\documentclass[11pt,reqno,a4paper]{amsart}
\usepackage{latexsym,bm}
\usepackage{amsmath,amssymb,cases}
\usepackage{amsthm}
\usepackage{xcolor}
\usepackage{enumerate}
\usepackage{caption}
\usepackage{graphicx,subfig}
\usepackage[numbers,sort&compress]{natbib}
\usepackage{accents}

\usepackage{geometry}
\geometry{verbose,tmargin=2.7cm,bmargin=2.8cm,lmargin=2.6cm,rmargin=2.6cm}

\usepackage{float}

\usepackage{ulem}
\usepackage[thicklines]{cancel}
\usepackage{relsize}
\usepackage{exscale}

\newtheorem{The}{Theorem}[section]
\newtheorem{Lem}[The]{Lemma}
\newtheorem{Cor}[The]{Corollary}
\newtheorem{Def}[The]{Definition}
\newtheorem{Pro}[The]{Proposition}

\theoremstyle{definition}
\newtheorem{Rem}{Remark}[section]
\newtheorem{Exa}[Rem]{Example}

\graphicspath{{fig/}}

\numberwithin{equation}{section}
\numberwithin{figure}{section}

%\allowdisplaybreaks[4]

\pagestyle{plain}

\begin{document}
\captionsetup[figure]{labelfont={default},labelformat={default},labelsep=period,name={Fig.}}

\title[New Formula for Entropy Solutions for Conservation Laws]{New Formula for Entropy Solutions\\
for Scalar Hyperbolic Conservation Laws\\
with Flux Functions of Convexity Degeneracy\\
and Global Dynamic Patterns of Solutions}

\author{Gaowei Cao}
\address{Gaowei Cao: Wuhan Institute of Physics and Mathematics,
	Innovation Academy for Precision Measurement Science and Technology,
	Chinese Academy of Sciences, Wuhan 430071, China;
	\quad\quad Oxford Centre for Nonlinear Partial Differential Equations,
Mathematical Institute, University of Oxford, Oxford, OX2 6GG, UK}
\email{\tt gwcao@apm.ac.cn; caog@maths.ox.ac.uk}

\author{Gui-Qiang G. Chen$^{\dag}$}
\address{Gui-Qiang G. Chen: Oxford Centre for Nonlinear Partial Differential Equations,
Mathematical Institute, University of Oxford, Oxford, OX2 6GG, UK}
\email{\tt chengq@maths.ox.ac.uk}

\author{Xiaozhou Yang}
\address{Xiaozhou Yang: Wuhan Institute of Physics and Mathematics, 
Innovation Academy for Precision Measurement Science and Technology,
Chinese Academy of Sciences, Wuhan 430071, China}
\email{\tt xzyang@apm.ac.cn}
\date{\today}

\begin{abstract}
We are concerned with 
a new solution formula and its applications to the analysis of properties of
entropy solutions of the Cauchy problem for one-dimensional scalar hyperbolic conservation laws,
wherein the flux functions exhibit convexity degeneracy 
and the initial data are in  $L^\infty$.
We first introduce and validate the novel formula for entropy solutions for the Cauchy problem,
which generalizes the Lax--Oleinik formula.
Then, by employing this formula,
we obtain a series of fine properties of entropy solutions
and discover several new structures and phenomena, 
which are summarized in the following three aspects:
\begin{itemize}
\item [(i)] Series of results on the fine structures of entropy solutions, 
especially including the new criteria for all six types of initial waves 
for the Cauchy problem, 
the new structures of entropy solutions inside the backward characteristic triangle, 
and the new features of the formation and development of shocks 
such as all five types of continuous shock generation points, along with their criteria 
and the optimal regularities of the corresponding resulting shocks;

\item[(ii)] Series of results on the global structures of entropy solutions, 
including the four new invariants of entropy solutions, 
the new criteria for the locations and speeds of divides, 
and the exact determination of the global structures of entropy solutions; 

\item[(iii)] Series of new results on the asymptotic behaviors of entropy solutions,
including the asymptotic profiles and decay rates
of entropy solutions for initial data in $L^\infty$, respectively
in the $L^\infty$--norm and the $L^p_{{\rm loc}}$--norm: 
The asymptotic profile in the $L^\infty$--norm is either a rarefaction-constant solution (as a function only consisting of some constant regions,
centered rarefaction waves, and/or rarefaction regions) 
or a single shock, 
and the asymptotic profile in the $L^p_{{\rm loc}}$--norm is the generalized 
$N$--wave as we introduce.
\end{itemize}
Through the above results (i)--(iii),
we obtain the global dynamic patterns of entropy solutions
for scalar hyperbolic conservation laws with the flux functions
satisfying $\eqref{c1.2}$ and general initial data in $L^\infty$.
Moreover, the new solution formula is also extended to
more general scalar hyperbolic conservation laws.
\end{abstract}

\keywords{Formula for entropy solutions, Convexity degeneracy of flux functions,
Fine properties of entropy solutions,
Initial waves for the Cauchy problem,
Fine structures of entropy solutions,
Lifespans of characteristics,
Formation and development of shocks,
Invariants, Divides,
Global structures of entropy solutions,
Asymptotic behaviors.\\
$^\dag$Corresponding author.}

\subjclass[2020]{35L65,35L67,35C99,35L03,35B65.}
\maketitle

\tableofcontents

\section{Introduction}
We are concerned with 
a new solution formula
and its applications to the analysis of properties of entropy solutions of
the Cauchy problem for 
one-dimensional scalar hyperbolic conservation laws:
\begin{eqnarray}
&&u_t+f(u)_x=0\qquad\,\, \mbox{for $(x,t)\in \mathbb{R}\times\mathbb{R}^+:=({-}\infty,\infty)\times(0,\infty)$},\label{c1.1}\\[1mm]
&&u|_{t=0}=\varphi(x), \label{ID}
\end{eqnarray}
wherein the initial data function $\varphi(x)$ is in $L^\infty(\mathbb{R})$ and the flux function $f(u)$ exhibits convexity degeneracy:
\begin{equation}\label{c1.2}
f''(u)\geq 0 \,\,\,\,\mbox{on $u \in \mathbb{R}$},\qquad\,\,\, \mathcal{L}\{u\,:\, f''(u)=0\}=0,
\end{equation}
where $\mathcal{L}$ is the Lebesgue measure.

As is well-known, due to the nonlinearity of the flux function $f(u)$,
no matter how smooth the initial data function $\varphi(x)$ is,
the solution may develop singularities and form shock waves (shocks, for short)
generically in a finite time,
so that entropy solutions must be considered.
We are concerned with not only the existence and uniqueness, 
but also, more importantly, research for a more general solution formula 
and its applications to the analysis of properties of entropy solutions
so as to establish the global dynamic patterns of entropy solutions. 
This has been shown
to be also helpful for understanding
and analyzing
some important hyperbolic systems of conservation laws
(see, for example, \cite{HFM,HW,TB,WHD})
and related stochastic partial differential equations
(see, for example, \cite{HLB94,EKMS,SSI}), among others.

Owing to the nonlinearity of the flux functions
and the complicated interaction of characteristics,
representative formulas for general entropy solutions
have been
restricted to the case of the one-dimensional scalar conservation laws
with a uniformly convex flux function only.
The first successful attempt of solution formulas was given by Hopf \cite{HE},
in which the solution formula for the inviscid Burgers equation
was obtained by the vanishing viscosity limit.
For equation $\eqref{c1.1}$ with flux functions $f(u)$ satisfying that
$f''(u)\geq c_0>0$,
Lax \cite{LPD57} gave the implicit representation for entropy solutions
and Oleinik \cite{OOA} gave a formula for entropy solutions
in a different way.
However, for the general case $f''(u)\geq 0$, 
there is no representation formula of entropy solutions available, 
which is one of our main motivations of this paper.

For properties of entropy solutions of
the scalar conservation law $\eqref{c1.1}$,
Lax \cite{LPD57,LPD73} gave the invariants of entropy solutions
for initial data in $L^\infty \cap L^1$
and obtained the development of $N$--waves for initial data with compact support
and the formation of the sawtooth profiles for periodic initial data.
Dafermos \cite{DCM2,DCM3,DCM5} established the method of generalized characteristics
and obtained many confinement structures and several
cases of asymptotic behaviors of entropy solutions.
Quinn \cite{QBK} noted the $L^1$--contraction property for piecewise smooth solutions,
Holden--Holden \cite{HH} proved the $L^1$--stability via the numerical method
introduced by Dafermos \cite{DCM0},
and Holden--Risebro--Tveito \cite{HRT} proved that the maximum principle holds
for weak solutions as limits of approximate solutions generated by the Lax--Friedrichs scheme, 
the Godunov scheme,
and the Glimm scheme (even for the case of hyperbolic systems).
Moreover,
Lin \cite{LLW} gave a description of all the six types of
initial waves for the Cauchy problem 
but did not obtain the concrete criteria,
Huang \cite{HFM} gave a description of the backward characteristic triangles and the shock curves,
{Lebaud \cite{LMP} and Yin--Zhu \cite{YZ} proved that the singularity
around the first blowup point caused by the initial compression is actually a shock
under some specific degenerate information of sufficiently smooth initial data functions,}
Li--Wang \cite{LW} gave the global structures of entropy solutions
for several special types of the $L^\infty$ initial data functions,
and Li \cite{LBH} gave a description of the global structures of shocks.
Furthermore, Liu \cite{LTP} gave the stability of Riemann solutions
under the compact perturbation
and Chen--Frid \cite{CF,CF1} showed the large-time stability of the Riemann solutions
with respect to the $L^\infty \cap L^1$ perturbation
and the large-time decay of periodic entropy solutions
of hyperbolic conservation laws (even for the multidimensional case).
However, many important properties of entropy solutions have not yet been explored and/or understood 
when the uniform convexity of the flux function of $\eqref{c1.1}$ fails, especially
for the global dynamic patterns of the entropy solutions.
See also  Dafermos \cite{DCM1}
and the references cited therein.

The scalar hyperbolic conservation laws, serving as the foundational hyperbolic equations,
represent a crucial starting point for the analysis of hyperbolic systems of conservation laws.
Moreover, numerous ideas, concepts, and methodologies gleaned from analyzing scalar hyperbolic
conservation laws have played pivotal roles in advancing the theory
of hyperbolic systems of conservation laws. As examples, these especially include
the minimal entropy conditions
({\it cf.}\, Panov \cite{PEY94}, Bouchut--Perthame \cite{BP},
De Lellis--Otto--Westdickenberg \cite{DOW04},
Cao--Chen \cite{CC}, and Krupa--Vasseur \cite{KRV}),
the kinetic formulations
({\it cf.}\, Lions--Perthame--Tadmor \cite{LPT},
De Lellis--Otto--Westdickenberg \cite{DOW03},
Tadmor--Tao \cite{TT}, and Golse--Perthame \cite{GP}),
the entropy dissipation
({\it cf.}\, De Lellis--Riviere \cite{DR}, Bianchini--Marconi \cite{BM,MB},
and Silvestre \cite{SL}),
the BV/SBV and fractional BV
({\it cf.}\, Ambrosio--De Lellis \cite{ADL}, Bourdarias--Gisclon--Junca \cite{BGJ},
Bianchini--Yu \cite{BY}, and Marconi \cite{ME18}),
the divergence-measure fields and normal traces
({\it cf.}\, Chen--Frid \cite{CF2}, Chen--Rascle \cite{CR}, Chen--Perthame \cite{CP},
Vasseur \cite{VA}, Kwon--Vasseur \cite{KV}, and B\"{u}rger--Frid--Karlsen \cite{BFK}),
the contractive semigroups
({\it cf.}\, Quinn \cite{QBK}, Andreianov--Karlsen--Risebro \cite{AKR}),
Serre--Silvestre \cite{SS}, and Serre \cite{SD21,SD22}),
the front-tracking method
({\it cf.}\, Dafermos \cite{DCM0}, DiPerna \cite{DRJ},
Bressan \cite{BA}, and Bressan--Liu--Yang \cite{BLY}),
among others.
Also see
Ambrosio--Crippa--De Lellis--Otto--Westdickenberg \cite{ACD},
Chen--Holden--Karlsen \cite{CHK},
Holden--Risebro \cite{HR},
Perthame \cite{PB},
and the references cited therein.

\vspace{1pt}
For the reasons discussed above,  
achieving the global dynamic patterns
of the entropy solutions to scalar hyperbolic conservation laws
is paramount for advancing the theory of hyperbolic conservation laws.
This paper is dedicated to fulfilling this purpose.
One of the motivations of this paper 
that both formulas for entropy solutions given by Lax \cite{LPD73} and Oleinik \cite{OOA}
are not suitable for many convex functions $f(u)$,
such as the 
typical flux functions $f(u)=u^{2n}$ with $n>1$ and $f(u)=e^{ku}$ with $k\neq 0$.
Another motivation is that the complete description of
many properties of entropy solutions has not been achieved 
before,
even when the flux function is uniformly convex;
these include the following aspects:
The criteria for initial waves of the Cauchy problem, 
the general cases of the formation and development of shocks,
the structures of entropy solutions inside the backward characteristic triangles,
the necessary and sufficient conditions for the locations and speeds of divides,
and the asymptotic behaviors of entropy solutions with general initial data in $L^\infty$.
Our third motivation is that there are very few results
on the regularity and asymptotic behaviors of entropy solutions
when the convex flux function possesses
the state points of convexity degeneracy and/or asymptotic lines.

\vspace{3pt}
The aim of this paper is twofold:
Firstly, we introduce and validate a novel formula for entropy solutions
of scalar hyperbolic conservation laws with
the flux functions satisfying $\eqref{c1.2}$
({\it i.e.}, the uniform convexity is not required);
this new formula aligns with the well-known Lax--Oleinik formula \cite{LPD57,LPD73,OOA}
in the case that the flux function is of uniform convexity.
Importantly, the method used to derive this new formula is rooted in the understanding of
the intrinsic formula for smooth solutions {\rm (}see {\rm Remark} $\ref{rem:c1.2}${\rm )},
offering a new approach to identifying formulas
for entropy solutions in more general hyperbolic conservation laws
where such intrinsic formulas are known.
Indeed, as shown in \S 8,
the new formula for entropy solutions can be extended to
more general scalar hyperbolic conservation laws $\eqref{c7.1}$
with the flux pair $(U(u), F(u))$ satisfying $\eqref{c7.1c}$
and the initial data in $L^1_{{\rm loc}}$ satisfying $\eqref{c7.19}$.
Secondly, we 
leverage this newly derived formula to elucidate various fine properties of entropy solutions
and obtain the global dynamic patterns of entropy solutions
for the Cauchy problem with
{\it flux functions satisfying $\eqref{c1.2}$ and initial data only 
in $L^\infty$};
these especially include: 
\begin{enumerate}
\item[(i)] {\it Criteria for all the six types of initial waves for the Cauchy problem
and dynamic behaviors of characteristics in {\rm \S 3}.}
We prove the necessary and sufficient conditions
for the rays emitting from points on the $x$--axis to be characteristics at least local-in-time,
and obtain the criteria for all the six types of initial waves for the Cauchy problem
by providing the necessary and sufficient conditions for all the six 
types of initial waves for the Cauchy problem,
even when the point on the $x$--axis is not a Lebesgue point of the initial data function.
Furthermore, we introduce and prove the lifespan of characteristics,
and provide a classification of characteristics by the way of their terminations.

\vspace{1pt}
\item[(ii)] {\it New features of the formation and development of shocks in {\rm \S 4}
and fine structures of entropy solutions in {\rm\S 5}.}
In \S 4.1, we discover all the five types of continuous shock generation points
and prove the necessary and sufficient conditions of them;
in \S 4.2, we prove the optimal regularities of shock curves and entropy solutions
near all the five types of continuous shock generation points,
which are not necessarily assumed to be isolated.
In \S 5, the fine structures of entropy solutions are established
by providing and proving the new structures of entropy solutions inside the backward characteristic triangles,
proving the directional limits of entropy solutions at the discontinuous points
(even for the points at which there are infinitely many shocks collided together)
and the general structures of shocks (especially including the left-derivatives of them).

\vspace{1pt}
\item[(iii)] {\it Four new invariants, criteria for divides as well as their locations and speeds, and the exact determination of global structures of entropy solutions in {\rm\S 6}.}
We first introduce and prove the four invariants of entropy solutions,  
and then provide and prove 
the necessary and sufficient conditions respectively for the divides as well as their locations and speeds. 
Furthermore, by using the results on invariants and divides,
the global structures of entropy solutions are obtained by proving 
all the path-connected branches of the shock set of entropy solutions,
which are actually separated by the 
nearby
divides.

\vspace{2pt}
\item[(iv)] {\it Asymptotic profiles and decay rates respectively
in the $L^\infty$--norm and the $L^p_{{\rm loc}}$--norm in {\rm\S 7}.}
Using the advantages of the results on invariants and divides 
developed in this paper,
we prove that the entropy solutions decay to either a rarefaction-constant solution
(as a function only consisting of some constant regions,
centered rarefaction waves, and/or rarefaction regions)
or a single shock in the $L^\infty$--norm,  
and obtain the corresponding decay rates.
Moreover, introducing the concept of generalized $N$--waves, we prove that
the entropy solutions, which possess at least one divide,
decay to the generalized $N$--waves in the $L^p_{{\rm loc}}$--norm, 
and obtain the corresponding decay rates.

\vspace{2pt}
\item[(v)] {\it Influences of the convexity degeneracy of flux functions.}
As shown in this paper, the convexity degeneracy of the flux function $f(u)$
({\it i.e.}, the infinitesimal order of $f''(u)$ at degeneracy point)
influences the behaviors of entropy solutions substantially,
especially including the generation of characteristics and shocks, 
the formation and development of shocks,
and the decay rates respectively in the $L^\infty$--norm and the $L^p_{{\rm loc}}$--norm. 
\end{enumerate}
The global dynamic patterns of entropy solutions
with {\it general initial data in $L^\infty$} are new,
even for the case when the flux function $f(u)$ is uniformly convex;
these especially include: The criteria for all the six types
of initial waves for the Cauchy problem,
the structures of entropy solutions inside the backward characteristic triangles,
the directional limits of entropy solutions at the discontinuous points
and the left-derivatives of shock curves,
the formation and development of shocks for all the five types of continuous shock generation points,
the four invariants, 
the necessary and sufficient conditions respectively for the divides as well as their locations and speeds, 
the exact determination of the global structures of entropy solutions, 
and the asymptotic behaviors of entropy solutions 
respectively in the $L^\infty$--norm and the $L^p_{\rm loc}$--norm.
Our proofs are based on the new formula for entropy solutions
of the Cauchy problem with flux functions allowing for convexity degeneracy and initial data only in $L^\infty$.

Finally, we extend the new formula for entropy solutions to more general scalar hyperbolic conservation laws $U(u)_t+F(u)_x=0$ in \S 8, 
where the flux pair $(U(u), F(u))$ satisfies $\eqref{c7.1c}$
and the initial data function in $L^1_{{\rm loc}}$ satisfies $\eqref{c7.19}$.
The corresponding fine properties of entropy solutions are also presented.

\smallskip
\section{New Formula for Entropy Solutions and Its Main 
Application Results}
In this section, we first introduce a new formula
for entropy solutions of
the Cauchy problem $\eqref{c1.1}$--$\eqref{ID}$ with
the flux function $f(u)$ satisfying $\eqref{c1.2}$, 
and then validate it by proving that the function 
defined by this newly derived formula
solves the Cauchy problem $\eqref{c1.1}$--$\eqref{ID}$ uniquely
in Theorem $\ref{the:mt}$.
As the direct applications of this newly derived formula,
the refined $L^1-$contraction
and some pointwise properties such as the monotonicity in $L^\infty$
are proved in Corollary $\ref{cor:c2.1}$,
and the refined semigroup properties 
are proved in Corollary $\ref{cor:c2.2}$.

In \S 2.2,
we present a summary of the main results on the fine properties of entropy solutions,
which are elucidated in \S 3--\S 7
via leveraging this newly derived formula.

\smallskip
We first state the notions of weak solutions and entropy solutions, respectively.

\begin{Def}[Weak Solutions]\label{def:ws}
A bounded measurable function $u=u(x,t)$ on $\mathbb{R}\times \mathbb{R}^+$ is called
to be a weak solution of the Cauchy problem $\eqref{c1.1}$--$\eqref{ID}$ if $u=u(x,t)$ satisfies
\begin{eqnarray}
&&\iint_{\mathbb{R}\times \mathbb{R}^+}\big(u\phi_t+f(u)\phi_x\big)\,{\rm d}x{\rm d}t=0
\qquad\,\,\,\,\,
{\rm for\ any}\ \phi=\phi(x,t)\in C^\infty_{\rm c}(\mathbb{R}\times \mathbb{R}^+),\label{c1.3}\\[2mm]
&&\lim_{t\rightarrow 0{+}}\int_{x_1}^{x_2}u(x,t)\,{\rm d}x=\int_{x_1}^{x_2}\varphi(x)\,{\rm d}x
\qquad\,\,{\rm for\ any}\ x_1,x_2\in \mathbb{R}.\label{c1.3c}
\end{eqnarray}
\end{Def}

\begin{Def}[Entropy Solutions]\label{def:es}
A weak solution $u=u(x,t)$ on $\mathbb{R}\times \mathbb{R}^+$ is called
to be
an entropy solution of the Cauchy problem $\eqref{c1.1}$--$\eqref{ID}$
if it satisfies the Oleinik-type one-sided inequality{\rm :}
\begin{equation}\label{c1.4}
f'(u(x_2,t))-f'(u(x_1,t))\leq \frac{x_2-x_1}{t}\qquad\,\,\,
{\rm for\ any}\ x_2>x_1\ {\rm and}\ t>0.
\end{equation}
\end{Def}

\subsection{New solution formula and its direct corollaries}
We first introduce the new formula for entropy solutions.

For any fixed $(x,t)\in \mathbb{R}\times \mathbb{R}^+$,
we define a function $E(\cdot\,;x,t)$ in variable $u\in \mathbb{R}$ by
\begin{equation}\label{c2.1}
E(u;x,t):=t\int_0^u f''(s)\big(\varphi(x-tf'(s))-s\big)\,{\rm d}s.
\end{equation}
Without loss of generality,
we set $f(0)=0$.

\vspace{2pt}
From $\eqref{c1.2}$, $f'(u)$ is strictly increasing. 
Then, for any $(x,t)\in \mathbb{R}\times \mathbb{R}^+$,
\begin{equation*}
\varphi (x-tf'(s))<s \,\,\,\,{\rm if}\ s>\|\varphi\|_{L^\infty},
\qquad \,\,\, \varphi (x-tf'(s))>s \,\,\,\,{\rm if}\ s<-\|\varphi\|_{L^\infty}.
\end{equation*}
Thus, by $\eqref{c1.2}$,
if $s>\|\varphi\|_{L^\infty}$, then
$$
f''(s)\big(\varphi(x-tf'(s))-s\big)\leq 0
$$
so that $E(\cdot\,; x,t)$ is strictly decreasing;
and, if $s<-\|\varphi\|_{L^\infty}$, then
$$
f''(s)\big(\varphi(x-tf'(s))-s\big)\geq 0
$$
so that $E(\cdot\,; x,t)$ is strictly increasing.
Therefore, for any $(x,t)\in \mathbb{R}\times \mathbb{R}^+$,
$E(\cdot\,; x,t)$ always attains its maximum in the interval
$[-\|\varphi\|_{L^\infty},\|\varphi\|_{L^\infty}]$.

\smallskip
Let $\mathcal{U}(x,t)$ be the set of points at which $E(\cdot\,; x,t)$ attains its maximum:
\begin{equation}\label{c2.3}
\mathcal{U}(x,t):=\big\{w\in \mathbb{R}\,:\, E(w;x,t)=\max_{v\in \mathbb{R}}E(v;x,t)\big\}.
\end{equation}
Then, by the continuity of $E(\cdot\,;x,t)$, $\,\mathcal{U}(x,t)$ is a bounded closed set
and $\mathcal{U}(x,t)\neq \varnothing$.

\begin{Def}
We define $u^-(x,t)$ and $u^+(x,t)$ on $\mathbb{R} \times \mathbb{R}^+$ by
\begin{equation}\label{c2.4}
u^-(x,t)=\sup\, \mathcal{U}(x,t), \qquad\,\,\, u^+(x,t)=\inf\, \mathcal{U}(x,t).
\end{equation}
\end{Def}

Since $\mathcal{U}(x,t)$ is closed,
then $u^\pm(x,t)\in\mathcal{U}(x,t)$ (see {\rm Fig.} $\ref{figMaxU}$).
For any $(x,t)\in \mathbb{R}\times \mathbb{R}^+$, let
\begin{equation}\label{c2.5}
\quad \hat{E}(x,t):=E(u^\pm(x,t);x,t)-\int_0^{x-tf'(0)}\varphi(\xi)\,{\rm d}\xi
=\max_{v\in \mathbb{R}}E(v;x,t)-\int_0^{x-tf'(0)}\varphi(\xi)\,{\rm d}\xi.
\end{equation}

\begin{figure}[H]
	\begin{center}
		{\includegraphics[width=0.65\columnwidth]{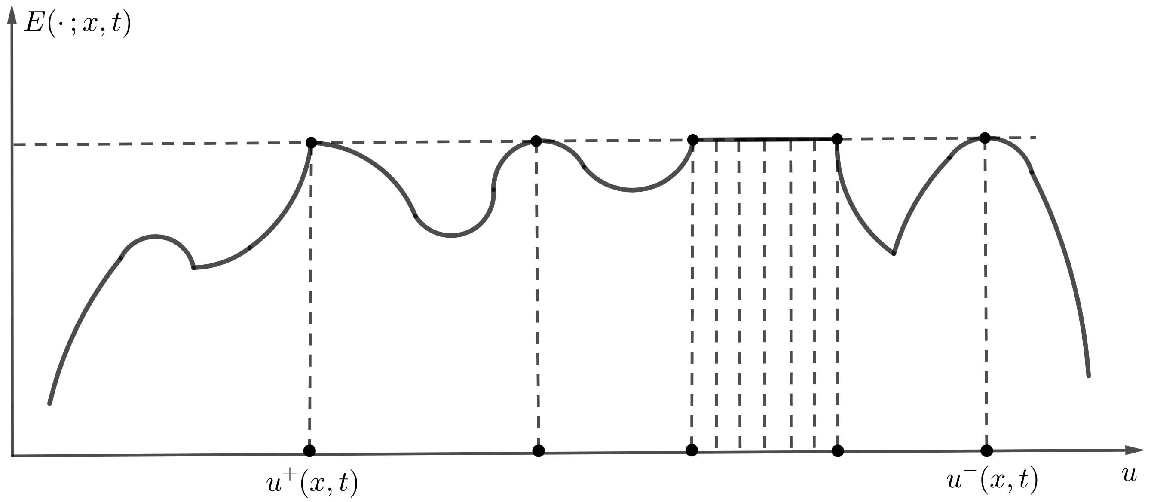}}
 \caption{$u^\pm(x,t)\in\mathcal{U}(x,t)\subset[u^+(x,t),u^-(x,t)]$.}\label{figMaxU}
	\end{center}
\end{figure}

We now present the main theorem on the new formula for entropy solutions.

\begin{The}[New formula for entropy solutions]\label{the:mt}
Let $f(u)\in C^2(\mathbb{R})$ be a convex flux function of $\eqref{c1.1}$ satisfying $\eqref{c1.2}$,
and let the initial data function $\varphi(x)$ be in $L^\infty(\mathbb{R})$.
Let $u=u(x,t)$ on $ \mathbb{R} \times \mathbb{R}^+$ be defined by
\begin{equation}\label{c2.50}
u(x,t):=u^+(x,t)=\inf \big\{w\in \mathbb{R}\,:\, E(w;x,t)=\max_{v\in \mathbb{R}}E(v;x,t)\big\}
\end{equation}
with $E(u;x,t)$ given by $\eqref{c2.1}$.
Then $\eqref{c2.50}$ is the solution formula of the Cauchy problem \eqref{c1.1}--\eqref{ID}
in the following sense{\rm :}
$u=u(x,t)$, defined by $\eqref{c2.50}$,
is the unique entropy solution of the Cauchy problem $\eqref{c1.1}$--$\eqref{ID}$.

Furthermore, the left- and right-traces of $u=u(x,t)$ exist pointwise and satisfy
 \begin{equation}\label{c2.50a}
u(x{-},t)=u^-(x,t),\qquad u(x{+},t)=u^+(x,t)
\qquad {\rm on}\ (x,t)\in \mathbb{R} \times \mathbb{R}^+;
\end{equation}
moreover, for each $t>0$,
$u(x-,t)=u(x+,t)$ holds except at most a countable set of $(x,t)$ at which $u(x-,t)>u(x+,t)$, and
\begin{equation}\label{c2.38}
f'(u(\cdot,t))\in {\rm BV}_{{\rm loc}}(\mathbb{R})
\qquad\,\,\, {\rm for\ any }\ t>0.
\end{equation}
\end{The}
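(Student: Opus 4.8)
The plan is to extract everything from a single monotonicity property of the maximizing set of $E(\cdot\,;x,t)$, and then to reduce the weak formulation and uniqueness to the classical theory by perturbing the flux.

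\emph{Reformulation and basic properties.} Changing variables $\xi=x-tf'(s)$ in the first term of \eqref{c2.1} and integrating $\int_0^u f''(s)\,s\,{\rm d}s$ by parts (recall $f(0)=0$) gives
\[
E(u;x,t)=\int_{x-tf'(u)}^{x-tf'(0)}\varphi(\xi)\,{\rm d}\xi-t\big(uf'(u)-f(u)\big).
\]
By \eqref{c1.2}, $f'$ is continuous and strictly increasing, so $b:=(f')^{-1}$ is well-defined, continuous and strictly increasing on the range of $f'$; writing $f^*(p):=p\,b(p)-f(b(p))$ we get $uf'(u)-f(u)=f^*(f'(u))$ with $(f^*)'=b$, so $f^*$ is $C^1$ and \emph{strictly} convex. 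In this form the only dependence of $E$ on $\varphi$ is through $\xi\mapsto\int_0^\xi\varphi$, so $E$ is jointly continuous in $(u,x,t)$; together with the coercivity already noted in the text, $\mathcal U(x,t)\subset[-\|\varphi\|_{L^\infty},\|\varphi\|_{L^\infty}]$ is nonempty and compact, the set-valued map $(x,t)\mapsto\mathcal U(x,t)$ is upper semicontinuous, and $u^\pm$ are Borel.

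\emph{The rearrangement lemma and the Oleinik inequality.} I would prove: for $x_1<x_2$, $t>0$, and any $u_i\in\mathcal U(x_i,t)$, one has $x_1-tf'(u_1)\le x_2-tf'(u_2)$. Arguing by contradiction, if $\xi_1:=x_1-tf'(u_1)>\xi_2:=x_2-tf'(u_2)$, choose $u_1',u_2'$ with $f'(u_1')=(x_2-\xi_1)/t$ and $f'(u_2')=(x_1-\xi_2)/t$; both values lie in the range of $f'$ since $f'(u_1)<f'(u_i')<f'(u_2)$ for $i=1,2$, and $f'(u_1')+f'(u_2')=f'(u_1)+f'(u_2)$. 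Using the reformulation and the identities $x_2-tf'(u_1')=\xi_1$, $x_1-tf'(u_2')=\xi_2$, the $\int\varphi$ contributions telescope to zero and
\[
\big(E(u_1';x_2,t)+E(u_2';x_1,t)\big)-\big(E(u_1;x_1,t)+E(u_2;x_2,t)\big)=t\big(f^*(f'(u_1))+f^*(f'(u_2))-f^*(f'(u_1'))-f^*(f'(u_2'))\big)>0
\]
by strict convexity of $f^*$, contradicting the maximality of $E(\cdot\,;x_i,t)$ at $u_i$ over $\mathbb{R}$. Setting $\xi^\pm(x,t):=x-tf'(u^\pm(x,t))$, this shows $\xi^\pm(\cdot,t)$ are nondecreasing with $\xi^+(x_1,t)\le\xi^-(x_2,t)$ for $x_1<x_2$; taking $u_1=u^+(x_1,t)$, $u_2=u^+(x_2,t)$ gives $f'(u^+(x_2,t))-f'(u^+(x_1,t))\le(x_2-x_1)/t$, i.e.\ \eqref{c1.4} holds for $u:=u^+$.

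\emph{Traces, countability of jumps, and \eqref{c2.38}.} From upper semicontinuity of $\mathcal U$ together with the monotonicity of $\xi^\pm$, a squeezing argument shows $u^+(\cdot,t)$ is right-continuous, $u^-(\cdot,t)$ is left-continuous, $\lim_{y\uparrow x}u^+(y,t)=u^-(x,t)$, and $\lim_{y\downarrow x}u^-(y,t)=u^+(x,t)$, which is \eqref{c2.50a}. Since $\xi^-(x,t)\le\xi^+(x,t)$ with $\xi^+(x_1,t)\le\xi^-(x_2,t)$ for $x_1<x_2$, the nondegenerate intervals $\big(\xi^-(x,t),\xi^+(x,t)\big)$ are pairwise disjoint, so only countably many are nonempty; hence $u(x-,t)>u(x+,t)$ for at most countably many $x$ and $u(x-,t)=u(x+,t)$ elsewhere. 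Finally $f'(u(\cdot,t))=\tfrac1t\big(x-\xi^+(x,t)\big)$ is a difference of two nondecreasing functions, hence lies in ${\rm BV}_{\rm loc}(\mathbb{R})$, giving \eqref{c2.38}. For the initial condition \eqref{c1.3c}, at every Lebesgue point $x$ of $\varphi$ one has $\tfrac1t E(u;x,t)\to\int_0^u f''(s)(\varphi(x)-s)\,{\rm d}s$ uniformly for $u$ bounded as $t\to0^+$, and the limit functional has (by \eqref{c1.2}) unique maximizer $\varphi(x)$, so $u^+(x,t)\to\varphi(x)$ and dominated convergence yields \eqref{c1.3c}. For the weak identity \eqref{c1.3}, set $f_\epsilon:=f+\tfrac\epsilon2 u^2$ (uniformly convex) with entropy solution $u_\epsilon$; the classical Lax--Oleinik theorem identifies $u_\epsilon$ (a.e.) with the function defined by \eqref{c2.50} for $f_\epsilon$, so $u_\epsilon$ satisfies \eqref{c1.3} with $f_\epsilon$, while $E_\epsilon\to E$ locally uniformly forces $u_\epsilon\to u^+$ wherever $u^+=u^-$, hence a.e.\ and boundedly; passing to the limit gives \eqref{c1.3} for $u^+$. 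Thus $u^+$ is a weak solution satisfying \eqref{c1.4}, i.e.\ an entropy solution, and uniqueness is the classical $L^1$-contraction/uniqueness theorem under the one-sided Oleinik condition. The main obstacle is the rearrangement lemma: this is exactly where uniform convexity is replaced by \eqref{c1.2}, and one must check both that $u_1',u_2'$ land in the (possibly bounded) range of $f'$ and that \emph{strict} convexity of $f^*$ — equivalently strict monotonicity of $(f')^{-1}$, which is precisely the measure-zero condition in \eqref{c1.2} — is enough to close the contradiction; the trace analysis and the identification of the classical Lax--Oleinik solution with $u_\epsilon$ are routine but need care with the pointwise limits.
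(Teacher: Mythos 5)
Your proposal is correct in substance and reaches every claim of the theorem, but it travels a genuinely different route from the paper in two of the three main steps. For the Oleinik inequality \eqref{c1.4}, the paper (Lemma $\ref{lem:c2.4}$) fixes the left maximizer and shows that $E(\cdot\,;x',t)$ decreases beyond a threshold by exhibiting a monotone auxiliary function $L(u)$ with $L'\ge 0$ and $\mathcal L\{L'=0\}=0$; your rearrangement argument instead swaps the two backward feet, cancels the $\int\varphi$ contributions exactly, and invokes strict convexity of the Legendre transform $f^*$ (equivalently strict monotonicity of $(f')^{-1}$, which is precisely where \eqref{c1.2} enters in both proofs). Your version is cleaner and yields the monotonicity $x_1-tf'(u_1)\le x_2-tf'(u_2)$ for \emph{arbitrary} maximizers in one stroke, which subsumes the paper's \eqref{c2.31}; your subsequent derivation of the traces \eqref{c2.50a}, the countable jump set, and \eqref{c2.38} then matches the paper's Lemma $\ref{lem:c2.5}$ and Step 4(a) essentially verbatim. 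For the weak identity \eqref{c1.3}, the paper works directly with the Lipschitz potential $\hat E$ and the identities $\hat E_x=-u$, $\hat E_t=f(u)$ a.e.\ (Lemma $\ref{lem:c2.3}$), which is self-contained and also gives \eqref{c1.3c} for free; your perturbation $f_\epsilon=f+\tfrac{\epsilon}{2}u^2$ plus the identification of \eqref{c2.50} with the classical Lax--Oleinik minimization (via $y=x-tf'_\epsilon(u)$) and the a.e.\ convergence of maximizers is a valid alternative, at the cost of importing the uniformly convex theory and of a separate Lebesgue-point argument for the initial trace. Both limits you need ($E_\epsilon\to E$ locally uniformly, and convergence of maximizers off the countable jump set) do go through because the $\varphi$-dependence of $E$ is only through its primitive.

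The one place where you under-deliver is uniqueness. The ``classical $L^1$-contraction/uniqueness theorem under the one-sided Oleinik condition'' is a theorem about \emph{uniformly} convex fluxes, where \eqref{c1.4} is a one-sided Lipschitz bound on $u$ itself; here \eqref{c1.4} only controls $f'(u)$, and the degeneracy of $f''$ is exactly the obstruction. The relevant result is Hoff's sharpened uniqueness theorem, and the paper does not cite it as a black box: Step 5 of its proof reconstructs the full duality argument (divided differences $f[u_1,u_2]$, mollified linear transport equations for the test functions, the gradient bound $|\partial_x\phi_i^{\varepsilon,\delta}|\le \frac{t_2}{t}\|\psi_i'\|_{L^\infty}$ coming from \eqref{c1.4}). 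If you intend to cite Hoff, say so explicitly and check his hypotheses cover $C^2$ convex fluxes with $\mathcal L\{f''=0\}=0$ and $L^\infty$ data; otherwise this step needs the page of work the paper devotes to it, and your proposal as written does not contain it.
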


\begin{Rem}
Theorem $\ref{the:mt}$ still holds if the solution is alternatively defined by
$$u(x,t):=u^-(x,t)=\sup \big\{w\in \mathbb{R}\,:\, E(w;x,t)=\max_{v\in \mathbb{R}}E(v;x,t)\big\}.$$
In fact, as shown in Theorem $\ref{the:mt}$,
we can prove that
$u(x-,t)=u(x+,t)$ holds except at most a countable set of $(x,t)$ for each $t>0$,
then it follows from $\eqref{c2.50a}$ that
$u^-(x,t)=u^+(x,t)$ almost everywhere on $\mathbb{R} \times \mathbb{R}^+$
(see also Lemma $\ref{lem:c2.3}$).
\end{Rem}

\begin{Cor}\label{cor:c2.1}
The entropy solutions, given by the solution formula $\eqref{c2.50}$,
satisfy the following properties{\rm :}
\begin{itemize}
\item [(i)] Let $u_1(x,t)$ and $u_2(x,t)$ as in $\eqref{c2.50}$ be determined by $E(u;x,t)$ in $\eqref{c2.1}$
with the $L^\infty$ initial data functions $\varphi_1(x)$ and $\varphi_2(x)$, respectively.
Then, for any $t>0$ and $x_2>x_1$,
 \begin{equation}\label{c2.51}
\int_{x_1}^{x_2}|u_2(x,t)-u_1(x,t)|\,{\rm d}x
\leq\int_{x_1-tf'(M(x_1,t))}^{x_2-tf'(m(x_2,t))}|\varphi_2(x)-\varphi_1(x)|\,{\rm d}x,
\end{equation}
where $M(x,t):=\max\{u_1(x{-},t),u_2(x{-},t)\}$ and $m(x,t):=\min\{u_1(x{+},t),u_2(x{+},t)\}$.

\smallskip
\item[(ii)] Let $u_i(x,t)$ as in $\eqref{c2.50}$ be determined by $E(u;x,t)$ in $\eqref{c2.1}$
with the $L^\infty$ initial data functions $\varphi_i(x)$ for $i=1,2$.
If $\varphi_1(x)\leq\varphi_2(x)$ holds almost everywhere on $\mathbb{R}$, then
 \begin{equation}\label{c2.50b}
 u_1(x\pm,t)\leq u_2(x\pm,t)
 \qquad\,\,\, { \rm for\ any}\ (x,t)\in \mathbb{R}\times \mathbb{R}^+.
\end{equation}

\item[(iii)] Let $u_n(x,t)$ and $u(x,t)$ as in $\eqref{c2.50}$ be determined by $E(u;x,t)$ in $\eqref{c2.1}$
with the $L^\infty$ initial data functions $\varphi_n(x)$ and $\varphi(x)$, respectively.
If $\{\varphi_n(x)\}_n$ is an increasing sequence such that
$\lim_{n\rightarrow \infty}\|\varphi_n-\varphi\|_{L^\infty}=0$, then
\begin{equation}\label{c7.30}
\lim_{n\rightarrow \infty}u_n(x+,t)=u(x+,t)
\qquad {\rm for\ any}\ (x,t)\in \mathbb{R}\times \mathbb{R}^+;
\end{equation}
and if $\{\varphi_n(x)\}_n$ is a decreasing sequence such that
$\lim_{n\rightarrow \infty}\|\varphi_n-\varphi\|_{L^\infty}=0$, then
\begin{equation}\label{c7.31}
\lim_{n\rightarrow \infty}u_n(x-,t)=u(x-,t)
\qquad {\rm for\ any}\ (x,t)\in \mathbb{R}\times \mathbb{R}^+.
\end{equation}
\end{itemize}
\end{Cor}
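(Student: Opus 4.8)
The plan is to read off all three assertions from the variational structure underlying \eqref{c2.50}. Fix $(x,t)$ and set $\Phi(w;x,t):=\int_0^{x-tf'(w)}\varphi(\xi)\,\mathrm{d}\xi+t\bigl(wf'(w)-f(w)\bigr)$. The change of variables $\tau=x-tf'(s)$ in \eqref{c2.1} gives $E(w;x,t)=\int_0^{x-tf'(0)}\varphi(\tau)\,\mathrm{d}\tau-\Phi(w;x,t)$, so $\arg\min_{w}\Phi(\cdot;x,t)=\mathcal U(x,t)$ and $v(x,t):=-\hat E(x,t)=\min_{w\in\mathbb R}\Phi(w;x,t)$. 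Since each $x\mapsto\Phi(w;x,t)$ is $\|\varphi\|_{L^\infty}$-Lipschitz and the minimum is attained on a fixed compact $w$-interval, $v(\cdot,t)$ is locally Lipschitz, and $\partial_x v(\cdot,t)=u(\cdot,t)$ a.e. (this identification of $v$ as the Hamilton--Jacobi potential of the entropy solution $u$ is part of the proof of Theorem \ref{the:mt}); in particular $\int_{x_1}^{x_2}u(x,t)\,\mathrm{d}x=v(x_2,t)-v(x_1,t)$. The key is the cancellation
\[
\Phi_2(w;x,t)-\Phi_1(w;x,t)=\int_0^{x-tf'(w)}\bigl(\varphi_2(\xi)-\varphi_1(\xi)\bigr)\,\mathrm{d}\xi\qquad\text{for all }w,
\]
(where $\Phi_i,\mathcal U_i,v_i$ are built from $\varphi_i$), because the $f$-term in $\Phi$ is independent of $i$. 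I would prove (ii) first, as it drives (i) and (iii).

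\smallskip
To prove (ii), assume $\varphi_1\le\varphi_2$ a.e. Then $D(w):=E_2(w;x,t)-E_1(w;x,t)$ satisfies $D(w_b)-D(w_a)=\int_{x-tf'(w_b)}^{x-tf'(w_a)}(\varphi_2-\varphi_1)\,\mathrm{d}\xi\ge 0$ for $w_a<w_b$ (then $x-tf'(w_b)<x-tf'(w_a)$ and $\varphi_2-\varphi_1\ge0$ a.e.), so $D$ is nondecreasing. The next step is an elementary comparison of argmax sets: if $a_2:=\sup\mathcal U_2(x,t)<\sup\mathcal U_1(x,t)=:a_1$, then $E_1(a_1)\ge E_1(a_2)$ and $E_2(a_2)\ge E_2(a_1)$ force $D(a_1)\le D(a_2)$; together with monotonicity of $D$ this yields $D(a_1)=D(a_2)$, hence $E_2(a_1)=E_2(a_2)=\max_v E_2(v;x,t)$, i.e.\ $a_1\in\mathcal U_2(x,t)$, contradicting $a_1>a_2$. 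The statement for the infima is symmetric. Since $u_i(x{-},t)=\sup\mathcal U_i(x,t)$ and $u_i(x{+},t)=\inf\mathcal U_i(x,t)$ by \eqref{c2.50a}, this is exactly \eqref{c2.50b}.

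\smallskip
For (iii), let $\varphi_n\uparrow$ with $\|\varphi_n-\varphi\|_{L^\infty}\to0$, so $\varphi_n\le\varphi$ a.e. By (ii), $n\mapsto u_n(x{+},t)$ is nondecreasing and $\le u(x{+},t)$, and it lies in the fixed compact set $[-\sup_n\|\varphi_n\|_{L^\infty},\sup_n\|\varphi_n\|_{L^\infty}]$, so $\bar u:=\lim_n u_n(x{+},t)$ exists with $\bar u\le u(x{+},t)$. From \eqref{c2.1}, $|E_n(w;x,t)-E(w;x,t)|\le t\,\|\varphi_n-\varphi\|_{L^\infty}\,|f'(w)-f'(0)|$, so $E_n(\cdot;x,t)\to E(\cdot;x,t)$ uniformly on compact $w$-intervals; hence for every fixed $v$, $E(\bar u;x,t)=\lim_n E_n\bigl(u_n(x{+},t);x,t\bigr)=\lim_n\max_{w}E_n(w;x,t)\ge\lim_n E_n(v;x,t)=E(v;x,t)$, i.e.\ $\bar u\in\mathcal U(x,t)$, so $\bar u\ge\inf\mathcal U(x,t)=u(x{+},t)$ by \eqref{c2.50}. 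Thus $\bar u=u(x{+},t)$, which is \eqref{c7.30}; \eqref{c7.31} is obtained the same way using a decreasing sequence, the representatives $u_n(x{-},t)=\sup\mathcal U_n(x,t)$, and the corresponding half of (ii).

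\smallskip
Finally, for (i) the plan is as follows. Put $G(x):=v_2(x,t)-v_1(x,t)$; then $G$ is locally Lipschitz with $G'=u_2(\cdot,t)-u_1(\cdot,t)$ a.e., so $\int_{x_1}^{x_2}|u_2-u_1|\,\mathrm{d}x=\int_{x_1}^{x_2}(u_2-u_1)^{+}\,\mathrm{d}x+\int_{x_1}^{x_2}(u_1-u_2)^{+}\,\mathrm{d}x$. Since $f'(u_i(\cdot,t))\in\mathrm{BV}_{\mathrm{loc}}$ by \eqref{c2.38}, the representative $u_i(\cdot{+},t)$ is regulated, so $\{u_2(\cdot{+},t)>u_1(\cdot{+},t)\}$ agrees, up to a countable set, with a disjoint union $\bigsqcup_k(a_k,b_k)$ of open intervals, and $\int_{x_1}^{x_2}(u_2-u_1)^{+}\,\mathrm{d}x=\sum_k\bigl(G(b_k)-G(a_k)\bigr)$ (after truncating each $(a_k,b_k)$ to $[x_1,x_2]$). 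On such an interval set $w_k^{-}:=u_2(a_k{+},t)\in\mathcal U_2(a_k,t)$ and $w_k^{+}:=u_1(b_k{+},t)\in\mathcal U_1(b_k,t)$; then minimality of $v_i$ and the cancellation identity give
\[
G(b_k)-G(a_k)\le\int_0^{b_k-tf'(w_k^{+})}(\varphi_2-\varphi_1)\,\mathrm{d}\xi-\int_0^{a_k-tf'(w_k^{-})}(\varphi_2-\varphi_1)\,\mathrm{d}\xi=\int_{J_k}(\varphi_2-\varphi_1)\le\int_{J_k}|\varphi_2-\varphi_1|,
\]
with $J_k:=[a_k-tf'(w_k^{-}),\,b_k-tf'(w_k^{+})]$; the symmetric bound with $\varphi_1-\varphi_2$ holds on the components of $\{u_1(\cdot{+},t)>u_2(\cdot{+},t)\}$, producing intervals $J_l'$. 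It then remains to check that all the $J_k,J_l'$ are pairwise non-overlapping and contained in $I^{\ast}:=[x_1-tf'(M(x_1,t)),\,x_2-tf'(m(x_2,t))]$, and this is exactly where the Oleinik inequality \eqref{c1.4} enters: for consecutive endpoints $b\le a$ from the two families, the ordering of $u_1,u_2$ at $b$ and \eqref{c1.4} between $b$ and $a$ give $b-tf'(w^{+})\le a-tf'(w^{-})$ for the $w^{\pm}$ attached to those endpoints, while the extreme endpoints of $\bigcup J$ are squeezed into $I^{\ast}$ via $u_i(x_1{+},t)\le u_i(x_1{-},t)\le M(x_1,t)$ and $u_i(x_2{+},t)\ge m(x_2,t)$ with one more use of \eqref{c1.4}; summing over the essentially disjoint $J_k,J_l'\subseteq I^{\ast}$ then gives \eqref{c2.51}. (In the ordered case $\varphi_1\le\varphi_2$ there is a single interval $[x_1,x_2]$, and the choices $w^{-}=u_2(x_1{-},t)=M(x_1,t)$, $w^{+}=u_1(x_2{+},t)=m(x_2,t)$ reproduce $I^{\ast}$ exactly, so the estimate is sharp.) I expect this last combinatorial step of (i) to be the main obstacle: it is a finite-speed-of-propagation statement whose only quantitative input is \eqref{c1.4}, but organizing the countably many intervals $J_k,J_l'$ at the jump points of $u_1(\cdot,t)$ and $u_2(\cdot,t)$ (including accumulation points of jumps), and tracking which solution controls each endpoint through the one-sided traces \eqref{c2.50a}, is the delicate part; by contrast (ii) and (iii) follow essentially at once from the monotonicity of $E_2-E_1$.
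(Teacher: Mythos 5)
Your proposal is correct and follows essentially the same route as the paper: part (ii) via the monotonicity of $E_2-E_1$ in $w$ and a comparison of the argmax sets, part (iii) via the monotone limit combined with the locally uniform convergence $E_n\to E$ and the closedness of the maximizer set, and part (i) via the identity $\int_{x_1}^{x_2}u_i\,\mathrm{d}x=\hat E_i(x_1,t)-\hat E_i(x_2,t)$, the two-sided comparison at argmax points, and the decomposition of $\{u_2>u_1\}$ and $\{u_1>u_2\}$ into disjoint intervals whose backward-characteristic images do not overlap. The one step you leave as a sketch --- the pairwise non-overlapping of the image intervals $J_k,J_l'$ and their containment in $I^{\ast}$ --- is carried out in the paper exactly as you propose, by applying the monotonicity of $x\mapsto x-tf'(u_i(x\pm,t))$ (Lemma \ref{lem:c2.4}) to whichever solution $u_i$ is attached to each pair of consecutive interval endpoints.
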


\begin{Rem}
In {\rm Corollary} $\ref{cor:c2.1}$,
the inequality in $\eqref{c2.51}$ provides a refined $L^1$--contraction property
with the exact upper and lower limits of the integral on the $x$--axis{\rm ;}
the inequalities in $\eqref{c2.50b}$ provide a refined monotonicity in $L^\infty$
which holds pointwise, instead of almost everywhere{\rm ;}
and the limits in $\eqref{c7.30}$--$\eqref{c7.31}$ provide
the pointwise convergence of entropy solutions
for the monotone sequence of the initial data,
which are not found in previous literature.
\end{Rem}

\smallskip
For any fixed $\tau>0$, consider the following Cauchy problem:
\begin{equation}\label{c2.59}
\begin{cases}
v_t+f(v)_x=0 \qquad & {\rm for}\ x\in \mathbb{R},\ t>\tau, \\[1mm]
v(x,t)|_{t=\tau}=u(x,\tau)\qquad & {\rm for}\ x\in \mathbb{R},\ t=\tau,
\end{cases}
\end{equation}
where $u(x,\tau)$ is obtained by restricting $u(x,t)$ in $\eqref{c2.50}$ to the line: $t=\tau$.

\vspace{1pt}
Similar to $\eqref{c2.1}$,
for any $(x,t)\in \mathbb{R}\times \mathbb{R}^+$ with $t>\tau$, we define
\begin{equation}\label{c2.57}
E(v;x,t;\tau):=(t-\tau)\int_0^v f''(s)\big(u(x-(t-\tau)f'(s),\tau)-s\big)\,{\rm d}s,
\end{equation}
and similar to $\eqref{c2.3}$--$\eqref{c2.5}$,
\begin{equation}\label{c2.58}
\begin{cases}
\displaystyle\mathcal{U}(x,t;\tau)
:=\big\{v\in \mathbb{R}\,:\, E(v;x,t;\tau)=\max_{w\in\mathbb{R}} E(w;x,t;\tau)\big\}, \\[2mm]
\displaystyle v^-(x,t):=\sup\,\mathcal{U}(x,t;\tau),
\qquad v^+(x,t):=\inf\, \mathcal{U}(x,t;\tau),\\[2mm]
\displaystyle \hat{E}(x,t;\tau):=\max_{w\in\mathbb{R}} E(w;x,t;\tau)+\hat{E}(x-(t-\tau)f'(0),\tau).
\end{cases}
\end{equation}

By Theorem $\ref{the:mt}$,
$v(x,t):=v^+(x,t)$ is the unique entropy solution of the Cauchy problem $\eqref{c2.59}$.
In fact, we have stronger results as follows:

\begin{Cor}[Refined semigroup properties of entropy solutions determined by the new solution formula]
\label{cor:c2.2}
Let $\mathcal{U}(x,t)$ and $\hat{E}(x,t)$ be defined by $\eqref{c2.3}$ and $\eqref{c2.5}$ respectively,
and let $\mathcal{U}(x,t;\tau)$ and $\hat{E}(x,t;\tau)$ for $t>\tau>0$
be defined by $\eqref{c2.58}$.
Then,
for any $(x,t)$ with $t>\tau$,
\begin{equation}\label{c2.61c}
\mathcal{U}(x,t;\tau)=\mathcal{U}(x,t),\qquad \hat{E}(x,t;\tau)=\hat{E}(x,t).
\end{equation}
In particular, $v(x,t)$ satisfies that
\begin{equation}\label{c2.61}
v(x\pm,t)=u(x\pm,t)\qquad\,\,\, {\rm for \ any}\ x\in \mathbb{R},\ t> \tau,
\end{equation}
and the map
$S_t:L^\infty(\mathbb{R})\times [0,\infty)\rightarrow L^\infty(\mathbb{R})$, associated with
$S_t\varphi=u(\cdot,t)$ for $u(x,0)=\varphi(x)\in L^\infty(\mathbb{R}),$
determines a contractive semigroup in the $L^1_{\rm loc}$--norm.
\end{Cor}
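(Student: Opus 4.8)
The plan is to deduce everything from the two identities $\mathcal U(x,t;\tau)=\mathcal U(x,t)$ and $\hat E(x,t;\tau)=\hat E(x,t)$ for $t>\tau$. Granting these, $\eqref{c2.61}$ is immediate: by $\eqref{c2.58}$ and $\eqref{c2.4}$, $v^-(x,t)=\sup\mathcal U(x,t;\tau)=\sup\mathcal U(x,t)=u^-(x,t)$ and likewise $v^+(x,t)=u^+(x,t)$, so the trace identity $\eqref{c2.50a}$, applied once to $u$ and once to $v$ (the unique entropy solution of $\eqref{c2.59}$ by Theorem $\ref{the:mt}$), gives $v(x\pm,t)=v^\pm(x,t)=u^\pm(x,t)=u(x\pm,t)$. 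The semigroup law $S_{t}\circ S_{\tau}=S_{t+\tau}$ then follows from $\eqref{c2.61}$ by shifting $\eqref{c2.59}$ in time (with $S_0=\mathrm{id}$ being the initial condition in Definition $\ref{def:ws}$), and the $L^1_{\mathrm{loc}}$--contractivity is precisely $\eqref{c2.51}$ of Corollary $\ref{cor:c2.1}$, whose right-hand side is an integral over a bounded interval once $M,m$ are bounded by $\max_i\|\varphi_i\|_{L^\infty}$.

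To prove the two identities I would pass to the potential formulation. With $\Phi_0(y):=\int_0^y\varphi(\xi)\,{\rm d}\xi$, the substitution $\xi=x-tf'(s)$ together with one integration by parts (using $f(0)=0$) in $\eqref{c2.1}$ yields
\[
E(w;x,t)=\int_{x-tf'(w)}^{x-tf'(0)}\varphi(\xi)\,{\rm d}\xi+t\big(f(w)-wf'(w)\big),
\]
hence, with $f^{*}$ the Legendre transform of $f$ (so $f^{*}(f'(w))=wf'(w)-f(w)$) and the substitution $y=x-tf'(w)$,
\[
\psi(x,t):=-\hat E(x,t)=\inf_{y\in\mathbb R}\Big\{\Phi_0(y)+t\,f^{*}\!\big(\tfrac{x-y}{t}\big)\Big\},\qquad \mathcal U(x,t)=\big\{(f')^{-1}\!\big(\tfrac{x-y}{t}\big):\,y\in Y(x,t)\big\},
\]
where $Y(x,t)$ is the (nonempty, closed) set of minimizers. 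Here $f^{*}$ is finite and superlinear since $f\in C^2(\mathbb R)$, and strictly convex on the open interval $f'(\mathbb R)$ since $f'$ is strictly increasing by $\eqref{c1.2}$; moreover, as in the discussion preceding Theorem $\ref{the:mt}$, the maximizers $w$, hence the slopes $f'(w)$ and $\tfrac{x-y}{t}$ occurring above, stay in the compact interval $[f'(-\|\varphi\|_{L^\infty}),f'(\|\varphi\|_{L^\infty})]\subset f'(\mathbb R)$, which disposes of any boundary issue for ${\rm dom}\,f^{*}$ (e.g.\ when $f'$ is bounded, as for $f(u)=e^{ku}$). Running the same computation for $E(\cdot;x,t;\tau)$ in $\eqref{c2.57}$ with $(\varphi,t)$ replaced by $(u(\cdot,\tau),t-\tau)$, and using that $\psi(\cdot,\tau)=-\hat E(\cdot,\tau)$ is locally Lipschitz with $\partial_x\psi(\cdot,\tau)=u(\cdot,\tau)$ wherever $u(\cdot,\tau)$ is continuous, hence a.e.\ (part of the Hopf--Lax structure underlying the proof of Theorem $\ref{the:mt}$, the jump set being countable by that theorem), so that $\int_{y'}^{y}u(\xi,\tau)\,{\rm d}\xi=\psi(y,\tau)-\psi(y',\tau)$, one obtains analogously
\[
-\hat E(x,t;\tau)=\inf_{y\in\mathbb R}\Big\{\psi(y,\tau)+(t-\tau)\,f^{*}\!\big(\tfrac{x-y}{t-\tau}\big)\Big\}.
\]

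The identity $\hat E(x,t;\tau)=\hat E(x,t)$ is then the Hopf--Lax composition: substituting the formula for $\psi(y,\tau)$ into the last display and minimizing over $y$ first, the perspective inequality $\tau f^{*}(\tfrac a\tau)+(t-\tau)f^{*}(\tfrac b{t-\tau})\ge t\,f^{*}(\tfrac{a+b}{t})$ — Jensen, with equality exactly on the diagonal $\tfrac a\tau=\tfrac b{t-\tau}$ by strict convexity of $f^{*}$ on $f'(\mathbb R)$ — collapses the double infimum to $\inf_z\{\Phi_0(z)+t f^{*}(\tfrac{x-z}{t})\}=\psi(x,t)$. For the set identity one chases the two inclusions through the equality cases. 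If $w$ maximizes $E(\cdot;x,t;\tau)$, then, with $y_w:=x-(t-\tau)f'(w)$, any minimizer $z^{*}$ of $\psi(y_w,\tau)$ must satisfy $\tfrac{y_w-z^{*}}{\tau}=\tfrac{x-y_w}{t-\tau}=f'(w)$ (equality in the perspective inequality) and $z^{*}$ must minimize $\psi(x,t)$; hence $f'(w)=\tfrac{x-z^{*}}{t}$ with $z^{*}\in Y(x,t)$, i.e.\ $w\in\mathcal U(x,t)$. Conversely, if $w\in\mathcal U(x,t)$ with $z_w:=x-tf'(w)\in Y(x,t)$, then plugging $y_w:=x-(t-\tau)f'(w)=z_w+\tfrac\tau t(x-z_w)$ into the infimum defining $-\hat E(x,t;\tau)$ and using $\psi(y_w,\tau)\le\Phi_0(z_w)+\tau f^{*}(\tfrac{x-z_w}{t})$ shows the value at $y_w$ is $\le\psi(x,t)$, hence $=\psi(x,t)$, so $w$ maximizes $E(\cdot;x,t;\tau)$, i.e.\ $w\in\mathcal U(x,t;\tau)$.

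The main obstacle is precisely this last, set-valued, step. The almost-everywhere version of the semigroup property is cheap — $u$ restricted to $\{t\ge\tau\}$ is an entropy solution of $\eqref{c2.59}$, so it coincides a.e.\ with $v$ by the uniqueness in Theorem $\ref{the:mt}$ — but since $Y(x,t)$, hence $\mathcal U(x,t)$, need not be an interval, matching only $\sup\mathcal U$ and $\inf\mathcal U$ would be insufficient; it is the strict convexity of $f^{*}$ on $f'(\mathbb R)$ and the equality analysis of the perspective inequality that pin down the slopes and force the full set identity. The only other point requiring care is the a.e.\ identity $\partial_x\psi(\cdot,\tau)=u(\cdot,\tau)$, equivalently the absolute continuity of $y\mapsto\hat E(y,\tau)$ with this derivative, which I would quote from the proof of Theorem $\ref{the:mt}$.
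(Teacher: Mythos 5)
Your proposal is correct, but it reorganizes the argument around the Hopf--Lax/Legendre-transform structure rather than following the paper's direct manipulation of the functional $E$. The paper proves $\mathcal U(x,t;\tau)=\mathcal U(x,t)$ by two explicit inclusion arguments: for $c\in\mathcal U(x,t)$ it runs the chain of inequalities $\eqref{c2.65}$ (using $\eqref{c2.13}$, the maximality of $u(y(w)-,\tau)$ at $(y(w),\tau)$, the maximality of $c$ at $(x,t)$, and a final convexity estimate $(t-\tau)\int_v^w sf''\,{\rm d}s+\tau\int_v^{u^-(w)}sf''\,{\rm d}s\ge 0$); for the converse it first shows, by a one-sided difference-quotient argument on $\eqref{c2.66}$, that $\mathcal U(y(c),\tau)=\{c\}$, and then reverses the chain. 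Your route replaces all of this by rewriting $-\hat E$ as the value function $\inf_y\{\Phi_0(y)+tf^{*}(\frac{x-y}{t})\}$ and deriving both identities of $\eqref{c2.61c}$ from the equality case of the perspective (Jensen) inequality for $f^{*}$, which is strictly convex on $f'(\mathbb R)$ precisely because $f'$ is strictly increasing under $\eqref{c1.2}$. The two final convexity inequalities are the same estimate in different clothing, but your organization is more conceptual: it makes the semigroup identity $\hat E(x,t;\tau)=\hat E(x,t)$ an instance of the composition law for inf-convolutions, and it obtains the full set identity $\mathcal U(x,t;\tau)=\mathcal U(x,t)$ (not merely the endpoints) from the rigidity of the equality case, whereas the paper has to prove $\mathcal U(y(c),\tau)=\{c\}$ as a separate step. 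What you correctly identify as the essential inputs to be quoted — the additivity $\eqref{c2.13}$ giving $\psi(y,\tau)-\psi(y',\tau)=\int_{y'}^{y}u(\xi,\tau)\,{\rm d}\xi$, and the confinement of all maximizers to $[-\|\varphi\|_{L^\infty},\|\varphi\|_{L^\infty}]$ so that the relevant slopes stay in a compact subinterval of $f'(\mathbb R)$ and the possible finiteness boundary of ${\rm dom}\,f^{*}$ never interferes — are exactly the points where the degenerate-convexity hypothesis $\eqref{c1.2}$ (rather than uniform convexity) must be handled, and you handle them correctly. The deduction of $\eqref{c2.61}$ from $\eqref{c2.61c}$ via $\eqref{c2.50a}$ and of the $L^1_{\rm loc}$-contraction from $\eqref{c2.51}$ coincides with the paper's Steps 2--3.
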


\begin{Rem}
As shown in Corollary $\ref{cor:c2.2}$,
$\eqref{c2.61}$ implies the semigroup property of entropy solutions, {\it i.e.},
for any $\varphi(x)\in L^\infty(\mathbb{R})$,
$S_t\varphi=S_{t-\tau}S_{\tau}\varphi$ for $\tau\in [0,t].$
Moreover, since $\eqref{c2.61}$ holds pointwise,
it can infer the fine properties of entropy solutions more than the semigroup property.
For examples,
by the arbitrariness of $\tau>0$ in $\eqref{c2.59}$,
{\rm Corollary} $\ref{cor:c2.2}$
{\rm (}especially the equality{\rm :} $v(x\pm,t)=u(x\pm,t)$ in $\eqref{c2.61}${\rm )}
is a powerful tool to prove the local structures of entropy solutions; in particular,
it has been used to establish
the development of shocks near all the five types of continuous shock generation
points in {\rm Theorem} $\ref{the:dsw1}$ and prove the left- and
right-derivatives of shock curves in Theorem 5.5.
\end{Rem}

\subsection{Main results on fine properties of entropy solutions via the solution formula}
In \S 3--\S 7,
we leverage the newly derived solution formula
$\eqref{c2.50}$ with $E(u;x,t)$ given by $\eqref{c2.1}$
to elucidate the fine properties of entropy solutions, and
obtain the global dynamic patterns of entropy solutions
for the Cauchy problem $\eqref{c1.1}$--$\eqref{ID}$ with
{\it flux functions allowing for convexity degeneracy and initial data only in $L^\infty$}, 
which can be summarized in four aspects{\rm :}

\smallskip
\noindent
{\bf 1. Global dynamic patterns of entropy solutions:} 
Initial waves for the Cauchy problem,
fine structures of entropy solutions, 
invariants and global structures of entropy solutions, 
and asymptotic behaviors and decay rates 
respectively in the $L^\infty$--norm and the $L^p_{\rm loc}$--norm.
\begin{itemize}
\item [{\rm (i)}] {\it Generation of initial waves for the Cauchy problem.}
The criteria for all the six types of initial waves for the Cauchy problem
are established by proving the necessary and sufficient conditions
for all the six types of initial waves for the Cauchy problem
in {\rm Theorem} $\ref{the:c4.1}$ associating with Theorem $\ref{the:c4.0}$,
which includes the case that the point is not a Lebesgue point of the initial data function
({\it cf.}\, Fig. $\ref{figPhiFab}$).

\begin{figure}[H]
\begin{center}
{\includegraphics[width=0.65\columnwidth]{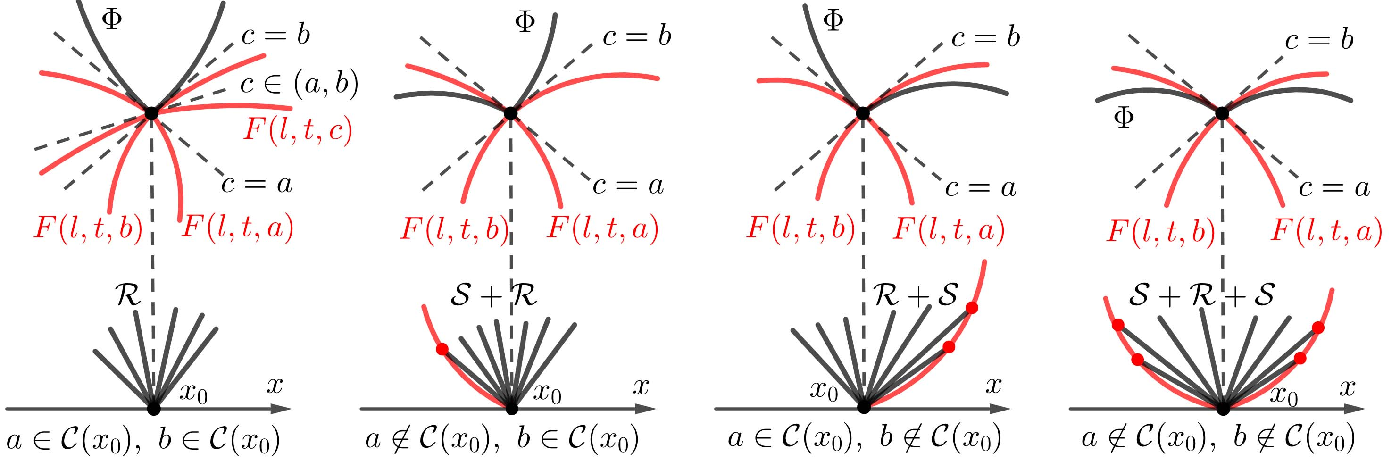}}
\caption{Initial waves for the Cauchy problem emitting from point $x_0$
when $a=\overline{{\rm D}}_- \Phi(x_0)<\underline{{\rm D}}_+ \Phi(x_0)=b.$
For such four cases, the initial waves 
are the combination of a rarefaction wave with/without one or two shocks.
}\label{figPhiFab}
	\end{center}
\end{figure}

\item [{\rm (ii)}]
{\it Fine structures of entropy solutions.}
The fine structures of entropy solutions are obtained
by proving the structures of entropy solutions inside the backward characteristic triangles in {\rm Theorem} $\ref{pro:c3.3}$,
the directional limits of entropy solutions at discontinuous points in {\rm Theorem} $\ref{pro:c3.4}$,
the general structures of shocks in Theorem $\ref{pro:c3.6}$,
and the fine structures of entropy solutions on the shock sets in Theorem $\ref{the:c4.2}$ ({\it cf.} Fig. $\ref{figStriangle}$).

\begin{figure}[H]
	\begin{center}
		{\includegraphics[width=0.6\columnwidth]{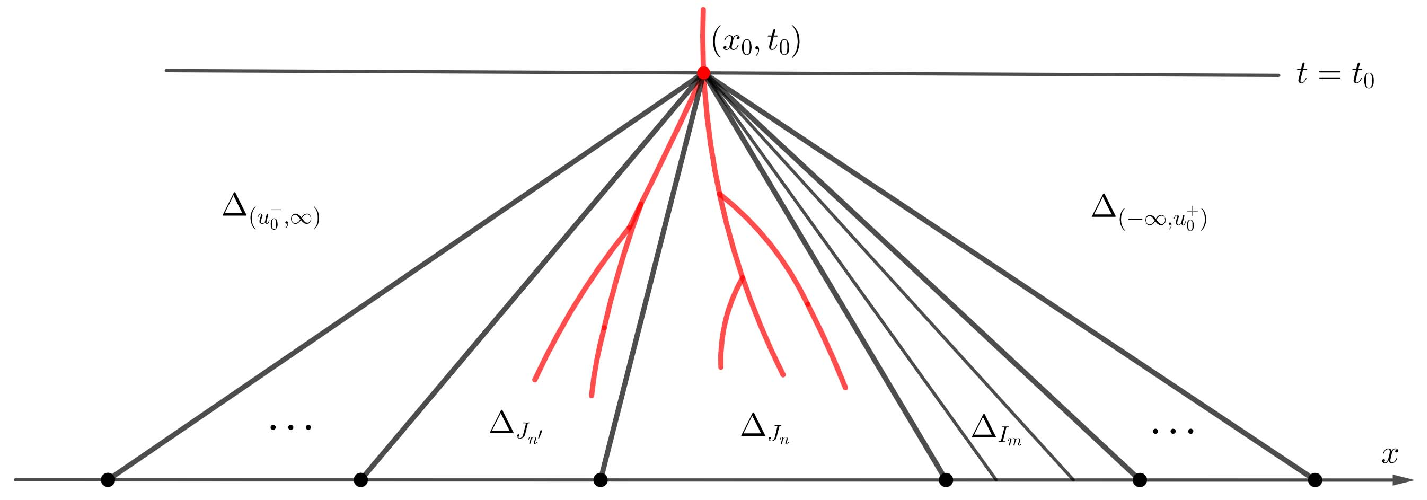}}
 \caption{Structures of entropy solutions inside the backward characteristic triangles
from discontinuous points.
In each $\Delta_{I_m}$, there is a centered compression wave; 
and, in each $\Delta_{J_n}$, any two shocks must coincide each other before time $t_0$.
}\label{figStriangle}
	\end{center}
\end{figure}

\item [{\rm (iii)}]
{\it Invariants and global structures of entropy solutions.}
The four invariants of entropy solutions, including $\overline{u}_l$ and $\underline{u}_r$,
are introduced and proved in Theorem $\ref{pro:c6.1}$.
Using the results on invariants and divides developed in this paper,
the global structures of entropy solutions are obtained in {\rm Theorem} $\ref{pro:c6.2}$ 
by proving all the path-connected branches of the shock set of entropy solutions
(see Figs. $\ref{figGSSD}$--$\ref{figGSSDKabab}$).

\vspace{1pt}
\item [{\rm (iv)}]
{\it Asymptotic profiles and decay rates in the $L^\infty$--norm.}
The asymptotic profiles of entropy solutions in the $L^\infty$--norm are proved to be
either a rarefaction-constant solution or a single shock
in {\rm Theorem} $\ref{pro:c6.3}$ with {\rm Lemmas} $\ref{the:c6.1}$--$\ref{lem:c6.3}$,
and the corresponding decay rates are obtained 
in {\rm Theorems} $\ref{the:c6.2}$--$\ref{the:c6.3}$
(see Figs. $\ref{figGSSD}$--$\ref{figGSSDKabab}$).

\begin{figure}[H]
	\begin{center}
		{\includegraphics[width=0.5\columnwidth]{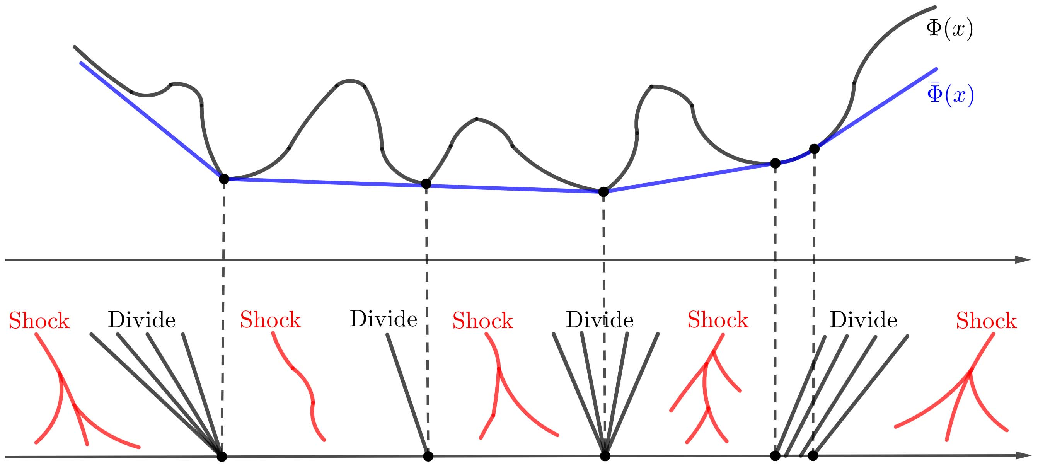}}
\caption{Divides, global structures of entropy solutions,
and asymptotic profiles in the $L^\infty$--norm
when $\overline{u}_l<\underline{u}_r$.
The divides form isolated characteristics,
centered rarefaction waves, and/or rarefaction regions;  
and, in each of the 
parallelogram regions, 
any two shocks must coincide each other in a finite time.
 }\label{figGSSD}
	\end{center}
\end{figure}

\begin{figure}[H]
	\begin{center}
		{\includegraphics[width=0.5\columnwidth]{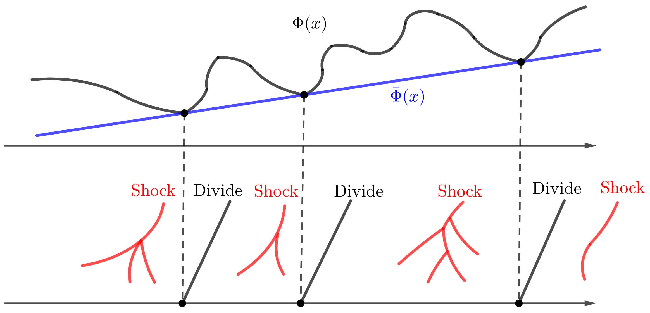}}
\caption{Divides, global structures of entropy solutions,
and asymptotic profiles in the $L^\infty$--norm
when $\overline{u}_l=\underline{u}_r$ with $\mathcal{K}_0\neq \varnothing$.
Every divide possesses the same speed $f'(\underline{u}_r)$; 
and, in each of the remaining parallelogram regions, any two shocks must coincide each other in a finite time.
 }\label{figGSSDKab}
	\end{center}
\end{figure}

\begin{figure}[H]
	\begin{center}
		{\includegraphics[width=0.5\columnwidth]{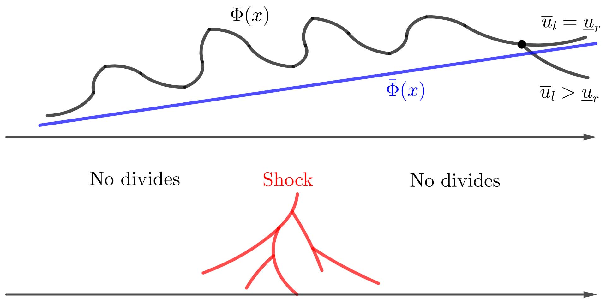}}
\caption{Divides, global structures of entropy solutions,
and asymptotic profiles in the $L^\infty$--norm
when $\overline{u}_l>\underline{u}_r$ or
$\overline{u}_l=\underline{u}_r$ with $\mathcal{K}_0=\varnothing$.
Any two shocks in $\mathbb{R}\times\mathbb{R}^+$ must coincide each other in a finite time 
(for this case, the entropy solution possesses no divides).
 }\label{figGSSDKabab}
	\end{center}
\end{figure}

\vspace{1pt}
\item [{\rm (v)}]
{\it Asymptotic profiles and decay rates in the $L^p_{{\rm loc}}$--norm.}
Introducing the generalized $N$--waves in {\rm Definition} $\ref{def:n.1}$,
we prove that the entropy solutions, which possess at least one divide,
decay to the generalized $N$--waves in the $L^p_{{\rm loc}}$--norm
in {\rm Lemma} $\ref{lem:n.1}$ and {\rm Theorem} $\ref{the:n.1}$,
in which the corresponding decay rates are also obtained
(see Fig. $\ref{figNwavth}$).

\begin{figure}[H]
	\begin{center}
		{\includegraphics[width=0.65\columnwidth]{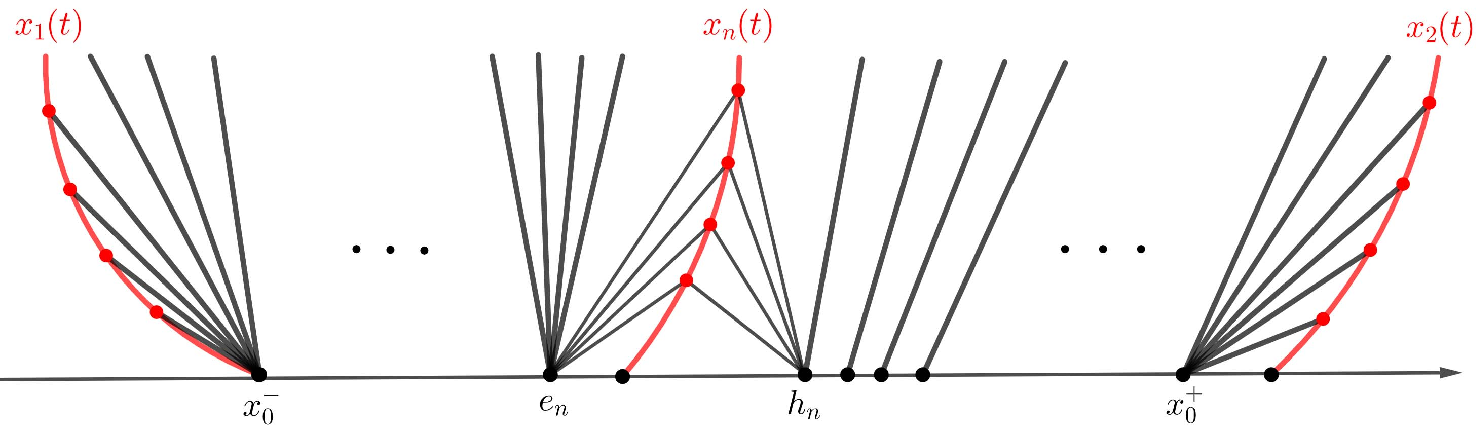}}
\caption{Generalized $N$--waves as the asymptotic profiles in the $L^p_{{\rm loc}}$--norm. 
The generalized $N$--wave is defined in the form of 
$w(x,t)=(f')^{-1}(\frac{x-\xi}{t})$ 
for $\xi$ belonging to the closed set of the divide generation points on the $x$--axis.
		}\label{figNwavth}
	\end{center}
\end{figure}
\end{itemize}

\noindent
{\bf 2. Dynamic behaviors of characteristics:} The generation, spread, and termination of characteristics;
and the divides as well as their locations and speeds.
\begin{itemize}
\item [{\rm (i)}]
{\it Generation of characteristics.} 
The generation of characteristics emitting from points on the $x$--axis
is completely solved in Theorem $\ref{the:c4.0}$ 
by providing and proving the criteria for all the cases of $\mathcal{C}(x_0)$  
(as the set of characteristic generation values of $x_0$) defined via the new solution formula, 
which includes the case that the point $x_0$ is not a Lebesgue point of the initial data function
({\it cf.}\, Fig. $\ref{figPhiFab}$).

\vspace{1pt}
\item [{\rm (ii)}]
{\it Spread of characteristics.}
The spread of characteristics is established in Theorems $\ref{lem:plc1}$--$\ref{pro:elc1}$
via introducing and proving the lifespan of characteristics determined by the new solution formula.

\vspace{1pt}
\item [{\rm (iii)}]
{\it Termination of characteristics.}
Using the upper bound of lifespan and the lifespan of characteristics,
we provide a classification of characteristics by the way of their terminations in Theorem $\ref{the:elc0}$,
especially including the way of terminations by the formation of the continuous shock generation points.

\vspace{1pt}
\item [{\rm (iv)}]
{\it Divides as well as their locations and speeds.}
The necessary and sufficient conditions respectively for the divides as well as their locations and speeds are provided and proved in Proposition $\ref{pro:c5.1}$ and {\rm Theorem} $\ref{the:c5.1}$, 
in which we determine the locations and speeds of divides 
by introducing the convex hull over $(-\infty,\infty)$ 
of the primitive function of the initial data function
(see Figs. $\ref{figGSSD}$--$\ref{figGSSDKabab}$). 
\end{itemize}

\noindent
{\bf 3. Dynamic behaviors of shocks:} The formation and development of shocks, the spread of shocks, and the asymptotic behaviors of shocks.
\begin{itemize}
\item [{\rm (i)}]
{\it Generation of shocks on the $x$--axis and formation of shocks.}
The generation of shocks emitting from points on the $x$--axis is established
by proving the necessary and sufficient conditions
for the shock generation points on the $x$--axis in {\rm Theorems} $\ref{the:c4.0}$--$\ref{the:c4.1}$,
and the formation of shocks is obtained by introducing and proving the necessary and sufficient conditions
for all the five types of the continuous shock generation points in {\rm Theorem} $\ref{the:c4.3}$
and for the discontinuous shock generation points in {\rm Theorem} $\ref{the:c4.2}${\rm (iii)}
associating with Theorem $\ref{pro:c3.3}${\rm (ii)}.

\vspace{1pt}
\item [{\rm (ii)}]
{\it Development of shocks.}
The development of shocks is obtained by proving the optimal regularities of shock curves
and entropy solutions near all the five types of continuous shock generation points in {\rm Theorem} $\ref{the:dsw1}$,
in which the continuous shock generation points are not necessarily assumed to be isolated
({\it cf}. Fig. $\ref{figUCSGPab}$).

\vspace{1pt}
\item [{\rm (iii)}]
{\it Spread of shocks.}
The spread of shocks is precisely described by both providing the two associated triangle sequences
and proving the left- and right-derivatives and the piecewise smoothness of shock curves in {\rm Theorem} $\ref{pro:c3.6}$.

\vspace{1pt}
\item [{\rm (iv)}]
{\it Asymptotic behaviors of shocks.}
The asymptotic behaviors of shocks are established by both providing 
all the path-connected branches of the shock sets of entropy solutions in {\rm Theorem} $\ref{pro:c6.2}$
and proving the large-time properties
especially including the limits of speeds and the left- and right-states of shocks
in {\rm Lemmas} $\ref{the:c6.1}$--$\ref{lem:c6.3}$ and {\rm Theorem} $\ref{pro:c6.3}$.
\end{itemize}

\begin{figure}[H]
	\begin{center}
		{\includegraphics[width=0.69\columnwidth]{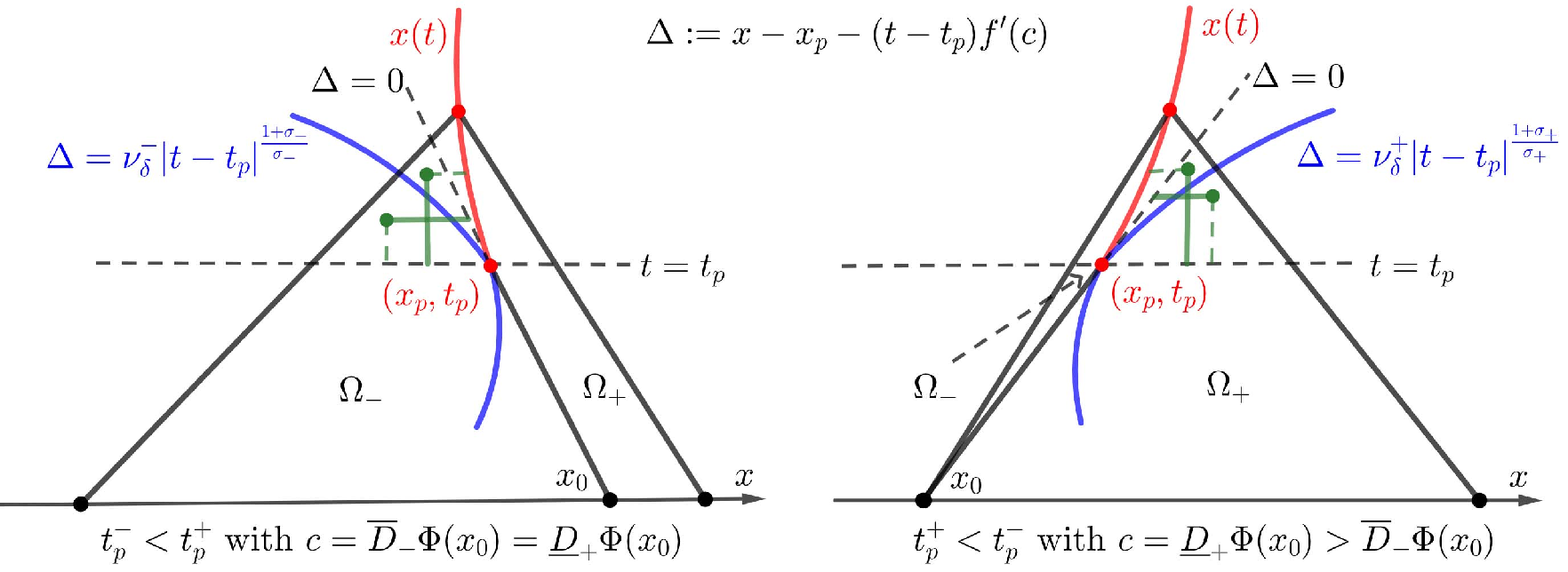}}
 \caption{The shock curve $x(t)$ for the case that $t_p^+\neq t_p^-$. 
 The left/right figure illustrates the case of shocks formatted by the compression of local characteristics 
 emitting from points on the left/right of $x_0$.
 }\label{figUCSGPab}
	\end{center}
\end{figure}

\noindent
{\bf 4. Influences of the convexity degeneracy of flux functions.}
{\it The convexity degeneracy of the flux function $f(u)$
{\rm(}the infinitesimal order of $f''(u)$ at degeneracy point{\rm)}
influences the behaviors of entropy solutions substantially
via reducing the strength of the compression of characteristics.}
These especially include:
\begin{itemize}
\item [{\rm (i)}] The generation of initial waves for the Cauchy problem,
including the types of characteristics and shocks,
as shown in Propositions $\ref{pro:c4.3}$--$\ref{pro:c4.4}$.

\vspace{1pt}
\item [{\rm (ii)}] The formation and development of shocks,
as shown in Theorems $\ref{the:c4.3}$ and $\ref{the:dsw1}$.

\vspace{1pt}
\item [{\rm (iii)}] The decay rates respectively in the $L^\infty$--norm and the $L^p_{{\rm loc}}$--norm,
as shown in Theorems $\ref{the:c6.2}$--$\ref{the:c6.3}$
and in Theorem $\ref{the:n.1}$ with Lemma $\ref{lem:n.1}$, respectively.
\end{itemize}

\smallskip
\noindent
{\bf Innovation points.} 
The innovation points on the new solution formula and the fine properties of entropy solutions 
can be summarized as follows:

\begin{itemize}
\item [(i)] {\it New solution formula and its derivation.}  
We first introduce and validate a novel formula for entropy solutions of scalar hyperbolic conservation laws 
with flux functions of convexity degeneracy 
that allow the state points of convexity degeneracy and asymptotic lines, 
which aligns with the well-known Lax--Oleinik formula \cite{LPD57,LPD73,OOA} 
in the case that the flux function is uniformly convex. 

Moreover, the derivation of this new formula is rooted in the understanding of
the intrinsic formula for smooth solutions,
which offers a new approach to identifying formulas
for entropy solutions in more general hyperbolic conservation laws
where such intrinsic formulas are available.

\vspace{2pt}
\item [(ii)] {\it Global dynamic patterns of entropy solutions.}
The global dynamic patterns of entropy solutions
with {\it general initial data in $L^\infty$} are new,
even for the case when the flux function $f(u)$ is uniformly convex. 
The global dynamic patterns of entropy solutions are established
by leveraging this newly derived formula and introducing new concepts, 
which especially include: 

\begin{itemize}
\item [(a)] {\it Fine structures of entropy solutions.}
By introducing the set $\mathcal{C}(x_0)$ of the speeds of characteristic lines 
emitting from any $x_0$ on the $x$--axis, the criteria for all six types of initial waves 
for the Cauchy problem are established.
Introducing the set $\mathcal{U}(x,t)$ of the speeds of the shock-free backward characteristics 
emitting from any point $(x,t)$ in the upper half-plane and the lifespan of characteristics, 
we obtain the structures of entropy solutions inside the backward characteristic triangles,
the directional limits of entropy solutions at discontinuous points, 
the left-derivatives of shock curves,
and the formation and development of shocks for all five types of
continuous shock generation points.

\vspace{1pt}
\item [(b)] {\it Global structures of entropy solutions.}
By introducing the invariants of entropy solutions and the convex hull over $({-}\infty,\infty)$ of 
the primitive function of the initial data function, 
the necessary and sufficient conditions respectively 
for the divides as well as their locations and speeds are provided, 
and the global structures of entropy solutions are established. 

\vspace{1pt}
\item [(c)] {\it Asymptotic behaviors of entropy solutions.}
By introducing the generalized $N$--waves and using the results on invariants and divides developed in this paper, 
the asymptotic behaviors of entropy solutions (including the asymptotic profiles and decay rates)
respectively in the $L^\infty$--norm and the $L^p_{\rm loc}$--norm are established.
\end{itemize}
\end{itemize}

\smallskip
\noindent
{\bf Comparison with the previous results.}
The comparison between the previously available results and the results in this paper
can be summarized as follows:
The previously available results, as discussed above, 
are mainly restricted to the Cauchy problem $\eqref{c1.1}$--$\eqref{ID}$
with the flux function $f(u)$ satisfying that
$$
f''(u)\geq c_0>0 \qquad {\rm on}\ u \in \mathbb{R}.
$$
While, in this paper,
the flux function $f(u)$ is only required to satisfy \eqref{c1.2}, $i.e.$,
$$
f''(u)\geq 0 \,\,\,\, {\rm on}\ u \in \mathbb{R},\qquad\,\,\, \mathcal{L}\{u\,:\, f''(u)=0\}=0,
$$
not only for the results of the new solution formula,
but also for the results on the fine properties of entropy solutions
of the Cauchy problem $\eqref{c1.1}$--$\eqref{ID}$ with
{\it initial data only in $L^\infty$}.

\vspace{2pt}
In \S 2, since the Lax--Oleinik formula \cite{LPD57,LPD73,OOA} requires that $f''(u)\geq c_0>0$,
it can not be applied directly to
the case of convex functions $f(u)$ with $f''(u)=0$ at some points and/or with asymptotic lines,
such as $f(u)=u^{2n}$ with $n>1$ and $f(u)=e^{ku}$ with $k\neq 0$.
From $\eqref{c1.2}$, our new solution formula is valid for such cases.
Moreover, as shown in Theorem $\ref{the:mt}$,
the entropy solutions of the Cauchy problem $\eqref{c1.1}$--$\eqref{ID}$ also possess strong traces, 
even when the flux function $f(u)$ in $\eqref{c1.2}$ has the state points of convexity degeneracy.

\vspace{2pt}
As in \S 3, for the initial waves for the Cauchy problem,
when $f''(u)\geq c_0>0$,
Lin \cite{LLW} showed that there exist six types of
initial waves for the Cauchy problem,
and for the special case that the initial data functions possess
discontinuity points only of the first kind,
and then
gave a sufficient condition of shocks emitting from
continuous points of the initial data function
and a necessary condition of that, respectively;
However, the necessary and sufficient condition to judge
the concrete type of initial waves emitting from
points on the $x$--axis have been opened.
In this paper, for the flux functions satisfying $\eqref{c1.2}$ 
and the initial data only in $L^\infty$,
the criteria for all six types of initial waves for the Cauchy problem
is established by providing the necessary and sufficient conditions
for all the six types of initial waves for the Cauchy problem in Theorems $\ref{the:c4.0}$--$\ref{the:c4.1}$,
even for the case that the point is not a Lebesgue point of the initial data function.
Furthermore, we introduce and prove the lifespan of characteristics in Theorems $\ref{lem:plc1}$--$\ref{pro:elc1}$,
and provide a classification of characteristics by the way of their terminations in Theorem $\ref{the:elc0}$.

\vspace{2pt}
As in \S 4, for the formation and development of shocks,
Lebaud \cite{LMP} and Yin--Zhu \cite{YZ} proved that the singularity
around the first blowup point caused by the initial compression is actually a shock,
under some specific degenerate information of sufficiently smooth initial data functions.
In \S 4.1, we discover all the five types of continuous shock generation points
and prove the necessary and sufficient conditions of them in Theorem $\ref{the:c4.3}$;
in \S 4.2, we prove the optimal regularities of shock curves and entropy solutions
near all the five types of continuous shock generation points in Theorems $\ref{the:dsw1}$,
in which the continuous shock generation points
are not necessarily assumed to be isolated.

\vspace{2pt}
As in \S 5, for the fine structures of entropy solutions, 
when $f''(u)\geq c_0>0$,
many results about the local structures of entropy solutions
by the method of generalized characteristics
are presented in Dafermos \cite[Chapter 11]{DCM1};
also see Huang \cite{HFM} for a description of the
backward characteristic triangles and the shock curves.
However, there exists no results on 
the structures of entropy solutions inside the backward characteristic triangles
and the directional limits of entropy solutions at the discontinuous points.
Even when the flux function $f(u)$ has the state points of convexity degeneracy as in $\eqref{c1.2}$, we provide and prove
the structures of entropy solutions inside the backward characteristic triangles
in Theorem $\ref{pro:c3.3}$ and
the directional limits of entropy solutions at the discontinuous points
(including the case for the points at which there are infinitely many shocks collided together)
in Theorem $\ref{pro:c3.4}$
via the vital exploitation of the new solution formula. 

\vspace{2pt}
As in \S 6, for the invariants, divides, and global structures of entropy solutions, when
$f''(u)\geq c_0>0$, Proposition $\ref{pro:c5.1}$
was given by Dafermos \cite[Theorem 11.4.1]{DCM1},
and some description of divides (without the locations and speeds)
and the global structures of shocks were given in Li \cite{LBH}.
As shown in Theorems $\ref{the:c5.1}$--$\ref{the:c5.2}$,
for general initial data in $L^\infty$,
the necessary and sufficient conditions respectively for the divides as well as their locations and speeds of entropy solutions are provided and proved.
As an illustration, the divides of entropy solutions for the $L^\infty$ initial data functions
with compact support, periodicity, or $L^1$-integrability, respectively,
are given in Examples $\ref{exa:c5.1}$--$\ref{exa:c5.3}$.
Furthermore, the global structures of entropy solutions with general initial data functions in $L^\infty$ are obtained in Theorem $\ref{pro:c6.2}$.

\vspace{2pt}
As in \S 7, for the asymptotic behaviors of entropy solutions, $f''(u)\geq c_0>0$,
Lax \cite{LPD57,LPD73} gave the invariants of entropy solutions
for the initial data in  $L^\infty\cap L^1$
and obtained the development of $N$--waves for the initial data with compact support
and the formation of sawtooth profiles for the periodic initial data;
Dafermos \cite{DCM2,DCM1} obtained several important
cases of
the asymptotic behaviors of entropy solutions by developing the method of generalized characteristics;
and Li--Wang \cite{LW} gave several asymptotic behaviors of entropy solutions
for some types of smooth initial data functions.
For the flux functions satisfying $\eqref{c1.2}$
and the initial data only in $L^\infty$,
we establish the asymptotic behaviors by proving
the detailed asymptotic profiles and the corresponding decay rates of entropy solutions
respectively in both the $L^\infty$--norm in Theorems $\ref{pro:c6.3}$--$\ref{the:c6.3}$
and the $L^p_{{\rm loc}}$--norm in Lemma $\ref{lem:n.1}$ and Theorem $\ref{the:n.1}$.

\subsection{Proofs of the solution formula and its corollaries}
In this subsection, we first prove Theorem $\ref{the:mt}$ on the new solution formula,
and then prove Corollaries $\ref{cor:c2.1}$--$\ref{cor:c2.2}$.

\vspace{1pt}
Before giving the proofs, we introduce some notions:
For any $(x,t)\in \mathbb{R} \times \mathbb{R}^+$ and $u\in \mathbb{R}$, we denote the set:
\begin{equation}\label{con}
{\rm Con}(x,t,u):=\big\{(\xi,\tau)\in \mathbb{R} \times \mathbb{R}^+\,:\,
\tau f'({-}\infty)<\xi-y(x,t,u)<\tau f'(\infty)\big\},
\end{equation}
where $y(x,t,u)$ is defined by
\begin{equation}\label{cony}
y(x,t,u):=x-tf'(u).
\end{equation}
Then, from $\eqref{con}$, ${\rm Con}(x,t,u)\subset \mathbb{R} \times \mathbb{R}^+$ is open
so that there exists $\delta>0$ such that $B_{\delta}(x,t)\subset {\rm Con}(x,t,u)$.
In particular,
$$
{\rm Con}(x,t,u)\equiv \mathbb{R} \times \mathbb{R}^+\qquad\, \mbox{ if $f'(\pm\infty)=\pm\infty$}.
$$

Since $f'(u)$ is strictly increasing, $\eqref{con}$--$\eqref{cony}$ imply that,
for any $(x',t')\in {\rm Con}(x,t,u)$,
there exists a unique $\hat{u}\in \mathbb{R}$ such that
\begin{equation}\label{cony1}
y(x,t,u)=x-tf'(u)=x'-t'f'(\hat{u})=y(x',t',\hat{u}),
\end{equation}
which, by $\eqref{con}$, implies
\begin{equation}\label{con1}
{\rm Con}(x,t,u)={\rm Con}(x',t',\hat{u}).
\end{equation}
See also {\rm Fig.} $\ref{figConxtu}$.

\vspace{1pt}
By $\eqref{cony1}$, it follows from $\eqref{c2.1}$ that
\begin{equation}\label{EE}
E(\hat{u};x',t')-E(u;x,t)=\int^{x'-t'f'(0)}_{x-tf'(0)}\varphi(\xi)\,{\rm d}\xi
-(t'-t)\int^{\hat{u}}_0sf''(s)\,{\rm d}s-t\int^{\hat{u}}_u s f''(s)\,{\rm d}s.
\end{equation}

\begin{figure}[H]
	\begin{center}
		{\includegraphics[width=0.6\columnwidth]{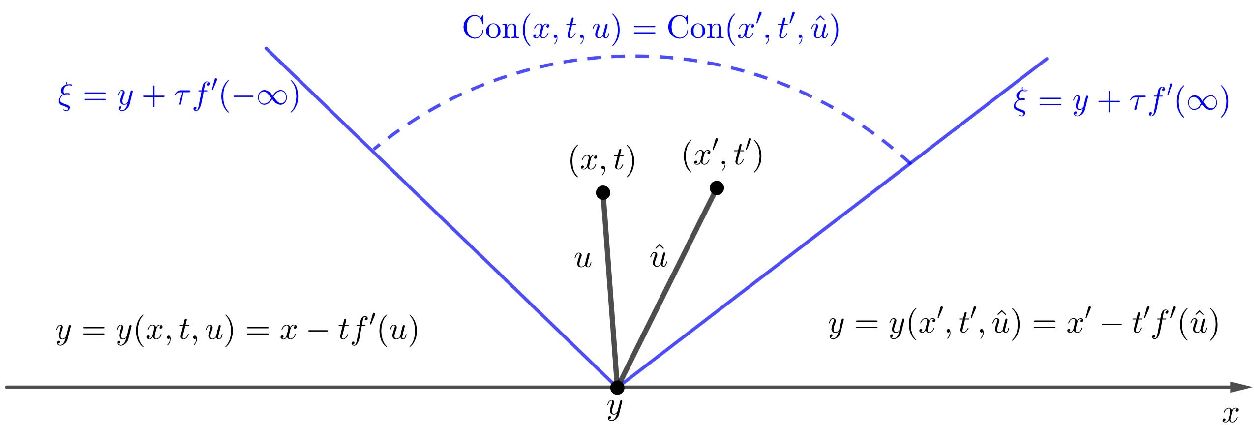}}
 \caption{${\rm Con}(x,t,u)={\rm Con}(x',t',\hat{u})$ as in $\eqref{con1}$.}\label{figConxtu}
	\end{center}
\end{figure}

\subsubsection{Proof of {\rm Theorem} $\ref{the:mt}$}
From $\eqref{c2.1}$--$\eqref{c2.3}$,
$\mathcal{U}(x,t) \subset [-\|\varphi\|_{L^\infty},\|\varphi\|_{L^\infty}]$
for any $(x,t)\in \mathbb{R} \times \mathbb{R}^+$.
Then it follows from $\eqref{c2.4}$ that,
for any $(x,t)\in \mathbb{R} \times \mathbb{R}^+$,
\begin{equation}\label{c2.7}
|u^\pm(x,t)|\leq \|\varphi\|_{L^\infty}.
\end{equation}

\smallskip
\noindent
The remaining proof is divided into five steps.

\smallskip
\noindent
{\bf 1.} For $\hat{E}(x,t)$ defined by $\eqref{c2.5}$, we have

\begin{Lem}\label{lem:c2.3} $\hat{E}(x,t)$ as in $\eqref{c2.5}$ is Lipschitz continuous
and satisfies the following properties{\rm :}
\begin{itemize}
\item[(i)] For any $t>0$ and $x_1,x_2\in \mathbb{R}$,
\begin{equation}\label{c2.13}
\int^{x_2}_{x_1}u^\pm(x,t)\,{\rm d}x=\hat{E}(x_1,t)-\hat{E}(x_2,t).
\end{equation}
\item[(ii)] For any $x\in \mathbb{R}$ and $t_2>t_1>0$,
\begin{equation}\label{c2.14}
\int^{t_2}_{t_1}f(u^\pm(x,t))\,{\rm d}t=\hat{E}(x,t_2)-\hat{E}(x,t_1).
\end{equation}
\end{itemize}
\end{Lem}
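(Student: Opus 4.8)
The plan is to exploit the key identity \eqref{EE}, which relates $E(\hat u;x',t')$ to $E(u;x,t)$ whenever $(x',t')\in {\rm Con}(x,t,u)$, together with the definition \eqref{c2.5} of $\hat E$. First I would establish the Lipschitz continuity: since $\hat E(x,t)=\max_v E(v;x,t)-\int_0^{x-tf'(0)}\varphi$, and $E(u;x,t)$ depends smoothly on $(x,t)$ with derivatives controlled uniformly for $u\in[-\|\varphi\|_{L^\infty},\|\varphi\|_{L^\infty}]$ (the only range where the max is attained, by the discussion preceding \eqref{c2.3}), the maximum function $\max_v E(v;x,t)$ is Lipschitz on $\mathbb{R}\times\mathbb{R}^+$; the remaining term is Lipschitz in $(x,t)$ because $\varphi\in L^\infty$. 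The point here is that the envelope of a uniformly Lipschitz family is Lipschitz.

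For part (i), I would differentiate $\hat E(x,t)$ in $x$ for fixed $t$. By the envelope theorem (Danskin-type argument), at any point of differentiability $\partial_x\big(\max_v E(v;x,t)\big)=\partial_x E(w;x,t)$ for any maximizer $w\in\mathcal{U}(x,t)$; a direct computation from \eqref{c2.1} gives $\partial_x E(w;x,t)=t\int_0^w f''(s)\varphi'(\cdots)(\cdots)\,{\rm d}s+\ldots$, but cleaner is to use \eqref{EE} directly: taking $(x',t')=(x+h,t)$ with $h$ small so that $(x',t')\in{\rm Con}(x,t,w)$, and letting $\hat w$ be the corresponding value via \eqref{cony1}, \eqref{EE} yields $E(\hat w;x+h,t)-E(w;x,t)=\int_{x-tf'(0)}^{x+h-tf'(0)}\varphi(\xi)\,{\rm d}\xi - t\int_w^{\hat w}sf''(s)\,{\rm d}s$. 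Since $\hat w\to w$ as $h\to 0$, dividing by $h$ and passing to the limit (using that $x-tf'(0)$ is a Lebesgue point for a.e.\ $x$, or more carefully, integrating first) shows $\partial_x\hat E(x,t)=-u^\pm(x,t)$ at points where $u^-=u^+$. Integrating over $[x_1,x_2]$ and using that $u^-=u^+$ off a countable set gives \eqref{c2.13}. Part (ii) is analogous, differentiating in $t$ for fixed $x$: from \eqref{EE} with $(x',t')=(x,t+h)$ one gets the increment governed by $-(t+h-t)\int_0^{\hat w}sf''(s)\,{\rm d}s+\ldots$, and a parallel computation identifies $\partial_t\hat E(x,t)=f(u^\pm(x,t))$, using $f(0)=0$ and $f(w)=\int_0^w f'(s)\,{\rm d}s=wf'(w)-\int_0^w sf''(s)\,{\rm d}s$ after integration by parts, which is exactly the combination appearing in \eqref{EE}.

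The main obstacle I anticipate is justifying the differentiation of the envelope $\max_v E(v;x,t)$ rigorously when the maximizer set $\mathcal{U}(x,t)$ is not a singleton, i.e.\ at the (countably many, for each fixed $t$) discontinuity points. The cleanest route is probably to avoid pointwise differentiation altogether: show that $\hat E(\cdot,t)$ is Lipschitz hence absolutely continuous, compute its derivative a.e.\ (where it automatically equals $-u^\pm$ since $u^-=u^+$ a.e.), and then integrate — the countable exceptional set has measure zero and does not affect \eqref{c2.13}. One must still verify that the one-sided difference quotients of $\hat E$ are controlled from both sides by $-u^-$ and $-u^+$ respectively (monotonicity of these one-sided limits in $x$, coming from the Oleinik inequality \eqref{c1.4}), which pins down the traces in \eqref{c2.50a} as well, but for the present lemma only the integrated identities \eqref{c2.13}–\eqref{c2.14} are needed, so absolute continuity plus the a.e.\ derivative identity suffices.
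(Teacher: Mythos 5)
Your proposal is correct and rests on the same core mechanism as the paper's proof: the transport identity \eqref{EE} combined with the maximizer property of $u^\pm$ gives two-sided control of the increments of $\hat{E}$ by $-u^+$ (resp.\ $f(u^+)$) evaluated at the two endpoints, which is exactly the content of the paper's inequalities \eqref{c2.21}--\eqref{c2.22} and \eqref{c2.29}--\eqref{c2.30}. The only (harmless) difference is the final bookkeeping: the paper passes from the local two-sided bound to \eqref{c2.13}--\eqref{c2.14} by additivity over a fine partition, whereas you invoke Lipschitz continuity (obtained cleanly as the envelope of a uniformly Lipschitz family), absolute continuity, and the a.e.\ identification of the derivative where $u^-=u^+$ -- both routes need the same one-sided difference-quotient estimates, so the arguments are essentially equivalent.
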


\begin{proof}[Proof of {\rm Lemma} $\ref{lem:c2.3}$.]
We divide the proof into three steps.

\smallskip
{\bf (a).} We first show that $\hat{E}(x,t)\in C(\mathbb{R} \times \mathbb{R}^+)$.
By the definition of $\hat{E}(x,t)$ in $\eqref{c2.5}$, it suffices to prove that $E(u^+(x,t);x,t)\in C(\mathbb{R} \times \mathbb{R}^+)$.

\smallskip
For any fixed $(x_0,t_0)\in \mathbb{R} \times\mathbb{R}^+$,
denote $u^+_0:=u^+(x_0,t_0)$.
It follows from $\eqref{c2.1}$ that
\begin{align*}
E(u^+_0;x,t)=-\int^{x-tf'(u^+_0)}_{x-tf'(0)}\varphi(\xi)\,{\rm d}\xi-t\int^{u^+_0}_0sf''(s)\,{\rm d}s,
\end{align*}
which means that $E(u^+_0;x,t)$ is continuous in $(x,t) \in \mathbb{R} \times \mathbb{R}^+$.
Thus, for any given $\varepsilon>0$, there exists $\delta_1>0$ such that,
for any $(x,t)\in B_{\delta_1}(x_0,t_0)$,
$$
E(u^+_0;x,t)>E(u^+_0;x_0,t_0)-\varepsilon,
$$
which, by $u^+(x,t)\in \mathcal{U}(x,t)$, implies that,
for any $(x,t)\in B_{\delta_1}(x_0,t_0)$,
\begin{align}\label{c2.9}
E(u^+(x,t);x,t)\geq E(u^+_0;x,t)> E(u^+_0;x_0,t_0)-\varepsilon.
\end{align}

On the other hand, since $f'(u)$ is strictly increasing,
from $\eqref{cony}$,
for any $(x,t)\in B_{\delta_1}(x_0,t_0)$,
\begin{align*}
\begin{cases}
\displaystyle\big|y(x,t,u^+(x,t))-y(x_0,t_0,u^+(x,t))\big|<N\delta_1,\\[2mm]
\displaystyle y(x_0,t_0,M)\leq y(x_0,t_0,u^+(x,t)) \leq y(x_0,t_0,-M),
\end{cases}
\end{align*}
where $N:=\sup_{|u|\leq M}\sqrt{1+f'(u)^2}$
for $M:=\|\varphi\|_{L^\infty}$.
Then $\delta_1$ can be further chosen sufficiently small,
if needed,
such that
$$
y(x_0,t_0,2M)\leq y(x,t,u^+(x,t)) \leq y(x_0,t_0,-2M) \qquad\,\,
{\rm for\ any\ } (x,t)\in B_{\delta_1}(x_0,t_0).
$$
This implies that,
for any $(x,t)\in B_{\delta_1}(x_0,t_0)$, there exists a unique $\hat{u}\in \mathbb{R}$ such that
\begin{equation}\label{c2.10}
x-tf'(u^+(x,t))=x_0-t_0f'(\hat{u}).
\end{equation}
Since $\varphi(x)$ and $u^+(x,t)$ are both bounded,
it follows from $\eqref{EE}$, $\eqref{c2.10}$, and $\eqref{aa5}$ that
\begin{align*}
&\ E(u^+(x,t);x,t)-E(\hat{u};x_0,t_0)\\
&=\int^{x-tf'(0)}_{x_0-t_0f'(0)}\varphi(\xi)\,{\rm d}\xi-(t-t_0)\int^{u^+(x,t)}_0sf''(s)\,{\rm d}s-t_0\int_{\hat{u}}^{u^+(x,t)}s f''(s)\,{\rm d}s\\[1mm]
&=\int^{x-tf'(0)}_{x_0-t_0f'(0)}\varphi(\xi)\,{\rm d}\xi-(t-t_0)\int^{u^+(x,t)}_0sf''(s)\,{\rm d}s
-t_0\,\rho(u^+(x,t),\hat{u})\big(f'(u^+(x,t))-f'(\hat{u})\big)
\\[1mm]
&\longrightarrow 0 \qquad\,\,\, {\rm as}\ (x,t)\rightarrow(x_0,t_0).
\end{align*}
This implies that, for sufficiently small $\delta \in (0,\delta_1)$,
if $(x,t)\in B_{\delta}(x_0,t_0)$, then
\begin{align}\label{c2.12}
E(u^+(x,t);x,t)<E(\hat{u};x_0,t_0)+\varepsilon\leq E(u^+_0;x_0,t_0)+\varepsilon.
\end{align}

Combining $\eqref{c2.9}$ with $\eqref{c2.12}$ yields that,
for any given $\varepsilon>0$, there exists $\delta>0$ such that
$$
|E(u^+(x,t);x,t)-E(u^+_0;x_0,t_0)|<\varepsilon \qquad{\rm for \ any}\ (x,t)\in B_{\delta}(x_0,t_0).
$$
By the arbitrariness of $(x_0,t_0)\in \mathbb{R}\times \mathbb{R}^+$,
we conclude that
$E(u^+(x,t);x,t)\in C(\mathbb{R} \times \mathbb{R}^+),$
which, by $\eqref{c2.5}$, implies that $\hat{E}(x,t)\in C(\mathbb{R} \times \mathbb{R}^+)$.

\smallskip
{\bf (b).} We now prove $\eqref{c2.13}$ for the case of $u^+(x,t)$ only, since
the case of $u^-(x,t)$ is similar.

\smallskip
For any $t>0$ and $x_1,x_2\in \mathbb{R}$, denote $u^+_i:=u^+(x_i,t)$ for $i=1,2$.
By the additivity of $\eqref{c2.13}$,
it suffices to prove $\eqref{c2.13}$ for small $|x_2-x_1|$.

\smallskip
Since $|u^+_i|\leq \|\varphi\|_{L^\infty}$,
it follows from $\eqref{con}$--$\eqref{cony}$ that,
for small $|x_2-x_1|$,
$$
(x_1,t)\in {\rm Con}(x_2,t,u^+_2), \qquad (x_2,t)\in {\rm Con}(x_1,t,u^+_1),
$$
so that, by $\eqref{cony1}$,
there exists a unique $(\hat{u}_1, \hat{u}_2)\in \mathbb{R}^2$ such that
\begin{equation}\label{c2.15}
x_1-tf'(\hat{u}_1)=x_2-tf'(u^+_2), \qquad x_1-tf'(u^+_1)=x_2-tf'(\hat{u}_2).
\end{equation}

Since $u^+_i\in \mathcal{U}(x_i,t)$,
by $\eqref{c2.5}$ and $\eqref{c2.15}$,
it follows from $\eqref{EE}$ and $\eqref{aa5}$ that
\begin{align}\label{c2.21}
\hat{E}(x_1,t)-\hat{E}(x_2,t)
&\geq E(\hat{u}_1;x_1,t)-E(u^+_2;x_2,t)-\int^{x_1-tf'(0)}_{x_2-tf'(0)}\varphi(\xi)\,{\rm d}\xi \nonumber \\
&=t \int^{u^+_2}_{\hat{u}_1} sf''(s)\,{\rm d}s
=(u^+_2+o(1))t\big(f'(u^+_2)-f'(\hat{u}_1)\big)
\nonumber \\[1mm]
&
=-(u^+_2+o(1))(x_1-x_2),
\end{align}
\begin{align}\label{c2.22}
\hat{E}(x_1,t)-\hat{E}(x_2,t)
&\leq E(u^+_1;x_1,t)-E(\hat{u}_2;x_2,t)-\int^{x_1-tf'(0)}_{x_2-tf'(0)}\varphi(\xi)\,{\rm d}\xi\nonumber \\
&=t \int_{u^+_1}^{\hat{u}_2} sf''(s)\,{\rm d}s
=(u^+_1+o(1))t\big(f'(\hat{u}_2)-f'(u^+_1)\big)\nonumber \\[1mm]
&=-(u^+_1+o(1))(x_1-x_2).
\end{align}

By the arbitrariness of $x_1$ and $x_2$,
it follows from $\eqref{c2.21}$--$\eqref{c2.22}$ that $\eqref{c2.13}$ holds.

\smallskip
{\bf (c).} We now prove $\eqref{c2.14}$ for $u^+(x,t)$ only,
since the case for $u^-(x,t)$ is similar.

\smallskip
For any $x\in\mathbb{R}$ and $t_2>t_1>0$, denote $u^+_i:=u^+(x,t_i)$ for $i=1,2$.
By the additivity of $\eqref{c2.14}$,
it suffices to prove $\eqref{c2.14}$ for small $t_2-t_1>0$.

\smallskip
Since $|u^+_i|\leq \|\varphi\|_{L^\infty}$,
it follows from $\eqref{con}$--$\eqref{cony}$ that,
for small $t_2-t_1>0$,
$$
(x,t_1)\in {\rm Con}(x,t_2,u^+_2), \qquad (x,t_2)\in {\rm Con}(x,t_1,u^+_1),
$$
so that, by $\eqref{cony1}$,
there exists a unique $(\check{u}_1, \check{u}_2)\in \mathbb{R}^2$ such that
\begin{equation}\label{c2.23}
x-t_1f'(\check{u}_1)=x-t_2f'(u^+_2),\qquad x-t_1f'(u^+_1)=x-t_2f'(\check{u}_2).
\end{equation}

Since $u^+_i\in \mathcal{U}(x_i,t)$,
from $\eqref{c2.5}$ and $\eqref{c2.23}$,
it follows from $\eqref{EE}$ and $\eqref{aa5}$ that
\begin{align}\label{c2.29}
\hat{E}(x,t_1)-\hat{E}(x,t_2)
&\geq E(\check{u}_1;x,t_1)-E(u^+_2;x,t_2)-\int^{x-t_1f'(0)}_{x-t_2f'(0)}\varphi(\xi)\,{\rm d}\xi \nonumber \\[1mm]
&=(t_2-t_1)\int^{u^+_2}_{0}sf''(s)\,{\rm d}s+t_1\int^{u^+_2}_{\check{u}_1} sf''(s)\,{\rm d}s\nonumber\\[1mm]
&=\big(u^+_2f'(u^+_2)-f(u^+_2)\big)(t_2-t_1)+(u^+_2+o(1))t_1\big(f'(u^+_2)-f'(\check{u}_1)\big)\nonumber \\[1mm]
&=-\big(u^+_2f'(u^+_2)-f(u^+_2)\big)(t_1-t_2)+(u^+_2+o(1))f'(u^+_2)(t_1-t_2)\nonumber \\[1mm]
&=\big(f(u^+_2)+o(1)\big)(t_1-t_2),
\end{align}
\begin{align}\label{c2.30}
\hat{E}(x,t_1)-\hat{E}(x,t_2)
&\leq E(u^+_1;x,t_1)-E(\check{u}_2;x,t_2)-\int^{x-t_1f'(0)}_{x-t_2f'(0)}\varphi(\xi)\,{\rm d}\xi \nonumber \\[1mm]
&=(t_2-t_1)\int^{u^+_1}_{0}sf''(s)\,{\rm d}s-t_2\int^{u^+_1}_{\check{u}_2} sf''(s)\,{\rm d}s\nonumber\\[1mm]
&=\big(u^+_1f'(u^+_1)-f(u^+_1)\big)(t_2-t_1)-(u^+_1+o(1))t_2\big(f'(u^+_1)-f'(\check{u}_2)\big)\nonumber \\[1mm]
&=-\big(u^+_1f'(u^+_1)-f(u^+_1)\big)(t_1-t_2)+(u^+_1+o(1))f'(u^+_1)(t_1-t_2)\nonumber \\[1mm]
&=\big(f(u^+_1)+o(1)\big)(t_1-t_2).
\end{align}
By the arbitrariness of $t_1$ and $t_2$,
it follows from $\eqref{c2.29}$--$\eqref{c2.30}$ that $\eqref{c2.14}$ holds.
\end{proof}

\smallskip
\noindent
{\bf 2.} We now prove the Oleinik-type one-sided inequality $\eqref{c1.4}$.

\begin{Lem}\label{lem:c2.4} For any $t>0$ and $x,x'\in \mathbb{R}$ with $x<x'$ $($see {\rm Fig.} $\ref{figOleinik})$,
\begin{equation}\label{c2.31}
x-tf'(u^+(x,t))\leq x'-tf'(u^-(x',t)).
\end{equation}
Furthermore, $y(x,t,u^\pm(x,t))$ defined by $\eqref{cony}$ are both nondecreasing in $x\in \mathbb{R}$.
\end{Lem}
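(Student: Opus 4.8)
The plan is to prove $\eqref{c2.31}$ first for $x<x'$ with $x'-x$ small and then propagate it to arbitrary $x<x'$ by chaining; the monotonicity of $y(\cdot,t,u^\pm(\cdot,t))$ will then be immediate. Fix $t>0$ and write $u_1:=u^+(x,t)$, $u_2:=u^-(x',t)$, $y_1:=y(x,t,u_1)$, and $y_2:=y(x',t,u_2)$. Since $|u_1|,|u_2|\le\|\varphi\|_{L^\infty}$ by $\eqref{c2.7}$, there is $\eta=\eta(t,\|\varphi\|_{L^\infty})>0$ (with $\eta=+\infty$ when $f'(\pm\infty)=\pm\infty$) such that, whenever $0<x'-x<\eta$, both $(x',t)\in{\rm Con}(x,t,u_1)$ and $(x,t)\in{\rm Con}(x',t,u_2)$; by $\eqref{cony1}$ this yields unique $\hat u_1,\hat u_2\in\mathbb{R}$ with $y(x',t,\hat u_1)=y_1$ and $y(x,t,\hat u_2)=y_2$, equivalently $f'(\hat u_1)=f'(u_1)+\frac{x'-x}{t}$ and $f'(\hat u_2)=f'(u_2)-\frac{x'-x}{t}$.

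The core is a quadrilateral-type inequality expressed through the new formula. Since $u_1$ and $u_2$ realise the maxima of $E(\cdot\,;x,t)$ and $E(\cdot\,;x',t)$, one has $E(u_1;x,t)\ge E(\hat u_2;x,t)$ and $E(u_2;x',t)\ge E(\hat u_1;x',t)$. Adding these two inequalities and using the identity $\eqref{EE}$ at equal times ($t'=t$, so the $(t'-t)$-integrals vanish) to rewrite $E(\hat u_1;x',t)-E(u_1;x,t)$ and $E(\hat u_2;x,t)-E(u_2;x',t)$, the four $E$-values cancel, the two resulting $\varphi$-integrals are opposite in sign and cancel, and one is left with
\[
\int_{u_1}^{\hat u_1}s f''(s)\,{\rm d}s+\int_{u_2}^{\hat u_2}s f''(s)\,{\rm d}s\ \ge\ 0 ,
\]
with no ordering of $y_1,y_2$ presupposed. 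I would then change variables by $w=f'(s)$, so that $f''(s)\,{\rm d}s={\rm d}w$ and $s=(f')^{-1}(w)$; writing $\delta:=\frac{x'-x}{t}>0$, $p:=f'(u_1)$, $q:=f'(\hat u_2)$ and using $f'(\hat u_1)=p+\delta$, $f'(u_2)=q+\delta$, the last display becomes $\int_p^{p+\delta}(f')^{-1}(w)\,{\rm d}w\ge\int_q^{q+\delta}(f')^{-1}(w)\,{\rm d}w$. By $\eqref{c1.2}$, $f'$ — and hence $(f')^{-1}$ — is strictly increasing, so $r\mapsto\int_r^{r+\delta}(f')^{-1}$ is strictly increasing; thus the inequality forces $p\ge q$, that is $f'(u_1)\ge f'(\hat u_2)$, i.e. $\frac{x-y_1}{t}\ge\frac{x-y_2}{t}$, i.e. $y_1\le y_2$. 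This proves $\eqref{c2.31}$ when $0<x'-x<\eta$.

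For general $x<x'$, I would partition $[x,x']$ by nodes $x=x_0<\cdots<x_n=x'$ of mesh $<\eta$ and chain: the small-scale estimate gives $y(x_i,t,u^+(x_i,t))\le y(x_{i+1},t,u^-(x_{i+1},t))$, while $u^+(x_{i+1},t)\le u^-(x_{i+1},t)$ gives $y(x_{i+1},t,u^-(x_{i+1},t))\le y(x_{i+1},t,u^+(x_{i+1},t))$, so the chain telescopes to $\eqref{c2.31}$. The last assertion then follows, since for $x<x'$ one has $y(x,t,u^+(x,t))\le y(x',t,u^-(x',t))\le y(x',t,u^+(x',t))$ and $y(x,t,u^-(x,t))\le y(x,t,u^+(x,t))\le y(x',t,u^-(x',t))$. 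I expect the only genuine subtlety to be the passage through the cones: the identity $\eqref{EE}$ and the comparison values $\hat u_1,\hat u_2$ are available only inside ${\rm Con}(\cdot)$, which is why the estimate must first be localized to small $|x'-x|$ and then globalized by chaining; the convexity-degeneracy condition $\eqref{c1.2}$ enters only mildly here, merely ensuring that $(f')^{-1}$ is strictly increasing so that the concluding comparison is decisive.
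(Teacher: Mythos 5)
Your proof is correct; the localization to small $x'-x$ via the cones, the use of \eqref{EE} at equal times, and the final chaining and monotonicity deductions all check out, and the requisite strictness is indeed supplied by \eqref{c1.2} through the strict monotonicity of $(f')^{-1}$. The mechanism of your key step, however, differs from the paper's. You invoke maximality at \emph{both} points --- $E(u_1;x,t)\ge E(\hat u_2;x,t)$ and $E(u_2;x',t)\ge E(\hat u_1;x',t)$ --- add them, cancel the common $\varphi$-integral, and reduce the matter to the inequality
\begin{equation*}
\int_{f'(u_1)}^{f'(u_1)+\delta}(f')^{-1}(w)\,{\rm d}w\;\ge\;\int_{f'(\hat u_2)}^{f'(\hat u_2)+\delta}(f')^{-1}(w)\,{\rm d}w,
\end{equation*}
which, by strict monotonicity of the sliding-window integral $r\mapsto\int_r^{r+\delta}(f')^{-1}$, forces $f'(u_1)\ge f'(\hat u_2)$ and hence $y_1\le y_2$. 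The paper instead argues one-sidedly: it only uses maximality of $u^+$ at $(x,t)$, transports an arbitrary competitor $u>\hat u^+$ back to $(x,t)$, and shows via the auxiliary nondecreasing function $L$ (with ${\rm d}L=(u-\hat u)f''(u)\,{\rm d}u$) that $E(u;x',t)<E(\hat u^+;x',t)$ for every $u>\hat u^+$, so that the entire set $\mathcal{U}(x',t)$ lies in $({-}\infty,\hat u^+]$. The paper's route thus yields a slightly stronger pointwise exclusion statement from a single maximality hypothesis, while yours buys a more symmetric and arguably cleaner computation, at the cost of needing both maximizers and the explicit change of variables $w=f'(s)$. Both are valid; no gap.
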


\begin{figure}[H]
	\begin{center}
		{\includegraphics[width=0.7\columnwidth]{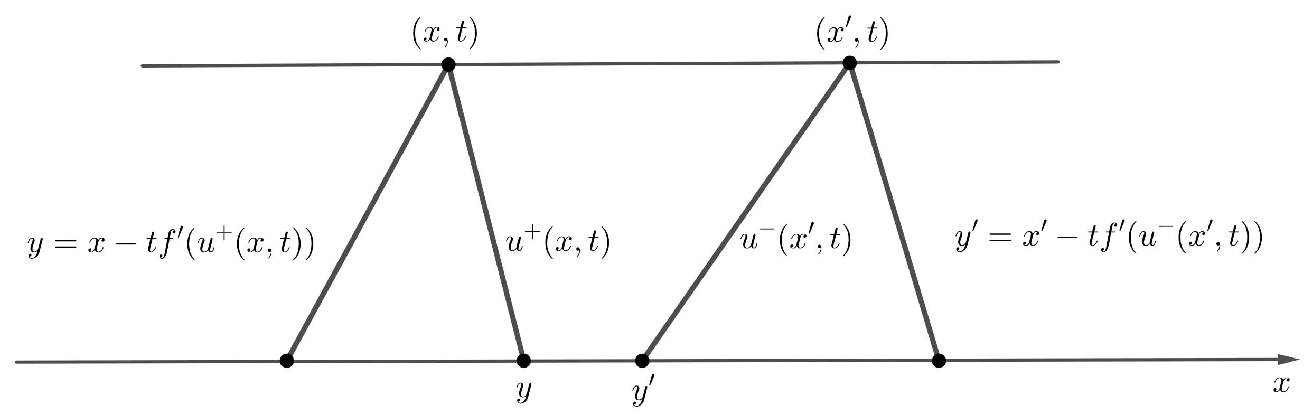}}\caption{The Oleinik-type one-sided inequality as in $\eqref{c2.31}$.
		}\label{figOleinik}
	\end{center}
\end{figure}

\begin{proof}[Proof of {\rm Lemma} $\ref{lem:c2.4}$.]
From $\eqref{c2.4}$, $u^+(x,t)\leq u^-(x,t)$, which implies
\begin{equation*}
x-tf'(u^-(x,t))\leq x-tf'(u^+(x,t)), \qquad x'-tf'(u^-(x',t))\leq x'-tf'(u^+(x',t)).
\end{equation*}
Then it suffices to prove $\eqref{c2.31}$ for small $x'-x>0$.

\vspace{2pt}
Fix $t>0$. For any $x,x'\in \mathbb{R}$ with $x<x'$ such that $(x',t)\in {\rm Con}(x,t, u^+(x,t))$,
denote $\delta:=x'-x>0$ and $u^+:=u^+(x,t)$.
Since $(x',t)\in {\rm Con}(x,t, u^+(x,t))$, from $\eqref{cony1}$,
there exists a unique $\hat{u}^+$ such that $y(x',t,\hat{u}^+)=y(x,t,u^+)$, $i.e.$,
\begin{equation}\label{xx+}
x'-tf'(\hat{u}^+)=x-tf'(u^+).
\end{equation}
Then, for any $u>\hat{u}^+$,
$y(x',t,u)<y(x',t,\hat{u}^+)=y(x,t,u^+).$
Since $x<x'$, then
$$
x-y(x,t,u^+)<x-y(x',t,u)<x'-y(x',t,u),
$$
which, by $\eqref{con}$, implies that $(x,t)\in {\rm Con}(x',t,u)$ for any $u>\hat{u}^+$.
This, by $\eqref{cony1}$, infers that there exists a unique $\hat{u}$ such that
\begin{equation}\label{xxu}
x'-tf'(u)=x-tf'(\hat{u}).
\end{equation}

Since $u^+\in \mathcal{U}(x,t)$,
then $E(\hat{u};x,t)-E(u^+;x,t)\leq 0.$
By $\eqref{EE}$ and $\eqref{xx+}$--$\eqref{xxu}$, for any $u>\hat{u}^+$,
\begin{align}\label{c2.33}
&\ E(u;x',t)-E(\hat{u}^+;x',t)\nonumber\\[1mm]
&=\big(E(u;x',t)-E(\hat{u};x,t)\big)+\big(E(\hat{u};x,t)-E(u^+;x,t)\big)
+\big(E(u^+;x,t)-E(\hat{u}^+;x',t)\big) \nonumber\\[1mm]
&\leq \big(E(u;x',t)-E(\hat{u};x,t)\big)+\big(E(u^+;x,t)-E(\hat{u}^+;x',t)\big) \nonumber\\
&=-t\int^u_{\hat{u}}sf''(s)\,{\rm d}s+t\int^{\hat{u}^+}_{u^+}sf''(s)\,{\rm d}s
\nonumber\\[1mm]
&
=:-t\big(L(u)-L(\hat{u}^+)\big).
\end{align}
It follows from $\eqref{xxu}$ that
$u>\hat{u}$ and $f''(u)\,{\rm d}u=f''(\hat{u})\,{\rm d}\hat{u}$ so that
\begin{align*}
{\rm d}L(u)=uf''(u)\,{\rm d}u-\hat{u}f''(\hat{u})\,{\rm d}\hat{u}=(u-\hat{u})f''(u)\,{\rm d}u,
\end{align*}
which implies that $L'(u)\geq 0$.

\smallskip
From $\eqref{c1.2}$, $f''(u)\geq 0$ with $\mathcal{L}\{u\,:\,f''(u)=0\}=0$.
Then $\mathcal{L}\{u\,:\,L'(u)=0\}=0$ so that
\begin{equation*}
L(u)-L(\hat{u}^+)=\int^u_{\hat{u}^+}L'(s)\,{\rm d}s>0 \qquad\,\, \mbox{for $u>\hat{u}^+$},
\end{equation*}
which, by $\eqref{c2.33}$, implies that, for any $u>\hat{u}^+$,
$$
E(u;x',t)-E(\hat{u}^+;x',t)\leq -t\big(L(u)-L(\hat{u}^+)\big)<0.
$$
This means that $E(\cdot\,; x',t)$ attains its maximum only if $u\leq \hat{u}^+$.
Then it follows from $\eqref{c2.3}$--$\eqref{c2.4}$ that $u^-(x',t)\leq \hat{u}^+$,
which, by $\eqref{xx+}$ with $x<x'$, yields $\eqref{c2.31}$.
\end{proof}

\smallskip
\noindent
{\bf 3.} We now prove the properties of $\mathcal{U}(x,t)$ in Lemma $\ref{lem:c2.5}$ below,
which include the traces of function $u(x,t)=u^+(x,t)$ and
the one-to-one correspondence between the elements of $\mathcal{U}(x,t)$
and the shock-free backward characteristics passing point $(x,t)$ in the upper half-space.

\begin{Lem}\label{lem:c2.5}
For any $(x,t)\in \mathbb{R}\times \mathbb{R}^+$, the following statements hold{\rm :}
\begin{itemize}
\item [(i)] For any sequence
$(x_n,t_n)\to (x,t)$ as $n\to \infty$, if
$\lim_{n\rightarrow \infty}u^\pm(x_n,t_n)$ exists, then
\begin{equation}\label{c2.39}
\lim\limits_{n\rightarrow \infty}u^\pm(x_n,t_n)\in \mathcal{U}(x,t).
\end{equation}
\item [(ii)] For $u^{\pm}(x{-},t):=\lim_{x' \rightarrow x{-}}u^{\pm}(x',t)$
and $u^{\pm}(x{+},t):=\lim_{x' \rightarrow x{+}}u^{\pm}(x',t)$,
\begin{equation}\label{c2.40}
u^{\pm}(x{-},t)=u^-(x,t),\qquad u^{\pm}(x{+},t)=u^+(x,t).
\end{equation}

\item [(iii)] Fix $(x_0,t_0)\in \mathbb{R}\times \mathbb{R}^+$.
For any $c\in \mathbb{R}$, define
\begin{equation}\label{c2.43}
 l_{c}(x_0,t_0):=\big\{(x,t)
\,:\, x=x_0+(t-t_0)f'(c)\quad {\rm for}\ t\in (0,t_0)\big\}.
\end{equation}
Then
\begin{equation}\label{uiff}
c\in \mathcal{U}(x_0,t_0)\qquad {\rm if\ and\ only\ if} \qquad
l_{c}(x_0,t_0)\,\, \mbox{is\ a\ shock-free\ characteristic}.
\end{equation}
\end{itemize}
\end{Lem}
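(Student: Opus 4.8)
The plan is to exploit the monotonicity of the map $x \mapsto y(x,t,u^\pm(x,t)) = x - tf'(u^\pm(x,t))$ established in Lemma \ref{lem:c2.4}, together with upper-semicontinuity of the argmax set $\mathcal{U}(x,t)$ that follows from the continuity of $(u,x,t)\mapsto E(u;x,t)$. For part (i), fix a sequence $(x_n,t_n)\to(x,t)$ with $u^\pm(x_n,t_n)\to w$. Since $u^\pm(x_n,t_n)\in\mathcal{U}(x_n,t_n)$, we have $E(u^\pm(x_n,t_n);x_n,t_n)=\hat E(x_n,t_n)+\int_0^{x_n-t_nf'(0)}\varphi$, and passing to the limit using the continuity of $E$ in all three variables (the key point being that $\varphi\in L^\infty$ makes $\int_0^{x-tf'(0)}\varphi(\xi)\,{\rm d}\xi$ continuous) and the Lipschitz continuity of $\hat E$ from Lemma \ref{lem:c2.3}, we obtain $E(w;x,t)=\max_v E(v;x,t)$, i.e.\ $w\in\mathcal{U}(x,t)$. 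This is essentially the closedness of the graph of the argmax multifunction.

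For part (ii), I would argue that the one-sided limits $u^\pm(x-,t)$ and $u^\pm(x+,t)$ exist because $y(x,t,u^\pm(x,t))$ is nondecreasing in $x$ (Lemma \ref{lem:c2.4}) and $f'$ is a strictly increasing homeomorphism onto its range; monotonicity of $y(x,t,\cdot)$ transfers to monotonicity of $u^\pm(x,t)$ up to the invertible change by $f'$, so one-sided limits exist pointwise. Then part (i) forces every such limit to lie in $\mathcal{U}(x,t)\subset[u^+(x,t),u^-(x,t)]$. To pin down which endpoint: for $x'<x$, the Oleinik inequality \eqref{c2.31} gives $x'-tf'(u^+(x',t))\le x-tf'(u^-(x,t))$, hence $u^-(x,t)\le u^+(x',t)+ o(1)$ in the appropriate sense as $x'\to x-$; combined with $u^+(x',t)\le u^-(x',t)$ and the limit being in $\mathcal{U}(x,t)$, this squeezes $u^\pm(x-,t)=u^-(x,t)$. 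The case $x'\to x+$ is symmetric and yields $u^\pm(x+,t)=u^+(x,t)$. This also reproves the trace statement \eqref{c2.50a} of Theorem \ref{the:mt}.

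For part (iii), the ray $l_c(x_0,t_0)$ is a characteristic iff $c\in\mathcal{U}(x,t)$ for every $(x,t)$ on it, and being shock-free is the statement that $u$ does not jump across points of the ray. If $c\in\mathcal{U}(x_0,t_0)$, then for any $(x',t')\in l_c(x_0,t_0)$ we have $y(x_0,t_0,c)=y(x',t',c)$, and by the identity \eqref{con1} the cone is preserved; the translation formula \eqref{EE} with $\hat u=c$ shows $E(c;x',t')-\int_0^{x'-t'f'(0)}\varphi = E(c;x_0,t_0)-\int_0^{x_0-t_0f'(0)}\varphi = \hat E(x_0,t_0)$, and one must check this common value equals $\hat E(x',t')$, i.e.\ that $c$ is still a maximizer at $(x',t')$ --- this follows from Lemma \ref{lem:c2.3}(i)--(ii) (the additivity of $\hat E$ along both $x$- and $t$-directions) combined with the fact that along the characteristic the solution is constant, so no entropy is dissipated. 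Conversely, if $l_c(x_0,t_0)$ is a shock-free characteristic, then $u\equiv c$ along it forces $c=u^\pm(x',t')\in\mathcal{U}(x',t')$ for all points on the ray, in particular at $(x_0,t_0)$.

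The main obstacle I anticipate is part (iii): making precise the equivalence ``$l_c$ is a genuine (entropy-admissible, shock-free) characteristic'' $\iff$ ``$c$ is a maximizer of $E$ along the ray,'' since this requires showing that the value $\hat E(x_0,t_0)$ propagates correctly along $l_c$ using Lemma \ref{lem:c2.3} --- concretely, that $\hat E(x_0 + (t-t_0)f'(c), t) = \hat E(x_0,t_0) + \int_{t_0}^t f(c)\,{\rm d}s + (\text{the }x\text{-shift term})$, and that these two Lemma \ref{lem:c2.3} increments are exactly compatible with $E(c;\cdot)$ being stationary. The forward direction (maximizer $\Rightarrow$ characteristic) needs the subtle point that $c$ being a maximizer at the single point $(x_0,t_0)$ already implies it is a maximizer at every point of $l_c$ below it; this is where \eqref{EE} and \eqref{con1} do the real work, turning a pointwise maximality into maximality along a whole ray.
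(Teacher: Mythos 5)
Parts (i) and (ii) of your proposal follow essentially the paper's own route: (i) is the closedness of the argmax graph obtained from the continuity of $\hat{E}$ (Lemma $\ref{lem:c2.3}$), and (ii) is the squeeze between the Oleinik inequality $\eqref{c2.31}$ and the fact (from (i)) that every limit point lies in $\mathcal{U}(x,t)\subset[u^+(x,t),u^-(x,t)]$. One caveat in (ii): your claim that the monotonicity of $x\mapsto y(x,t,u^\pm(x,t))$ ``transfers to monotonicity of $u^\pm(\cdot,t)$'' is false --- $f'(u^\pm(x,t))=\frac{x-y(x,t,u^\pm(x,t))}{t}$ is a difference of monotone functions, not monotone --- but this claim is not load-bearing, since the squeeze argument already forces the one-sided limits to exist and to equal the asserted values.

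The genuine gap is in the forward direction of (iii). You correctly name the crux --- transporting maximality of $E(\cdot\,;x_0,t_0)$ at $c$ to every point of the backward ray --- but the route you propose does not close. Invoking Lemma $\ref{lem:c2.3}$'s additivity of $\hat{E}$ together with ``along the characteristic the solution is constant, so no entropy is dissipated'' is circular: the increments in Lemma $\ref{lem:c2.3}$ are expressed through $u^\pm$ along the ray, which is precisely what is to be determined, and one may not assume the ray is a characteristic while proving that it is one. The actual mechanism is a direct pointwise comparison: for $(x,t)\in l_c(x_0,t_0)$ and $u\neq c$, choose the unique $\hat{u}$ strictly between $c$ and $u$ with $x-tf'(u)=x_0-t_0f'(\hat{u})$; then $\eqref{EE}$ and $c\in\mathcal{U}(x_0,t_0)$ give
\begin{equation*}
E(u;x,t)-E(c;x,t)\leq t_0\int_c^{\hat{u}}sf''(s)\,{\rm d}s-t\int_c^{u}sf''(s)\,{\rm d}s
=-l\big(\rho(\hat{u},c)-\rho(u,c)\big)<0,
\end{equation*}
with $l=t(f'(c)-f'(u))$, the strict inequality coming from the strict monotonicity of $\rho(\cdot\,,c)$ --- which is exactly where the degeneracy hypothesis $\mathcal{L}\{u:f''(u)=0\}=0$ enters. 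The strictness matters: to conclude the ray is shock-free you need $\mathcal{U}(x,t)=\{c\}$, i.e.\ $c$ is the \emph{unique} maximizer, so that $u^+(x,t)=u^-(x,t)=c$; merely showing that $c$ remains \emph{a} maximizer would not exclude a jump across the ray. Your proposal addresses neither the uniqueness of the maximizer nor the role of the degeneracy condition. (Your converse direction --- shock-free implies $c\in\mathcal{U}(x_0,t_0)$ via part (i) --- is correct and matches the paper.)
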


\begin{proof}[Proof of {\rm Lemma} $\ref{lem:c2.5}$.]
We divide the proof into three steps.

\smallskip
{\bf (a)}.
Denote $u^*:=\lim_{n\rightarrow \infty}u^+(x_n,t_n)$.
Then it suffices to prove that $u^*\in \mathcal{U}(x,t)$.
By Lemma $\ref{lem:c2.3}$, $\hat{E}(x,t)\in C(\mathbb{R}\times \mathbb{R}^+)$ so that
\begin{align*}
\hat{E}(x,t)=\lim\limits_{n\rightarrow \infty}\hat{E}(x_n,t_n)
&=\lim\limits_{n\rightarrow \infty} \Big(E(u^+(x_n,t_n);x_n,t_n)-\int^{x_n-t_nf'(0)}_0\varphi (\xi)\,{\rm d}\xi\Big)\\[1mm]
& =\lim\limits_{n\rightarrow \infty}\Big({-}\int^{x_n-t_nf'(u^+(x_n,t_n))}_0\varphi (\xi)\,{\rm d}\xi
 -t_n\int^{u^+(x_n,t_n)}_0sf''(s)\,{\rm d}s\Big)\\[1mm]
& =-\int^{x-tf'(u^*)}_0\varphi (\xi)\,{\rm d}\xi-t\int^{u^*}_0sf''(s)\,{\rm d}s\\[1mm]
&=E(u^*;x,t)-\int^{x-tf'(0)}_0\varphi (\xi)\,{\rm d}\xi.
\end{align*}
This, by $\eqref{c2.5}$, implies that $E(u^*;x,t)=\max_{v\in \mathbb{R}}E(v;x,t)$
so that $u^*\in \mathcal{U}(x,t)$.
Similarly, it is direct to check that $\eqref{c2.39}$ holds for $u^-(x,t)$.

\smallskip
 {\bf (b).} From $\eqref{c2.4}$ and $\eqref{c2.39}$,
\begin{equation}\label{c2.42}
u^+(x,t)\leq \mathop{\underline{\lim}}\limits_{(x',t')\rightarrow (x,t)}u^\pm(x',t')
\leq \mathop{\overline{\lim}}\limits_{(x',t')\rightarrow (x,t)}u^\pm(x',t')\leq u^-(x,t).
\end{equation}

When $x'<x$, from Lemma $\ref{lem:c2.4}$,
$$
f'(u^-(x,t))+\frac{x'-x}{t}\leq f'(u^+(x',t))\leq f'(u^-(x',t)).
$$
As $x'\rightarrow x{-}$, it follows from $\eqref{c2.42}$ that
$$
f'(u^-(x,t))\leq \mathop{\underline{\lim}}\limits_{x'\rightarrow x{-}}f'(u^\pm(x',t))
\leq \mathop{\overline{\lim}}\limits_{x'\rightarrow x{-}}f'(u^\pm(x',t))\leq f'(u^-(x,t)),
$$
which, by the strictly increasing property of $f'(u)$, implies
$$
u^-(x,t)\leq\mathop{\underline{\lim}}\limits_{x'\rightarrow x{-}}u^\pm(x',t)
\leq\mathop{\overline{\lim}}\limits_{x'\rightarrow x{-}}u^\pm(x',t)\leq u^-(x,t).
$$
This means that $u^\pm(x{-},t)=u^-(x,t)$.

\smallskip
When $x'>x$, from Lemma $\ref{lem:c2.4}$,
$$
f'(u^+(x,t))+\frac{x'-x}{t}\geq f'(u^-(x',t))\geq f'(u^+(x',t)).
$$
As $x'\rightarrow x{+}$, it follows from $\eqref{c2.42}$ that
$$
f'(u^+(x,t))\geq\mathop{\overline{\lim}}\limits_{x'\rightarrow x{+}}f'(u^\pm(x',t))
\geq\mathop{\underline{\lim}}\limits_{x'\rightarrow x{+}}f'(u^\pm(x',t))\geq f'(u^+(x,t)),
$$
which, by the strictly increasing property of $f'(u)$, implies
$$
u^+(x,t)\geq\mathop{\overline{\lim}}\limits_{x'\rightarrow x{+}}u^\pm(x',t)
\geq\mathop{\underline{\lim}}\limits_{x'\rightarrow x{+}}u^\pm(x',t)\geq u^+(x,t).
$$
This means that $u^\pm(x{+},t)=u^+(x,t).$

\smallskip
{\bf (c).} If $c\in \mathcal{U}(x_0,t_0)$, then,
for any $(x,t)\in l_{c} (x_0,t_0)$,
\begin{equation}\label{c2.44}
x-tf'(c)=x_0-t_0f'(c),
\end{equation}
so that, for any $u\in \mathbb{R}$,
$$\frac{x_0-x}{t_0}+\frac{t}{t_0}f'(u)=\frac{t_0-t}{t_0}f'(c)+\frac{t}{t_0}f'(u)$$
lies between $f'(c)$ and $f'(u)$.
This implies that, for any $u\in \mathbb{R}$,
there exists a unique $\hat{u}$ between $c$ and $u$ such that
\begin{equation}\label{c2.45}
x-tf'(u)=x_0-t_0 f'(\hat{u}).
\end{equation}
Then it follows from $\eqref{c2.1}$ and $c\in \mathcal{U}(x_0,t_0)$ that,
if $u \neq c$,
\begin{align}\label{c2.47}
E(u;x,t)-E(c;x,t)
&=-\int^{x-tf'(u)}_{x-tf'(c)}\varphi(\xi)\,{\rm d}\xi-t\int^u_{c}sf''(s)\,{\rm d}s\nonumber\\[1mm]
&=-\int^{x_0-t_0f'(\hat{u})}_{x_0-t_0f'(c)}\varphi(\xi)\,{\rm d}\xi-t\int^u_{c}sf''(s)\,{\rm d}s\nonumber\\[1mm]
&=E(\hat{u};x_0,t_0)-E(c;x_0,t_0)
+t_0\int^{\hat{u}}_{c}sf''(s)\,{\rm d}s-t\int^{u}_{c}sf''(s)\,{\rm d}s\nonumber\\[1mm]
& \leq t_0\int^{\hat{u}}_{c}sf''(s)\,{\rm d}s-t\int^{u}_{c}sf''(s)\,{\rm d}s
=-l\big(\rho(\hat{u},c)-\rho(u,c)\big),
\end{align}
where $\rho(u,v)$ is given by $\eqref{aa5a}$, and
$l$ is determined by $\eqref{c2.44}$--$\eqref{c2.45}$ as
\begin{equation*}
l:=t\big(f'(c)-f'(u)\big)=t_0\big(f'(c)-f'(\hat{u})\big).
\end{equation*}

From $\eqref{aa6}$, $\rho(\cdot,c)$ is strictly increasing.
Since $\hat{u}$ lies between $c$ and $u$, it follows from $\eqref{c2.47}$ that
$E(u;x,t)-E(c;x,t)<0$ for any $u\neq c$.
Thus, $c$ is the unique maximum point of $E(\cdot\,; x,t)$. Therefore,
$u^\pm(x,t)=c$ for any $(x,t)\in l_{c}(x_0,t_0)$,
which implies that $l_{c}(x_0,t_0)$ is shock-free.

\smallskip
On the other hand, if $u^\pm(x,t)=c$ for any $(x,t)\in l_{c}(x_0,t_0)$,
it follows from $\eqref{c2.39}$ that
$c$,
as the limit of $u^+(x,t)$ as $(x,t)\rightarrow (x_0,t_0)$ along $l_{c}(x_0,t_0)$,
belongs to $\mathcal{U}(x_0,t_0)$.
\end{proof}

\smallskip
\noindent
{\bf 4.} We now show that $u(x,t)\in L^\infty(\mathbb{R} \times \mathbb{R}^+)$ defined by $\eqref{c2.50}$ solves
the Cauchy problem $\eqref{c1.1}$--$\eqref{ID}$ in the distributional sense.

\smallskip
{\bf (a).} We first prove $\eqref{c2.50a}$--$\eqref{c2.38}$.
From $\eqref{c2.50}$ and $\eqref{c2.40}$, $u(x\pm,t)=u^\pm(x,t)$ pointwise.
From $\eqref{c2.42}$, $u(x-,t)$ and $u(x+,t)$ are upper- and low-semicontinuous, respectively,
so that $u(x-,t)$ and $u(x+,t)$ are both continuous at point $(x,t)$ if $u^-(x,t)=u^+(x,t)$.
By Lemma $\ref{lem:c2.4}$, $y(\cdot,t,u^\pm(\cdot,t))$ is nondecreasing
so that it possesses at most countably discontinuous points for each $t>0$.
Since $f'(u)$ is strictly increasing, for each $t>0$, $u^-(x,t)=u^+(x,t)$ also holds
except at most countable points $(x,t)$ at which $u^-(x,t)>u^+(x,t)$.
This yields $\eqref{c2.50a}$.

Furthermore, from $\eqref{cony}$,
$$f'(u^{\pm}(x,t))=\frac{x}{t}-\frac{y(x,t,u^{\pm}(x,t))}{t},$$
so that, by Lemma $\ref{lem:c2.4}$,
$f'(u^{\pm}(\cdot,t))$ can be expressed by the difference of two nondecreasing functions.
This yields $\eqref{c2.38}$.

\smallskip
{\bf (b).} We now prove that $u(x,t)\in L^\infty(\mathbb{R} \times \mathbb{R}^+)$
defined by $\eqref{c2.50}$ is a weak solution of the Cauchy problem $\eqref{c1.1}$--$\eqref{ID}$
in the sense of Definition $\ref{def:ws}$.
From $\eqref{c2.50a}$ and Lemma $\ref{lem:c2.3}$, $\hat{E}(x,t)$ is Lipschitz continuous and satisfies
$$
\hat{E}_x(x,t)\mathop{=}\limits^{a.e.}-u(x,t),\quad
\hat{E}_t(x,t)\mathop{=}\limits^{a.e.}f(u(x,t))
\qquad {\rm on} \ \mathbb{R}\times \mathbb{R}^+.
$$
Then $\eqref{c1.3}$ follows since, for any test function $\phi\in C^\infty_{\rm c}(\mathbb{R}\times \mathbb{R}^+)$,
\begin{align*}
\iint_{\mathbb{R}\times \mathbb{R}^+}\big(u\phi_t+f(u)\phi_x\big)\,{\rm d}x{\rm d}t
=\iint_{\mathbb{R}\times \mathbb{R}^+} \big({-}\hat{E}_x\phi_t+\hat{E}_t\phi_x\big)\,{\rm d}x{\rm d}t
=0.
\end{align*}
Furthermore, since $u(x,t)\in L^\infty(\mathbb{R} \times \mathbb{R}^+)$,
it follows from $\eqref{c2.13}$ that, for any $x_1,x_2\in \mathbb{R}$,
\begin{align*}
\lim_{t\rightarrow 0{+}}\int^{x_2}_{x_1}u(x,t)\,{\rm d}x
&=\lim_{t\rightarrow 0{+}}\big(\hat{E}(x_1,t)-\hat{E}(x_2,t)\big)\nonumber\\
&=\lim_{t\rightarrow 0{+}}\bigg(t \int^{u(x_1,t)}_{u(x_2,t)}f''(s)\big(\varphi(x-tf'(s))-s\big)\,{\rm d}s
+\int^{x_2-tf'(0)}_{x_1-tf'(0)}\varphi(\xi)\,{\rm d}\xi\bigg)\nonumber\\[1mm]
&=\int^{x_2}_{x_1} \varphi(\xi)\,{\rm d}\xi.
\end{align*}

\smallskip
\noindent
{\bf 5.} We now prove the uniqueness of the entropy solutions given by the solution formula $\eqref{c2.50}$
in the sense of Definition $\ref{def:es}$
({\it cf.}\, Hoff \cite{HD}).

\smallskip
{\bf (a)}. Since the flux function $f(u)$ satisfies $\eqref{c1.2}$,
consider the first-order and second-order differences of $f(u)$:
For any three different $u,v$, and $w$,
\begin{align*}
&f[u,v]:=\frac{f(u)-f(v)}{u-v}=\int^1_0f'(\theta u+(1-\theta)v)\,{\rm d}\theta, \\[2mm]
&f[u,v,w]: =\dfrac{f[u,v]-f[v,w]}{u-w}
=\int^1_0\int^1_0 f''(\xi \theta u+(1-\theta)v+(1-\xi)\theta w)\theta\,{\rm d}\xi{\rm d}\theta\geq 0.
\end{align*}
Then $f[u,v]$ can be extended to a continuously differentiable
function in $(u,v)\in \mathbb{R}^2$,
and $f[u,v,w]$ can be extended to a continuous function in $(u,v,w)\in \mathbb{R}^3$.

\smallskip
{\bf (b)}. Let $u_1(x,t), u_2(x,t)\in L^\infty(\mathbb{R} \times \mathbb{R}^+)$
be two different weak solutions of the Cauchy problem $\eqref{c1.1}$--$\eqref{ID}$
with the same initial data function $\varphi(x)\in L^\infty(\mathbb{R})$
and satisfy the Oleinik-type one-sided inequality $\eqref{c1.4}$.
Then, for any fixed $t_1,t_2$ with $t_2>t_1>0$,
if $\phi\in C^\infty_{\rm c}(\mathbb{R}\times [t_1,t_2])$,
\begin{equation}\label{ac3}
\int_\mathbb{R} u_i(x,\cdot)\phi(x,\cdot)\big|^{t_2}_{t_1}\,{\rm d}x
=\int^{t_2}_{t_1}\int_\mathbb{R} \big(u_i\phi _t+f(u_i)\phi _x\big)\,{\rm d}x{\rm d}t
\qquad \mbox{for $i=1,2$}.
\end{equation}
Meanwhile,
it follows from $\eqref{c1.4}$ that, for $i=1,2$,
\begin{equation}\label{ac4}
  \partial_x f'(u_i(x,t))\leq \frac{1}{t} \qquad \mbox{in the distributional sense}.
\end{equation}
Denote $w=w(x,t):=u_1(x,t)-u_2(x,t)$. From $\eqref{ac3}$,
\begin{equation}\label{ac5}
\int_\mathbb{R} w(x,\cdot)\phi(x,\cdot)\big|^{t_2}_{t_1}\,{\rm d}x
=\int^{t_2}_{t_1}\int_\mathbb{R} w\big(\phi _t+f[u_1,u_2]\phi _x\big)\,{\rm d}x{\rm d}t.
\end{equation}

\smallskip{\bf (c)}. Now fix $\psi(x)\in C^\infty_{\rm c}(\mathbb{R})$, and set
\begin{equation*}
\psi_1(x):=\frac{\psi(x)}{2}+\frac{1}{2}\int^x_{-\infty}|\psi'(\xi)|\,{\rm d}\xi,\qquad \psi_2(x):=\frac{\psi(x)}{2}-\frac{1}{2}\int^x_{-\infty}|\psi'(\xi)|\,{\rm d}\xi.
\end{equation*}
Then we have
\begin{equation*}
\psi(x)=\psi_1(x)+\psi_2(x),\qquad \psi_1'(x)\geq 0\geq\psi_2'(x).
\end{equation*}
For any given $\varepsilon, \delta>0$,
let $\phi_i^{\varepsilon,\delta}, i=1,2,$ solve the following linear transport equation respectively:
\begin{equation}\label{ac8}
\begin{cases}
\displaystyle {\partial_t}v+\big(j_{\varepsilon}\ast f'(u_i)\big)\,\partial_x v=0,\\[2mm]
\displaystyle v(x,t_2)=(j_{\delta}\ast \psi_i)(x),
\end{cases}
\end{equation}
where $j_{\varepsilon},j_{\delta}>0$ are the modifiers satisfying that
$\int_\mathbb{R}j_{\varepsilon}(x)\,{\rm d}x=\int_\mathbb{R}j_{\delta}(x)\,{\rm d}x=1$
for $\varepsilon,\delta>0$.

\smallskip
Since $\psi$ is bounded and $ {\rm spt}(\psi)$ is a bounded set,
then $\phi_i^{\varepsilon,\delta}\in C^{\infty}_{\rm c}(\mathbb{R}\times[t_1,t_2])$.
Taking the derivative $\partial_x$ on $\eqref{ac8}$,
we obtain that $\partial_x\phi_i^{\varepsilon,\delta}$ solves the following linear equation of $v_x$:
\begin{equation*}
\begin{cases}
\displaystyle
\partial_t(v_x)+\big(j_\varepsilon *f'(u_i)\big)\,\partial_x(v_x)
 =-\big(j_\varepsilon * \partial_x(f'(u_i))\big)\,v_x,\\[2mm]
\displaystyle v_x(x,t_2)=(j_\delta *\psi'_i)(x).
\end{cases}
\end{equation*}
Then, along any characteristic $x(t):\, \frac{{\rm d}x(t)}{{\rm d}t}=(j_\varepsilon *f'(u_i))(x(t),t)$
with $x(t_2)=y$,
$\,\partial_x\phi_i^{\varepsilon,\delta}$ satisfies
\begin{equation*}
\partial_x\phi^{\varepsilon,\delta}_i(x(t),t)
=(j_\delta * \psi'_i)(y)
\exp\Big\{ \int^{t_2}_t \big(j_\varepsilon * \partial_x (f'(u_i))\big)(x(\tau),\tau)\,{\rm d}\tau\Big\},
\end{equation*}
and hence $\partial_x\phi^{\varepsilon,\delta}_1\geq 0\geq \partial_x\phi^{\varepsilon,\delta}_2$.
Thus, it follows from $\eqref{ac4}$ that
\begin{equation}\label{ac12}
\big|\partial_x\phi^{\varepsilon,\delta}_i(x,t)\big|
\leq\|\psi'_i\|_{L^\infty}\exp\Big\{\int^{t_2}_t\frac{1}{\tau}\,{\rm d}\tau\Big\}
= \frac{t_2}{t}\|\psi'_i\| _{L^\infty}.
\end{equation}

\smallskip
{\bf (d)}. Let $\phi^{\varepsilon,\delta}:=\phi^{\varepsilon,\delta}_1+\phi^{\varepsilon,\delta}_2$.
Then $\phi^{\varepsilon,\delta}\in C^\infty_{\rm c}(\mathbb{R}\times [t_1,t_2])$.
Taking $\phi^{\varepsilon,\delta}$ into $\eqref{ac5}$--$\eqref{ac8}$,
\begin{align}\label{ac13}
&\ \int_\mathbb{R} w(x,t_2)(j_\delta *\psi)(x)\,{\rm d}x
-\int_\mathbb{R}w(x,t_1) \,\phi^{\varepsilon,\delta}(x,t_1)\,{\rm d}x \nonumber\\[1mm]
& =\int^{t_2}_{t_1}\int_\mathbb{R} w\big((\phi^{\varepsilon,\delta}_1+\phi^{\varepsilon,\delta}_2)_t
+f[u_1,u_2](\phi^{\varepsilon,\delta}_1+\phi^{\varepsilon,\delta}_2)_x\big)\,{\rm d}x{\rm d}t\nonumber\\[1mm]
& =\int^{t_2}_{t_1}\int_\mathbb{R} w\,\partial_x\phi^{\varepsilon,\delta}_1\,
\big(f[u_1,u_2]-j_\varepsilon *f'(u_1)\big)\,{\rm d}x{\rm d}t
\nonumber\\[1mm]
& \quad\,
+\int^{t_2}_{t_1}\int_\mathbb{R} w\,\partial_x\phi^{\varepsilon,\delta}_2\,
\big(f[u_1,u_2]-j_\varepsilon *f'(u_2)\big)\,{\rm d}x{\rm d}t\nonumber\\[1mm]
& =: A^{\varepsilon,\delta}_1+A^{\varepsilon,\delta}_2.
\end{align}
For $A^{\varepsilon,\delta}_1$,
since $f[u_1,u_1]=f'(u_1)$ and $f[u_1,u_1,u_2]\geq 0$, then
\begin{align*}
&\ w\,\partial_x\phi^{\varepsilon,\delta}_1\,\big(f[u_1,u_2]-j_\varepsilon *f'(u_1)\big) \nonumber\\[1mm]
& =w\,\partial_x\phi^{\varepsilon,\delta}_1\,\big(f[u_1,u_2]-f[u_1,u_1]\big)
+w\,\partial_x\phi^{\varepsilon,\delta}_1\,\big(f'(u_1)- j_\varepsilon *f'(u_1)\big)\nonumber\\[1mm]
& =-w^2\,\partial_x\phi^{\varepsilon,\delta}_1\,f[u_1,u_1,u_2]
+w\,\partial_x\phi^{\varepsilon,\delta}_1\,\big(f'(u_1)- j_\varepsilon *f'(u_1)\big)\nonumber\\[1mm]
& \leq w\,\partial_x\phi^{\varepsilon,\delta}_1\,\big(f'(u_1)- j_\varepsilon *f'(u_1)\big).
\end{align*}
Since both functions $\psi_i(x), i=1,2$, are compact supported,
then there exists sufficiently large $R>0$ such that
${\rm spt}(\phi_i^{\varepsilon,\delta})\subset [-R,R]\times [0,t_2]=: \mathbb{D}$
for any $\varepsilon,\delta>0$ and $i=1,2$.
Thus, from $\eqref{ac12}$,
\begin{align}\label{ac15}
A^{\varepsilon,\delta}_1
&\leq\int^{t_2}_{t_1}\int^R_{-R} w\,\partial_x\phi^{\varepsilon,\delta}_1\,
\big(f'(u_1)-j_\varepsilon *f'(u_1)\big)\,{\rm d}x{\rm d}t\nonumber\\[1mm]
& \leq \int^{t_2}_{t_1}\,\int^R_{-R} \|w\|_{L^\infty}\, \frac{t_2}{t}\|\psi'_1\| _{L^\infty}\,
\big|f'(u_1)-j_\varepsilon *f'(u_1)\big| \,{\rm d}x{\rm d}t\nonumber\\[1mm]
& \leq \frac{t_2}{t_1}\,\|w\|_{L^\infty}\,\|\psi'_1\| _{L^\infty}\,
\|f'(u_1)-j_\varepsilon *f'(u_1)\|_{L^1(\mathbb{D})}.
\end{align}
Similarly, we have
\begin{align}\label{ac16}
A^{\varepsilon,\delta}_2
\leq \frac{t_2}{t_1}\,\|w\|_{L^\infty}\,\|\psi'_2\| _{L^\infty}\,
\|f'(u_2)-j_\varepsilon *f'(u_2)\|_{L^1(\mathbb{D})}.
\end{align}

From $\eqref{ac8}$, $\|\phi_i^{\varepsilon,\delta}\|_{L^{\infty}}\leq \|\psi_i\|_{L^{\infty}}$.
Combining $\eqref{ac13}$ with $\eqref{ac15}$--$\eqref{ac16}$ leads to
\begin{align*}
 \int_\mathbb{R} w(x,t_2)\,(j_\delta *\psi)(x)\,{\rm d}x
 &=A^{\varepsilon,\delta}_1+ A^{\varepsilon,\delta}_2
 +\int_\mathbb{R} w(x,t_1)\,\phi^{\varepsilon,\delta}(x,t_1)\,{\rm d}x\\[1mm]
& \leq A^{\varepsilon,\delta}_1+ A^{\varepsilon,\delta}_2
+\|w(\cdot,t_1)\|_{L^1(\mathbb{D})}\,\big(\|\psi_1\|_{L^{\infty}}+\|\psi_2\|_{L^{\infty}}\big)\\[1mm]
&\leq \frac{t_2}{t_1}\,\|w\|_{L^\infty}\,\|\psi'_1\| _{L^\infty}\,
\|f'(u_1)-j_\varepsilon *f'(u_1)\|_{L^1(\mathbb{D})}\\[1mm]
& \quad \, +\frac{t_2}{t_1}\,\|w\|_{L^\infty}\,\|\psi'_2\| _{L^\infty}\,
\|f'(u_2)-j_\varepsilon *f'(u_2)\|_{L^1(\mathbb{D})}\\[1mm]
&\quad\, +\|w(\cdot,t_1)\|_{L^1(\mathbb{D})}\,\big(\|\psi_1\|_{L^{\infty}}+\|\psi_2\|_{L^{\infty}}\big).
\end{align*}
This, by letting $\varepsilon\rightarrow 0{+}$, implies
\begin{equation*}
\int_\mathbb{R} w(x,t_2)\,(j_{\delta}\ast \psi)(x)\,{\rm d}x
\leq \|w(\cdot,t_1)\|_{L^1(\mathbb{D})}\,\big(\|\psi_1\|_{L^{\infty}}+\|\psi_2\|_{L^{\infty}}\big).
\end{equation*}
Letting $t_1\rightarrow 0{+}$ first and then
$\delta \rightarrow 0{+}$, we obtain
$$
\int_\mathbb{R}w(x,t_2)\,\psi(x)\,{\rm d}x\leq 0
\qquad\,\, {\rm for\ any }\ \psi(x)\in C_{\rm c}^{\infty}(\mathbb{R}).
$$
This means that, for any $t_2>0$, $w(x,t_2)=0$ holds almost everywhere for $x\in \mathbb{R}$.
By the arbitrariness of $t_2>0$ and the Fubini theorem,
$w(x,t)=0$ holds almost everywhere for $(x,t)\in \mathbb{R}\times \mathbb{R}^+$, $i.e.$,
$u_1(x,t)=u_2(x,t)$ holds almost everywhere on $ \mathbb{R}\times \mathbb{R}^+.$

\smallskip
After all the five steps above, we complete the proof of Theorem $\ref{the:mt}$.

\begin{Rem}
For any $x\neq x'$ and $t'>t$,
\begin{equation}\label{c7.24}
\frac{U(x,t',x')-U(x,t,x')}{t'-t}+\frac{F(x',t,t')-F(x,t,t')}{x'-x}=0,
\end{equation}
where $U(x,t,x'):=\frac{1}{x'-x}\int^{x'}_xu(\xi,t)\,{\rm d}\xi$ and
$F(x,t,t'):=\frac{1}{t'-t}\int^{t'}_t f(u(x,\tau))\,{\rm d}\tau.$

\smallskip
In fact, this can be seen as follows{\rm :}
By {\rm Lemma} $\ref{lem:c2.3}$, for any $x\neq x'$ and $t'> t$,
\begin{align*}
&\int^{x'}_x u(\xi,t) \,{\rm d}\xi=\hat{E}(x,t)-\hat{E}(x',t),
&&\int^{x'}_x u(\xi,t') \,{\rm d}\xi=\hat{E}(x,t')-\hat{E}(x',t'),\\[2mm]
&\int^{t'}_t f(u(x,\tau)) \,{\rm d}\tau=\hat{E}(x,t')-\hat{E}(x,t),
&&\int^{t'}_t f(u(x',\tau)) \,{\rm d}\tau=\hat{E}(x',t')-\hat{E}(x',t),
\end{align*}
so that
\begin{align*}
\int^{x'}_x\big(u(\xi,t')-u(\xi,t)\big)\,{\rm d}\xi
+\int^{t'}_t\big(f(u(x',\tau))-f(u(x,\tau))\big)\,{\rm d}\tau=0,
\end{align*}
which, by multiplying $\frac{1}{(x'-x)(t'-t)}$, implies $\eqref{c7.24}$.
\end{Rem}

\subsubsection{Proof of {\rm Corollary} $\ref{cor:c2.1}$}
The proof is divided into three steps.

\smallskip
\noindent
{\bf 1.} We first prove the $L^1-$contraction inequality $\eqref{c2.51}$.
Let the entropy solutions $u_i(x,t)$ be determined by $E_i(u;x,t)$ as in $\eqref{c2.1}$
with initial data functions $\varphi_i(x)\in L^\infty(\mathbb{R})$ for $i=1,2$.

\smallskip
{\bf (a).} We claim: For any $x_1,x_2\in \mathbb{R}$ and $t>0$,
\begin{equation}\label{c2.52}
\int_{x_1-tf'(u_{1,1})}^{x_2-tf'(u_{2,2})}(\varphi_2-\varphi_1)\,{\rm d}x
\leq\int_{x_1}^{x_2}(u_2-u_1)\,{\rm d}x
\leq\int_{x_1-tf'(u_{2,1})}^{x_2-tf'(u_{1,2})}(\varphi_2-\varphi_1)\,{\rm d}x,
\end{equation}
where $u_{i,j}\in \mathcal{U}_i(x_j,t)$ for $i,j=1,2$.

\smallskip
In fact, for any fixed $x_1,x_2\in \mathbb{R}$ and $t>0$,
$u_{2,1}\in \mathcal{U}_2(x_1,t)$ and $u_{1,2}\in \mathcal{U}_1(x_2,t)$.
From $\eqref{c2.5}$,
taking $u_i(x,t)$ into $\eqref{c2.13}$ respectively and subtracting them yield
\begin{align}\label{c2.53}
\int_{x_1}^{x_2}(u_2-u_1)\,{\rm d}x
&=\big(\hat{E}_2(x_1,t)-\hat{E}_1(x_1,t)\big)-\big(\hat{E}_2(x_2,t)-\hat{E}_1(x_2,t)\big)\nonumber\\
&\leq\big(E_2(u_{2,1};x_1,t)-E_1(u_{2,1};x_1,t)\big)-\big(E_2(u_{1,2};x_2,t)-E_1(u_{1,2};x_2,t)\big)\nonumber\\[1mm]
& \quad\,\,+\int^{x_2-tf'(0)}_{x_1-tf'(0)}\big(\varphi_2(\xi)-\varphi_1(\xi)\big)\,{\rm d}\xi\nonumber\\[1mm]
&= t\int_0^{u_{2,1}} f''(s)\big(\varphi_2 (x_1-tf'(s))-\varphi_1 (x_1-tf'(s))\big)\,{\rm d}s
\nonumber\\[1mm]
& \quad\,-t\int_0^{u_{1,2}} f''(s)\big(\varphi_2 (x_2-tf'(s))-\varphi_1 (x_2-tf'(s))\big)\,{\rm d}s
 \nonumber\\[1mm]
&\quad\,+\int^{x_2-tf'(0)}_{x_1-tf'(0)}\big(\varphi_2(\xi)-\varphi_1(\xi)\big)\,{\rm d}\xi\nonumber\\[1mm]
&=\bigg({-}\int^{x_1-tf'(u_{2,1})}_{x_1-tf'(0)}{+}\int_{x_2-tf'(0)}^{x_2-tf'(u_{1,2})}{+}
\int^{x_2-tf'(0)}_{x_1-tf'(0)}\bigg)\big(\varphi_2(\xi)-\varphi_1(\xi)\big)\,{\rm d}\xi\nonumber\\[1mm]
&=\int_{x_1-tf'(u_{2,1})}^{x_2-tf'(u_{1,2})}(\varphi_2-\varphi_1)\,{\rm d}x.
\end{align}
Similarly, from $u_{1,1}\in \mathcal{U}_1(x_1,t)$ and $u_{2,2}\in \mathcal{U}_2(x_2,t)$,
it is direct to check that
\begin{align}\label{c2.55}
\int_{x_1}^{x_2}(u_2-u_1)\,{\rm d}x
&=\big(\hat{E}_2(x_1,t)-\hat{E}_1(x_1,t)\big)-\big(\hat{E}_2(x_2,t)-\hat{E}_1(x_2,t)\big)\nonumber\\
&\geq \big(E_2(u_{1,1};x_1,t)-E_1(u_{1,1};x_1,t)\big)-\big(E_2(u_{2,2};x_2,t)-E_1(u_{2,2};x_2,t)\big)
\nonumber\\[1mm]
&\quad\,\,+\int^{x_2-tf'(0)}_{x_1-tf'(0)}\big(\varphi_2(\xi)-\varphi_1(\xi)\big)\,{\rm d}\xi
\nonumber\\[1mm]
&\geq\int_{x_1-tf'(u_{1,1})}^{x_2-tf'(u_{2,2})}(\varphi_2-\varphi_1)\,{\rm d}x.
\end{align}
Combining $\eqref{c2.53}$ with $\eqref{c2.55}$, we obtain $\eqref{c2.52}$.

\smallskip
{\bf (b).} From $\eqref{cony}$, for any $t>0$,
$$
f'(u_2(x\pm,t))-f'(u_1(x\pm,t))=\frac{y_1(x,t,u_1(x\pm,t))-y_2(x,t,u_2(x\pm,t))}{t}.
$$
Let $y^{\pm}_i(x,t):=y_i(x,t,u_i(x\pm,t))$ for $i=1,2$.
Since $f'(u)$ is strictly increasing, $f'(u_2(x\pm,t))-f'(u_1(x\pm,t))$ and hence
$y^{\pm}_1(x,t)-y^{\pm}_2(x,t)$ have the same signs
as $u_2(x\pm,t)-u_1(x\pm,t)$.

\smallskip
Since $y^{\pm}_1(\cdot,t)$ and $y^{\pm}_2(\cdot,t)$ are both nondecreasing in $x$,
then, for any $a,b\in \mathbb{Q}$ with $a<b$, the set:
$
\big\{x\in \mathbb{R}\,:\, y^{\pm}_1(x,t)<a\big\}\cap\big\{x\in \mathbb{R}\,:\, y^{\pm}_2(x,t)>b\big\}
$
is the intersection of two half-lines.
By removing at most countable points at which $y^{\pm}_1(\cdot,t)$ or $y^{\pm}_2(\cdot,t)$ is discontinuous,
we see that
$$
\big\{x\,:\,y^{\pm}_1(x,t)-y^{\pm}_2(x,t)<0\big\}
=\mathsmaller{\bigcup}_{a<b;\, a,b\in \mathbb{Q}}\big(\big\{x\,:\, y^{\pm}_1(x,t)<a\big\}
\cap\big\{x\,:\, y^{\pm}_2(x,t)>b\big\}\big)
$$
is an open set, so it is a countable union of disjoint open intervals
$\cup_n X_n:=\cup_n (x_{n,l},x_{n,r})$.
Similarly, the set: $\{x\,:\, y^{\pm}_1(x,t)-y^{\pm}_2(x,t)>0\}$
is a countable union of disjoint open intervals
$\cup_m Y_m:=\cup_m (y_{m,l},y_{m,r})$.
Furthermore, any two open intervals in $\{X_n,Y_m\}_{n,m}$ are disjoint.

\smallskip
For $x_1<x_2$, denote $[x_1(0),\,x_2(0)]:=[x_1-tf'(M(x_1,t)),\,x_2-tf'(m(x_2,t))]$.
Furthermore, for $X_n,Y_m$ in $\{X_n,Y_m\}_{n,m}$, denote
\begin{align*}
\begin{cases}
\displaystyle X_n(0):=(x_{n,l}-tf'(u_1(x_{n,l},t)),\,x_{n,r}-tf'(u_2(x_{n,r},t))),\\[2mm]
\displaystyle Y_m(0):=(y_{m,l}-tf'(u_2(y_{m,l},t)),\,y_{m,r}-tf'(u_1(y_{m,r},t))).
\end{cases}
\end{align*}
From $\eqref{c2.52}$, for any $t>0$ and $x_1,x_2\in \mathbb{R}$ with $x_1<x_2$,
\begin{align}\label{u2u1+}
\int_{[x_1,x_2]\bigcap \{u_2-u_1>0\}}(u_2-u_1)\,{\rm d}x
&=\int_{[x_1,x_2]\bigcap \{y^\pm_1(x,t)-y^\pm_2(x,t)>0\}}(u_2-u_1)\,{\rm d}x\nonumber\\[1mm]
& =\sum_m\int_{[x_1,x_2]\bigcap (y_{m,l},y_{m,r})}(u_2-u_1)\,{\rm d}x
\nonumber\\[1mm]
&
\leq\sum_m\int_{[x_1(0),x_2(0)]\bigcap Y_m(0)}(\varphi_2-\varphi_1)\,{\rm d}x,
\end{align}
where the last step is inferred by $\eqref{c2.52}$ with the following facts:
For the case that $x_1\in (y_{m,l},y_{m,r})$ for some $m$, $x_1(0)=x_1-tf'(u_2(x_1,t))$;
for the case that $x_2\in (y_{m',l},y_{m',r})$ for some $m'$, $x_2(0)=x_2-tf'(u_1(x_2,t))$.
Similarly, we have
\begin{equation}\label{u2u1-}
\int_{[x_1,x_2]\bigcap \{u_2-u_1<0\}}(u_2-u_1)\,{\rm d}x
\geq \sum_n\int_{[x_1(0),x_2(0)]\bigcap X_n(0)}(\varphi_2-\varphi_1)\,{\rm d}x.
\end{equation}

Since any two open intervals in $\{X_n,Y_m\}_{n,m}$ are disjoint,
by $\eqref{c2.31}$,
any two open intervals in $\{X_n(0),Y_m(0)\}_{n,m}$ do not intersect each other.
From $\eqref{u2u1+}$--$\eqref{u2u1-}$, we conclude
\begin{align*}
\int_{x_1}^{x_2}|u_2-u_1|\,{\rm d}x
& =-\int_{[x_1,x_2]\bigcap \{u_2-u_1<0\}}(u_2-u_1)\,{\rm d}x
+\int_{[x_1,x_2]\bigcap \{u_2-u_1>0\}}(u_2-u_1)\,{\rm d}x\\[1mm]
& \leq-\sum_n\int_{[x_1(0),x_2(0)]\bigcap X_n(0)}(\varphi_2-\varphi_1)\,{\rm d}x
+\sum_m\int_{[x_1(0),x_2(0)]\bigcap Y_m(0)}(\varphi_2-\varphi_1)\,{\rm d}x\\[1mm]
& \leq\bigg(\sum_n\int_{[x_1(0),x_2(0)]\bigcap X_n(0)}
+\sum_m\int_{[x_1(0),x_2(0)]\bigcap Y_m(0)}\bigg)|\varphi_2-\varphi_1|\,{\rm d}x\\[1mm]
& \leq\int_{[x_1(0),x_2(0)]}|\varphi_2-\varphi_1|\,{\rm d}x
=\int_{x_1-tf'(M(x_1,t))}^{x_2-tf'(m(x_2,t))}|\varphi_2-\varphi_1|\,{\rm d}x.
\end{align*}

\smallskip
\noindent
{\bf 2.} We now prove $\eqref{c2.50b}$.
Let the entropy solutions $u _i(x,t)$ be determined by $E_i(u;x,t)$ as in $\eqref{c2.1}$
with initial data functions $\varphi_i(x)\in L^\infty(\mathbb{R})$ for $i=1,2$.

To prove $u_1(x+,t)\leq u_2(x+,t)$ on $\mathbb{R}\times \mathbb{R}^+$,
denote $u^+_1:=u_1(x+,t)$.
Then $\eqref{c2.4}$ implies that
$E_1(u^+_1;x,t)-E_1(u;x,t)>0$ for any $u<u^+_1$.
Since $\varphi_1(x)\leq\varphi_2(x)\ { a.e.}\ {\rm on}\ \mathbb{R}$,
then, if $u<u^+_1$,
\begin{align*}
E_2(u^+_1;x,t)-E_2(u;x,t)
&=E_1(u^+_1;x,t)-E_1(u;x,t)\\
&\quad \, +t\int_u^{u_1^+} f''(s)\big(\varphi_2 (x-tf'(s))-\varphi_1 (x-tf'(s))\big)\,{\rm d}s\\[1mm]
&>t\int_u^{u_1^+} f''(s)\big(\varphi_2 (x-tf'(s))-\varphi_1 (x-tf'(s))\big)\,{\rm d}s
\geq 0.
\end{align*}
Since $E_2(\cdot\,;x,t)$ attains its maximum at $u_2(x+,t)$, then $u_2(x+,t)\geq u_1(x+,t)$.

\smallskip
To prove $u_1(x-,t)\leq u_2(x-,t)$ on $\mathbb{R}\times \mathbb{R}^+$,
denote $u^-_1:=u_1(x-,t)$. Then $\eqref{c2.4}$ implies that
$E_1(u^-_1; x,t)-E_1(u;x,t)\geq 0$ for any $u\in \mathbb{R}$.
Since $\varphi_1(x)\leq\varphi_2(x)\ { a.e.}\ {\rm on}\ \mathbb{R}$,
then, if $u<u^-_1$,
\begin{align*}
E_2(u^-_1;x,t)-E_2(u;x,t)
&=E_1(u^-_1;x,t)-E_1(u;x,t)\\
&\quad \, +t\int_u^{u_1^-} f''(s)\big(\varphi_2 (x-tf'(s))-\varphi_1 (x-tf'(s))\big)\,{\rm d}s\\[1mm]
&\geq t\int^{u^-_1}_{u}f''(s)\big(\varphi_2(x-tf'(s))-\varphi_1(x-tf'(s))\big)\,{\rm d}s
\geq 0.
\end{align*}
Since $u^-_2(x,t)$ is the supremum of the set of points at which
$E_2(\cdot\,;x,t)$ attains its maximum, then $u_2(x-,t)\geq u_1(x-,t)$.

\medskip
\noindent
{\bf 3.} We now prove $\eqref{c7.30}$ and $\eqref{c7.31}$.

\smallskip
{\bf (a).} To prove $\eqref{c7.30}$,
suppose that $\varphi_n,\varphi \in L^\infty$,
and $\{\varphi_n\}_n$ is an increasing sequence such that
$\lim_{n\rightarrow \infty}\|\varphi_n-\varphi\|_{L^\infty}=0$.
From $\eqref{c2.50b}$,
$$
u_1(x+,t)\leq u_2(x+,t)\leq \cdots\leq u(x+,t).
$$

Denote $\hat{u}(x,t):=\lim_{n\rightarrow \infty}u_n(x+,t)$.
Then $\hat{u}(x,t)\leq u(x+,t)$.
Thus, it suffices to show that $\hat{u}(x,t)\geq u(x+,t)$.
Let $E_n(u;x,t)$ and $E(u;x,t)$ be given by $\eqref{c2.1}$ with $\varphi_n(x)$ and $\varphi(x)$, respectively.
Then, for any fixed $(x,t)\in \mathbb{R}\times \mathbb{R}^+$,
\begin{equation}\label{c7.32}
E(u;x,t)=\lim_{n\rightarrow \infty}E_n(u;x,t)
\qquad{ \rm for\ any}\ u\in \mathbb{R}.
\end{equation}

Since $\{u_n(x+,t)\}$ and $\{\varphi_n\}$ are both uniformly bounded sequences,
it follows from $\eqref{c7.32}$ that,
for any fixed $(x,t)\in \mathbb{R}\times \mathbb{R}^+$,
\begin{align}\label{c7.33}
E(\hat{u};x,t)-E_n(u_n^+;x,t)
&=E(\hat{u};x,t)-E_n(\hat{u};x,t)\nonumber\\
&\quad\,
+t\int^{\hat{u}}_{u^+_n}f''(s)\big(\varphi_n(x-tf'(s))-s\big)\,{\rm d}s
\rightarrow 0 \qquad\, \mbox{as $n\to \infty$},
\end{align}
where $\hat{u}:=\hat{u}(x,t)$ and $u_n^+:=u_n(x+,t)$.

\vspace{2pt}
On the other hand, from $\eqref{c2.4}$, for every $n$,
$E_n(u;x,t)\leq E_n(u^+_n;x,t)$ for any $u\in \mathbb{R}$.
Then $\eqref{c7.32}$--$\eqref{c7.33}$ imply that,
for any $u\in \mathbb{R}$,
\begin{equation*}
E(\hat{u};x,t)= \lim_{n\rightarrow \infty}E_n(u_n^+;x,t)
\geq \lim_{n\rightarrow \infty}E_n(u;x,t)= E(u;x,t),
\end{equation*}
which means that $E(\cdot\,;x,t)$ attains its maximum at $\hat{u}=\hat{u}(x,t)$.
Since $u(x+,t)$ is the infimum of the set of points at which $E(\cdot\,;x,t)$ attains its maximum,
then $\hat{u}(x,t)\geq u(x+,t)$.

\smallskip
{\bf (b).} To prove $\eqref{c7.31}$,
suppose that $\varphi_n,\varphi \in L^\infty$,
and $\{\varphi_n\}_n$ is a decreasing sequence such that
$\lim_{n\rightarrow \infty}\|\varphi_n-\varphi\|_{L^\infty}=0$.
From $\eqref{c2.50b}$,
$$
u_1(x-,t)\geq u_2(x-,t)\geq \cdots\geq u(x-,t).
$$

\vspace{2pt}
Denote $\check{u}(x,t):=\lim_{n\rightarrow \infty}u_n(x-,t)$.
Then $\check{u}(x,t)\geq u(x-,t)$.
Thus, it suffices to show that $\check{u}(x,t)\leq u(x-,t)$.
Let $E_n(u;x,t)$ and $E(u;x,t)$ be given by $\eqref{c2.1}$ with $\varphi_n(x)$ and $\varphi(x)$, respectively.
Then, for any fixed $(x,t)\in \mathbb{R}\times \mathbb{R}^+$,
\begin{equation}\label{c7.35}
E(u;x,t)=\lim_{n\rightarrow \infty}E_n(u;x,t)\qquad\,\,{ \rm for\ any}\ u\in \mathbb{R}.
\end{equation}

Since $\{u_n(x-,t)\}$ and $\{\varphi_n\}$ are both uniformly bounded sequences,
it follows from $\eqref{c7.35}$ that,
for any fixed $(x,t)\in \mathbb{R}\times \mathbb{R}^+$,
\begin{align}\label{c7.36}
E(\check{u};x,t)-E_n(u_n^-;x,t)
&=E(\check{u};x,t)-E_n(\check{u};x,t)
\nonumber\\
&\quad \,
+t\int^{\check{u}}_{u^-_n}f''(s)\big(\varphi_n(x-tf'(s))-s\big)\,{\rm d}s\, \rightarrow 0
\qquad\, \mbox{as $n\to \infty$},
\end{align}
where $\check{u}:=\check{u}(x,t)$ and $u_n^-:=u_n(x-,t)$.

\vspace{2pt}
On the other hand, from $\eqref{c2.4}$, for every $n$,
$E_n(u;x,t)\leq E_n(u^-_n;x,t)$ holds for any $u\in \mathbb{R}$.
Then $\eqref{c7.35}$--$\eqref{c7.36}$ imply that,
for any $u\in \mathbb{R}$,
$$
E(\check{u};x,t)=\lim\limits_{n\rightarrow \infty}E_n(u^-_n;x,t)\geq \lim\limits_{n\rightarrow \infty}E_n(u;x,t)= E(u;x,t),
$$
which means that $E(\cdot\,;x,t)$ attains its maximum at $\check{u}(x,t)$.
Since $u(x-,t)$ is the supremum of the set of points at which $E(\cdot\,;x,t)$ attains its maximum,
then $\check{u}(x,t)\leq u(x{-},t)$.

\smallskip
To sum up, we have completed the proof of Corollary $\ref{cor:c2.1}$.

\subsubsection{Proof of {\rm Corollary} $\ref{cor:c2.2}$}
The proof is divided into three steps.

\smallskip
\noindent
{\bf 1.} We first show that, for any $(x,t)\in \mathbb{R}\times \mathbb{R}^+$ with $t>\tau$,
$$
c\in \mathcal{U}(x,t;\tau)
\qquad {\rm if\ and\ only\ if}
\qquad c\in \mathcal{U}(x,t).
$$

\vspace{1pt}
{\bf (a).} Let $c\in \mathcal{U}(x,t)$.
For any $w\in \mathbb{R}$, denote $y(w):=x-(t-\tau)f'(w)$
and
$$
u^-(w):=u(y(w)-,\tau), \qquad x(w):= y(w)-\tau f'(u^-(w)).
$$
Then, choosing $v$ so that $x=x(w)+tf'(v)$,
we have
\begin{align}\label{c2.63}
\begin{cases}
\displaystyle x-tf'(c)=y(c)-\tau f'(c),\\[1mm]
\displaystyle x-tf'(v)=y(w)-\tau f'(u^-(w)),\\[1mm]
\displaystyle tf'(v)=(t-\tau)f'(w)+\tau f'(u^-(w));
\end{cases}
\end{align}
see also Fig. $\ref{figTransport}$.
Taking $x_1=y(c)$, $x_2=y(w)$, and $t=\tau$ into $\eqref{c2.13}$ leads to
\begin{equation}\label{c2.64}
\int_{y(c)}^{y(w)}u(\xi,\tau)\,{\rm d}\xi=\hat{E}(y(c),\tau)-\hat{E}(y(w),\tau).
\end{equation}
Since $u(y(w)-,\tau)\in \mathcal{U}(y(w),\tau)$ and $c\in \mathcal{U}(x,t)$,
from $\eqref{c2.63}$--$\eqref{c2.64}$,
for any $w\in \mathbb{R}$,
\begin{align}\label{c2.65}
&\ E(c;x,t;\tau)-E(w;x,t;\tau) \nonumber\\[1mm]
&=(t-\tau)\int_w^c f''(s)\big(u(x-(t-\tau)f'(s),\tau)-s\big)\,{\rm d}s \nonumber\\[1mm]
&=\int_{y(c)}^{y(w)}u(\xi,\tau)\,{\rm d}\xi-(t-\tau)\int_w^c sf''(s)\,{\rm d}s \nonumber\\[1mm]
&=\hat{E}(y(c),\tau)-\hat{E}(y(w),\tau)+(t-\tau)\int_c^w sf''(s)\,{\rm d}s
\nonumber\\[1mm]
&\geq E(c;y(c),\tau)-E(u^-(w);y(w),\tau)+(t-\tau)\int_c^w sf''(s)\,{\rm d}s
+\int_{y(c)-\tau f'(0)}^{y(w)-\tau f'(0)}\varphi(\xi)\,{\rm d}\xi \nonumber\\[1mm]
&=\int_{y(c)-\tau f'(c)}^{y(w)-\tau f'(u^-(w))}\varphi(\xi)\,{\rm d}\xi
+(t-\tau)\int_c^w sf''(s)\,{\rm d}s +\tau\int_c^{u^-(w)}sf''(s)\,{\rm d}s \nonumber\\[1mm]
&=\int_{x-tf'(c)}^{x-t f'(v)}\varphi(\xi)\,{\rm d}\xi
+(t-\tau)\int_c^w sf''(s)\,{\rm d}s +\tau\int_c^{u^-(w)}sf''(s)\,{\rm d}s \nonumber\\[1mm]
& =E(c;x,t)-E(v;x,t)+(t-\tau)\int_v^w sf''(s)\,{\rm d}s+\tau\int_v^{u^-(w)} sf''(s)\,{\rm d}s \nonumber\\[1mm]
& \geq (t-\tau)\int_v^w sf''(s)\,{\rm d}s+\tau\int_v^{u^-(w)} sf''(s)\,{\rm d}s
\geq 0,
\end{align}
where the last step follows from the convexity of $f(u)$:
\begin{align*}
&\ (t-\tau)\int_v^w sf''(s)\,{\rm d}s+\tau\int_v^{u^-(w)} sf''(s)\,{\rm d}s\\[1mm]
&=(t-\tau)\big(sf'(s)\big|_v^w+ f(v)-f(w)\big)+\tau\big(sf'(s)\big|_v^{u^-(w)}+ f(v)-f(u^-(w))\big)\\[1mm]
&\geq (t-\tau)\big(sf'(s)\big|_v^w+f'(w)(v-w)\big)
+\tau\big(sf'(s)\big|_v^{u^-(w)}+f'(u^-(w))(v-u^-(w))\big)\\[1mm]
&=v\big({-}tf'(v)+(t-\tau)f'(w)+\tau f'(u^-(w))\big)
=0.
\end{align*}
This implies that $c\in \mathcal{U}(x,t;\tau)$.

\begin{figure}[H]
	\begin{center}
		{\includegraphics[width=0.7\columnwidth]{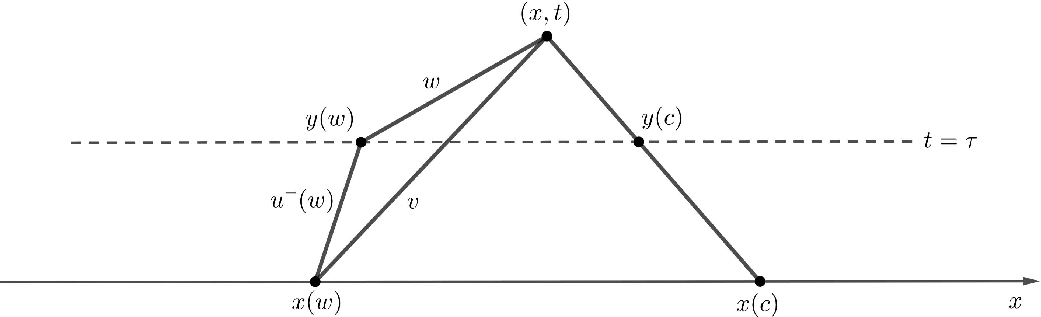}}
 \caption{The positional relation of $y(w)$ and $x(w)$ as in $\eqref{c2.63}$.}\label{figTransport}
	\end{center}
\end{figure}

\smallskip
{\bf (b).}
Let $c\in \mathcal{U}(x,t;\tau)$.
Then, for any $(x,t)$ with $t>\tau$,
$$
E(c;x,t;\tau)-E(w;x,t;\tau)\geq 0
\qquad\,\, {\rm for\ any}\ w\in \mathbb{R},
$$
which, by $y(w)=x-(t-\tau)f'(w)$, implies
\begin{equation}\label{c2.66}
\int_{y(c)}^{y(w)}(u(\xi,\tau)-c)\,{\rm d}\xi\geq-(t-\tau)\int_c^w (s-c)f''(s)\,{\rm d}s
\qquad {\rm for\ any}\ w\in \mathbb{R}.
\end{equation}
Since $y(w)-y(c)=(t-\tau)\big(f'(c)-f'(w)\big)$, by the strictly increasing property of $f'(u)$,
$$
y(w)-y(c)<0\quad \mbox{if}\ w>c, \qquad\,\,\, y(w)-y(c)>0 \quad \mbox{if}\ w<c.
$$
Notice that solution $u(x,t)$ has both the left- and right-traces.
It follows from $\eqref{c2.66}$ that
\begin{align*}
u(y(c){-},\tau)-c
&=\lim_{w\rightarrow c{+}}\dfrac{1}{y(w)-y(c)}\int_{y(c)}^{y(w)}(u(\xi,\tau)-c)\,{\rm d}\xi \\[1mm]
&\leq \lim_{w\rightarrow c{+}}\frac{1}{f'(w)-f'(c)}\int_c^w (s-c)f''(s)\,{\rm d}s\\[1mm]
&=\lim_{w\rightarrow c{+}}\frac{(w-c)f'(w)-\big(f(w)-f(c)\big)}{f'(w)-f'(c)}\\[1mm]
&=\lim_{w\rightarrow c{+}}\frac{f'(w)-f'(\zeta)}{f'(w)-f'(c)}(w-c) =0,
\end{align*}
where $\zeta$ lies between $w$ and $c$.

\vspace{2pt}
Similarly, we see that $u(y(c){+},\tau)-c\geq 0$ so that
$
u(y(c){-},\tau)\leq c\leq u(y(c){+},\tau),
$
which, by $u(y(c){-},\tau)=u^-(y(c),\tau)\geq u^+(y(c),\tau)= u(y(c){+},\tau)$,
implies that $u(y(c){\pm},\tau)=c$. This means that
$\mathcal{U}(y(c),\tau)=\{c\}$.
Then it follows from $\eqref{c2.65}$ that,
for any $w\in \mathbb{R}$,
\begin{align*}
0&\leq E(c;x,t;\tau)-E(w;x,t;\tau) \\[1mm]
&=\hat{E}(y(c),\tau)-\hat{E}(y(w),\tau)+(t-\tau)\int_c^w sf''(s)\,{\rm d}s
\\[1mm]
&\leq E(c;y(c),\tau)-E(w;y(w),\tau)+(t-\tau)\int_c^w sf''(s)\,{\rm d}s+\int_{y(c)-\tau f'(0)}^{y(w)-\tau f'(0)}\varphi(\xi)\,{\rm d}\xi\\[1mm]
&=-\int_{x-tf'(w)}^{x-tf'(c)}\varphi(\xi)\,{\rm d}\xi-t\int_w^c sf''(s)\,{\rm d}s
=E(c;x,t)-E(w;x,t),
\end{align*}
which implies that $c\in \mathcal{U}(x,t)$.

\smallskip
\noindent
{\bf 2.} For any fixed $(x,t)\in \mathbb{R}\times \mathbb{R}^+$ with $t>\tau$,
choosing $c\in \mathcal{U}(x,t)=\mathcal{U}(x,t;\tau)$, then, from $\eqref{c2.58}$,
$$
\hat{E}(x,t;\tau)=E(c;x,t;\tau)+\hat{E}(x-(t-\tau)f'(0),\tau),
$$
which, by using $\eqref{c2.64}$ with $w=0$, implies
\begin{align*}
\hat{E}(x,t;\tau)
&=-\int_{y(0)}^{y(c)}u(\xi,\tau)\,{\rm d}\xi-(t-\tau)\int_0^c sf''(s)\,{\rm d}s+\hat{E}(x-(t-\tau)f'(0),\tau)\\[1mm]
&=\hat{E}(y(c),\tau)-\hat{E}(y(0),\tau)-(t-\tau)\int_0^c sf''(s)\,{\rm d}s+\hat{E}(y(0),\tau)\\[1mm]
&=\hat{E}(y(c),\tau)-(t-\tau)\int_0^c sf''(s)\,{\rm d}s\\[1mm]
&=-\int_0^{x-tf'(c)}\varphi(\xi)\,{\rm d}\xi-t\int_0^c sf''(s)\,{\rm d}s
=\hat{E}(x,t).
\end{align*}

In particular, by the definitions of $u(x,t)$ and $v(x,t)$ in $\eqref{c2.4}$ and $\eqref{c2.58}$,
respectively, it follows from $\mathcal{U}(x,t;\tau)=\mathcal{U}(x,t)$ in $\eqref{c2.61c}$
that $\eqref{c2.61}$ holds.

\smallskip
\noindent
{\bf 3.}
According to $\eqref{c2.51}$,
to prove that the map $S_t$ determines a contractive semigroup in the $L^1_{\rm loc}$--norm,
it suffices to verify that
$S_t\varphi=S_{t-\tau}S_{\tau}\varphi$ for any $\tau\in [0,t]$.
In fact, this is directly implied by $\eqref{c2.61}$ via
$$S_t\varphi=u(\cdot,t)=v(\cdot,t)=S_{t-\tau}u(\cdot,\tau)=S_{t-\tau}S_{\tau}\varphi.$$

\smallskip
Up to now, we have completed the proof of Corollary $\ref{cor:c2.2}$.

\begin{Rem}\label{rem:c1.2}
The function, $E(u;x,t)$, defined by $\eqref{c2.1}$ can be understood formally as follows{\rm :}
It is well known that the local smooth solution of the Cauchy problem $\eqref{c1.1}$--$\eqref{ID}$,
if it exists, is determined by the following implicit function of $u\in \mathbb{R}$,
\begin{equation}\label{smooth}
\varphi(x-tf'(u))-u=0.
\end{equation}
If the local smooth solution blows up in a finite time,
$\eqref{smooth}$ can not hold exactly as time goes on, because
the characteristics will interact with each other.
{\it Even in this case,
the integral of $\varphi(x-tf'(u))-u$ with respect to $u$ as in $\eqref{c2.1}$
can exactly describe the interaction of characteristics in some sense}.
This insight indicates that, as shown in {\rm \S 8},
it is possible to identify the formula for entropy solutions for hyperbolic conservation laws
if such an intrinsic formula for the smooth solutions is known,
which may reveal a deeper connection
between smooth solutions and entropy solutions for hyperbolic conservation laws.
\end{Rem}

\begin{Rem}
As shown in Theorem $\ref{the:mt}$, for any point $(x,t)$ with $t>0$,
$E(u;x,t)$ in $\eqref{c2.1}$ as a function of $u$ attains 
its maximum at $u=u(x,t)$ in $\eqref{c2.50}$,
which is the unique entropy solution of the Cauchy problem $\eqref{c1.1}$--$\eqref{ID}$.
For the subsequent development, based on the analysis of function $E(u;x,t)$
and function $\Phi(x)=\int_0^x\varphi(\xi)\,{\rm d}\xi$
with its convex hull $\bar{\Phi}(x)$ over $({-}\infty,\infty)$
defined by $\eqref{c5.4}$--$\eqref{c5.5}$,
we can obtain the exact descriptions of the set $\mathcal{C}(x_0)$
in Definition $\ref{def:c4.1}$
as the set of the speeds of characteristic lines emitting from $x_0$ on the $x$--axis,
the lifespan expressions $\eqref{plc2}$ for $t_p(x_0,c)$ and $\eqref{elc1}$ for $t_*(x_0,c)$
of any characteristic line $L_{c}(x_0)$ emitting from $x_0$ on the $x$--axis
with speed $c\in \mathcal{C}(x_0)$,
the set $\mathcal{U}(x,t)$ in $\eqref{c2.3}$
as the set of the speeds of the shock-free backward characteristics emitting from point $(x,t)$,
the set $\mathcal{D}(x_0)$ in Definition $\ref{def:c5.1}$
as the set of the speeds of the divides emitting from $x_0$ on the $x$--axis,
and the point set $\mathcal{K}_0$ in $\eqref{c5.6}$ as the set of
the divide generation points.
Using the exact descriptions of sets $\mathcal{C}(x_0)$, $\mathcal{U}(x,t)$,
$\mathcal{D}(x_0)$, and $\mathcal{K}_0$,
and lifespans $t_p(x_0,c)$ and $t_*(x_0,c)$,
we can obtain various fine properties of entropy solutions, especially including
the generation of initial waves for the Cauchy problem,
the structures of entropy solutions inside the backward characteristic triangles,
the formation and development of shocks,
the necessary and sufficient conditions respectively for the divides as well as their locations and speeds,
the global structures of entropy solutions,
and the asymptotic profiles and decay rates respectively in the $L^\infty$--norm and the $L^p_{{\rm loc}}$--norm.
\end{Rem}

\smallskip
\section{Characteristics and Initial Waves for the Cauchy Problem}
In this section, we focus on the dynamic behaviors of characteristics
and the generation of initial waves of the Cauchy problem.
In \S 3.1, we establish the criteria for all the cases of 
the sets of characteristic generation values in Theorem $\ref{the:c4.0}$
with Propositions $\ref{pro:c4.2}$--$\ref{pro:c4.4}$;
in \S 3.2, the criteria for all the six types of initial waves for the Cauchy problem 
are established by providing and proving the necessary and sufficient conditions
for all the six types of initial waves for the Cauchy problem in Theorem $\ref{the:c4.1}$.
In \S 3.3--3.4, we introduce and prove 
an upper bound of lifespan and the lifespan of characteristics in Theorems $\ref{lem:plc1}$--$\ref{pro:elc1}$,
and provide a classification of characteristics based on the ways
of their terminations in Theorem $\ref{the:elc0}$.

\subsection{Generation of characteristics }
For any point $x_0\in \mathbb{R}$ and constant $c\in \mathbb{R}$,
define the line:
\begin{equation}\label{c4.1c}
L_c(x_0):=\big\{(x,t)\,:\, x=x_0+tf'(c) \,\,\,\mbox{for} \,\,\, t>0\big\}.
\end{equation}

\begin{Def}\label{def:c4.1}
$L_c(x_0)$ in $\eqref{c4.1c}$ is called a characteristic
emitting from $x_0$
if there exists $(x,t)\in L_c(x_0)$ such that $c\in \mathcal{U}(x,t)$,
for which $c$ is called a characteristic generation value of $x_0$.
The point set $\mathcal{C}(x_0)$ of characteristic generation values of $x_0$ is defined by
\begin{equation}\label{c4.0}
\mathcal{C}(x_0)
:=\big\{c \in \mathbb{R}\,:\, c \in \mathcal{U}(x,t)\,\,\, {\rm for\ some\ } (x,t)\in L_c(x_0) \big\}.
\end{equation}
\end{Def}

From $\eqref{c2.7}$, it is direct to see that
$\mathcal{C}(x_0)\subset [-\|\varphi\|_{L^\infty},\|\varphi\|_{L^\infty}]$.
As shown later in this section, $\mathcal{C}(x_0)$ can be determined by
$\overline{{\rm D}}_- \Phi(x_0)$ and $\underline{{\rm D}}_+ \Phi(x_0)$ in $\eqref{c4.1}$.
Let
\begin{equation}\label{c4.1a}
\Phi(x)=\int^x_{0} \varphi(\xi)\,{\rm d}\xi.
\end{equation}
Then the Dini derivatives of $\Phi(x)$ at a point $x_0\in \mathbb{R}$ are given by
\begin{equation}\label{c4.1}
\begin{cases}
\overline{{\rm D}}_\pm\Phi(x_0)
:=\displaystyle\limsup\limits_{y\rightarrow x_{0}{\pm }}\frac{1}{y-x_0}\int^y_{x_0} \varphi(\xi)\,{\rm d}\xi
=\limsup\limits_{l\rightarrow 0\pm}\frac{1}{l}\int^{x_0+l}_{x_0} \varphi(\xi)\,{\rm d}\xi, \\[4mm]
\underline{{\rm D}}_\pm\Phi(x_0)
:=\displaystyle\liminf\limits_{y\rightarrow x_{0}{\pm }}\frac{1}{y-x_0}\int^y_{x_0} \varphi(\xi)\,{\rm d}\xi
=\liminf\limits_{l\rightarrow 0\pm}\frac{1}{l}\int^{x_0+l}_{x_0} \varphi(\xi)\,{\rm d}\xi.
\end{cases}
\end{equation}

For any point $(x,t)\in L_c(x_0)$,
from $\eqref{c2.1}$--$\eqref{c2.4}$,
$c\in \mathcal{U}(x,t)$ if and only if,
for any $u\in \mathbb{R}$,
\begin{align}\label{c4.2}
0\leq E(c;x,t)-E(u;x,t)
=\int^{x_0+l}_{x_0}(\varphi(\xi)-c)\,{\rm d}\xi+t\int^u_c(s-c)f''(s)\,{\rm d}s,
\end{align}
where $l:=t(f'(c)-f'(u))$.
Equivalently, $c\in \mathcal{U}(x,t)$ if and only if,
for any $l=
l(u;t,c)$,
\begin{align}\label{c4.2b}
\Phi(l;x_0,c)
:=\int^{x_0+l}_{x_0}(\varphi(\xi)-c)\,{\rm d}\xi
\geq -t\int^{u(l;t,c)}_c(s-c)f''(s)\,{\rm d}s=:F(l;t,c),
\end{align}
where $u(l;t,c):=(f')^{-1}(f'(c)-\frac{l}{t})$ is the inverse function of
$l=l(u;t,c):=t(f'(c)-f'(u))$.

\begin{Lem}\label{lem:c4.1}
For any $x_0\in\mathbb{R}$ and $c\in\mathbb{R}$,
\begin{itemize}
\item [(i)] $c\in\mathcal{C}(x_0)$ if and only if there exists $t>0$ such that
 \begin{align}\label{c4.2c}
\Phi(l;x_0,c)>F(l;t,c)\qquad {\rm for\ small}\ l\neq 0.
\end{align}

\item [(ii)] $c\notin \mathcal{C}(x_0)$ if and only if, for any $t>0$,
there exists $l_n\neq 0$ with $\lim_{n\rightarrow\infty} l_n=0$ such that
\begin{align}\label{gc1}
\Phi(l_n;x_0,c)<F(l_n;t,c).
\end{align}
\end{itemize}
\end{Lem}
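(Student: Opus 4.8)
The plan is to reduce the lemma to the pointwise comparison in \eqref{c4.2b}: spelling out \eqref{c4.2} at a point $(x,t)=(x_0+tf'(c),t)\in L_c(x_0)$ shows, via \eqref{c2.3}--\eqref{c2.4}, that $c\in\mathcal{U}(x_0+tf'(c),t)$ holds exactly when $\Phi(l;x_0,c)\ge F(l;t,c)$ for every $l$ in the open interval $R(t):=\big(t(f'(c)-f'(\infty)),\,t(f'(c)-f'(-\infty))\big)$, the range of $u\mapsto l(u;t,c)$ (which always contains $0$). By Definition \ref{def:c4.1}, $c\in\mathcal{C}(x_0)$ is then equivalent to saying that this inequality holds on $R(t)$ for \emph{some} $t>0$, and the lemma amounts to comparing this with the localized condition ``$\Phi(l;x_0,c)>F(l;t,c)$ for small $l\ne0$''.

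Two facts about $F$ do the work. Substituting $\sigma=t(f'(c)-f'(s))$ in its defining integral gives $F(l;t,c)=\int_0^l\big((f')^{-1}(f'(c)-\sigma/t)-c\big)\,{\rm d}\sigma$; since $(f')^{-1}$ is strictly increasing, the integrand has the sign of $-\sigma$ and, by \eqref{c1.2}, is nonzero for $\sigma\ne0$, so $F(l;t,c)<0=F(0;t,c)$ for $l\ne0$. Comparing integrands for $t_1<t_2$ shows that $F(l;t,c)$ is \emph{strictly increasing in $t$} for each fixed $l\ne0$. Because $R(t)\subset R(t')$ when $t\le t'$, it follows at once that the property ``$\Phi(\cdot;x_0,c)\ge F(\cdot;t,c)$ on all of $R(t)$'' descends from any $t'$ to every $t\le t'$; in particular, $c\in\mathcal{C}(x_0)$ iff this property holds for all sufficiently small $t>0$.

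Granting this, part (i) goes as follows. ($\Rightarrow$) Given $t_1>0$ with the property on $R(t_1)$, for $l\ne0$ in $R(t_1/2)\subset R(t_1)$ strict $t$-monotonicity yields $F(l;t_1/2,c)<F(l;t_1,c)\le\Phi(l;x_0,c)$, so $t=t_1/2$ witnesses the localized condition. ($\Leftarrow$) Assume $\Phi(l;x_0,c)>F(l;t_0,c)$ for $0<|l|\le\delta_0$; I claim one can choose $t\le t_0$ small with $\Phi(\cdot;x_0,c)\ge F(\cdot;t,c)$ throughout $R(t)$. On $|l|\le\delta_0$ this persists because $F(\cdot;t,c)\le F(\cdot;t_0,c)$ (and both sides vanish at $0$). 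For $l\ge\delta_0$ in $R(t)$: such $l$ exist only when $f'(-\infty)=-\infty$, and then the crude bounds $\Phi(l;x_0,c)\ge\Phi(\delta_0;x_0,c)-(\|\varphi\|_{L^\infty}+|c|)(l-\delta_0)$ and $F(l;t,c)\le F(\delta_0;t,c)-K(t)(l-\delta_0)$, with the positive slope $K(t):=c-(f')^{-1}(f'(c)-\delta_0/t)\to+\infty$ as $t\to0$, close the inequality for $t$ small, since $\Phi(\delta_0;x_0,c)>F(\delta_0;t_0,c)\ge F(\delta_0;t,c)$; the case $l\le-\delta_0$ is symmetric and governed by $f'(\infty)$. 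Hence $c\in\mathcal{U}(x_0+tf'(c),t)$, i.e.\ $c\in\mathcal{C}(x_0)$. Part (ii) is then the contrapositive of (i), sharpened by $t$-monotonicity: $c\notin\mathcal{C}(x_0)$ forces, for every $t'>0$, a sequence $l_n\to0$, $l_n\ne0$, with $\Phi(l_n;x_0,c)\le F(l_n;t',c)$; taking $t'=t/2$ for a given $t>0$ and using $F(l_n;t/2,c)<F(l_n;t,c)$ upgrades this to the strict inequality \eqref{gc1}, and such sequences trivially obstruct the criterion of (i), giving the converse.

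The main obstacle is exactly the converse of (i) for $|l|$ bounded away from $0$: one must push the graph of $F(\cdot;t,c)$ below that of $\Phi(\cdot;x_0,c)$ \emph{uniformly} in such $l$ by shrinking $t$. Here the dichotomy between $f'$ onto $\mathbb{R}$ --- so $R(t)=\mathbb{R}$ and $F(\cdot;t,c)$, with slope $K(t)\to\infty$, must overtake the at most linearly growing $\Phi(\cdot;x_0,c)$ --- and $f'$ with a finite one-sided limit --- so the corresponding end of $R(t)$ retracts into $[-\delta_0,\delta_0]$ as $t\to0$ --- must be handled at each of the two infinities of $f'$ separately; the decisive structural point is that decreasing $t$ simultaneously deepens $F$ and contracts its domain $R(t)$.
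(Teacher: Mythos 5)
Your proof is correct, and its skeleton---recasting $c\in\mathcal{U}(x_0+tf'(c),t)$ as the inequality $\Phi(\cdot\,;x_0,c)\ge F(\cdot\,;t,c)$ on the range of $l(\cdot\,;t,c)$, establishing the strict monotonicity of $F(l;\cdot,c)$ in $t$ (the paper computes $\frac{{\rm d}F}{{\rm d}t}=\int_c^u(u-s)f''(s)\,{\rm d}s>0$ in \eqref{c4.2e}), and then shrinking $t$---is the same as the paper's; your treatment of part (ii) is essentially identical to the paper's. The one step where you genuinely diverge is the converse of (i). The paper localizes: since $E(\cdot\,;x,t)$ always attains its maximum on the fixed compact interval $[-\|\varphi\|_{L^\infty},\|\varphi\|_{L^\infty}]$, every competitor $u$ corresponds to $l=t(f'(c)-f'(u))=O(t)$, so for $t$ small only the hypothesis ``$\Phi>F$ for small $l$'' is ever tested and large $l$ never enters. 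You instead verify $\Phi\ge F$ on all of $R(t)$ directly, playing the at-most-linear growth of $\Phi$ against the slope $K(t)\to\infty$ of $F$ when $f'$ is onto, and against the contraction of $R(t)$ into $[-\delta_0,\delta_0]$ when $f'$ has a finite one-sided limit. Both mechanisms are sound; the paper's is shorter and avoids the case split on $f'(\pm\infty)$, while yours is more quantitative and does not need the a priori confinement of maximizers. A minor difference in the forward direction: you derive the strictness in \eqref{c4.2c} from the $t$-monotonicity of $F$, whereas the paper gets it from the uniqueness of the maximizer along $L_c(x_0)$ via \eqref{uiff}; either is fine.
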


\begin{proof} We divide the proof into two steps accordingly.

\vspace{1pt}
\noindent
{\bf 1}. If $c\in \mathcal{C}(x_0)$, then, by $\eqref{c4.0}$,
there exists $(x_1,t_1)\in L_c(x_0)$ such that $c\in \mathcal{U}(x_1,t_1)$,
which, by $\eqref{uiff}$, implies that,
for any $(x,t)\in L_c(x_0)$ with $t\in (0,t_1)$, $\mathcal{U}(x,t)=\{c\}$, $i.e.$,
$c$ is the unique maximum point of $E(\cdot\,;x,t)$.
Then it follows from $\eqref{c4.2b}$ that $\eqref{c4.2c}$ holds.

On the other hand, suppose that there exist $t_1>0$ and small $l_0>0$ such that
\begin{align*}
\Phi(l;x_0,c)>F(l;t_1,c)\qquad {\rm for\ any}\ l\in ({-}l_0,0)\cup(0,l_0).
\end{align*}
For any fixed $l$ and $c$, it follows from $l=t(f'(c)-f'(u))$ that
$tf''(u)\,{\rm d}u=(f'(c)-f'(u))\,{\rm d}t$ such that
\begin{align}\label{c4.2e}
\frac{{\rm d}F}{{\rm d}t}
&=-\int^{u}_c(s-c)f''(s)\,{\rm d}s-t(u-c)f''(u)\, \frac{{\rm d}u}{{\rm d}t}\nonumber\\[1mm]
&=-\int^{u}_c(s-c)f''(s)\,{\rm d}s+(u-c)\big(f'(u)-f'(c)\big)
=\int^{u}_c(u-s)f''(s)\,{\rm d}s>0.
\end{align}
Thus, for any $t<t_1$,
\begin{align}\label{c4.2c2}
\Phi(l;x_0,c)>F(l;t_1,c)>F(l;t,c)\qquad {\rm for\ any}\ l\in ({-}l_0,0)\cup(0,l_0).
\end{align}
Since, for any $(x,t)\in \mathbb{R}\times\mathbb{R}^+$,
$E(\cdot\,;x,t)$ attains its maximum on $u\in [-\|\varphi\|_{L^\infty},\|\varphi\|_{L^\infty}]$,
then we can choose $t<t_1$ sufficiently small such that
$l=t(f'(c)-f'(u))\in ({-}l_0,l_0)$
for any $u\in [-\|\varphi\|_{L^\infty},\|\varphi\|_{L^\infty}].$
This, by $\eqref{c4.2c2}$, implies that,
for sufficiently small $t<t_1$,
$c$ is the unique maximum point of $E(\cdot\,;x,t)$ for $(x,t)\in L_c(x_0)$.
This shows that $c\in \mathcal{C}(x_0)$.

\smallskip
\noindent
{\bf 2}. It follows from $\eqref{c4.2c}$ that $c\notin \mathcal{C}(x_0)$ if and only if,
for any $t'>0$,
there exists a sequence $l_n\neq 0$ with $\lim_{n\rightarrow\infty} l_n=0$ such that
$\Phi(l_n;x_0,c)\leq F(l_n;t',c)$.
From $\eqref{c4.2e}$, for any $t>t'$, $F(l_n;t',c)<F(l_n;t,c)$. Then
$
\Phi(l_n;x_0,c)\leq F(l_n;t',c)<F(l_n;t,c),
$
which, by the arbitrariness of $t'>0$, implies $\eqref{gc1}$.
\end{proof}

\begin{Pro}\label{pro:c4.1}
For any $x_0\in\mathbb{R}$, if there exists $c\in \mathcal{C}(x_0)$, then
\begin{equation}\label{c4.3}
\overline{{\rm D}}_- \Phi(x_0)\leq c\leq \underline{{\rm D}}_+ \Phi(x_0).
\end{equation}
\end{Pro}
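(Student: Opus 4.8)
The plan is to read off both inequalities in \eqref{c4.3} from the characterization of $\mathcal{C}(x_0)$ in Lemma \ref{lem:c4.1}(i), by letting $l\to 0$ through positive and then negative values in the pointwise estimate $\Phi(l;x_0,c)>F(l;t,c)$ and exploiting that $F(l;t,c)/l\to 0$ as $l\to 0$.

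First I would invoke Lemma \ref{lem:c4.1}(i): since $c\in\mathcal{C}(x_0)$, there exists some $t>0$ with $\Phi(l;x_0,c)>F(l;t,c)$ for all small $l\neq 0$. Recalling $\Phi(l;x_0,c)=\int_{x_0}^{x_0+l}(\varphi(\xi)-c)\,{\rm d}\xi$ and dividing by $l$, this says that $\frac{1}{l}\int_{x_0}^{x_0+l}\varphi(\xi)\,{\rm d}\xi-c$ exceeds $\frac{F(l;t,c)}{l}$ for small $l>0$ and is smaller than $\frac{F(l;t,c)}{l}$ for small $l<0$ (the inequality reversing because $l<0$). Taking $\liminf$ as $l\to0+$ in the first case and $\limsup$ as $l\to0-$ in the second, and using the definitions \eqref{c4.1} of the Dini derivatives, I obtain
\[
\underline{{\rm D}}_+\Phi(x_0)-c\ \geq\ \liminf_{l\to0+}\frac{F(l;t,c)}{l},\qquad\quad
\overline{{\rm D}}_-\Phi(x_0)-c\ \leq\ \limsup_{l\to0-}\frac{F(l;t,c)}{l}.
\]

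It then remains to prove the key fact $\lim_{l\to0}\frac{F(l;t,c)}{l}=0$. Writing $u=u(l;t,c)=(f')^{-1}\big(f'(c)-\tfrac{l}{t}\big)$, so that $l=t\big(f'(c)-f'(u)\big)$ and $u\to c$ as $l\to0$, an integration by parts in the integral defining $F$ in \eqref{c4.2b} gives
\[
\frac{F(l;t,c)}{l}=\frac{-\int_c^u(s-c)f''(s)\,{\rm d}s}{f'(c)-f'(u)}
=\frac{\int_c^u\big(f'(u)-f'(s)\big)\,{\rm d}s}{f'(u)-f'(c)}.
\]
Since $f'$ is nondecreasing, for $s$ between $c$ and $u$ one has $|f'(u)-f'(s)|\le|f'(u)-f'(c)|$, whence $\big|\tfrac{F(l;t,c)}{l}\big|\le|u-c|\to0$ as $l\to0$. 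Substituting this into the two displayed inequalities yields $c\le\underline{{\rm D}}_+\Phi(x_0)$ and $\overline{{\rm D}}_-\Phi(x_0)\le c$, which together are exactly \eqref{c4.3}.

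I do not expect any genuine obstacle here; the only point requiring care is the sign bookkeeping — the direction of the inequality flips when dividing $\Phi(l;x_0,c)>F(l;t,c)$ by $l<0$, and one must correspondingly take $\limsup$ from the left and $\liminf$ from the right. It is also worth noting that only $f''\ge0$ is used in this proposition; the nondegeneracy condition $\mathcal{L}\{u:f''(u)=0\}=0$ in \eqref{c1.2} plays no role at this stage.
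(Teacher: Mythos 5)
Your proof is correct and follows essentially the same route as the paper's: both reduce \eqref{c4.3} to the inequality $\Phi(l;x_0,c)\geq F(l;t,c)$ coming from \eqref{c4.2b}, divide by $l$ with the appropriate sign flip, and show that $F(l;t,c)/l\to 0$ as $l\to 0$ (the paper via the mean value theorem, you via an equivalent integration by parts). One small caveat: your closing remark that the nondegeneracy condition $\mathcal{L}\{u: f''(u)=0\}=0$ plays no role is not quite accurate, since the strict monotonicity of $f'$ — which is exactly what that condition supplies on top of $f''\geq 0$ — is needed for $u(l;t,c)=(f')^{-1}(f'(c)-l/t)$ to be well defined and to converge to $c$ as $l\to 0$.
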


\begin{proof}
Suppose that $c\in \mathcal{C}(x_0)$. Since $f'(u)$ is strictly increasing,
$l=t(f'(c)-f'(u))$ implies that $l(u-c)<0$ for $u\neq c$
and $u=
u(l;t,c)\rightarrow c\mp$ as $l\rightarrow 0\pm$.
Then, from $\eqref{c4.1}$ and $\eqref{c4.2b}$,
\begin{align*}
\overline{{\rm D}}_-\Phi(x_0)-c
&=\mathop{\overline{\lim}}\limits_{l\rightarrow 0{-}}
\dfrac{1}{l}\int^{x_0+l}_{x_0}\varphi (\xi)\,{\rm d}\xi-c
\leq\lim_{u\rightarrow c{+}}\frac{1}{f'(u)-f'(c)}\int^u_c(s-c)f''(s)\,{\rm d}s\nonumber\\[1mm]
&=\lim_{u\rightarrow c{+}}\frac{f'(u)-f'(\zeta)}{f'(u)-f'(c)}(u-c)=0,
\end{align*}
where $\zeta$ lies between $u$ and $c$.

Similarly, it is direct to check that
$\underline{{\rm D}}_+\Phi(x_0)-c\geq 0.$
This implies $\eqref{c4.3}$.
\end{proof}

\begin{Pro}\label{pro:c4.2}
For any $x_0\in\mathbb{R}$, if $\overline{{\rm D}}_- \Phi(x_0)>\underline{{\rm D}}_+ \Phi(x_0)$, then $\mathcal{C}(x_0)= \varnothing$.
\end{Pro}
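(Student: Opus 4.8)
The plan is to derive Proposition \ref{pro:c4.2} immediately from Proposition \ref{pro:c4.1} by contraposition. Suppose $\mathcal{C}(x_0)\neq\varnothing$ and pick any $c\in\mathcal{C}(x_0)$. Proposition \ref{pro:c4.1} then gives $\overline{{\rm D}}_-\Phi(x_0)\leq c\leq\underline{{\rm D}}_+\Phi(x_0)$, hence $\overline{{\rm D}}_-\Phi(x_0)\leq\underline{{\rm D}}_+\Phi(x_0)$, contradicting the hypothesis $\overline{{\rm D}}_-\Phi(x_0)>\underline{{\rm D}}_+\Phi(x_0)$. Therefore $\mathcal{C}(x_0)=\varnothing$.

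For a proof relying only on Lemma \ref{lem:c4.1} rather than Proposition \ref{pro:c4.1}, I would instead verify that every $c\in\mathbb{R}$ satisfies the criterion \eqref{gc1}. Split into the cases $c<\overline{{\rm D}}_-\Phi(x_0)$ and $c\geq\overline{{\rm D}}_-\Phi(x_0)$; in the second case the hypothesis forces $c>\underline{{\rm D}}_+\Phi(x_0)$, so the two cases exhaust $\mathbb{R}$. In the first case, the definition of $\limsup$ produces a sequence $l_n\to 0^-$ along which $\Phi(l_n;x_0,c)<(c'-c)\,l_n$ for a fixed $c'\in(c,\overline{{\rm D}}_-\Phi(x_0))$, i.e.\ a bound linear in $l_n$ with negative slope; in the second case, $\liminf$ yields $l_n\to 0^+$ with an analogous linear bound. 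Since $F(l;t,c)\leq 0$ for small $l\neq 0$ and $F(l;t,c)/l\to 0$ as $l\to 0$ (because $u(l;t,c)\to c$, exactly the estimate $\tfrac{f'(u)-f'(\zeta)}{f'(u)-f'(c)}(u-c)\to 0$ used in the proof of Proposition \ref{pro:c4.1}), the linear term dominates and $\Phi(l_n;x_0,c)<F(l_n;t,c)$ holds for all large $n$. Thus \eqref{gc1} holds for every $t>0$, and Lemma \ref{lem:c4.1}(ii) gives $c\notin\mathcal{C}(x_0)$; since $c$ was arbitrary, $\mathcal{C}(x_0)=\varnothing$.

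There is no genuine obstacle here: the first route is a one-line contrapositive, and the only mildly delicate point in the second route is the sign and order-of-magnitude comparison between the linear lower bound on $\Phi(l;x_0,c)$ and the superlinear quantity $F(l;t,c)=o(l)$, which is precisely the comparison already carried out in the proof of Proposition \ref{pro:c4.1}. I would present the short argument via Proposition \ref{pro:c4.1} as the proof.
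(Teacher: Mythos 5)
Your first route is exactly the paper's proof: Proposition \ref{pro:c4.2} is stated there to follow immediately from Proposition \ref{pro:c4.1}, i.e.\ by the same contrapositive you give. The proposal is correct and takes essentially the same approach as the paper.
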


\begin{proof}
It follows immediately from Proposition $\ref{pro:c4.1}$.
\end{proof}

\begin{Pro}\label{pro:c4.2b}
For any $x_0\in\mathbb{R}$, if $\overline{{\rm D}}_- \Phi(x_0)<\underline{{\rm D}}_+ \Phi(x_0)$, then
\begin{equation}\label{c4.3a}
(\overline{{\rm D}}_- \Phi(x_0),\underline{{\rm D}}_+ \Phi(x_0))\subset\mathcal{C}(x_0)
\subset [\overline{{\rm D}}_- \Phi(x_0),\underline{{\rm D}}_+ \Phi(x_0)].
\end{equation}
\end{Pro}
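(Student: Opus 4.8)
The plan is to prove the two inclusions in $\eqref{c4.3a}$ separately. The upper inclusion $\mathcal{C}(x_0)\subset[\overline{{\rm D}}_- \Phi(x_0),\underline{{\rm D}}_+ \Phi(x_0)]$ is immediate from Proposition $\ref{pro:c4.1}$ applied to any $c\in\mathcal{C}(x_0)$, so the entire content lies in the lower inclusion: every $c$ with $\overline{{\rm D}}_- \Phi(x_0)<c<\underline{{\rm D}}_+ \Phi(x_0)$ belongs to $\mathcal{C}(x_0)$. For this I would invoke the criterion of Lemma $\ref{lem:c4.1}$(i), according to which it suffices to exhibit one $t>0$ with $\Phi(l;x_0,c)>F(l;t,c)$ for all small $l\neq 0$.

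The crucial simplification is that $F(l;t,c)\le 0$ for every $t>0$ and every admissible $l$. Indeed, from $\eqref{c4.2b}$ we have $F(l;t,c)=-t\int_c^{u(l;t,c)}(s-c)f''(s)\,{\rm d}s$, and since $f''\ge 0$ by $\eqref{c1.2}$, the integrand $(s-c)f''(s)$ has the sign of $s-c$; hence $\int_c^{u}(s-c)f''(s)\,{\rm d}s\ge 0$ whether $u>c$ or $u<c$, so $F(l;t,c)\le 0$. Consequently the inequality to be verified reduces to the $t$-independent claim $\Phi(l;x_0,c)>0$ for all small $l\neq 0$.

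To prove this last claim I would read off the strict inequalities $\overline{{\rm D}}_- \Phi(x_0)<c$ and $\underline{{\rm D}}_+ \Phi(x_0)>c$ from the definitions in $\eqref{c4.1}$. Since $\limsup_{l\to 0-}\frac{1}{l}\int_{x_0}^{x_0+l}\varphi(\xi)\,{\rm d}\xi=\overline{{\rm D}}_- \Phi(x_0)<c$, there is $\eta_->0$ such that $\frac{1}{l}\int_{x_0}^{x_0+l}\varphi(\xi)\,{\rm d}\xi<c$ for $l\in(-\eta_-,0)$; multiplying through by $l<0$ reverses the sign and gives $\Phi(l;x_0,c)=\int_{x_0}^{x_0+l}(\varphi(\xi)-c)\,{\rm d}\xi>0$ there. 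Symmetrically, $\liminf_{l\to 0+}\frac{1}{l}\int_{x_0}^{x_0+l}\varphi(\xi)\,{\rm d}\xi=\underline{{\rm D}}_+ \Phi(x_0)>c$ yields $\eta_+>0$ with $\Phi(l;x_0,c)>0$ for $l\in(0,\eta_+)$. Combining the two branches, $\Phi(l;x_0,c)>0\ge F(l;t,c)$ for $0<|l|<\min\{\eta_-,\eta_+\}$ and every $t>0$, so Lemma $\ref{lem:c4.1}$(i) yields $c\in\mathcal{C}(x_0)$, establishing $\eqref{c4.3a}$.

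The argument is short, and essentially a corollary of the criterion in Lemma $\ref{lem:c4.1}$ together with $f''\ge 0$; the only step requiring care is the sign bookkeeping on the branch $l<0$, where one passes from a statement about the average of $\varphi$ near $x_0$ to one about $\Phi(l;x_0,c)$ by multiplying by a negative quantity. It is worth noting that the degeneracy condition $\mathcal{L}\{f''=0\}=0$ plays no role in this inclusion, since only the nonstrict bound $F(l;t,c)\le 0$ is used.
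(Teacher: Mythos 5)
Your proof is correct and follows essentially the same route as the paper: the upper inclusion is Proposition $\ref{pro:c4.1}$, and the lower inclusion is obtained by showing $\Phi(l;x_0,c)>0\ge F(l;t,c)$ for small $l\neq 0$ directly from the strict Dini-derivative inequalities and then invoking the criterion of Lemma $\ref{lem:c4.1}$(i). Your sign bookkeeping on both branches is right, and your closing observation that only $f''\ge 0$ (not the degeneracy condition) is needed here is accurate.
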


\begin{proof}
Denote $a:=\overline{{\rm D}}_- \Phi(x_0)$ and $b:=\underline{{\rm D}}_+ \Phi(x_0)$. Then $a<b$.

If $c\in (a,b)$, it follows from $\eqref{c4.1}$ that,
for any given $\varepsilon$ with $0<\varepsilon <\min\{b-c,c-a\}$,
\begin{align*}
\mathop{\underline{\lim}}\limits_{l\rightarrow 0{+}}
\frac{1}{l}\int^{x_0+l}_{x_0}(\varphi(\xi)-c)\,{\rm d}\xi>\varepsilon,\qquad
\mathop{\overline{\lim}}\limits_{l\rightarrow 0{-}}
\frac{1}{l}\int^{x_0+l}_{x_0}(\varphi(\xi)-c)\,{\rm d}\xi<-\varepsilon,
\end{align*}
so that, for any fixed $t>0$,
\begin{align*}
\Phi(l;x_0,c)=\int^{x_0+l}_{x_0}(\varphi(\xi)-c)\,{\rm d}\xi>0>F(l;t,c)
\qquad {\rm for\ small}\ l\neq 0,
\end{align*}
which, by $\eqref{c4.2c}$, yields $\eqref{c4.3a}$.
\end{proof}

By Propositions $\ref{pro:c4.2}$--$\ref{pro:c4.2b}$,
the remaining task to determine $\mathcal{C}(x_0)$ completely is to judge
whether $\overline{{\rm D}}_- \Phi(x_0), \underline{{\rm D}}_+ \Phi(x_0)\in \mathcal{C}(x_0)$
for the case that $\overline{{\rm D}}_- \Phi(x_0)<\underline{{\rm D}}_+ \Phi(x_0)$,
and $\overline{{\rm D}}_- \Phi(x_0)\in\mathcal{C}(x_0)$
for the case that $\overline{{\rm D}}_- \Phi(x_0)=\underline{{\rm D}}_+ \Phi(x_0)$.
These are done by Propositions $\ref{pro:c4.3}$--$\ref{pro:c4.4}$ below.

\begin{Pro}\label{pro:c4.3}
Let $\overline{{\rm D}}_- \Phi(x_0)<\underline{{\rm D}}_+ \Phi(x_0)$.
Denote $a:=\overline{{\rm D}}_- \Phi(x_0)$ and $b:=\underline{{\rm D}}_+ \Phi(x_0)$.
Then
\begin{itemize}
\item [(i)] $a\in \mathcal{C}(x_0)$ if and only if there exists $t_1>0$
such that
\begin{equation}\label{c4.6}
\Phi(l;x_0,a)>F(l;t_1,a)\qquad {\rm for\ small }\ l< 0.
\end{equation}
Furthermore, if there exist some constants $\alpha_+\geq 0$ and $N_+>0$ such that
\begin{equation}\label{c4.7a}
f''(u)=(N_++o(1))|u-a|^{\alpha_+}\qquad\,\, {\rm as}\ u\rightarrow a{+},
\end{equation}
then $a\in \mathcal{C}(x_0)$ if and only if
\begin{equation}\label{c4.7}
\mathop{\underline{\lim}}\limits_{l\rightarrow 0{-}}{|l|^{-\frac{2+\alpha_+}{1+\alpha_+}}}
\int^{x_0+l}_{x_0}(\varphi (\xi)-a)\,{\rm d}\xi>-\infty;
\end{equation}
or equivalently, $a\notin \mathcal{C}(x_0)$ if and only if
\begin{equation*}
\mathop{\underline{\lim}}\limits_{l\rightarrow 0{-}}{|l|^{-\frac{2+\alpha_+}{1+\alpha_+}}}
\int^{x_0+l}_{x_0}(\varphi (\xi)-a)\,{\rm d}\xi=-\infty.
\end{equation*}
If, in addition, there exist some constants $\gamma_->0$ and $C_{\gamma_-}\neq 0$ such that
\begin{equation}\label{c4.7b}
\varphi(x_0+l)-a=(C_{\gamma_-}+o(1)){\rm sgn}(l) |l|^{\gamma_-}\qquad\,\, {\rm as}\ l\rightarrow 0{-},
\end{equation}
then $a\in \mathcal{C}(x_0)$ if and only if
\begin{equation}\label{c4.7c}
C_{\gamma_-}>0 \qquad {\rm or}\qquad \gamma_- (1+\alpha_+)\geq 1;
\end{equation}
or equivalently, $a\notin \mathcal{C}(x_0)$ if and only if
$C_{\gamma_-}<0$ and $\gamma_- (1+\alpha_+)< 1.$
In this case, if $a\notin \mathcal{C}(x_0)$, then, for any $t>0$,
\begin{equation}\label{c4.7d}
\Phi(l;x_0,a)<F(l;t,a)\qquad\,\, {\rm for\ small }\ l< 0.
\end{equation}

\item [(ii)] $b\in \mathcal{C}(x_0)$ if and only if
there exists $t_2>0$ such that
\begin{equation}\label{c4.8}
\Phi(l;x_0,b)>F(l;t_2,b)\qquad\,\, {\rm for\ small }\ l> 0.
\end{equation}
Furthermore, if there exist some constants $\alpha_-\geq 0$ and $N_->0$
such that
\begin{equation}\label{c4.9a}
f''(u)=(N_-+o(1))|u-b|^{\alpha_-}\qquad\,\, {\rm as}\ u\rightarrow b{-},
\end{equation}
then $b\in \mathcal{C}(x_0)$ if and only if
\begin{equation}\label{c4.9}
\mathop{\underline{\lim}}\limits_{l\rightarrow 0{+}}{|l|^{-\frac{2+\alpha_-}{1+\alpha_-}}}
\int^{x_0+l}_{x_0}(\varphi(\xi)-b)\,{\rm d}\xi>-\infty;
\end{equation}
or equivalently, $b\notin \mathcal{C}(x_0)$ if and only if
\begin{equation*}
\mathop{\underline{\lim}}\limits_{l\rightarrow 0{+}}{|l|^{-\frac{2+\alpha_-}{1+\alpha_-}}}
\int^{x_0+l}_{x_0}(\varphi(\xi)-b)\,{\rm d}\xi=-\infty.
\end{equation*}
If, in addition, there exist some constants $\gamma_+>0$ and $C_{\gamma_+}\neq 0$ such that
\begin{equation}\label{c4.9b}
\varphi(x_0+l)-b=(C_{\gamma_+}+o(1)){\rm sgn}(l)|l|^{\gamma_+}
\qquad\,\, {\rm as}\ l\rightarrow 0{+},
\end{equation}
then $b\in \mathcal{C}(x_0)$ if and only if
\begin{equation}\label{c4.9ca}
C_{\gamma_+}>0 \qquad {\rm or}\qquad \gamma_+ (1+\alpha_-)\geq 1;
\end{equation}
or equivalently, $b\notin \mathcal{C}(x_0)$ if and only if
$C_{\gamma_+}<0$ and $\gamma_+ (1+\alpha_-)< 1.$
In this case, if $b\notin \mathcal{C}(x_0)$, then, for any $t>0$,
\begin{equation}\label{c4.9d}
\Phi(l;x_0,b)<F(l;t,b)\qquad\,\, {\rm for\ small }\ l> 0.
\end{equation}
\end{itemize}
\end{Pro}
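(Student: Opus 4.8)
The whole statement reduces, via Lemma~$\ref{lem:c4.1}$(i), to analyzing the comparison $\Phi(l;x_0,c)>F(l;t,c)$ near $l=0$, first for $c=a$ and then for $c=b$. I begin by reducing each to a one-sided comparison. Take $c=a$. Since $a=\overline{{\rm D}}_-\Phi(x_0)<\underline{{\rm D}}_+\Phi(x_0)$, one has $\liminf_{l\to 0+}l^{-1}\Phi(l;x_0,a)=\underline{{\rm D}}_+\Phi(x_0)-a>0$, hence $\Phi(l;x_0,a)>0$ for all small $l>0$; meanwhile, for such $l$ the point $u=u(l;t,a)$ satisfies $u<a$, so by $\eqref{c1.2}$ the integrand $(s-a)f''(s)$ is negative a.e.\ on $(u,a)$ and $F(l;t,a)<0$. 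Thus the inequality in $\eqref{c4.2c}$ is automatic for small $l>0$, and Lemma~$\ref{lem:c4.1}$(i) reduces to the existence of $t_1>0$ with $\Phi(l;x_0,a)>F(l;t_1,a)$ for small $l<0$, which is $\eqref{c4.6}$. The mirror-image observation — using $\overline{{\rm D}}_-\Phi(x_0)<b$ to get $\Phi(l;x_0,b)>0$ and $F(l;t,b)<0$ for small $l<0$ — yields $\eqref{c4.8}$ in part~(ii).

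Next I extract the asymptotics of $F$. Using $l=t\big(f'(a)-f'(u)\big)$ together with $\eqref{c4.7a}$, I expand, as $u\to a{+}$ (equivalently $l\to 0{-}$), $f'(u)-f'(a)=\tfrac{N_+}{1+\alpha_+}(u-a)^{1+\alpha_+}(1+o(1))$ and $\int_a^u(s-a)f''(s)\,{\rm d}s=\tfrac{N_+}{2+\alpha_+}(u-a)^{2+\alpha_+}(1+o(1))$; inverting the first relation gives $u-a=(1+o(1))\big(\tfrac{(1+\alpha_+)|l|}{tN_+}\big)^{1/(1+\alpha_+)}$, hence, with $\beta:=\tfrac{2+\alpha_+}{1+\alpha_+}\in(1,2]$,
\[
F(l;t,a)=-\big(\kappa_+\,t^{-1/(1+\alpha_+)}+o(1)\big)\,|l|^{\beta}\qquad\text{as }l\to 0{-},
\]
for a constant $\kappa_+=\kappa_+(N_+,\alpha_+)>0$. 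Since $\eqref{c4.2e}$ shows $F(l;t,a)$ is nondecreasing in $t$ and $t\mapsto\kappa_+t^{-1/(1+\alpha_+)}$ is a decreasing bijection of $(0,\infty)$ onto $(0,\infty)$, a dichotomy on $\liminf_{l\to0{-}}|l|^{-\beta}\Phi(l;x_0,a)$ finishes this part: if that $\liminf$ is $-\infty$, then for every $t$ there is a sequence $l_n\to0{-}$ along which $\Phi(l_n;x_0,a)<-\big(\kappa_+t^{-1/(1+\alpha_+)}+1\big)|l_n|^{\beta}\le F(l_n;t,a)$, so $\eqref{c4.6}$ fails and $a\notin\mathcal{C}(x_0)$; and if that $\liminf$ equals some finite $-K$, then choosing $t_1$ small enough that $\kappa_+t_1^{-1/(1+\alpha_+)}>K+1$ gives $\Phi(l;x_0,a)>-(K+\tfrac14)|l|^{\beta}>F(l;t_1,a)$ for small $l<0$, so $\eqref{c4.6}$ holds and $a\in\mathcal{C}(x_0)$. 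This is the equivalence with $\eqref{c4.7}$ (its negation being the logical complement), and the statement for $b$ with $\alpha_-,N_-$ is identical after the mirror substitution.

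Finally I treat the power-law criterion. Under the extra hypothesis $\eqref{c4.7b}$, integration gives $\Phi(l;x_0,a)=\big(\tfrac{C_{\gamma_-}}{1+\gamma_-}+o(1)\big)|l|^{1+\gamma_-}$ as $l\to0{-}$. If $C_{\gamma_-}>0$ this is positive near $0{-}$, hence $>F(l;t,a)$ for every $t$, so $a\in\mathcal{C}(x_0)$. If $C_{\gamma_-}<0$, I compare $1+\gamma_-$ with $\beta=1+\tfrac{1}{1+\alpha_+}$: when $\gamma_-(1+\alpha_+)>1$ we have $1+\gamma_->\beta$, so $\Phi(l;x_0,a)=o(|l|^{\beta})$ beats the negative $F$ and $a\in\mathcal{C}(x_0)$; when $\gamma_-(1+\alpha_+)=1$ both sides are of order $|l|^{\beta}$ and taking $t_1$ small with $\kappa_+t_1^{-1/(1+\alpha_+)}>|C_{\gamma_-}|/(1+\gamma_-)$ yields $\Phi(l;x_0,a)>F(l;t_1,a)$ for small $l<0$, so again $a\in\mathcal{C}(x_0)$; when $\gamma_-(1+\alpha_+)<1$ we have $1+\gamma_-<\beta$, so the negative leading term of $\Phi(l;x_0,a)$ dominates $F(l;t,a)=O(|l|^{\beta})$ and $\Phi(l;x_0,a)<F(l;t,a)$ for small $l<0$ and every $t$, which is $\eqref{c4.7d}$ and forces $a\notin\mathcal{C}(x_0)$. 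This is exactly $\eqref{c4.7c}$, and part~(ii) follows verbatim under the substitutions $a\mapsto b$, $l<0\mapsto l>0$, $\alpha_+\mapsto\alpha_-$, $N_+\mapsto N_-$, $\gamma_-\mapsto\gamma_+$, $C_{\gamma_-}\mapsto C_{\gamma_+}$, with $\eqref{c4.9a}$--$\eqref{c4.9d}$ in place of $\eqref{c4.7a}$--$\eqref{c4.7d}$.

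\textbf{Main obstacle.} The delicate step is the second one: one needs the joint $(l,t)$-asymptotics of $F(l;t,a)$ sharp in \emph{both} the $|l|$-power and the $t$-power, with an $o(1)$ controlled well enough that the ``choose $t_1$'' arguments are legitimate. This precision is exactly what is used in the borderline case $\gamma_-(1+\alpha_+)=1$, where membership in $\mathcal{C}(x_0)$ is decided by comparing the constant $|C_{\gamma_-}|/(1+\gamma_-)$ against the full range $(0,\infty)$ of achievable values $\kappa_+t^{-1/(1+\alpha_+)}$. Everything else is bookkeeping with the Dini derivatives of $\Phi$ and Lemma~$\ref{lem:c4.1}$.
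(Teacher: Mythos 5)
Your proposal is correct and follows essentially the same route as the paper's proof: reduce to the one-sided inequality via Lemma $\ref{lem:c4.1}$ and the automatic positivity of $\Phi(l;x_0,a)$ for small $l>0$ (coming from $a<b$), derive the sharp asymptotics $F(l;t,a)\cong -\kappa_+ t^{-1/(1+\alpha_+)}|l|^{(2+\alpha_+)/(1+\alpha_+)}$ exactly as in $\eqref{c4.13}$, run the dichotomy on the $\liminf$ to get $\eqref{c4.7}$, and finish the power-law case by comparing exponents, which reproduces $\eqref{c4.13d1}$ and $\eqref{c4.7c}$--$\eqref{c4.7d}$. The constant you call $\kappa_+$ is the paper's $\frac{1+\alpha_+}{2+\alpha_+}\big(\frac{1+\alpha_+}{N_+}\big)^{1/(1+\alpha_+)}$, and part (ii) is the same mirror argument the paper uses.
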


\begin{proof} We divide the proof into two steps accordingly.

\smallskip
\noindent
{\bf 1}. From Lemma $\ref{lem:c4.1}$, $a\in \mathcal{C}(x_0)$ if and only if $\eqref{c4.2c}$ holds.

\smallskip
{\bf (a)}.
To prove $\eqref{c4.6}$, it suffices to show that,
for any fixed $t>0$, $\eqref{c4.6}$ always holds for small $l>0$.
In fact, since $a<b$, it follows from $\eqref{c4.1}$ that,
for any given $\varepsilon\in(0,b-a)$,
\begin{align*}
\mathop{\underline{\lim}}\limits_{l\rightarrow 0{+}}
\frac{1}{l}\int^{x_0+l}_{x_0}(\varphi(\xi)-a)\,{\rm d}\xi=b-a>\varepsilon,
\end{align*}
which implies that, for any fixed $t>0$,
\begin{align}\label{c4.12a}
\Phi(l;x_0,a)=\int^{x_0+l}_{x_0}(\varphi(\xi)-a)\,{\rm d}\xi>0>F(l;t,a)
\qquad {\rm for\ small}\ l>0.
\end{align}

\smallskip
{\bf (b)}. Assume that $\eqref{c4.7a}$ holds.
For any fixed $t>0$, it follows from $l=t(f'(a)-f'(u))$ that
$u=u(l;t,a)\rightarrow a{+}$ as $l\rightarrow 0{-}$.
From $\eqref{c4.2b}$ and $\eqref{aa2}$--$\eqref{aa3}$,
for small $l<0$,
\begin{align}\label{c4.13}
F(l;t,a)&=-t\int^{u(l;t,a)}_a(s-a)f''(s)\,{\rm d}s
=-t\frac{N_++o(1)}{2+\alpha_+}|u-a|^{2+\alpha_+}\nonumber\\[1mm]
&=-t\frac{N_++o(1)}{2+\alpha_+}\Big(\frac{1+\alpha_+}{N_++o(1)}
 \big|f'(u)-f'(a)\big|\Big)^{\frac{2+\alpha_+}{1+\alpha_+}}\nonumber\\[1mm]
&=-t\frac{N_++o(1)}{2+\alpha_+}
\Big(\frac{1+\alpha_+}{N_++o(1)}\frac{|l|}{t}\Big)^{\frac{2+\alpha_+}{1+\alpha_+}}\nonumber\\[1mm]
&=-(1+o(1))\frac{1+\alpha_+}{2+\alpha_+}\Big(\frac{1+\alpha_+}{N_+}\Big)^{\frac{1}{1+\alpha_+}}\,
t^{-\frac{1}{1+\alpha_+}}\,|l|^{\frac{2+\alpha_+}{1+\alpha_+}}.
\end{align}

If $a\in \mathcal{C}(x_0)$, then it follows from $\eqref{c4.6}$ and $\eqref{c4.13}$ that
\begin{align*}
\mathop{\underline{\lim}}\limits_{l\rightarrow 0{-}}
|l|^{-\frac{2+\alpha_+}{1+\alpha_+}}
\int^{x_0+l}_{x_0}(\varphi (\xi)-a)\,{\rm d}\xi
&=\mathop{\underline{\lim}}\limits_{l\rightarrow 0{-}}
|l|^{-\frac{2+\alpha_+}{1+\alpha_+}}\,\Phi(l;x_0,a)\nonumber\\[1mm]
&\geq -\frac{1+\alpha_+}{2+\alpha_+}\Big(\frac{1+\alpha_+}{N_+}\Big)^{\frac{1}{1+\alpha_+}}\,
t_1^{-\frac{1}{1+\alpha_+}}>-\infty.
\end{align*}
On the other hand, if $\eqref{c4.7}$ holds, there exists $M>0$ such that
\begin{align*}
\mathop{\underline{\lim}}\limits_{l\rightarrow 0{-}}|l|^{-\frac{2+\alpha_+}{1+\alpha_+}}
\int^{x_0+l}_{x_0}(\varphi (\xi)-a)\,{\rm d}\xi\geq -\frac{M}{4},
\end{align*}
which, by combining with $\eqref{c4.13}$, implies that, for small $l<0$,
\begin{align*}
\Phi(l;x_0,a)=\int^{x_0+l}_{x_0}(\varphi (\xi)-a)\,{\rm d}\xi
>-\frac{M}{2} \,|l|^{\frac{2+\alpha_+}{1+\alpha_+}}>F(l;\tilde{t}_1,a)
\end{align*}
as desired, where
$\tilde{t}_1$ is given by
$$
\tilde{t}_1:=\frac{1+\alpha_+}{N_+}\big(\frac{1+\alpha_+}{2+\alpha_+}\frac{1}{M}\big)^{1+\alpha_+}.
$$

\smallskip
{\bf (c)}.
If, in addition, $\eqref{c4.7b}$ holds, then, by a simple calculation,
\begin{align}\label{c4.13d1}
|l|^{-\frac{2+\alpha_+}{1+\alpha_+}}\int^{x_0+l}_{x_0}(\varphi (\xi)-a)\,{\rm d}\xi
=\frac{C_{\gamma_-}+o(1)}{1+\gamma_-}|l|^{\frac{\gamma_-(1+\alpha_+)-1}{1+\alpha_+}},
\end{align}
which implies that $\eqref{c4.7}$ holds if and only if
$C_{\gamma_-}>0$ or $\gamma_-(1+\alpha_+)\geq 1$.
Then $\eqref{c4.7c}$ holds.
It is direct to check from $\eqref{c4.13}$--$\eqref{c4.13d1}$ that $\eqref{c4.7d}$ holds.

\medskip
\noindent
{\bf 2}. By the same arguments as in Step 1 above, it can be checked that $\eqref{c4.8}$--$\eqref{c4.9d}$ hold.
\end{proof}

\begin{Pro}\label{pro:c4.4}
Let $\overline{{\rm D}}_- \Phi(x_0)=\underline{{\rm D}}_+ \Phi(x_0)=:a$.
Then
\begin{equation}\label{c4.16}
\mathcal{C}(x_0)=\{a\} \qquad {\rm or}\qquad \mathcal{C}(x_0)=\varnothing.
\end{equation}
Moreover, the following properties are valid{\rm :}
\begin{itemize}
\item[(i)] $\mathcal{C}(x_0)=\{a\}$ if and only if there exists $t_1>0$ such that
\begin{equation}\label{c4.17}
\Phi(l;x_0,a)>F(l;t_1,a)\qquad\,\, {\rm for\ small}\ l\neq 0.
\end{equation}
\item[(ii)] If there exist some constants $\alpha_\pm\geq 0$ and $N_\pm>0$ such that
\begin{equation}\label{c4.18a}
f''(u)=(N_\pm+o(1))|u-a|^{\alpha_\pm}\qquad\,\, {\rm as}\ u\rightarrow a{\pm},
\end{equation}
then $\mathcal{C}(x_0)=\varnothing$ if and only if
at least one of the following two limits holds{\rm :}
\begin{equation}\label{c4.18}
\mathop{\underline{\lim}}\limits_{l\rightarrow 0{\pm}}
|l|^{-\frac{2+\alpha_\mp}{1+\alpha_\mp}}\int^{x_0+l}_{x_0}(\varphi (\xi)-a)\,{\rm d}\xi=-\infty.
\end{equation}
\item[(iii)] If, in addition, there exist some constants $\gamma_\pm>0$ and $C_{\gamma_\pm}\neq 0$ such that
\begin{equation}\label{c4.18c}
\varphi(x_0+l)-a=\big(C_{\gamma_\pm}+o(1)\big){\rm sgn}(l)|l|^{\gamma_\pm}
\qquad\,\, {\rm as}\ l\rightarrow 0{\pm},
\end{equation}
then $\mathcal{C}(x_0)=\varnothing$ if and only if
at least one of the following two holds{\rm :}
\begin{equation}\label{c4.18d}
C_{\gamma_\pm}<0 \qquad {\rm and}\qquad \gamma_\pm (1+\alpha_\mp)< 1.
\end{equation}
In this case, if $\mathcal{C}(x_0)=\varnothing$, then, for any $t>0$,
at least one of the following two holds{\rm :}
\begin{equation*}
\Phi(l;x_0,a)<F(l;t,a)\qquad\,\,{\rm for\ small}\ \pm l> 0.
\end{equation*}
\end{itemize}
\end{Pro}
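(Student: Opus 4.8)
The whole statement reduces to Lemma~$\ref{lem:c4.1}$, Proposition~$\ref{pro:c4.1}$, and the one-sided asymptotic computations already carried out in the proof of Proposition~$\ref{pro:c4.3}$. First, since $\overline{{\rm D}}_-\Phi(x_0)=\underline{{\rm D}}_+\Phi(x_0)=a$, the inclusion $\mathcal{C}(x_0)\subseteq\{a\}$ in $\eqref{c4.16}$ follows at once from Proposition~$\ref{pro:c4.1}$, which forces any $c\in\mathcal{C}(x_0)$ to satisfy $a\le c\le a$. Hence $\mathcal{C}(x_0)=\{a\}$ if and only if $a\in\mathcal{C}(x_0)$, and part~(i) is then precisely Lemma~$\ref{lem:c4.1}$(i) applied with $c=a$: $a\in\mathcal{C}(x_0)$ iff there is $t_1>0$ with $\Phi(l;x_0,a)>F(l;t_1,a)$ for small $l\ne 0$, i.e. $\eqref{c4.17}$.

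The structural core is to split the two-sided condition $\eqref{c4.17}$ into its $l<0$ and $l>0$ halves. I would use that $F(l;t,a)$ is strictly increasing in $t$ for each fixed $l\ne 0$ (identity $\eqref{c4.2e}$), so that, if the inequality $\Phi(l;x_0,a)>F(l;t,a)$ fails for small $l$ of a given sign at some $t$, then it fails at all larger $t$ as well. Consequently $\eqref{c4.17}$ holds for a single $t_1>0$ if and only if each of its two one-sided versions holds for its own time $t_1^{\mp}>0$, the nontrivial implication being obtained by taking $t_1=\min\{t_1^-,t_1^+\}$ and invoking the monotonicity. Thus $\mathcal{C}(x_0)=\varnothing$ precisely when at least one of the two one-sided conditions fails for every $t$, which accounts for the ``at least one of'' phrasing in (ii) and (iii). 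Getting these quantifiers on $t$ exactly right --- so that $\mathcal{C}(x_0)=\varnothing$ corresponds to a side failing for \emph{every} $t$ rather than just for some $t$ --- is the only genuinely delicate point; after it, the remainder is a side-by-side reuse of the arguments in Proposition~$\ref{pro:c4.3}$.

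For (ii), I would analyze each side separately under $\eqref{c4.18a}$. As $l\to 0{-}$ one has $u(l;t,a)\to a{+}$, so the computation leading to $\eqref{c4.13}$ applies verbatim with $\alpha_+,N_+$, and the argument of Proposition~$\ref{pro:c4.3}$(i) shows that the $l<0$ half of $\eqref{c4.17}$ holds iff $\liminf_{l\to 0-}|l|^{-(2+\alpha_+)/(1+\alpha_+)}\int_{x_0}^{x_0+l}(\varphi(\xi)-a)\,{\rm d}\xi>-\infty$. As $l\to 0{+}$ one has $u(l;t,a)\to a{-}$, and the $l>0$ half is handled exactly as in Proposition~$\ref{pro:c4.3}$(ii) with $b$ replaced by $a$, being equivalent to the analogous $\liminf$ with exponent $(2+\alpha_-)/(1+\alpha_-)$. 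Combining these two equivalences with the splitting step of the previous paragraph yields $\eqref{c4.18}$.

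For (iii), I would substitute the power-law form $\eqref{c4.18c}$ and integrate, obtaining as in $\eqref{c4.13d1}$ that $|l|^{-(2+\alpha_+)/(1+\alpha_+)}\int_{x_0}^{x_0+l}(\varphi(\xi)-a)\,{\rm d}\xi=\frac{C_{\gamma_-}+o(1)}{1+\gamma_-}\,|l|^{(\gamma_-(1+\alpha_+)-1)/(1+\alpha_+)}$ as $l\to 0{-}$, together with the symmetric expression as $l\to 0{+}$ written in terms of $C_{\gamma_+},\gamma_+,\alpha_-$. Reading off the sign of the $|l|$-exponent then shows the corresponding $\liminf$ equals $-\infty$ exactly when $C_{\gamma_\pm}<0$ and $\gamma_\pm(1+\alpha_\mp)<1$, which is $\eqref{c4.18d}$. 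Finally, when $\mathcal{C}(x_0)=\varnothing$ and, say, the $l<0$ side is the failing one, $\eqref{c4.7d}$ gives $\Phi(l;x_0,a)<F(l;t,a)$ for small $l<0$ and every $t>0$, while the $l>0$ case is covered by $\eqref{c4.9d}$; this establishes the last assertion of (iii).
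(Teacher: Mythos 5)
Your proposal is correct and follows exactly the route the paper takes: the paper's entire proof of Proposition \ref{pro:c4.4} is the single line ``let $b=a$ in Proposition \ref{pro:c4.3}'', and your argument is precisely a careful unpacking of that reduction, with the inclusion $\mathcal{C}(x_0)\subseteq\{a\}$ from Proposition \ref{pro:c4.1}, part (i) from Lemma \ref{lem:c4.1}, and the one-sided asymptotics recycled from \eqref{c4.13}--\eqref{c4.13d1}. The only content you add beyond the paper is the explicit splitting of the two-sided condition \eqref{c4.17} into its $l<0$ and $l>0$ halves via the monotonicity of $F(l;\cdot,a)$ from \eqref{c4.2e}, which the paper leaves implicit; you handle the quantifiers on $t$ correctly, so there is no gap.
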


\begin{proof}
It follows immediately by letting $b=a$ in Proposition $\ref{pro:c4.3}$.
\end{proof}

To sum up, for any $x_0\in \mathbb{R}$,
the point set $\mathcal{C}(x_0)$ of characteristic generation values of $x_0$
can be completely determined by Theorem $\ref{the:c4.0}$ as follows:

\begin{The}[Criteria for all the cases of $\mathcal{C}(x_0)$]\label{the:c4.0}
Let the point set $\mathcal{C}(x_0)$ of characteristic generation values of $x_0$,
and the Dini derivatives $\overline{{\rm D}}_- \Phi(x_0)$ and $\underline{{\rm D}}_+ \Phi(x_0)$ at $x_0$
be given by $\eqref{c4.0}$ and $\eqref{c4.1a}$--$\eqref{c4.1}$, respectively.

\begin{itemize}
\item [(i)] If $\overline{{\rm D}}_- \Phi(x_0)>\underline{{\rm D}}_+ \Phi(x_0)$,
then $\mathcal{C}(x_0)=\varnothing$.

\item [(ii)] If $\overline{{\rm D}}_- \Phi(x_0)<\underline{{\rm D}}_+ \Phi(x_0)$,
then $\eqref{c4.3a}$ holds, $i.e.$,
$$
(\overline{{\rm D}}_- \Phi(x_0),\underline{{\rm D}}_+ \Phi(x_0))\subset\mathcal{C}(x_0)
\subset [\overline{{\rm D}}_- \Phi(x_0),\underline{{\rm D}}_+ \Phi(x_0)].
$$
Moreover, the following properties are valid{\rm :}
\begin{itemize}
\item[(a)] $\overline{{\rm D}}_- \Phi(x_0) \in \mathcal{C}(x_0)\
(resp.,\ \underline{{\rm D}}_+ \Phi(x_0)\in \mathcal{C}(x_0))$
if and only if $\eqref{c4.6}\ (resp.,\ \eqref{c4.8})$ holds{\rm ;}
\item[(b)] For $\overline{{\rm D}}_- \Phi(x_0)$,
if $\eqref{c4.7a}$ holds, then $\overline{{\rm D}}_- \Phi(x_0)\in \mathcal{C}(x_0)$
if and only if $\eqref{c4.7}$ holds
and, if in addition $\eqref{c4.7b}$ holds,
then $\overline{{\rm D}}_- \Phi(x_0)\in \mathcal{C}(x_0)$
if and only if $\eqref{c4.7c}$ holds{\rm ;}
\item[(c)]
For $\underline{{\rm D}}_+ \Phi(x_0)$,
if $\eqref{c4.9a}$ holds,
then $\underline{{\rm D}}_+ \Phi(x_0)\in \mathcal{C}(x_0)$
if and only if $\eqref{c4.9}$ holds and,
if in addition $\eqref{c4.9b}$ holds,
then $\underline{{\rm D}}_+ \Phi(x_0)\in \mathcal{C}(x_0)$
if and only if $\eqref{c4.9ca}$ holds.
\end{itemize}

\item [(iii)] If $\overline{{\rm D}}_- \Phi(x_0)=\underline{{\rm D}}_+ \Phi(x_0)$,
then $\eqref{c4.16}$ holds, $i.e.$,
$$
\mathcal{C}(x_0)=\big\{\underline{{\rm D}}_+ \Phi(x_0)\big\}
\qquad {\rm or}\qquad \mathcal{C}(x_0)=\varnothing.$$
Moreover, the following properties are valid{\rm :}
\begin{itemize}
\item[(a)]
$\underline{{\rm D}}_+ \Phi(x_0)\in \mathcal{C}(x_0)$
if and only if $\eqref{c4.17}$ holds{\rm ;}
\item[(b)] If $\eqref{c4.18a}$ holds,
then $\mathcal{C}(x_0)=\varnothing$ if and only if
at least one case of $\eqref{c4.18}$ holds{\rm ;}
\item[(c)]
If $\eqref{c4.18a}$ and $\eqref{c4.18c}$ hold,
then $\mathcal{C}(x_0)=\varnothing$ if and only if
at least one case of $\eqref{c4.18d}$ holds.
\end{itemize}
\end{itemize}
\end{The}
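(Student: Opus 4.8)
The plan is to observe that Theorem~\ref{the:c4.0} is a consolidation of the propositions already established in this subsection, organized by the trichotomy for the pair of Dini derivatives $\big(\overline{{\rm D}}_-\Phi(x_0),\underline{{\rm D}}_+\Phi(x_0)\big)$. Since exactly one of the three relations $\overline{{\rm D}}_-\Phi(x_0)>\underline{{\rm D}}_+\Phi(x_0)$, $\overline{{\rm D}}_-\Phi(x_0)<\underline{{\rm D}}_+\Phi(x_0)$, $\overline{{\rm D}}_-\Phi(x_0)=\underline{{\rm D}}_+\Phi(x_0)$ holds, it suffices to treat the assertions (i), (ii), (iii) separately and check that each follows from the corresponding proposition(s).

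First, assertion (i) is exactly Proposition~\ref{pro:c4.2}. Assertion (iii) --- that $\mathcal{C}(x_0)$ equals $\{\underline{{\rm D}}_+\Phi(x_0)\}$ or $\varnothing$, together with the criteria (a)--(c) --- is exactly Proposition~\ref{pro:c4.4}, once one records that under $\overline{{\rm D}}_-\Phi(x_0)=\underline{{\rm D}}_+\Phi(x_0)=:a$ the statements ``$\mathcal{C}(x_0)=\{a\}$'' and ``$a\in\mathcal{C}(x_0)$'' are interchangeable, which matches the phrasing of (iii)(a). For assertion (ii), I would first invoke Proposition~\ref{pro:c4.2b} for the two-sided inclusion~\eqref{c4.3a}; this pins down $\mathcal{C}(x_0)$ up to the membership of its two endpoints, so the rest of (ii) amounts to a membership criterion for each endpoint. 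The claims about $\overline{{\rm D}}_-\Phi(x_0)$ (the first half of (a), and all of (b)) are Proposition~\ref{pro:c4.3}(i), and those about $\underline{{\rm D}}_+\Phi(x_0)$ (the second half of (a), and all of (c)) are Proposition~\ref{pro:c4.3}(ii); assembling them finishes (ii).

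As for the main difficulty: at the level of Theorem~\ref{the:c4.0} itself there is essentially none beyond this bookkeeping, since the analytic substance has already been carried out upstream. The genuinely hard step sits inside Proposition~\ref{pro:c4.3}(b)--(c): one inverts $l=t\big(f'(c)-f'(u)\big)$ near $u=c$ under a one-sided power-law degeneracy $f''(u)=(N+o(1))|u-c|^{\alpha}$ to extract the precise asymptotics $F(l;t,c)\sim -C\,t^{-1/(1+\alpha)}|l|^{(2+\alpha)/(1+\alpha)}$ (equation~\eqref{c4.13}), and then compares this exact power of $|l|$ against the one-sided growth of $\Phi(l;x_0,c)$, using the monotonicity $\partial_t F>0$ from~\eqref{c4.2e} to pass between one witnessing time and all small times. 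That comparison is what upgrades the qualitative criteria \eqref{c4.6}/\eqref{c4.8} into the sharp conditions \eqref{c4.7}/\eqref{c4.9}, and, under the further expansion of $\varphi$ near $x_0$, into the explicit inequalities \eqref{c4.7c}/\eqref{c4.9ca}. With those propositions in hand, the proof of Theorem~\ref{the:c4.0} is merely their synthesis along the trichotomy.
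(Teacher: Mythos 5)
Your proposal is correct and matches the paper exactly: the paper itself states that Theorem~\ref{the:c4.0} "follows directly from Propositions~\ref{pro:c4.2}--\ref{pro:c4.4}," which is precisely the trichotomy-plus-bookkeeping synthesis you describe. Your identification of where the real analytic work lives (the asymptotic inversion in Proposition~\ref{pro:c4.3}(b)--(c) via~\eqref{c4.13} and the monotonicity~\eqref{c4.2e}) is also accurate.
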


As a summative theorem, it follows directly from Propositions $\ref{pro:c4.2}$--$\ref{pro:c4.4}$.

\subsection{Generation of all the six types of initial waves for the Cauchy problem}
Combining Theorem $\ref{the:c4.0}$ with Propositions $\ref{pro:c4.2}$--$\ref{pro:c4.4}$ together,
we obtain the criteria for all the six types of initial waves for the Cauchy problem 
emitting from any point $x_0$ on the $x$--axis as follows:

\begin{The}[Classification of the initial waves for the Cauchy problem]\label{the:c4.1}
For any point $x_0$ on the $x$--axis,
the initial waves for the Cauchy problem emitting from $x_0$ must be one of the following six cases{\rm :}
\begin{itemize}
\item [(i)] If $\overline{{\rm D}}_- \Phi(x_0)>\underline{{\rm D}}_+ \Phi(x_0)$,
or $\overline{{\rm D}}_- \Phi(x_0)=\underline{{\rm D}}_+ \Phi(x_0)$
with $\mathcal{C}(x_0)=\varnothing$,
then the initial wave for the Cauchy problem emitting from $x_0$ is a shock $\mathcal{S}${\rm ;}

\item [(ii)] If $\overline{{\rm D}}_- \Phi(x_0)=\underline{{\rm D}}_+ \Phi(x_0)$
with $\mathcal{C}(x_0)\not=\varnothing$,
then the initial wave for the Cauchy problem emitting from $x_0$ is a characteristic{\rm;}

\item [(iii)] If $\overline{{\rm D}}_- \Phi(x_0)<\underline{{\rm D}}_+ \Phi(x_0)$
and $\mathcal{C}(x_0)=[\overline{{\rm D}}_- \Phi(x_0),\underline{{\rm D}}_+ \Phi(x_0)]$,
then the initial wave for the Cauchy problem emitting from $x_0$ is a rarefaction wave $\mathcal{R}${\rm ;}

\item [(iv)] If $\overline{{\rm D}}_- \Phi(x_0)<\underline{{\rm D}}_+ \Phi(x_0)$
and $\mathcal{C}(x_0)=(\overline{{\rm D}}_- \Phi(x_0),\underline{{\rm D}}_+ \Phi(x_0)]$,
then the initial wave for the Cauchy problem emitting from $x_0$ is $\mathcal{S}+\mathcal{R}${\rm ;}

\item [(v)] If $\overline{{\rm D}}_- \Phi(x_0)<\underline{{\rm D}}_+ \Phi(x_0)$
and $\mathcal{C}(x_0)=[\overline{{\rm D}}_- \Phi(x_0),\underline{{\rm D}}_+ \Phi(x_0))$,
then the initial wave for the Cauchy problem
emitting from $x_0$ is $\mathcal{R}+\mathcal{S}${\rm ;}

\item [(vi)] If $\overline{{\rm D}}_- \Phi(x_0)<\underline{{\rm D}}_+ \Phi(x_0)$
and $\mathcal{C}(x_0)=(\overline{{\rm D}}_- \Phi(x_0),\underline{{\rm D}}_+ \Phi(x_0))$,
then the initial wave for the Cauchy problem emitting from $x_0$ is $\mathcal{S}+\mathcal{R}+\mathcal{S}$.
\end{itemize}
\noindent
In the above, $\mathcal{C}(x_0)$, $\overline{{\rm D}}_- \Phi(x_0)$, and $\underline{{\rm D}}_+ \Phi(x_0)$
are given by $\eqref{c4.0}$--$\eqref{c4.1}$.
See {\rm Fig.} $\ref{figPhiFab}$ and {\rm Fig.} $\ref{figPhiFabab}$ for the details.
\end{The}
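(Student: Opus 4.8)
The plan is to reduce the whole statement to the already-established description of the set $\mathcal{C}(x_0)$ of characteristic generation values, and then to read off the local geometry of the entropy solution near $(x_0,0)$ from the topological shape of $\mathcal{C}(x_0)$. Writing $a:=\overline{{\rm D}}_- \Phi(x_0)$ and $b:=\underline{{\rm D}}_+ \Phi(x_0)$, I would first invoke Propositions $\ref{pro:c4.2}$--$\ref{pro:c4.4}$ (equivalently Theorem $\ref{the:c4.0}$) to conclude that exactly one of the following occurs: if $a>b$, or $a=b$ with $\mathcal{C}(x_0)=\varnothing$, then $\mathcal{C}(x_0)=\varnothing$; if $a=b$ with $\mathcal{C}(x_0)\neq\varnothing$, then $\mathcal{C}(x_0)=\{a\}$; and if $a<b$, then by $\eqref{c4.3a}$ one has $(a,b)\subset\mathcal{C}(x_0)\subset[a,b]$, so $\mathcal{C}(x_0)$ is one of $[a,b]$, $(a,b]$, $[a,b)$, $(a,b)$ according to whether $a\in\mathcal{C}(x_0)$ and $b\in\mathcal{C}(x_0)$, membership being decided by the criteria $\eqref{c4.6}$ and $\eqref{c4.8}$ of Proposition $\ref{pro:c4.3}$. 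This exhausts exactly the six alternatives listed in cases (i)--(vi).

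Next I would translate the shape of $\mathcal{C}(x_0)$ into wave structure. By Definition $\ref{def:c4.1}$ and Lemma $\ref{lem:c2.5}$(iii), for each $c\in\mathcal{C}(x_0)$ there is $t_c>0$ such that $L_c(x_0)$ restricted to $t\in(0,t_c)$ is a shock-free characteristic along which the entropy solution is identically $c$, while if $c\notin\mathcal{C}(x_0)$ no characteristic through $(x_0,0)$ has speed $f'(c)$. Consequently, when $\mathcal{C}(x_0)$ is an interval $I$ with nonempty interior, the family $\{L_c(x_0):c\in I\}$ sweeps out a full angular sector issuing from $(x_0,0)$, and by Lemma $\ref{lem:c2.5}$(iii) together with the trace identities $\eqref{c2.50a}$ this sector is foliated by the shock-free characteristics $L_c(x_0)$, $c\in I$, so that the entropy solution there equals the centered function $u(x,t)=(f')^{-1}\!\big(\tfrac{x-x_0}{t}\big)$ on the cone between the slopes $f'(\inf I)$ and $f'(\sup I)$; this is by definition a (centered) rarefaction wave $\mathcal{R}$. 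If $I=\{a\}$ is a singleton the same reasoning yields a single characteristic, and if $\mathcal{C}(x_0)=\varnothing$ there is no characteristic issuing from $x_0$ at all.

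It then remains to identify what happens along the two bounding edges of the sector, i.e.\ to show that a missing endpoint of $\mathcal{C}(x_0)$ forces a genuine shock there, and that $\mathcal{C}(x_0)=\varnothing$ forces a single shock through $(x_0,0)$. I would argue this via the Oleinik-type one-sided inequality of Lemma $\ref{lem:c2.4}$ and the monotonicity of $y(\cdot,t,u^\pm(\cdot,t))$: if $a=\overline{{\rm D}}_- \Phi(x_0)\notin\mathcal{C}(x_0)$, then by Proposition $\ref{pro:c4.3}$(i) — the alternative $\eqref{c4.7d}$, and in the degenerate subcase the chain $\eqref{c4.7a}$--$\eqref{c4.7c}$ — one has $\Phi(l;x_0,a)<F(l;t,a)$ for every $t>0$ and all small $l<0$, which by $\eqref{c4.2b}$ means $a$ is never the maximizer of $E(\cdot\,;x,t)$ for $(x,t)$ just to the left of $L_a(x_0)$; hence the left trace $u^-(x,t)$ jumps strictly below the rarefaction edge, and $\eqref{c2.50a}$ locates a shock curve emanating from $(x_0,0)$ immediately to the left of the fan. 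The symmetric argument at $b=\underline{{\rm D}}_+ \Phi(x_0)$ handles the right edge; when $\mathcal{C}(x_0)=\varnothing$ the two one-sided shocks coincide into the single shock $\mathcal{S}$, and when $\mathcal{C}(x_0)=\{a\}\neq\varnothing$ the criterion $\eqref{c4.17}$ of Proposition $\ref{pro:c4.4}$ excludes such a jump on either side, leaving only the characteristic. Matching the six set-theoretic possibilities against the resulting local pictures — $\varnothing\mapsto\mathcal{S}$, $\{a\}\mapsto$ characteristic, $[a,b]\mapsto\mathcal{R}$, $(a,b]\mapsto\mathcal{S}+\mathcal{R}$, $[a,b)\mapsto\mathcal{R}+\mathcal{S}$, $(a,b)\mapsto\mathcal{S}+\mathcal{R}+\mathcal{S}$ — gives exactly cases (i)--(vi), as illustrated in Fig.~$\ref{figPhiFab}$ and Fig.~$\ref{figPhiFabab}$.

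The main obstacle will be the third step: rigorously converting ``$a\notin\mathcal{C}(x_0)$'' into ``a bona fide shock is emitted along the left edge of the fan,'' uniformly in time. The delicate points are, first, establishing the uniform-in-$t$ strict inequality $\Phi(l;x_0,a)<F(l;t,a)$ for small $l<0$ — which is precisely where the convexity-degeneracy bookkeeping in $\eqref{c4.7a}$--$\eqref{c4.7d}$ is essential and is genuinely new relative to the uniformly convex case — and, second, using the pointwise traces $\eqref{c2.50a}$ and the Oleinik inequality to show that the jump of $u^-$ across the edge is strict, so the discontinuity is not removable and persists for all small $t>0$. Care is also needed to distinguish cleanly the two sub-possibilities in case (i), namely $a=b$ with $\mathcal{C}(x_0)=\varnothing$ versus $\mathcal{C}(x_0)=\{a\}$, which is exactly the content of the criterion $\eqref{c4.17}$.
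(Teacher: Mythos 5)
Your overall skeleton matches the paper's: reduce everything to the classification of $\mathcal{C}(x_0)$ via Theorem~$\ref{the:c4.0}$ and Propositions~$\ref{pro:c4.2}$--$\ref{pro:c4.4}$, read the rarefaction fan off the interval $(a,b)\subset\mathcal{C}(x_0)$ using $\eqref{uiff}$, and then decide what happens at the edges. Your first two steps are sound and essentially identical to the paper's Steps~2--3.

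The gap is in your third step, exactly where you anticipated trouble. You convert ``$a\notin\mathcal{C}(x_0)$'' into a shock by asserting the uniform inequality $\Phi(l;x_0,a)<F(l;t,a)$ for \emph{all} small $l<0$ and every $t>0$, citing $\eqref{c4.7d}$ and the chain $\eqref{c4.7a}$--$\eqref{c4.7c}$. But $\eqref{c4.7d}$ is only proved under the extra power-law hypotheses $\eqref{c4.7a}$ \emph{and} $\eqref{c4.7b}$ on $f''$ near $a$ and on $\varphi$ near $x_0$; for general $L^\infty$ data, which is the setting of the theorem, $a\notin\mathcal{C}(x_0)$ only gives the sequential condition $\eqref{gc1}$: for each $t$ there is a sequence $l_n\to 0$ with $\Phi(l_n;x_0,a)<F(l_n;t,a)$. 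That is far too weak to conclude that $a$ fails to maximize $E(\cdot\,;x,t)$ at every point just left of $L_a(x_0)$, so your derivation of a strict, persistent jump does not go through, and with it the whole of case (i) (where you obtain $\mathcal{S}$ by ``merging two one-sided shocks''). The paper avoids this entirely by a constructive argument: for each $t_1>0$ it sets $a(t_1)=\sup\{x: x-t_1f'(u(x-,t_1))<x_0\}$ and $b(t_1)=\inf\{x: x-t_1f'(u(x-,t_1))>x_0\}$, shows $a(t_1)=b(t_1)$ precisely because $\mathcal{C}(x_0)=\varnothing$ (any $x$ strictly between would produce a characteristic generation value), deduces the strict inequality $\eqref{c4.5}$ and hence $u(x(t)+,t)<u(x(t)-,t)$, and proves continuity of $x(t)$ with $x(0+)=x_0$ from the monotonicity in Lemma~$\ref{lem:c2.4}$ and the traces $\eqref{c2.40}$. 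For the one-sided cases (iv)--(vi) the paper then only needs the equivalence ``$a\in\mathcal{C}(x_0)$ iff no shock on the left of $L_a(x_0)$,'' proved by the collision argument of its Step~2, combined with the same sup/inf construction on the left half. You should replace your $E$-comparison at the edge by this construction; as written, your argument only covers initial data satisfying the special asymptotics $\eqref{c4.7b}$/$\eqref{c4.9b}$, not the general $L^\infty$ case the theorem claims.
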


\begin{figure}[H]
	\begin{center}
		{\includegraphics[width=0.8\columnwidth]{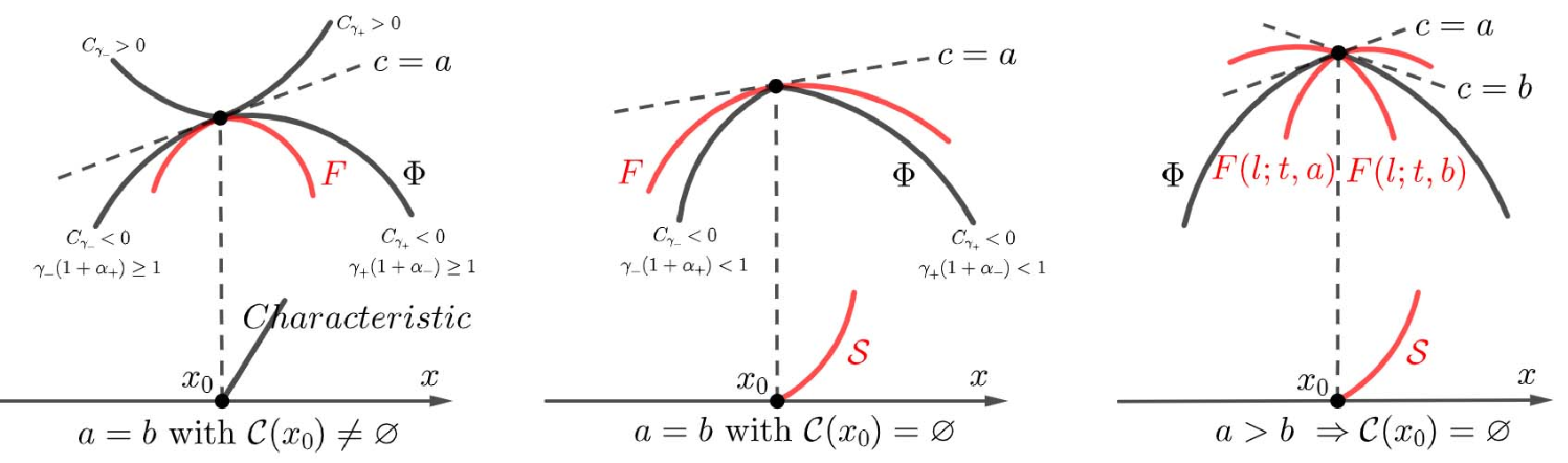}}
 \caption{$a=\overline{{\rm D}}_- \Phi(x_0)\geq\underline{{\rm D}}_+ \Phi(x_0)=b$.
		}\label{figPhiFabab}
	\end{center}
\end{figure}

\begin{proof} We divide the proof into three steps.

\smallskip
\noindent
{\bf 1}. From $\eqref{c4.3}$, $\overline{{\rm D}}_- \Phi(x_0)>\underline{{\rm D}}_+ \Phi(x_0)$
implies that $\mathcal{C}(x_0)= \varnothing$.
Now suppose $\mathcal{C}(x_0)= \varnothing$.
For any $t=t_1>0$, we let
\begin{align*}
\begin{cases}
\displaystyle a(t_1)=\sup\big\{x\in \mathbb{R}\,:\, x-t_1f'(u(x{-},t_1))<x_0\big\},\\[1mm]
\displaystyle b(t_1)\,=\,\inf\big\{x\in \mathbb{R}\,:\, x-t_1f'(u(x{-},t_1))>x_0\big\}.
\end{cases}
\end{align*}
Then $a(t_1)\leq b(t_1)$.
If $a(t_1)<b(t_1)$, for any $x_1\in(a(t_1),b(t_1))$,
$x_1-t_1f'(u(x_1{-},t_1))=x_0$ so that $u(x_1{-},t_1)\in \mathcal{C}(x_0)$,
which contradicts with $\mathcal{C}(x_0)=\varnothing$.
Therefore, $a(t_1)=b(t_1)$.

\smallskip
{\bf (a)}. Set $x=x(t): =a(t)$ for $t>0$. Then $x(t)$ is a single-valued curve with $x(0)=x_0$.
From $\eqref{c2.40}$, for any $t_1>0$,
\begin{align*}
\begin{cases}
\displaystyle x(t_1)-t_1f'(u(x(t_1){-},t_1))
=\lim\limits_{x\rightarrow x(t_1)-}\big(x-t_1f'(u(x{-},t_1))\big)\leq x_0,\\
\displaystyle x(t_1)-t_1f'(u(x(t_1){+},t_1))
=\lim\limits_{x\rightarrow x(t_1)+}\big(x-t_1f'(u(x{-},t_1))\big)\geq x_0.
\end{cases}
\end{align*}
Then, from $\mathcal{C}(x_0)= \varnothing$,
\begin{equation}\label{c4.5}
x(t_1)-t_1f'(u(x(t_1){-},t_1))<x_0<x(t_1)-t_1f'(u(x(t_1){+},t_1)),
\end{equation}
so that, by the strictly increasing property of $f'(u)$, $u(x(t_1){+},t_1)<u(x(t_1){-},t_1)$,
which implies that the entropy solution $u(x,t)$ is discontinuous along the single-valued curve $x=x(t)$.

\smallskip
{\bf (b)}. We now prove the continuity of $x=x(t)$.
Denote $y^\pm (x(t),t):=x(t)-tf'(u(x(t){\pm},t))$.
From $\eqref{uiff}$ and $\eqref{c4.5}$, for any $t_2>t_1>0$,
\begin{equation}\label{c43.10}
y^-(x(t_2),t_2)\leq y^-(x(t_1),t_1)\leq y^+(x(t_1),t_1) \leq y^+(x(t_2),t_2).
\end{equation}
Thus, $y^-(x(t),t)$ is nonincreasing and $y^+(x(t),t)$ is nondecreasing.
Letting $t_2 \rightarrow t_{1}{+}$ in $\eqref{c43.10}$,
\begin{equation*}
y^-(\overline{x(t_{1}+)},t_1)\leq y^-(x(t_1),t_1)\leq y^+(x(t_1),t_1)\leq y^+(\underline{ x(t_{1}+)},t_1),
\end{equation*}
where $\overline{x(t_{1}+)}:=\mathop{\overline{\lim}}_{t_2 \rightarrow t_{1}{+}}x(t_2)$
and $\underline{ x(t_{1}+)}:=\mathop{\underline{\lim}}_{t_2 \rightarrow t_{1}{+}}x(t_2)$.

By Lemma $\ref{lem:c2.4}$,
Since $y^{\pm}(\cdot,t_1)$ are nondecreasing,
then $\overline{x(t_{1}+)}\leq x(t_1)\leq\underline{ x(t_{1}+)},$
which implies that
$\lim_{t\rightarrow t_{1}{+}}x(t)=x(t_1)$ for any $t_1>0$.
Thus, $x(t)$ is right-continuous on $(0,\infty)$.

Similarly, letting $t_1 \rightarrow t_{2}{-}$ in $\eqref{c43.10}$,
$$
y^- (x(t_2),t_2)\leq y^- (\underline{x(t_{2}-)},t_2)
\leq y^+ (\overline{x(t_{2}-)},t_2)\leq y^+ (x(t_{2}),t_2),
$$
where $\overline{x(t_{2}-)}:=\mathop{\overline{\lim}}_{t_1 \rightarrow t_{2}{-}}x(t_1)$
and $\underline{ x(t_{2}-)}:=\mathop{\underline{\lim}}_{t_1 \rightarrow t_{2}{-}}x(t_1)$.
This, together with Lemma $\ref{lem:c2.4}$, implies that
$\overline{x(t_{2}-)}\leq x(t_2)\leq \underline{x(t_{2}-)}$ so that
$x(t)$ is left-continuous on $(0,\infty)$.

Therefore, $x=x(t)$ is continuous on $(0,\infty)$.
Meanwhile, letting $t_1\rightarrow 0{+}$ in $\eqref{c4.5}$,
we see that $\lim_{t\rightarrow 0{+}}x(t)=x_0$.

\smallskip
\noindent
{\bf 2}. Denote $a:=\overline{{\rm D}}_- \Phi(x_0)$.
Since $\mathcal{C}(x_0)\not=\varnothing$, by Proposition $\ref{pro:c4.1}$,
there exists only one characteristic emitting from $x_0$,
which is given by $x=x_0+tf'(a)$ with $t\in [0,t_0)$ for some $t_0>0$.
This means that there exists no rarefaction wave emitting from $x_0$.

We now prove that there does not exist a shock emitting from $x_0$.
Otherwise, there exists a shock $x=x(t)$ emitting from $x_0$.
By $\eqref{uiff}$, shock $x=x(t)$ cannot intersect
the characteristic: $x=x_0+tf'(a)$ on $(0,t_0)$.
Without loss of generality, assume that $x(t)<x_0+tf'(a)$ on $(0,t_0)$.
Since $x(0)=x_0$, then $x(t)-tf'(u(x(t){+},t))\geq x_0$. Since, for any $t_1\in (0,t_0)$,
the shock-free characteristic $x=x(t_1)+(t-t_1)f'(u(x(t_1){+},t_1))$
cannot intersect the shock-free characteristic $x=x_0+tf'(a)$ on $(0,t_1)$, then
$$x(t_1)-t_1f'(u(x(t_1){+},t_1))=x_0,$$
which implies that $u(x(t_1){+},t_1)\in \mathcal{C}(x_0)$.
Since $x(t)<x_0+tf'(a)$ on $(0,t_0)$,
then
$$u(x(t_1){+},t_1)<a=\overline{{\rm D}}_- \Phi(x_0),$$
which contradicts to Proposition $\ref{pro:c4.1}$.

\smallskip
\noindent
{\bf 3}.
For the remaining four cases, by Propositions $\ref{pro:c4.2}$--$\ref{pro:c4.4}$,
it suffices to prove that, when $a=\overline{{\rm D}}_- \Phi(x_0)<\underline{{\rm D}}_+ \Phi(x_0)=b$,
$a \notin \mathcal{C}(x_0)$$(${\it resp.},
$b \notin \mathcal{C}(x_0)$$)$ if and only if
there exists a shock emitting from $x_0$
and lying on the left$(${\it resp.}, right$)$ of the rarefaction wave emitting from $x_0$.
In fact, if there exists a shock emitting from $x_0$
and lying on the left of the rarefaction wave emitting from $x_0$,
then $a \notin \mathcal{C}(x_0)$.
If $a \in \mathcal{C}(x_0)$, by the same arguments as (ii),
there exists no shock emitting from $x_0$
and lying on the left of the characteristic: $x=x_0+tf'(a)$.
\end{proof}

Applying the solution formula $\eqref{c2.50}$ for the Cauchy problem $\eqref{c2.59}$
with the initial data function $u(x,\tau)$
as shown in $\eqref{c2.57}$--$\eqref{c2.58}$.
We define $\mathcal{C}_\tau(x)$ similar to $\mathcal{C}(x_0)$
as in $\eqref{c4.0}$,
and $\Phi_\tau(x):=\int_0^xu(\xi,\tau)\,{\rm d}\xi$
similar to $\Phi(x)=\int_0^x\varphi(\xi)\,{\rm d}\xi$
on the real line $t=\tau$ for any $\tau>0$.

Since $u(x{-},\tau)$ and $u(x{+},\tau)$ always exist, then
\begin{equation*}
\overline{{\rm D}}_- \Phi_{\tau}(x)=u(x{-},\tau)
\geq u(x{+},\tau)=\underline{{\rm D}}_+ \Phi_{\tau}(x).
\end{equation*}

Combining Theorem $\ref{the:c4.1}$ with Corollary $\ref{cor:c2.2}$, 
the initial waves for the Cauchy problem emitting the line $t=\tau$ for any $\tau>0$ are as follows:

\begin{Cor}\label{cor:c4.1}
The initial wave for the Cauchy problem emitting from any point $(x,t)$ with $t>0$
must be one of the following two waves{\rm :}
\begin{itemize}
\item [(i)] If $u(x{-},t)>u(x{+},t)$, or $u(x{-},t)= u(x{+},t)$ with $\mathcal{C}_t(x)=\varnothing$,
then the initial wave for the Cauchy problem emitting from point $(x,t)$ is a shock $\mathcal{S}${\rm ;}

\item [(ii)] If $u(x{-},t)= u(x{+},t)$ with $\mathcal{C}_t(x)=\{u(x{-},t)\}$,
then the initial wave for the Cauchy problem emitting from point $(x,t)$ is a characteristic. 
\end{itemize}
\end{Cor}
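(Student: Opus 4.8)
\emph{Proof proposal.} The plan is to obtain Corollary~\ref{cor:c4.1} as a direct specialization of the six-type classification in Theorem~\ref{the:c4.1}, applied to the restarted Cauchy problem \eqref{c2.59}. First I would fix $(x_0,\tau)$ with $\tau>0$ and invoke Corollary~\ref{cor:c2.2}: the unique entropy solution $v$ of \eqref{c2.59} (produced by the solution formula \eqref{c2.50} with initial data $u(\cdot,\tau)$ on the slice $t=\tau$) satisfies $v(x\pm,t)=u(x\pm,t)$ for $t>\tau$, together with $\mathcal{U}(x,t;\tau)=\mathcal{U}(x,t)$. Consequently the notion of ``initial wave for the Cauchy problem emitting from $(x_0,\tau)$'' for the solution $u$ coincides with that of the initial wave emitting from $x_0$ for the Cauchy problem \eqref{c2.59}, and the set of characteristic generation values that governs it is exactly $\mathcal{C}_\tau(x_0)$. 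Thus Theorem~\ref{the:c4.1} applies verbatim with $\varphi$ replaced by $u(\cdot,\tau)$ and $\Phi$ by $\Phi_\tau$.

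The key observation is that, on the slice $t=\tau$, the Dini derivatives collapse to the traces. Since $u(\cdot,\tau)$ has the pointwise left- and right-traces $u(x_0\pm,\tau)$ by Theorem~\ref{the:mt}, the one-sided averages $\tfrac{1}{l}\int_{x_0}^{x_0+l}u(\xi,\tau)\,\mathrm{d}\xi$ converge as $l\to 0{-}$ and as $l\to 0{+}$, so that $\overline{{\rm D}}_- \Phi_\tau(x_0)=u(x_0-,\tau)$ and $\underline{{\rm D}}_+ \Phi_\tau(x_0)=u(x_0+,\tau)$, and moreover $u(x_0-,\tau)\ge u(x_0+,\tau)$ by \eqref{c2.4} (this is precisely the inequality displayed just before the statement). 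Hence cases (iii)--(vi) of Theorem~\ref{the:c4.1}, which all require the strict inequality $\overline{{\rm D}}_- \Phi_\tau(x_0)<\underline{{\rm D}}_+ \Phi_\tau(x_0)$, are vacuous here, and only cases (i) and (ii) survive. Reading them off: case (i) holds exactly when $u(x_0-,\tau)>u(x_0+,\tau)$, or $u(x_0-,\tau)=u(x_0+,\tau)$ with $\mathcal{C}_\tau(x_0)=\varnothing$, and yields a shock $\mathcal{S}$, which is item (i) of the corollary; case (ii) holds exactly when $u(x_0-,\tau)=u(x_0+,\tau)$ with $\mathcal{C}_\tau(x_0)\ne\varnothing$, and yields a characteristic, and by Proposition~\ref{pro:c4.4} (equivalently Theorem~\ref{the:c4.0}(iii)) a nonempty $\mathcal{C}_\tau(x_0)$ with $\overline{{\rm D}}_- \Phi_\tau(x_0)=\underline{{\rm D}}_+ \Phi_\tau(x_0)$ must equal the singleton $\{u(x_0-,\tau)\}$, which is item (ii).

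The main obstacle is the bookkeeping in the first step rather than any genuinely new estimate: one must be certain that the continuation of $u$ past the line $t=\tau$ is governed by the solution formula applied to the data $u(\cdot,\tau)$ with the \emph{same} set $\mathcal{U}$ of shock-free backward characteristic speeds, and that $\mathcal{C}_\tau(x_0)$ defined via this restarted formula is the object appearing in Theorem~\ref{the:c4.1}. Both are supplied by the pointwise identities $v(x\pm,t)=u(x\pm,t)$ and $\mathcal{U}(x,t;\tau)=\mathcal{U}(x,t)$ of Corollary~\ref{cor:c2.2}; once these are in hand, the argument is a one-line restriction of Theorem~\ref{the:c4.1} to the regime $\overline{{\rm D}}_- \Phi_\tau(x_0)\ge\underline{{\rm D}}_+ \Phi_\tau(x_0)$.
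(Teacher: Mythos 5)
Your proposal is correct and follows essentially the same route as the paper: the paper derives this corollary by combining Theorem~\ref{the:c4.1} with Corollary~\ref{cor:c2.2} after observing that on the slice $t=\tau$ the Dini derivatives reduce to the traces, $\overline{{\rm D}}_- \Phi_{\tau}(x)=u(x{-},\tau)\geq u(x{+},\tau)=\underline{{\rm D}}_+ \Phi_{\tau}(x)$, so only cases (i)--(ii) of the six-type classification can occur. Your additional appeal to Proposition~\ref{pro:c4.4} to identify the nonempty $\mathcal{C}_\tau(x)$ as the singleton $\{u(x{-},\tau)\}$ is consistent with the paper's statement.
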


\smallskip
\subsection{Lifespan of characteristics}
In this subsection, we introduce a upper bound of lifespan and the lifespan of characteristics.

For any $x_0\in \mathbb{R}$ and $c\in \mathcal{C}(x_0)$,
we define a upper bound of lifespan
$t_p:=t_p(x_0,c)$ of characteristic $L_c(x_0)$ defined by $\eqref{c4.1c}$ as follows:
\begin{align}\label{plc1}
t_p=t_p(x_0,c):=\min\big\{t_p^\pm(x_0,c)\big\},
\end{align}
where $t_p^\pm:=t_p^\pm(x_0,c)$ are defined by
\begin{align}\label{plc2}
t_p^\pm=t_p^\pm(x_0,c):=\sup\big\{t\in \mathbb{R}^+\,:\,\Phi(l;x_0,c)>F(l;t,c)
\quad\,\,\, {\rm for\ small}\ \pm l>0\big\}
\end{align}
with  $\Phi(l;x_0,c)$ and $F(l;t,c)$ given by $\eqref{c4.2b}$, and $\pm l$ as the symbol representing $+l$ and $-l$ together 
from now on.

\smallskip
Since $c\in \mathcal{C}(x_0)$,
combining $\eqref{plc1}$--$\eqref{plc2}$ with Lemma $\ref{lem:c4.1}$ yields that
$t_p^\pm>0$, $i.e.$, $t_p^\pm \in (0,\infty]$,
which implies that $t_p \in (0,\infty]$.

\smallskip
From $\eqref{c4.2b}$, $F(l;t,c)<0$ for any $l\neq 0$ and $t>0$.
Therefore, if $\Phi(l;x_0,c)\geq 0$ for small $\pm l>0$,
then $\Phi(l;x_0,c)>F(l;t,c)$ for small $\pm l>0$ and $t>0$,
which means that $t_p^\pm=\infty$.
Otherwise, $t_p^\pm$ belong to $(0,\infty)$ or equal to $\infty$.

\begin{Lem}\label{lem:plc0}
For characteristic $L_c(x_0)$
defined by $\eqref{c4.1c}$
with $c\in \mathcal{C}(x_0)$,
if $t_p^\pm\in(0,\infty)$, then
\begin{itemize}
\item [(i)] For $t<t_p^\pm$,
\begin{align}\label{plc2c}
\Phi(l;x_0,c)>F(l;t,c) \qquad {\rm for\ small}\ \pm l>0.
\end{align}

\item [(ii)] For $t=t_p^\pm$,
\begin{align}\label{plc2a}
\Phi(l;x_0,c)\geq F(l;t_p^\pm,c)\qquad {\rm for\ small}\ \pm l>0.
\end{align}

\item [(iii)] For $t>t_p^+\, (\mbox{or}\,\,\, t_p^-)$,
there exists 
a sequence 
$l_n^+>0\, (\mbox{or}\,\,\,l_n^-<0)$ with
$\lim_{n\rightarrow\infty} l_n^\pm=0$ such that
\begin{align}\label{plc2b}
\Phi(l_n^\pm;x_0,c)<F(l_n^\pm;t,c).
\end{align}
\end{itemize}

\noindent
In the above, $\Phi(l;x_0,c)$ and $F(l;t,c)$ are given by $\eqref{c4.2b}$.
\end{Lem}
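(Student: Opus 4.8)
The plan is to deduce all three assertions from the strict monotonicity of $t\mapsto F(l;t,c)$ recorded in $\eqref{c4.2e}$, namely $\frac{{\rm d}F}{{\rm d}t}(l;t,c)=\int_c^{u(l;t,c)}\big(u(l;t,c)-s\big)f''(s)\,{\rm d}s>0$ for each fixed $l\neq 0$, together with the continuity of $t\mapsto F(l;t,c)$ and the scaling identity $F(\lambda l;\lambda t,c)=\lambda F(l;t,c)$, which holds because $-F(l;t,c)/t$ depends on $(l,t)$ only through $l/t$.

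First I would note that the set $S^\pm:=\{t>0\,:\,\Phi(l;x_0,c)>F(l;t,c)\ {\rm for\ small}\ \pm l>0\}$, whose supremum is $t_p^\pm$ by $\eqref{plc2}$, is ``downward closed'': if $t\in S^\pm$ and $0<t''<t$, then $\eqref{c4.2e}$ gives $\Phi(l;x_0,c)>F(l;t,c)>F(l;t'',c)$ on the same one-sided neighbourhood of $l=0$, so $t''\in S^\pm$. Hence $S^\pm$ is an interval of the form $(0,t_p^\pm)$ or $(0,t_p^\pm]$, and for any $t<t_p^\pm$ one may pick $t'\in S^\pm$ with $t'>t$ to conclude $\Phi(l;x_0,c)>F(l;t',c)>F(l;t,c)$ for small $\pm l>0$, which is $\eqref{plc2c}$. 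For $\eqref{plc2b}$, given $t>t_p^+$ I would choose an intermediate $t'\in(t_p^+,t)$; since $t'\notin S^+$ there is a sequence $l_n^+>0$ with $l_n^+\to 0$ and $\Phi(l_n^+;x_0,c)\leq F(l_n^+;t',c)$, and $\eqref{c4.2e}$ upgrades this to $\Phi(l_n^+;x_0,c)\leq F(l_n^+;t',c)<F(l_n^+;t,c)$; the case $t>t_p^-$ is identical with $l_n^-<0$.

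The remaining point $\eqref{plc2a}$, that the strict inequality of $\eqref{plc2c}$ can degenerate only to ``$\geq$'' at the endpoint $t=t_p^\pm$, is the delicate one. I would introduce the threshold $g^\pm(t):=\sup\{\rho>0\,:\,\Phi(l;x_0,c)>F(l;t,c)\ {\rm whenever}\ 0<\pm l<\rho\}$, which is positive on $(0,t_p^\pm)$ by $\eqref{plc2c}$ and non-increasing in $t$ by $\eqref{c4.2e}$. If $g_0:=\lim_{t\uparrow t_p^\pm}g^\pm(t)>0$, then for each fixed $l$ with $0<\pm l<g_0$ one has $\Phi(l;x_0,c)>F(l;t,c)$ for all $t<t_p^\pm$; letting $t\uparrow t_p^\pm$ and using continuity of $F(l;\cdot,c)$ gives $\Phi(l;x_0,c)\geq F(l;t_p^\pm,c)$, which is $\eqref{plc2a}$.

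The main obstacle is therefore the borderline case $g_0=0$, where the interval of validity in $\eqref{plc2c}$ collapses as $t\uparrow t_p^\pm$. Here the plan is to argue by contradiction from a ``bad'' sequence $m_n\to 0$ (with $\pm m_n>0$) satisfying $\Phi(m_n;x_0,c)<F(m_n;t_p^\pm,c)$: for each $t<t_p^\pm$ one compares at the point on the ray $l/t=m_n/t_p^\pm$, along which $F$ is exactly linear by the scaling identity, invokes $\eqref{plc2c}$ at that $t$, and controls the errors using $|F(l;t,c)|\leq|u(l;t,c)-c|\,|l|=o(|l|)$ as $l\to0$ (together with $u(l;t,c)\to c$) and the Lipschitz dependence of $\Phi(\cdot;x_0,c)$ on $l$ (from $\varphi\in L^\infty$). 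A diagonal choice of $(l,t)$ along such rays, tuned so that $l$ stays below the decaying threshold $g^\pm(t)$, should then force the contradiction; making this ray comparison quantitative enough to beat the vanishing thresholds is where the real work lies, while everything else is bookkeeping with $\eqref{c4.2e}$.
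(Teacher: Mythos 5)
Your arguments for (i) and (iii) are correct and are essentially the paper's: the paper derives $\eqref{plc2c}$ ``directly from the definition'' of $t_p^\pm$ in $\eqref{plc2}$ (the downward-closedness of the set via the monotonicity $\eqref{c4.2e}$ is exactly the implicit step you make explicit), and its proof of (iii) is the same contradiction with the strict monotonicity of $F(l;\cdot,c)$ in $t$, run in the contrapositive direction.

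The problem is (ii), which you correctly single out as the delicate point but do not close. Your first case, $g_0:=\lim_{t\uparrow t_p^\pm}g^\pm(t)>0$, is precisely the paper's entire proof of (ii): the paper ``lets $t\to (t_p^\pm){-}$ in $\eqref{plc2c}$'' for fixed $l$, which tacitly assumes that the one-sided neighbourhood on which $\eqref{plc2c}$ holds does not collapse as $t\uparrow t_p^\pm$. Your second case, $g_0=0$, is where the content lies, and the ray comparison you sketch cannot be completed, because $\eqref{plc2a}$ can genuinely fail there. Take $f(u)=\frac12 u^2$, so that $F(l;t,c)=-\frac{l^2}{2t}$, and choose $\varphi$ near $x_0$ with $\varphi(x_0+l)-c=-\frac{l}{t_*}-3l^2$ for small $l>0$ and $\varphi(x_0+l)-c=-\frac{l}{t_*}$ for small $l<0$. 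Then $\Phi(l;x_0,c)=-\frac{l^2}{2t_*}-l^3$ for $l>0$; one checks $c\in\mathcal{C}(x_0)$ (via Lemma $\ref{lem:c4.1}$ with $t=t_*/2$) and $t_p^+=t_*\in(0,\infty)$, yet
\[
\Phi(l;x_0,c)-F(l;t_p^+,c)=-l^3<0\qquad\text{for every small }l>0 .
\]
So no quantitative refinement of your ray argument (nor of the paper's limiting argument) can yield $\eqref{plc2a}$ without an additional hypothesis guaranteeing $g_0>0$: in the borderline regime of Theorem $\ref{lem:plc1}$ where $\gamma_\pm(1+\alpha_\mp)=1$, the sign of $\Phi-F$ at $t=t_p^\pm$ is decided by lower-order terms that the hypotheses do not control. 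Your instinct about where the difficulty sits is exactly right; the remaining step is not bookkeeping but a genuine obstruction.
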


\begin{proof} We divide the proof into three steps correspondingly.

\smallskip
\noindent
{\bf 1}. For $t<t_p^\pm$, $\eqref{plc2c}$ is directly implied by the definition of $t_p^\pm$ in $\eqref{plc2}$.

\smallskip
\noindent
{\bf 2}. For $t=t_p^\pm$, from the continuity of $F(l;\cdot,c)$ inferred by $\eqref{c4.2e}$,
letting $t\rightarrow (t_p^\pm)-$ in $\eqref{plc2c}$, we obtain $\eqref{plc2a}$.

\smallskip
\noindent
{\bf 3}. For $t>t_p^\pm$, we prove $\eqref{plc2b}$ by contradiction.
In fact, if $\eqref{plc2b}$ does not hold, there exists $l_0>0$ such that
$\Phi(l;x_0,c)\geq F(l;t,c)$ for $\pm l\in (0,l_0).$
$$\Phi(l;x_0,c)\geq F(l;t,c)\qquad {\rm for}\ \pm l\in (0,l_0).$$
Since $F(l;\cdot,c)$ is strictly increasing which is inferred by $\eqref{c4.2e}$,
then, for any $t'\in(t_p^\pm,t)$,
\begin{align*}
\Phi(l;x_0,c)\geq F(l;t,c)>F(l;t',c)\qquad {\rm for }\ \pm l\in (0,l_0),
\end{align*}
which contradicts to the definition of $t_p^\pm$ in $\eqref{plc2}$.
\end{proof}

In order to determine the exact values of $t_p^\pm$, we assume that
\begin{align}\label{plc3}
\begin{cases}
f''(u)=(N_\pm+o(1))|u-c|^{\alpha_\pm}\qquad &{\rm as}\ u\rightarrow c{\pm},\\[2mm]
\varphi(x_0+l)-c=(C_{\gamma_\pm}+o(1)){\rm sgn}(l) |l|^{\gamma_\pm}\qquad &{\rm as}\ l\rightarrow 0{\pm},
\end{cases}
\end{align}
for some constants $\alpha_\pm\geq 0, \gamma_\pm>0$, $N_\pm>0$, and $C_{\gamma_\pm}\neq 0$.

\medskip
For characteristic $L_c(x_0)$ with $c\in \mathcal{C}(x_0)$,
the exact values of $t_p^\pm$ can be determined as follows:

\begin{The}[A upper bound of lifespan of characteristics]\label{lem:plc1}
Suppose that $\mathcal{C}(x_0)\neq\varnothing$.
For any characteristic $L_c(x_0)$ defined by $\eqref{c4.1c}$ with $c\in \mathcal{C}(x_0)$,
$t_p^\pm=t_p^\pm(x_0,c)>0$.  There are two cases{\rm :}
\begin{itemize}
\item [(i)] Case{\rm :} $\overline{{\rm D}}_- \Phi(x_0)<\underline{{\rm D}}_+ \Phi(x_0)$.
\begin{itemize}
\item [(a)] If $c\in (\overline{{\rm D}}_- \Phi(x_0),\,\underline{{\rm D}}_+ \Phi(x_0))$, then $t_p=t_p^\pm=\infty$.

\item [(b)] If $c=\overline{{\rm D}}_- \Phi(x_0)\in \mathcal{C}(x_0)$, then $t_p^+=\infty$.
Assume that $\eqref{plc3}$ holds for some constants $\alpha_+\geq 0, \gamma_->0$,
$N_+>0$, and $C_{\gamma_-}\neq 0$,
then $t^-_p\in(0,\infty)$ if and only if
\begin{align}\label{plc4a}
C_{\gamma_-}<0\qquad {\rm and}\quad \gamma_-(1+\alpha_+)=1.
\end{align}
Furthermore, if $t^-_p\in(0,\infty)$, then
\begin{align}\label{plc4b}
t^-_p=\big(\gamma_- N_+|C_{\gamma_-}|^{1+\alpha_+}\big)^{-1},
\end{align}
and, for any $t>t_p^-$, $\Phi(l;x_0,c)<F(l;t,c)$ for small $l<0$.

\item [(c)] If $c=\underline{{\rm D}}_+ \Phi(x_0)\in \mathcal{C}(x_0)$, then $t_p^-=\infty$.
Assume that $\eqref{plc3}$ holds for some constants $\alpha_-\geq 0, \gamma_+>0$,
$N_->0$, and $C_{\gamma_+}\neq 0$,
then $t^+_p\in(0,\infty)$ if and only if
\begin{align}\label{plc5a}
C_{\gamma_+}<0\qquad {\rm and}\quad \gamma_+(1+\alpha_-)=1.
\end{align}
Furthermore, if $t^+_p\in(0,\infty)$, then
\begin{align}\label{plc5b}
t^+_p=\big(\gamma_+ N_-|C_{\gamma_+}|^{1+\alpha_-}\big)^{-1},
\end{align}
and, for any $t>t_p^+$, $\Phi(l;x_0,c)<F(l;t,c)$ for small $l>0$.
\end{itemize}

\item [(ii)] Case{\rm :} $\overline{{\rm D}}_- \Phi(x_0)=\underline{{\rm D}}_+ \Phi(x_0)$.
Assume that $\eqref{plc3}$ holds,
then $t^\pm_p\in(0,\infty)$ if and only if
\begin{align}\label{plc6a}
C_{\gamma_\pm}<0\qquad {\rm and}\quad \gamma_\pm(1+\alpha_\mp)=1.
\end{align}
Furthermore, if $t^\pm_p\in(0,\infty)$, then
\begin{align}\label{plc6b}
t^\pm_p=\big(\gamma_\pm N_\mp|C_{\gamma_\pm}|^{1+\alpha_\mp}\big)^{-1},
\end{align}
and for any $t>t_p^\pm$, $\Phi(l;x_0,c)<F(l;t,c)$ for small $\pm l>0$.
\end{itemize}
\noindent
In the above,
$\mathcal{C}(x_0)$, $\overline{{\rm D}}_- \Phi(x_0)$,
and $\underline{{\rm D}}_+ \Phi(x_0)$
are given by $\eqref{c4.0}$--$\eqref{c4.1}${\rm ,}
$\Phi(l;x_0,c)$ and $F(l;t,c)$ are given by $\eqref{c4.2b}${\rm ,}
and $t_p$ with $t_p^{\pm}$ is given by $\eqref{plc1}$--$\eqref{plc2}$.
See also {\rm Fig. $\ref{figPls}$}.
\end{The}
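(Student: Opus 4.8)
The plan is to pin down the exact value of the threshold $t_p^\pm$ by reading it off from the asymptotic expansions of $\Phi(l;x_0,c)$ and $F(l;t,c)$ as $l\to 0^\pm$ that are essentially already produced inside the proof of Proposition~$\ref{pro:c4.3}$, exploiting that $t\mapsto F(l;t,c)$ is strictly increasing and continuous by $\eqref{c4.2e}$, together with the clean characterization of $t_p^\pm$ supplied by Lemma~$\ref{lem:plc0}$. Since $c\in\mathcal{C}(x_0)$, the positivity $t_p^\pm>0$ is already recorded right after $\eqref{plc2}$, so the real content is the finiteness dichotomy and, when $t_p^\pm$ is finite, its closed form.

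First I would dispose of the sides on which $\Phi$ never becomes negative. Because $F(l;t,c)<0$ for all $l\neq 0$ and $t>0$ by $\eqref{c4.2b}$, whenever $\Phi(l;x_0,c)\geq 0$ for small $\pm l>0$ one gets $t_p^\pm=\infty$ at once. This settles Case~(i)(a), where $\Phi(l;x_0,c)>0$ for small $l\neq 0$ as in the proof of Proposition~$\ref{pro:c4.2b}$, so $t_p=t_p^\pm=\infty$; the $+l$ side of Case~(i)(b), where $\eqref{c4.12a}$ gives $\Phi(l;x_0,a)>0$, so $t_p^+=\infty$; and the $-l$ side of Case~(i)(c), where $\limsup_{l\to 0-}\frac{1}{l}\int_{x_0}^{x_0+l}(\varphi-b)\,{\rm d}\xi=\overline{{\rm D}}_-\Phi(x_0)-b<0$ forces $\Phi(l;x_0,b)>0$ for small $l<0$, so $t_p^-=\infty$.

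The remaining one-sided problems are all the same after relabeling: the $-l$ side of (i)(b) with the data $(\alpha_+,N_+,\gamma_-,C_{\gamma_-})$, the $+l$ side of (i)(c) with $(\alpha_-,N_-,\gamma_+,C_{\gamma_+})$, and both sides of (ii) with $(\alpha_\mp,N_\mp,\gamma_\pm,C_{\gamma_\pm})$ (in (ii) one has $\mathcal{C}(x_0)=\{c\}$ by Proposition~$\ref{pro:c4.4}$). Taking the $-l$ side of (i)(b) as the model: as $l\to 0^-$ one has $u(l;t,c)\to c^+$, so $\eqref{c4.13}$ gives $F(l;t,c)=-(1+o(1))\,\kappa\,t^{-1/(1+\alpha_+)}\,|l|^{(2+\alpha_+)/(1+\alpha_+)}$ with the explicit constant $\kappa=\frac{1+\alpha_+}{2+\alpha_+}\big(\frac{1+\alpha_+}{N_+}\big)^{1/(1+\alpha_+)}$, while integrating $\eqref{c4.7b}$, as in $\eqref{c4.13d1}$, gives $\Phi(l;x_0,c)=\frac{C_{\gamma_-}+o(1)}{1+\gamma_-}\,|l|^{1+\gamma_-}$. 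Comparing the powers of $|l|$: if $\gamma_-(1+\alpha_+)>1$ then $|\Phi|\ll|F|$ and $\Phi-F\sim -F>0$; if $\gamma_-(1+\alpha_+)<1$ then $\Phi$ dominates and, since $c\in\mathcal{C}(x_0)$ forces $C_{\gamma_-}>0$ in that range by Proposition~$\ref{pro:c4.3}$(i), again $\Phi>F$; and if $C_{\gamma_-}>0$ one has $\Phi>0>F$ outright. In each of these $t_p^-=\infty$. The only surviving case — which is exactly $\eqref{plc4a}$ — is $C_{\gamma_-}<0$ with $\gamma_-(1+\alpha_+)=1$, where $\Phi$ and $F$ carry the same power of $|l|$; dividing by it and letting $l\to 0$, the inequality $\Phi(l;x_0,c)>F(l;t,c)$ reduces to the scalar inequality $\frac{|C_{\gamma_-}|}{1+\gamma_-}<\kappa\,t^{-1/(1+\alpha_+)}$, whose right-hand side decreases strictly from $+\infty$ to $0$, so there is a unique $t_p^-\in(0,\infty)$ at which equality holds; solving it, and simplifying with $\gamma_-=1/(1+\alpha_+)$, produces $\eqref{plc4b}$, and for $t>t_p^-$ the leading coefficients are strictly separated, yielding $\Phi<F$ for all small $l<0$ (stronger than the sequential statement of Lemma~$\ref{lem:plc0}$(iii)). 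The assertions of (i)(c) and (ii) then follow verbatim under the indicated relabeling, giving $\eqref{plc5a}$--$\eqref{plc5b}$ and $\eqref{plc6a}$--$\eqref{plc6b}$.

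The step I expect to be the main obstacle is controlling the two $o(1)$ remainders in the expansions of $\Phi$ and $F$ simultaneously precisely at the critical exponent $\gamma_\pm(1+\alpha_\mp)=1$, where the two leading terms have the same order and a careless comparison could be spoilt by the errors; this is handled by the strict monotonicity and continuity of $t\mapsto F(l;t,c)$ from $\eqref{c4.2e}$, which localize the crossing to a single value of $t$, together with the strict separation of the leading coefficients away from $t=t_p^\pm$, with Lemma~$\ref{lem:plc0}$ turning the pointwise inequality between those coefficients back into the stated dichotomy for $t_p^\pm$.
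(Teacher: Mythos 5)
Your proposal is correct and follows essentially the same route as the paper's proof: both reduce to the one-sided comparison of $\Phi(l;x_0,c)$ with $F(l;t,c)$ via the expansions $\eqref{c4.13}$--$\eqref{c4.13d1}$ (restated as $\eqref{plc9}$), dispose of the non-critical exponent ranges using $\eqref{c4.7c}$ and the sign of $F$, and extract $\eqref{plc4b}$ from the coefficient balance at the critical exponent $\gamma_\pm(1+\alpha_\mp)=1$ together with the strict monotonicity of $t\mapsto F(l;t,c)$ from $\eqref{c4.2e}$. The closed-form value you obtain after simplifying with $\gamma_-=1/(1+\alpha_+)$ matches the paper's computation exactly.
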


\begin{figure}[H]
	\begin{center}
		{\includegraphics[width=0.65\columnwidth]{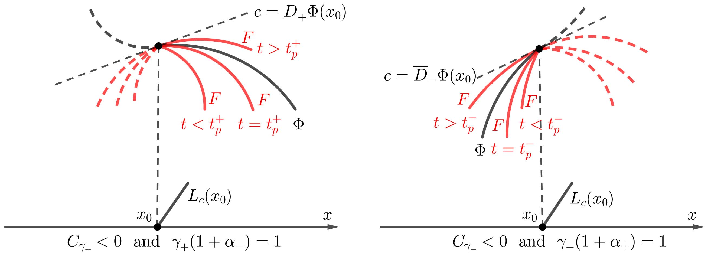}}
 \caption{The positional relations of $\Phi(\cdot\,;x_0,c)$
 and $F(\cdot\,;t,c)$ for $t_p^\pm\in (0,\infty)$.}\label{figPls}
	\end{center}
\end{figure}

\begin{proof}
Suppose that $\mathcal{C}(x_0)\neq\varnothing$.
For any characteristic $L_c(x_0)$ given by $\eqref{c4.1c}$ with $c\in \mathcal{C}(x_0)$,
it is direct to see from $\eqref{plc2}$ and Lemma $\ref{lem:c4.1}$ that $t_p^\pm=t_p^\pm(x_0,c)>0$.

\smallskip
\noindent
{\bf 1}. For the case that $\overline{{\rm D}}_- \Phi(x_0)<\underline{{\rm D}}_+ \Phi(x_0)$,
denote $a:=\overline{{\rm D}}_- \Phi(x_0)$ and $b:=\underline{{\rm D}}_+ \Phi(x_0)$.

\smallskip
{\bf (a).} If $c\in (\overline{{\rm D}}_- \Phi(x_0),\,\underline{{\rm D}}_+ \Phi(x_0))$,
since $a<b$, from $\eqref{c4.1}$,
for
$0<\varepsilon <\min\{b-c,c-a\}$,
$\mathop{\underline{\lim}}_{l\rightarrow 0{+}}
\frac{1}{l}\int^{x_0+l}_{x_0}(\varphi(\xi)-c)\,{\rm d}\xi>\varepsilon$
and
$\mathop{\overline{\lim}}_{l\rightarrow 0{-}}
\frac{1}{l}\int^{x_0+l}_{x_0}(\varphi(\xi)-c)\,{\rm d}\xi<{-}\varepsilon$.
Thus, for any $t>0$,
\begin{align*}
\Phi(l;x_0,c)=\int^{x_0+l}_{x_0}(\varphi(\xi)-c)\,{\rm d}\xi>0>F(l;t,c)
\qquad {\rm for\ small}\ \pm l>0,
\end{align*}
which implies that $t_p=t_p^\pm=\infty$.

\smallskip
{\bf (b).} If $c=\overline{{\rm D}}_- \Phi(x_0)\in \mathcal{C}(x_0)$,
it follows from $\eqref{c4.12a}$ that $t_p^+=\infty$.
For the case of $t^-_p$,
similar to $\eqref{c4.13}$--$\eqref{c4.13d1}$,
it is direct to check that, for small $l<0$,
\begin{align}\label{plc9}
\begin{cases}
\Phi(l;x_0,c)=(1+o(1))\frac{C_{\gamma_-}}{1+\gamma_-}|l|^{1+\gamma_-},\\[2mm]
F(l;t,c)=-(1+o(1))\frac{1+\alpha_+}{2+\alpha_+}
\Big(\frac{1+\alpha_+}{N_+}\Big)^{\frac{1}{1+\alpha_+}}\,t^{-\frac{1}{1+\alpha_+}}
\,|l|^{1+\frac{1}{1+\alpha_+}}.
\end{cases}
\end{align}

From $\eqref{plc9}$, when $C_{\gamma_-}>0$,
then $\Phi(l;x_0,c)>0$ for small $l<0$ so that, for any $t>0$,
\begin{align}\label{plc9a}
\Phi(l;x_0,c)>F(l;t,c) \qquad {\rm for\ small}\ l<0,
\end{align}
which implies that $t_p^-=\infty$.
When $C_{\gamma_-}<0$, from $\eqref{c4.7c}$,
$c\in \mathcal{C}(x_0)$ implies that $\gamma_-(1+\alpha_+)\geq 1$.
If $\gamma_-(1+\alpha_+)> 1$, by $\eqref{plc9}$,
$\eqref{plc9a}$ holds for any $t>0$ so that $t_p^-=\infty$;
and if $\gamma_-(1+\alpha_+)= 1$, $\eqref{plc9a}$ is equivalent to that,
for small $l<0$,
\begin{align*}
t^{\frac{1}{1+\alpha_+}}
<(1+o(1))\frac{1+\gamma_-}{|C_{\gamma_-}|}\frac{1+\alpha_+}{2+\alpha_+}
\Big(\frac{1+\alpha_+}{N_+}\Big)^{\frac{1}{1+\alpha_+}}
=(1+o(1))\big(\gamma_- N_+|C_{\gamma_-}|^{1+\alpha_+}\big)^{-\frac{1}{1+\alpha_+}},
\end{align*}
which, by letting $l \rightarrow 0{-}$, implies $\eqref{plc4b}$.

\smallskip
Therefore, $t^-_p\in(0,\infty)$ if and only if $\eqref{plc4a}$ holds.
It follows from $\eqref{plc9}$ that,
for any $t>t_p^-$, $\Phi(l;x_0,c)<F(l;t,c)$ for small $l<0.$

\smallskip
{\bf (c).} By the same argument as {\bf (b)}, it can be checked that $\eqref{plc5a}$--$\eqref{plc5b}$ hold.

\smallskip
\noindent
{\bf 2}. $\eqref{plc6a}$--$\eqref{plc6b}$ follow immediately
by letting $\overline{{\rm D}}_- \Phi(x_0)=\underline{{\rm D}}_+ \Phi(x_0)$ in {\bf (b)} and {\bf (c)}.
\end{proof}

From $\eqref{c4.2}$--$\eqref{c4.2b}$, for any $x_0\in \mathbb{R}$ and $c\in \mathcal{C}(x_0)$,
the lifespan $t_*:=t_*(x_0,c)$ of characteristic $L_c(x_0)$ defined by $\eqref{c4.1c}$ is given by
\begin{align}\label{elc1}
t_*=t_*(x_0,c)&:=\sup\big\{t\in \mathbb{R}^+\,:\,c\in \mathcal{U}(x,t)
\quad\,\,\, {\rm for}\ (x,t)\in L_c(x_0)\big\}\nonumber\\[1mm]
&\;=\sup\big\{t\in \mathbb{R}^+\,:\,\Phi(l;x_0,c)\geq F(l;t,c)
\quad\,\,\, {\rm for\ all}\,\, l=l(u;t,c) \big\},
\end{align}
where $l(u;t,c)=t(f'(c)-f'(u))$ for $u\in \mathbb{R}$,
and the $\Phi(l;x_0,c)$ and $F(l;t,c)$ are given by $\eqref{c4.2b}$.

For any characteristic $L_c(x_0)$, since $c\in \mathcal{C}(x_0)$,
it follows from the definitions of $\mathcal{C}(x_0)$ in $\eqref{c4.0}$
and $t_*$ in $\eqref{elc1}$ that $t_*>0$.

\begin{The}[Lifespan of characteristics]\label{pro:elc1}
Suppose that $\mathcal{C}(x_0)\neq \varnothing$. If $c\in \mathcal{C}(x_0)$,
then $t_*=t_*(x_0,c)$ given by $\eqref{elc1}$
is the lifespan of characteristic $L_c(x_0)$
given by
$\eqref{c4.1c}$
in the following sense{\rm :} 
\begin{itemize}
\item [(i)] If $t_*<\infty$,
then, for any $(x,t)\in L_c(x_0)$,
\begin{align}\label{elc2a}
\{c\}= \mathcal{U}(x,t) \,\,\,{\rm if}\ t< t_*, \quad\,
\{c\}\subset\mathcal{U}(x_*,t_*) \,\,\,{\rm if}\ t=t_*,
\quad\, c\notin \mathcal{U}(x,t) \,\,\,{\rm if}\ t> t_*,
\end{align}
where $x_*:=x_0+t_*f'(c)$.

\smallskip
\item [(ii)] If $t_*=\infty$, then
\begin{align}\label{elc2b}
\{c\}= \mathcal{U}(x,t) \qquad \mbox{{\rm for\ any} $(x,t)\in L_c(x_0)$ {\rm with}\ $t>0$}.
\end{align}
\end{itemize}
\end{The}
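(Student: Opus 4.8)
\emph{Proof proposal.} The plan is to identify $t_*$ with the supremum of the set of times at which $L_c(x_0)$ is a shock-free characteristic, and then to exploit the downward-closedness of that set. Write $x_t:=x_0+tf'(c)$ and $M:=\|\varphi\|_{L^\infty}$, so $c\in\mathcal{C}(x_0)\subset[-M,M]$ by \eqref{c2.7}. First I would record, via \eqref{c4.2}--\eqref{c4.2b}, that for $(x_t,t)\in L_c(x_0)$ one has $E(c;x_t,t)-E(u;x_t,t)=\Phi(l;x_0,c)-F(l;t,c)$ with $l=l(u;t,c)=t\big(f'(c)-f'(u)\big)$; since $E(\cdot\,;x_t,t)$ attains its maximum on $[-M,M]$ and $c\in[-M,M]$, it follows that $c\in\mathcal{U}(x_t,t)$ if and only if $\Phi(l;x_0,c)\geq F(l;t,c)$ for every $l=l(u;t,c)$ with $u\in[-M,M]$. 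Call such a time $t>0$ \emph{admissible}. By this equivalence and \eqref{elc1}, $t_*=\sup\{t>0:\ t\ \text{is admissible}\}$, and this set is nonempty (equivalently $t_*>0$) because $c\in\mathcal{C}(x_0)$ forces admissibility of all sufficiently small $t>0$, by Lemma $\ref{lem:c4.1}$(i) and the argument in its proof.

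Next I would establish the key monotonicity: if $t$ is admissible and $0<t'<t$, then $\mathcal{U}(x_{t'},t')=\{c\}$; in particular $t'$ is admissible. Indeed, admissibility of $t$ means $c\in\mathcal{U}(x_t,t)$, and then the argument in the proof of Lemma $\ref{lem:c2.5}$(iii) shows that $c$ is the unique maximizer of $E(\cdot\,;x,\tau)$ at every point of the backward segment $l_c(x_t,t)$ in $\eqref{c2.43}$; the point $(x_{t'},t')$ lies on that segment since $x_{t'}=x_t+(t'-t)f'(c)$ and $t'\in(0,t)$. Consequently the admissible set is an interval of the form $(0,t_*)$ or $(0,t_*]$, and $\mathcal{U}(x_t,t)=\{c\}$ at every admissible time $t$.

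It then remains to read off the three cases. If $t_*=\infty$, every $t>0$ is admissible, so $\mathcal{U}(x_t,t)=\{c\}$ for all $t>0$, which is $\eqref{elc2b}$. Suppose $t_*<\infty$. For $t<t_*$, pick an admissible $t''\in(t,t_*]$ and apply the monotonicity step to get $\mathcal{U}(x_t,t)=\{c\}$; for $t>t_*$, $t$ is not admissible, hence $c\notin\mathcal{U}(x_t,t)$. The delicate point is $t=t_*$, where one must show the supremum is attained, i.e., $t_*$ is admissible. For fixed $u\in[-M,M]$ set $l:=l(u;t_*,c)$ and, for $t\in(0,t_*)$, $l(t):=l(u;t,c)=(t/t_*)\,l$; then $u(l(t);t,c)=u$, so $F(l(t);t,c)=-t\int_c^u(s-c)f''(s)\,{\rm d}s$, which is continuous in $t$ (cf. $\eqref{c4.2e}$), while $\Phi(l(t);x_0,c)=\int_{x_0}^{x_0+l(t)}(\varphi(\xi)-c)\,{\rm d}\xi\to\Phi(l;x_0,c)$ as $t\to t_*{-}$ since this is Lipschitz in $l$. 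Admissibility of each $t<t_*$ gives $\Phi(l(t);x_0,c)\geq F(l(t);t,c)$, and letting $t\to t_*{-}$ yields $\Phi(l;x_0,c)\geq F(l;t_*,c)$; as $u\in[-M,M]$ was arbitrary, $c\in\mathcal{U}(x_*,t_*)$, i.e. $\{c\}\subset\mathcal{U}(x_*,t_*)$. This completes $\eqref{elc2a}$.

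The main obstacle is precisely this boundary case $t=t_*$: one has to pass to the limit in the defining inequality $\eqref{elc1}$ over the relevant range of $l$, which is why I parametrize by the compact variable $u\in[-M,M]$ rather than by $l$ itself (so that $f'(u)$ stays bounded away from the endpoints of the range of $f'$ and the inverse $u(l(t);t,c)$ is literally constant along the limit, making the passage to the limit elementary). Everything else is bookkeeping built on Lemmas $\ref{lem:c4.1}$ and $\ref{lem:c2.5}$ together with the monotonicity and continuity $\eqref{c4.2e}$ of $t\mapsto F(l;t,c)$.
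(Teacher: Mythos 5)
Your proposal is correct and follows essentially the same route as the paper: both arguments rest on $\eqref{uiff}$ (equivalently, the argument in the proof of Lemma $\ref{lem:c2.5}$(iii)) to show that the set of times at which $c\in\mathcal{U}$ along $L_c(x_0)$ is a downward-closed interval with supremum $t_*$, and then read off the three regimes; your direct passage to the limit in the $E$-inequalities at $t=t_*$ (parametrized by $u\in[-M,M]$) replaces the paper's appeal to the closure property $\eqref{c2.39}$, and your direct treatment of $t_*=\infty$ replaces the paper's contradiction argument via the strict monotonicity $\eqref{c4.2e}$ of $F(l;\cdot,c)$, but these are equivalent pieces of bookkeeping. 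The only nitpick is that your intermediate summary ``$\mathcal{U}(x_t,t)=\{c\}$ at every admissible time'' is slightly too strong at the top endpoint, where only $\{c\}\subset\mathcal{U}(x_*,t_*)$ is guaranteed; since you revert to the correct weaker claim when reading off the case $t=t_*$, nothing breaks.
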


\begin{proof}
From $\eqref{uiff}$, if $c\in \mathcal{U}(x_1,t_1)$ for some $t_1>0$,
then $\{c\}=\mathcal{U}(x,t)$ for any $(x,t)\in L_c(x_0)$ with $t\in (0,t_1)$ so that,
the set
\begin{align*}
T(x_0,c):=\big\{t\in \mathbb{R}^+\,:\,c\in \mathcal{U}(x,t)\quad {\rm for}\ (x,t)\in L_c(x_0)\big\}
\end{align*}
is an interval with $0=\inf T(x_0,c)$.

\smallskip
\noindent
{\bf 1}. For $t_*<\infty$, since $T(x_0,c)$ is an interval with $0=\inf T(x_0,c)$ and $t_*=\sup T(x_0,c)$,
then $(0,t_*)\subset T(x_0,c)\subset (0,t_*]$,
which, by $\eqref{c2.39}$, implies that $c=\lim_{t\rightarrow t_*{-}}u(x,t)\in \mathcal{U}(x_*,t_*)$,
where the limit is taken along line $L_c(x_0)$.
Then, by $\eqref{uiff}$, $\{c\}=\mathcal{U}(x,t)$ for any $(x,t)\in L_c(x_0)$ with $t\in (0,t_*)$.
Thus, $\eqref{elc2a}$ holds for $t\leq t_*$.
For $t>t_*$, $c\notin \mathcal{U}(x,t)$ follows immediately from the definition of $t_*$ in $\eqref{elc1}$.

\smallskip
\noindent
{\bf 2}. For $t_*=\infty$, since $T(x_0,c)$ is an interval with $0=\inf T(x_0,c)$ and $t_*=\sup T(x_0,c)$,
then $T(x_0,c)=(0,\infty)$,
which means that $c\in \mathcal{U}(x,t)$ for any $(x,t)\in L_c(x_0)$ with $t>0$.
We prove $\eqref{elc2b}$ by contradiction.
Otherwise, if there exists $(x_1,t_1)\in L_c(x_0)$ such that $\{c\}\subsetneqq \mathcal{U}(x_1,t_1)$, $i.e.$,
there exists $u_1\in \mathcal{U}(x_1,t_1)$ with $u_1\neq c$ satisfying
\begin{align}\label{elc2d}
\Phi(l_1;x_0,c)=F(l_1;t_1,c)\qquad {\rm with}\ l_1:=l(u_1;t_1,c).
\end{align}
Since $l_1=t_1(f'(c)-f'(u_1))$ and $f'(u)$ is strictly increasing,
then, for any $t>t_1$, there exists $\hat{u}_1$ lying between $c$ and $u_1$
such that $l_1=t(f'(c)-f'(\hat{u}_1))$. From $\eqref{c4.2e}$,
$F(l_1;\cdot,c)$ is strictly increasing, then, by $\eqref{elc2d}$,
$\Phi(l_1;x_0,c)=F(l_1;t_1,c)<F(l_1;t,c)$ for any $t>t_1,$
which implies that $c\notin \mathcal{U}(x,t)$ for any $(x,t)\in L_c(x_0)$ with $t>t_1$.
This contradicts to $c\in \mathcal{U}(x,t)$ for any $(x,t)\in L_c(x_0)$ with $t>0$.
\end{proof}

\subsection{Termination of characteristics}
In this subsection, using the upper bound of lifespan and the lifespan,
we provide a classification of characteristics by the way of their terminations.

Combining $\eqref{elc1}$ with $\eqref{plc1}$--$\eqref{plc2}$,
it is direct to see that
\begin{align}\label{elc3}
t_*=t_*(x_0,c)\leq t_p(x_0,c)=\min\big\{t^\pm_p(x_0,c)\big\}=\min\big\{t_p^\pm\big\}.
\end{align}
Thus, if $t_p=\infty$, then $t_*<t_p=\infty$ or $t_*=t_p=\infty$;
and if $t_p\in (0,\infty)$,
then $(x_p,t_p)\in L_c(x_0)$ with $x_p:=x_0+t_pf'(c)$ and
\begin{itemize}
\item [(a)] If $c\notin \mathcal{U}(x_p,t_p)$, $t_*<t_p$ by $\eqref{elc2a}$.

\vspace{1pt}
\item [(b)] If $c\in \mathcal{U}(x_p,t_p)$,  $t_*=t_p$ by $\eqref{elc2a}$ and $\eqref{elc3}$.
There exist two cases:
\begin{align*}
\{c\}=\mathcal{U}(x_p,t_p)\qquad {\rm or} \qquad \{c\}\subsetneqq\mathcal{U}(x_p,t_p).
\end{align*}
\end{itemize}

Finally, we present a classification of characteristics by the way of their terminations.

\begin{The}\label{the:elc0}
Any characteristic $L_c(x_0)$ defined by $\eqref{c4.1c}$
emitting from point $x_0$ on the $x$--axis
must be one of the following four cases{\rm :}
\begin{itemize}
\item [(i)] For $t_*=t_p\in (0,\infty)$ with $\{c\}=\mathcal{U}(x_p,t_p)$,
characteristic $L_c(x_0)$ terminates at $(x_p,t_p)$
only due to the compression of local characteristics near $x_0$,
and $(x_p,t_p)$ is a continuous shock generation point.

\item [(ii)] For $t_*=t_p\in (0,\infty)$ with $\{c\}\subsetneqq\mathcal{U}(x_p,t_p)$,
characteristic $L_c(x_0)$ terminates at $(x_p,t_p)$
due to the combination of the compression of local characteristics near $x_0$ for parts
with some characteristics away from $x_0$,
and $(x_p,t_p)$ is a discontinuous shock generation point or a point of shocks.

\item [(iii)] For $t_*<t_p\in (0,\infty]$,
characteristic $L_c(x_0)$ terminates at $(x_*,t_*)$
due to the collision of $L_c(x_0)$ with some characteristics
away from $x_0$, and $(x_*,t_*)$ is a point of a shock.

\item [(iv)] For $t_*=t_p=\infty$, characteristic $L_c(x_0)$ exists for all time.
\end{itemize}
\noindent
In the above, $t_p$ and $t_*$ are defined by
$\eqref{plc1}$ and $\eqref{elc1}$, respectively.
See {\rm Fig. $\ref{figCCP}$} for the details.
\end{The}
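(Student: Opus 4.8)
The plan is to reduce the statement to a completely explicit case split on the pair $(t_p,t_*)=(t_p(x_0,c),t_*(x_0,c))$, using only the basic ordering $\eqref{elc3}$, $t_*\le t_p=\min\{t_p^+,t_p^-\}$, the lifespan trichotomy of Theorem $\ref{pro:elc1}$, and the sign information on $\Phi(\cdot\,;x_0,c)-F(\cdot\,;t,c)$ collected in Lemma $\ref{lem:plc0}$. Throughout I set $x_p:=x_0+t_pf'(c)$ and $x_*:=x_0+t_*f'(c)$, and I keep in mind the dictionary provided by Lemma $\ref{lem:c2.5}$: the elements of $\mathcal{U}(x,t)$ are exactly the speeds of the shock-free backward characteristics through $(x,t)$, and a backward characteristic of speed $u$ issued from $(x,t)\in L_c(x_0)$ lands on the $x$-axis at $x_0+l$ with $l=l(u;t,c)=t(f'(c)-f'(u))$; thus the regime $l\to 0$ is precisely the compression of characteristics emitted from points arbitrarily near $x_0$, while any fixed $l\ne 0$ corresponds to a colliding characteristic landing away from $x_0$. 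I also record the elementary fact, from $\eqref{c2.40}$ together with $u^\pm=\sup/\inf\mathcal{U}$, that $\#\mathcal{U}(x,t)\ge 2$ is equivalent to $u(x-,t)>u(x+,t)$, i.e. to $(x,t)$ being a point of discontinuity of the entropy solution.

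First I would treat the cases with $t_p=\infty$. If in addition $t_*=\infty$, then Theorem $\ref{pro:elc1}$(ii) gives $\mathcal{U}(x,t)=\{c\}$ for every $(x,t)\in L_c(x_0)$, so $L_c(x_0)$ is a shock-free characteristic for all time: this is case (iv). If $t_p=\infty$ but $t_*<\infty$, then, since $t_p^\pm=\infty$, Lemma $\ref{lem:plc0}$(i) shows that $\Phi(l;x_0,c)>F(l;t,c)$ for all small $\pm l\ne 0$ at every finite time $t$; hence, for $t\in(t_*,\infty)$, the failure $c\notin\mathcal{U}(x,t)$ from $\eqref{elc2a}$ must be witnessed by some $l$ bounded away from $0$, so the characteristic is killed by a collision with a backward characteristic landing away from $x_0$. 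Letting $t\downarrow t_*$ and using the uniform bound $\eqref{c2.7}$ and the closedness $\eqref{c2.39}$, I would extract a limiting speed $u_*\in\mathcal{U}(x_*,t_*)$ with $u_*\ne c$ (while $c\in\mathcal{U}(x_*,t_*)$ by $\eqref{elc2a}$), so $\#\mathcal{U}(x_*,t_*)\ge 2$ and $(x_*,t_*)$ is a point of a shock. This gives case (iii) when $t_p=\infty$.

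Next suppose $t_p\in(0,\infty)$. If $c\notin\mathcal{U}(x_p,t_p)$, then $\eqref{elc2a}$ forces $t_*<t_p$; since $t_*<t_p=\min\{t_p^\pm\}$, Lemma $\ref{lem:plc0}$(i) again makes the local inequality strict on $(t_*,t_p)$, so, exactly as above, the termination at $(x_*,t_*)$ is a collision away from $x_0$ and $(x_*,t_*)$ is a point of a shock: case (iii) once more. If instead $c\in\mathcal{U}(x_p,t_p)$, then $\eqref{elc2a}$ and $\eqref{elc3}$ give $t_*=t_p$, so $L_c(x_0)$ survives to the last instant allowed by the data immediately around $x_0$; by Lemma $\ref{lem:plc0}$(iii), for every $t>t_p$ there is a sequence $l_n\to 0$ (on the side realizing $t_p=\min\{t_p^+,t_p^-\}$) with $\Phi(l_n;x_0,c)<F(l_n;t,c)$, which is exactly the assertion that the local characteristics focus at $(x_p,t_p)$ and a shock emanates from it, so $(x_p,t_p)$ is a shock generation point. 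It then remains to read off $\mathcal{U}(x_p,t_p)$: if $\mathcal{U}(x_p,t_p)=\{c\}$ the solution is continuous at $(x_p,t_p)$ and the termination is caused purely by the compression of the local characteristics near $x_0$, which is case (i), a continuous shock generation point; if $\{c\}\subsetneqq\mathcal{U}(x_p,t_p)$, then besides this local compression there is, for any $u_1\in\mathcal{U}(x_p,t_p)\setminus\{c\}$, a shock-free backward characteristic of speed $u_1$ landing at the distinct point $x_0+l(u_1;t_p,c)$ away from $x_0$, so the termination combines both mechanisms and, since $\#\mathcal{U}(x_p,t_p)\ge 2$, the point $(x_p,t_p)$ is a discontinuous shock generation point or a point of shocks, which is case (ii). Because $t_p$ is either $\infty$ or finite, and in each situation the sub-alternatives just listed are exhaustive and mutually exclusive, the classification is complete.

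The main obstacle is not the combinatorial bookkeeping but making the dichotomy ``compression of characteristics near $x_0$'' versus ``collision with characteristics away from $x_0$'' fully rigorous. For case (iii) one must show that the $l$ witnessing $c\notin\mathcal{U}(x,t)$ can be taken bounded below by a positive constant uniformly as $t\downarrow t_*$ --- a compactness argument that uses the strict monotonicity $\eqref{c4.2e}$ of $F(l;\cdot,c)$ and the strict inequality $t_*<t_p^\pm$ to rule out $l_n\to 0$ --- in order to conclude that the extracted $u_*$ is genuinely $\ne c$. Likewise, in case (i) one must upgrade the focusing statement of Lemma $\ref{lem:plc0}$(iii) to the conclusion that the solution is actually discontinuous at points $(x,t)\in L_c(x_0)$ with $t>t_p$ arbitrarily close to $(x_p,t_p)$, i.e. that $(x_p,t_p)$ matches the definition of a continuous shock generation point in the sense made precise in \S 4; identifying each of the four configurations with the corresponding named type of terminal point (continuous shock generation point, discontinuous shock generation point, point of shocks) is where the remaining care is needed.
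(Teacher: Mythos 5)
Your case split and the tools you deploy for cases (iii) and (iv) coincide with the paper's proof: case (iv) is read off from Theorem \ref{pro:elc1}(ii), and your case (iii) argument is the contrapositive form of the paper's contradiction argument in its Step 3 --- the paper assumes $\{c\}=\mathcal{U}(x_*,t_*)$, derives $l(t)\to 0$ from \eqref{c2.39}, and contradicts the uniform lower bound $|l|\geq l_1$ obtained from \eqref{plc2c} and the monotonicity \eqref{c4.2e}; you run the same estimate forward to extract $u_*\neq c$ in $\mathcal{U}(x_*,t_*)$. Your handling of case (ii) is also adequate, since the theorem's conclusion there is the disjunction that Lemma \ref{lem:c2.5}(iii) resolves further only if one wants to distinguish a discontinuous shock generation point ($\mathcal{U}(x_p,t_p)=[u(x_p{+},t_p),u(x_p{-},t_p)]$) from a point of shocks.

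The one genuine gap is in case (i), and it is exactly the point you flag at the end without filling. Lemma \ref{lem:plc0}(iii) only tells you that $c\notin\mathcal{U}(x,t)$ for $(x,t)\in L_c(x_0)$ with $t>t_p$, witnessed by $l_n\to 0$; it does not by itself say that a shock emanates from $(x_p,t_p)$. Being a continuous shock generation point means, via Corollary \ref{cor:c4.1}(i) applied to the Cauchy problem restarted at $t=t_p$, that $\mathcal{C}_{t_p}(x_p)=\varnothing$; since $u$ is continuous at $(x_p,t_p)$ the only candidate element is $c$, so you must show $c\notin\mathcal{C}_{t_p}(x_p)$. The paper does this by contradiction using the refined semigroup property, Corollary \ref{cor:c2.2}: if $c\in\mathcal{C}_{t_p}(x_p)$, then $c\in\mathcal{U}(x,t;t_p)=\mathcal{U}(x,t)$ for some $(x,t)\in L_c(x_0)$ with $t>t_p$, contradicting \eqref{elc2a}. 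Without invoking the identity $\mathcal{U}(x,t;t_p)=\mathcal{U}(x,t)$ there is no way to translate the lifespan information about the original problem into the initial-wave classification of the restarted problem, so this step cannot be replaced by a further appeal to Lemma \ref{lem:plc0}(iii). Supply Corollary \ref{cor:c2.2} at this point and your argument closes.
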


\begin{figure}[H]
	\begin{center}
		{\includegraphics[width=0.65\columnwidth]{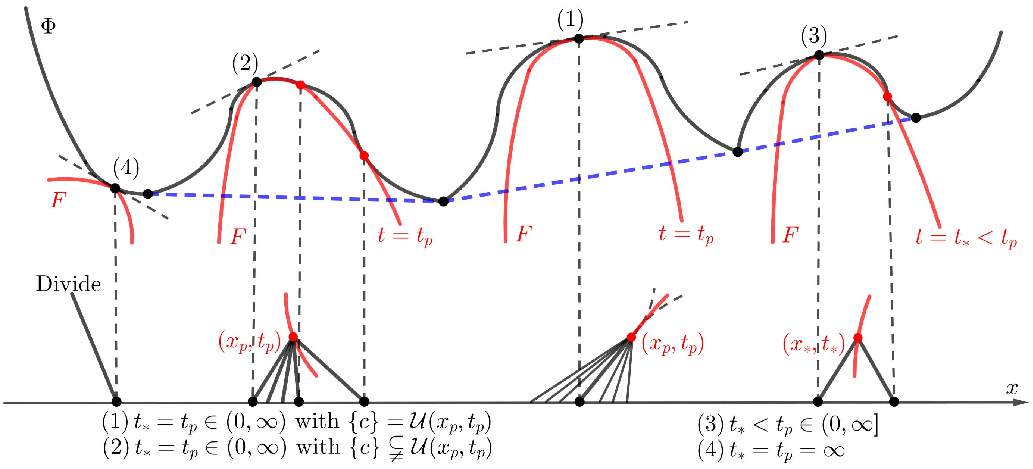}}
 \caption{Four types of characteristics classified by $t_*$ and $t_p$.}\label{figCCP}
	\end{center}
\end{figure}

\begin{proof} We divide the proof into four steps correspondingly.

\smallskip
\noindent
{\bf 1}. For $t_*=t_p\in (0,\infty)$ with $\{c\}=\mathcal{U}(x_p,t_p)$,
it follows from $\eqref{elc2a}$ that $L_c(x_0)$ is shock-free for $t\leq t_p$.
$\{c\}=\mathcal{U}(x_p,t_p)$ implies that solution $u=u(x,t)$ is continuous at $(x_p,t_p)$.
By Corollary $\ref{cor:c4.1}$(i), to prove that $(x_p,t_p)$ is a continuous shock generation point,
it suffices to show that $\mathcal{C}_{t_p}(x_p)=\varnothing$
or equivalently $c\notin \mathcal{C}_{t_p}(x_p)$.
Otherwise, $c\in \mathcal{C}_{t_p}(x_p)$ so that
there exists $(x,t)\in L_c(x_0)$ with $t>t_p$ such that $c\in \mathcal{U}(x,t;t_p)$,
which, by Corollary $\ref{cor:c2.2}$, implies that $c\in \mathcal{U}(x,t)$.
This contradicts to $\eqref{elc2a}$ that $c\notin \mathcal{U}(x,t)$ if $t>t_*=t_p$.

\smallskip
\noindent
{\bf 2}. For $t_*=t_p\in (0,\infty)$ with $\{c\}\subsetneqq\mathcal{U}(x_p,t_p)$,
it follows from $\eqref{elc2a}$ that $L_c(x_0)$ is shock-free for $t< t_p$.
$\{c\}\subsetneqq\mathcal{U}(x_p,t_p)$ implies that solution $u=u(x,t)$ is discontinuous at $(x_p,t_p)$.
Furthermore, by Lemma $\ref{lem:c2.5}$(iii),
if $\mathcal{U}(x_p,t_p)=[u(x_p+,t_p),u(x_p-,t_p)]$,
then point $(x_p,t_p)$ is a discontinuous shock generation point;
and if $\mathcal{U}(x_p,t_p)\subsetneqq[u(x_p+,t_p),u(x_p-,t_p)]$,
then point $(x_p,t_p)$ is a point of shocks.

\smallskip
\noindent
{\bf 3}. For $t_*<t_p\in (0,\infty]$,
it follows from $\eqref{elc2a}$ that $L_c(x_0)$ is shock-free for $t< t_*$.
It suffices to show that $\{c\}\subsetneqq\mathcal{U}(x_*,t_*)$,
which is proved by contradiction.
Otherwise, assume that $\{c\}=\mathcal{U}(x_*,t_*)$.
From $\eqref{elc2a}$, $c\notin \mathcal{U}(x,t)$ if $t>t_*$,
then there exists $u(t)\neq c$ such that
$u(t)\in \mathcal{U}(x,t)$ for any $(x,t)\in L_c(x_0)$ with $t>t_*$.
From $\eqref{c2.39}$, it follows from $\{c\}=\mathcal{U}(x_*,t_*)$ that
$\lim_{t\rightarrow t_*{+}}u(t)=c$, then
\begin{align}\label{elc3e}
\lim_{t\rightarrow t_*{+}}l(t):=\lim_{t\rightarrow t_*{+}}t\big(f'(c)-f'(u(t))\big)=0.
\end{align}

On the other hand, from $\eqref{plc2c}$, for any $t_1\in [t_*,t_p)$,
there exists small $l_1>0$ such that
$\Phi(l;x_0,c)>F(l;t_1,c)$ for $l\in ({-}l_1,0)\cup (0,l_1)$.
This, by $\eqref{c4.2e}$, implies that, for any $t\in [t_*,t_1)$,
\begin{align}\label{elc3d}
\Phi(l;x_0,c)>F(l;t_1,c)>F(l;t,c) \qquad {\rm for}\ l\in ({-}l_1,0)\cup (0,l_1).
\end{align}
Since $c\notin \mathcal{U}(x,t)$ and $u(t)\in \mathcal{U}(x,t)$,
then $E(c;x,t)-E(u(t);x,t)<0$,
which, by $\eqref{c4.2b}$ and $\eqref{elc3d}$, implies that
$l(t)=t(f'(c)-f'(u(t)))\notin ({-}l_1,l_1).$
This contradicts to $\eqref{elc3e}$.

\smallskip
\noindent
{\bf 4}. It follows from $\eqref{elc2b}$ immediately.
\end{proof}

\begin{Rem}
From {\rm Proposition} $\ref{pro:c4.3}$--$\ref{pro:c4.4}$,
the necessary and sufficient conditions for the rays emitting from points on the $x$--axis to be characteristics,
at least local-in-time, hold for all the points,
{\it even when the point on the $x$--axis is not a Lebesgue point of the initial data function.
Owing to the completeness of such results,
without {\rm a priori} assumptions on the continuity, or the traces,
or the property of the approximating jump of the initial data function,
we can obtain the criteria for all the six types of initial waves for the Cauchy problem
with the flux functions satisfying $\eqref{c1.2}$ and the initial data only in $L^\infty$}.
\end{Rem}

\section{New Features of Formation and Development of Shocks}
In this section, we study the formation and development of shocks.
In \S 4.1, we discover all the five types of continuous shock generation points
and prove the necessary and sufficient conditions of them in Theorem $\ref{the:c4.3}$;
in \S 4.2, we prove the optimal regularities of shock curves and entropy solutions
near all the five types of continuous shock generation points in Theorems $\ref{the:dsw1}$,
in which the continuous shock generation points are not necessarily assumed to be isolated.

\subsection{Formation of shocks
for all the five types of continuous shock generation points}
By Theorem $\ref{the:elc0}$,
a continuous shock generation point $(x,t)$
must be a point $(x_p,t_p)$ of some characteristic $L_c(x_0)$, $i.e.$,
\begin{align}\label{fsw1}
x=x_p=x_0+t_pf'(c),\qquad t=t_p\in(0,\infty).
\end{align}
Moreover,
point $(x_p,t_p)$ is a continuous shock generation point if and only if
$t_*=t_p\in (0,\infty)$ and $\{c\}=\mathcal{U}(x_p,t_p),$
which, by $t_*\leq t_p$ in $\eqref{elc3}$
and the fact that $\{c\}=\mathcal{U}(x_p,t_p)$ implying $t_*\geq t_p$,
is equivalent to
\begin{align}\label{fsw2a}
t_p\in (0,\infty) \qquad {\rm and}\qquad \{c\}=\mathcal{U}(x_p,t_p).
\end{align}
Furthermore, from Theorem $\ref{lem:plc1}$,
the necessary condition of $t_p\in (0,\infty)$ of characteristic $L_c(x_0)$ is
$c\in \partial \mathcal{C}(x_0)$,
where $\partial \mathcal{C}(x_0)$ is the boundary of $\mathcal{C}(x_0)$.

As shown in Theorem $\ref{lem:plc1}$, in order to determine the exact values of $t_p^\pm$,
we need the information of $f''(u)$ near $c$ and $\varphi(x)$ near $x_0$. Assume that
\begin{align}\label{fsw3}
\begin{cases}
f''(u)=(N_\pm+o(1))|u-c|^{\alpha_\pm}\qquad &{\rm as}\ u\rightarrow c{\pm},\\[1mm]
\varphi(x_0+l)-c=(C_{\gamma_\pm}+o(1)){\rm sgn}(l) |l|^{\gamma_\pm}\qquad &{\rm as}\ l\rightarrow 0{\pm},
\end{cases}
\end{align}
for some constants $\alpha_\pm\geq 0, \gamma_\pm>0$, $N_\pm>0$, and $C_{\gamma_\pm}\neq 0$.

\begin{The}[Formation of the shocks with continuous shock generation point]\label{the:c4.3}
A continuous shock generation point $(x,t)\in \mathbb{R}\times \mathbb{R}^+$
must be a point $(x_p,t_p)$ as in $\eqref{fsw1}$ of some characteristic $L_c(x_0)$
defined by $\eqref{c4.1c}$. 
Moreover, point $(x_p,t_p)$
is a continuous shock generation point if and only if
$t_p\in(0,\infty)$ and
\begin{align}\label{fsw4}
\Phi(x_0+l)-\Phi(x_0)-c\,l>\big(\rho((f')^{-1}(f'(c)-l/t_p),c)-c\big)\,l
\qquad {\rm for\ all}\ l\neq 0,
\end{align}
where $l=t_p(f'(c)-f'(u))$ with $u\in \mathbb{R}$,
$\Phi(x)=\int^x_0\varphi(\xi)\,{\rm d}\xi$,
and $\rho(u,c)$ is given by $\eqref{aa5a}$.
Furthermore, if
$\eqref{fsw3}$ is assumed,
then the following statements hold{\rm :}
\vspace{3pt}
\begin{itemize}
\item [(i)] For $\overline{{\rm D}}_- \Phi(x_0)<\underline{{\rm D}}_+ \Phi(x_0)$,
there exist two cases{\rm :}
\vspace{3pt}
\begin{itemize}
\item [(a)] If $c=\overline{{\rm D}}_- \Phi(x_0)\in \mathcal{C}(x_0)$,
then $t_p\in(0,\infty)$ if and only if
\begin{align}\label{fsw4a}
C_{\gamma_-}<0\qquad {\rm and}\qquad \gamma_-(1+\alpha_+)=1,
\end{align}
and, if $t_p\in(0,\infty)$, then
$t_p=t^-_p=(\gamma_- N_+|C_{\gamma_-}|^{1+\alpha_+})^{-1}.$

\vspace{3pt}
\item [(b)] If $c=\underline{{\rm D}}_+ \Phi(x_0)\in \mathcal{C}(x_0)$,
then $t_p\in(0,\infty)$ if and only if
\begin{align}\label{fsw5a}
C_{\gamma_+}<0\qquad {\rm and}\qquad \gamma_+(1+\alpha_-)=1,
\end{align}
and, if $t_p\in(0,\infty)$, then
$t_p=t^+_p=(\gamma_+ N_-|C_{\gamma_+}|^{1+\alpha_-})^{-1}.$
\end{itemize}

\vspace{3pt}
\item [(ii)] For $\overline{{\rm D}}_- \Phi(x_0)=\underline{{\rm D}}_+ \Phi(x_0)$,
then $t^\pm_p\in(0,\infty)$ if and only if
\begin{align}\label{fsw6a}
C_{\gamma_\pm}<0\qquad {\rm and}\quad \gamma_\pm(1+\alpha_\mp)=1,
\end{align}
and, if $t^\pm_p\in(0,\infty)$, then
$t^\pm_p=(\gamma_\pm N_\mp|C_{\gamma_\pm}|^{1+\alpha_\mp})^{-1}.$
Furthermore, if $t_p=\min\big\{t^\pm_p\big\}\in (0,\infty)$, then
\begin{align}\label{fsw6c}
t_p=\begin{cases}
\big(\gamma_- N_+|C_{\gamma_-}|^{1+\alpha_+}\big)^{-1}\quad
&{\rm if}\ t^-_p\in (0,\infty)\ {\rm and} t^+_p=\infty,\\[1mm]
\big(\gamma_+ N_-|C_{\gamma_+}|^{1+\alpha_-}\big)^{-1}\quad
&{\rm if}\ t^+_p\in (0,\infty)\ {\rm and}\ t^-_p=\infty,\\[1mm]
\min\big\{\big(\gamma_\pm N_\mp|C_{\gamma_\pm}|^{1+\alpha_\mp}\big)^{-1}\big\} \quad
&{\rm if}\ t^+_p,t^-_p\in (0,\infty).
\end{cases}
\end{align}
\end{itemize}
\noindent
In the above,
$\mathcal{C}(x_0)$, $\overline{{\rm D}}_- \Phi(x_0)$,
and $\underline{{\rm D}}_+ \Phi(x_0)$
are given by $\eqref{c4.0}$--$\eqref{c4.1}$.
See also {\rm Fig.} $\ref{figFCSGP}$.
\end{The}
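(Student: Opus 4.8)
The plan is to deduce the statement almost entirely from results already established: Theorem~\ref{the:elc0} (classification of characteristics by their terminations), Theorem~\ref{lem:plc1} (the upper bound of lifespan $t_p^\pm$), and the variational description of $\mathcal{U}(x,t)$ in \eqref{c4.2}--\eqref{c4.2b}. The first assertion --- that a continuous shock generation point $(x,t)$ is necessarily a point $(x_p,t_p)$ as in \eqref{fsw1} of some characteristic $L_c(x_0)$ with $c\in\partial\mathcal{C}(x_0)$ --- is exactly the content of Theorem~\ref{the:elc0}(i) together with the discussion leading to \eqref{fsw1}--\eqref{fsw2a}; I would simply cite this. By that same discussion, $(x_p,t_p)$ is a continuous shock generation point if and only if $t_p\in(0,\infty)$ and $\{c\}=\mathcal{U}(x_p,t_p)$; the reduction from $t_*=t_p$ to merely $t_p\in(0,\infty)$ uses $t_*\le t_p$ from \eqref{elc3} together with the strict monotonicity of $t\mapsto F(l;t,c)$ in \eqref{c4.2e}, which forces $t_*\ge t_p$ once $\{c\}=\mathcal{U}(x_p,t_p)$ holds.

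It then remains to rewrite the condition $\{c\}=\mathcal{U}(x_p,t_p)$ as \eqref{fsw4}. By \eqref{c2.3}--\eqref{c2.4}, $\{c\}=\mathcal{U}(x_p,t_p)$ means that $E(\cdot\,;x_p,t_p)$ attains its maximum only at $u=c$; since $u\mapsto l=l(u;t_p,c)=t_p(f'(c)-f'(u))$ is a strictly decreasing bijection of $\mathbb{R}$ onto the range of $l$, with $u=c$ corresponding to $l=0$, this is, by \eqref{c4.2}--\eqref{c4.2b}, equivalent to $\Phi(l;x_0,c)>F(l;t_p,c)$ for all $l\neq0$. Finally I would carry out the short algebraic identification: writing $u=(f')^{-1}(f'(c)-l/t_p)$ and using that $\rho(\cdot\,,\cdot)$ from \eqref{aa5a} satisfies $\rho(u,c)\big(f'(u)-f'(c)\big)=\int_c^u sf''(s)\,{\rm d}s$, one gets
\[
F(l;t_p,c)=-t_p\int_c^u(s-c)f''(s)\,{\rm d}s=\big(\rho(u,c)-c\big)\,l,
\]
while $\Phi(l;x_0,c)=\int_{x_0}^{x_0+l}(\varphi(\xi)-c)\,{\rm d}\xi=\Phi(x_0+l)-\Phi(x_0)-c\,l$. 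Substituting both expressions into the inequality gives \eqref{fsw4}.

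For the explicit formulas under hypothesis \eqref{fsw3}, I would invoke Theorem~\ref{lem:plc1} directly. In case (i)(a), where $c=\overline{{\rm D}}_- \Phi(x_0)\in\mathcal{C}(x_0)$, Theorem~\ref{lem:plc1}(i)(b) gives $t_p^+=\infty$, so $t_p=\min\{t_p^\pm\}=t_p^-$; hence $t_p\in(0,\infty)$ if and only if \eqref{plc4a} holds, which is \eqref{fsw4a}, and then $t_p=t_p^-=(\gamma_- N_+|C_{\gamma_-}|^{1+\alpha_+})^{-1}$ by \eqref{plc4b}. Case (i)(b) is the mirror statement obtained from Theorem~\ref{lem:plc1}(i)(c). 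In case (ii), where $\overline{{\rm D}}_- \Phi(x_0)=\underline{{\rm D}}_+ \Phi(x_0)$, Theorem~\ref{lem:plc1}(ii) gives that $t_p^\pm\in(0,\infty)$ if and only if \eqref{plc6a} (which is \eqref{fsw6a}) holds, with values \eqref{plc6b}, and $t_p^\pm=\infty$ otherwise; taking $t_p=\min\{t_p^+,t_p^-\}$ and splitting into the three possibilities according to which of $t_p^+,t_p^-$ is finite produces \eqref{fsw6c}.

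The main obstacle is not analytic --- all the substantive analysis is packaged in Theorems~\ref{the:elc0} and \ref{lem:plc1}. The only genuine work is the bookkeeping in the second step: verifying the identity $F(l;t_p,c)=\big(\rho((f')^{-1}(f'(c)-l/t_p),c)-c\big)\,l$ from the definition of $\rho$ in \eqref{aa5a}, and confirming that the chain of equivalences is tight --- in particular that $\{c\}=\mathcal{U}(x_p,t_p)$ with $t_p\in(0,\infty)$ indeed forces $t_*=t_p$, via the strict monotonicity of $F$ in $t$ from \eqref{c4.2e}, so that the first ``if and only if'' faithfully captures the notion of a continuous shock generation point.
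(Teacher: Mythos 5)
Your proposal is correct and follows essentially the same route as the paper: reduce to the equivalence $t_p\in(0,\infty)$ and $\{c\}=\mathcal{U}(x_p,t_p)$ via \eqref{fsw1}--\eqref{fsw2a} and \eqref{elc3}, rewrite the uniqueness of the maximizer of $E(\cdot\,;x_p,t_p)$ as \eqref{fsw4} through the identity $F(l;t_p,c)=\big(\rho(u,c)-c\big)l$, and read off the explicit cases from Theorem~\ref{lem:plc1}. The only cosmetic difference is that the paper obtains $t_*\ge t_p$ directly from the definition \eqref{elc1} (since $c\in\mathcal{U}(x_p,t_p)$ puts $t_p$ in the set whose supremum is $t_*$), whereas you route it through the monotonicity of $F$ in $t$; both are fine.
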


\begin{figure}[H]
	\begin{center}
		{\includegraphics[width=0.68\columnwidth]{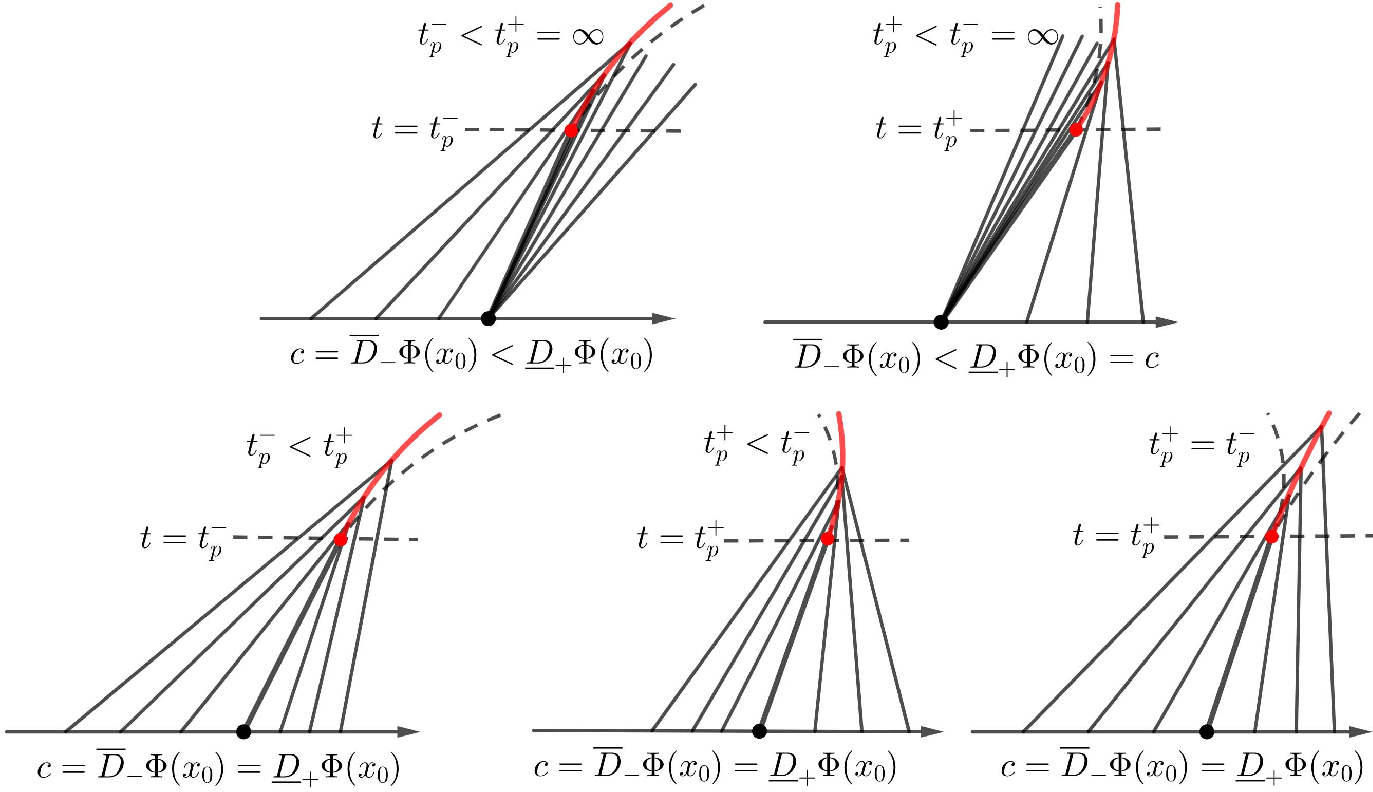}}
 \caption{The five types of continuous shock generation points.}\label{figFCSGP}
	\end{center}
\end{figure}

\begin{proof}
By the arguments in $\eqref{fsw1}$--$\eqref{fsw2a}$,
point $(x_p,t_p)$
is a continuous shock generation point if and only if $\eqref{fsw2a}$ holds.
From the definition of $\mathcal{U}(x,t)$ in $\eqref{c2.3}$,
$\{c\}=\mathcal{U}(x_p,t_p)$ is equivalent to that
$c$ is the unique maximum point of $E(\cdot\,;x_p,t_p)$,
which, by $\eqref{c4.2}$--$\eqref{c4.2b}$, implies that,
for all $l=t_p(f'(c)-f'(u))\neq 0$ with $u\in \mathbb{R}$,
\begin{align*}
\Phi(x_0+l)-\Phi(x_0)-c\,l&
>-t_p\int^u_{c}(s-c)f''(s)\,{\rm d}s
=\frac{\int^u_{c}(s-c)f''(s)\,{\rm d}s}{\int^u_{c}f''(s)\,{\rm d}s}\,l
\nonumber\\[1mm]
&
=\big(\rho(u,c)-c\big)\,l=\big(\rho((f')^{-1}(f'(c)-l/t_p),c)-c\big)\,l.
\end{align*}
From Theorem $\ref{lem:plc1}$,
$t_p\in (0,\infty)$ implies that $c\in \partial \mathcal{C}(x_0)$.
There exist two cases:
\begin{enumerate}
\item[(i)] For $\overline{{\rm D}}_- \Phi(x_0)<\underline{{\rm D}}_+ \Phi(x_0)$,
there exist two subcases:

\vspace{1pt}
\noindent
If $c=\overline{{\rm D}}_- \Phi(x_0)\in \mathcal{C}(x_0)$,
by Theorem $\ref{lem:plc1}$, $t^+_p=\infty$
and $\eqref{fsw4a}$ follows from $\eqref{plc4a}$--$\eqref{plc4b}$.

\vspace{1pt}
\noindent
If $c=\underline{{\rm D}}_+ \Phi(x_0)\in \mathcal{C}(x_0)$,
by Theorem $\ref{lem:plc1}$, $t^-_p=\infty$
and $\eqref{fsw5a}$ follows from $\eqref{plc5a}$--$\eqref{plc5b}$.

\smallskip
\item[(ii)] For $\overline{{\rm D}}_- \Phi(x_0)=\underline{{\rm D}}_+ \Phi(x_0)$,
$\eqref{fsw6a}$--$\eqref{fsw6c}$ follow from $\eqref{plc6a}$--$\eqref{plc6b}$ immediately.
\end{enumerate}

\smallskip
This completes the proof of Theorem $\ref{the:c4.3}$.
\end{proof}

\begin{Cor}[Formation of shocks for the flux functions of uniform convexity]
Suppose that $f''(u)>0$ and $\varphi(x)\in C^1(\mathbb{R})$.
A point $(x,t)\in \mathbb{R}\times\mathbb{R}^+$ is a continuous shock generation point
if and only if there exists $x_0$ on the $x$--axis with $\varphi'(x_0)<0$ such that
\begin{align}\label{fsw7}
\begin{cases}
x=x_p=x_0+t_pf'(\varphi(x_0)),\\[2mm]
t=t_p=\big(f''(\varphi(x_0))|\varphi'(x_0)|\big)^{-1},
\end{cases}
\end{align}
and, for all $l=t_p(f'(\varphi(x_0))-f'(u))\neq 0$ with $u\in \mathbb{R}$,
\begin{align}\label{fsw7a}
\Phi(x_0+l)-\Phi(x_0)-\varphi(x_0)\,l
>-
|\varphi'(x_0)|\,k((f')^{-1}(f'(\varphi(x_0))-l/t_p),\varphi(x_0))\, l^2,
\end{align}
where
$k(u,c)$ is given by
\begin{align*}
k(u,c):=\dfrac{f''(c)\int^1_0f''(c+\theta(u-c))\theta\,{\rm d}\theta}
{\big(\int^1_0f''(c+\theta(u-c))\,{\rm d}\theta\big)^2},
\qquad\,\,\, \lim_{u\rightarrow c}k(u,c)=\frac{1}{2}.
\end{align*}
In particular, if $f(u)=\frac{1}{2}u^2$, then $\eqref{fsw7a}$ is equivalent to
\begin{align*}
\Phi(x_0+l)-\Phi(x_0)-\varphi(x_0)\,l>-\frac{1}{2}|\varphi'(x_0)|\, l^2
\qquad {\rm for\ any}\ l\neq 0.
\end{align*}
\end{Cor}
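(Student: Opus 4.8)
The plan is to obtain this corollary as a direct specialization of Theorem~\ref{the:c4.3}. Since $\varphi\in C^{1}(\mathbb{R})$, the primitive $\Phi(x)=\int_{0}^{x}\varphi(\xi)\,{\rm d}\xi$ is $C^{2}$, so $\overline{{\rm D}}_-\Phi(x_{0})=\underline{{\rm D}}_+\Phi(x_{0})=\varphi(x_{0})$ at every $x_{0}\in\mathbb{R}$; we are thus always in case~(iii) of Theorem~\ref{the:c4.0}, and by Proposition~\ref{pro:c4.1} the only possible characteristic generation value of $x_{0}$ is $c:=\varphi(x_{0})$. Moreover, because $f''>0$ is continuous and $\varphi$ is differentiable, the expansion hypothesis \eqref{fsw3} holds at $x_{0}$ with $\alpha_{\pm}=0$, $N_{\pm}=f''(\varphi(x_{0}))$, $\gamma_{\pm}=1$, and $C_{\gamma_{\pm}}=\varphi'(x_{0})$ whenever $\varphi'(x_{0})\neq0$; Proposition~\ref{pro:c4.4} then gives $\mathcal{C}(x_{0})=\{\varphi(x_{0})\}$, since the alternative $\mathcal{C}(x_{0})=\varnothing$ would require $\gamma_{\pm}(1+\alpha_{\mp})<1$, i.e.\ $1<1$. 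Hence a continuous shock generation point must be a point $(x_{p},t_{p})$ of the characteristic $L_{c}(x_{0})$ with $c=\varphi(x_{0})$, and Theorem~\ref{the:c4.3}(ii) applies.

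With these parameters, condition \eqref{fsw6a} of Theorem~\ref{the:c4.3}(ii) reads $C_{\gamma_{\pm}}<0$ and $\gamma_{\pm}(1+\alpha_{\mp})=1$; the order condition is automatic, so $t_{p}^{\pm}\in(0,\infty)$ if and only if $\varphi'(x_{0})<0$, and in that case $t_{p}^{+}=t_{p}^{-}=t_{p}=\big(\gamma_{\pm}N_{\mp}|C_{\gamma_{\pm}}|^{1+\alpha_{\mp}}\big)^{-1}=\big(f''(\varphi(x_{0}))\,|\varphi'(x_{0})|\big)^{-1}$. (In the borderline cases $\varphi'(x_{0})\ge 0$ one has, by a direct comparison of $\Phi(l;x_{0},c)=o(l^{2})$ or $\Phi(l;x_{0},c)>0$ with $F(l;t,c)\sim-\text{const}\cdot l^{2}$, that $t_{p}=\infty$, so no finite shock forms.) Together with $x_{p}=x_{0}+t_{p}f'(c)$ from \eqref{fsw1}, this is precisely \eqref{fsw7}.

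It then remains to rewrite the abstract criterion \eqref{fsw4} as \eqref{fsw7a}. Setting $u=(f')^{-1}(f'(c)-l/t_{p})$ so that $l=t_{p}(f'(c)-f'(u))$, the right-hand side of \eqref{fsw4} equals, by the definition of $\rho(\cdot,c)$ in \eqref{aa5a}, the quantity $-t_{p}\int_{c}^{u}(s-c)f''(s)\,{\rm d}s$. The substitution $s=c+\theta(u-c)$ gives $\int_{c}^{u}(s-c)f''(s)\,{\rm d}s=(u-c)^{2}\int_{0}^{1}\theta f''(c+\theta(u-c))\,{\rm d}\theta$ and $f'(u)-f'(c)=(u-c)\int_{0}^{1}f''(c+\theta(u-c))\,{\rm d}\theta$, whence $l^{2}=t_{p}^{2}(u-c)^{2}\big(\int_{0}^{1}f''(c+\theta(u-c))\,{\rm d}\theta\big)^{2}$. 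Using $|\varphi'(x_{0})|=(t_{p}f''(c))^{-1}$ from the formula for $t_{p}$ just obtained, one checks directly that $-t_{p}\int_{c}^{u}(s-c)f''(s)\,{\rm d}s=-|\varphi'(x_{0})|\,k(u,c)\,l^{2}$ with $k(u,c)$ exactly as in the statement (and $\lim_{u\to c}k(u,c)=\tfrac12$ follows from the continuity of $f''$); hence \eqref{fsw4} is \eqref{fsw7a}, and both directions of the equivalence follow by reading these identities forwards and backwards. Finally, for $f(u)=\tfrac12 u^{2}$ we have $f''\equiv1$, so $k(u,c)=\int_{0}^{1}\theta\,{\rm d}\theta=\tfrac12$, and since $f'(u)=u$ the variable $l=t_{p}(\varphi(x_{0})-u)$ ranges over $\mathbb{R}\setminus\{0\}$ as $u$ ranges over $\mathbb{R}\setminus\{c\}$, which collapses \eqref{fsw7a} to the stated Burgers inequality.

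The difficulty here is organizational rather than analytic: once Theorem~\ref{the:c4.3} is in hand there is no obstacle, and the only real work is the change-of-variables identity of the last paragraph together with keeping straight the three-way relation among $l$, $u$, and $t_{p}$. The one place the hypotheses ``$f''>0$ and $\varphi\in C^{1}$'' are genuinely needed is in ensuring $\mathcal{C}(x_{0})=\{\varphi(x_{0})\}\neq\varnothing$ for every $x_{0}$ (equivalently, the local classical solvability of $u=\varphi(x-tf'(u))$), so that such a shock generation point can be read off a characteristic at all.
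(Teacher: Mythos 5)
Your proposal is correct and follows essentially the same route as the paper's own proof: specialize Theorem \ref{the:c4.3}(ii) with $\alpha_\pm=0$, $\gamma_\pm=1$, $N_\pm=f''(\varphi(x_0))$, $C_{\gamma_\pm}=\varphi'(x_0)$ to get \eqref{fsw7}, then convert \eqref{fsw4} into \eqref{fsw7a} via the identity $(\rho(u,c)-c)\,l=-|\varphi'(x_0)|\,k(u,c)\,l^2$ coming from the substitution $s=c+\theta(u-c)$ in \eqref{aa5a}. Your extra remarks on the borderline case $\varphi'(x_0)\ge 0$ and on $\mathcal{C}(x_0)=\{\varphi(x_0)\}$ are correct details the paper leaves implicit, but they do not change the argument.
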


\begin{proof}
Since $f''(u)>0$ and $\varphi(x)\in C^1(\mathbb{R})$,
for any $x_0$ with $\varphi'(x_0)\neq 0$,
then $\alpha_\pm\equiv 0$, $\gamma_\pm\equiv 1$, $N_{\pm}=f''(\varphi(x_0))>0$,
and $C_{\gamma_\pm}=\varphi'(x_0)\neq 0$.
By $\eqref{fsw6a}$, $t_p\in(0,\infty)$ if and only if $\varphi'(x_0)<0$,
and it follows from $\eqref{fsw1}$ and $\eqref{fsw6c}$ that $\eqref{fsw7}$ holds.
Finally, from $\eqref{fsw4}$ and $\eqref{aa5a}$,
\begin{align*}
\rho((f')^{-1}(f'(c)-l/t_p),c)-c
&=-\frac{l}{t_p}\,\frac{\int^1_0f''(c+\theta(u-c))\theta\,{\rm d}\theta}
{\big(\int^1_0f''(c+\theta(u-c))\,{\rm d}\theta\big)^2}
=-|\varphi'(x_0)|k(u,c)\, l,
\end{align*}
which, by taking $c=\varphi(x_0)$, implies $\eqref{fsw7a}$.

\smallskip
In particular, if $f(u)=\frac{1}{2}u^2$,
then
$f''(u)\equiv 1$ so that
$k(u,c)\equiv \frac{1}{2}$.
\end{proof}

For characteristic $L_c(x_0)$ with $c\in \mathcal{C}(x_0)$,
if $\eqref{fsw3}$ holds,
by Theorem $\ref{the:c4.3}$, $t_p^\pm\in (0,\infty)$ if and only if
$C_{\gamma_\pm}<0$ and $\gamma_\pm(1+\alpha_\mp)=1$.
Thus, from $\eqref{fsw3}$ and $\eqref{aa2}$,
for small $\pm l>0$,
\begin{align*}
f'(\varphi(x_0+l))-f'(c)
&=-{\rm sgn}(l) \frac{N_\mp+o(1)}{1+\alpha_\mp}|\varphi(x_0+l)-c|^{1+\alpha_\mp}\nonumber\\[1mm]
&=-{\rm sgn}(l)(1+o(1))\gamma_\pm N_\mp |C_{\gamma_\pm}|^{1+\alpha_\mp}\, |l|
=-(1+o(1))\overline{C}_{\gamma_\pm}\,l,
\end{align*}
where $\overline{C}_{\gamma_\pm}$ are given by
\begin{align}\label{fsw9a}
\overline{C}_{\gamma_\pm}:=\gamma_\pm N_\mp |C_{\gamma_\pm}|^{1+\alpha_\mp}=(t_p^\pm)^{-1}.
\end{align}

\medskip
We now present a necessary condition of continuous shock generation points,
which relays the further information of $f''(u)$ near $c$ and $\varphi(x)$ near $x_0$ as in $\eqref{fsw3}$.

\begin{Lem}\label{lem:fsw2}
Assume that $\varphi(x_0+l)$ is continuous on $\pm l\in (0,l_0)$ for some $l_0>0$,
and $\eqref{fsw3}$ holds for $c=\underline{{\rm D}}_+ \Phi(x_0)$
or $c=\overline{{\rm D}}_-\Phi(x_0)$, respectively.
Suppose that $(x_p,t_p)$ is a continuous shock generation point with $t_p=t_p^\pm\in (0,\infty)$.
If, for small $\pm l>0$,
\begin{align}\label{fsw10a}
f'(\varphi(x_0+l))-f'(c)=-\overline{C}_{\gamma_\pm}l+\omega(l)l
\end{align}
for $\overline{C}_{\gamma_\pm}$ in $\eqref{fsw9a}$
and a continuous function $\omega(l)$ possessing a unique zero point $\omega(0)=0$, then
\begin{align}\label{fsw10b}
\omega(l)>0 \qquad {\rm for\ small}\ \pm l>0.
\end{align}
\end{Lem}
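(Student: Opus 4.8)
The plan is to read off the desired sign of $\omega$ from the strict inequality that characterizes continuous shock generation points. By Theorem \ref{the:c4.3}, since $(x_p,t_p)$ is a continuous shock generation point with $t_p\in(0,\infty)$, the inequality \eqref{fsw4} holds; recalling $\rho(u,c)$ from \eqref{aa5a} and the relation $l=t_p(f'(c)-f'(u))$, the right-hand side of \eqref{fsw4} is exactly $-t_p\int_c^{u(l;t_p,c)}(s-c)f''(s)\,{\rm d}s=F(l;t_p,c)$, with $u(l;t_p,c):=(f')^{-1}(f'(c)-l/t_p)$ and $F$ as in \eqref{c4.2b}. Hence, setting $H(l):=\Phi(x_0+l)-\Phi(x_0)-c\,l-F(l;t_p,c)=\int_0^l(\varphi(x_0+s)-c)\,{\rm d}s-F(l;t_p,c)$, the condition \eqref{fsw4} says precisely that $H(l)>0$ for all $l\neq0$, while $H(0)=0$.

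Next I would differentiate. For fixed $t_p$, the relation $l=t_p(f'(c)-f'(u))$ gives $\partial u/\partial l=-1/(t_pf''(u))$ on the set where $f''(u)>0$, so, by a computation analogous to the one leading to \eqref{c4.2e}, $\partial_l F(l;t_p,c)=-t_p(u-c)f''(u)\,\partial_l u=u(l;t_p,c)-c$. Since $u(l;t_p,c)-c$ is continuous and vanishes at $l=0$ (even when $f''(c)=0$), the function $F(\cdot;t_p,c)$ is $C^1$ near $l=0$, and so is $H$, with $H'(l)=\varphi(x_0+l)-u(l;t_p,c)$. Assume without loss of generality $t_p=t_p^+$ (the case $t_p=t_p^-$ being symmetric with $l>0$ and $l<0$ interchanged). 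By \eqref{fsw9a}, $t_p^+=(\overline{C}_{\gamma_+})^{-1}$, hence $f'(u(l;t_p,c))-f'(c)=-l/t_p=-\overline{C}_{\gamma_+}l$; subtracting this from \eqref{fsw10a} yields $f'(\varphi(x_0+l))-f'(u(l;t_p,c))=\omega(l)\,l$ for small $l>0$. Since $f'$ is strictly increasing, this forces ${\rm sgn}\big(H'(l)\big)={\rm sgn}\big(\varphi(x_0+l)-u(l;t_p,c)\big)={\rm sgn}\big(\omega(l)\,l\big)={\rm sgn}\big(\omega(l)\big)$ for $l>0$.

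It then remains a short contradiction argument. Because $\omega$ is continuous and has its only zero at $0$, it does not vanish on some interval $(0,\delta)$, hence keeps a constant sign there. If that sign were negative, then $H'(l)<0$ on $(0,\delta)$, so $H$ would be strictly decreasing on $(0,\delta)$; together with the continuity of $H$ on $[0,\delta)$ and $H(0)=0$, this forces $H(l)<0$ for $l\in(0,\delta)$, contradicting $H(l)>0$ for all $l\neq0$. Therefore $\omega(l)>0$ for small $l>0$, which is \eqref{fsw10b}. For $t_p=t_p^-$ one runs the same argument on $(-\delta,0)$, where ${\rm sgn}(\omega(l)l)=-{\rm sgn}(\omega(l))$: if $\omega<0$ near $0^-$ then $H'>0$ there, so $H$ is strictly increasing and $H(l)<H(0)=0$ for $l<0$, again a contradiction.

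The only genuinely delicate point I expect is the regularity of $F(\cdot;t_p,c)$ across $l=0$ in the degenerate range $\alpha_\mp>0$ (where $f''(c)=0$), which is what lets one pass from $H'<0$ on $(0,\delta)$ to $H<0$ there; this is taken care of by observing that $u(l;t_p,c)-c$ extends continuously by $0$ to $l=0$, so that $H\in C^1$ near $0$ and $H(l)=\int_0^lH'(s)\,{\rm d}s$ for small $l$. Everything else is the sign-chase above, using only the strict monotonicity of $f'$, the identity $\partial_lF=u-c$, and the strict inequality \eqref{fsw4} guaranteed by Theorem \ref{the:c4.3}.
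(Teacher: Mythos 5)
Your proof is correct, but it follows a genuinely different route from the paper's. The paper argues on the solution side at time $t_p$: it takes a point $x_p+l_p$ on the line $t=t_p$, uses the Oleinik one-sided inequality to define the foot $x_0+l$ of the backward characteristic, proves $\pm l>0$ by contradiction with the constraint $\mathcal{C}(x_0)\subset[\overline{{\rm D}}_-\Phi(x_0),\underline{{\rm D}}_+\Phi(x_0)]$ from Proposition $\ref{pro:c4.1}$, identifies $u(x_p+l_p,t_p)=\varphi(x_0+l)$ via $\mathcal{C}(x_0+l)=\{\varphi(x_0+l)\}$, and then reads $\omega(l)>0$ directly from the identity $0<\pm l_p=t_p^{\pm}\omega(l)(\pm l)$. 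You instead stay entirely on the variational side: you take the strict inequality $\eqref{fsw4}$ from Theorem $\ref{the:c4.3}$ as the definition of $H>0$, establish $H'(l)=\varphi(x_0+l)-u(l;t_p,c)$ (correctly handling the degenerate points where $f''=0$ via the change of variables $s=t_p(f'(c)-f'(u))$, which gives $F(l;t_p,c)=\int_0^l(u(s;t_p,c)-c)\,{\rm d}s$ with a continuous integrand), and close with a sign-and-monotonicity contradiction. Your version is more elementary and self-contained once Theorem $\ref{the:c4.3}$ is granted, and it yields $\omega>0$ on a full one-sided neighborhood in one stroke; the paper's version is slightly longer but produces as a by-product the correspondence $u(x_p+l_p,t_p)=\varphi(x_0+l)$ with $\pm l>0$, which is structural information that is reused immediately afterwards in the development-of-shocks analysis of Theorem $\ref{the:dsw1}$ (see $\eqref{dsw12a}$--$\eqref{dsw12c}$). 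Both arguments use the continuity and unique-zero hypothesis on $\omega$ in the same way, to upgrade a sign statement to a full punctured neighborhood.
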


\begin{proof}
Suppose that point $(x_p,t_p)$ of characteristic $L_c(x_0)$ is a continuous shock generation point
determined by $c=\underline{{\rm D}}_+ \Phi(x_0)$ with $t_p=t_p^+$,
or $c=\overline{{\rm D}}_- \Phi(x_0)$ with $t_p=t_p^-$, respectively.

\smallskip
For $(x,t_p)$ with $\pm l_p:=\pm(x-x_p)>0$ small,
by the Oleinik-type one-sided inequality $\eqref{c1.4}$,
\begin{equation}\label{fsw11}
\pm l:=\pm \big(l_p-t_p(f'(u(x_p+l_p,t_p))-f'(c))\big)\geq 0.
\end{equation}
We claim that
\begin{equation}\label{fsw11a}
\pm l>0 \,\,\,\, \mbox{{\rm for} $\pm l_p>0$},
\qquad\lim_{\pm l_p\rightarrow 0{+}}l=0,
\qquad u(x_p+l_p,t_p)=\varphi(x_0+l).
\end{equation}
In fact, by continuity of solution $u=u(x,t)$ at point $(x_p,t_p)$,
$\lim_{\pm l_p\rightarrow 0{+}}u(x_p+l_p,t_p)=c$ so that,
by $\eqref{fsw11}$, $\lim_{\pm l_p\rightarrow 0{+}}l=0$.
From $\eqref{fsw11}$,
\begin{equation*}
x_0+l=x_0+l_p+t_p\big(f'(c)-f'(u(x_p+l_p,t_p))\big)=x-t_pf'(u(x_p+l_p,t_p)),
\end{equation*}
so that $(x_p+l_p,t_p)\in L_{u(x_p+l_p,t_p)}(x_0+l)$,
which implies that
$u(x_p+l_p,t_p)\in \mathcal{C}(x_0+l)\neq \varnothing.$

\smallskip
If there exists $\pm l_p>0$ such that $l=0$, then, from $\eqref{fsw11}$,
$$\pm\big(f'(u(x_p+l_p,t_p))-f'(c)\big)=\pm l_p/t_p>0.$$
Since $f'(u)$ is strictly increasing,
then, for the case that $c=\underline{{\rm D}}_+ \Phi(x_0)$ with $t_p=t_p^+$,
$\underline{{\rm D}}_+ \Phi(x_0)=c<u(x_p+l_p,t_p)\in \mathcal{C}(x_0),$
which contradicts to $\eqref{c4.3a}$;
for the case that $c=\overline{{\rm D}}_- \Phi(x_0)$ with $t_p=t_p^-$,
$\overline{{\rm D}}_- \Phi(x_0)=c>u(x_p+l_p,t_p)\in \mathcal{C}(x_0),$
which contradicts to $\eqref{c4.3a}$.
Thus, we conclude that $\pm l>0$ for $\pm l_p>0$.

Since $\varphi(x_0+l)$ is continuous on $\pm l\in (0,l_0)$,
$u(x_p+l_p,t_p)\in \mathcal{C}(x_0+l)\neq \varnothing$ implies that
$\mathcal{C}(x_0+l)=\{\varphi(x_0+l)\}$,
so that $u(x_p+l_p,t_p)=\varphi(x_0+l)$.

\smallskip
By $\eqref{fsw11a}$, it follows from $\eqref{fsw10a}$ and $\eqref{fsw11}$ that
\begin{align*}
0<\pm l_p&=\pm \big(l+t_p\big(f'(u(x_p+l_p,t_p))-f'(c)\big)\big)\nonumber\\[1mm]
&=\pm \big(l+t_p\big(f'(\varphi(x_0+l))-f'(c)\big)\big)\nonumber\\[1mm]
&=\pm \big(l-t^\pm_p\overline{C}_{\gamma_\pm}l+t^\pm_p\omega(l)l\big)
=t^\pm_p\omega(l)(\pm l),
\end{align*}
which implies that $\omega(l)>0$ for small $\pm l>0$.
\end{proof}

\subsection{Development of shocks
near the five types of continuous shock generation points}
From Theorem $\ref{the:elc0}$,
by the arguments as for $\eqref{fsw1}$--$\eqref{fsw2a}$,
for any characteristic $L_c(x_0):\, x=x_0+tf'(c)$,
point $(x_p,t_p)\in L_c(x_0)$ with $x_p=x_0+t_pf'(c)$
and $t_p$ given by $\eqref{plc1}$--$\eqref{plc2}$
is a continuous shock generation point if and only if
$t_p\in (0,\infty)$ and $\{c\}=\mathcal{U}(x_p,t_p)$.
There are three cases:
\begin{align*}
t_p^+=t_p^-\in (0,\infty),\qquad 0<t_p^+<t_p^-\in (0,\infty],
\qquad 0<t_p^-<t_p^+\in (0,\infty].
\end{align*}
The basic assumptions on the local behaviors of $f''(u)$
and $\varphi(x_0+l)$ near $c$ and $x_0$ respectively are presented as follows:
For simplicity, assume that, for some constants $\alpha\geq 0 $ and $N>0$,
\begin{align}\label{dsw2}
f''(u)=(N+o(1))|u-c|^{\alpha}\qquad {\rm as}\ u\rightarrow c.
\end{align}

\noindent
{\bf Case I.} $t_p^+=t_p^-\in (0,\infty)$.
Then $c=\overline{D}_-\Phi(x_0)=\underline{D}_+\Phi(x_0)$,
and it follows from $\eqref{fsw6a}$ that
$\gamma_+=\gamma_-$ and $C_{\gamma_+}=C_{\gamma_-}$.
For simplicity, assume that there exists $l_0>0$ such that
$\varphi(x_0+l)$ is continuous for $l\in ({-}l_0,0)\cup(0,l_0)$ and
\begin{align}\label{dsw3a}
\varphi(x_0+l)-c=(C_{\gamma}+o(1)){\rm sgn}(l) |l|^{\gamma} \qquad\mbox{as $l\rightarrow 0$},
\end{align}
where the constants satisfy that $C_{\gamma}<0$ and $\gamma(1+\alpha)=1.$
Furthermore, according to $\eqref{fsw10a}$--$\eqref{fsw10b}$,
assume that
\begin{align}\label{dsw3b}
f'(\varphi(x_0+l))-f'(c)
=-\overline{C}_{\gamma}\, l+\big(\overline{C}_{\sigma_\pm}+o(1)\big)\,{\rm sgn}(l)|l|^{1+\sigma}\qquad \mbox{as $l\rightarrow 0\pm$},
\end{align}
where the constants satisfy that
\begin{align*}
\overline{C}_{\gamma}:=\frac{1}{1+\alpha} N |C_{\gamma}|^{1+\alpha}=(t_p)^{-1}>0,
\qquad \overline{C}_{\sigma_\pm}>0, \qquad \sigma>0.
\end{align*}
Moreover, if $\overline{C}_{\sigma_+}=\overline{C}_{\sigma_-}=:\overline{C}_{\sigma}$,
assume that
\begin{align}\label{dsw3c}
f'(\varphi(x_0+l))-f'(c)
=-\overline{C}_{\gamma}\,l+\overline{C}_{\sigma}\,{\rm sgn}(l)|l|^{1+\sigma}
+(\overline{C}_{\rho}+o(1))\, |l|^{1+\sigma+\rho}\qquad \mbox{ as $l\rightarrow 0$},
\end{align}
where the constants satisfy that
\begin{align*}
\overline{C}_{\gamma}=\frac{1}{1+\alpha} N |C_{\gamma}|^{1+\alpha}=(t_p)^{-1}>0, \qquad \overline{C}_{\sigma}>0, \qquad \overline{C}_{\rho}\neq 0, \qquad \sigma,\rho>0.
\end{align*}

\noindent
{\bf Case II.} $0<t_p^+<t_p^-\in (0,\infty]$.
Then $c=\underline{D}_+\Phi(x_0)\geq \overline{D}_-\Phi(x_0)$.
For simplicity, assume that there exists $l_0>0$ such that
$\varphi(x_0+l)$ is continuous for $l\in (0,l_0)$ and
\begin{align}\label{dsw4a}
\varphi(x_0+l)-c=(C_{\gamma_+}+o(1)){\rm sgn}(l) |l|^{\gamma_+}\qquad \mbox{ as $l\rightarrow 0{+}$},
\end{align}
where the constants satisfy that $C_{\gamma_+}<0$ and $\gamma_+(1+\alpha)=1.$
Furthermore, according to $\eqref{fsw10a}$--$\eqref{fsw10b}$,
assume that
\begin{align}\label{dsw4b}
f'(\varphi(x_0+l))-f'(c)
=-\overline{C}_{\gamma_+}\, l+\big(\overline{C}_{\sigma_+}+o(1)\big)\,{\rm sgn}(l)|l|^{1+\sigma_+}\qquad \mbox{as $l\rightarrow 0{+}$},
\end{align}
where the constants satisfy that
\begin{align*}
\overline{C}_{\gamma_+}=\frac{1}{1+\alpha} N |C_{\gamma_+}|^{1+\alpha}=(t_p)^{-1}>0, \qquad \overline{C}_{\sigma_+}>0, \qquad \sigma_+>0.
\end{align*}
Moreover, if $\underline{D}_+\Phi(x_0)=\overline{D}_-\Phi(x_0)$,
assume that $\varphi(x_0+l)$ is continuous for $-l\in (0,l_0)$ and
\begin{align}\label{dsw4c}
\varphi(x_0+l)-c=(C_{\gamma_-}+o(1)){\rm sgn}(l) |l|^{\gamma_-}\qquad {\rm as}\ l\rightarrow 0{-},
\end{align}
where, by $t_p^+<t_p^-$, the constants $\gamma_->0$ and $C_{\gamma_-}\neq 0$ satisfy that
\begin{align}\label{dsw4c1}
\begin{cases}
C_{\gamma_-}>0 \quad &{\rm if}\ \gamma_-(1+\alpha)<1,\\ 
C_{\gamma_-}>C_{\gamma_+} \quad &{\rm if}\ \gamma_-(1+\alpha)=1,\\ 
C_{\gamma_-}\neq 0 \quad &{\rm if}\ \gamma_-(1+\alpha)>1.
\end{cases}
\end{align}

\noindent
{\bf Case III.} $0<t_p^-<t_p^+\in (0,\infty].$
Then $c=\overline{D}_-\Phi(x_0)\leq \underline{D}_+\Phi(x_0)$.
For simplicity, assume that there exists $l_0>0$ such that
$\varphi(x_0+l)$ is continuous for $-l\in (0,l_0)$ and
\begin{align}\label{dsw5a}
\varphi(x_0+l)-c=(C_{\gamma_-}+o(1)){\rm sgn}(l) |l|^{\gamma_-} \qquad \mbox{ as $l\rightarrow 0{-}$},
\end{align}
where the constants satisfy that $C_{\gamma_-}<0$ and $\gamma_-(1+\alpha)=1$.
Furthermore, according to $\eqref{fsw10a}$--$\eqref{fsw10b}$,
assume that
\begin{align}\label{dsw5b}
f'(\varphi(x_0+l))-f'(c)
=-\overline{C}_{\gamma_-}\, l+\big(\overline{C}_{\sigma_-}+o(1)\big)\,{\rm sgn}(l)|l|^{1+\sigma_-}  \qquad \mbox{as $l\rightarrow 0{-}$},
\end{align}
where the constants satisfy
\begin{align*}
\overline{C}_{\gamma_-}=\frac{1}{1+\alpha} N |C_{\gamma_-}|^{1+\alpha}=(t_p)^{-1}>0, \qquad \overline{C}_{\sigma_-}>0, \qquad \sigma_->0.
\end{align*}
Moreover, if $\underline{D}_+\Phi(x_0)=\overline{D}_-\Phi(x_0)$,
assume that $\varphi(x_0+l)$ is continuous for $l\in (0,l_0)$ and
\begin{align}\label{dsw5c}
\varphi(x_0+l)-c=(C_{\gamma_+}+o(1)){\rm sgn}(l) |l|^{\gamma_+}
\qquad {\rm as}\ l\rightarrow 0{+},
\end{align}
where, by $t_p^-<t_p^+$, the constants $\gamma_+>0$ and $C_{\gamma_+}\neq 0$ satisfy that
\begin{align}\label{dsw5c1}
\begin{cases}
C_{\gamma_+}>0 \quad &{\rm if}\ \gamma_+(1+\alpha)<1,\\ 
C_{\gamma_+}>C_{\gamma_-} \quad &{\rm if}\ \gamma_+(1+\alpha)=1,\\ 
C_{\gamma_+}\neq 0 \quad &{\rm if}\ \gamma_+(1+\alpha)>1.
\end{cases}
\end{align}

Let $(x_p,t_p)$ be a continuous shock generation point,
and let $x=x(t)$ for $t\geq t_p$ be the unique shock emitting from $(x_p,t_p)$.
Denote $u^\pm(t):=u(x(t)\pm,t)$. Since $\mathcal{U}(x_p,t_p)=\{c\}$,
it follows from $\eqref{c2.39}$ that $\lim_{t\rightarrow t_p{+}}u^\pm(t)=c$.
From $\lim_{t\rightarrow t_p{+}}x(t)=x_p=x_0+t_pf'(c)$,
\begin{equation*}
y^\pm(x(t),t):=x(t)-tf'(u^\pm(t))\longrightarrow x_0
\qquad {\rm as }\ t\rightarrow t_p{+},
\end{equation*}
so that there exists $\varepsilon_0>0$ such that
\begin{equation*}
x_0-l_0<y^-(x(t_p+\varepsilon_0),t_p+\varepsilon_0)<x_0
<y^+(x(t_p+\varepsilon_0),t_p+\varepsilon_0)<x_0+l_0.
\end{equation*}
Denote $(x_{\varepsilon_0},t_{\varepsilon_0}):=(x(t_p+\varepsilon_0),t_p+\varepsilon_0)$
and $x_0(t):=x_0+tf'(c)$.
Let
\begin{equation}\label{dswt2}
\Omega_\pm:=\big\{(x,t)\in \Delta_I(x_{\varepsilon_0},t_{\varepsilon_0})\,:\,
\pm (x-x_0(t))\geq 0\ {\rm if}\ t\leq t_p,
\,\,\, \pm (x-x(t))\geq 0\ {\rm if}\ t\geq t_p\big\},
\end{equation}
where the $\Delta_I(x_{\varepsilon_0},t_{\varepsilon_0})$ is the backward characteristic triangle of
point $(x_{\varepsilon_0},t_{\varepsilon_0})$, {\it i.e.,}
$$\Delta_I(x_{\varepsilon_0},t_{\varepsilon_0}):=\big\{(x,t)\,:\,
x_{\varepsilon_0}+(t-t_{\varepsilon_0})f'(u(x_{\varepsilon_0}{-},t_{\varepsilon_0}))
\leq x\leq 
x_{\varepsilon_0}+(t-t_{\varepsilon_0})f'(u(x_{\varepsilon_0}{+},t_{\varepsilon_0}))\big\}.$$
See also  {\rm Fig.} $\ref{figOmega}$ for the details.

\begin{figure}[H]
	\begin{center}
		{\includegraphics[width=0.7\columnwidth]{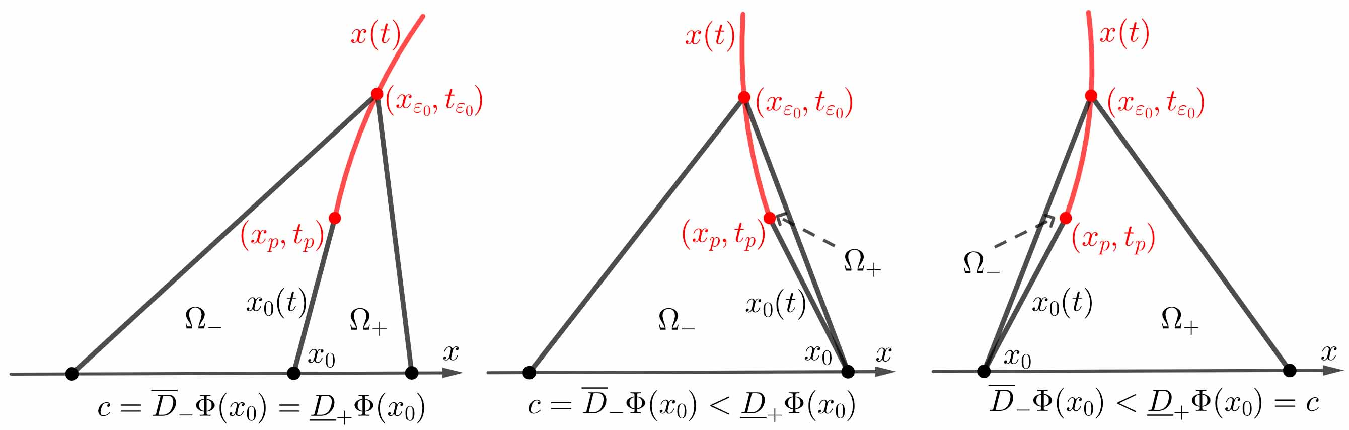}}
 \caption{The three types of domains $\Omega_\pm$ defined by $\eqref{dswt2}$.}\label{figOmega}
	\end{center}
\end{figure}

\begin{The}[Development of shocks near continuous shock generation points]\label{the:dsw1}
Suppose that $(x_p,t_p)$ determined by $\eqref{plc1}$ and $\eqref{fsw1}$, 
as a point on the characteristic line $L_c(x_0)$
defined by $\eqref{c4.1c}$ with $c\in \mathcal{C}(x_0)$,
is a continuous shock generation point and $\eqref{dsw2}$ holds.
Let $x=x(t)$ for $t>t_p$ be the unique shock emitting from $(x_p,t_p)$.
Then the regularities of 
both
the shock curve $x=x(t)$ and the entropy solution $u=u(x,t)$
near point $(x_p,t_p)$ are as follows{\rm :}
\begin{itemize}
\item [Case I.]  $t_p^+=t_p^-\in (0,\infty)$. 
Suppose that $\eqref{dsw3a}$--$\eqref{dsw3c}$ hold
for $c=\overline{D}_-\Phi(x_0)=\underline{D}_+\Phi(x_0)$.

\vspace{2pt}
\begin{itemize}
\item [(i)] If $\overline{C}_{\sigma_+}\neq\overline{C}_{\sigma_-}$,
then
\begin{align}\label{dsw6a1}
\quad\qquad\qquad x(t)-x_p-(t-t_p)f'(c)\cong
{\rm sgn}(\overline{C}_{\sigma_+}-\overline{C}_{\sigma_-})O_1(1)(t-t_p)^{\frac{1+\sigma}{\sigma}}
\quad \mbox{{\rm as} $t\rightarrow t_p{+}$};
\end{align}
and, for $(x,t)\in \Omega_\pm$ near $(x_p,t_p)$,
\begin{align}\label{dsw6a}
\qquad\qquad |u(x,t)-c|
\lessapprox |C_\gamma| \bigg\{&(1-\delta)^{-\frac{\gamma}{\sigma}}
\bigg(\frac{(\overline{C}_{\gamma})^2}{\overline{C}_{\sigma_\pm}}\bigg)^{\frac{\gamma}{\sigma}}
\big|t-t_p\big|^{\frac{\gamma}{\sigma}}\nonumber\\[1mm]
&\,+\delta^{-\frac{\gamma}{1+\sigma}}\bigg(\frac{\overline{C}_{\gamma}}{\overline{C}_{\sigma_\pm}}\bigg)^{\frac{\gamma}{1+\sigma}}
\big|x-x_p-(t-t_p)f'(c)\big|^{\frac{\gamma}{1+\sigma}}\bigg\}
\end{align}
hold for $\delta\in(0,1)$ and $\delta>\max\{1{-}(Q_\pm)^{-1}\}$.

\vspace{1pt}
\item [(ii)] If $\overline{C}_{\sigma_+}=\overline{C}_{\sigma_-}$,
then
\begin{align}\label{dsw6a2}
x(t)-x_p-(t-t_p)f'(c)\cong
{\rm sgn}(\overline{C}_{\rho})O_2(1)(t-t_p)^{\frac{1+\sigma+\rho}{\sigma}}\quad \mbox{{\rm as} $t\rightarrow t_p{+}$};
\end{align}
and $\eqref{dsw6a}$ with $\overline{C}_{\sigma_\pm}=\overline{C}_{\sigma}$ holds
for $(x,t)\in \Omega_-\cup \Omega_+$ near $(x_p,t_p)$ with $\delta\in(0,1)$.
\end{itemize}

\smallskip
\item [Case II.] $0<t_p^+<t_p^-\in(0,\infty]$. 
Suppose that $\eqref{dsw4a}$--$\eqref{dsw4c1}$ hold
for $c=\underline{D}_+\Phi(x_0)$.
Then
\begin{align}\label{dsw7}
x(t)-x_p-(t-t_p)f'(c)\cong-O^+_3(1)(t-t_p)^{\frac{1+\sigma_+}{\sigma_+}}
\qquad {\rm as}\ t\rightarrow t_p{+}.
\end{align}
For $(x,t)\in \Omega_+$ near $(x_p,t_p)$,
$\eqref{dsw6a}$ with $\overline{C}_{\sigma_+}$
holds for $\delta>1-(Q_3^+)^{-1}${\rm ;}

\noindent
For  $(x,t)\in \Omega_-$ near $(x_p,t_p)$,
\begin{align}\label{dsw7a}
\qquad u(x,t)-c\cong\begin{cases}
-C_{\gamma_-}(O^-_4(1))^{-\varrho}\,\big|x-x_p-(t-t_p)f'(c)\big|^{\varrho} & {\rm for}\ a=c,\\[2mm]
C_{\gamma_+}\big|x-x_p-(t-t_p)f'(c)\big|^{\gamma_+} & {\rm for}\ a<c,
\end{cases}
\end{align}
where $\varrho=\max\{\gamma_-,\gamma_+\}$ and $a:=\overline{D}_-\Phi(x_0)$.

\smallskip
\item [Case III.]  $0<t_p^-<t_p^+\in(0,\infty]$. 
Suppose that $\eqref{dsw5a}$--$\eqref{dsw5c1}$ hold
for $c=\overline{D}_-\Phi(x_0)$.
Then
\begin{align}\label{dsw8}
x(t)-x_p-(t-t_p)f'(c)\cong O^-_3(1)(t-t_p)^{\frac{1+\sigma_-}{\sigma_-}}
\qquad {\rm as}\ t\rightarrow t_p{+}.
\end{align}
For $(x,t)\in \Omega_-$ near $(x_p,t_p)$,
$\eqref{dsw6a}$ with $\overline{C}_{\sigma_-}$
holds for $\delta>1-(Q_3^-)^{-1}${\rm ;}

\noindent
For $(x,t)\in \Omega_+$ near $(x_p,t_p)$,
\begin{align}\label{dsw8a}
\qquad u(x,t)-c\cong\begin{cases}
C_{\gamma_+}(O^+_4(1))^{-\varrho}\,\big|x-x_p-(t-t_p)f'(c)\big|^{\varrho} & {\rm for}\ c=b,\\[2mm]
-C_{\gamma_-}\big|x-x_p-(t-t_p)f'(c)\big|^{\gamma_-} & {\rm for}\ c<b,
\end{cases}
\end{align}
where $\varrho=\max\{\gamma_-,\gamma_+\}$ and $b:=\underline{D}_+\Phi(x_0)$.
\end{itemize}
\noindent
In the above,
$\mathcal{C}(x_0)$, $\overline{{\rm D}}_- \Phi(x_0)$,
and $\underline{{\rm D}}_+ \Phi(x_0)$
are given by $\eqref{c4.0}$--$\eqref{c4.1}${\rm ;}
constants $O_1(1)$ and $O_2(1)$ are given by
\begin{align}\label{dsw9}
\begin{cases}
O_1(1):=
\overline{C}_{\gamma}\Big(\frac{(\overline{C}_{\gamma})^2Q_+}{\overline{C}_{\sigma_+}}\Big)^{\frac{1}{\sigma}}|Q_+-1|
=\overline{C}_{\gamma}\Big(\frac{(\overline{C}_{\gamma})^2Q_-}{\overline{C}_{\sigma_-}}\Big)^{\frac{1}{\sigma}}|Q_--1|,\\[3mm]
O_2(1):=
\frac{(1+\gamma)(1-\gamma)+\sigma}{(1+\gamma)(1-\gamma)+\sigma(2+\sigma)}
\frac{|\overline{C}_{\rho}|}{\overline{C}_{\gamma}}
\Big(\frac{(\overline{C}_{\gamma})^2}{\overline{C}_{\sigma}}\Big)^{\frac{1+\sigma+\rho}{\sigma}},
\end{cases}
\end{align}
where $Q_\pm$ are given by
\begin{align}\label{dsw9a}
\begin{cases}
Q_+=\frac{\gamma\sigma}{(1+\gamma)(1+\sigma)}\frac{(\lambda_1)^\gamma(1+\lambda_1)}{1+(\lambda_1)^\gamma}
+\frac{1+\gamma+\sigma}{(1+\gamma)(1+\sigma)},\\[3mm]
Q_-=\frac{\gamma\sigma}{(1+\gamma)(1+\sigma)}\frac{1+\lambda_1}{\lambda_1(1+(\lambda_1)^\gamma)}
+\frac{1+\gamma+\sigma}{(1+\gamma)(1+\sigma)}
\end{cases}
\end{align}
for  the unique real root $\lambda_1$ of the following equation{\rm :}
\begin{align*}
\gamma\sigma(1+\lambda)(\lambda_0-\lambda^{1+\gamma+\sigma})
=(1+\gamma+\sigma)\lambda(1+\lambda^\gamma)(\lambda^\sigma-\lambda_0)
\qquad {\rm with}\ \lambda_0:=\textstyle\frac{\overline{C}_{\sigma_+}}{\overline{C}_{\sigma_-}};
\end{align*}
and constants $O^\pm_3(1)$
with $Q^{\pm}_3$
and $O^\pm_4(1)$ are given by
\begin{align}
&O^\pm_3(1):=\overline{C}_{\gamma_\pm}
(1-Q^{\pm}_3)(Q^{\pm}_3)^{\frac{1}{\sigma_\pm}}
\bigg(\frac{(\overline{C}_{\gamma_\pm})^2}{\overline{C}_{\sigma_\pm}}\bigg)^{\frac{1}{\sigma_\pm}},\qquad \,\,\,
Q^{\pm}_3=\frac{1+\gamma_\pm+\sigma_\pm}{(1+\gamma_\pm)(1+\sigma_\pm)},
\label{dsw9d}\\[2mm]
&O^\pm_4(1):=\begin{cases}
\frac{\overline{C}_{\gamma_\pm}}{\overline{C}_{\gamma_\mp}} \quad
&{\rm if}\ \gamma_\pm(1+\alpha)<1,\\[1mm]
1+{\rm sgn}(C_{\gamma_\pm})\frac{\overline{C}_{\gamma_\pm}}{\overline{C}_{\gamma_\mp}} \quad
&{\rm if}\ \gamma_\pm(1+\alpha)=1,\\[1mm]
1 \quad
&{\rm if}\ \gamma_\pm(1+\alpha)>1.
\end{cases}\label{dsw9c}
\end{align}
See also
{\rm Fig.} $\ref{figUCSGP}$ and {\rm Fig.} $\ref{figUCSGPab}$.
\end{The}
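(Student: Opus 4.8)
The plan is to reduce the statement to an asymptotic analysis of a small closed system of transcendental relations for the shock curve and its one--sided states near $(x_p,t_p)$, and then to read off the regularity of the entropy solution from the new solution formula $\eqref{c2.50}$ together with the backward characteristics. By Theorem $\ref{the:elc0}$ and the discussion around $\eqref{fsw1}$--$\eqref{fsw2a}$, the point $(x_p,t_p)$ lies on $L_c(x_0)$ with $t_p\in(0,\infty)$ and $\mathcal{U}(x_p,t_p)=\{c\}$, so $x=x(t)$ is the unique shock emitting from it. Write $u^\pm(t):=u(x(t)\pm,t)$ (so $u^-(t)=\sup\mathcal{U}(x(t),t)$ and $u^+(t)=\inf\mathcal{U}(x(t),t)$ by Theorem $\ref{the:mt}$), and set $d(t):=x(t)-x_p-(t-t_p)f'(c)$. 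By Lemma $\ref{lem:c2.5}$(i) and the continuity of $u$ at $(x_p,t_p)$, $u^\pm(t)\to c$ and $d(t)\to 0$ as $t\to t_p{+}$; by Lemma $\ref{lem:c2.5}$(iii) the two extreme backward characteristics from $(x(t),t)$ are shock--free and meet the $x$--axis at $x_0+l^\pm(t)$ with $l^\pm(t)\to 0$, and, arguing as in the proof of Lemma $\ref{lem:fsw2}$ (via Propositions $\ref{pro:c4.3}$--$\ref{pro:c4.4}$ at the continuity points of $\varphi$), one gets $u^\pm(t)=\varphi(x_0+l^\pm(t))$ and $x(t)-tf'(u^\pm(t))=x_0+l^\pm(t)$. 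Corollary $\ref{cor:c2.2}$ (the pointwise identity $v(x\pm,t)=u(x\pm,t)$ and $\mathcal{U}(x,t;\tau)=\mathcal{U}(x,t)$ for $t>\tau$) is used to localize: restarting the Cauchy problem from a time $\tau$ slightly below $t_p$ does not change $\mathcal{U}(\cdot\,,\cdot)$ for $t>\tau$, so all of the above may be read purely off $E(\cdot\,;x,t)$ in a neighbourhood of $(x_p,t_p)$.

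Next I would convert the maximality of $u^\pm(t)$ for $E(\cdot\,;x(t),t)$ into the equal--area identity: since $E(u^-(t);x(t),t)=E(u^+(t);x(t),t)$, using $\eqref{c2.1}$ and the substitution $\xi=x(t)-tf'(s)$ gives
\begin{equation*}
\int_{l^-(t)}^{l^+(t)}\big(\varphi(x_0+\ell)-c\big)\,{\rm d}\ell
= t\int_{u^+(t)}^{u^-(t)}(s-c)f''(s)\,{\rm d}s .
\end{equation*}
Together with the two characteristic relations above, which by $\overline{C}_\gamma=1/t_p$ read $d(t)=\overline{C}_\gamma(t_p-t)\,l^\pm(t)+t\big(f'(\varphi(x_0+l^\pm(t)))-f'(c)+\overline{C}_\gamma l^\pm(t)\big)$ for each sign, this closes into a system for $(l^+(t),l^-(t),d(t))$. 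Inserting the local expansions — $\eqref{dsw2}$ for $f''$ near $c$, $\eqref{dsw3a}$--$\eqref{dsw5c1}$ for $\varphi$ near $x_0$, with $\eqref{fsw9a}$ and $\eqref{fsw10a}$--$\eqref{fsw10b}$ so that $f'(\varphi(x_0+\ell))-f'(c)=-\overline{C}_\gamma\ell+(\overline{C}_{\sigma_\pm}+o(1)){\rm sgn}(\ell)|\ell|^{1+\sigma}$ and $u-c\cong C_\gamma\,{\rm sgn}(\ell)|\ell|^{\gamma}$ — the balance of the $(t_p-t)\,l^\pm$ term against the $|l^\pm|^{1+\sigma}$ term forces the scaling $l^\pm(t)$ of order $(t-t_p)^{1/\sigma}$. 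Writing $l^\pm(t)=A_\pm(t-t_p)^{1/\sigma}(1+o(1))$ and $\lambda:=A_-/A_+$ (Case I), the equal--area identity reduces at leading order to exactly the stated equation for $\lambda_1$ with $\lambda_0=\overline{C}_{\sigma_+}/\overline{C}_{\sigma_-}$; one checks it has a unique admissible real root, whence $A_\pm$, the quantities $Q_\pm$ in $\eqref{dsw9a}$, and then $d(t)\cong{\rm sgn}(\overline{C}_{\sigma_+}-\overline{C}_{\sigma_-})\,O_1(1)(t-t_p)^{(1+\sigma)/\sigma}$ with $O_1(1)$ in $\eqref{dsw9}$, computable either from the $+$ side or the $-$ side, which is why $\eqref{dsw9}$ lists two coinciding expressions. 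In the degenerate subcase $\overline{C}_{\sigma_+}=\overline{C}_{\sigma_-}$ the leading correction cancels and the next term from $\eqref{dsw3c}$ takes over, producing $O_2(1)$; Cases II and III run on the same scheme with $t_p=t_p^\pm$, yielding $O_3^\pm(1)$ and $Q_3^\pm$ as in $\eqref{dsw9d}$ and the compatibility alternatives $\eqref{dsw4c1}$/$\eqref{dsw5c1}$ fixing $O_4^\pm(1)$ in $\eqref{dsw9c}$.

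For the regularity of $u$ in the sets $\Omega_\pm$ of $\eqref{dswt2}$: a point $(x,t)\in\Omega_\pm$ near $(x_p,t_p)$ is reached by a shock--free backward characteristic to $x_0+\ell$, i.e. $x-tf'(\varphi(x_0+\ell))=x_0+\ell$; expanding as above expresses $x-x_p-(t-t_p)f'(c)$ as a known combination of $(t-t_p)$ and $\ell$--monomials, and splitting the $\ell$--contribution with a free weight $\delta\in(0,1)$ into a ``$(t-t_p)$--part'' and an ``$(x-x_p-(t-t_p)f'(c))$--part'' and then using $u-c\cong C_\gamma\,{\rm sgn}(\ell)|\ell|^{\gamma}$ yields the two--term bound $\eqref{dsw6a}$; the threshold $\delta>\max\{1-(Q_\pm)^{-1}\}$ (resp. $\delta>1-(Q_3^\pm)^{-1}$) is precisely the requirement that the split point land on the correct side of $x=x(t)$, so that $(x,t)$ indeed stays in $\Omega_\pm$. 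On the non--compressing side (for instance $\Omega_-$ in Case II), $t_p$ is not a blow--up time of those characteristics, so the classical implicit function theorem applies to $x-tf'(\varphi(x_0+\ell))=x_0+\ell$ and gives $u(x,t)-c\cong C_{\gamma_\mp}|x-x_p-(t-t_p)f'(c)|^{\gamma_\mp}$ when $a<c$ (resp. $b>c$), while in the borderline case $a=c$ (resp. $b=c$) both one--sided expansions at $x_0$ interact and the exponent becomes $\varrho=\max\{\gamma_-,\gamma_+\}$ with amplitude $O_4^\pm(1)$. I expect the principal obstacle to be Steps~3--4 above: deriving the correction equation for $\lambda$, proving it has a unique admissible real root $\lambda_1$, and carrying the coefficient bookkeeping faithfully enough to recover the exact constants (in particular, verifying that the two displayed forms of $O_1(1)$ in $\eqref{dsw9}$ agree), together with the several compatibility subcases of Cases~II and~III.
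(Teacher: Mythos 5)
Your proposal is correct and follows essentially the same route as the paper: the equal--area identity obtained from the maximizer property of $E$ (the paper writes it restarted at $t=t_p$ via Corollary $\ref{cor:c2.2}$ and then converts back to $\varphi$ through $l^\pm$ and $l_p^\pm$, while you write it directly on the $x$--axis — equivalent by that same corollary), the forced scaling $l^\pm(t)\sim (t-t_p)^{1/\sigma}$, the transcendental equation for the ratio of the one--sided amplitudes with its unique admissible root $\lambda_1$, the cancellation in the degenerate subcase picked up by the next--order term from $\eqref{dsw3c}$, and the $\delta$--weighted splitting (with threshold tied to $Q_\pm$, $Q_3^\pm$) for the two--term bound $\eqref{dsw6a}$. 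One bookkeeping caveat when you carry out the computation: the paper's ratio variable is $\lambda=(A_-/A_+)^{1+\alpha}=(A_-/A_+)^{1/\gamma}$ rather than $A_-/A_+$, and that is the normalization under which the displayed equation for $\lambda_1$ (and hence $Q_\pm$ and the two coinciding forms of $O_1(1)$) comes out exactly as stated.
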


\begin{figure}[H]
	\begin{center}
		{\includegraphics[width=0.6\columnwidth]{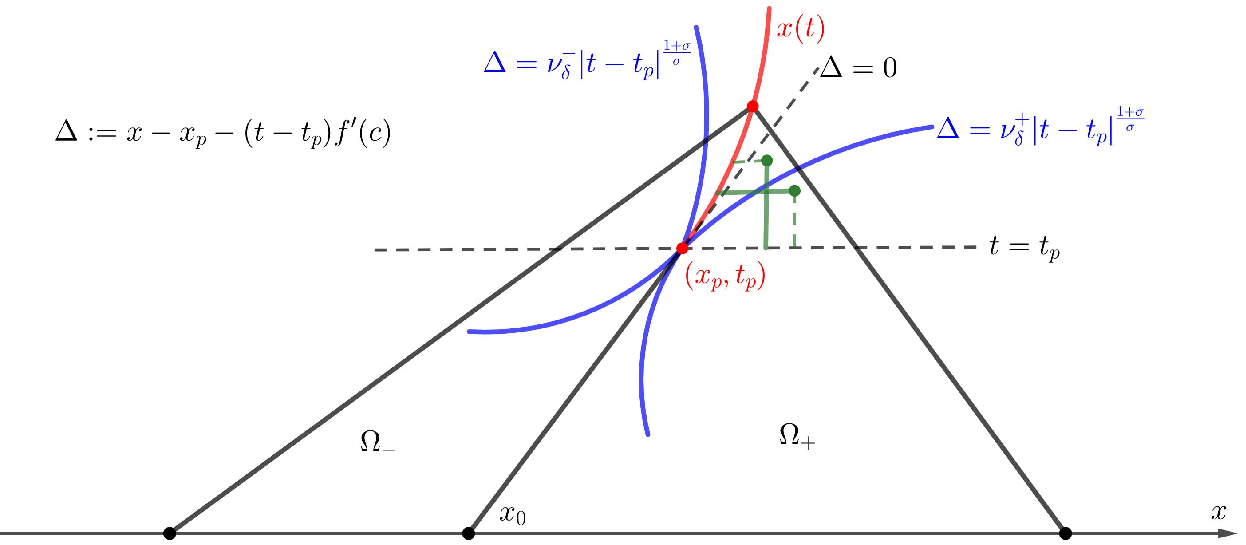}}
 \caption{The shock curve $x(t)$
 for the case that $t_p^+=t_p^-\in (0,\infty)$.}\label{figUCSGP}
	\end{center}
\end{figure}

\begin{proof}
We complete the proof case by case accordingly.

\smallskip
\noindent
{\bf Case I.} In this case, $t_p^+=t_p^-\in (0,\infty)$. By Theorem $\ref{the:c4.3}$,
 $c=\overline{D}_-\Phi(x_0)=\underline{D}_+\Phi(x_0)$.
Denote $u^\pm(t):=u(x(t)\pm,t)$ for $t>t_p$.
By Corollary $\ref{cor:c2.2}$, $u^\pm(t)\in \mathcal{U}(x(t),t;t_p)$ so that,
by $\eqref{c2.57}$--$\eqref{c2.61}$,
$$
E(u^+(t);\,x(t),t;\,t_p)=E(u^-(t);\,x(t),t;\,t_p),
$$
which, similar to $\eqref{c4.2}$, is equivalent to
\begin{equation}\label{dsw10}
\frac{1}{t-t_p}\int_{x_p+l_p^-}^{x_p+l_p^+}(u(\xi,t_p)-c)\,{\rm d}\xi
=\int_{u^+(t)}^{u^-(t)}(s-c)f''(s)\,{\rm d}s,
\end{equation}
where $l_p^\pm:=l_p^\pm(t)$ are given by
\begin{equation}\label{dsw10a}
l_p^\pm=l_p^\pm(t):=x(t)-x_p-(t-t_p)f'(u^\pm(t))\qquad {\rm for}\ t>t_p.
\end{equation}

Denote $U_\pm:=U_\pm(t,t_p)$ by
\begin{equation*}
U_\pm=U_\pm(t,t_p):=\frac{1}{l_p^\pm}\int_0^{l_p^\pm} (u(x_p+\varsigma,t_p)-c)\,{\rm d}\varsigma.
\end{equation*}
Then we have
\begin{align*}
\int_{x_p+l_p^-}^{x_p+l_p^+}(u(\xi,t_p)-c)\,{\rm d}\xi
=\Big(\int_{0}^{l_p^+}+\int_{l_p^-}^{0}\Big) (u(x_p+\varsigma,t_p)-c)\,{\rm d}\varsigma
=U_+\,l_p^+-U_-\,l_p^-,
\end{align*}
which, by combining with $\eqref{dsw10}$--$\eqref{dsw10a}$, implies
\begin{align}
&\frac{x(t)-x_p}{t-t_p}-f'(u^\pm(t))=\frac{{-}\int_{u^+(t)}^{u^-(t)}(s-c)f''(s)\,{\rm d}s
+U_\mp(f'(u^-(t))-f'(u^+(t)))}{U_--U_+},\label{dsw10d}\\[2mm]
&\frac{x(t)-x_p}{t-t_p}-f'(c)
=\frac{{-}\int_{u^+(t)}^{u^-(t)}(s-c)f''(s)\,{\rm d}s
+U_-(f'(u^-(t))-f'(c))-U_+(f'(u^+(t))-f'(c))}{U_--U_+}.\label{dsw10e}
\end{align}

The remaining proof of {\bf Case I} is divided into four steps:

\smallskip
\noindent
{\bf 1.} We first prove that
\begin{align}\label{dsw11a}
\lambda_1:=\lim_{t\rightarrow t_p{+}}\lambda(t)
\begin{cases}
>1 \quad & {\rm if}\ \overline{C}_{\sigma_+}>\overline{C}_{\sigma_-},\\[1mm]
=1 \quad & {\rm if}\ \overline{C}_{\sigma_+}=\overline{C}_{\sigma_-},\\[1mm]
<1 \quad & {\rm if}\ \overline{C}_{\sigma_+}<\overline{C}_{\sigma_-},
\end{cases}
\end{align}
where $A_\pm:=A_\pm(t)$ and $\lambda:=\lambda(t)$ are given by
\begin{align}\label{dsw11}
A_\pm=A_\pm(t):=|u^\pm(t)-c|,\qquad
\lambda=\lambda(t):=\Big(\frac{A_-(t)}{A_+(t)}\Big)^{1+\alpha}.
\end{align}

{\bf (a).} Denote $l^\pm:=l^\pm(t)$ by
\begin{align}\label{dsw12}
l^\pm=l^\pm(t):=x(t)-x_0-tf'(u^\pm(t)).
\end{align}
By $\eqref{dsw3b}$, it follows from $\eqref{dsw10a}$ and $\eqref{dsw12}$ that
\begin{align}\label{dsw12a}
l_p^\pm&=l^\pm+t_p(f'(u^\pm(t))-f'(c))
=l^\pm+t_p(f'(\varphi(x_0+l^\pm))-f'(c))\nonumber\\[1mm]
&=(1-t_p\overline{C}_\gamma)l^\pm+t_p(\overline{C}_{\sigma_\pm}+o(1)){\rm sgn}(l^\pm)|l^\pm|^{1+\sigma}\nonumber\\
&=(1+o(1))\frac{\overline{C}_{\sigma_\pm}}{\overline{C}_{\gamma}}{\rm sgn}(l^\pm)|l^\pm|^{1+\sigma}.
\end{align}
Then, by $\eqref{dsw3a}$, we obtain
\begin{align}\label{dsw12c}
u^\pm(t)-c&=u(x_p+l_p^\pm,t_p)-c
=\varphi(x_0+l^\pm)-c
=(C_\gamma+o(1)){\rm sgn}(l^\pm)|l^\pm|^{\gamma}
\nonumber\\[1mm]
&=-(1+o(1))|C_\gamma|
\bigg(\frac{\overline{C}_{\gamma}}{\overline{C}_{\sigma_\pm}}\bigg)^{\frac{\gamma}{1+\sigma}}
{\rm sgn}(l_p^\pm)|l_p^\pm|^{\frac{\gamma}{1+\sigma}}\nonumber\\[1mm]
&=:-(1+o(1))\overline{C}_\pm{\rm sgn}(l_p^\pm)|l_p^\pm|^{\frac{\gamma}{1+\sigma}}.
\end{align}
Thus, by $\eqref{dsw12c}$, it follows from $\eqref{dsw10a}$ and $\eqref{dsw11}$ that
\begin{align}\label{dsw12e}
\frac{x(t)-x_p}{t-t_p}-f'(u^\pm(t))=\frac{l_p^\pm}{t-t_p}
={\rm sgn}(l_p^\pm)\frac{1+o(1)}{t-t_p}\bigg(\frac{A_\pm}{\overline{C}_\pm}\bigg)^{\frac{1+\sigma}{\gamma}}.
\end{align}

On the other hand, from $\eqref{dsw12c}$,
\begin{align}\label{dsw13}
U_\pm &=-\frac{1}{l_p^\pm}\int_0^{l_p^\pm}
 (1+o(1))\overline{C}_\pm{\rm sgn}(l_p^\pm)|\varsigma|^{\frac{\gamma}{1+\sigma}}
\,{\rm d}\varsigma\nonumber\\[1mm]
&=-(1+o(1))\frac{1+\sigma}{1+\gamma+\sigma}
\overline{C}_\pm{\rm sgn}(l_p^\pm)\,|l_p^\pm|^{\frac{\gamma}{1+\sigma}}\nonumber\\[1mm]
&=-(1+o(1))\frac{1+\sigma}{1+\gamma+\sigma}{\rm sgn}(l_p^\pm)\,A_\pm.
\end{align}
Furthermore, by $\eqref{dsw11}$, it follows from $\eqref{aa2}$--$\eqref{aa3}$ that
\begin{align}\label{dsw13a}
f'(u^\pm(t))-f'(c)
=\mp \frac{N+o(1)}{1+\alpha}|u^\pm(t)-c|^{1+\alpha}
=\mp \frac{N+o(1)}{1+\alpha}(A_\pm)^{1+\alpha},
\end{align}
\begin{align}\label{dsw13b}
-\int_{u^+(t)}^{u^-(t)}(s-c)f''(s)\,{\rm d}s
=-\frac{N+o(1)}{2+\alpha}(A_-)^{2+\alpha}+ \frac{N+o(1)}{2+\alpha}(A_+)^{2+\alpha}.
\end{align}
Taking $\eqref{dsw12e}$--$\eqref{dsw13b}$ into $\eqref{dsw10d}$, we obtain
\begin{align}\label{dsw14}
\pm \frac{1}{t-t_p}\Big(\frac{A_\pm}{\overline{C}_\pm}\Big)^{\frac{1+\sigma}{\gamma}}
=\frac{1+o(1)}{\frac{1+\sigma}{1+\gamma+\sigma}(A_-+A_+)}
\bigg(& \pm \frac{1+\sigma}{1+\gamma+\sigma}\frac{N+o(1)}{1+\alpha}A_\mp
\big((A_-)^{1+\alpha}+(A_+)^{1+\alpha}\big)\nonumber\\[2mm]
&\, +\frac{N+o(1)}{2+\alpha}\big((A_+)^{2+\alpha}-(A_-)^{2+\alpha}\big)
\bigg).
\end{align}
Then, as $t\rightarrow t_p{+}$,
\begin{align*}
\frac{\overline{C}_{\sigma_+}}{\overline{C}_{\sigma_-}}\Big(\frac{A_+}{A_-}\Big)^{\frac{1+\sigma}{\gamma}}
\cong
\frac{\gamma\sigma(A_-)^{2+\alpha}+(1+\gamma)(1+\sigma)A_-(A_+)^{1+\alpha}
+(1+\gamma+\sigma)(A_+)^{2+\alpha}}
{\gamma\sigma(A_+)^{2+\alpha}+(1+\gamma)(1+\sigma)A_+(A_-)^{1+\alpha}
+(1+\gamma+\sigma)(A_-)^{2+\alpha}},
\end{align*}
which, by $A_-=A_+\lambda^{\frac{1}{1+\alpha}}$ inferred by $\eqref{dsw11}$ and $\gamma(1+\alpha)=1$ in $\eqref{dsw3a}$, implies
\begin{align*}
\lambda_0=\frac{\overline{C}_{\sigma_+}}{\overline{C}_{\sigma_-}}
\cong
\frac{\gamma\sigma\lambda^{2+\gamma+\sigma}+(1+\gamma)(1+\sigma)\lambda^{1+\gamma+\sigma}
+(1+\gamma+\sigma)\lambda^{1+\sigma}}
{\gamma\sigma+(1+\gamma)(1+\sigma)\lambda+(1+\gamma+\sigma)\lambda^{1+\gamma}}.
\end{align*}
By a simple calculation, we obtain
\begin{align}\label{dsw14c}
\gamma\sigma(1+\lambda)(\lambda_0-\lambda^{1+\gamma+\sigma})
\cong(1+\gamma+\sigma)\lambda(1+\lambda^\gamma)(\lambda^\sigma-\lambda_0).
\end{align}

{\bf (b).} We now solve equation $\eqref{dsw14c}$. First, it is direct to see that
\begin{align*}
0<\mathop{\underline{\lim}}\limits_{t\rightarrow t_p{+}}\lambda(t)<\mathop{\overline{\lim}}\limits_{t\rightarrow t_p{+}}\lambda(t)<\infty.
\end{align*}
Due to the magnitude of $\overline{C}_{\sigma_+}$ and $\overline{C}_{\sigma_-}$,
there are three cases:

{(i)} $\,$ If $\overline{C}_{\sigma_+}=\overline{C}_{\sigma_-}$, then $\lambda_0=1$.
By letting $t\rightarrow t_p{+}$ in $\eqref{dsw14c}$,
any converged subsequence of $\{\lambda(t)\}$ must converge to $1$,
which implies that $\lambda_1=\lim_{t\rightarrow t_p{+}}\lambda(t)=1.$

{(ii)} $\,$ If $\overline{C}_{\sigma_+}<\overline{C}_{\sigma_-}$, then $\lambda_0<1$. To prove
\begin{align*}
\gamma\sigma(1+\lambda)(\lambda_0-\lambda^{1+\gamma+\sigma})=(1+\gamma+\sigma)\lambda(1+\lambda^\gamma)(\lambda^\sigma-\lambda_0)
\end{align*}
has a unique real root,
it suffices to show that
\begin{align}\label{dsw15c}
h(\lambda):=\frac{1+\gamma+\sigma}{\gamma\sigma}\frac{\lambda(1+\lambda^\gamma)}{1+\lambda}+\frac{\lambda^{1+\gamma+\sigma}-\lambda_0}{\lambda^\sigma-\lambda_0}=0
\end{align}
has a unique root for $\lambda\in(0,\infty)$.

Denote $\lambda_0^+=(\lambda_0)^{\frac{1}{1+\gamma+\sigma}}$
and $\lambda_0^-=(\lambda_0)^{\frac{1}{\sigma}}.$
If $\lambda>\lambda_0^+$ or $\lambda<\lambda_0^-$,
we see from $\eqref{dsw15c}$ that $h(\lambda)>0$.
Meanwhile, it is direct to check from $\eqref{dsw15c}$ that
\begin{align*}
h(\lambda_0^+)=\frac{1+\gamma+\sigma}{\gamma\sigma}\,
\frac{\lambda_0^+(1+(\lambda_0^+)^\gamma)}{1+\lambda_0^+}>0,\qquad
\lim_{\lambda\rightarrow \lambda_0^-{+}}h(\lambda)=-\infty,
\end{align*}
and, for $\lambda\in(\lambda_0^-,\lambda_0^+)$,
\begin{align*}
h'(\lambda)
=\frac{1+\gamma+\sigma}{\gamma\sigma}\frac{1+(1+\gamma)\lambda^\gamma+\gamma\lambda^{1+\gamma}}{(1+\lambda)^2}
+\frac{
(1+\gamma+\sigma)\lambda^{1+\gamma}(\lambda^{\sigma}{-}\lambda_0)
+\sigma(\lambda_0{-}\lambda^{1+\gamma+\sigma})
}
{\lambda^{1-\sigma}(\lambda^\sigma{-}\lambda_0)^2}>0,
\end{align*}
which implies that there exists a unique $\lambda=\lambda_1\in (\lambda_0^-,\lambda_0^+)\subset (0,1)$
such that $h(\lambda_1)=0$.
Therefore, we conclude
\begin{align*}
\lim_{t\rightarrow t_p{+}}\lambda(t)=\lambda_1<1 \qquad {\rm if}\ \overline{C}_{\sigma_+}<\overline{C}_{\sigma_-}.
\end{align*}

{(iii)} $\,$ If $\overline{C}_{\sigma_+}>\overline{C}_{\sigma_-}$,
then $\lambda_0>1$ so that $(\lambda_0)^{-1}<1$.
By taking $\bar{\lambda}:=\lambda^{-1}$ into $\eqref{dsw14c}$,
\begin{align}\label{dsw17}
\gamma\sigma(1+\bar{\lambda})\big((\lambda_0)^{-1}-\bar{\lambda}^{1+\gamma+\sigma}\big)
\cong(1+\gamma+\sigma)\bar{\lambda}(1+\bar{\lambda}^\gamma)\big(\bar{\lambda}^\sigma-(\lambda_0)^{-1}\big),
\end{align}
which is the same as $\eqref{dsw14c}$
via replacing $\bar{\lambda}$ and $(\lambda_0)^{-1}$ by $\lambda$ and $\lambda_0$, respectively.

Since $(\lambda_0)^{-1}<1$, by the same arguments
as ${\rm (ii)}$,
equation $\eqref{dsw17}$ has a unique real root $\bar{\lambda}_1\in ((\lambda_0)^{-\frac{1}{\sigma}},(\lambda_0)^{-\frac{1}{1+\gamma+\sigma}})\subset (0,1)$ so that
\begin{align*}
\lambda_1=\lim_{t\rightarrow t_p{+}}\lambda(t)=(\bar{\lambda}_1)^{-1}>1 \qquad {\rm if}\ \overline{C}_{\sigma_+}>\overline{C}_{\sigma_-}.
\end{align*}

\smallskip
\noindent
{\bf 2.} We now prove $\eqref{dsw6a1}$ with $O_1(1)$ in $\eqref{dsw9}$.
Since $\overline{C}_{\sigma_+}\neq\overline{C}_{\sigma_-}$,
it follows from $\eqref{dsw11a}$ that $\lambda_1\neq 1$.

\smallskip
{\bf (a).} By taking $A_-=(1+o(1))(\lambda_1)^\gamma A_+$ into $\eqref{dsw14}$,
it is direct to check that, as $t\rightarrow t_p{+}$,
\begin{align}\label{dsw18}
A_+&
=(1+o(1))|C_\gamma|
\bigg(\frac{(\overline{C}_{\gamma})^2}{\overline{C}_{\sigma_+}}\bigg)^{\frac{\gamma}{\sigma}}
\bigg(\frac{\gamma\sigma}{(1+\gamma)(1+\sigma)}\frac{(\lambda_1)^\gamma(1+\lambda_1)}{1+(\lambda_1)^\gamma}
+\frac{1+\gamma+\sigma}{(1+\gamma)(1+\sigma)}\bigg)^{\frac{\gamma}{\sigma}}
\,(t-t_p)^{\frac{\gamma}{\sigma}}\nonumber\\[1mm]
&=(1+o(1))|C_\gamma|
\bigg(\frac{\big(\overline{C}_{\gamma}\big)^2}{\overline{C}_{\sigma_+}}\bigg)^{\frac{\gamma}{\sigma}}
(Q_+)^{\frac{\gamma}{\sigma}}\,(t-t_p)^{\frac{\gamma}{\sigma}},
\end{align}
\begin{align}\label{dsw18a}
A_-&
=(1+o(1))|C_\gamma|
\bigg(\frac{(\overline{C}_{\gamma})^2}{\overline{C}_{\sigma_-}}\bigg)^{\frac{\gamma}{\sigma}}
\bigg(\frac{\gamma\sigma}{(1+\gamma)(1+\sigma)}\frac{1+\lambda_1}{\lambda_1(1+(\lambda_1)^\gamma)}
+\frac{1+\gamma+\sigma}{(1+\gamma)(1+\sigma)}\bigg)^{\frac{\gamma}{\sigma}}
\,(t-t_p)^{\frac{\gamma}{\sigma}}\nonumber\\[1mm]
&=(1+o(1))|C_\gamma|
\bigg(\frac{\big(\overline{C}_{\gamma}\big)^2}{\overline{C}_{\sigma_-}}\bigg)^{\frac{\gamma}{\sigma}}
(Q_-)^{\frac{\gamma}{\sigma}}\,(t-t_p)^{\frac{\gamma}{\sigma}},
\end{align}
where $Q_\pm$, given by $\eqref{dsw9a}$, can be seen from $\eqref{dsw11a}$ that
\begin{align}\label{dsw18b}
Q_-<1<Q_+ \,\,\, {\rm if}\ \overline{C}_{\sigma_+}>\overline{C}_{\sigma_-},
\qquad\,\, Q_+<1<Q_- \,\,\, {\rm if}\ \overline{C}_{\sigma_+}<\overline{C}_{\sigma_-}.
\end{align}
Then, by $\eqref{dsw12e}$,
it is direct to check from $\eqref{dsw18}$--$\eqref{dsw18a}$ that
\begin{align}\label{dsw18c}
l_p^\pm=\pm (1+o(1))\overline{C}_\gamma
\bigg(\frac{(\overline{C}_{\gamma})^2}{\overline{C}_{\sigma_\pm}}\bigg)^{\frac{1}{\sigma}}
(Q_\pm)^{\frac{1+\sigma}{\sigma}}\,(t-t_p)^{\frac{1+\sigma}{\sigma}}\qquad \mbox{ as $t\rightarrow t_p{+}$},
\end{align}
which, by $\eqref{dsw12a}$, implies that
\begin{align}\label{dsw18d}
l^\pm=\pm (1+o(1))
\bigg(\frac{(\overline{C}_{\gamma})^2}{\overline{C}_{\sigma_\pm}}\bigg)^{\frac{1}{\sigma}}
(Q_\pm)^{\frac{1}{\sigma}}\,(t-t_p)^{\frac{1}{\sigma}} \qquad \mbox{ as $t\rightarrow t_p{+}$}.
\end{align}

{\bf (b).} By $\eqref{dsw3b}$, it follows from $\eqref{dsw10a}$ and $\eqref{dsw12a}$ that,
as $t\rightarrow t_p{+}$,
\begin{align}\label{dsw19}
x(t)-x_p-(t-t_p)f'(c)
&=x(t)-x_p-(t-t_p)f'(u^\pm(t))+(t-t_p)(f'(u^\pm(t))-f'(c))\nonumber\\[1mm]
&=l_p^\pm+(t-t_p)(f'(u^\pm(t))-f'(c))\nonumber\\[1mm]
&=l_p^\pm+(t-t_p)\big({-}\overline{C}_{\gamma}\, l^\pm
+\big(\overline{C}_{\sigma_\pm}+o(1)\big)\,{\rm sgn}(l^\pm)|l^\pm|^{1+\sigma}\big)\nonumber\\[1mm]
&=l_p^\pm-\overline{C}_{\gamma}(t-t_p)\big(l^\pm-(1+o(1))l_p^\pm\big)\nonumber\\[1mm]
&=(1+o(1))l_p^\pm-\overline{C}_{\gamma}(t-t_p)l^\pm,
\end{align}
which, by combining with $\eqref{dsw18b}$--$\eqref{dsw18d}$, implies that,
as $t\rightarrow t_p{+}$,
\begin{align*}
x(t)-x_p-(t-t_p)f'(c)
&=(1+o(1))l_p^\pm-\overline{C}_{\gamma}(t-t_p)l^\pm \nonumber\\[1mm]
&=\pm (1+o(1))\overline{C}_\gamma\bigg(\frac{(\overline{C}_{\gamma})^2}{\overline{C}_{\sigma_\pm}}\bigg)^{\frac{1}{\sigma}}
(Q_\pm)^{\frac{1}{\sigma}}(Q_\pm-1)\,(t-t_p)^{\frac{1+\sigma}{\sigma}} \nonumber\\
&=(1+o(1)){\rm sgn}\big(\overline{C}_{\sigma_+}-\overline{C}_{\sigma_-}\big)
\overline{C}_\gamma\bigg(\frac{(\overline{C}_{\gamma})^2Q_\pm}
{\overline{C}_{\sigma_\pm}}\bigg)^{\frac{1}{\sigma}}\,
|Q_\pm-1|\,(t-t_p)^{\frac{1+\sigma}{\sigma}}\nonumber\\[1mm]
&\cong {\rm sgn}\big(\overline{C}_{\sigma_+}-\overline{C}_{\sigma_-}\big)O_1(1)
\,(t-t_p)^{\frac{1+\sigma}{\sigma}}.
\end{align*}
It remains to check $O_1(1)$ in $\eqref{dsw9}$, $i.e.$,
$$
\bigg(\frac{(\overline{C}_{\gamma})^2Q_+}{\overline{C}_{\sigma_+}}\bigg)^{\frac{1}{\sigma}}|Q_+-1|
=\bigg(\frac{(\overline{C}_{\gamma})^2Q_-}{\overline{C}_{\sigma_-}}\bigg)^{\frac{1}{\sigma}}|Q_--1|,
$$
which is implied by the following two facts:
It is direct to check from $\eqref{dsw9a}$ that
\begin{align*}
|Q_+-1|
=\frac{\gamma\sigma}{(1+\gamma)(1+\sigma)}\,
\frac{\big|(\lambda_1)^{1+\gamma}-1\big|}{1+(\lambda_1)^\gamma}
=\lambda_1|Q_--1|,
\end{align*}
and, by $A_-=(1+o(1))(\lambda_1)^\gamma A_+$,
it follows from $\eqref{dsw18}$--$\eqref{dsw18a}$ that
$$\lambda_1=\lim_{t\rightarrow t_p{+}}\bigg(\frac{A_-(t)}{A_+(t)}\bigg)^{\frac{1}{\gamma}}
=\bigg(\frac{Q_-}{Q_+}\lambda_0\bigg)^{\frac{1}{\sigma}}.$$

\smallskip
\noindent
{\bf 3.} We now prove $\eqref{dsw6a2}$ with $O_2(1)$ in $\eqref{dsw9}$.
Since $\overline{C}_{\sigma_+}=\overline{C}_{\sigma_-}
=\overline{C}_{\sigma}$,
then $\lambda_1=1$ so that $Q_+=Q_-$.
By $\eqref{dsw18c}$--$\eqref{dsw18d}$,
it follows from $\eqref{dsw19}$ that
\begin{align}\label{dsw20}
x(t)-x_p-(t-t_p)f'(c)=o(1)(t-t_p)^{\frac{1+\sigma}{\sigma}}.
\end{align}
Thus, to estimate shock $x=x(t)$,
we need assumption $\eqref{dsw3c}$.

\smallskip
{\bf (a).} Since $\overline{C}_{\sigma_\pm}=\overline{C}_{\sigma}$,
it follows from $\eqref{dsw11a}$ that
$$\lambda(t)=1+o(1)\qquad {\rm as}\ t\rightarrow t_p{+},$$
which implies
$$
\frac{\lambda(t)^{1+\gamma}-1}{\lambda(t)^\gamma+1}
=\frac{\big(1+(\lambda(t)-1)\big)^{1+\gamma}-1}{2+o(1)}
=\frac{1+o(1)}{2}(1+\gamma)(\lambda(t)-1).
$$
Then, by taking $\eqref{dsw12c}$--$\eqref{dsw13b}$ into $\eqref{dsw10e}$,
it can be checked that
\begin{align}\label{dsw20a}
&\ \frac{1}{t-t_p}\big(x(t)-x_p-(t-t_p)f'(c)\big)\nonumber\\
&=(1+o(1))\frac{N}{1+\alpha}\frac{(1+\gamma)(1-\gamma)+\sigma}{(1+\gamma)(1+\sigma)}
\frac{(A_-(t))^{2+\alpha}-(A_+(t))^{2+\alpha}}{A_-(t)+A_+(t)}\nonumber\\[1mm]
&=(1+o(1))\frac{N}{1+\alpha}\frac{(1+\gamma)(1-\gamma)+\sigma}{(1+\gamma)(1+\sigma)}
(A_+(t))^{1+\alpha}\,
\frac{\lambda(t)^{1+\gamma}-1}{\lambda(t)^\gamma+1}\nonumber\\[1mm]
&=(1+o(1))\frac{N}{1+\alpha}\frac{(1+\gamma)(1-\gamma)+\sigma}{2(1+\gamma)(1+\sigma)}
(A_+(t))^{1+\alpha} (1+\gamma)(\lambda(t)-1)\nonumber\\[1mm]
&=(1+o(1))\frac{(1+\gamma)(1-\gamma)+\sigma}{2(1+\sigma)}\frac{N}{1+\alpha}
\big((A_-(t))^{1+\alpha}-(A_+(t))^{1+\alpha}\big)\nonumber\\[1mm]
&=(1+o(1))\frac{(1+\gamma)(1-\gamma)+\sigma}{2(1+\sigma)}
\big(|f'(u^-(t))-f'(c)|-|f'(u^+(t))-f'(c)|\big)\nonumber\\[1mm]
&=(1+o(1))\frac{(1+\gamma)(1-\gamma)+\sigma}{2(1+\sigma)}
\big((f'(u^-(t))-f'(c))+(f'(u^+(t))-f'(c))\big)\nonumber\\[1mm]
&=-(1+o(1))\frac{(1+\gamma)(1-\gamma)+\sigma}{2(1+\sigma)}\,
\overline{C}_\gamma\,(l^++l^-).
\end{align}

{\bf (b).} On the other hand, from $\eqref{dsw18d}$ and $\eqref{dsw20}$,
for $t>t_p$, we set
\begin{align}\label{dsw21}
\begin{cases}
\mu^\pm=\mu^\pm(t):=\frac{l^\pm(t)}{s}=\frac{l^\pm}{s},\qquad
s=(t-t_p)^{\frac{1}{\sigma}},\\[2mm]
\nu=\nu(t):=\big(x(t)-x_p-(t-t_p)f'(c)\big)s^{-1-\sigma}.
\end{cases}
\end{align}
Then, by $\overline{C}_{\sigma_\pm}=\overline{C}_{\sigma}$ and $Q_\pm=1$, we obtain
\begin{align}\label{dsw21a}
\mu_0^\pm:=\lim_{t\rightarrow t_p{+}}\mu^\pm(t)=\lim_{t\rightarrow t_p{+}}\frac{l^\pm(t)}{s}
=\pm\bigg(\frac{(\overline{C}_{\gamma})^2}{\overline{C}_{\sigma}}\bigg)^{\frac{1}{\sigma}},
\qquad \lim_{t\rightarrow t_p{+}}\nu(t)=0.
\end{align}
Noticing that $x_p=x_0+t_pf'(c)$, it follows from $\eqref{dsw12}$ and $\eqref{dsw21}$ that,
for small $t>t_p$,
\begin{align}\label{dsw21b}
\nu
&=\big(l^\pm+t(f'(u^\pm(t))-f'(c))\big)s^{-1-\sigma}
\nonumber\\[1mm]
&
=\big(l^\pm+t(f'(\varphi(x_0+l^\pm))-f'(c))\big)s^{-1-\sigma}
\nonumber\\[1mm]
&=\Big\{
l^\pm+(t_p+s^\sigma)\big({-}\overline{C}_{\gamma}\,l^\pm
+\overline{C}_{\sigma}\,{\rm sgn}(l^\pm)|l^\pm|^{1+\sigma}
+(\overline{C}_{\rho}+o(1))\, |l^\pm|^{1+\sigma+\rho}\big)
\Big\}s^{-1-\sigma}\nonumber\\[1mm]
&=-\overline{C}_{\gamma}\,\mu^\pm
+(t_p+s^\sigma)\overline{C}_{\sigma}\,|\mu^\pm|^{\sigma}\mu^\pm
+(1+o(1))t_p\overline{C}_{\rho}\, |\mu^\pm|^{1+\sigma+\rho}s^{\rho}.
\end{align}
From $\eqref{dsw21a}$, letting $s\rightarrow 0{+}$ in $\eqref{dsw21b}$ leads to
\begin{align*}
0=-\overline{C}_{\gamma}\,\mu_0^\pm +t_p \overline{C}_{\sigma} \,|\mu_0^\pm|^{\sigma}\mu_0^\pm,
\end{align*}
so that, by subtracting into $\eqref{dsw21b}$,
\begin{align*}
\nu&=-\overline{C}_{\gamma}\,(\mu^\pm-\mu_0^\pm)+t_p\overline{C}_{\sigma}\,
\big(|\mu^\pm|^{\sigma}\mu^\pm-|\mu_0^\pm|^{\sigma}\mu_0^\pm\big)
\nonumber\\[1mm]
&\quad \,
+\overline{C}_{\sigma}\,|\mu_0^\pm|^{\sigma}\mu_0^\pm s^{\sigma}+(1+o(1))t_p\overline{C}_{\rho}\,|\mu_0^\pm|^{1+\sigma+\rho}s^{\rho}\nonumber\\[1mm]
&\quad \, +\overline{C}_{\sigma}\,\big(|\mu^\pm|^{\sigma}\mu^\pm-|\mu_0^\pm|^{\sigma}\mu_0^\pm\big) s^{\sigma}
+(1+o(1))t_p\overline{C}_{\rho}\,\big(|\mu^\pm|^{1+\sigma+\rho}-|\mu_0^\pm|^{1+\sigma+\rho}\big)s^{\rho}
\nonumber\\[1mm]
&=\big({-}\overline{C}_{\gamma}
+(1+\sigma+o(1))t_p\overline{C}_{\sigma}|\mu_0^\pm|^{\sigma}+o(1)\big)\,(\mu^\pm-\mu_0^\pm)
\nonumber\\[1mm]
&\quad \,
+\overline{C}_{\sigma}\,|\mu_0^\pm|^{\sigma}\mu_0^\pm s^{\sigma}+t_p(\overline{C}_{\rho}+o(1))\,|\mu_0^\pm|^{1+\sigma+\rho}s^{\rho}
\nonumber\\[1mm]
&=(1+o(1))\sigma\overline{C}_{\gamma}(\mu^\pm-\mu_0^\pm)
+\overline{C}_{\sigma}\,|\mu_0^\pm|^{\sigma}\mu_0^\pm s^{\sigma}
+t_p(\overline{C}_{\rho}+o(1))\,|\mu_0^\pm|^{1+\sigma+\rho}s^{\rho},
\end{align*}
which, by $\eqref{dsw21}$--$\eqref{dsw21a}$, implies
\begin{align}\label{dsw21e}
2\nu&=(1+o(1))\sigma\overline{C}_{\gamma}(\mu^++\mu^-)+t_p(\overline{C}_{\rho}+o(1))\,
\big(|\mu_0^+|^{1+\sigma+\rho}+|\mu_0^-|^{1+\sigma+\rho}\big)s^{\rho}\nonumber\\[1mm]
&=(1+o(1))\sigma\overline{C}_{\gamma}\frac{l^++l^-}{s}
+2t_p(\overline{C}_{\rho}+o(1))\,|\mu_0^+|^{1+\sigma+\rho}s^{\rho}.
\end{align}
Thus, combining $\eqref{dsw20a}$--$\eqref{dsw21}$
with $\eqref{dsw21e}$,
we obtain
\begin{align*}
\nu s&=\big(x(t)-x_p-(t-t_p)f'(c)\big)s^{-\sigma}\nonumber\\[1mm]
&=-(1+o(1))\frac{(1+\gamma)(1-\gamma)+\sigma}{2(1+\sigma)}\,
\overline{C}_\gamma\,(l^++l^-)\nonumber\\[1mm]
&=-(1+o(1))\frac{(1+\gamma)(1-\gamma)+\sigma}{\sigma(1+\sigma)}
\big(\nu s-t_p(\overline{C}_{\rho}+o(1))\,|\mu_0^+|^{1+\sigma+\rho}s^{1+\rho}\big).
\end{align*}
This, by a simple calculation, implies
\begin{align*}
\nu&=(1+o(1))\frac{(1+\gamma)(1-\gamma)+\sigma}{(1+\gamma)(1-\gamma)+\sigma(2+\sigma)}
t_p(\overline{C}_{\rho}+o(1))\,|\mu_0^+|^{1+\sigma+\rho}\,s^{\rho}\nonumber\\[1mm]
&=(1+o(1)){\rm sgn}(\overline{C}_{\rho})
\frac{(1+\gamma)(1-\gamma)+\sigma}{(1+\gamma)(1-\gamma)+\sigma(2+\sigma)}
\frac{|\overline{C}_{\rho}|}{\overline{C}_{\gamma}}
\bigg(\frac{(\overline{C}_{\gamma})^2}{\overline{C}_{\sigma}}\bigg)^{\frac{1+\sigma+\rho}{\sigma}}
s^{\rho}
\nonumber\\[1mm]
&
\cong {\rm sgn}(\overline{C}_{\rho})O_2(1)\, s^{\rho}.
\end{align*}
Therefore, it follows from $\eqref{dsw21}$ that
\begin{align*}
x(t)-x_p-(t-t_p)f'(c)=\nu s^{1+\sigma}\cong
{\rm sgn}(\overline{C}_{\rho})O_2(1)\, (t-t_p)^{\frac{1+\sigma+\rho}{\sigma}}.
\end{align*}

\smallskip
\noindent
{\bf 4.} We now estimate the behavior of solution $u=u(x,t)$ near point $(x_p,t_p)$.

\smallskip
{\bf (a).} For any $\nu \in \mathbb{R}$,
we define the curve:
\begin{align}\label{dswr1}
\chi(t,\nu):\, x-x_p-(t-t_p)f'(c)=\nu |t-t_p|^\frac{1+\sigma}{\sigma}
\qquad {\rm for\ small\ } t-t_p.
\end{align}
For $(x,t)\in \chi(t,\nu)$ with $\pm(x-x(t))>0$,
denote $\tilde{l}^\pm:=\tilde{l}^\pm(t,\nu)$ by
\begin{align*}
\tilde{l}^\pm=\tilde{l}^\pm(t,\nu):=y(x,t,u(x,t))-x_0=x-tf'(u(x,t))-x_0.
\end{align*}
Since $\varphi(x_0+l)$ is continuous for $\pm l\in (0,l_0)$,
it follows from $\eqref{c2.31}$ and $\eqref{uiff}$ that
$\pm\tilde{l}^\pm>0$ satisfy that, for small $t-t_p$,
\begin{align}\label{dswr2a}
\tilde{l}^\pm=\tilde{l}^\pm(t,\nu)
&=x-tf'(u(x,t))-x_0
=x-tf'(\varphi(x_0+\tilde{l}^\pm))-x_0\nonumber\\[1mm]
&=x-x_p-(t-t_p)f'(c)-t\big(f'(\varphi(x_0+\tilde{l}^\pm))-f'(c)\big)\nonumber\\[1mm]
&=\nu |t-t_p|^\frac{1+\sigma}{\sigma}-t\big(f'(\varphi(x_0+\tilde{l}^\pm))-f'(c)\big).
\end{align}
Denote $\tilde{\mu}^\pm:=\tilde{\mu}^\pm(t,\nu)$ and $s$ by
\begin{align}\label{dswr3}
\tilde{\mu}^\pm=\tilde{\mu}^\pm(t,\nu):=\frac{\tilde{l}^\pm(t,\nu)}{s}=\frac{\tilde{l}^\pm}{s},
\quad s=|t-t_p|^{\frac{1}{\sigma}}
\qquad {\rm for\ small}\ t-t_p\neq 0.
\end{align}
From $\eqref{dswr2a}$--$\eqref{dswr3}$,
it follows from $\pm\tilde{l}^\pm>0$ that $\pm\tilde{\mu}^\pm>0$
and, for small $s>0$,
\begin{align}\label{dswr4}
\nu &=\big(\tilde{l}^\pm+t(f'(\varphi(x_0+\tilde{l}^\pm))-f'(c))\big)s^{-1-\sigma}\nonumber\\[1mm]
&=\Big\{\tilde{l}^\pm
+(t_p+{\rm sgn}(t-t_p)s^\sigma)\big({-}\overline{C}_{\gamma}\,\tilde{l}^\pm
+\big(\overline{C}_{\sigma_\pm}+o(1)\big)\,{\rm sgn}(\tilde{l}^\pm)|\tilde{l}^\pm|^{1+\sigma}\big)
\Big\}s^{-1-\sigma}\nonumber\\[1mm]
&=-{\rm sgn}(t-t_p)\overline{C}_{\gamma}\,\tilde{\mu}^\pm
+(1+o(1))t_p\overline{C}_{\sigma_\pm}\,|\tilde{\mu}^\pm|^{\sigma}\tilde{\mu}^\pm.
\end{align}

{\bf (b).} We now estimate $\tilde{l}^\pm$. There exist two cases:

\smallskip
{\bf (b1)}. For $t<t_p$, from $\eqref{dswr4}$,
$\nu =\big(\,\overline{C}_{\gamma}+(1+o(1))t_p\overline{C}_{\sigma_\pm}\,|\tilde{\mu}^\pm|^{\sigma}\big)\tilde{\mu}^\pm$
so that
\begin{align*}
|\nu|=\big(\,\overline{C}_{\gamma}+(1+o(1))t_p\overline{C}_{\sigma_\pm}\,|\tilde{\mu}^\pm|^{\sigma}\big)|\tilde{\mu}^\pm|
\geq (1+o(1))t_p\overline{C}_{\sigma_\pm}\,|\tilde{\mu}^\pm|^{1+\sigma},
\end{align*}
which, by $\eqref{dswr3}$, implies
\begin{align}\label{dswr7b}
\big|\tilde{l}^\pm\big|=\big|\tilde{\mu}^\pm\big| s
&\leq(1+o(1))\big(t_p \overline{C}_{\sigma_\pm}\big)^{-\frac{1}{1+\sigma}}
|\nu|^{\frac{1}{1+\sigma}} s\nonumber\\[1mm]
&=(1+o(1))\bigg(\frac{\overline{C}_{\gamma}}{\overline{C}_{\sigma_\pm}}\bigg)^{\frac{1}{1+\sigma}}
\big|x-x_p-(t-t_p)f'(c)\big|^{\frac{1}{1+\sigma}}.
\end{align}

\smallskip
{\bf (b2)}. For $t>t_p$, $\eqref{dswr4}$ becomes
\begin{align}\label{dswr6a}
\nu =-\overline{C}_{\gamma}\,\tilde{\mu}^\pm+(1+o(1))t_p\overline{C}_{\sigma_\pm}\,|\tilde{\mu}^\pm|^{\sigma}\tilde{\mu}^\pm.
\end{align}
Denote $\tilde{\mu}_0^\pm:=\pm\Big(\frac{(\overline{C}_{\gamma})^2}
{\overline{C}_{\sigma_\pm}}\Big)^{\frac{1}{\sigma}}$,
or equivalently, $\tilde{\mu}_0^\pm$ with $\pm\tilde{\mu}_0^\pm>0$ satisfy that
\begin{align*}
0=-\overline{C}_{\gamma} +t_p \overline{C}_{\sigma_\pm} \,|\tilde{\mu}_0^\pm|^{\sigma}.
\end{align*}
For
$g_\pm(\mu):=-\overline{C}_{\gamma}\,\mu+t_p\overline{C}_{\sigma_\pm}\,|\mu|^{\sigma}\mu$
for $\pm\mu>0$,
it is direct to see that
\begin{align}\label{dswr9a}
g_\pm(\mu) \qquad \mbox{ is\ strictly\ increasing\ \, for $\pm(\mu-\bar{\mu}^\pm_0)>0$},
\end{align}
where $\bar{\mu}^\pm_0$ with $\pm\bar{\mu}^\pm_0>0$ is the unique real root of $g'_\pm(\mu)=0$, $i.e.$,
\begin{align*}
0=-\overline{C}_{\gamma}+(1+\sigma)t_p\overline{C}_{\sigma_\pm}\,|\bar{\mu}^\pm_0|^{\sigma},
\qquad {\rm or \ equivalently }\quad \bar{\mu}^\pm_0=(1+\sigma)^{-\frac{1}{\sigma}}\tilde{\mu}^\pm_0.
\end{align*}

Denote $\nu_0$ by
\begin{align*}
\nu_0:
={\rm sgn}\big(\overline{C}_{\sigma_+}-\overline{C}_{\sigma_-}\big)O_1(1)
={\rm sgn}\big(\overline{C}_{\sigma_+}-\overline{C}_{\sigma_-}\big)
\overline{C}_\gamma(Q_\pm)^{\frac{1}{\sigma}} |\tilde{\mu}^\pm_0| |Q_\pm-1|.
\end{align*}
From $\eqref{dsw6a1}$,
for $\nu$ with $\pm(\nu-\nu_0)>0$,
if $(x,t)\in \chi(t,\nu)$ with small $s=t-t_p>0$, then
\begin{align}\label{dswr10aa}
\pm(x-x(t))
&=\pm \big((x-x_p-(t-t_p)f'(c)\big)-\big(x(t)-x_p-(t-t_p)f'(c))\big)\nonumber\\[1mm]
&\cong\pm(\nu s^{1+\sigma}-\nu_0 s^{1+\sigma})=\pm(\nu-\nu_0) s^{1+\sigma}>0,
\end{align}
which, by $\eqref{c2.31}$, $\eqref{dsw12}$, and $\eqref{dswr2a}$,
implies that $\pm\tilde{l}^\pm\geq \pm l^\pm$.
Hence, from $\eqref{dsw18d}$ and $\eqref{dswr3}$,
\begin{align}\label{dswr4c2}
\pm \tilde{\mu}^\pm=\pm\frac{\tilde{l}^\pm}{s}\geq\pm \frac{l^\pm}{s}
=(1+o(1))|\tilde{\mu}^\pm_0|(Q_\pm)^{\frac{1}{\sigma}}
>(1+\sigma)^{-\frac{1}{\sigma}}|\tilde{\mu}^\pm_0|=|\bar{\mu}^\pm_0|,
\end{align}
where we have used that
$Q_\pm>\frac{1}{1+\sigma}\frac{1+\gamma+\sigma}{1+\gamma}
=\frac{1}{1+\sigma}\big(1+\frac{\sigma}{1+\gamma}\big)>\frac{1}{1+\sigma}$ by $\eqref{dsw9a}$.

Furthermore, for any fixed $\nu$ with $\pm(\nu-\nu_0)>0$,
by letting $s\rightarrow 0{+}$ in $\eqref{dswr6a}$,
it is direct to see that $\tilde{\mu}^\pm$ is bounded for small $s>0$.
Thus, by $\eqref{dswr4c2}$,
any limit point of the converged subsequence of $\tilde{\mu}^\pm$
solves $\nu=g_\pm(\mu)$ on $\pm(\mu-\bar{\mu}^\pm_0)>0$ uniquely,
which means that
\begin{align}\label{dswr6b}
\lim_{s\rightarrow 0{+}}\frac{\tilde{l}^\pm}{s}=\lim_{s\rightarrow 0{+}}\tilde{\mu}^\pm=:\tilde{\mu}_0^\pm(\nu)
\end{align}
with $\tilde{\mu}_0^\pm(\nu)$ solving $\nu=g_\pm(\mu)$ on $\pm(\mu-\bar{\mu}^\pm_0)>0$ uniquely, $i.e.$,
for $\nu$ with $\pm(\nu-\nu_0)>0$,
\begin{align}\label{dswr6a1}
\nu=g_\pm(\tilde{\mu}_0^\pm(\nu))
=-\overline{C}_{\gamma}\tilde{\mu}_0^\pm(\nu)
+t_p \overline{C}_{\sigma_\pm} \,|\tilde{\mu}_0^\pm(\nu)|^{\sigma}\tilde{\mu}_0^\pm(\nu)
\qquad\, {\rm with}\ \pm(\tilde{\mu}_0^\pm(\nu)-\bar{\mu}^\pm_0)>0.
\end{align}

On the other hand, for any $\delta\in(0,1)$ with $\delta>\max\{1-(Q_\pm)^{-1}\}$,
let $\tilde{\mu}_\delta^\pm$ be the unique real root of the following equation:
\begin{align*}
g_\pm(\mu)=-\overline{C}_{\gamma}\,\mu
+t_p\overline{C}_{\sigma_\pm}\,|\mu|^{\sigma}\mu
=\delta t_p\overline{C}_{\sigma_\pm}\,|\mu|^{\sigma}\mu
=:g^{\delta}_\pm(\mu)
\qquad\, {\rm for}\ \pm\mu>0.
\end{align*}
Then it is direct to check that
$\tilde{\mu}_\delta^\pm=(1-\delta)^{-\frac{1}{\sigma}}\tilde{\mu}_0^\pm.$
Since $\delta>\max\{1-(Q_\pm)^{-1}\}$, then
\begin{align}\label{dswr11d}
\pm\tilde{\mu}_\delta^\pm
=\pm(1-\delta)^{-\frac{1}{\sigma}}\tilde{\mu}_0^\pm
=(1-\delta)^{-\frac{1}{\sigma}}|\tilde{\mu}_0^\pm|
>(Q_\pm)^{\frac{1}{\sigma}}|\tilde{\mu}_0^\pm|
>(1+\sigma)^{-\frac{1}{\sigma}}|\tilde{\mu}^\pm_0|
=|\bar{\mu}^\pm_0|.
\end{align}
See Fig. $\ref{figDSWS}$ for the positional relations of $\bar{\mu}^\pm_0$, $\tilde{\mu}^\pm_0$,
$\hat{\mu}_0^\pm:=(Q_\pm)^{\frac{1}{\sigma}}\tilde{\mu}_0^\pm$, and $\tilde{\mu}_\delta^\pm$.

\begin{figure}[H]
	\begin{center}
		{\includegraphics[width=0.7\columnwidth]{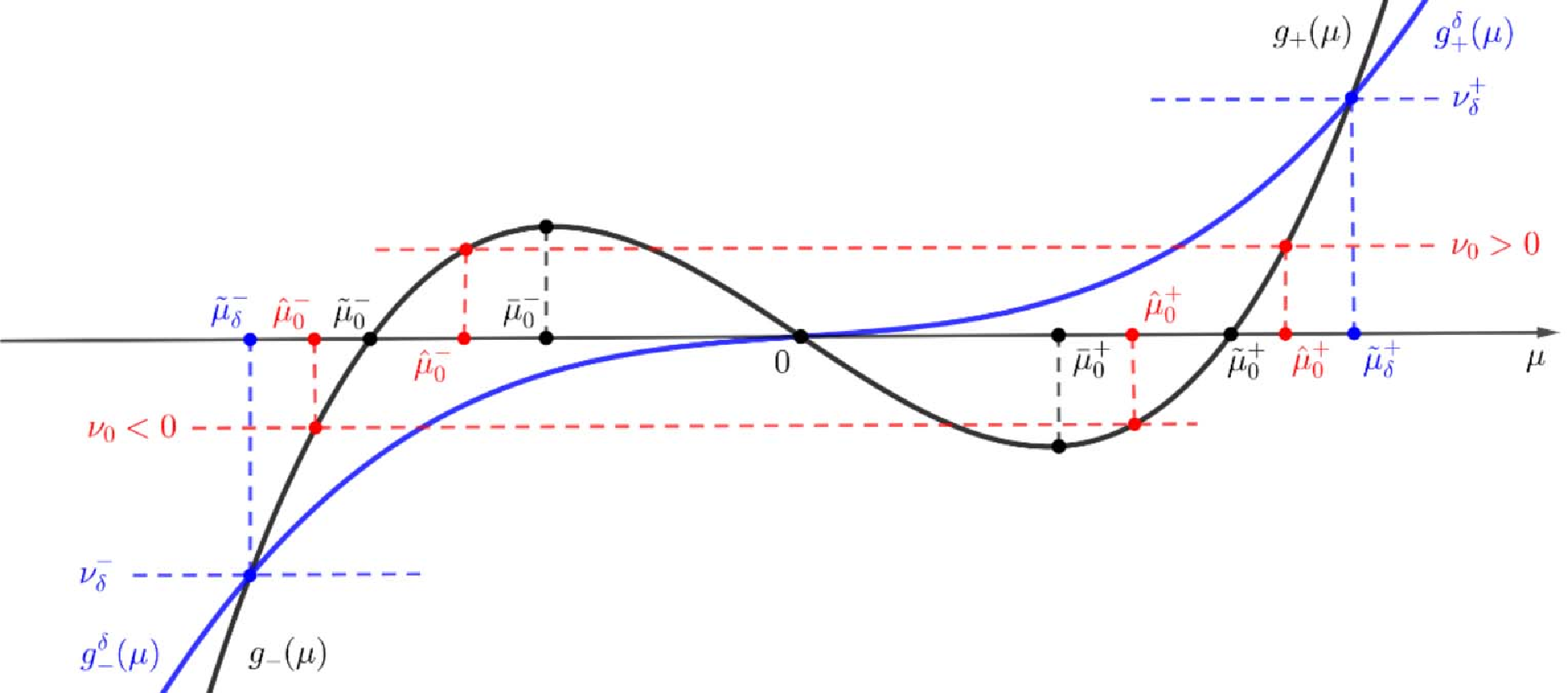}}
 \caption{The positional relations of
 $\bar{\mu}^\pm_0$, $\tilde{\mu}^\pm_0$,
$\hat{\mu}_0^\pm$, and $\tilde{\mu}_\delta^\pm$.
 }\label{figDSWS}
	\end{center}
\end{figure}

For $\delta\in(0,1)$ with $\delta>\max\{1-(Q_\pm)^{-1}\}$, denote $\nu_\delta^\pm$ by
\begin{align}\label{dswr11b}
\nu_\delta^\pm:=g_\pm(\tilde{\mu}_\delta^\pm)=\delta t_p\overline{C}_{\sigma_\pm}\,|\tilde{\mu}_\delta^\pm|^{\sigma}\tilde{\mu}_\delta^\pm
=g_\pm^\delta(\tilde{\mu}_\delta^\pm).
\end{align}
Then it follows from
$\eqref{dswr11d}$--$\eqref{dswr11b}$ that
\begin{align}\label{dswr11e}
\pm\nu_\delta^\pm
&=\pm\delta t_p\overline{C}_{\sigma_\pm}\,|\tilde{\mu}_\delta^\pm|^{\sigma}\tilde{\mu}_\delta^\pm
=\delta t_p\overline{C}_{\sigma_\pm}\,|\tilde{\mu}_\delta^\pm|^{1+\sigma}
>\delta t_p\overline{C}_{\sigma_\pm}Q_\pm(Q_\pm)^{\frac{1}{\sigma}}\,|\tilde{\mu}_0^\pm|^{1+\sigma}\nonumber\\[1mm]
&=(\delta Q_\pm)\big(t_p\overline{C}_{\sigma_\pm}|\tilde{\mu}_0^\pm|^\sigma\big)(Q_\pm)^{\frac{1}{\sigma}}\,|\tilde{\mu}_0^\pm|
>(Q_\pm-1)\overline{C}_{\gamma}(Q_\pm)^{\frac{1}{\sigma}}\,|\tilde{\mu}_0^\pm|\nonumber\\[1mm]
&=\pm{\rm sgn}\big(\overline{C}_{\sigma_+}-\overline{C}_{\sigma_-}\big)
\overline{C}_\gamma(Q_\pm)^{\frac{1}{\sigma}} |\tilde{\mu}^\pm_0| |Q_\pm-1|
=\pm\nu_0,
\end{align}
where we have used that
$Q_\pm-1=\pm{\rm sgn}\big(\overline{C}_{\sigma_+}-\overline{C}_{\sigma_-}\big) |Q_\pm-1|$ by $\eqref{dsw18b}$.

Thus, for any $\nu$ with $\pm (\nu-\nu_\delta^\pm)\geq 0$,
it follows from $\eqref{dswr11e}$ that
$\pm(\nu-\nu_0)>0$, which, by $\eqref{dswr10aa}$, implies that
$\pm(x-x(t))>0$ for $(x,t)\in \chi(t,\nu)$ with small $t-t_p>0$.
Hence, for any $\nu$ with $\pm (\nu-\nu_\delta^\pm)\geq 0$,
it follows from $\eqref{dswr6a1}$ and $\eqref{dswr11b}$ that
$$\pm \big(g_\pm(\tilde{\mu}_0^\pm(\nu))-g_\pm(\tilde{\mu}_\delta^\pm)\big)
=\pm (\nu-\nu_\delta^\pm)\geq 0,$$
which, by combining $\eqref{dswr6a1}$--$\eqref{dswr11d}$ with $\eqref{dswr9a}$, implies
\begin{align*}
\pm\big(\tilde{\mu}_0^\pm(\nu)-\tilde{\mu}_\delta^\pm\big)\geq 0
\qquad\, {\rm for}\ \pm (\nu-\nu_\delta^\pm)\geq 0.
\end{align*}
Meanwhile, it follows from $\eqref{dswr11d}$ that
\begin{align*}
\pm g_\pm(\mu)\geq\pm \delta t_p\overline{C}_{\sigma_\pm}\,|\mu|^{\sigma}\mu
 \qquad {\rm for\ } \pm (\mu-\tilde{\mu}_\delta^\pm)\geq 0.
\end{align*}
Therefore, for any $\nu$ with $\pm (\nu-\nu_\delta^\pm)\geq 0$,
\begin{align*}
|\nu|=\pm \nu=\pm g_\pm(\tilde{\mu}_0^\pm(\nu))
\geq\pm \delta t_p\overline{C}_{\sigma_\pm}\,|\tilde{\mu}_0^\pm(\nu)|^{\sigma}\tilde{\mu}_0^\pm(\nu)
=\delta t_p\overline{C}_{\sigma_\pm}\,|\tilde{\mu}_0^\pm(\nu)|^{1+\sigma},
\end{align*}
which implies
\begin{align}\label{dswr12c}
|\tilde{\mu}_0^\pm(\nu)|\leq \delta^{-\frac{1}{1+\sigma}}
(t_p\overline{C}_{\sigma_\pm})^{-\frac{1}{1+\sigma}}|\nu|^{\frac{1}{1+\sigma}}
 \qquad {\rm for\ } \pm (\nu-\nu_\delta^\pm)\geq 0.
\end{align}

Combining all the steps above, we conclude that, for the case that $t>t_p$,
\begin{itemize}
\item [(i)] If $\pm (\nu-\nu_\delta^\pm)\geq 0$,
then $\pm(x-x(t))>0$ for $(x,t)\in \chi(t,\nu)$ with small $t-t_p>0$.
Then, by $\eqref{dswr1}$ and $\eqref{dswr3}$,
it follows from $\eqref{dswr6b}$ and $\eqref{dswr12c}$ that,
as $t\rightarrow t_p{+}$,
\begin{align}\label{dswr13}
|\tilde{l}^\pm|=|\tilde{l}^\pm(t,\nu)|
&=(1+o(1))|\tilde{\mu}_0^\pm(\nu)|s
\leq(1+o(1))\delta^{-\frac{1}{1+\sigma}}
(t_p\overline{C}_{\sigma_\pm})^{-\frac{1}{1+\sigma}}
|\nu|^{\frac{1}{1+\sigma}} s\nonumber\\[1mm]
&=(1+o(1))\delta^{-\frac{1}{1+\sigma}}
(t_p\overline{C}_{\sigma_\pm})^{-\frac{1}{1+\sigma}}
|x-x_p-(t-t_p)f'(c)|^{\frac{1}{1+\sigma}}\nonumber\\[1mm]
&=(1+o(1))\delta^{-\frac{1}{1+\sigma}}
\bigg(\frac{\overline{C}_{\gamma}}{\overline{C}_{\sigma_\pm}}\bigg)^{\frac{1}{1+\sigma}}
|x-x_p-(t-t_p)f'(c)|^{\frac{1}{1+\sigma}}.
\end{align}
For point $(x,t)$ on $t=t_p$ with small $\pm\tilde{l}_p^\pm=\pm(x-x_p)>0$,
similar to $\eqref{dsw12a}$,
\begin{align*}
|\tilde{l}^\pm|
=(1+o(1))\bigg(\frac{\overline{C}_{\gamma}}{\overline{C}_{\sigma_\pm}}\bigg)^{\frac{1}{1+\sigma}}
|\tilde{l}_p^\pm|^{\frac{1}{1+\sigma}}
=(1+o(1))\bigg(\frac{\overline{C}_{\gamma}}{\overline{C}_{\sigma_\pm}}\bigg)^{\frac{1}{1+\sigma}}
|x-x_p|^{\frac{1}{1+\sigma}}.
\end{align*}

\item [(ii)] If $\nu\in [\nu_\delta^-,\nu_\delta^+]$,
by $\eqref{c2.31}$,
it follows from $\eqref{dswr6b}$--$\eqref{dswr11b}$ that,
for $(x,t)\in \chi(t,\nu)$ with $\pm(x-x(t))>0$,
as $t\rightarrow t_p{+}$,
\begin{align}\label{dswr13a}
|\tilde{l}^\pm|=|\tilde{l}^\pm(t,\nu)|
&\leq |\tilde{l}^\pm(t,\nu_\delta^\pm)|
=(1+o(1))|\tilde{\mu}_0^\pm(\nu_\delta^\pm)|s
=(1+o(1))|\tilde{\mu}_\delta^\pm|s\nonumber\\[1mm]
&=(1+o(1))(1-\delta)^{-\frac{1}{\sigma}}
\bigg(\frac{(\overline{C}_{\gamma})^2}{\overline{C}_{\sigma_\pm}}\bigg)^{\frac{1}{\sigma}}
|t-t_p|^{\frac{1}{\sigma}}.
\end{align}
For point $(x,t)$ on $x=x(t)$,
since $\delta>\max\{1-(Q_\pm)^{-1}\}$, then $Q_\pm\leq (1-\delta)^{-1}$, and
it follows from $\eqref{dsw18d}$ that $\eqref{dswr13a}$ holds.
\end{itemize}

{\bf (c).} We now prove $\eqref{dsw6a}$.
By $\eqref{uiff}$, it follows from $\eqref{dsw3a}$ and $\eqref{dswr2a}$ that,
as $t\rightarrow t_p$,
\begin{align*}
|u(x,t)-c|=|\varphi(x_0+\tilde{l}^\pm)-c|
=|C_{\gamma}+o(1)|\, |\tilde{l}^\pm|^{\gamma}.
\end{align*}
Then, for any $\delta\in(0,1)$ with $\delta>\max\{1-(Q_\pm)^{-1}\}$,
$\eqref{dsw6a}$ can be established as follows:
\begin{itemize}
\item [(i)] For $t\geq t_p$, if $(x,t)\in \Omega_\pm$ near $(x_p,t_p)$ with
\begin{align*}
\pm \big( x-x_p-(t-t_p)f'(c)-\nu_{\delta}^\pm |t-t_p|^{\frac{1+\sigma}{\sigma}}\big)\geq 0,
\end{align*}
then it follows from $\eqref{dswr13}$ that
\begin{align}\label{dswr8a2}
\,\,\,\,\,\,|u(x,t)-c|
\leq (1+o(1))\delta^{-\frac{\gamma}{1+\sigma}}|C_{\gamma}|
\bigg(\frac{\overline{C}_{\gamma}}{\overline{C}_{\sigma_\pm}}\bigg)^{\frac{\gamma}{1+\sigma}}
|x-x_p-(t-t_p)f'(c)|^{\frac{\gamma}{1+\sigma}};
\end{align}
and, if $(x,t)\in \Omega_\pm$ near $(x_p,t_p)$ with
\begin{align*}
\nu_{\delta}^-|t-t_p|^{\frac{1+\sigma}{\sigma}}
\leq x-x_p-(t-t_p)f'(c)
\leq \nu_{\delta}^+|t-t_p|^{\frac{1+\sigma}{\sigma}},
\end{align*}
then it follows from $\eqref{dswr13a}$ that
\begin{align*}
|u(x,t)-c|
\leq (1+o(1))(1-\delta)^{-\frac{\gamma}{\sigma}}|C_{\gamma}|
\bigg(\frac{(\overline{C}_{\gamma})^2}{\overline{C}_{\sigma_\pm}}\bigg)^{\frac{\gamma}{\sigma}}
|t-t_p|^{\frac{\gamma}{\sigma}}.
\end{align*}

\item [(ii)] For $t<t_p$, if $(x,t)\in \Omega_\pm$ near $(x_p,t_p)$, by $\delta\in(0,1)$,
$\eqref{dswr8a2}$ follows from $\eqref{dswr7b}$.
\end{itemize}

For the case that $\overline{C}_{\sigma_\pm}=\overline{C}_{\sigma}$,
it follows from $\eqref{dsw9a}$ and $\eqref{dsw11a}$ that $Q_\pm=1$.
Then, for any $\delta\in(0,1)$,
$\eqref{dsw6a}$ follows immediately
via replacing $\overline{C}_{\sigma_\pm}$ by $\overline{C}_{\sigma}$.

\smallskip
\noindent
{\bf Case II.} Suppose that $0<t_p^+<t_p^-\in(0,\infty]$.
By Theorem $\ref{the:c4.3}$, $c=\underline{D}_+\Phi(x_0)$.

Denote $u^\pm(t):=u(x(t)\pm,t)$ for $t>t_p$.
From Corollary $\ref{cor:c2.2}$, $u^\pm(t)\in \mathcal{U}(x(t),t;t_p)$ so that,
by $\eqref{c2.57}$--$\eqref{c2.61}$, $E(u^+(t);x(t),t;t_p)=E(u^-(t);x(t),t;t_p)$.
By the same arguments as for {\bf Case I},
it is direct to check that $\eqref{dsw10}$--$\eqref{dsw10e}$ hold.

\smallskip
\noindent
{\bf 1.} We first consider the case that
$c=\underline{D}_+\Phi(x_0)=\overline{D}_-\Phi(x_0)=a$.
Similar to $\eqref{dsw12}$--$\eqref{dsw12c}$,
from $\eqref{dsw4a}$--$\eqref{dsw4b}$,
it can be checked from $\eqref{dsw10a}$ and $\eqref{dsw12}$ that,
\begin{align}\label{dswn12a}
l_p^+=(1+o(1))\frac{\overline{C}_{\sigma_+}}{\overline{C}_{\gamma_+}}|l^+|^{1+\sigma_+}
\qquad {\rm as}\ t\rightarrow t_p{+},
\end{align}
and, for $l_p^-$, by $\eqref{aa2}$,
it follows from $\eqref{dsw2}$ and $\eqref{dsw4c}$--$\eqref{dsw4c1}$ that
\begin{align*}
f'(\varphi(x_0+l^-))-f'(c)
&=-{\rm sgn} (C_{\gamma_-})(1+o(1))\frac{N}{1+\alpha}|\varphi(x_0+l^-)-c|^{1+\alpha}\nonumber\\[1mm]
&=-{\rm sgn} (C_{\gamma_-})(1+o(1))\frac{N}{1+\alpha}|C_{\gamma_-}|^{1+\alpha}
|l^-|^{\gamma_-(1+\alpha)}\nonumber\\[1mm]
&=-{\rm sgn} (C_{\gamma_-})(1+o(1))\overline{C}_{\gamma_-}|l^-|^{\gamma_-(1+\alpha)},
\end{align*}
which, by $\eqref{dsw10a}$ and $\eqref{dsw12}$, implies
\begin{align}\label{dswn12a2}
l_p^-&=l^-+t_p\big(f'(u^-(t))-f'(c)\big)
=l^-+t_p\big(f'(\varphi(x_0+l^-))-f'(c)\big)\nonumber\\[1mm]
&=l^--{\rm sgn} (C_{\gamma_-})(1+o(1))t_p\overline{C}_{\gamma_-}|l^-|^{\gamma_-(1+\alpha)}
=-(1+o(1))O^-_4(1)|l^-|^{\varrho_0},
\end{align}
where $\varrho_0:=\min\{\gamma_-(1+\alpha),1\}$, and
$O_4^-(1)$ is given by $\eqref{dsw9c}$.

Then, similar to $\eqref{dsw12c}$, it follows from $\eqref{dswn12a}$--$\eqref{dswn12a2}$ that,
as $t\rightarrow t_p{+}$,
\begin{align}\label{dswn12c}
\begin{cases}
u^+(t)-c
\cong-|C_{\gamma_+}|
\Big(\frac{\overline{C}_{\gamma_+}}{\overline{C}_{\sigma_+}}\Big)^{\frac{\gamma_+}{1+\sigma_+}} |l_p^+|^{\frac{\gamma_+}{1+\sigma_+}}
=:-O_+(1) |l_p^+|^{\frac{\gamma_+}{1+\sigma_+}},\\[2mm]
u^-(t)-c
\cong -{\rm sgn}(C_{\gamma_-})|C_\gamma|
\big((O_4^-(1))^{-1}|l_p^-|\big)^{\frac{\gamma_-}{\varrho_0}}
=:-{\rm sgn}(C_{\gamma_-})O_-(1) |l_p^-|^{\varrho},
\end{cases}
\end{align}
where $\varrho=\max\{\gamma_-,\gamma_+\}=\frac{\gamma_-}{\varrho_0}$.

\smallskip
\noindent
{\bf 2.} We now prove that
\begin{align}\label{dswn11a}
\tilde{\lambda}_1:=\lim_{t\rightarrow t_p{+}}\tilde{\lambda}(t)
=\frac{1+\gamma_++\sigma_+}{\gamma_+\sigma_+}\,
\frac{|C_{\gamma_+}|^{\frac{1+\sigma_+}{\gamma_+}}}{|C_{\gamma_-}|^{\frac{1}{\varrho}}}\,
\frac{\overline{C}_{\gamma_+}}{\overline{C}_{\sigma_+}}\, O_4^-(1),
\end{align}
where $A_\pm:=A_\pm(t)$ and $\tilde{\lambda}:=\tilde{\lambda}(t)$ are given by
\begin{align}\label{dswn11}
A_\pm=A_\pm(t):=|u^\pm(t)-c|,\qquad
\tilde{\lambda}=\tilde{\lambda}(t)
:=\big(A_+(t)\big)^{\frac{1+\sigma_+}{\gamma_+}}
\big(A_-(t)\big)^{-\frac{1}{\varrho}}.
\end{align}

\smallskip
{\bf (a).} Similar to $\eqref{dsw12e}$,
it follows from $\eqref{dswn12c}$ and $\eqref{dswn11}$ that
\begin{align}\label{dswn12e}
\begin{cases}
\displaystyle\frac{x(t)-t_p}{t-t_p}-f'(u^+(t))=\frac{l_p^+}{t-t_p}
=\frac{1+o(1)}{t-t_p}\Big(\frac{A_+}{O_+(1)}\Big)^{\frac{1+\sigma_+}{\gamma_+}},\\[4mm]
\displaystyle\frac{x(t)-t_p}{t-t_p}-f'(u^-(t))=\frac{l_p^-}{t-t_p}
=-\frac{1+o(1)}{t-t_p}\Big(\frac{A_-}{O_-(1)}\Big)^{\frac{1}{\varrho}}.
\end{cases}
\end{align}
Similar to $\eqref{dsw13}$, we have
\begin{align*}
U_+=-(1+o(1))\frac{1+\sigma_+}{1+\gamma_++\sigma_+}A_+,\qquad
U_-=-{\rm sgn}(C_{\gamma_-})\frac{1+o(1)}{1+\varrho}A_-,
\end{align*}
which, by plugging $\eqref{dsw13a}$--$\eqref{dsw13b}$ into $\eqref{dsw10d}$, implies
\begin{align}\label{dswn14}
&\frac{1+o(1)}{t-t_p}\Big(\frac{A_+}{O_+(1)}\Big)^{\frac{1+\sigma_+}{\gamma_+}}\nonumber\\[1mm]
&\,\,\,\,=\frac{-\frac{N+o(1)}{2+\alpha}\big((A_-)^{2+\alpha}-(A_+)^{2+\alpha}\big)
-{\rm sgn}(C_{\gamma_-})\frac{1+o(1)}{1+\varrho}\frac{N+o(1)}{1+\alpha}
A_-\big((A_-)^{1+\alpha}+(A_+)^{1+\alpha}\big)}
{-{\rm sgn}(C_{\gamma_-})\frac{1+o(1)}{1+\varrho}A_-
+(1+o(1))\frac{1+\sigma_+}{1+\gamma_++\sigma_+}A_+},\\[2mm]
&-\frac{1+o(1)}{t-t_p}\Big(\frac{A_-}{O_-(1)}\Big)^{\frac{1}{\varrho}}\nonumber\\[1mm]
&\,\,\,\,=\frac{-\frac{N+o(1)}{2+\alpha}\big((A_-)^{2+\alpha}-(A_+)^{2+\alpha}\big)
-\frac{1+\sigma_+}{1+\gamma_++\sigma_+}\frac{N+o(1)}{1+\alpha}
A_+\big((A_-)^{1+\alpha}+(A_+)^{1+\alpha}\big)}
{-{\rm sgn}(C_{\gamma_-})\frac{1+o(1)}{1+\varrho}A_-
+(1+o(1))\frac{1+\sigma_+}{1+\gamma_++\sigma_+}A_+}.\label{dswn14aa}
\end{align}
Then, dividing $\eqref{dswn14}$ by $\eqref{dswn14aa}$, we obtain
\begin{align}\label{dswn14a}
&\ \frac{1+\varrho}{1+\gamma_++\sigma_+}
\frac{(O_-(1))^{\frac{1}{\varrho}}}{(O_+(1))^{\frac{1+\sigma_+}{\gamma_+}}}\,
\tilde{\lambda}\nonumber\\[1mm]
&\cong
\frac{-[(1+\varrho)+{\rm sgn}(C_{\gamma_-})(1+\gamma_+)](A_-)^{2+\alpha}
-{\rm sgn}(C_{\gamma_-})(1+\gamma_+)A_-(A_+)^{1+\alpha}+(1+\varrho)(A_+)^{2+\alpha}}
{(1+\gamma_++\sigma_+)(A_-)^{2+\alpha}+(1+\gamma_+)(1+\sigma_+)A_+(A_-)^{1+\alpha}
+\gamma_+\sigma_+(A_+)^{2+\alpha}}.
\end{align}

\smallskip
{\bf (b).} We now prove $A_-=o(1)A_+$ by contradiction.
Otherwise, there are two cases:
\begin{itemize}
\item [(i)] If $A_-=O(1)A_+$, it follows from $\eqref{dswn14aa}$ that,
as $t\rightarrow t_p{+}$,
\begin{align*}
(A_-)^{\frac{1}{\varrho}}=O(1)(A_+)^{1+\alpha}(t-t_p)=O(1)(A_-)^{1+\alpha}(t-t_p),
\end{align*}
which, by a simple calculation, leads to the following contradiction,
\begin{align*}
1=O(1)(t-t_p)^\varrho(A_-)^{\varrho(1+\alpha)-1}=o(1)
\qquad {\rm as}\ t\rightarrow t_p{+},
\end{align*}
where the last step is due to the fact that $\varrho(1+\alpha)-1\geq 0$,
inferred by $\gamma_+(1+\alpha)=1$ and $\varrho=\max\{\gamma_-,\gamma_+\}$.

\item [(ii)] If $A_+=o(1)A_-$, from $\eqref{dsw4c1}$, there exist two subcases:
For the case that $C_{\gamma_-}>0$, or $C_{\gamma_-}<0$ with $\gamma_-(1+\alpha)>1$,
we see that
$$
(1+\varrho)+{\rm sgn}(C_{\gamma_-})(1+\gamma_+)>0
$$
so that the left side of $\eqref{dswn14a}$ is positive
and the right side of $\eqref{dswn14a}$ is negative, which is a contradiction.
For the case that $C_{\gamma_-}<0$ with $\gamma_-(1+\alpha)=1$,
from $\eqref{dswn14a}$,
\begin{align*}
\frac{1+\varrho}{1+\gamma_++\sigma_+}
\frac{(O_-(1))^{\frac{1}{\varrho}}}{(O_+(1))^{\frac{1+\sigma_+}{\gamma_+}}}\,
\frac{(A_+)^{\frac{1+\sigma_+}{\gamma_+}}}{(A_-)^{\frac{1}{\varrho}}}
\cong \frac{(1+\gamma_+)(A_+)^{1+\alpha}}{(1+\gamma_++\sigma_+)(A_-)^{1+\alpha}},
\end{align*}
which, by a simple calculation, implies that $A_+=O(1)$.
This contradicts to the fact that $\lim_{t\rightarrow t_p{+}}u^+(t)=c$.
\end{itemize}
Therefore, $A_-=o(1)A_+$.
Taking $A_-=o(1)A_+$ into $\eqref{dswn14a}$ and using $\eqref{dswn12c}$ lead to
\begin{align*}
\tilde{\lambda}=(1+o(1))\frac{1+\gamma_++\sigma_+}{\gamma_+\sigma_+}
\frac{(O_+(1))^{\frac{1+\sigma_+}{\gamma_+}}}{(O_-(1))^{\frac{1}{\varrho}}}
=(1+o(1))\frac{1+\gamma_++\sigma_+}{\gamma_+\sigma_+}
\frac{|C_{\gamma_+}|^{\frac{1+\sigma_+}{\gamma_+}}}{|C_{\gamma_-}|^{\frac{1}{\varrho}}}\,
\frac{\overline{C}_{\gamma_+}}{\overline{C}_{\sigma_+}}\,
O_4^-(1),
\end{align*}
which, by letting $t\rightarrow t_p{+}$, implies $\eqref{dswn11a}$, as desired.

\smallskip
\noindent
{\bf 3.} We now prove $\eqref{dsw7}$ with $O^+_3(1)$ in $\eqref{dsw9d}$.

\smallskip
{\bf (a).} By taking $A_-=o(1) A_+$ into $\eqref{dswn14}$, it is direct to check that,
as $t\rightarrow t_p{+}$,
\begin{align*}
A_+=(1+o(1))|C_{\gamma_+}|
\bigg(\frac{1+\gamma_++\sigma_+}{(1+\gamma_+)(1+\sigma_+)}\bigg)^{\frac{\gamma_+}{\sigma_+}}
\bigg(\frac{\big(\overline{C}_{\gamma_+}\big)^2}{\overline{C}_{\sigma_+}}\bigg)^{\frac{\gamma_+}{\sigma_+}}
\,(t-t_p)^{\frac{\gamma_+}{\sigma_+}},
\end{align*}
which, by $\eqref{dswn11a}$--$\eqref{dswn11}$, implies
\begin{align*}
A_-&=(1+o(1))|C_{\gamma_-}|
\bigg(\frac{\gamma_+\sigma_+}{(1+\gamma_+)(1+\sigma_+)}\bigg)^{\varrho}
\bigg(\frac{1+\gamma_++\sigma_+}{(1+\gamma_+)(1+\sigma_+)}\bigg)^{\frac{\varrho}{\sigma_+}}
\nonumber\\[2mm]
&\quad \, \times
\bigg(\frac{\overline{C}_{\gamma_+}}{O_4^-(1)}\bigg)^{\varrho}
\bigg(\frac{\big(\overline{C}_{\gamma_+}\big)^2}{\overline{C}_{\sigma_+}}\bigg)^{\frac{\varrho}{\sigma_+}}
\,(t-t_p)^{\frac{\varrho(1+\sigma_+)}{\sigma_+}}.
\end{align*}
From $\eqref{dswn12e}$,
as $t\rightarrow t_p{+}$,
\begin{align*}
|l_p^-|\cong\bigg(\frac{A_-}{O_-(1)}\bigg)^{\frac{1}{\varrho}}
\cong\frac{\gamma_+\sigma_+}{1+\gamma_++\sigma_+}
\bigg(\frac{A_+}{O_+(1)}\bigg)^{\frac{1+\sigma_+}{\gamma_+}}
\cong\frac{\gamma_+\sigma_+}{1+\gamma_++\sigma_+}|l_p^+|.
\end{align*}
Then, by using $\eqref{dswn12c}$, it is direct to check that,
as $t\rightarrow t_p{+}$,
\begin{align}
&l_p^+=(1+o(1))\overline{C}_{\gamma_+}
\bigg(\frac{1+\gamma_++\sigma_+}{(1+\gamma_+)(1+\sigma_+)}\bigg)^{\frac{1+\sigma_+}{\sigma_+}}
\bigg(\frac{(\overline{C}_{\gamma_+})^2}{\overline{C}_{\sigma_+}}\bigg)^{\frac{1}{\sigma_+}}
\,(t-t_p)^{\frac{1+\sigma_+}{\sigma_+}},\label{dswn3c}\\[2mm]
&l_p^-=(1+o(1))\overline{C}_{\gamma_+}
\frac{\gamma_+\sigma_+}{(1+\gamma_+)(1+\sigma_+)}
\bigg(\frac{1+\gamma_++\sigma_+}{(1+\gamma_+)(1+\sigma_+)}\bigg)^{\frac{1}{\sigma_+}}
\bigg(\frac{(\overline{C}_{\gamma_+})^2}{\overline{C}_{\sigma_+}}\bigg)^{\frac{1}{\sigma_+}}
\,(t-t_p)^{\frac{1+\sigma_+}{\sigma_+}}, \label{dswn3c1}
\end{align}
which, by $\eqref{dswn12a}$--$\eqref{dswn12a2}$, implies that,
as $t\rightarrow t_p{+}$,
\begin{align}
l^+&=(1+o(1))
\bigg(\frac{1+\gamma_++\sigma_+}{(1+\gamma_+)(1+\sigma_+)}\bigg)^{\frac{1}{\sigma_+}}
\bigg(\frac{(\overline{C}_{\gamma_+})^2}{\overline{C}_{\sigma_+}}\bigg)^{\frac{1}{\sigma_+}}
\,(t-t_p)^{\frac{1}{\sigma_+}},\label{dswn3d}\\[2mm]
l^-&=-(1+o(1))
\bigg(\frac{\gamma_+\sigma_+}{(1+\gamma_+)(1+\sigma_+)}\bigg)^{\frac{1}{\varrho_0}}
\bigg(\frac{1+\gamma_++\sigma_+}{(1+\gamma_+)(1+\sigma_+)}\bigg)^{\frac{1}{\varrho_0\sigma_+}}\nonumber\\[2mm]
&\quad \,\,\,\,\,\,\times \bigg(\frac{\overline{C}_{\gamma_+}}{O_4^-(1)}\bigg)^{\frac{1}{\varrho_0}}
\bigg(\frac{(\overline{C}_{\gamma_+})^2}{\overline{C}_{\sigma_+}}\bigg)^{\frac{1}{\varrho_0\sigma_+}}
\,(t-t_p)^{\frac{1+\sigma_+}{\varrho_0\sigma_+}}.\label{dswn3d1}
\end{align}

{\bf (b).} By $\eqref{dswn3c}$ and $\eqref{dswn3d}$, it follows from $\eqref{dsw19}$ that,
as $t\rightarrow t_p{+}$,
\begin{align*}
x(t)-x_p-(t-t_p)f'(c)&=(1+o(1))l_p^+-\overline{C}_{\gamma_+}(t-t_p)l^+\nonumber\\[1mm]
&=(1+o(1))\bigg(\frac{1+\gamma_++\sigma_+}{(1+\gamma_+)(1+\sigma_+)}-1\bigg)
\overline{C}_{\gamma_+}(t-t_p)l^+\nonumber\\[1mm]
&=-(1+o(1))O_3^+(1)\,(t-t_p)^{\frac{1+\sigma_+}{\sigma_+}}.
\end{align*}
Equivalently, this can also be seen as follows:
Since $\varrho_0=\min\{\gamma_-(1+\alpha),1\}\leq 1$,
it follows from $\eqref{dswn3c1}$ and $\eqref{dswn3d1}$ that
\begin{align*}
l^-=O(1)l_p^- \quad {\rm if}\ \gamma_-(1+\alpha)\geq 1, 
\qquad\,\, l^-=o(1)l_p^- \quad {\rm if}\ \gamma_-(1+\alpha)< 1,
\end{align*}
which, by $\eqref{dsw19}$, implies that, as $t\rightarrow t_p{+}$,
\begin{align*}
x(t)-x_p-(t-t_p)f'(c)
&=l^-_p+(t-t_p)(f'(u^-(t))-f'(c))\nonumber\\[1mm]
&=l_p^--\overline{C}_{\gamma_+}(t-t_p)(l^--(1+o(1))l_p^-)\nonumber\\
&=(1+o(1))l_p^-
=-(1+o(1))O_3^+(1)\,(t-t_p)^{\frac{1+\sigma_+}{\sigma_+}}.
\end{align*}

\smallskip
\noindent
{\bf 4.} For $(x,t)\in \Omega_+$ near $(x_p,t_p)$,
since the proof of $\eqref{dsw6a}$ in {\bf Case I} is established for $\Omega_+$ and $\Omega_-$ separately,
then it is direct to see that both $\eqref{dsw6a}$
with $\overline{C}_{\sigma_+}$
and $\delta>1-(Q^+_3)^{-1}$
hold.
For $(x,t)\in \Omega_-$,
by the same arguments as those for $\eqref{dswn12a}$--$\eqref{dswn12c}$,
it follows from $\eqref{dswr2a}$ that,
as $(x,t)\rightarrow (x_p,t_p)$
in $\Omega_-$,
\begin{align*}
x-x_p-(t-t_p)f'(c)&=\tilde{l}^-+t\big(f'(\varphi(x_0+\tilde{l}^-))-f'(c)\big)\nonumber\\[1mm]
&=\tilde{l}^--{\rm sgn} (C_{\gamma_-})(1+o(1))t_p\overline{C}_{\gamma_-}|\tilde{l}^-|^{\gamma_-(1+\alpha)}
=-(1+o(1))O^-_4(1)|\tilde{l}^-|^{\varrho_0},
\end{align*}
which, by $\varrho=\max\{\gamma_-,\gamma_+\}=\frac{\gamma_-}{\varrho_0}$, implies
\begin{align}\label{dswn5a}
u(x,t)-c&=u(x_p+l_p^-,t_p)-c=\varphi(x_0+\tilde{l}^-)-c
=-(C_{\gamma_-}+o(1))|\tilde{l}^-|^{\gamma_-}\nonumber\\
&=-(1+o(1))C_{\gamma_-}\big((O_4^-(1))^{-1}
|x-x_p-(t-t_p)f'(c)|\big)^{\frac{\gamma_-}{\varrho_0}}\nonumber\\[1mm]
&=-(1+o(1))C_{\gamma_-}(O_4^-(1))^{-\varrho}
|x-x_p-(t-t_p)f'(c)|^{\varrho}.
\end{align}

\smallskip
\noindent
{\bf 5.} For the case that $c=\underline{D}_+\Phi(x_0)>\overline{D}_-\Phi(x_0)=a$,
the behavior of solution $u=u(x,t)$ near $(x_p,t_p)$ is quite similar
to the case that $C_{\gamma_-}>0$ and $\gamma_-(1+\alpha)=1$.
To prove $\eqref{dsw7}$--$\eqref{dsw7a}$ for the case that $c>a$,
it suffices to establish $\eqref{dswn12c}$ for $u^-(t)$ and $\eqref{dswn5a}$.

In fact, by the continuity of solution $u=u(x,t)$ at point $(x_p,t_p)$,
the forward generalized characteristic of the point lying on characteristic $L_a(x_0)$
for the case that $a\in \mathcal{C}(x_0)$,
or the shock curve emitting from $x_0$ for the case that $a\notin \mathcal{C}(x_0)$,
must lie on the left of point $(x_p,t_p)$.
This means that, for $(x,t)\in \Omega_-$ near $(x_p,t_p)$,
$x_0=x-tf'(u(x,t))$ so that
\begin{align*}
f'(u(x,t))-f'(c)&=\frac{x-x_0}{t}-f'(c)
=\frac{1}{t}\big(x-x_p-(t-t_p)f'(c)\big)\nonumber\\
&=(1+o(1))\overline{C}_{\gamma_+}\big(x-x_p-(t-t_p)f'(c)\big)\nonumber\\[1mm]
&=-(1+o(1))\overline{C}_{\gamma_+}|x-x_p-(t-t_p)f'(c)|.
\end{align*}
Then, instead of $\eqref{dswn12c}$,
it follows from $\eqref{aa2}$ and $\gamma_+(1+\alpha)=1$ that
\begin{align*}
u^-(t)-c&=u(x_p+l_p^-,t_p)-c
\nonumber\\
&
=-\Big(\frac{1+\alpha}{N+o(1)}|f'(u(x_p+l_p^-,t_p))-f'(c)|\Big)^{\frac{1}{1+\alpha}}\nonumber\\
&=-\Big(\frac{1+\alpha}{N+o(1)}\overline{C}_{\gamma_+}|l_p^-|\Big)^{\frac{1}{1+\alpha}}
=-(1+o(1))|C_{\gamma_+}|\,|l_p^-|^{\gamma_+},
\end{align*}
and, instead of $\eqref{dswn5a}$, we have
\begin{align*}
u(x,t)-c
&=-\Big(\frac{1+\alpha}{N+o(1)}|f'(u(x,t))-f'(c)|\Big)^{\frac{1}{1+\alpha}}\nonumber\\
&=-(1+o(1))|C_{\gamma_+}|\,|x-x_p-(t-t_p)f'(c)|^{\frac{1}{1+\alpha}}\nonumber\\[1mm]
&=(1+o(1))C_{\gamma_+}|x-x_p-(t-t_p)f'(c)|^{\gamma_+}.
\end{align*}

\smallskip
\noindent
{\bf Case III.} Suppose that $\eqref{dsw5a}$--$\eqref{dsw5b}$ hold,
and in addition $\eqref{dsw5c}$--$\eqref{dsw5c1}$ hold
for the case that $\underline{D}_+\Phi(x_0)=\overline{D}_-\Phi(x_0)$.
By the same arguments as {\bf Case II},
it can be checked that $\eqref{dsw8}$ with $O^-_3(1)$ in $\eqref{dsw9d}$
and $\eqref{dsw8a}$ with $O^+_4(1)$ in $\eqref{dsw9c}$ hold.

\smallskip
Up to now, we have completed the proof of Theorem $\ref{the:dsw1}$.
\end{proof}

\begin{Rem}
For {\rm Theorems} $\ref{the:c4.3}$ and $\ref{the:dsw1}$, it is worth to point out that,
{\it the continuous shock generation points are not necessarily to be
the first blowup point of entropy solutions,
which even are not assumed to be isolated}.
Furthermore, we present the following three observations{\rm:}
\begin{itemize}
\item[{\rm (i)}] For the case that $0<t_p^+<t_p^-\in(0,\infty]$
{\rm (}{\it resp.,} $0<t_p^-<t_p^+\in(0,\infty]${\rm)},
the developed shock curve is caused by the compression of local characteristics
emitting from the right {\rm (}{\it resp.,} left{\rm)} side of $x_0$,
so is the convexity of such a shock curve.

\vspace{1pt}
\item[{\rm (ii)}] For the case that $t_p^+=t_p^-\in(0,\infty)$,
the developed shock curve is caused by the compression of local characteristics
emitting from both sides of $x_0$. Moreover, if $\overline{C}_{\sigma_+}\neq\overline{C}_{\sigma_-}$,
the strengths of the compression from the two sides of $x_0$ are different
so that the convexity of such a shock curve is determined by the magnitude of
$\overline{C}_{\sigma_+}$ and $\overline{C}_{\sigma_-}${\rm ;}
and if $\overline{C}_{\sigma_+}=\overline{C}_{\sigma_-}$,
such strengths are of the same magnitude
so that the convexity of the developed shock curve is determined by the sign of
the coefficient $\overline{C}_{\rho}$ of
the higher term $|l|^{1+\sigma+\rho}$ as in $\eqref{dsw3c}$.

\vspace{1pt}
\item[{\rm (iii)}] In addition,
$O^\pm_3(1)$ with $Q^{\pm}_3$ in $\eqref{dsw9d}$ can be viewed as the limits of
$O_1(1)$ with $Q_{\pm}$ in $\eqref{dsw9}$--$\eqref{dsw9a}$ as $(\lambda_1)^{\pm 1} \rightarrow 0{+}$, respectively.
Combining this with the continuity of $O_1(1)$ and $Q_{\pm}$ with respect to $\lambda_1$
and the continuity of $\lambda_1$ with respect to $\lambda_0$,
may indicate some exceptional stability of the developed shock curves and entropy solutions
under the perturbation of the initial data function $\varphi(x)$ near $x_0$.
\end{itemize}
\end{Rem}

\section{Fine Structures of Entropy Solutions}
In this section, we focus on the fine structures of entropy solutions.
In \S 5.1, we provide and prove
the structures of entropy solutions inside the backward characteristic triangles in Theorem $\ref{pro:c3.3}$;
in \S 5.2--5.3, we prove 
the directional limits of entropy solutions at the discontinuous points in Theorem $\ref{pro:c3.4}$
and the general structures of shocks in Theorem $\ref{pro:c3.6}$;
and in \S 5.4, we establish 
the fine structures of entropy solutions on the shock sets in Theorem $\ref{the:c4.2}$.

\subsection{New structures of entropy solutions inside the backward characteristic triangles}
By the definition of ${\mathcal U}(x,t)$ in $\eqref{c2.3}$,
for any $(x_0,t_0)\in \mathbb{R} \times \mathbb{R}^+$,
${\mathcal U}(x_0,t_0)$ is a bounded closed set such that
\begin{equation*}
{\mathcal U}(x_0,t_0)\subset [u(x_0{+},t_0),u(x_0{-},t_0)]=:I(x_0,t_0)\subset \mathbb{R}.
\end{equation*}
Let $\mathring{{\mathcal U}}(x_0,t_0)$ and $\partial {\mathcal U}(x_0,t_0)$
be the interior and the boundary of ${\mathcal U}(x_0,t_0)$ respectively,
and let ${\mathcal U}^c(x_0,t_0):=I(x_0,t_0)-{\mathcal U}(x_0,t_0)$
be the complement of ${\mathcal U}(x_0,t_0)$ under $I(x_0,t_0)$.
Then
\begin{equation}\label{c3.02}
I(x_0,t_0)=\partial {\mathcal U}(x_0,t_0)\cup \mathring{{\mathcal U}}(x_0,t_0) \cup {\mathcal U}^c(x_0,t_0).
\end{equation}
Since ${\mathcal U}(x_0,t_0)$ is a closed set,
then $\mathring{{\mathcal U}}(x_0,t_0)$ and ${\mathcal U}^c(x_0,t_0)$ are both open sets
so that they can be expressed by disjoint unions of at most countable open intervals respectively:
\begin{equation}\label{c3.1}
\mathring{{\mathcal U}}(x_0,t_0)=\cup_m I_m:=\cup_m (a_m,b_m),
\qquad {\mathcal U}^c(x_0,t_0)=\cup_n J_n:=\cup_n (c_n,d_n)
\end{equation}
with $a_m, b_m, c_n, d_n \in \partial {\mathcal U}(x_0,t_0)$.

\smallskip
From $\eqref{c3.02}$--$\eqref{c3.1}$, $I:=I(x_0,t_0)$ can be expressed by
the following countable disjoint union:
\begin{align}\label{c3.2}
I=I(x_0,t_0)
&=\partial{\mathcal U}(x_0,t_0)\cup\big(\cup_m I_m\big)\cup\big(\cup_n J_n\big)\nonumber\\[1mm]
&=\partial{\mathcal U}(x_0,t_0)\cup\big(\cup_m (a_m,b_m)\big)\cup\big(\cup_n (c_n,d_n)\big).
\end{align}
For any point set $F\subset \mathbb{R}$, let
\begin{equation}\label{c3.3}
\Delta_F (x_0,t_0):=\big\{(x,t)\in \mathbb{R} \times \mathbb{R}^+ \,:\,
(x,t) \in l_u(x_0,t_0) \quad{\rm for}\ u\in F\big\},
\end{equation}
where $l_u(x_0,t_0)$ is defined by $\eqref{c2.43}$, $i.e.$,
$$
l_u(x_0,t_0)=\big\{(x,t)\,:\, x=x_0+(t-t_0)f'(u)\quad {\rm for}\ t\in (0,t_0)\big\}.
$$
The triangle region $\Delta_I(x_0,t_0)$,
defined by applying $\eqref{c3.3}$ to $I=I(x_0,t_0)$,
is called the backward characteristic triangle from point $(x_0,t_0)$,
which is a triangle when ${\mathcal U}(x_0,t_0)$ contains more than one point,
and is a single characteristic line when ${\mathcal U}(x_0,t_0)$ contains only one point.

\smallskip
Denote $u^{\pm}_0:=u(x_0{\pm},t_0)$.
Then the backward characteristic triangle $\Delta_I(x_0,t_0)$ and $\mathbb{R}\times (0,t_0)$
can be expressed by the following countable disjoint unions:
\begin{equation}\label{c3.4}
\begin{cases}
\displaystyle\Delta_I(x_0,t_0)\,
=\Delta_{\partial{\mathcal U}}(x_0,t_0)\cup\big(\cup_m \Delta_{I_m}(x_0,t_0)\big)
\cup\big(\cup_n \Delta_{J_n}(x_0,t_0)\big),\\[2mm]
\displaystyle\mathbb{R}\times (0,t_0)
=\Delta_I(x_0,t_0)\cup\Delta_{({-}\infty,u^+_0)}(x_0,t_0)\cup\Delta_{(u^-_0,\infty)}(x_0,t_0).
\end{cases}
\end{equation}
See Fig. $\ref{figUT}$ for the details.

\begin{figure}[H]
	\begin{center}
		{\includegraphics[width=0.65\columnwidth]{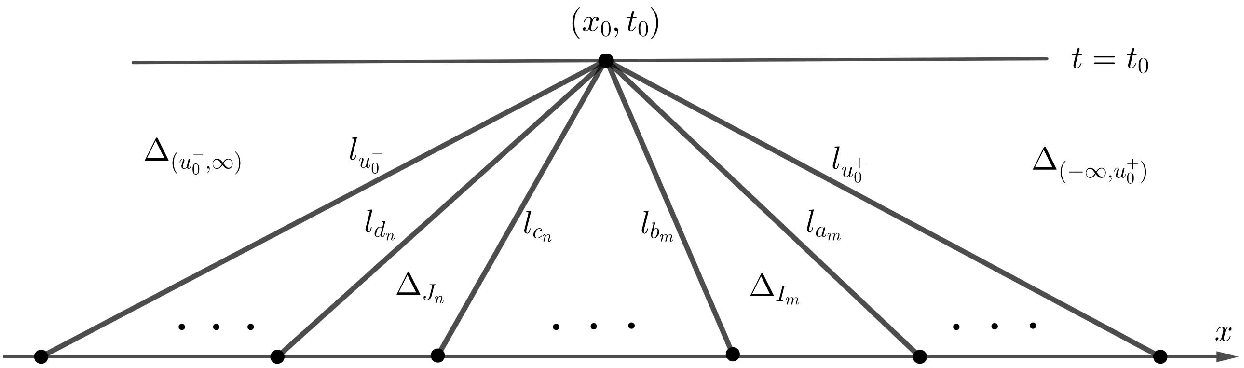}}
 \caption{The partitions of $\Delta_I(x_0,t_0)$ and $\mathbb{R}\times (0,t_0)$ as in $\eqref{c3.4}$.
		}\label{figUT}
	\end{center}
\end{figure}

We first have the following properties of the backward characteristic triangles.

\begin{Pro}\label{pro:c3.1}
 For any $(x_1,t_1), (x_2,t_2)\in \mathbb{R}\times \mathbb{R}^+$,
 $\Delta_I(x_1,t_1)$ and $\Delta_I(x_2,t_2)$ must possess one of the following two relations{\rm :}
\begin{itemize}
\item [(i)] $\Delta_I(x_1,t_1)\cap \Delta_I(x_2,t_2)=\varnothing${\rm ;}

\smallskip
\item [(ii)] $\Delta_I(x_1,t_1)\supset \Delta_I(x_2,t_2)\,$
or $\,\Delta_I(x_1,t_1)\subset \Delta_I(x_2,t_2)$.
\end{itemize}
Furthermore, if $\Delta_I(x_1,t_1)\subset \Delta_I(x_2,t_2)$, then
one of the following two cases holds{\rm :}
\begin{itemize}
\item [(i)] $\Delta_I(x_1,t_1)\subset l_u(x_2,t_2)$ for some $u\in \mathcal{U}(x_2,t_2)${\rm ;}

\smallskip
\item [(ii)] $\Delta_I(x_1,t_1)\subset \Delta_{J_n}(x_2,t_2)$ for some $J_n \subset \mathcal{U}^c(x_2,t_2)$.
\end{itemize}
\end{Pro}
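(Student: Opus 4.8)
\textbf{Proof proposal for Proposition \ref{pro:c3.1}.}

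The plan is to reduce every assertion to the single structural fact already established in Lemma \ref{lem:c2.5}(iii): a line $l_c(x_0,t_0)$ is shock-free if and only if $c\in\mathcal{U}(x_0,t_0)$, combined with the monotonicity of the trace map $x\mapsto y(x,t,u^\pm(x,t))$ from Lemma \ref{lem:c2.4} and the backward-characteristic-triangle partition \eqref{c3.2}--\eqref{c3.4}. The key observation is that the spatial foot-point function $y(x,t,u):=x-tf'(u)$ is what organizes everything: a point $(x,t)$ lies on a shock-free characteristic through $(x_0,t_0)$ precisely when $y(x,t,u)=y(x_0,t_0,c)$ for some $c\in\mathcal{U}(x_0,t_0)$, and $\Delta_I(x_i,t_i)$ is exactly the set of $(x,t)$ with $t<t_i$ whose backward characteristic foot-point lies in the closed interval $[y(x_i,t_i,u^+_i),\,y(x_i,t_i,u^-_i)]$ on the $x$-axis (or, more precisely, on a line $t=\tau$). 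Indeed, by the semigroup property in Corollary \ref{cor:c2.2}, for any $\tau<t_i$ the backward characteristics through $(x_i,t_i)$ sweep out an interval on $t=\tau$, and a point $(x,t)$ of $\Delta_I(x_i,t_i)$ is characterized by the condition that its own backward characteristic foot-points (left and right traces) both lie within the foot-print of $(x_i,t_i)$.

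First I would set up notation: write $Y_i:=[y^-_i,y^+_i]$ with $y^\pm_i:=y(x_i,t_i,u(x_i\pm,t_i))$ for $i=1,2$ (these are well-defined by Theorem \ref{the:mt}), and show that $(x,t)\in\Delta_I(x_i,t_i)$ with $t<t_i$ iff $y(x,t,u)\in Y_i$ for every $u\in\mathcal{U}(x,t)$; this is immediate from \eqref{c3.3}--\eqref{c3.4} and \eqref{uiff}. The trichotomy then follows from the elementary fact that two closed intervals $Y_1,Y_2$ on the line, together with the backward-cone geometry, produce only the stated configurations. Concretely, assume WLOG $t_1\le t_2$. If $(x_1,t_1)\notin\Delta_I(x_2,t_2)$, I claim $\Delta_I(x_1,t_1)\cap\Delta_I(x_2,t_2)=\varnothing$: any common point $(x,t)$ would, by the monotonicity of foot-points (Lemma \ref{lem:c2.4}) and the nesting of backward triangles along a shock-free characteristic, force $(x_1,t_1)$ and $(x_2,t_2)$ to share a shock-free characteristic or one to lie over the other, a contradiction. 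If instead $(x_1,t_1)\in\Delta_I(x_2,t_2)$, then by \eqref{c3.4} either $(x_1,t_1)\in l_u(x_2,t_2)$ for some $u\in\mathcal{U}(x_2,t_2)$ — and then all backward characteristics through $(x_1,t_1)$ must coincide with $l_u(x_2,t_2)$ by Lemma \ref{lem:c2.5}(iii), giving $\Delta_I(x_1,t_1)\subset l_u(x_2,t_2)$ and the first sub-case — or $(x_1,t_1)$ lies strictly inside some $\Delta_{I_m}(x_2,t_2)$ or $\Delta_{J_n}(x_2,t_2)$; I would rule out $\Delta_{I_m}$ (an open gap of $\mathcal{U}(x_2,t_2)$-interior carries rarefaction structure, so $(x_1,t_1)$ sits on a genuine characteristic with $u\in\mathcal{U}(x_2,t_2)$, folding it into the first case) and keep $\Delta_{J_n}$, where the two bounding characteristics $l_{c_n},l_{d_n}$ are shock-free and confine $\Delta_I(x_1,t_1)\subset\Delta_{J_n}(x_2,t_2)$, yielding the second sub-case and also the inclusion $\Delta_I(x_1,t_1)\subset\Delta_I(x_2,t_2)$.

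The main obstacle I anticipate is the careful bookkeeping at the boundary $\partial\mathcal{U}(x_2,t_2)$: when $(x_1,t_1)$ lies on a characteristic $l_c(x_2,t_2)$ with $c\in\partial\mathcal{U}(x_2,t_2)$ that also bounds some gap $J_n$, one must argue that $\Delta_I(x_1,t_1)$ is still contained in $l_c(x_2,t_2)$ (hence in $\overline{\Delta_{J_n}}$ but genuinely in the first sub-case), using that $c\in\mathcal{U}(x_2,t_2)$ forces $l_c$ shock-free all the way down and that $\mathcal{U}(x_1,t_1)=\{c\}$ by the uniqueness in \eqref{uiff}. A second delicate point is the disjointness claim when $t_1<t_2$ and the triangles "just miss": here I rely on the strict monotonicity of $f'$ together with Lemma \ref{lem:c2.4}'s nondecreasing foot-point function to exclude any overlap; the $o(1)$-type local estimates are not needed, only the global monotonicity. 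Everything else is a finite case analysis on interval configurations, which I would present compactly rather than exhaustively.
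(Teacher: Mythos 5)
Your proposal is correct and follows essentially the same route as the paper's proof: the trichotomy rests on the non-crossing of shock-free characteristics coming from $\eqref{uiff}$ together with the monotone foot-point map of Lemma $\ref{lem:c2.4}$, and the refinement comes from locating $(x_1,t_1)$ within the decomposition $\eqref{c3.4}$, with the $\Delta_{\partial\mathcal{U}}$ and $\Delta_{I_m}$ cases collapsing into sub-case (i) (the point sits on a shock-free characteristic $l_u(x_2,t_2)$ with $u\in\mathcal{U}(x_2,t_2)$, forcing $u(x_1{\pm},t_1)=u$) and the $\Delta_{J_n}$ case yielding sub-case (ii) via the foot-point inequality against the bounding characteristics $l_{c_n},l_{d_n}$. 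Your foot-point--interval characterization is a harmless repackaging whose ``if'' direction is slightly overclaimed at the triangle boundary but is never used, and the phrase ``rarefaction structure'' for $\Delta_{I_m}$ should read ``centered compression wave''; neither point affects the validity of the argument.
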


\begin{proof}
From $\eqref{uiff}$, for any $(x_0,t_0)\in \mathbb{R} \times \mathbb{R}^+$ and $c\in \mathcal{U}(x_0,t_0)$,
the entropy solution $u(x,t)$ is continuous on $l_{c}(x_0,t_0)$ so that,
for any $u_i\in \mathcal{U}(x_i,t_i)$,
$l_{u_1}(x_1,t_1)$ and $l_{u_2}(x_2,t_2)$ are disjoint, or one containing another.
Then it is direct to check that,
if $\Delta_I(x_1,t_1)\cap \Delta_I(x_2,t_2)\neq\varnothing$,
then $\Delta_I(x_1,t_1)\supset \Delta_I(x_2,t_2)$ or $\Delta_I(x_1,t_1)\subset \Delta_I(x_2,t_2)$.

Furthermore, if $\Delta_I(x_1,t_1)\subset \Delta_I(x_2,t_2)$,
then $(x_1,t_1)\in \Delta_I(x_2,t_2)$.
From $\eqref{c3.4}$,
if $(x_1,t_1)\in \Delta_{\partial{\mathcal U}}(x_2,t_2)$ or $(x_1,t_1)\in \Delta_{I_m}(x_2,t_2)$,
then $(x_1,t_1)\in l_{u}(x_2,t_2)$ for some $u\in \mathcal{U} (x_2,t_2)$ so that $u(x_1{\pm},t_1)=u$,
which implies that $\Delta_I(x_1,t_1) \subset l_{u}(x_2,t_2)$.
If $(x_1,t_1)\in \Delta_{J_n}(x_2,t_2)$ for some $J_n=(c_n,d_n)\subset {\mathcal U}^c(x_2,t_2)$,
it follows from $\eqref{uiff}$ and $c_n, d_n\in \partial {\mathcal U}(x_2,t_2)$ that
$$
x_2-t_2f'(d_n)\leq x_1-t_1f'(u(x_1{-},t_1))\leq x_1-t_1f'(u(x_1{+},t_1))\leq x_2-t_2f'(c_n),
$$
which implies that $\Delta_I(x_1,t_1) \subset \Delta_{J_n}(x_2,t_2)$.
\end{proof}

\begin{Lem}[Forward generalized characteristics]\label{pro:c3.2}
For any $(x_0,t_0)\in \mathbb{R}\times \mathbb{R}^+$,
there exists a unique forward generalized characteristic{\rm :}
\begin{equation}\label{c3.5a}
X(x_0,t_0)=\big\{x=x(t)\,:\, x(t_0)=x_0,\,\, t_0\le t<\infty\big\}
\end{equation}
such that
\begin{equation}\label{c3.5}
\Delta_I(x_0,t_0)\subset \Delta_I(x(t_1),t_1)\subset \Delta_I(x(t_2),t_2)
\qquad{\rm for \ any}\ t_2>t_1>t_0;
\end{equation}
see also {\rm Fig.} $\ref{figForwardcurve}$.
Furthermore, the following properties hold{\rm :}
\begin{itemize}
\item [(i)] For any fixed $t_1$ with $t_1>t_0$,
$X(x(t_1),t_1)=X(x_0,t_0)$ on $[t_1,\infty)$.

\vspace{1pt}
\item [(ii)] If $u(x_0{+},t_0)<u(x_0{-},t_0)$, $X(x_0,t_0)$ is a discontinuity of the entropy solution $u(x,t)$.
For any $t_1> t_0$, there exists $J_n \subset\mathcal{U}^c(x_1,t_1)$
such that $(x(t),t)\in \Delta_{J_n}(x(t_1),t_1)$ on $[t_0,t_1)$.
\end{itemize}
\noindent
In the above, $\Delta_{I}(x,t)$ with $I$ and $\Delta_{J_n}(x,t)$ with $J_n$ are given by $\eqref{c3.02}$--$\eqref{c3.3}$.
\end{Lem}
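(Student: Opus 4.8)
The plan is to build the forward generalized characteristic $X(x_0,t_0)$ by a limiting/selection procedure from the backward characteristic triangles, and to verify the nesting property $\eqref{c3.5}$ as the core structural fact, with the continuity and uniqueness coming out of the monotonicity lemmas already established. First I would fix $(x_0,t_0)$ and, for each $t>t_0$, consider the set of points $x$ on the line $t$ whose backward characteristic triangle $\Delta_I(x,t)$ contains $\Delta_I(x_0,t_0)$; by Proposition $\ref{pro:c3.1}$, any two triangles are either disjoint or nested, so the collection of such $x$ is an interval, and I would define $x(t)$ as (say) the infimum of this interval. One must check the interval is nonempty: this follows from Lemma $\ref{lem:c2.5}$(iii), namely that a shock-free characteristic emitting from a point of $\mathcal{U}(x_0,t_0)$ extends backward, together with the Oleinik-type monotonicity of $y(\cdot,t,u^\pm(\cdot,t))$ from Lemma $\ref{lem:c2.4}$, which forces the backward cone of $(x_0,t_0)$ to be captured inside some triangle at every later time.

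The key steps, in order, would be: (1) show $\Delta_I(x_0,t_0)\subset \Delta_I(x(t_1),t_1)$ for every $t_1>t_0$ by the selection above and Proposition $\ref{pro:c3.1}$; (2) show the monotone nesting $\Delta_I(x(t_1),t_1)\subset \Delta_I(x(t_2),t_2)$ for $t_2>t_1>t_0$ — here I would argue that, since $\Delta_I(x_0,t_0)$ sits inside both, Proposition $\ref{pro:c3.1}$ forces one of $\Delta_I(x(t_1),t_1),\Delta_I(x(t_2),t_2)$ to contain the other, and a time-comparison (the later triangle reaches down to time $t=0$ with a base at least as wide) rules out the reverse inclusion, possibly invoking the minimality in the definition of $x(t_1)$ to pin down that it is the smaller one; (3) deduce continuity of $x=x(t)$ from the boundedness $|u^\pm|\le\|\varphi\|_{L^\infty}$ in $\eqref{c2.7}$ plus the one-sided limits $\eqref{c2.40}$ and the monotonicity of $y^\pm$ in Lemma $\ref{lem:c2.4}$, exactly as in the continuity argument for the shock curve $x=x(t)$ in Step 1(b) of the proof of Theorem $\ref{the:c4.1}$; (4) prove uniqueness and the semigroup property (i) — any curve satisfying $\eqref{c3.5}$ must coincide with the one constructed because the nesting pins $x(t)$ to lie in the capturing interval, whose endpoints are forced to merge by the same $f'$-strict-monotonicity squeeze; (5) for (ii), when $u(x_0+,t_0)<u(x_0-,t_0)$, use Proposition $\ref{pro:c3.1}$'s dichotomy to see that $\Delta_I(x_0,t_0)$, which is a genuine (non-degenerate) triangle, cannot lie inside a single line $l_u(x(t_1),t_1)$, hence it must lie inside some $\Delta_{J_n}(x(t_1),t_1)$ with $J_n\subset\mathcal{U}^c(x(t_1),t_1)$; then $\eqref{uiff}$ gives that the entropy solution is discontinuous along $X(x_0,t_0)$ since along $J_n$ the backward rays are not shock-free.

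The main obstacle I anticipate is Step (2) — establishing the monotone nesting with the correct direction of inclusion. The subtlety is that Proposition $\ref{pro:c3.1}$ only gives that one triangle contains the other, not which; to fix the direction I need to exploit that every $\Delta_I(x(t),t)$ descends all the way to the initial line and that $\Delta_I(x(t_1),t_1)$ is, by its own construction, the \emph{minimal} later triangle containing $\Delta_I(x_0,t_0)$, so if $\Delta_I(x(t_2),t_2)\subsetneq \Delta_I(x(t_1),t_1)$ happened then $\Delta_I(x(t_2),t_2)$ would itself be a strictly smaller triangle at time $t_2$ still containing $\Delta_I(x_0,t_0)$ and lying below $t_1$, which contradicts minimality once one reconciles the time levels via the characteristic cone geometry. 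A secondary technical point is checking that the selection $x(t)$ is single-valued and that the "capturing interval'' genuinely shrinks to a point rather than a nontrivial interval — this is where the strict monotonicity of $f'$ together with $\eqref{c2.39}$ (limits of $u^\pm$ along sequences land in $\mathcal{U}$) is essential, and I would phrase it exactly parallel to how $a(t_1)=b(t_1)$ is forced in Step 1 of the proof of Theorem $\ref{the:c4.1}$. Once the nesting and single-valuedness are in hand, continuity, uniqueness, (i), and (ii) are comparatively routine consequences of the already-proved lemmas.
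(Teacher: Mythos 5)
Your proposal is correct and follows essentially the same route as the paper: define $x(t_1)$ by the capturing property of the backward triangles, show the capturing set is a single point by the strict monotonicity of $f'$ (exactly the $a(t_1)=b(t_1)$ squeeze you cite), verify $(x_0,t_0)\in\Delta_I(x(t_1),t_1)$ via the one-sided limits $\eqref{c2.40}$, and derive the nesting, continuity, uniqueness, and (ii) from Proposition $\ref{pro:c3.1}$ and Lemma $\ref{lem:c2.4}$. The obstacle you flag in Step (2) has a one-line resolution that avoids any minimality argument: since $\Delta_I(x(t_2),t_2)$ contains points at time levels in $(t_1,t_2)$ while $\Delta_I(x(t_1),t_1)$ lies entirely in $\{t<t_1\}$, the reverse inclusion is impossible, so Proposition $\ref{pro:c3.1}$ forces $\eqref{c3.5}$; the only other detail worth adding is that in (ii) the continuity of $x(t)$ together with the pairwise disjointness of the $\Delta_{J_n}(x(t_1),t_1)$ is what pins the whole segment $\{(x(t),t):t\in[t_0,t_1)\}$ inside a single $\Delta_{J_n}$.
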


\begin{figure}[H]
	\begin{center}
		{\includegraphics[width=0.65\columnwidth]{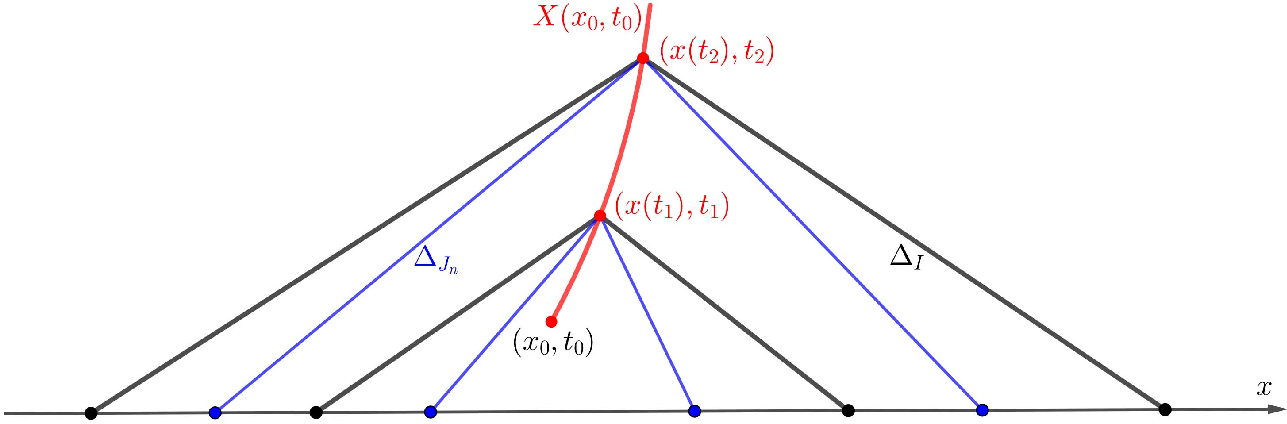}}
 \caption{The forward generalized characteristic $x(t)$ (red curve) with $\Delta_I(x(t),t)$ (black triangle) 
 and $\Delta_{J_n}(x(t),t)$ (blue triangle)
in Lemma $\ref{pro:c3.2}$.
 }
 \label{figForwardcurve}
	\end{center}
\end{figure}

\begin{proof}
We divide the proof into four steps.

\smallskip
\noindent
{\bf 1.} For any $t_1\geq t_0$, let
\begin{equation}\label{c3.7}
A_\pm(x_0,t_0,t_1):=\big\{x\in \mathbb{R}\,:\, \pm\big(x+(t_0-t_1)f'(u(x{+},t_1))-x_0\big)>0\big\}.
\end{equation}
Then
$$
a(t_1):=\sup A_-(x_0,t_0,t_1)\leq \inf A_+(x_0,t_0,t_1)=: b(t_1).
$$

We first claim: {\it $\, a(t_1)=b(t_1)$ {\rm for any} $t_1\geq t_0$.}
This can be seen as follows:
For $t_1=t_0$, it is direct to check that $a(t_0)=b(t_0)=x_0$.
For the case that $t_1>t_0$,
if $a(t_1)<b(t_1)$,
then, for any $x,x'\in (a(t_1),b(t_1))$ with $x<x'$,
\begin{equation}\label{c3.8}
x+(t_0-t_1)f'(u(x{+},t_1))=x_0=x'+(t_0-t_1)f'(u(x'{+},t_1)),
\end{equation}
so that
$(x_0,t_0)\in l_{u(x{+},t_1)}(x,t_1)\cap l_{u(x'{+},t_1)}(x',t_1),$
which, by $\eqref{uiff}$, implies
\begin{equation}\label{c3.9}
u(x{+},t_1)=u(x_0{\pm},t_0)=u(x'{+},t_1).
\end{equation}
On the other hand, since $f'(u)$ is strictly increasing, $\eqref{c3.8}$ implies
$$
f'(u(x'{+},t_1))-f'(u(x{+},t_1))=\frac{x'-x}{t_1-t_0}>0,
$$
so that $u(x'{+},t_1)>u(x{+},t_1)$,
which contradicts to $\eqref{c3.9}$.

\vspace{1pt}
Denote $X(x_0,t_0): x=x(t):=a(t)$ for $ t\geq t_0$.
Then $X(x_0,t_0)$ with $x(t_0)=x_0$ is a single-valued function on $[t_0,\infty)$.

\smallskip
\noindent
{\bf 2.} We now show that, for any $t_1>t_0$,
$(x(t_1),t_1)$ is the unique point on line $t=t_1$ such that
$\Delta_I(x_0,t_0)\subset \Delta_I(x(t_1),t_1)$.
If $x<x(t_1)$, then $x\in A_- (x_0,t_0,t_1)$,
which, by $\eqref{c3.7}$, implies that
$x+(t_0-t_1)f'(u(x{+},t_1))<x_0$.
Thus, $(x_0,t_0)$ lies on the right side of $\Delta_I(x,t_1)$,
which, by Proposition $\ref{pro:c3.1}$, implies that $\Delta_I(x_0,t_0)\cap \Delta_I(x,t_1)=\varnothing$;
and if $x>x(t_1)$, from $\eqref{c2.31}$ and $\eqref{c3.7}$,
for any $x'\in (x(t_1),x)$,
$$
x_0<x'+(t_0-t_1)f'(u(x'{+},t_1))\leq x+(t_0-t_1)f'(u(x{-},t_1)),
$$
so that $(x_0,t_0)$ lies on the left side of $\Delta_I(x,t_1)$,
which implies that $\Delta_I(x_0,t_0)\cap \Delta_I(x,t_1)=\varnothing$.

\vspace{2pt}
For $\Delta_I(x(t_1),t_1)$,
by Lemma $\ref{lem:c2.5}$,
$\lim_{x\rightarrow x(t_1){\pm}}u(x{+},t_1)=u(x(t_1){\pm},t_1)$ so that, by $\eqref{c3.7}$,
\begin{align*}
\begin{cases}
\displaystyle x(t_1)+(t_0-t_1)f'(u(x(t_1){+},t_1))
=\lim\limits_{x\rightarrow x(t_1){+}}\big(x+(t_0-t_1)f'(u(x{+},t_1))\big)\geq x_0, \\[1mm]
\displaystyle x(t_1)+(t_0-t_1)f'(u(x(t_1){-},t_1))
=\lim\limits_{x\rightarrow x(t_1){-}}\big(x+(t_0-t_1)f'(u(x{+},t_1))\big)\leq x_0,
\end{cases}
\end{align*}
which implies that $(x_0,t_0)\in \Delta_I(x(t_1),t_1)$
so that $\Delta_I(x_0,t_0)\subset \Delta_I(x(t_1),t_1)$.

\vspace{2pt}
Thus, for any $ t_2 >t_1 >t_0$, $\Delta_I(x_0,t_0)\subset \Delta_I(x(t_i),t_i)$
so that $\Delta_I(x(t_1),t_1)\cap \Delta_I(x(t_2),t_2)\neq \varnothing$,
which, by Proposition $\ref{pro:c3.1}$ and $(x(t_2),t_2)\notin \Delta_I(x(t_1),t_1)$, implies $\eqref{c3.5}$.

\medskip
\noindent
{\bf 3.} We now prove that $X(x_0,t_0)$
is continuous on $[t_0,\infty)$ with $\lim_{t\rightarrow t_{0}{+}}x(t)=x_0$.

\vspace{2pt}
Denote $y^\pm (x(t),t):=x(t)-tf'(u(x(t){\pm},t))$.
From $\eqref{c3.5}$, for any $t_2>t_1\geq t_0$,
\begin{equation}\label{c3.10}
y^-(x(t_2),t_2)\leq y^-(x(t_1),t_1)\leq y^+(x(t_1),t_1) \leq y^+(x(t_2),t_2),
\end{equation}
so that $y^-(x(t),t)$ is nonincreasing and $ y^+(x(t),t)$ is nondecreasing.
Letting $t_2 \rightarrow t_{1}{+}$ in $\eqref{c3.10}$,
\begin{equation*}
y^-(\overline{x(t_{1}+)},t_1)\leq y^-(x(t_1),t_1)
\leq y^+(x(t_1),t_1) \leq y^+(\underline{ x(t_{1}+)},t_1),
\end{equation*}
where $\overline{x(t_{1}+)}:=\mathop{\overline{\lim}}_{t_2 \rightarrow t_{1}{+}}x(t_2)$
and $\underline{ x(t_{1}+)}:=\mathop{\underline{\lim}}_{t_2 \rightarrow t_{1}{+}}x(t_2)$.

\smallskip
Since $y^{\pm}(\cdot,t_1)$ are nondecreasing,
then $\overline{x(t_{1}+)}\leq x(t_1)\leq\underline{ x(t_{1}+)}$,
which implies
\begin{equation*}
\lim\limits_{t\rightarrow t_{1}{+}}x(t)=x(t_1)\qquad {\rm for\ any\ } t_1\geq t_0.
\end{equation*}
Similarly, letting $t_1 \rightarrow t_{2}{-}$ in $\eqref{c3.10}$,
$$
y^- (x(t_2),t_2)\leq y^- (\underline{x(t_{2}-)},t_2)
\leq y^+ (\overline{x(t_{2}-)},t_2)\leq y^+ (x(t_{2}),t_2),
$$
so that, by Lemma $\ref{lem:c2.4}$,
$\overline{x(t_{2}-)}\leq x(t_2)\leq \underline{x(t_{2}-)}$,
which implies
\begin{equation*}
\lim_{t\rightarrow t_{2}{-}}x(t)=x(t_2)\qquad {\rm for\ any\ } t_2>t_0,
\end{equation*}
where $\overline{x(t_{2}-)}:=\mathop{\overline{\lim}}_{t_1 \rightarrow t_{2}{-}}x(t_1)$
and $\underline{ x(t_{2}-)}:=\mathop{\underline{\lim}}_{t_1 \rightarrow t_{2}{-}}x(t_1)$.

\smallskip
Therefore,
$X(x_0,t_0)$ is continuous on $[t_0,\infty)$ with $\lim_{t\rightarrow t_{0}{+}}x(t)=x_0$.

\smallskip
\noindent
{\bf 4.} For any fixed $t_1>t_0$,
the forward generalized characteristic of point $(x(t_1),t_1)$ is given by
$$
X(x(t_1),t_1):=\big\{x=x_1(t)\,:\, x_1(t_1)=x(t_1),\, t_1\le t<\infty\big\}.
$$
Then, by the definition of forward generalized characteristics in $\eqref{c3.5a}$--$\eqref{c3.5}$,
for any $t_2>t_1$,
$$
\varnothing\neq\Delta_I(x_0,t_0)\subset\Delta_I(x(t_1),t_1)
=\Delta_I(x_1(t_1),t_1)\subset \Delta_I(x(t_2),t_2)\cap \Delta_I(x_1(t_2),t_2),
$$
so that, by Proposition $\ref{pro:c3.1}$,
$$\Delta_I(x(t_2),t_2)\supset \Delta_I(x_1(t_2),t_2)\qquad
{\rm or}\ \Delta_I(x(t_2),t_2)\subset \Delta_I(x_1(t_2),t_2),$$
which implies that
$x(t_2)=x_1(t_2)$.
By the arbitrariness of $t_2$ with $t_2>t_1$ and $x_1(t_1)=x(t_1)$,
we conclude that $X(x(t_1),t_1)=X(x_0,t_0)$ on $[t_1,\infty)$.

Finally, if $u(x_0{+},t_0)<u(x_0{-},t_0)$, then $\Delta_I(x_0,t_0)$ is a triangle so that,
by $\eqref{c3.5}$, $\Delta_I(x(t),t)$ is also a triangle for any $t>t_0$,
which implies that $u(x(t){+},t)<u(x(t){-},t)$ for any $t>t_0$.
This means that $X(x_0,t_0)$ is a discontinuity of entropy solution $u(x,t)$.

Since $X(x_0,t_0)$
is the forward generalized characteristic of $(x_0,t_0)$,
then, for any $t\in (t_0,t_1)$,
$$
\Delta_I(x_0,t_0)\subset \Delta_I(x(t),t)\subset \Delta_I(x(t_1),t_1),
$$
which, by Proposition $\ref{pro:c3.1}$, implies that
$(x(t),t)\in \Delta_{J_n}(x(t_1),t_1)$ for some $J_n\subset \mathcal{U}^c(x_1,t_1)$.
By the continuity of $x=x(t)$ and
$\Delta_{J_{n_1}}(x(t_1),t_1)\cap\Delta_{J_{n_2}}(x(t_1),t_1)=\varnothing$ for any $n_1\not= n_2$,
the forward generalized characteristic $x=x(t)$ on $[t_0,t_1)$ contains in the same $\Delta_{J_{n}}(x(t_1),t_1)$.
\end{proof}

\begin{The}[Structures of entropy solutions inside the backward characteristic triangles]\label{pro:c3.3}
For any $(x_0,t_0)\in \mathbb{R}\times \mathbb{R}^+$,
the entropy solution $u(x,t)$ on $\mathbb{R}\times (0,t_0)$ as in $\eqref{c3.4}$
satisfies the following properties{\rm :}
\begin{itemize}
\item [(i)] If $(x,t)\in \Delta_{\partial \mathcal{U}}(x_0,t_0)$,
then $u(x{-},t)=u(x{+},t)$,
and $u(x{\pm},t)\in \partial \mathcal{U}(x_0,t_0)$.

\vspace{2pt}
\item [(ii)] If there exists $I_m=(a_m,b_m)$,
then solution $u(x,t)$ in $\Delta_{I_m}(x_0,t_0)$ can be concretely expressed by
\begin{equation}\label{c3.13}
u(x,t)=(f')^{-1}(\frac{x-x_0}{t-t_0})\in I_m,
\end{equation}
and the initial data function $\varphi(x)$ satisfies
\begin{equation}\label{c3.14}
\varphi(x)=(f')^{-1}(\frac{x_0-x}{t_0}) \qquad\,\,\, {a.e.} \ x\in [x_0-t_0f'(b_m),\,x_0-t_0f'(a_m)].
\end{equation}

\item [(iii)] If there exists $J_n=(c_n,d_n)$,
then, for any $(x_1,t_1)\in \Delta_{J_n}(x_0,t_0)$,
the forward generalized characteristic $X(x_1,t_1): x=x(t)$ for $t\geq t_1$
passes through point $(x_0,t_0)$,
and there exists $\underline{t}<t_0$ such that
\begin{equation}\label{c3.15}
u(x(t){+},t)<u(x(t){-},t)\qquad\, {\rm on} \ t\in (\underline{t},t_0].
\end{equation}
Furthermore, let $X(x_i,t_i):\, x=x_i(t)$ for $t\geq t_i$
be the forward generalized characteristics with respect to two different points $(x_i,t_i)\in \Delta_{J_n}(x_0,t_0)$ for $i=1,2$.
Then there exists
$\bar{t}$ with $\max\{t_1,t_2\}<\bar{t}<t_0$ such that
\begin{equation}\label{c3.16}
x_1(t)=x_2(t)\qquad {\rm on}\ [\bar{t},t_0].
\end{equation}
Moreover, the set of discontinuous points of the entropy solution $u(x,t)$ in $\Delta_{J_n}(x_0,t_0):$
$$
\Gamma (\Delta_{J_n}(x_0,t_0)):= \big\{(x,t)\in \Delta_{J_n}(x_0,t_0)\,:\, u(x{+},t)<u(x{-},t)\big\}
$$
is path-connected.

\item [(iv)]
For any point $(x_1,t_1)\in \Delta_{({-}\infty,u^+_0)}(x_0,t_0)\cup\Delta_{(u^-_0,\infty)}(x_0,t_0)$,
the forward generalized characteristic of point $(x_1,t_1)$ do not pass point $(x_0,t_0)$.
\end{itemize}
\noindent
In the above, 
$\partial \mathcal{U}(x_0,t_0)$, 
$\Delta_{\partial \mathcal{U}}(x_0,t_0)$, $\Delta_{I_m}(x_0,t_0)$ with $I_m$,
and $\Delta_{J_n}(x_0,t_0)$ with $J_n$
are given by $\eqref{c3.02}$--$\eqref{c3.3}$.
See also
{\rm Fig.} $\ref{figStriangle}$.
\end{The}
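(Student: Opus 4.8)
The plan is to decompose $\mathbb{R}\times(0,t_0)$ via the partition \eqref{c3.4} and treat each stratum using the key tools already established: the characterization \eqref{uiff} of $\mathcal{U}(x_0,t_0)$ via shock-free characteristics, the monotonicity of $y(\cdot,t,u^\pm(\cdot,t))$ from Lemma~\ref{lem:c2.4}, the trace identities \eqref{c2.40} from Lemma~\ref{lem:c2.5}, and Lemma~\ref{pro:c3.2} on forward generalized characteristics. First, for part (i): if $(x,t)\in\Delta_{\partial\mathcal{U}}(x_0,t_0)$, then $(x,t)\in l_c(x_0,t_0)$ for some $c\in\partial\mathcal{U}(x_0,t_0)\subset\mathcal{U}(x_0,t_0)$, so by \eqref{uiff} the line $l_c(x_0,t_0)$ is a shock-free characteristic; hence $u(x\pm,t)=c$, giving continuity and $u(x\pm,t)\in\partial\mathcal{U}(x_0,t_0)$.

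For part (ii): suppose $I_m=(a_m,b_m)\subset\mathring{\mathcal{U}}(x_0,t_0)$. Every $c\in I_m$ lies in $\mathcal{U}(x_0,t_0)$, so by \eqref{uiff} each $l_c(x_0,t_0)$ is shock-free, forcing $u\equiv c$ on it; since these lines foliate $\Delta_{I_m}(x_0,t_0)$ and a point $(x,t)$ there lies on $l_c$ with $\frac{x-x_0}{t-t_0}=f'(c)$, formula \eqref{c3.13} follows. Tracing these characteristics back to $t=0$ and using the definition $\varphi(x_0+l)=u$ along $l_c(x_0,t_0)$ (valid a.e.\ because at $t\to0+$ the solution converges to $\varphi$ in $L^1_{\mathrm loc}$ by \eqref{c1.3c}, combined with the pointwise trace structure) yields \eqref{c3.14}; the interval $[x_0-t_0f'(b_m),x_0-t_0f'(a_m)]$ is exactly the set of initial points of these characteristics.

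For part (iii): given $J_n=(c_n,d_n)\subset\mathcal{U}^c(x_0,t_0)$ and $(x_1,t_1)\in\Delta_{J_n}(x_0,t_0)$, apply Lemma~\ref{pro:c3.2} to get the forward generalized characteristic $X(x_1,t_1)$; by the monotonicity \eqref{c3.5} and the fact that $(x_1,t_1)$ sits strictly inside the triangle $\Delta_{J_n}(x_0,t_0)$, one shows $\Delta_I(x_1,t_1)\subset\Delta_I(x_0,t_0)$ and the forward curve must reach $(x_0,t_0)$ (it cannot escape the nested triangles). Since $c_n,d_n\in\partial\mathcal{U}(x_0,t_0)$ bound an interval of $\mathcal{U}^c$, the triangle $\Delta_{J_n}(x_0,t_0)$ is nondegenerate, so $\Delta_I(x(t),t)$ is a genuine triangle for $t$ slightly below $t_0$, giving the strict inequality \eqref{c3.15} on some $(\underline{t},t_0]$. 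For \eqref{c3.16}, both forward characteristics $X(x_i,t_i)$ pass through $(x_0,t_0)$ by the previous argument; by Lemma~\ref{pro:c3.2}(i) two forward generalized characteristics that meet must coincide thereafter, and running this at a time $\bar t$ just below $t_0$ where both have entered a common $\Delta_{J_m}$ shows they agree on $[\bar t,t_0]$. Path-connectedness of $\Gamma(\Delta_{J_n}(x_0,t_0))$ then follows: any discontinuous point in $\Delta_{J_n}$ lies on a forward generalized characteristic that merges into the common curve through $(x_0,t_0)$ before time $t_0$, so one can connect it to $(x_0,t_0)$ within the discontinuity set.

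For part (iv): if $(x_1,t_1)\in\Delta_{(-\infty,u_0^+)}(x_0,t_0)$, then $(x_1,t_1)$ lies strictly to the right of every $l_u(x_0,t_0)$ with $u\geq u_0^+$, hence strictly to the right of the whole triangle $\Delta_I(x_0,t_0)$; by Proposition~\ref{pro:c3.1} and the monotonicity of forward triangles in Lemma~\ref{pro:c3.2}, the forward characteristic of $(x_1,t_1)$ stays in the region $\Delta_I(x(t),t)$ which is disjoint from $(x_0,t_0)$ (using \eqref{c2.31} to prevent crossing), so it never passes through $(x_0,t_0)$; the case $\Delta_{(u_0^-,\infty)}$ is symmetric. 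The main obstacle I anticipate is in part (iii): carefully justifying that the forward generalized characteristic through an interior point of $\Delta_{J_n}(x_0,t_0)$ must actually hit $(x_0,t_0)$ and not merely stay inside the nested triangles converging to it — this requires combining Lemma~\ref{pro:c3.2} with a continuity/squeezing argument on the triangles $\Delta_I(x(t),t)$ as $t\uparrow t_0$, together with the observation that $\mathcal{U}(x_0,t_0)$ being closed makes $\partial J_n\subset\mathcal{U}(x_0,t_0)$, so the bounding characteristics $l_{c_n}$ and $l_{d_n}$ are themselves shock-free and pinch the forward curve onto $(x_0,t_0)$.
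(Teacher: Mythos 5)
Parts (i), (ii), and (iv) of your proposal follow the paper's route and are essentially correct; in (ii) you should still spell out the exchange of the limits $x'\to x$ and $t\to 0+$ (the paper does this via the uniform continuity of $(f')^{-1}(\frac{\xi-x_0}{t-t_0})$ on $\bar{\Delta}_{I_m}(x_0,t_0)\cap(\mathbb{R}\times[0,\frac{t_0}{2}])$ together with $\eqref{c1.3c}$ and the Lebesgue differentiation theorem), but that is a matter of detail rather than substance.

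Part (iii) has two genuine gaps, both at the same place: you never exploit the dichotomy that $J_n=(c_n,d_n)$ lies in $\mathcal{U}^c(x_0,t_0)$ while limits of solution values along curves ending at $(x_0,t_0)$ must lie in $\mathcal{U}(x_0,t_0)$ by $\eqref{c2.39}$. For $\eqref{c3.15}$, your inference ``$\Delta_{J_n}(x_0,t_0)$ is nondegenerate, so $\Delta_I(x(t),t)$ is a genuine triangle for $t$ slightly below $t_0$'' does not follow: a priori the forward generalized characteristic of $(x_1,t_1)$ could remain shock-free on all of $(t_1,t_0)$, with the discontinuity forming only at $(x_0,t_0)$ itself. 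The paper rules this out by contradiction: a shock-free forward characteristic is the straight line $x(t)=x_1-(t_1-t)f'(u(x_1,t_1))$ reaching $(x_0,t_0)$, so the constant value it carries equals $(f')^{-1}(\frac{x_1-x_0}{t_1-t_0})\in(c_n,d_n)\subset\mathcal{U}^c(x_0,t_0)$, yet by $\eqref{c2.39}$ that same value belongs to $\mathcal{U}(x_0,t_0)$ --- a contradiction. For $\eqref{c3.16}$, your argument is circular: both curves trivially meet at $(x_0,t_0)$, and ``entering a common $\Delta_{J_m}$ just below $t_0$'' does not force two distinct shocks to coincide there --- the whole content of $\eqref{c3.16}$ is that they merge strictly before $t_0$. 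The paper again argues by contradiction: if $x_1(t)<x_2(t)$ for all $t<t_0$, the monotone quantities $y^+(x_1(t),t)\le y^-(x_2(t),t)$ are trapped strictly inside $(x_0-t_0f'(d_n),\,x_0-t_0f'(c_n))$, so $y_0=\lim_{t\rightarrow t_0-}y^+(x_1(t),t)$ satisfies $(f')^{-1}(\frac{x_0-y_0}{t_0})\in(c_n,d_n)\subset\mathcal{U}^c(x_0,t_0)$, while $\eqref{c2.39}$ places that same limit in $\mathcal{U}(x_0,t_0)$. Without these two contradiction arguments the core assertions of (iii) are unproved. By contrast, the step you flagged as the main obstacle --- that $X(x_1,t_1)$ passes through $(x_0,t_0)$ --- is immediate from the uniqueness statement in Lemma~$\ref{pro:c3.2}$ together with $\Delta_I(x_1,t_1)\subset\Delta_{J_n}(x_0,t_0)\subset\Delta_I(x_0,t_0)$, so no extra squeezing argument is needed there.
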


\begin{proof} We divide the proof into four steps accordingly.

\smallskip
\noindent
{\bf 1.} If $(x,t)\in \Delta_{\partial \mathcal{U}}(x_0,t_0)$,
there exists $c\in \partial \mathcal{U}(x_0,t_0)$ such that $(x,t)\in l_{c}(x_0,t_0)$
which, by $\eqref{uiff}$, implies that $u(x{+},t)=u(x{-},t)=c$.

\smallskip
\noindent
{\bf 2.} Suppose that $I_m=(a_m,b_m)\neq \varnothing$.
Since $I_m \subset \mathring {\mathcal{U}}(x_0,t_0)\subset \mathcal{U}(x_0,t_0)$,
then, for any $(x,t)\in \Delta_{I_m}(x_0,t_0)$,
there exists $c\in I_m \subset \mathcal{U}(x_0,t_0)$ such that $(x,t)\in l_c(x_0,t_0)$
which, by $\eqref{uiff}$, implies that $u(x{\pm},t)=c$. Thus,
$x=x_0+(t-t_0)f'(c)=x_0+(t-t_0)f'(u(x{\pm},t))$
which, by a simple calculation, $\eqref{c3.13}$ follows.

To prove $\eqref{c3.14}$, let $\bar{\Delta}_{I_m}(x_0,t_0)$ be the closure of $\Delta_{I_m}(x_0,t_0)$.
Then $(f')^{-1}(\frac{x-x_0}{t-t_0})$ is continuous on the trapezoidal area
$\bar{\Delta}_{I_m}(x_0,t_0)\cap(\mathbb{R}\times [0,\frac{t_0}{2}])$
so that it is uniformly continuous.
By $\eqref{c1.3c}$ and the Lebesgue differential theorem,
for $x\in (x_0-t_0f'(b_m), \, x_0-t_0f'(a_m))$,
\begin{align*}
\varphi(x)
&\mathop{=}\limits^{a.e.}\lim\limits_{x' \rightarrow x}\frac{1}{x'-x}\int_{x}^{x'} \varphi(\xi)\,{\rm d}\xi
=\lim\limits_{x' \rightarrow x}
\bigg(\frac{1}{x'-x}\,\lim\limits_{t \rightarrow 0{+}}\int_{x}^{x'} u(\xi,t)\,{\rm d}\xi\bigg)\\[1mm]
& =\lim\limits_{x' \rightarrow x}\lim\limits_{t \rightarrow 0{+}}
\bigg(\frac{1}{x'-x}\int_{x}^{x'}(f')^{-1}(\frac{\xi-x_0}{t-t_0})\,{\rm d}\xi\bigg)\\[1mm]
& =\lim\limits_{t \rightarrow 0{+}}
\bigg(\lim\limits_{x' \rightarrow x}\frac{1}{x'-x}\int_{x}^{x'}(f')^{-1}(\frac{\xi-x_0}{t-t_0})\,{\rm d}\xi\bigg)\\[1mm]
&=\lim\limits_{t \rightarrow 0{+}}(f')^{-1}(\frac{x-x_0}{t-t_0})
=(f')^{-1}(\frac{x_0-x}{t_0}).
\end{align*}

\smallskip
\noindent
{\bf 3.} Suppose that $J_n=(c_n,d_n)\neq \varnothing$.
From Proposition $\ref{pro:c3.1}$, for any $(x_1,t_1)\in \Delta_{J_n}(x_0,t_0)$,
$
\Delta_I(x_1,t_1)\subset \Delta_{J_n}(x_0,t_0)\subset \Delta_I(x_0,t_0)
$
so that, by Lemma $\ref{pro:c3.2}$, $(x_0,t_0)$ lies on $X(x_1,t_1)$.

\smallskip
{\bf (a).} We first prove $\eqref{c3.15}$. Let
$$
\underline{t}:=\inf \big\{t\in [t_1,\infty)\,:\,
u(x(t){+},t)<u(x(t){-},t)\,\,\, {\rm for}\ (x(t),t)\in X(x_1,t_1)\big\}.
$$
Since $J_n\neq \varnothing$, then $u(x_0{+},t_0)<u(x_0{-},t_0)$ so that $\underline{t}\leq t_0$.
We claim that $\underline{t}< t_0$.
Otherwise, if $\underline{t}= t_0$, then $u(x(t){+},t)=u(x(t){-},t)$ for any $t\in (t_1,t_0)$
so that, by $\eqref{c3.5}$,
$$
\Delta_I(x_1,t_1)\subset \Delta_I(x(t),t)=l_{u(x(t){-},t)}(x(t),t).
$$
Thus, $(x_1,t_1)\in l_{u(x(t){-},t)}(x(t),t)$ so that,
by $\eqref{uiff}$, $u(x(t){-},t)=u(x_1,t_1)$ for $t\in (t_1,t_0)$.
This means that $X(x_1,t_1)$ satisfies
\begin{equation}\label{c3.17}
x(t)=x_1-(t_1-t)f'(u(x_1,t_1)) \qquad {\rm for \ any}\ t\in (t_1,t_0).
\end{equation}
Since $X(x_1,t_1)$ is continuous, then $\lim_{t \rightarrow t_{0}{-}}x(t)=x_0$
so that, by $\eqref{c2.40}$,
\begin{equation}\label{c3.18}
u(x_1,t_1)=\lim\limits_{t \rightarrow t_{0}{-}}u(x(t){-},t)\in \mathcal{U}(x_0,t_0).
\end{equation}
Noting that $(x_1,t_1)\in \Delta_{J_n}(x_0,t_0)$,
and letting $t \rightarrow t_{0}{-}$ in $\eqref{c3.17}$,
we obtain
\begin{equation*}
u(x_1,t_1)=(f')^{-1}(\frac{x_1-x_0}{t_1-t_0})\in (c_n,d_n)\subset\mathcal{U}^c(x_0,t_0),
\end{equation*}
which contradicts to $\eqref{c3.18}$.
Thus, $\underline{t}< t_0$ and $\eqref{c3.15}$ follows.

\smallskip
{\bf (b).} We now prove $\eqref{c3.16}$.
By Lemma $\ref{pro:c3.2}$,
$(x_i,t_i)\in \Delta_{J_n}(x_0,t_0)\subset \Delta_{I}(x_0,t_0)$ for $i=1,2$, which
imply that $X(x_i,t_i)$ passes $(x_0,t_0)$.
Since $x_1(t_0)=x_0=x_2(t_0)$, we let
$$
\bar{t}:=\inf \big\{t\in (0,\infty)\,:\, x_1(t)=x_2(t)\big\}.
$$
Then
$\bar{t}\leq t_0$.
To prove $\eqref{c3.16}$, it suffices to show that $\bar{t}< t_0$.

\vspace{2pt}
Otherwise, if $\bar{t}= t_0$, then $x_1(t)\neq x_2(t)$ for $t<t_0$.
Without loss of generality, assume that $x_1(t)<x_2(t)$.
From $\eqref{c3.15}$, there exists $\tilde{t}<t_0$ such that
$$
u(x_1(t){+},t)<u(x_1(t){-},t),\quad u(x_2(t){+},t)<u(x_2(t){-},t)
\,\qquad {\rm for \ any}\ t\in [\tilde{t},t_0],
$$
so that, for any $t\in [\tilde{t},t_0]$,
\begin{align}\label{c3.20}
x_0-t_0f'(d_n)
&\leq y^-(x_1(\tilde{t}),\tilde{t})<y^+(x_1(\tilde{t}),\tilde{t})\leq y^+(x_1(t),t)\nonumber\\[1mm]
& \leq y^-(x_2(t),t)\leq y^-(x_2(\tilde{t}),\tilde{t})<y^+(x_2(\tilde{t}),\tilde{t})
\leq x_0-t_0f'(c_n),
\end{align}
where $y^{\pm}(x_i(t),t):=x_i(t)-tf'(u(x_i(t){\pm},t))$ for $i=1,2$.

\smallskip
Since $y^+(x_1(t),t)$ is nondecreasing in $t$,
letting $t\rightarrow t_{0}{-}$ in $\eqref{c3.20}$,
$$
x_0-t_0f'(d_n)<y^+(x_1(\tilde{t}),\tilde{t})
\leq y_0:=\lim\limits_{t\rightarrow t_{0}{-}} y^+(x_1(t),t)
\leq y^-(x_2(\tilde{t}),\tilde{t})<x_0-t_0f'(c_n),
$$
which implies
\begin{equation}\label{c3.21}
(f')^{-1}(\frac{x_0-y_0}{t_0})\in (c_n,d_n) \subset \mathcal{U}^c(x_0,t_0).
\end{equation}
On the other hand, since $X(x_1,t_1) : x=x_1(t)$ for $t\geq t_1$
is continuous and $x_1(t_0)=x_0$, then, by $\eqref{c2.39}$,
it follows from $y^+(x_1(t),t)$ $=x_1(t)-tf'(u(x_1(t){+},t))$ that
\begin{equation*}
(f')^{-1}(\frac{x_0-y_0}{t_0})=\lim\limits_{t\rightarrow t_{0}{-}} u(x_1(t){+},t)\in \mathcal{U}(x_0,t_0),
\end{equation*}
which contradicts to $\eqref{c3.21}$.
Thus, $\bar{t}< t_0$. By the definition of $\bar{t}$, $\eqref{c3.16}$ holds.

\vspace{1pt}
{\bf (c).} Moreover, for any two points $(x_i,t_i)\in \Gamma (\Delta_{J_n}(x_0,t_0))$ for $i=1,2$,
by Lemma $\ref{pro:c3.2}$(ii), the forward generalized characteristics $X(x_1,t_1)$ and $X(x_2,t_2)$
are both discontinuities of the entropy solution $u(x,t)$ so that
$\eqref{c3.16}$ directly implies that $\Gamma (\Delta_{J_n}(x_0,t_0))$ is path-connected.

\smallskip
\noindent
{\bf 4}. For any point $(x_1,t_1)\in \Delta_{({-}\infty,u^+_0)}(x_0,t_0)\cup\Delta_{(u^-_0,\infty)}(x_0,t_0)$,
then $(x_1,t_1)\not\in \Delta_I(x_0,t_0)$.
From $t_1<t_0$, $(x_0,t_0)\not\in \Delta_I(x_1,t_1)$.
Then, by Proposition $\ref{pro:c3.1}$, $\Delta_I(x_1,t_1)\cap\Delta_I(x_0,t_0)=\varnothing$.
On the other hand, it follows from $\eqref{c3.5}$ that,
if $(x_0,t_0)\in X(x_1,t_1)$, then $\Delta_I(x_1,t_1)\subset \Delta_I(x_0,t_0)$.
This is a contradiction.
Thus, $(x_0,t_0)\not\in X(x_1,t_1)$,
which means that the forward generalized characteristic $X(x_1,t_1)$ of point $(x_1,t_1)$ do not pass point $(x_0,t_0)$.

\smallskip
Up to now, we have completed the proof of Theorem $\ref{pro:c3.3}$.
\end{proof}

\begin{Rem}
From {\rm Theorem} $\ref{pro:c3.3}$,  
every shock curve passing through $(x_0,t_0)$ must contain
in the backward characteristic triangle $\Delta_I(x_0,t_0)$ when $t<t_0$.
Furthermore,
since $\Delta_{J_n}(x_0,t_0)\cap \Delta_{J_{n'}}(x_0,t_0)=\varnothing$ for any $n\neq n'$,
two shock curves passing through $(x_0,t_0)$ collide with each other at a time $t<t_0$
if and only if
they are contained in the same triangle $\Delta_{J_n}(x_0,t_0)$,
and two shock curves passing through $(x_0,t_0)$ collide with each other at $t=t_0$
if and only if
they are contained in two different triangles $\Delta_{J_n}(x_0,t_0)$.
This means that {\it there is a one-to-one correspondence
between the path-connected branches of
the set of discontinuous points in $\Delta_I(x_0,t_0)$
and the disjoint intervals in $\mathcal{U}^c(x_0,t_0)=\cup_n J_n$}.
\end{Rem}

\subsection{Directional limits of entropy solutions at discontinuous points}
For any fixed $(x_0,t_0)\in \mathbb{R}\times \mathbb{R}^+$, if $u(x_0{+},t_0)<u(x_0{-},t_0)$,
the forward generalized characteristic $X(x_0,t_0):x=x(t)$ for $t\geq t_0$ is a continuous single-valued curve,
along which the entropy solution $u(x,t)$ is discontinuous.
Denote
\begin{equation*}
l^{\pm}(t):=x_0+(t-t_0)f'(u(x_0{\pm},t_0))\qquad {\rm for}\ t\in (0,t_0),
\end{equation*}
and let
\begin{equation}\label{c3.24}
\begin{cases}
\displaystyle \Delta^-(x_0,t_0):=\big\{(x,t)\,:\, x<x(t)\ {\rm if}\ t\geq t_0,\,\,\,
x<l^-(t)\ {\rm if}\ t\in (0,t_0)\big\}, \\[2mm]
\displaystyle \Delta^+(x_0,t_0):=\big\{(x,t)\,:\, x>x(t)\ {\rm if}\ t\geq t_0,\,\,\,
x>l^+(t)\ {\rm if}\ t\in (0,t_0)\big\}.
\end{cases}
\end{equation}
If $\mathcal{U}^c(x_0,t_0)=\cup_nJ_n\neq \varnothing$, there exists $J_n\neq \varnothing$.
Let $X(x_n,t_n): x=x_n(t)$ for $t\geq t_n$
be the forward generalized characteristic of point $(x_n,t_n)\in \Delta_{J_n}(x_0,t_0)$.
Then, by Theorem $\ref{pro:c3.3}$,
there exists $\hat{t}_n\in (t_n,t_0)$ such that
$u(x_n(t){+},t)<u(x_n(t){-},t)$ for any $ t\in [\hat{t}_n,t_0]$.
We set
\begin{equation}\label{c3.25}
\begin{cases}
\displaystyle \Delta^-_{J_n}(x_0,t_0):=
\big\{(x,t)\in\Delta_{J_n}(x_0,t_0)\,:\, x<x_n(t) \,\,\, {\rm for}\ t\in (\hat{t}_n,t_0)\big\}, \\[2mm]
\displaystyle \Delta^+_{J_n}(x_0,t_0):=
\big\{(x,t)\in\Delta_{J_n}(x_0,t_0)\,:\, x>x_n(t) \,\,\, {\rm for}\ t\in (\hat{t}_n,t_0)\big\}.
\end{cases}
\end{equation}

\smallskip
The directional limits of entropy solutions at the discontinuous points are as follows:

\begin{The}[Directional limits of entropy solutions]\label{pro:c3.4}
 For any $(x_0,t_0)\in \mathbb{R}\times \mathbb{R}^+$ with $u(x_0{+},t_0)<u(x_0{-},t_0)$,
\begin{itemize}
\item [(i)] If $x=x(t)$ for $t\geq t_0$ is the forward generalized characteristic $X(x_0,t_0)$ of point $(x_0,t_0)$, then
\begin{equation}\label{c3.26}
\lim_{\genfrac{}{}{0pt}{3}{(x,t)\rightarrow (x_0,t_0)}{(x,t)\in \Delta^-(x_0,t_0)}}
u(x{\pm},t)=u(x_0{-},t_0),\qquad
\lim_{\genfrac{}{}{0pt}{3}{(x,t)\rightarrow (x_0,t_0)}{(x,t)\in \Delta^+(x_0,t_0)}}
u(x{\pm},t)=u(x_0{+},t_0),
\end{equation}
\begin{equation}\label{c3.27}
\lim_{t\rightarrow t_{0}{+}}u(x(t){-},t)=u(x_0{-},t_0),\qquad
\lim_{t\rightarrow t_{0}{+}}u(x(t){+},t)=u(x_0{+},t_0).
\end{equation}

\item [(ii)] If there exists $J_n \subset \mathcal{U}^c(x_0,t_0)$ such that $J_n=(c_n,d_n)\neq \varnothing$, then
\begin{equation}\label{c3.28}
\lim_{\genfrac{}{}{0pt}{3}{(x,t)\rightarrow (x_0,t_0)}{(x,t)\in \Delta^-_{J_n}(x_0,t_0)}}u(x{\pm},t)=d_n,\qquad
\lim_{\genfrac{}{}{0pt}{3}{(x,t)\rightarrow (x_0,t_0)}{(x,t)\in \Delta^+_{J_n}(x_0,t_0)}}u(x{\pm},t)=c_n,
\end{equation}
\begin{equation}\label{c3.29}
\lim_{t\rightarrow t_{0}{-}}u(x_n(t){-},t)=d_n,\qquad
\lim_{t\rightarrow t_{0}{-}}u(x_n(t){+},t)=c_n.
\end{equation}
\end{itemize}
\noindent
In the above, $\Delta^{\pm}(x_0,t_0)$ and $\Delta^{\pm}_{J_n}(x_0,t_0)$
are given by $\eqref{c3.24}$ and $\eqref{c3.25}$, respectively{\rm ;}
and the curve $x=x_n(t)$ for $t\geq t_n$
is the forward generalized characteristic $X(x_n,t_n)$ of any point $(x_n,t_n)\in \Delta_{J_n}(x_0,t_0)$. 
See also {\rm Fig.} $\ref{figDlimit}$.
\end{The}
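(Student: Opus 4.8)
The plan is to leverage the machinery of forward generalized characteristics from Lemma~\ref{pro:c3.2} together with the structure theorem Theorem~\ref{pro:c3.3} and the trace identities $\eqref{c2.40}$ from Lemma~\ref{lem:c2.5}. The key observation is that all four limits in $\eqref{c3.26}$--$\eqref{c3.29}$ are statements about $u^\pm$ along curves approaching $(x_0,t_0)$ either from the two sides of the discontinuity $X(x_0,t_0)$ (for part (i)) or from inside a complementary triangle $\Delta_{J_n}(x_0,t_0)$ (for part (ii)), and $\eqref{c2.40}$ already tells us that $u^\pm(x,t)$ is ``squeezed'' between $u^-$ and $u^+$ values and that $u^\pm(x\pm,t)=u^\mp(x,t)$. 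So the real content is monotonicity of the function $y(x,t,u^\pm(x,t))=x-tf'(u^\pm(x,t))$ in $x$ (Lemma~\ref{lem:c2.4}) combined with the nestedness $\eqref{c3.5}$ of backward characteristic triangles along a forward generalized characteristic.

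First I would prove $\eqref{c3.27}$. Along $X(x_0,t_0):x=x(t)$ with $t\ge t_0$, the triangles satisfy $\Delta_I(x_0,t_0)\subset\Delta_I(x(t),t)$ for $t>t_0$ by $\eqref{c3.5}$, which forces $y^-(x(t),t)\le y^-(x_0,t_0)\le y^+(x_0,t_0)\le y^+(x(t),t)$ and hence $y^\pm(x(t),t)$ are monotone in $t$; letting $t\to t_0{+}$ and using the continuity of $X(x_0,t_0)$ together with $\eqref{c2.39}$ (any limit of $u^+(x(t),t)$ lies in $\mathcal U(x_0,t_0)$), the limits $\lim_{t\to t_0{+}}u(x(t){\mp},t)$ must coincide with $u(x_0{\mp},t_0)$, because by $\eqref{c2.40}$ these are the sup and inf of $\mathcal U(x_0,t_0)$ pinched from the correct side. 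For $\eqref{c3.26}$, given $(x,t)\in\Delta^-(x_0,t_0)$ approaching $(x_0,t_0)$, the point lies strictly to the left of the discontinuity curve (or of $l^-$ below $t_0$), so $y(x,t,u^\pm(x,t))\le x-tf'(u^-(x_0,t_0))$-type bounds plus the lower bound from $\eqref{c2.42}$ (lower-semicontinuity of $u^+(x+,t)$) force $u^\pm(x,t)\to u(x_0{-},t_0)$; symmetric argument on $\Delta^+(x_0,t_0)$. Here I would be careful to invoke $\eqref{c2.39}$ to ensure every convergent subsequence has its limit in $\mathcal U(x_0,t_0)$, and then the side from which we approach selects $\sup$ versus $\inf$ of that set.

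For part (ii), the point is that inside $\Delta_{J_n}(x_0,t_0)$ with $J_n=(c_n,d_n)$, the relevant backward-triangle structure collapses: by Theorem~\ref{pro:c3.3}(iii) every $(x_1,t_1)\in\Delta_{J_n}(x_0,t_0)$ has its forward generalized characteristic passing through $(x_0,t_0)$, and along it $\eqref{c3.16}$ shows all these curves merge before $t_0$ into a single discontinuity curve $x=x_n(t)$. Since $c_n,d_n\in\partial\mathcal U(x_0,t_0)$, the geometry $\eqref{uiff}$ forces $y^\pm(x_n(t),t)$ to be trapped in $[x_0-t_0f'(d_n),x_0-t_0f'(c_n)]$ and monotone, so their limits as $t\to t_0{-}$ are exactly $x_0-t_0f'(d_n)$ and $x_0-t_0f'(c_n)$ respectively (the extreme values being forced because $u^\pm(x_n(t){\mp},t)$ cannot re-enter $\mathcal U(x_0,t_0)$); this yields $\eqref{c3.29}$. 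Then $\eqref{c3.28}$ follows by the same left/right squeezing as for $\eqref{c3.26}$, now using $\eqref{c3.25}$: points in $\Delta^-_{J_n}$ lie left of $x_n(t)$, giving $u^\pm\to d_n$, and points in $\Delta^+_{J_n}$ lie right, giving $u^\pm\to c_n$.

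The main obstacle I anticipate is handling the degenerate sub-case where $(x_0,t_0)$ is a limit of infinitely many shocks, i.e.\ where $\mathcal U^c(x_0,t_0)$ has infinitely many components $J_n$ accumulating at a boundary point of $\mathcal U(x_0,t_0)$: one must ensure the directional limits $\eqref{c3.28}$--$\eqref{c3.29}$ are genuinely attained for each fixed $J_n$ uniformly enough that the approach region $\Delta^\pm_{J_n}(x_0,t_0)$ is nonempty and the limit along it is unambiguous. This is controlled by the disjointness $\Delta_{J_{n}}(x_0,t_0)\cap\Delta_{J_{n'}}(x_0,t_0)=\varnothing$ and the path-connectedness of $\Gamma(\Delta_{J_n}(x_0,t_0))$ from Theorem~\ref{pro:c3.3}(iii), but making the argument airtight requires carefully tracking that the forward characteristic $x_n(t)$ stays inside $\Delta_{J_n}$ for $t<t_0$ close to $t_0$ (which is exactly the content of $\hat t_n$ in $\eqref{c3.25}$). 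A secondary technical point is justifying the interchange of the various limits ($x'\to x$ before or after $(x,t)\to(x_0,t_0)$), which I would resolve by working throughout with the monotone functions $y^\pm(\cdot,t)$ and invoking $\eqref{c2.40}$ to replace $u(x{\pm},t)$ by $u^\mp(x,t)$, reducing everything to monotone-function limits that commute.
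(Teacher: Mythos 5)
Your proposal is correct and follows essentially the same route as the paper's proof: one-sided bounds from $\eqref{c2.39}$ (every subsequential limit lies in $\mathcal{U}(x_0,t_0)$, whose sup and inf are $u(x_0{-},t_0)$ and $u(x_0{+},t_0)$), complementary bounds from the monotonicity of $y^\pm$ in Lemma $\ref{lem:c2.4}$ and the nestedness $\eqref{c3.5}$, and for part (ii) the trapping of $y^\pm$ in $[x_0-t_0f'(d_n),\,x_0-t_0f'(c_n)]$ combined with $[c_n,d_n]\cap\mathcal{U}(x_0,t_0)=\{c_n,d_n\}$ to pin the limits to the endpoints. The only stylistic difference is your extra caution about accumulating components $J_n$, which the paper does not need since the argument is carried out for each fixed $J_n$ separately.
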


\begin{figure}[H]
	\begin{center}
		{\includegraphics[width=0.7\columnwidth]{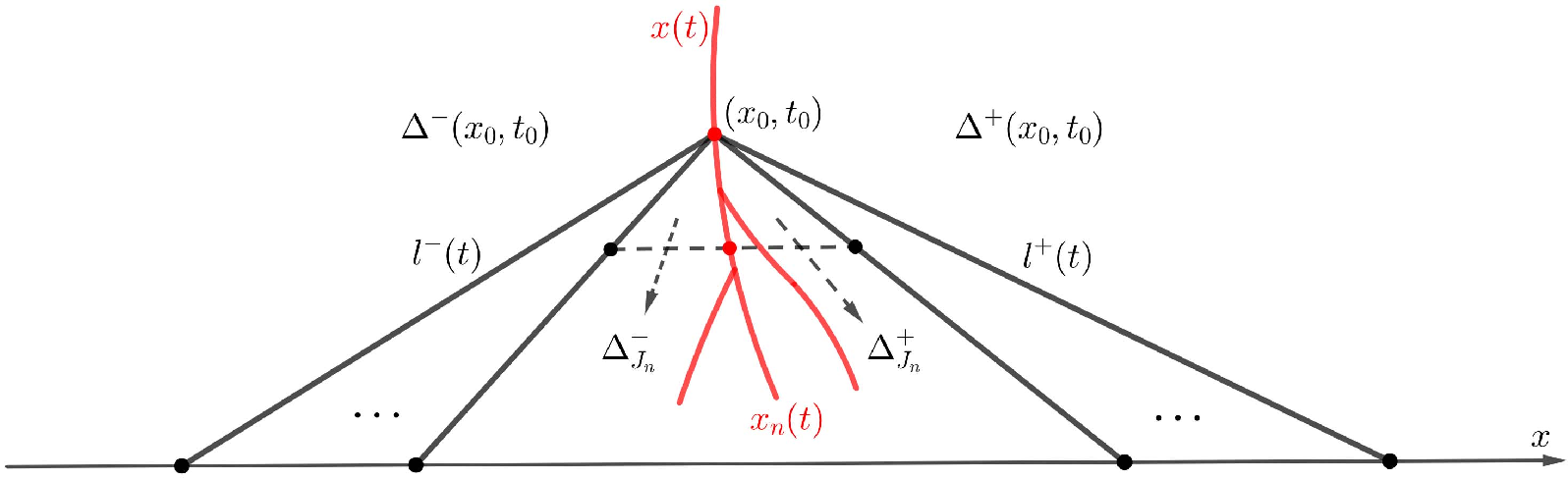}}
 \caption{The directional areas $\Delta^\pm(x_0,t_0)$ and $\Delta_{J_n}^\pm(x_0,t_0)$.
		}\label{figDlimit}
	\end{center}
\end{figure}

\begin{proof} We divide the proof into two steps accordingly.

\smallskip
\noindent
{\bf 1}. From $\eqref{c2.39}$, for any $(x_0,t_0)\in \mathbb{R}\times \mathbb{R}^+$,
\begin{equation}\label{c3.30}
\limsup_{\genfrac{}{}{0pt}{3}{(x,t)\rightarrow (x_0,t_0)}{(x,t)\in \Delta^-(x_0,t_0)}}
u(x{\pm},t)\leq u(x_0{-},t_0), \qquad
\liminf_{\genfrac{}{}{0pt}{3}{(x,t)\rightarrow (x_0,t_0)}{(x,t)\in \Delta^+(x_0,t_0)}}
u(x{\pm},t)\geq u(x_0{+},t_0).
\end{equation}

Denote $y^{\pm}(x,t):=x-tf'(u(x{\pm},t))$.
Then, from Lemma $\ref{lem:c2.4}$,
\begin{align*}
\begin{cases}
\displaystyle y^\pm(x,t)\leq y^-(x(t),t)\leq y^-(x_0,t_0)
\qquad {\rm for\ any}\ (x,t)\in \Delta^-(x_0,t_0), \\[2mm]
\displaystyle y^\pm(x,t)\geq y^+(x(t),t)\geq y^+(x_0,t_0)
\qquad {\rm for\ any}\ (x,t)\in \Delta^+(x_0,t_0),
\end{cases}
\end{align*}
so that
\begin{align*}
\begin{cases}
\displaystyle x\leq y^-(x_0,t_0)+tf'(u(x\pm,t))
\qquad {\rm for\ any}\ (x,t)\in \Delta^-(x_0,t_0), \\[2mm]
\displaystyle x\geq y^+(x_0,t_0)+tf'(u(x\pm,t))
\qquad {\rm for\ any}\ (x,t)\in \Delta^+(x_0,t_0),
\end{cases}
\end{align*}
which, by the strictly increasing property of $f'(u)$, implies
\begin{equation}\label{c3.31}
\liminf_{\genfrac{}{}{0pt}{3}{(x,t)\rightarrow (x_0,t_0)}{(x,t)\in \Delta^-(x_0,t_0)}}
u(x\pm,t)\geq u(x_0{-},t_0),\qquad
\limsup_{\genfrac{}{}{0pt}{3}{(x,t)\rightarrow (x_0,t_0)}{(x,t)\in \Delta^+(x_0,t_0)}}
u(x\pm,t)\leq u(x_0{+},t_0).
\end{equation}
Combining $\eqref{c3.30}$ with $\eqref{c3.31}$ leads to $\eqref{c3.26}$.

\vspace{2pt}
From $\eqref{c3.5}$ in Lemma $\ref{pro:c3.2}$,
$$
y^-(x(t),t)\leq y^-(x_0,t_0)\leq y^+(x_0,t_0)\leq y^+(x(t),t),
$$
so that, from $\lim_{t\rightarrow t_{0}{+}}x(t)=x_0$,
by letting $t\rightarrow t_{0}{+}$, we obtain
\begin{equation*}
\mathop{\underline{\lim}}\limits_{t\rightarrow t_{0}{+}}u(x(t){-},t)\geq u(x_0{-},t_0),\qquad
\mathop{\overline{\lim}}\limits_{t\rightarrow t_{0}{+}}u(x(t){+},t)\leq u(x_0{+},t_0),
\end{equation*}
which, by $\eqref{c2.39}$, implies $\eqref{c3.27}$.

\smallskip
\noindent
{\bf 2}. From Proposition $\ref{pro:c3.1}$, for any $(x,t)\in\Delta_{J_n}(x_0,t_0)$,
we see that $\Delta_I(x,t)\subset \Delta_{J_n}(x_0,t_0)$ so that,
for any $(x,t)\in\Delta^+_{J_n}(x_0,t_0)$,
$$
x_0-t_0f'(d_n)\leq y^-(x_n(\hat{t}_n),\hat{t}_n)
<y^+(x_n(\hat{t}_n),\hat{t}_n)
\leq y^+(x_n(t),t)\leq y^{\pm}(x,t)\leq x_0-t_0f'(c_n),
$$
and, for any $(x,t)\in \Delta^-_{J_n}(x_0,t_0)$,
$$
x_0-t_0f'(d_n)\leq y^{\pm} (x,t)\leq y^-(x_n(t),t)
\leq y^-(x_n(\hat{t}_n),\hat{t}_n)
<y^+(x_n(\hat{t}_n),\hat{t}_n)\leq x_0-tf'(c_n).
$$
Since $f'(u)$ is strictly increasing,
letting $(x,t)\in \Delta_{J_n}^\pm(x_0,t_0)\rightarrow (x_0,t_0)$ respectively,
we obtain
\begin{equation}\label{c3.33}
\begin{cases}
\displaystyle c_n\leq
\liminf\limits_{\genfrac{}{}{0pt}{3}{(x,t)\rightarrow (x_0,t_0)}{(x,t)\in \Delta^+_{J_n}(x_0,t_0)}}u(x{\pm},t)
\leq\limsup\limits_{\genfrac{}{}{0pt}{3}{(x,t)\rightarrow (x_0,t_0)}{(x,t)\in \Delta^+_{J_n}(x_0,t_0)}}
u(x{\pm},t)<d_n, \\[5mm]
\displaystyle c_n<
\liminf\limits_{\genfrac{}{}{0pt}{3}{(x,t)\rightarrow (x_0,t_0)}{(x,t)\in \Delta^-_{J_n}(x_0,t_0)}}u(x{\pm},t)
\leq\limsup\limits_{\genfrac{}{}{0pt}{3}{(x,t)\rightarrow (x_0,t_0)}{(x,t)\in \Delta^-_{J_n}(x_0,t_0)}}u(x{\pm},t)
\leq d_n.
\end{cases}
\end{equation}
From $\eqref{c2.39}$, the upper- and lower-limits in $\eqref{c3.33}$ are all in $\mathcal{U}(x_0,t_0)$,
which, by $[c_n,d_n]\cap \mathcal{U}(x_0,t_0)=\{c_n,d_n\}$, yields $\eqref{c3.28}$.

From $\Delta_I(x_n(t),t)\subset \Delta_{J_n}(x_0,t_0)$
and $u(x_n(\hat{t}_n){+},\hat{t}_n)< u(x_n(\hat{t}_n){-},\hat{t}_n)$,
$$
x_0-t_0f'(d_n)\leq y^-(x_n(t),t)\leq y^-(x_n(\hat{t}),\hat{t})
< y^+(x_n(\hat{t}),\hat{t})\leq y^+(x_n(t),t)\leq x_0-t_0f'(c_n),
$$
so that, from $\lim_{t\rightarrow t_{0}{-}}x_n(t)=x_0$,
by letting $t\rightarrow t_{0}{-}$, we obtain
\begin{equation*}
\begin{cases}
\displaystyle c_n<\mathop{\underline{\lim}}\limits_{t\rightarrow t_{0}{-}}
u(x_n(t){-},t)\leq \mathop{\overline{\lim}}\limits_{t\rightarrow t_{0}{-}}u(x_n(t){-},t)\leq d_n, \\[4mm]
\displaystyle c_n\leq \mathop{\underline{\lim}}\limits_{t\rightarrow t_{0}{-}}
u(x_n(t){+},t)\leq \mathop{\overline{\lim}}\limits_{t\rightarrow t_{0}{-}}u(x_n(t){+},t)<d_n,
\end{cases}
\end{equation*}
which, by $[c_n,d_n]\cap \mathcal{U}(x_0,t_0)=\{c_n,d_n\}$, yields $\eqref{c3.29}$.
\end{proof}

\begin{Rem}
From {\rm Lemma} $\ref{lem:c2.5}$, the limit of the entropy solution $u(x,t)$
under any sequence $(x_n,t_n)\rightarrow (x_0,t_0)$ contains in $\mathcal{U}(x_0,t_0)$.
Furthermore, from {\rm Theorem} $\ref{pro:c3.4}$,
the directional limit should be
what a point in $\mathcal{U}(x_0,t_0)$ in the respective case is described exactly.
We point out
that {\it the directional limits in any triangle $\Delta_{J_n}(x_0,t_0)$ are independent of
the choice of the forward generalized characteristics}.
\end{Rem}

\subsection{Left- and right-derivatives of shock curves and the piecewise smoothness}
From Lemma $\ref{pro:c3.2}$, if $u(x_0{+},t_0)<u(x_0{-},t_0)$,
the forward generalized characteristic $X(x_0,t_0): x=x(t)$ for $t\geq t_0$
is a single-valued continuous curve along which the entropy solution $u(x,t)$ is discontinuous, $i.e.$,
$u(x(t){+},t)<u(x(t){-},t)$ for any $t> t_0$.
The opposite is also true,
which can be seen in the following theorem about the general structures of shocks.

\begin{The}[General structures of shocks]\label{pro:c3.6}
Let $x=x(t)$ for $t\geq t_0$ be a single-valued continuous curve,
along which the entropy solution $u(x,t)$ is discontinuous, $i.e.$,
$u(x(t){+},t)<u(x(t){-},t)$ for any $t> t_0$.
Then $x=x(t)$ is the forward generalized characteristic of $(x_0,t_0)$,
which can be described by the triangle sequence $\{\Delta_I(x(t),t)\}$
and a unique triangle sequence $\{\Delta_{J_n}(x(t),t)\}$
with $J_n\subset \mathcal{U}^c(x(t),t)$ as
\begin{equation}\label{c3.45}
\begin{cases}
\displaystyle \Delta_I(x_0,t_0)\subset \Delta_I(x(t_1),t_1)\subset \Delta_I(x(t_2),t_2)\quad
&{\rm for\ any}\ t_2>t_1>t_0, \\[1mm]
\displaystyle \Delta_{J_n}(x_0,t_0)\subset \Delta_{J_n}(x(t_1),t_1)\subset \Delta_{J_n}(x(t_2),t_2)\quad
&{\rm for\ any}\ t_2>t_1>t_0.
\end{cases}
\end{equation}
Furthermore, the
curve{\rm :} $x=x(t)$ for $t\geq t_0$ is a Lipschitz continuous shock such that
\begin{itemize}
\item [(i)] For any $t> t_0$, $x=x(t)$ is differentiable from the right, and
\begin{equation}\label{c3.36}
\frac{\,{\rm d}^+x(t)}{\,{\rm d}t}:=\lim_{t'\rightarrow t{+}}\frac{x(t')-x(t)}{t'-t}
=\frac{f(u(x(t){-},t))-f(u(x(t){+},t))}{u(x(t){-},t)-u(x(t){+},t)}=:\frac{[f(u)]}{[u]}\Big|_I.
\end{equation}

\item [(ii)] For any $t> t_0$, $x=x(t)$ is differentiable from the left, and
\begin{equation}\label{c3.37}
\frac{\,{\rm d}^-x(t)}{\,{\rm d}t}:=\lim_{t'\rightarrow t{-}}\frac{x(t')-x(t)}{t'-t}
=\frac{f(d_n)-f(c_n)}{d_n-c_n}=:\frac{[f(u)]}{[u]}\Big|_{J_n}
\end{equation}
for the $J_n=(c_n,d_n)\subset \mathcal{U}^c(x(t),t)$
such that $(x(\tau),\tau)\in \Delta_{J_n}(x(t),t)$ on $[t_0,t)$.

\item [(iii)] Except at most countable points of $t$ with $t>t_0$,
$\frac{\,{\rm d}^+x(t)}{\,{\rm d}t}=\frac{\,{\rm d}^-x(t)}{\,{\rm d}t}$.
\end{itemize}
\noindent
In the above, $\Delta_{I}(x,t)$ with $I$
and $\Delta_{J_n}(x,t)$ with $J_n$
are given by $\eqref{c3.02}$--$\eqref{c3.3}$.
See also {\rm Fig.} $\ref{figSwave}$.
\end{The}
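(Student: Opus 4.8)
The plan is to route everything through the forward generalized characteristic machinery (Lemma~\ref{pro:c3.2} and Theorem~\ref{pro:c3.3}) and then read off the one-sided derivatives from an exact integral identity supplied by Lemma~\ref{lem:c2.3}. Write $(x_0,t_0)=(x(t_0),t_0)$ and $y^\pm(x(t),t):=x(t)-tf'(u(x(t)\pm,t))$. The whole proof splits into: (a) identifying $x=x(t)$ with $X(x_0,t_0)$ and establishing the nested triangle sequences $\eqref{c3.45}$; (b) deducing Lipschitz continuity; (c) computing $\frac{{\rm d}^\pm x}{{\rm d}t}$; (d) the a.e. coincidence.

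\textbf{Step (a) and (b).} By Lemma~\ref{lem:c2.5}(iii) every shock-free characteristic carries a single value, hence cannot pass through any $(x(s),s)$ with $s>t_0$, where $u(x(s)-,s)>u(x(s)+,s)$; in particular the two lateral edges $l_{u^\pm(x(t_2),t_2)}(x(t_2),t_2)$ of $\Delta_I(x(t_2),t_2)$ are shock-free and cannot be crossed by the continuous curve $x(\cdot)$ on $(t_0,t_2)$. Combining this with the monotonicity of $y(\cdot,t,u^\pm(\cdot,t))$ from Lemma~\ref{lem:c2.4} and an intermediate-value argument, I would show that $t\mapsto y^-(x(t),t)$ is nonincreasing and $t\mapsto y^+(x(t),t)$ is nondecreasing for $t>t_0$; this is exactly the nesting $\Delta_I(x(t_1),t_1)\subset\Delta_I(x(t_2),t_2)$ for $t_0<t_1<t_2$, and letting $t_1\downarrow t_0$ also $\Delta_I(x_0,t_0)\subset\Delta_I(x(t_1),t_1)$. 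By the characterization of the forward generalized characteristic in the proof of Lemma~\ref{pro:c3.2}, this forces $x(t)=X(x_0,t_0)(t)$; the second sequence $\{\Delta_{J_n}(x(t),t)\}$ and its uniqueness then follow from Lemma~\ref{pro:c3.2}(ii) together with the path-connectedness of the discontinuity set inside a single $\Delta_{J_n}$ in Theorem~\ref{pro:c3.3}(iii), which guarantees that for each $t>t_0$ the whole backward portion $\{(x(\tau),\tau):t_0\le\tau<t\}$ lies in one and the same $\Delta_{J_n}(x(t),t)$. Lipschitz continuity is then immediate: by $\eqref{c3.45}$ the apex $(x(t_1),t_1)$ lies in $\Delta_I(x(t_2),t_2)$, so $x(t_1)$ sits between $x(t_2)+(t_1-t_2)f'(u^\mp(x(t_2),t_2))$, and since $|u^\pm|\le\|\varphi\|_{L^\infty}$ by $\eqref{c2.7}$ we get $|x(t_2)-x(t_1)|\le (t_2-t_1)\sup_{|u|\le\|\varphi\|_{L^\infty}}|f'(u)|$.

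\textbf{Step (c) and (d).} Applying $\eqref{c2.13}$ and $\eqref{c2.14}$ at the four corners $(x(t),t),(x(t),t'),(x(t'),t),(x(t'),t')$ and subtracting, I obtain for $t'>t>t_0$ the exact identity
\begin{equation*}
\int_{x(t)}^{x(t')}\big(u(\xi,t')-u(\xi,t)\big)\,{\rm d}\xi
=\int_t^{t'}\big(f(u(x(t)-,s))-f(u(x(t')+,s))\big)\,{\rm d}s,
\end{equation*}
and a symmetric identity for $\tau<t$. Dividing by $t'-t$ (resp. $t-\tau$) and letting $t'\to t{+}$ (resp. $\tau\to t{-}$): on the right, Theorem~\ref{pro:c3.4} gives $u(x(s)\mp,s)\to u(x(t)\mp,t)$ as $s\to t{+}$ along the forward characteristic (equation $\eqref{c3.27}$), whereas $u(x(s)-,s)\to d_n$ and $u(x(s)+,s)\to c_n$ as $s\to t{-}$ along the forward characteristic of a point of $\Delta_{J_n}(x(t),t)$ (equation $\eqref{c3.29}$); on the left, the boundedness of $u$, the Lipschitz bound just proved, and the same directional limits identify the limit as $\frac{{\rm d}^\pm x}{{\rm d}t}$ times the corresponding jump. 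This yields $\eqref{c3.36}$ and $\eqref{c3.37}$ (and in particular the existence of the one-sided derivatives). For the last assertion: $\frac{{\rm d}^+x}{{\rm d}t}$ depends only on $u^\pm(x(t),t)$ and, by the semicontinuity of the traces and the nesting of the $\Delta_I(x(t),t)$, is right-continuous in $t$, while $\frac{{\rm d}^-x}{{\rm d}t}$ depends on $c_n,d_n\in\partial\mathcal{U}(x(t),t)$ and is left-continuous; a Lipschitz curve whose one-sided velocities are one-sidedly continuous has at most countably many corners, giving $\frac{{\rm d}^+x}{{\rm d}t}=\frac{{\rm d}^-x}{{\rm d}t}$ off a countable set.

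\textbf{Main obstacle.} The crux is step (a): forcing an a priori arbitrary continuous discontinuity curve to coincide with the forward generalized characteristic, i.e. proving the monotonicity of $y^\mp(x(t),t)$. The delicate point is that, although a shock point cannot lie on a shock-free characteristic, one must still exclude the degenerate configuration in which $x(\cdot)$ approaches an apex $(x(t_2),t_2)$ from strictly outside $\Delta_I(x(t_2),t_2)$ with its jump shrinking to zero; ruling this out requires the sharp comparison in Lemma~\ref{lem:c2.4} and the fine directional-limit estimates of Theorem~\ref{pro:c3.4}, not merely a soft topological argument. Once step (a) is in place, the Lipschitz regularity, the Rankine--Hugoniot form of the one-sided velocities, and their coincidence off a countable set are comparatively routine consequences of Lemmas~\ref{lem:c2.3}--\ref{lem:c2.5} and Theorems~\ref{pro:c3.3}--\ref{pro:c3.4}.
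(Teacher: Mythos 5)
Your steps (a), (b), and (d) are sound: (a) is essentially the paper's reduction to the forward generalized characteristic via Lemma~\ref{pro:c3.2}, (b) is a valid (and more direct) route to the Lipschitz bound than the paper's, and (d) is a genuinely different and workable argument — since $\eqref{c3.27}$ and $\eqref{c3.29}$ show that $t\mapsto\frac{{\rm d}^+x(t)}{{\rm d}t}$ is right-continuous with left limit equal to $\frac{{\rm d}^-x(t)}{{\rm d}t}$, it is a regulated function and hence has at most countably many discontinuities, whereas the paper instead builds an injection from corner times into disjoint open intervals on the $x$--axis. The genuine gap is in step (c), the computation of the one-sided velocities, which is the heart of the theorem. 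Your rectangle identity
$\int_{x(t)}^{x(t')}(u(\xi,t')-u(\xi,t))\,{\rm d}\xi=\int_t^{t'}\big(f(u(x(t),s))-f(u(x(t'),s))\big)\,{\rm d}s$
is correct, but the limits you assign to its terms are not the ones the identity actually produces. The time integrals involve $u$ on the \emph{fixed} vertical segments $\{x(t)\}\times(t,t')$ and $\{x(t')\}\times(t,t')$, not along the shock curve, so $\eqref{c3.27}$ and $\eqref{c3.29}$ do not apply to them; when $f'(u(x(t){+},t))\le 0\le f'(u(x(t){-},t))$ the curve may oscillate across the line $x=x(t)$, the traces $u(x(t),s)$ then alternate between values near $u(x(t){-},t)$ and near $u(x(t){+},t)$, and the averaged limit need not be either (and when $x(t')=x(t)$ the identity degenerates to $0=0$).

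The failure is worse for the left derivative, where it is not merely technical. Formula $\eqref{c3.37}$ involves $c_n,d_n\in\partial\mathcal{U}(x(t),t)$, which in general differ from $u(x(t){\pm},t)$. The directional limits $d_n$ and $c_n$ of Theorem~\ref{pro:c3.4}(ii) are valid only inside $\Delta^{\mp}_{J_n}(x(t),t)$, but the bottom edge of your rectangle — the segment from $x(\tau)$ to $x(t)$ at time $\tau<t$ — exits $\Delta_{J_n}(x(t),t)$ (and even $\Delta_I(x(t),t)$) whenever $f'(c_n)>0$ or $f'(d_n)<0$, since at time $\tau$ that triangle only spans $[x(t)-(t-\tau)f'(d_n),\,x(t)-(t-\tau)f'(c_n)]$. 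On the excess portion the trace limits sweep through $[u(x(t){+},t),c_n]$ and beyond, so the spatial average does not converge to $c_n$, and your computation would at best return $[f(u)]/[u]\big|_I$ on both sides — contradicting part (iii) being nontrivial. The paper avoids all of this by using Corollary~\ref{cor:c2.2}: the equalities $E(u^{+}(t');x(t'),t';t)=E(u^{-}(t');x(t'),t';t)$ and $E(d_n;x(t),t;t')=E(c_n;x(t),t;t')$ reduce each one-sided derivative to averages of $u$ at a \emph{single} time level over intervals whose endpoints are the feet $x(t')-(t'-t)f'(u^{\pm}(t'))$, resp.\ $x(t)-(t-t')f'(c_n)$ and $x(t)-(t-t')f'(d_n)$, of backward characteristics; these intervals straddle the shock and stay inside the correct directional regions, so Theorem~\ref{pro:c3.4} applies and yields $\eqref{c3.36}$--$\eqref{c3.37}$ cleanly. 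To repair your step (c) you would need to replace the fixed vertical and horizontal edges of your rectangle by these characteristic edges, which is in effect the paper's argument.
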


\begin{figure}[H]
	\begin{center}
		{\includegraphics[width=0.7\columnwidth]{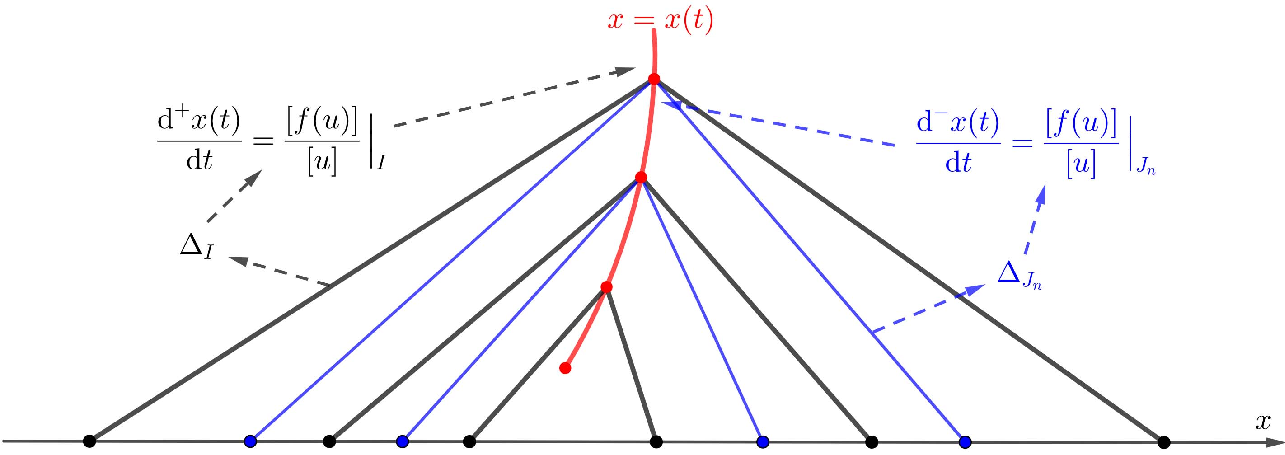}}
  \caption{The general structures of shocks
 as in Theorem $\ref{pro:c3.6}$.
		}\label{figSwave}
	\end{center}
\end{figure}

\begin{proof}
From Lemma $\ref{pro:c3.2}$, for any $t>t_0$,
there exists a unique point $(x,t)$ such that $(x_0,t_0)\in \Delta_I(x,t)$.
To prove $\eqref{c3.45}$ for the triangle sequence $\{\Delta_I(x(t),t)\}$,
it suffices to show that $(x_0,t_0)\in \Delta_I(x(t),t)$ for any $t>t_0$.
This follows by applying  Step 2 in the proof of Lemma $\ref{pro:c3.2}$ to curve $x=x(t)$.
Moreover, by Lemma $\ref{pro:c3.2}$, for any $t>t_0$,
there exists a unique $J_n\subset \mathcal{U}^c(x(t),t)$ such that
$(x(\tau),\tau)\in \Delta_{J_n}(x(t),t)$ for any $\tau\in [t_0,t)$.
This means that $\eqref{c3.45}$ holds for the triangle sequence $\{\Delta_{J_n}(x(t),t)\}$.

\smallskip
\noindent
{\bf 1}. For any $t'>t$, denote $u^\pm(t'):=u(x(t'){\pm},t')$.
From $\eqref{c2.61c}$,
$u^\pm(t')\in \mathcal{U}(x(t'),t';\,t)$ so that,
by $\eqref{c2.57}$--$\eqref{c2.61}$,
$$
E(u^-(t');\,x(t'),t';\,t)=E(u^+(t');\,x(t'),t';\,t),
$$
which is equivalent to
\begin{equation}\label{c3.38}
(t'-t)\int^{u^-(t')}_{u^+(t')}f''(s)\big(u(x(t')-(t'-t)f'(s),t)-s\big)\,{\rm d}s=0.
\end{equation}
Let $x_\pm(t'):=x(t')-(t'-t)f'(u^\pm(t'))$.
Taking $\xi=x(t')-(t'-t)f'(s)$ into $\eqref{c3.38}$,
\begin{equation}\label{c3.39}
\int^{x_-(t')}_{x(t)}u(\xi,t)\,{\rm d}\xi+\int^{x(t)}_{x_+(t')}u(\xi,t)\,{\rm d}\xi
=-(t'-t)\int^{u^-(t')}_{u^+(t')}sf''(s)\,{\rm d}s.
\end{equation}
Denote $U_\pm:=U_\pm(t',t)$ by
$$
U_\pm=U_\pm(t',t):=\frac{1}{x_\pm(t')-x(t)}\int^{x_\pm(t')}_{x(t)}u(\xi,t)\,{\rm d}\xi.
$$
From $\eqref{c2.40}$ and $\eqref{c3.27}$,
$\lim_{t'\rightarrow t{+}}u^\pm(t')=u(x(t){\pm},t)$ and
$\lim_{x\rightarrow x(t){\pm}}u(x,t)=u(x(t){\pm},t)$.
By $x_-(t')<x(t)<x_+(t')$ and $\lim_{t'\rightarrow t{+}}x_{\pm} (t')=x(t)$, then
$\lim_{t'\rightarrow t{+}}U_\pm(t',t)=u(x(t){\pm},t)$
which, by $u(x(t){-},t)>u(x(t){+},t)$, implies that
$U_-(t',t)-U_+(t',t)\neq 0$
for sufficiently small $t'-t>0$.
Thus, by taking $U_\pm=U_\pm(t',t)$ into $\eqref{c3.39}$,
$$
U_-\,\big(x_-(t')-x(t)\big)-U_+\,\big(x_+(t')-x(t)\big)
=-(t'-t)\int^{u^-(t')}_{u^+(t')}sf''(s)\,{\rm d}s,
$$
so that, by a simple calculation, we obtain
$$
\frac{x(t')-x(t)}{t'-t}
=\frac{f(u^-(t'))-f(u^+(t'))+\big(U_--u^-(t')\big)f'(u^-(t'))-\big(U_+-u^+(t')\big)f'(u^+(t'))}{U_--U_+},
$$
which, by letting $t'\rightarrow t{+}$, yields $\eqref{c3.36}$.

\smallskip
\noindent
{\bf 2}. Since $x=x(t)$ for $t\geq t_0$ is the forward generalized characteristic of $(x_0,t_0)$,
then, by Lemma $\ref{pro:c3.2}$, for any $t>t_0$,
there exists $J_n=(c_n,d_n) \subset \mathcal{U}^c(x(t),t)$ such that
$(x(\tau),\tau)\in \Delta_{J_n}(x(t),t)$ for $\tau \in [t_0,t)$.
From $\eqref{c2.61c}$,
for any $t'<t$, $\,c_n,d_n\in \mathcal{U}(x(t),t)=\mathcal{U}(x(t),t;t')$ so that,
by $\eqref{c2.57}$--$\eqref{c2.61}$,
$E(d_n;x(t),t;t')=E(c_n;x(t),t;t')$, $i.e.$,
\begin{equation*}
(t-t')\int^{d_n}_{c_n}f''(s)\big(u(x(t)-(t-t')f'(s),t')-s\big)\,{\rm d}s=0,
\end{equation*}
which, by letting $y_-(t'):=x(t)-(t-t')f'(d_n)$ and $y_+(t'):=x(t)-(t-t')f'(c_n)$, implies
\begin{equation}\label{c3.41}
\int^{y_-(t')}_{y_+(t')}u(\xi,t')\,{\rm d}\xi=-(t-t')\int^{d_n}_{c_n}sf''(s)\,{\rm d}s.
\end{equation}
Denote $V_\pm:=V_\pm(t',t)$ by
$$
V_\pm=V_\pm(t',t):=\frac{1}{y_\pm(t')-x(t')}\int^{y_\pm(t')}_{x(t')}u(\xi,t')\,{\rm d}\xi.
$$
Since $y_-(t')<x(t')<y_+(t')$ and $\lim_{t'\rightarrow t{-}}y_\pm (t')=x(t)$,
by Theorem $\ref{pro:c3.4}$(ii), 
$$\lim_{t'\rightarrow t{-}}V_-(t',t)=d_n,\qquad \lim_{t'\rightarrow t{-}}V_+(t',t)=c_n,$$
which, by $d_n>c_n$, implies that
$V_-(t',t)-V_+(t',t)\neq 0$ for sufficiently small $t'-t<0$.
Thus, by taking $V_\pm=V_\pm(t',t)$ into $\eqref{c3.41}$,
$$
V_-\,\big(y_-(t')-x(t')\big)-V_+\,\big(y_+(t')-x(t')\big)
=-(t-t')\int^{d_n}_{c_n}sf''(s)\,{\rm d}s,
$$
so that, by a simple calculation, we obtain
$$
\frac{x(t')-x(t)}{t'-t}
=\frac{f(d_n)-f(c_n)+\big(V_--d_n\big)f'(d_n)-\big(V_+-c_n\big)f'(c_n)}{V_--V_+},
$$
which, by letting $t'\rightarrow t{-}$, yields $\eqref{c3.37}$.

Since $u(x{\pm},t)\in L^\infty(\mathbb{R}\times\mathbb{R}^+)$,
$\eqref{c3.36}$--$\eqref{c3.37}$ imply that
$x=x(t)$ for $t\geq t_0$ is a Lipschitz continuous shock curve.

\smallskip
\noindent
{\bf 3}. For any $t>t_0$, denote $u^\pm(t):=u(x(t){\pm},t)$.
If $\mathcal{U}(x(t),t)=\big\{u^+(t),u^-(t)\big\}$,
then $d_n=u^-(t)$ and $c_n=u^+(t)$ so that
$\frac{{\rm d}^+x(t)}{{\rm d}t}=\frac{{\rm d}^-x(t)}{{\rm d}t}$.

\vspace{1pt}
On the other hand, if there exists $t_1>t_0$ such that
\begin{equation}\label{c3.42}
\frac{{\rm d}^+x(t)}{{\rm d}t}\Big|_{t=t_1}\neq\frac{{\rm d}^-x(t)}{{\rm d}t}\Big|_{t=t_1},
\end{equation}
then $d_n\neq u^-(t_1)$ or $c_n \neq u^+(t_1)$ so that 
$$J_n(t_1):=J_n=(c_n,d_n)\subsetneqq [u^+(t_1),u^-(t_1)].$$
Then, from $\eqref{c3.2}$, there exists $J_{n'}$ or $I_m\subset [u^+(t_1),u^-(t_1)]$ such that
$J_{n'}\cap J_{n}=\varnothing$ or $I_m\cap J_{n}=\varnothing$, respectively.
Denote the open interval $J_{n'}$ or $I_m$ by $J'_n(t_1)$.
Then $J'_n(t_1)\cap J_{n}(t_1)=\varnothing$.

We set
\begin{equation}\label{c3.43}
\begin{cases}
\displaystyle Y_n(t_1):=
\big\{y\in\mathbb{R}\,:\, y=x(t_1)-t_1f'(s)\quad {\rm for}\ s\in J_n(t_1)\big\}, \\[2mm]
\displaystyle Y'_n(t_1):=
\big\{y\in\mathbb{R}\,:\, y=x(t_1)-t_1f'(s)\quad {\rm for}\ s\in J'_n(t_1)\big\}.
\end{cases}
\end{equation}
Then $Y_n(t_1)$ and $Y'_n(t_1)$ are open intervals on the $x$--axis such that
$Y_n(t_1)\cap Y'_n(t_1)=\varnothing.$

\vspace{2pt}
For any $t_2\in (t_0,t_1)$,
by Lemma $\ref{pro:c3.2}$, $(x(\tau),\tau)\in \Delta_{J_n(t_1)}(x(t_1),t_1)$ on $ \tau\in [t_0,t_1)$
so that $(x(t_2),t_2)\in \Delta_{J_n(t_1)}(x(t_1),t_1)$.
If $\eqref{c3.42}$ holds for $t=t_2$,
by Proposition $\ref{pro:c3.1}$(ii),
$$
\Delta_{J_n(t_2)}(x(t_2),t_2)\cup\Delta_{J'_n(t_2)}(x(t_2),t_2)
\subset\Delta_I(x(t_2),t_2)\subset\Delta_{J_n(t_1)}(x(t_1),t_1),
$$
which, by $J'_n(t_1)\cap J_{n}(t_1)=\varnothing$, implies
that
$\Delta_{J'_n(t_1)}(x(t_1),t_1)\cap \Delta_{J'_n(t_2)}(x(t_2),t_2)=\varnothing.$
This, by $\eqref{c3.43}$, infers that $Y'_n(t_1)\cap Y'_n(t_2)=\varnothing$.

Therefore, for any $t_1>t_0$ such that $\eqref{c3.42}$ holds,
there exists an open interval $Y'_n(t_1)$ on the $x$--axis such that the map:
$t_1 \mapsto Y'_n(t_1)$ is one-to-one with $Y'_n(t_1)\cap Y'_n(t_2)=\varnothing$ if $t_1\neq t_2$.
Since the disjoint open intervals on the $x$--axis are at most countable,
then $x=x(t), t\geq t_0$, has the same left- and right-derivatives
except at most countable points of $t\geq t_0$.

\smallskip
This completes the proof of Theorem $\ref{pro:c3.6}$.
\end{proof}

\begin{Rem}
 Along any shock curve $x(t)$ for $t\geq t_0$, the following equation holds{\rm :}
\begin{equation}\label{c3.47}
E(u(x(t){-},t);\,x(t),t)=E(u(x(t){+},t);\,x(t),t) \qquad {\rm for\ any}\ t>t_0.
\end{equation}
{\it In general, we can not directly integrate the ordinary differential equation $\eqref{c3.36}$.
However, it is interesting to see that, in general,
the expression of shock curves can be obtained by using $\eqref{c3.47}$,
if the primitive of the initial data function $\varphi(x)$ is known}.
\end{Rem}

\begin{Rem}
Since $f'(u)$ is strictly increasing, $\eqref{c3.36}$--$\eqref{c3.37}$
imply that, along any shock of the entropy solution $u(x,t)$,
$$
 \frac{{\rm d}^\pm x(t)}{{\rm d}t}\in (f'(u(x(t){+},t)),f'(u(x(t){-},t))).
$$
This means that {\it the entropy solution
given by the solution formula $\eqref{c2.50}$
is the unique Lax's entropy solution in} \cite{LPD57,LPD73}.
Furthermore, since $\frac{{\rm d} x(t)}{{\rm d}t}=f'(u(x(t),t))$
along any shock-free characteristic, the forward generalized characteristics are exactly
the generalized characteristics introduced by Dafermos \cite{DCM2,DCM3,DCM5}.
\end{Rem}

\subsection{Fine structures of entropy solutions on the shock sets}
In this subsection, we show the fine structures of entropy solutions on the shock sets.
In fact, by combining the results in \S 3--4 and \S 5.1--5.4, 
we can obtain a comprehensive understanding of the fine structures of entropy solutions on the shock set 
in the upper half-plane,  
which, in detail, are presented in the following theorem.

\begin{The}[Fine structures of entropy solutions on the shock sets]\label{the:c4.2}
Any point $(x,t)$ with $t>0$ must be one of the following five cases{\rm :}
\begin{itemize}
\item [(i)] If $u(x{-},t)=u(x{+},t)$ with $\mathcal{C}_t(x)\neq\varnothing$,
then $(x,t)$
is a point that
lies in the inner of a characteristic segment.

\item [(ii)] If $u(x{-},t)=u(x{+},t)$ with $\mathcal{C}_t(x)=\varnothing$,
then $(x,t)$ is a continuous shock generation point.
The formation and development of shocks,
especially including the optimal regularities of the entropy solution and the shock curve near the continuous shock generation point,
are given by {\rm Theorems $\ref{the:c4.3}$ and $\ref{the:dsw1}$}.

\item [(iii)] If $u(x{-},t)>u(x{+},t)$ and the closed set $\mathcal{U}(x,t)=[u(x{+},t),u(x{-},t)]$,
then $(x,t)$ is a discontinuous shock generation point.
The entropy solution inside the backward characteristic triangle from such points is given by {\rm Theorem $\ref{pro:c3.3}${\rm(ii)}}.

\item [(iv)] If $u(x{-},t)>u(x{+},t)$
and the open set $\mathcal{U}^c(x,t)\subset [u(x{+},t),u(x{-},t)]$ is an open interval,
then $(x,t)$ is a point of shocks,
and there is only one shock passing through point $(x,t)$ 
that any two shocks passing point $(x,t)$ must coincide with each other after some $\bar{t}<t$.

Moreover, if $\mathcal{U}^c(x,t)=(u(x{+},t),u(x{-},t))$,
point $(x,t)$ is a regular point of shocks{\rm ;}
and if $\mathcal{U}^c(x,t)\subsetneqq (u(x{+},t),u(x{-},t))$,
point $(x,t)$ is an irregular point of shocks,
at which there is a collision of a shock with one or two centered compression waves.
The directional limits of the entropy solution at such points are
given by {\rm Theorem $\ref{pro:c3.4}$}.
The left- and right-derivatives of the shock curve at such points are given by $\eqref{c3.36}$--$\eqref{c3.37}$.

\item [(v)] If $u(x{-},t)>u(x{+},t)$ and the open set $\mathcal{U}^c(x,t)$
is a disjoint union of two or more open intervals,
then there are at least two shocks collided at point $(x,t)$
and $(x,t)$ is an irregular point of shocks,
at which there is a collision of 
more than one shock with or without centered compression waves.

Furthermore, there is a one-to-one correspondence to the shocks collided at point $(x,t)$
and the open intervals in the disjoint union of $\mathcal{U}^c(x,t)$.
The directional limits of the entropy solution
at such points are given by {\rm Theorem $\ref{pro:c3.4}$}.
The right-derivative of the shock curve at such points is given by $\eqref{c3.36}$,
and the left-derivatives of the shock branches at such points are given by $\eqref{c3.37}$ correspondingly.
\end{itemize}
\noindent
In the above, $\mathcal{U}^c(x,t)$ is the complement of $\mathcal{U}(x,t)$
as in $\eqref{c3.02}$.
See {\rm Fig.} $\ref{figCpoints}$ for the details.
\end{The}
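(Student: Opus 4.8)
The plan is to prove the classification by first dichotomizing on whether the entropy solution is continuous or discontinuous at $(x,t)$, which is exhaustive since $u(x-,t)=u^-(x,t)\geq u^+(x,t)=u(x+,t)$ by Theorem \ref{the:mt}, and then refining each branch by the structure of $\mathcal{C}_t(x)$ in the continuous case, or of $\mathcal{U}(x,t)$ and its complement $\mathcal{U}^c(x,t)$ under $I(x,t):=[u(x+,t),u(x-,t)]$ in the discontinuous case. In the continuous case $u(x-,t)=u(x+,t)=:c$, Corollary \ref{cor:c4.1} gives precisely two alternatives: either $\mathcal{C}_t(x)=\varnothing$, or $\mathcal{C}_t(x)=\{c\}$; these yield cases (ii) and (i) respectively. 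For case (i), I would apply the semigroup property of Corollary \ref{cor:c2.2} to restart the solution at time $t$ and combine it with Theorem \ref{pro:elc1} and \eqref{uiff} to show that the characteristic line through $(x,t)$ is shock-free on an open time-interval containing $t$, so $(x,t)$ is an interior point of a characteristic segment. For case (ii), Corollary \ref{cor:c4.1}(i) shows the initial wave emitted from $(x,t)$ is a shock, hence $(x,t)$ is a continuous shock generation point in the sense of \S 4, and Theorems \ref{the:c4.3} and \ref{the:dsw1} supply the stated criteria, formation, and optimal regularities near such points.

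In the discontinuous case $u(x-,t)>u(x+,t)$, I would use the decomposition $I(x,t)=\partial\mathcal{U}(x,t)\cup\mathring{\mathcal{U}}(x,t)\cup\mathcal{U}^c(x,t)$ from \eqref{c3.02}, noting that $u^\pm(x,t)\in\mathcal{U}(x,t)$ since $\mathcal{U}(x,t)$ is closed, so $\mathcal{U}^c(x,t)$ is a relatively open subset of $I(x,t)$ omitting the endpoints, i.e. a countable disjoint union of open intervals; hence the three subcases $\mathcal{U}^c(x,t)=\varnothing$, a single nonempty open interval, and a union of two or more open intervals are exhaustive and give cases (iii), (iv), (v). When $\mathcal{U}^c(x,t)=\varnothing$ we have $\mathcal{U}(x,t)=I(x,t)$, so $\mathring{\mathcal{U}}(x,t)=(u(x+,t),u(x-,t))$ is a single $I_m$ and Theorem \ref{pro:c3.3}(ii) describes the backward triangle as one centered compression wave, whence $(x,t)$ is a discontinuous shock generation point as in Theorem \ref{the:elc0}(ii). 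When $\mathcal{U}^c(x,t)=(c_n,d_n)$ is a single interval, Theorem \ref{pro:c3.3}(iii) shows the discontinuity set inside $\Delta_{J_n}(x,t)$ is path-connected and that any two shocks through $(x,t)$ coincide below some $\bar t<t$, so there is exactly one shock through $(x,t)$; whether $\mathcal{U}(x,t)=\{u(x+,t),u(x-,t)\}$ or is strictly larger distinguishes regular from irregular points of shocks, the extra components of $\mathring{\mathcal{U}}(x,t)$ being the colliding centered compression waves, while Theorem \ref{pro:c3.4} and \eqref{c3.36}--\eqref{c3.37} give the directional limits and one-sided derivatives. When $\mathcal{U}^c(x,t)$ has two or more components, the Remark following Theorem \ref{pro:c3.3} together with Lemma \ref{pro:c3.2}(ii) gives the one-to-one correspondence between these components and the shock branches colliding at $(x,t)$, with the right-derivative of the unique forward characteristic given by \eqref{c3.36} and the left-derivative of each branch by \eqref{c3.37}.

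I expect the main obstacle to be case (i): making rigorous the claim that $(x,t)$ lies in the \emph{interior} of a characteristic segment rather than at one of its endpoints. Backward persistence is immediate from \eqref{uiff}, but forward persistence requires ruling out $t$ being the lifespan endpoint $t_*(x_0,c)$ of the characteristic through $(x,t)$; the point here is that $\mathcal{C}_t(x)=\{c\}\neq\varnothing$ together with the semigroup identity $\mathcal{U}(x',t';t)=\mathcal{U}(x',t')$ of Corollary \ref{cor:c2.2} lets one restart a genuine characteristic from $(x,t)$, forcing $t<t_*(x_0,c)$. Everything else is organizational: tracking which earlier theorem supplies each clause of each case and verifying that the listed alternatives are mutually exclusive and exhaustive, which follows from the closedness of $\mathcal{U}(x,t)$ and the structure recorded in \eqref{c3.02}--\eqref{c3.2}.
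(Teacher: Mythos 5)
Your proposal is correct and follows essentially the same route as the paper: the paper's proof likewise splits on continuity at $(x,t)$, invokes Lemma $\ref{lem:c2.5}$ and Corollary $\ref{cor:c4.1}$ for cases (i)--(ii), and classifies the discontinuous cases by whether $\mathcal{U}^c(x,t)$ is empty, a single open interval, or a union of two or more, citing Theorem $\ref{pro:c3.3}$(ii)--(iii), Theorem $\ref{pro:c3.4}$, and $\eqref{c3.36}$--$\eqref{c3.37}$ exactly as you do. Your extra care on the forward persistence of the characteristic in case (i) is a more explicit unpacking of what Corollary $\ref{cor:c4.1}$(ii) already provides, not a different argument.
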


\begin{figure}[H]
\begin{center}
{\includegraphics[width=0.7\columnwidth]{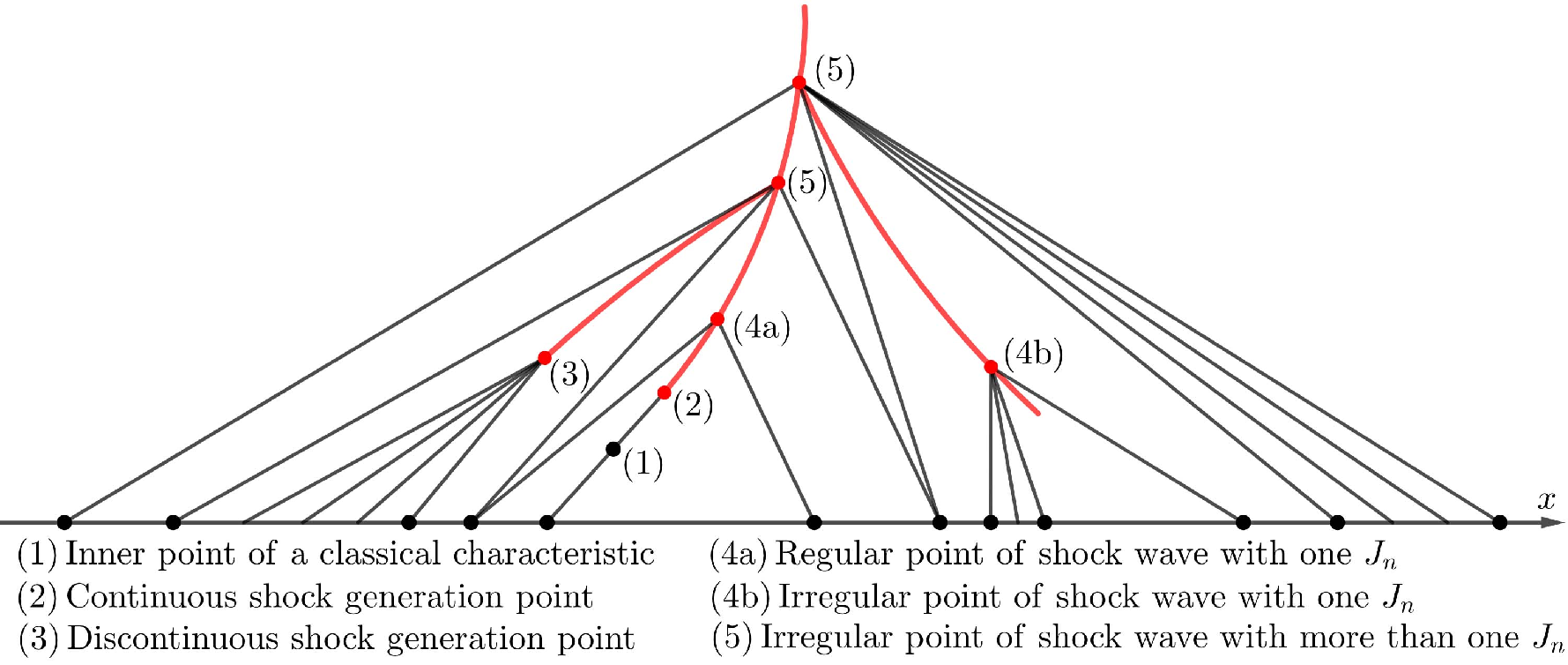}}\caption{Different types of points on the shock sets.}\label{figCpoints}
\end{center}
\end{figure}

\begin{proof} We divide the proof into five steps accordingly.

\vspace{2pt}
\noindent
{\bf 1}. According to Lemma $\ref{lem:c2.5}$ and Corollary $\ref{cor:c4.1}$(ii), (i) is true.

\smallskip
\noindent
{\bf 2}.
By Lemma $\ref{lem:c2.5}$,
the solution $u(x,t)$ is continuous at point $(x,t)$.
From Corollary $\ref{cor:c4.1}$(i),
point $(x,t)$ is a continuous shock generation point.

\smallskip
\noindent
{\bf 3}.
Since $u(x{-},t)>u(x{+},t)$,
then the solution $u(x,t)$ is discontinuous at point $(x,t)$.
From $\mathcal{U}(x,t)=[u(x{+},t),u(x{-},t)]$,
by Theorem $\ref{pro:c3.3}$(ii) and Corollary $\ref{cor:c4.1}$(i),
point $(x,t)$ is a discontinuous shock generation point.

\smallskip
\noindent
{\bf 4}. Since $\mathcal{U}^c(x,t)$ is an open interval,
there is only one $J_n$ at point $(x,t)$.
By Theorem $\ref{pro:c3.3}$(iii),
there exists only one shock passing point $(x,t)$ in the sense that
any two shocks passing point $(x,t)$ must coincide with each other after some time $\bar{t}<t$.

Moreover, if $\mathcal{U}^c(x,t)=J_n=(u(x{+},t),u(x{-},t))$,
then $\mathring{{\mathcal U}}(x,t)=\varnothing$.
Thus, there exist no centered compression waves centered at point $(x,t)$,
so that $(x,t)$ is a regular point of shock curves.
If $\mathcal{U}^c(x,t)=J_n\subsetneqq (u(x{+},t),u(x{-},t))$,
then $\mathring{{\mathcal U}}(x,t)\not=\varnothing$.
Thus, there exists one or two $I_m\subset \mathring{{\mathcal U}}(x,t)$ at point $(x,t)$,
which means that there exist one or two centered compression waves centered at point $(x,t)$.
Thus, point $(x,t)$ is an irregular point of shocks,
which is generated by the collision of a shock with one or two centered compression waves.

\smallskip
\noindent
{\bf 5}. If $\mathcal{U}^c(x,t)$ can be expressed by a disjoint union of two or more open intervals,
then there exist at least two $J_n\subset \mathcal{U}^c(x,t)$ at point $(x,t)$.
By Theorem $\ref{pro:c3.3}$(iii),
there exist at least two shocks collided at point $(x,t)$,
and there is a one-to-one correspondence to the shocks collided at point $(x,t)$
and the open intervals in the disjoint union of $\mathcal{U}^c(x,t)$.
\end{proof}

\section{Invariants, Divides, and Global Structures of Entropy Solutions}
In this section, we analyze the global structures of entropy solutions.
In \S 6.1, we present and prove four invariants of entropy solutions;
in \S 6.2, we provide and prove the necessary and sufficient conditions respectively for the divides as well as their locations and speeds of entropy solutions in Theorem $\ref{the:c5.1}$, 
and obtain all the divides of entropy solutions in Theorem $\ref{the:c5.2}$;
in \S 6.3, we establish the global structures of entropy solutions in Theorem $\ref{pro:c6.2}$.

\subsection{Four invariants of entropy solutions}
For the initial data function $\varphi(x)\in L^{\infty}(\mathbb{R})$,
the following limits are all finite numbers:
\begin{equation}\label{c5.7}
\begin{cases}
\overline{u}_l:
=\displaystyle\mathop{\overline{\lim}}\limits_{x\rightarrow -\infty}
\frac{1}{x}\int^x_0\varphi (\xi)\,{\rm d}\xi,\quad &
\underline{u}_l:=\displaystyle\mathop{\underline{\lim}}\limits_{x\rightarrow -\infty}
\frac{1}{x}\int^x_0\varphi (\xi)\,{\rm d}\xi,\\[4mm]
\overline{u}_r:=\displaystyle\mathop{\overline{\lim}}\limits_{x\rightarrow \infty}
\frac{1}{x}\int^x_0\varphi (\xi)\,{\rm d}\xi,\quad &
\underline{u}_r:
=\displaystyle\mathop{\underline{\lim}}\limits_{x\rightarrow \infty}
\frac{1}{x}\int^x_0\varphi (\xi)\,{\rm d}\xi.
\end{cases}
\end{equation}

In order to show that
$\overline{u}_l$, $\underline{u}_l$, $\overline{u}_r$, and $\underline{u}_r$, given by $\eqref{c5.7}$,
are all the invariants of entropy solutions, we need the following lemma.

\begin{Lem}\label{lem:c6.1}
Let $c\in\mathcal{U}(x_1,t_0)$ and $d\in\mathcal{U}(x_2,t_0)$ with $t_0>0$,
and let the line segments $x_1(t)$ and $x_2(t)$ be given by
$$
x_1(t)=x_1+(t-t_0)f'(c),\quad x_2(t)=x_2+(t-t_0)f'(d)\qquad\,\, {\rm for}\ t\in [0,t_0].
$$
Then, for any $t\in [0,t_0]$,
\begin{equation}\label{c6.11}
tc\big(f'(d)-f'(c)\big)
\leq\int_{x_1(t)}^{x_2(t)}u(x,t)\,{\rm d}x-\int_{x_1(0)}^{x_2(0)}\varphi(x)\,{\rm d}x
\leq td\big(f'(d)-f'(c)\big).
\end{equation}
In particular, if $c=d$, then
\begin{equation}\label{c6.12}
\int_{x_1(t)}^{x_2(t)}u(x,t)\,{\rm d}x=\int_{x_1-t_0f'(c)}^{x_2-t_0f'(c)}\varphi(x)\,{\rm d}x
\qquad\,\, {\rm for\ any}\ t\in [0,t_0].
\end{equation}
\end{Lem}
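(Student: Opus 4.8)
The plan is to derive the estimate \eqref{c6.11} from the representation of $u$ via $\hat{E}$ developed in Lemma \ref{lem:c2.3}, together with the one-to-one correspondence between elements of $\mathcal{U}$ and shock-free characteristics (Lemma \ref{lem:c2.5}(iii)). The key observation is that both $x_1(t)$ and $x_2(t)$ are shock-free characteristics on $(0,t_0)$ emitting downward from $(x_1,t_0)$ and $(x_2,t_0)$, since $c\in\mathcal{U}(x_1,t_0)$ and $d\in\mathcal{U}(x_2,t_0)$; hence by \eqref{uiff} we know $u(x_1(t)\pm,t)=c$ and $u(x_2(t)\pm,t)=d$ for $t\in(0,t_0)$, and the traces extend to $t=0$ in the averaged sense given by \eqref{c1.3c}. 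The quantity $\int_{x_1(t)}^{x_2(t)}u(x,t)\,\mathrm{d}x$ is therefore a natural object to differentiate in $t$.

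First I would express the integral using the Lipschitz potential: by \eqref{c2.13}, $\int_{x_1(t)}^{x_2(t)}u(x,t)\,\mathrm{d}x = \hat{E}(x_1(t),t)-\hat{E}(x_2(t),t)$ for $t\in(0,t_0)$, and by continuity of $\hat{E}$ this persists at $t=0$ with the right-hand side equal to $\int_{x_1(0)}^{x_2(0)}\varphi(x)\,\mathrm{d}x$. Then I would compute the $t$-derivative of $g(t):=\hat{E}(x_1(t),t)-\hat{E}(x_2(t),t)$ along the two line segments. Since $\hat{E}$ is Lipschitz and, by the proof of Lemma \ref{lem:c2.3}, satisfies $\hat{E}_x \overset{a.e.}{=} -u$ and $\hat{E}_t \overset{a.e.}{=} f(u)$, along $x_i(t)$ one gets $\frac{\mathrm{d}}{\mathrm{d}t}\hat{E}(x_i(t),t) = f(u(x_i(t),t)) - u(x_i(t),t)\,f'(\cdot_i)$ where $\cdot_1=c$, $\cdot_2=d$. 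On the shock-free segments this is simply $f(c)-c f'(c)$ along $x_1(t)$ and $f(d)-d f'(d)$ along $x_2(t)$ (constants!), so $g'(t) = \big(f(c)-cf'(c)\big) - \big(f(d)-df'(d)\big)$, a constant, giving $g(t)-g(0) = t\big[(f(c)-cf'(c))-(f(d)-df'(d))\big]$. To get the two-sided bound \eqref{c6.11}, I would rewrite $(f(c)-cf'(c))-(f(d)-df'(d))$ using $f(c)-f(d) = \int_d^c f'(s)\,\mathrm{d}s$ and the convexity inequality $\int_d^c f'(s)\,\mathrm{d}s$ sandwiched between $c(f'(c)-f'(d))$ and $d(f'(c)-f'(d))$ — equivalently, apply the monotonicity of $s\mapsto f'(s)$ to bound $\int_d^c (s-c)f''(s)\,\mathrm{d}s$ and $\int_d^c(s-d)f''(s)\,\mathrm{d}s$ by zero from the appropriate sides — which yields exactly $tc(f'(d)-f'(c)) \le g(t)-g(0) \le td(f'(d)-f'(c))$ after collecting signs (and noting the inequality direction flips consistently whether $c<d$ or $c>d$). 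The special case \eqref{c6.12} with $c=d$ is immediate since then $g'(t)=0$.

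The main obstacle I anticipate is handling the boundary behavior carefully at $t=0$: the traces $u(x_i(t)\pm,t)$ equal $c$ and $d$ only for $t>0$, and one must pass to the limit $t\to0+$ using the weak initial condition \eqref{c1.3c} rather than pointwise values of $\varphi$; the cleanest route is to avoid traces entirely at $t=0$ and instead invoke continuity of $\hat{E}$ on $\mathbb{R}\times\mathbb{R}^+$ (established in Lemma \ref{lem:c2.3}) together with $\hat{E}(y,0+)=\Phi(y)=\int_0^y\varphi$, so that $g(0)$ is identified correctly. A secondary point requiring care is that $\hat{E}_x=-u$ and $\hat{E}_t=f(u)$ hold only almost everywhere, so the computation of $\frac{\mathrm{d}}{\mathrm{d}t}\hat{E}(x_i(t),t)$ along a single line must be justified; since $x_i(t)$ is shock-free, $u$ is actually continuous in a neighborhood of the open segment (by \eqref{uiff} and the semicontinuity of $u(\cdot-,t)$, $u(\cdot+,t)$), so the chain rule for Lipschitz functions applies along that line and gives the stated constant derivative. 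Once these two points are dispatched, the rest is the short convexity estimate above.
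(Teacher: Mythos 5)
Your proof is correct, but it takes a genuinely different route from the paper's. The paper disposes of \eqref{c6.11} in two lines by specializing the refined two-sided $L^1$ estimate \eqref{c2.52} (established in the proof of Corollary \ref{cor:c2.1}(i)) to the constant comparison data $\varphi_1\equiv c$ and $\varphi_1\equiv d$, inserting $c\in\mathcal{U}(x_1,t_0)$ and $d\in\mathcal{U}(x_2,t_0)$ as the evaluation points; no differentiation along characteristics is needed. Your argument instead works with the potential $\hat{E}$ and the incremental identities \eqref{c2.13}--\eqref{c2.14}, and it actually proves more: since $u\equiv c$ on the open segment $x_1(t)$ and $u\equiv d$ on $x_2(t)$ by \eqref{uiff}, your computation yields the exact identity
\begin{align*}
\int_{x_1(t)}^{x_2(t)}u(x,t)\,{\rm d}x-\int_{x_1(0)}^{x_2(0)}\varphi(x)\,{\rm d}x
&= t\big[(f(c)-cf'(c))-(f(d)-df'(d))\big]\\
&=t\int_c^d sf''(s)\,{\rm d}s
= t\,\rho(d,c)\big(f'(d)-f'(c)\big),
\end{align*}
with $\rho$ as in \eqref{aa5a}, from which \eqref{c6.11} follows by $f''\geq 0$ (in either order of $c$ and $d$) and \eqref{c6.12} is immediate. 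One wording to repair: $u$ need not be continuous in a two-dimensional \emph{neighborhood} of the open characteristic segment, only \emph{at} each of its points, which follows from \eqref{uiff} together with Lemma \ref{lem:c2.5}(i) and \eqref{c2.42}; that pointwise continuity is exactly what is needed to pass to the limit in the difference quotients built from \eqref{c2.13}--\eqref{c2.14} and conclude that $\frac{{\rm d}}{{\rm d}t}\hat{E}(x_i(t),t)$ equals the stated constant, and the identification of $g(0)$ via the uniform Lipschitz bound on $\hat{E}$ and \eqref{c1.3c} is as you describe. With that adjustment the argument is complete; the cost relative to the paper is having to justify along-characteristic differentiability of $\hat{E}$, and the reward is the sharper exact formula rather than only the two-sided bound.
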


\begin{proof}
Letting $\varphi_2=\varphi$ and $\varphi_1\equiv c$ in $\eqref{c2.52}$,
and replacing $u_{2,1}$ and $u_{2,2}$ by $c\in\mathcal{U}_2(x_1,t_0)$
and $d\in\mathcal{U}_2(x_2,t_0)$ into $\eqref{c2.52}$ respectively,
we obtain that, for any $t\in [0,t_0]$,
\begin{equation*}
tc\big(f'(d)-f'(c)\big)
\leq\int_{x_1(t)}^{x_2(t)}u(x,t)\,{\rm d}x-\int_{x_1(0)}^{x_2(0)}\varphi(x)\,{\rm d}x
\leq \int_{x_2(t)-tf'(d)}^{x_2(t)-tf'(c)}\varphi(x)\,{\rm d}x.
\end{equation*}
Similarly, letting $\varphi_2=\varphi$ and $\varphi_1\equiv d$ in $\eqref{c2.52}$,
and replacing $u_{2,1}$ and $u_{2,2}$ by $c\in\mathcal{U}_2(x_1,t_0)$
and $d\in\mathcal{U}_2(x_2,t_0)$ into $\eqref{c2.52}$ respectively,
we obtain that, for any $t\in [0,t_0]$,
\begin{equation*}
\int_{x_1(t)-tf'(d)}^{x_1(t)-tf'(c)}\varphi(x)\,{\rm d}x
\leq\int_{x_1(t)}^{x_2(t)}u(x,t)\,{\rm d}x-\int_{x_1(0)}^{x_2(0)}\varphi(x)\,{\rm d}x
\leq td\big(f'(d)-f'(c)\big).
\end{equation*}
This implies $\eqref{c6.11}$.

\smallskip
In particular, $\eqref{c6.12}$ follows by taking $c=d$ into $\eqref{c6.11}$.
\end{proof}

\begin{The}[Four invariants of entropy solutions]\label{pro:c6.1}
 Suppose that $\overline{u}_l$, $\underline{u}_l$, $\overline{u}_r$, and $\underline{u}_r$
 are given by $\eqref{c5.7}$.
Let $u=u(x,t)$ be the entropy solution of the Cauchy problem $\eqref{c1.1}$--$\eqref{ID}$.
Then, for any $t>0$,
\begin{equation}\label{c6.22}
\begin{cases}
 \displaystyle \mathop{\overline{\lim}}\limits_{x\rightarrow-\infty}
 \frac{1}{x}\int^x_0u(\xi,t)\,{\rm d}\xi=\overline{u}_l,\quad &
 \displaystyle \mathop{\underline{\lim}}\limits_{x\rightarrow -\infty}
 \frac{1}{x}\int^x_0u(\xi,t)\,{\rm d}\xi=\underline{u}_l,
\\[4mm]
 \displaystyle \mathop{\overline{\lim}}\limits_{x\rightarrow \infty}
 \frac{1}{x}\int^x_0u(\xi,t)\,{\rm d}\xi=\overline{u}_r, \quad &
 \displaystyle \mathop{\underline{\lim}}\limits_{x\rightarrow \infty}
 \frac{1}{x}\int^x_0u(\xi,t)\,{\rm d}\xi=\underline{u}_r.
\end{cases}
\end{equation}
\end{The}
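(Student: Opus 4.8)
The plan is to reduce everything to Lemma~\ref{lem:c6.1}, which controls the change of mass of the entropy solution along a pair of characteristic segments descending to the initial line. Fix $t_0>0$. Since $\mathcal{U}(x,t_0)$ is nonempty and contained in $[-\|\varphi\|_{L^\infty},\|\varphi\|_{L^\infty}]$ for every $x\in\mathbb{R}$, and since $f\in C^2(\mathbb{R})$, there is a constant $N:=\sup_{|u|\le\|\varphi\|_{L^\infty}}|f'(u)|<\infty$, so that any characteristic value $c\in\mathcal{U}(x,t_0)$ satisfies $|f'(c)|\le N$ and $|c|\le\|\varphi\|_{L^\infty}$. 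Note also that $u(\xi,t_0)$ in the integrals below is unambiguous, since by Theorem~\ref{the:mt} one has $u(\xi{-},t_0)=u(\xi{+},t_0)$ except on an at most countable set.

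First I would establish the key estimate: there is a constant $C=C(t_0,N,\|\varphi\|_{L^\infty})$, \emph{independent of $x$}, such that
$$
\Big|\int_0^x u(\xi,t_0)\,{\rm d}\xi-\int_0^x\varphi(\xi)\,{\rm d}\xi\Big|\le C\qquad\text{for all }x\in\mathbb{R}.
$$
To prove this, for $x<0$ I would apply Lemma~\ref{lem:c6.1} with $x_1=x$, $x_2=0$, choosing $c\in\mathcal{U}(x,t_0)$ and $d\in\mathcal{U}(0,t_0)$; for $x>0$, with $x_1=0$, $x_2=x$ and the analogous choices. In either case, evaluating the two-sided bound $\eqref{c6.11}$ at $t=t_0$ (where $x_1(t_0)=x_1$, $x_2(t_0)=x_2$) gives
$$
\Big|\int_{x_1}^{x_2}u(\xi,t_0)\,{\rm d}\xi-\int_{x_1(0)}^{x_2(0)}\varphi(\xi)\,{\rm d}\xi\Big|\le t_0\max\{|c|,|d|\}\,\big(|f'(d)|+|f'(c)|\big)\le 2t_0N\|\varphi\|_{L^\infty}.
$$
Since $x_i(0)=x_i-t_0f'(\cdot)$ differs from $x_i$ by at most $t_0N$, and $\varphi\in L^\infty$, the difference $\int_{x_1(0)}^{x_2(0)}\varphi-\int_{x_1}^{x_2}\varphi$ is likewise bounded by $2t_0N\|\varphi\|_{L^\infty}$. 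Combining the two bounds and noting that $\int_{x_1}^{x_2}$ equals $\pm\int_0^x$ in each of the two cases yields the claim with $C=4t_0N\|\varphi\|_{L^\infty}$.

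Then I would conclude as follows. Dividing the key estimate by $x$ gives
$$
\Big|\frac1x\int_0^x u(\xi,t_0)\,{\rm d}\xi-\frac1x\int_0^x\varphi(\xi)\,{\rm d}\xi\Big|\le\frac{C}{|x|}\longrightarrow 0\qquad\text{as }|x|\to\infty,
$$
so the sequences $\frac1x\int_0^x u(\xi,t_0)\,{\rm d}\xi$ and $\frac1x\int_0^x\varphi(\xi)\,{\rm d}\xi$ have the same $\limsup$ and the same $\liminf$ as $x\to-\infty$, namely $\overline{u}_l$ and $\underline{u}_l$ by $\eqref{c5.7}$; the same argument as $x\to+\infty$ produces $\overline{u}_r$ and $\underline{u}_r$. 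As $t_0>0$ was arbitrary, this gives $\eqref{c6.22}$ for every $t>0$.

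There is no genuine obstacle here once Lemma~\ref{lem:c6.1} is available; the one point requiring care is that every error term — both the one coming directly from $\eqref{c6.11}$ and the one coming from the mismatch between $x_i$ and $x_i(0)$ — must be bounded by a constant depending only on $t_0$ and $\|\varphi\|_{L^\infty}$, and crucially not on the endpoint $x$ sent to infinity, which is precisely what forces it to disappear after division by $x$. An alternative route via Lemma~\ref{lem:c2.3} (writing $\int_{x_1}^{x_2}u(\cdot,t)={\hat E}(x_1,t)-{\hat E}(x_2,t)$) seems less direct, since it does not immediately compare the mass at time $t_0$ with the initial mass; the characteristic-segment comparison of Lemma~\ref{lem:c6.1} is the cleaner tool.
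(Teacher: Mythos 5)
Your proof is correct and follows essentially the same route as the paper: the paper likewise applies Lemma \ref{lem:c6.1} to the pair of points $(0,t)$ and $(x,t)$, absorbs both the two-sided error in \eqref{c6.11} and the endpoint shift from $x_i$ to $x_i(0)$ into an $x$-independent $O(1)$ constant, and then divides by $x$ and lets $|x|\to\infty$. You have merely written out explicitly the uniform bounds that the paper leaves implicit.
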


\begin{proof}
Fix $t>0$. Since $\varphi(x)$ and $u(x,t)$ are uniformly bounded by $\|\varphi\|_{L^\infty}$,
then, for any $x\in \mathbb{R}$,
taking points $(0,t)$ and $(x,t)$ into $\eqref{c6.11}$, we obtain
\begin{equation*}
O_1(1)+\int^x_0\varphi(\xi)\,{\rm d}\xi
\leq\int^{x}_{0}u(\xi,t)\,{\rm d}\xi
\leq\int^x_0\varphi (\xi)\,{\rm d}\xi+O_2(1),
\end{equation*}
which, by multiplying $\frac{1}{x}$ and letting $x\rightarrow\pm\infty$, implies $\eqref{c6.22}$.
\end{proof}

\subsection{Locations and speeds of divides}
A divide is a global-in-time characteristic that does not interact
with any other characteristic for all time $t$.
A more detailed definition of divides is given as
\begin{Def}\label{def:c5.1}
A point $x_0$ on the $x$--axis is called a divide generation point
if there exists $c\in \mathbb{R}$ such that
$\mathcal{U}(x,t)=\{c\}$ for any point $(x,t)\in L_c(x_0):=\{x=x_0+tf'(c),\, t>0\}$.
If $x_0$ is a divide generation point,
line $L_c(x_0)$ is called a divide emitting from $x_0$ with speed $c$,
and the correspondingly speed $c$ is called a divide generation value of $x_0$.

The point set $\mathcal{D}(x_0)$ of divide generation values of $x_0$ is defined by
\begin{equation}\label{c5.0}
\mathcal{D}(x_0):=\big\{c\in \mathbb{R}\,:\, \mathcal{U}(x,t)=\{c\}
\quad {\rm for\ any}\ (x,t)\in L_c(x_0)\ {\rm and} \ t>0\big\}.
\end{equation}
\end{Def}
\noindent
From Definition $\ref{def:c5.1}$, the following three are equivalent:

\smallskip
(i) $\,x_0$ is a divide generation point.
(ii) $\,L_c(x_0)$ is a divide for some $c\in \mathbb{R}$.
(iii) $\,\mathcal{D}(x_0)\neq\varnothing$.

\smallskip
Since a divide $L_c(x_0)$ is necessarily a characteristic,
by Definition $\ref{def:c4.1}$,
\begin{equation}\label{c5.1}
\mathcal{D}(x_0)\subset \mathcal{C}(x_0)
\subset [\overline{{\rm D}}_-\Phi(x_0),\underline{{\rm D}}_+\Phi(x_0)].
\end{equation}

We first present the necessary and sufficient conditions for the divides in Proposition $\ref{pro:c5.1}$,
and a basic property of $\mathcal{D}(x_0)$ in Proposition $\ref{pro:c5.2}$.

\begin{Pro}\label{pro:c5.1}
$L_c(x_0):\, x=x_0+tf'(c)$ for $t>0$ is a divide if and only if
\begin{equation}\label{c5.1d}
\Phi(l;x_0,c)=
\int^{x_0+l}_{x_0}(\varphi (\xi)-c)\,{\rm d}\xi\geq 0
\qquad\, {\rm for\ any}\ l\in \mathbb{R}.
\end{equation}
\end{Pro}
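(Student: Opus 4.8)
The plan is to characterize divides via the criterion for characteristics already established in Lemma~$\ref{lem:c4.1}$ together with the lifespan analysis of Theorems~$\ref{lem:plc1}$ and $\ref{pro:elc1}$, and to show that the condition $t_* = \infty$ for the characteristic $L_c(x_0)$ is exactly equivalent to $\eqref{c5.1d}$. First I would recall from $\eqref{elc1}$ that the lifespan $t_* = t_*(x_0,c)$ of $L_c(x_0)$ (with $c \in \mathcal{C}(x_0)$) is
$$
t_* = \sup\big\{t\in\mathbb{R}^+ \,:\, \Phi(l;x_0,c) \geq F(l;t,c)\ \text{for all}\ l = l(u;t,c),\ u\in\mathbb{R}\big\},
$$
and from Theorem~$\ref{pro:elc1}$(ii) that $L_c(x_0)$ is a divide, i.e.\ $\mathcal{U}(x,t)=\{c\}$ for all $(x,t)\in L_c(x_0)$ with $t>0$, precisely when $t_* = \infty$. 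So the statement reduces to: $t_*(x_0,c) = \infty$ (with the implicit understanding that $c\in\mathcal{C}(x_0)$, which $\eqref{c5.1d}$ will itself force) if and only if $\eqref{c5.1d}$ holds.

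The core computation is the observation that $F(l;t,c) < 0$ for all $l\neq 0$ and $t>0$, which is recorded in the excerpt just after $\eqref{plc2}$ (it follows from $\eqref{c4.2b}$ since $(s-c)f''(s)$ has the same sign as $s-c$, making the integrand of $F$ nonnegative on the interval of integration, and strictly positive on a set of positive measure by $\eqref{c1.2}$). Granting this, the ``if'' direction is immediate: if $\Phi(l;x_0,c)\geq 0$ for all $l\in\mathbb{R}$, then for \emph{every} $t>0$ we have $\Phi(l;x_0,c) \geq 0 > F(l;t,c)$ for all $l\neq 0$, so the supremum defining $t_*$ is $+\infty$; also $\eqref{c4.2c}$ of Lemma~$\ref{lem:c4.1}$(i) is satisfied, giving $c\in\mathcal{C}(x_0)$, and then Theorem~$\ref{pro:elc1}$(ii) yields that $L_c(x_0)$ is a divide. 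For the ``only if'' direction, suppose $L_c(x_0)$ is a divide, so $t_* = \infty$. If $\eqref{c5.1d}$ failed, there would be some $l_0\neq 0$ with $\Phi(l_0;x_0,c) < 0$; since $F(l_0;\cdot,c)$ is continuous and strictly increasing in $t$ with $F(l_0;t,c)\to 0$ as $t\to 0+$ (from $\eqref{c4.2e}$ and the fact that $u(l_0;t,c)\to c$ as $t\to 0+$), and $F(l_0;t,c)\to -\infty$ is not needed — what is needed is $F(l_0;t,c)\to 0$ from below as $t\to\infty$ only if the relevant $u$-range is bounded, so here one must be a little careful. The clean way: choose $t$ large enough that $l_0 = l(u;t,c)$ for some $u$ in the range $[-\|\varphi\|_{L^\infty},\|\varphi\|_{L^\infty}]$ where $E(\cdot;x,t)$ attains its max; then since $F(l_0;t,c)\to 0$ as $t\to\infty$ (because $u(l_0;t,c)\to c$) while $\Phi(l_0;x_0,c)$ is a fixed negative number, for all sufficiently large $t$ we get $\Phi(l_0;x_0,c) < F(l_0;t,c)$, contradicting $t_* = \infty$ by the definition of $t_*$ in $\eqref{elc1}$.

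The main obstacle, and the step deserving the most care, is the limiting behavior of $F(l;t,c)$ as $t\to\infty$ for a \emph{fixed} displacement $l\neq 0$: one needs that $u(l;t,c) = (f')^{-1}(f'(c) - l/t) \to c$ as $t\to\infty$, which is clear since $f'$ is a homeomorphism onto its (open) image and $f'(c) - l/t \to f'(c)$, hence $F(l;t,c) = -t\int_c^{u(l;t,c)}(s-c)f''(s)\,\mathrm{d}s$; by the substitution $l = t(f'(c)-f'(u))$ already used in $\eqref{c4.13}$ this integral equals $-\,\rho$-type quantity times $l$, and a Taylor/mean-value estimate (as in the displayed computation following $\eqref{xxu}$ and in Proposition~$\ref{pro:c4.1}$) gives $F(l;t,c) = O(1)\cdot t\cdot(u(l;t,c)-c)^2 \to 0$. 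Once this is in hand the equivalence drops out; alternatively, and perhaps more transparently, I would prove the ``only if'' direction directly from $\eqref{uiff}$: if $L_c(x_0)$ is a divide then $c\in\mathcal{U}(x,t)$ for all $t>0$, which by $\eqref{c4.2}$ with $u$ chosen so that $l=t(f'(c)-f'(u))$ gives $\Phi(l;x_0,c) \geq F(l;t,c)$ for every $t>0$; letting $t\to\infty$ with $l$ fixed and using $F(l;t,c)\to 0$ yields $\Phi(l;x_0,c)\geq 0$ for all $l$, which is $\eqref{c5.1d}$. I would present this direct argument as the primary one and relegate the $t_*$-reformulation to a remark.
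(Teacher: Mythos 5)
Your primary (direct) argument is correct and coincides with the paper's own proof: for the ``only if'' direction one fixes $l\neq 0$, chooses $u(t)$ with $l=t(f'(c)-f'(u(t)))$, and lets $t\to\infty$ so that $F(l;t,c)=l\,(\rho(u(t),c)-c)\to 0$, forcing $\Phi(l;x_0,c)\geq 0$; for the ``if'' direction, $\Phi(l;x_0,c)\geq 0>F(l;t,c)$ for all $l\neq 0$ makes $c$ the unique maximizer of $E(\cdot\,;x,t)$ at every point of $L_c(x_0)$. The detour through $t_*$ and Theorem~$\ref{pro:elc1}$ is an equivalent repackaging of the same two computations rather than a genuinely different route, and your direct version is the one to keep.
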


\begin{proof}
First, if $L_c(x_0)$ is a divide,
then $\mathcal{U}(x,t)=\{c\}$ for any $(x,t)\in L_c(x_0)$ so that
\begin{equation}\label{c5.2}
E(c;x,t)-E(u;x,t)\geq 0 \qquad {\rm for\ any\ }(x,t)\in L_c(x_0)\ {\rm and}\ u\in\mathbb{R}.
\end{equation}
For any given $l_0\neq 0$, for sufficiently large $t$,
there exists $u(t)\in \mathbb{R}$ such that
$l_0=t(f'(c)-f'(u(t)))$,
which, by the strictly increasing property of $f'(u)$, implies that
$\lim_{t\rightarrow \infty} u(t)=c$
and
$u(t)\neq c$.
From $\eqref{c4.2}$, taking $l_0$ and $u=u(t)$ into $\eqref{c5.2}$,
we obtain
\begin{align*}
\int^{x_0+l_0}_{x_0}(\varphi (\xi)-c)\,{\rm d}\xi&\geq -t\int^{u(t)}_c(s-c)f''(s)\,{\rm d}s
\nonumber\\[1mm]
&=-t\,\big((u(t)-c)f'(u(t))-(f(u(t))-f(c))\big)\nonumber\\[1mm]
&=l_0(u(t)-c)\frac{f'(u(t))-f'(\hat{u}(t))}{f'(u(t))-f'(c)}
\nonumber\\[1mm]
&
=l_0\,(u(t)-c)\,\theta\ \longrightarrow 0 \qquad {\rm as}\ t\rightarrow \infty,
\end{align*}
where $\hat{u}(t)$ lies between $c$ and $u(t)$, and $\theta \in(0,1)$.
This, by the arbitrariness of $l_0$, implies $\eqref{c5.1d}$.

On the other hand, suppose that $\eqref{c5.1d}$ holds.
For any fixed $(x,t)\in L_c(x_0)$, if $u\neq c$,
then, letting $l=t(f'(c)-f'(u))$ in $\eqref{c5.1d}$, we obtain
$$
E(c;x,t)-E(u;x,t)\geq t\int^u_c(s-c)f''(s)\,{\rm d}s > 0,
$$
which means that $c$ is the unique point at which $E(\cdot\,;x,t)$ attains its maximum.
This implies that $\mathcal{U}(x,t)=\{c\}$.
By the arbitrariness of point $(x,t)\in L_c(x_0)$, $L_c(x_0)$ is a divide.
\end{proof}

\begin{Pro}\label{pro:c5.2}
If $\mathcal{D}(x_0)$ defined by $\eqref{c5.0}$ has more than one element,
then it is a closed interval{\rm :}
\begin{itemize}
\item [(i)] For any $c_{-},c_{+} \in \mathcal{D}(x_0)$ with $c_{-}<c_{+}$,
then $[c_{-},c_{+}]\subset \mathcal{D}(x_0)${\rm ;}

\vspace{2pt}
\item [(ii)] If $(c_{-},c_{+})\subset \mathcal{D}(x_0)$,
then $[c_{-},c_{+}]\subset \mathcal{D}(x_0)$.
\end{itemize}
\end{Pro}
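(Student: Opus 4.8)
The plan is to deduce both statements directly from the characterization of divides in Proposition~$\ref{pro:c5.1}$: a speed $c$ lies in $\mathcal{D}(x_0)$ (equivalently, $L_c(x_0)$ is a divide) if and only if $\Phi(l;x_0,c)\ge 0$ for all $l\in\mathbb{R}$, where $\Phi(l;x_0,c)=\int_{x_0}^{x_0+l}(\varphi(\xi)-c)\,{\rm d}\xi$ as in $\eqref{c4.2b}$. The key structural observation is that, for each fixed $l$, the quantity $\Phi(l;x_0,c)$ is an affine function of $c$; indeed, with $\Phi(x)=\int_0^x\varphi(\xi)\,{\rm d}\xi$ as in $\eqref{c4.1a}$,
$$\Phi(l;x_0,c)=\Phi(x_0+l)-\Phi(x_0)-c\,l,$$
so that, for any $\theta\in[0,1]$ and any $c_-,c_+\in\mathbb{R}$,
$$\Phi\big(l;x_0,\theta c_-+(1-\theta)c_+\big)=\theta\,\Phi(l;x_0,c_-)+(1-\theta)\,\Phi(l;x_0,c_+).$$
In particular, for each $l$ the set $\{c\in\mathbb{R}:\Phi(l;x_0,c)\ge0\}$ is a closed half-line, and $\mathcal{D}(x_0)$ is the intersection of these half-lines over $l\in\mathbb{R}$, hence a closed interval whenever it has more than one point. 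Statements (i) and (ii) just isolate the ``convexity'' and ``closedness'' halves of this observation.

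For part (i), I would take $c_-,c_+\in\mathcal{D}(x_0)$ with $c_-<c_+$ and an arbitrary $c=\theta c_-+(1-\theta)c_+\in[c_-,c_+]$ for some $\theta\in[0,1]$. By Proposition~$\ref{pro:c5.1}$, $\Phi(l;x_0,c_-)\ge0$ and $\Phi(l;x_0,c_+)\ge0$ for every $l\in\mathbb{R}$; substituting into the convex-combination identity above gives $\Phi(l;x_0,c)\ge0$ for every $l\in\mathbb{R}$, so $c\in\mathcal{D}(x_0)$ by Proposition~$\ref{pro:c5.1}$ again. Since $c$ was arbitrary, $[c_-,c_+]\subset\mathcal{D}(x_0)$.

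For part (ii), I would fix $l\in\mathbb{R}$ and use the continuity (indeed affineness) of $c\mapsto\Phi(l;x_0,c)$. Since $(c_-,c_+)\subset\mathcal{D}(x_0)$, Proposition~$\ref{pro:c5.1}$ gives $\Phi(l;x_0,c)\ge0$ for every $c\in(c_-,c_+)$; letting $c\to c_-{+}$ and $c\to c_+{-}$ yields $\Phi(l;x_0,c_-)\ge0$ and $\Phi(l;x_0,c_+)\ge0$. As $l$ was arbitrary, both endpoints satisfy the criterion of Proposition~$\ref{pro:c5.1}$, hence $c_-,c_+\in\mathcal{D}(x_0)$, and therefore $[c_-,c_+]=\{c_-,c_+\}\cup(c_-,c_+)\subset\mathcal{D}(x_0)$.

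I do not expect any substantial obstacle: the entire content is that the defining inequalities $\Phi(l;x_0,c)\ge0$ depend affinely on the speed $c$, which reduces the proposition to elementary convexity and continuity arguments applied one parameter value $l$ at a time. The only point requiring a word of care is that all the inequalities must hold simultaneously over $l\in\mathbb{R}$; but since each inequality is treated for a fixed $l$ and the conclusion is drawn uniformly in $l$, this presents no difficulty.
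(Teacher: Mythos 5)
Your proof is correct and follows essentially the same route as the paper: both parts reduce to the integral criterion of Proposition~$\ref{pro:c5.1}$ and are argued pointwise in $l$, with part (ii) obtained by exactly the same limiting argument $c\to c_\mp$. The only cosmetic difference is in part (i), where the paper compares $\Phi(l;x_0,c)$ with $\Phi(l;x_0,c_+)$ for $l\ge 0$ and with $\Phi(l;x_0,c_-)$ for $l\le 0$ via monotonicity in $c$, whereas you use the affine dependence of $\Phi(l;x_0,c)$ on $c$ and take a convex combination; these are equivalent observations.
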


\begin{proof} We divide the proof into two steps accordingly.

\smallskip
\noindent
{\bf 1}. Suppose that $c_{-},c_{+} \in \mathcal{D}(x_0)$ with $c_{-}<c_{+}$.
From $\eqref{c5.1d}$, for any $c\in (c_{-},c_{+})$,
\begin{equation*}
\int^{x_0+l}_{x_0}(\varphi (\xi)-c)\,{\rm d}\xi
\geq\int^{x_0+l}_{x_0}(\varphi (\xi)-c_{\pm})\,{\rm d}\xi \geq 0
\qquad\,\, {\rm if}\ \pm l\geq 0,
\end{equation*}
which, by Proposition $\ref{pro:c5.1}$, implies that $c\in \mathcal{D}(x_0)$.
Thus, $[c_{-},c_{+}]\subset \mathcal{D}(x_0)$.

\smallskip
\noindent
{\bf 2}. If $(c_{-},c_{+})\subset \mathcal{D}(x_0)$, then, from $\eqref{c5.1d}$,
for any $c\in(c_{-},c_{+})$,
$$
\int^{x_0+l}_{x_0}(\varphi (\xi)-c)\,{\rm d}\xi \geq 0
\qquad {\rm for\ any}\ l\in\mathbb{R},
$$
which, by letting $c\rightarrow c_{-}{+}$ and $c\rightarrow c_{+}{-}$ respectively,
implies that $\eqref{c5.1d}$ holds for $c=c_{-}$ and $c=c_{+}$.
This means that $c_{-},c_{+} \in \mathcal{D}(x_0)$ so that $[c_{-},c_{+}]\subset \mathcal{D}(x_0)$.
\end{proof}

From Proposition $\ref{pro:c5.2}$,
$\mathcal{D}(x_0)$ may be the empty set, a single point set, or a closed interval.
In order to determine the point set $\mathcal{D}(x_0)$ exactly,
we introduce the convex hull of $\Phi(x)=\int_0^x\varphi(\xi)\,{\rm d}\xi$ over $({-}\infty,\infty)$,
denoted by $\bar{\Phi}(x)$, as follows:
\begin{equation}\label{c5.4}
\bar{\Phi}(x)=\lim_{N\rightarrow \infty}\bar{\Phi}_N(x)\qquad {\rm for \ any}\ x\in \mathbb{R},
\end{equation}
where $\bar{\Phi}_N(x)$ is the convex-truncated function of $\Phi(x)$ over $[-N,N]$ given by
\begin{equation}\label{c5.5}
\bar{\Phi}_N(x)=
\begin{cases}
\mathop{cov}_{[-N,N]}\Phi (x) \qquad &{\rm if}\ |x|\leq N,\\[1mm]
\Phi (x)\qquad &{\rm if}\ |x|\geq N.
\end{cases}
\end{equation}
Furthermore, let $\mathcal{K}_0$ be the set of points
at which $\Phi(x)$ equals its convex hull $\bar{\Phi}(x)$, $i.e.$,
\begin{equation}\label{c5.6}
\mathcal{K}_0=\big\{x_0\in\mathbb{R}\,:\, \Phi(x_0)=\bar{\Phi}(x_0)\big\}.
\end{equation}

\smallskip
In order to keep the coherence of writing,
we list the properties of $\bar{\Phi}(x)$ in Lemmas $\ref{lem:c5.1}$--$\ref{lem:c5.3}$,
and postpone their proofs in Appendix B for completeness.

\begin{Lem}\label{lem:c5.1}
$\bar{\Phi }(x)>-\infty$ on $\mathbb{R}$ if and only if
\begin{align}\label{phicc}
\mathop{\underline{\lim}}\limits_{x\rightarrow \pm \infty}(\Phi(x)-cx)>-\infty \qquad
{\rm for\ some}\ c\in \mathbb{R}.
\end{align}
\end{Lem}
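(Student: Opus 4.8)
The claim is that the convex hull $\bar{\Phi}(x)$ of $\Phi(x)$ over $(-\infty,\infty)$ is finite everywhere if and only if there exists $c\in\mathbb{R}$ such that $\underline{\lim}_{x\to\pm\infty}(\Phi(x)-cx)>-\infty$, i.e.\ $\Phi$ lies above some affine function near both ends. The plan is to prove the two implications separately, working from the definition of $\bar{\Phi}$ via the monotone limit of the convex-truncated functions $\bar{\Phi}_N$ in $\eqref{c5.4}$--$\eqref{c5.5}$.

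For the ``if'' direction, suppose $\Phi(x)\geq cx + b$ for $|x|\geq R$ for some constants $c,b$. The affine function $x\mapsto cx+b$ is convex and, by continuity of $\Phi$ on the compact interval $[-R,R]$, we can lower $b$ (to some $b'\le b$) so that $\Phi(x)\ge cx+b'$ holds for \emph{all} $x\in\mathbb{R}$. Since the convex hull of $\Phi$ over $[-N,N]$ is the supremum of all affine (equivalently, convex) functions lying below $\Phi$ on $[-N,N]$, and since $x\mapsto cx+b'$ is one such function for every $N$, we get $\bar{\Phi}_N(x)\geq cx+b'$ for all $N$ and all $x$. Passing to the limit $N\to\infty$ yields $\bar{\Phi}(x)\geq cx+b'>-\infty$ on $\mathbb{R}$, as desired. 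I would also note that the limit in $\eqref{c5.4}$ genuinely exists (as a possibly-$-\infty$ value): for fixed $x$, the sequence $\bar{\Phi}_N(x)$ is nonincreasing in $N$ because enlarging the truncation interval can only lower the convex hull, so monotone convergence applies.

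For the ``only if'' direction, I would argue contrapositively: assume that $\eqref{phicc}$ fails for every $c\in\mathbb{R}$, and show $\bar{\Phi}$ is not finite everywhere — in fact I expect $\bar{\Phi}\equiv -\infty$. Fix any point $x_*$, say $x_*=0$. A convex function finite on all of $\mathbb{R}$ has, at each point, a supporting line; if $\bar{\Phi}(0)$ were finite, there would be a slope $c$ with $\bar{\Phi}(x)\geq \bar{\Phi}(0)+c(x-0)$ for all $x$, hence $\Phi(x)\geq \bar{\Phi}(x)\geq cx+\bar\Phi(0)$ for all $x$, so $\underline\lim_{x\to\pm\infty}(\Phi(x)-cx)\geq\bar\Phi(0)>-\infty$, contradicting our assumption. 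The subtle point is justifying the existence of a supporting line: $\bar{\Phi}$ is convex as a pointwise (decreasing) limit of the convex functions $\bar{\Phi}_N$, but one must check it does not take the value $-\infty$ at some points while being finite at others in a degenerate way; a clean route is to observe that either $\bar{\Phi}$ is finite on an open interval (whence it is continuous and has supporting lines there, and the above argument runs), or $\bar{\Phi}(x)=-\infty$ at some point, which by convexity forces $\bar\Phi\equiv-\infty$ — in the latter case the conclusion $\bar{\Phi}(x)>-\infty$ already fails. So in all cases, failure of $\eqref{phicc}$ implies failure of $\bar{\Phi}>-\infty$ on $\mathbb{R}$.

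The main obstacle I anticipate is handling the two-sided nature of $\eqref{phicc}$ carefully: a single affine minorant must control $\Phi$ at \emph{both} $\pm\infty$ simultaneously with the \emph{same} slope $c$. In the ``if'' direction this is given by hypothesis (the same $c$ works at both ends), so it is painless; but one should double-check that $\eqref{phicc}$ as written — ``for some $c$'' with the $\pm$ meaning both signs — indeed supplies a common slope, rather than separate slopes $c_-,c_+$, in which case one would additionally need $c_-\le c_+$ and could take $c$ between them. The cleanest presentation uses the characterization of $\mathop{cov}\Phi$ as the supremum of affine minorants and the monotonicity of $\bar\Phi_N$ in $N$; with those two facts in hand the argument is short, and the remaining work is just the elementary fact that a finite convex function on $\mathbb{R}$ has a supporting line at every point.
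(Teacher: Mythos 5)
Your proof is correct and follows essentially the same route as the paper's: the ``if'' direction extends the affine minorant $cx+b$ to all of $\mathbb{R}$ via the minimum of $\Phi(x)-cx$ on a compact interval and then passes through $\bar{\Phi}_N(x)\geq cx+b$ to the limit, while the ``only if'' direction uses a supporting line of the finite convex limit function $\bar{\Phi}$ at a single point (the paper takes slope ${\rm D}_-\bar{\Phi}(0)$) together with $\Phi\geq\bar{\Phi}$. Your extra discussion of the degenerate case where $\bar{\Phi}$ takes the value $-\infty$ at some points is not actually needed for the contrapositive you run (finiteness everywhere already gives the supporting line), but it is harmless.
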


\begin{Lem}\label{lem:c5.2}
There are three cases of $\bar{\Phi}(x)${\rm :}
\begin{enumerate}
\item[{\rm (i)}] If $\overline{u}_l<\underline{u}_r$,
then $\bar{\Phi} (x)>-\infty$ on $\mathbb{R}$ and satisfies that
\begin{equation}\label{c5.8a}
\overline{u}_l\leq {\rm D}_-\bar{\Phi}(y)\leq {\rm D}_+\bar{\Phi}(y)\leq\underline{u}_r
\qquad {\rm for \ any}\ y\in \mathbb{R},
\end{equation}
and $\mathcal{K}_0\neq \varnothing$ such that
\begin{equation}\label{c5.8}
\inf_{x_0\in \mathcal{K}_0}{\rm D}_-\bar{\Phi}(x_0)=\overline{u}_l,\qquad
\sup_{x_0\in \mathcal{K}_0}{\rm D}_+\bar{\Phi}(x_0)=\underline{u}_r.
\end{equation}

\item[{\rm (ii)}] If $\overline{u}_l=\underline{u}_r$,
then $\bar{\Phi} (x)>-\infty$ on $\mathbb{R}$ if and only if
\begin{equation}\label{c5.8b}
\mathop{\underline{\lim}}_{x\rightarrow \pm \infty}(\Phi(x)-\underline{u}_rx)>-\infty.
\end{equation}
Furthermore, if $\bar{\Phi} (x)>-\infty$ on $ \mathbb{R}$, then
there exists $b_0\in \mathbb{R}$ such that $\bar{\Phi}(x)=\underline{u}_r x+b_0$.
There exist two subcases{\rm :}
\begin{itemize}
\item [(a)] $\mathcal{K}_0\neq \varnothing$ if $\Phi(x_0)=\underline{u}_r x_0+b_0$
for some $x_0\in \mathbb{R}${\rm ;}

\vspace{2pt}
\item [(b)] $\mathcal{K}_0= \varnothing$ if $\Phi(x)>\underline{u}_r x+b_0$ for any $x\in \mathbb{R}$.
\end{itemize}

\vspace{2pt}
\item[{\rm (iii)}] If $\overline{u}_l>\underline{u}_r$, then $\bar{\Phi} (x)\equiv-\infty$.
\end{enumerate}
\end{Lem}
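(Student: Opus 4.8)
The plan is to analyze $\bar\Phi$ through its description as a supremum of affine minorants of $\Phi$. First I would record some preliminaries. The functions $\bar\Phi_N$ in $\eqref{c5.5}$ are convex and decrease pointwise in $N$ (enlarging the convexification window can only lower the convex hull), so the limit $\bar\Phi$ in $\eqref{c5.4}$ exists in $[-\infty,\Phi(x)]$, is convex whenever it is not identically $-\infty$, coincides with the largest convex minorant of $\Phi$ on $\mathbb R$, and — as soon as $\Phi$ admits at least one affine minorant — equals $\sup\{\ell:\ \ell\ \textrm{affine},\ \ell\le\Phi\}$. Two elementary facts drive the rest. (a) Since $\varphi\in L^\infty$, $\Phi$ is Lipschitz, hence bounded on compacta; therefore $\inf_{\mathbb R}(\Phi(x)-cx)>-\infty$ iff $\liminf_{x\to\pm\infty}(\Phi(x)-cx)>-\infty$, and, writing $\Phi(x)-cx=x\bigl(\tfrac1x\int_0^x\varphi-c\bigr)$ and using $\eqref{c5.7}$, this holds exactly when $\overline u_l\le c\le\underline u_r$; consequently $\Phi$ has an affine minorant iff $\overline u_l\le\underline u_r$, and every such minorant has slope in $[\overline u_l,\underline u_r]$. (b) From the convexity inequality for $\bar\Phi_N$, passed to the limit, the value $-\infty$ of $\bar\Phi$ at one point forces $\bar\Phi\equiv-\infty$; hence $\bar\Phi$ is either $\equiv-\infty$ or finite everywhere.

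With these in hand, case (iii) ($\overline u_l>\underline u_r$) is immediate: by (a), $\Phi$ has no affine minorant, so Lemma $\ref{lem:c5.1}$ together with the dichotomy (b) gives $\bar\Phi\equiv-\infty$. For case (ii) ($\overline u_l=\underline u_r=:u_*$) the only admissible slope is $u_*$, so $\bar\Phi>-\infty$ iff $\inf_{\mathbb R}(\Phi(x)-u_*x)>-\infty$, i.e. iff $\eqref{c5.8b}$ holds, which is exactly the condition in Lemma $\ref{lem:c5.1}$; when it holds, $\bar\Phi$ is a supremum of lines of slope $u_*$, hence $\bar\Phi(x)=u_*x+b_0$ with $b_0=\inf_{\mathbb R}(\Phi(x)-u_*x)$, and subcases (a)/(b) merely record whether this infimum is attained, i.e. whether $\mathcal K_0=\{x:\Phi(x)=u_*x+b_0\}$ (as in $\eqref{c5.6}$) is nonempty.

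For case (i) ($\overline u_l<\underline u_r$), fix $c\in(\overline u_l,\underline u_r)$. The estimates in (a) upgrade to $\Phi(x)-cx\to+\infty$ as $x\to\pm\infty$: for $x\to+\infty$ eventually $\tfrac1x\int_0^x\varphi>c$ since $\underline u_r>c$, and for $x\to-\infty$ eventually $\tfrac1x\int_0^x\varphi<c$ since $\overline u_l<c$, and in each case the product with $x$ of the appropriate sign diverges to $+\infty$. By continuity the function $x\mapsto\Phi(x)-cx$ attains a finite minimum at some $x_c$; then $\ell_c(x):=\Phi(x_c)+c(x-x_c)$ is an affine minorant with $\ell_c(x_c)=\Phi(x_c)$, so $\bar\Phi\ge\ell_c$ is finite everywhere and $x_c\in\mathcal K_0$, giving $\mathcal K_0\neq\varnothing$. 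Being a supremum of affine minorants, all of slope in $[\overline u_l,\underline u_r]$, $\bar\Phi$ is convex and Lipschitz with one-sided derivatives in that interval, which is $\eqref{c5.8a}$. Finally, taking $c=\overline u_l+\varepsilon\downarrow\overline u_l$ (valid for small $\varepsilon$), $\ell_c$ supports the convex function $\bar\Phi$ at $x_c\in\mathcal K_0$, so $D_-\bar\Phi(x_c)\le c=\overline u_l+\varepsilon$; combined with $\eqref{c5.8a}$ this yields $\inf_{\mathcal K_0}D_-\bar\Phi=\overline u_l$, and symmetrically $c=\underline u_r-\varepsilon\uparrow\underline u_r$ yields $\sup_{\mathcal K_0}D_+\bar\Phi=\underline u_r$, which is $\eqref{c5.8}$.

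I expect the main obstacle to be the bookkeeping in case (i): correctly matching the sign of $x$ with the $\limsup$/$\liminf$ defining the invariants so that $\Phi(x)-cx$ really tends to $+\infty$ at both ends, and rigorously justifying that the touching minorant $\ell_c$ is a genuine supporting line of $\bar\Phi$ with slope between $D_-\bar\Phi$ and $D_+\bar\Phi$. This last point, together with the observation that the touching points $x_c$ lie in $\mathcal K_0$, is precisely what upgrades the easy global bound $D_\pm\bar\Phi\in[\overline u_l,\underline u_r]$ to the sharp extremal identities over $\mathcal K_0$ in $\eqref{c5.8}$.
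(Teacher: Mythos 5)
Your proof is correct and follows essentially the same architecture as the paper's: every affine minorant of $\Phi$ has slope in $[\overline{u}_l,\underline{u}_r]$; in case (i) each interior slope $c$ yields a point $x_c$ minimizing $\Phi(x)-cx$, which lies in $\mathcal{K}_0$ and whose supporting line pins $D_{\pm}\bar{\Phi}(x_c)$ around $c$, giving \eqref{c5.8}; cases (ii) and (iii) reduce to Lemma \ref{lem:c5.1}. Your derivation of \eqref{c5.8a} from the uniform slope bound on the family of affine minorants is a little cleaner than the paper's monotone difference-quotient computation, but the two routes are equivalent in substance.

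One preliminary statement is false as written and should be weakened, even though your three cases never actually invoke it in its false direction: the claim that $\liminf_{x\to\pm\infty}(\Phi(x)-cx)>-\infty$ holds ``exactly when'' $\overline{u}_l\le c\le\underline{u}_r$, and its corollary that $\Phi$ admits an affine minorant iff $\overline{u}_l\le\underline{u}_r$. At the boundary slope $c=\underline{u}_r$ the condition can fail: for instance $\Phi(x)=-\sqrt{x}$ for large $x$ (which comes from a bounded $\varphi$) has $\underline{u}_r=0$ while $\Phi(x)-0\cdot x\to-\infty$. This failure is precisely why case (ii) of the lemma requires the additional hypothesis \eqref{c5.8b} and why subcase (ii)(b) with $\mathcal{K}_0=\varnothing$ can occur; as stated, your ``iff'' contradicts the very statement you are proving. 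Only the necessary direction (affine minorants force $\overline{u}_l\le c\le\underline{u}_r$) is true, and since case (iii) uses only that direction, case (ii) explicitly conditions on \eqref{c5.8b}, and case (i) works with strictly interior slopes, the proof stands once the preliminary is restated one-directionally.
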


\begin{Lem}\label{lem:c5.3}
If $\bar{\Phi}(x)>-\infty$ on $\mathbb{R}$,
$\bar{\Phi} (x)$ is convex on $\mathbb{R}$.
Furthermore, if $\mathcal{K}_0\neq \varnothing$, then

\begin{itemize}
 \item [(i)] For any $x_0\in \mathcal{K}_0$,
\begin{equation}\label{c5.10}
\begin{cases}
\displaystyle {\rm D}_+\bar{\Phi}(x_0)
=\lim\limits_{x\rightarrow x_{0}{+}}\frac{\bar{\Phi}(x)-\bar{\Phi}(x_0)}{x-x_0}
=\inf_{l>0}\Delta_l\Phi(x_0),\\[4mm]
\displaystyle {\rm D}_-\bar{\Phi}(x_0)
=\lim\limits_{x\rightarrow x_{0}{-}}\frac{\bar{\Phi}(x)-\bar{\Phi}(x_0)}{x-x_0}
=\sup_{l<0}\Delta_l\Phi(x_0),
\end{cases}
\end{equation}
where $\Delta_l\Phi(x_0)=\frac{1}{l}\big(\Phi(x_0+l)-\Phi(x_0)\big)$ for $l\neq 0$.

\vspace{2pt}
\item [(ii)] $\mathcal{K}_0$ is a closed set,
and $\mathcal{K}^c_0:=\mathbb{R}-\mathcal{K}_0$ is an open set.
The following three properties hold{\rm :}
\begin{itemize}
\item [(a)] If $ x^+_0:= \sup\mathcal{K}_0<\infty$,
then $ x^+_0\in \mathcal{K}_0$ with ${\rm D}_+\bar{\Phi}(x^+_0)= \underline{u}_r$ such that
\begin{equation}\label{c5.11}
\bar{\Phi}(x)=\Phi(x^+_0)+\underline{u}_r(x-x^+_0)\qquad {\rm for \ any}\ x\geq x^+_0.
\end{equation}
\item [(b)] If $ x^-_0:=\inf \mathcal{K}_0>-\infty$,
then $ x^-_0\in \mathcal{K}_0$ with ${\rm D}_-\bar{\Phi}(x^-_0)= \overline{u}_l$ such that
\begin{equation}\label{c5.12}
\bar{\Phi}(x)=\Phi(x^-_0)+\overline{u}_l(x-x^-_0)\qquad {\rm for \ any}\ x\leq x^-_0.
\end{equation}
\item [(c)] Let the open set $\mathcal{K}^c_0:=\cup_n (e_n,h_n)$.
If $(e_n,h_n)$ is bounded, then
\begin{equation}\label{c5.13}
\bar{\Phi}(x)={\Phi}(e_n)+{\rm D}_+\bar{\Phi}(e_n)(x-e_n)\qquad {\rm for \ any}\ x \in [e_n, h_n],
\end{equation}
where ${\rm D}_+\bar{\Phi}(e_n)={\rm D}_-\bar{\Phi}(h_n)=\frac{{\Phi}(h_n)-{\Phi}(e_n)}{h_n-e_n}$.
\end{itemize}
\end{itemize}
\end{Lem}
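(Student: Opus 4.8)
The plan is to proceed in four stages: (1) establish that $\bar\Phi$ is convex on $\mathbb{R}$ and coincides with the largest convex minorant of $\Phi$; (2) deduce the one-sided derivative formulas in (i) from convexity together with $\bar\Phi\le\Phi$ and $\bar\Phi(x_0)=\Phi(x_0)$ on $\mathcal{K}_0$; (3) prove that $\bar\Phi$ is affine on each connected component of $\mathcal{K}^c_0$; and (4) read off (a)--(c) from this affine structure together with Lemmas $\ref{lem:c5.1}$--$\ref{lem:c5.2}$.

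For stage (1): from $\eqref{c5.5}$ the family $\{\bar\Phi_N\}$ is nonincreasing in $N$ — indeed, for $|x|\le N$ the restriction $\bar\Phi_{N+1}|_{[-N,N]}$ is convex and $\le\Phi$, hence $\le\mathop{cov}_{[-N,N]}\Phi=\bar\Phi_N$, while for $|x|\ge N$ one has $\bar\Phi_{N+1}\le\Phi=\bar\Phi_N$ — so the pointwise limit $\bar\Phi(x)$ in $\eqref{c5.4}$ exists in $[-\infty,\Phi(x)]$ and is finite under the hypothesis. Fixing $x_1<x_2$ and $\lambda\in(0,1)$, writing the convexity inequality for $\bar\Phi_N$ on $[-N,N]$ for all large $N$ and passing to the limit gives convexity of $\bar\Phi$ on $\mathbb{R}$; being finite and convex, $\bar\Phi$ is continuous with finite one-sided derivatives everywhere. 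Moreover $\bar\Phi\le\Phi$ (from $\bar\Phi_N\le\Phi$), while any convex $g\le\Phi$ satisfies $g|_{[-N,N]}\le\bar\Phi_N$ for every $N$ and hence $g\le\bar\Phi$; thus $\bar\Phi$ is exactly the convex hull (largest convex minorant) of $\Phi$ over $\mathbb{R}$.

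For stage (2), fix $x_0\in\mathcal{K}_0$. For $l>0$ the inequality $\bar\Phi(x_0+l)\le\Phi(x_0+l)$ together with $\bar\Phi(x_0)=\Phi(x_0)$ gives $\frac{\bar\Phi(x_0+l)-\bar\Phi(x_0)}{l}\le\Delta_l\Phi(x_0)$, and since $\bar\Phi$ is convex, ${\rm D}_+\bar\Phi(x_0)=\inf_{l>0}\frac{\bar\Phi(x_0+l)-\bar\Phi(x_0)}{l}\le\inf_{l>0}\Delta_l\Phi(x_0)$; the same inequality for $l<0$ reverses (division by $l<0$) to yield ${\rm D}_-\bar\Phi(x_0)\ge\sup_{l<0}\Delta_l\Phi(x_0)$. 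For the opposite inequalities I will use the supporting lines of the convex function $\bar\Phi$ at $x_0$, namely $\bar\Phi(x)\ge\bar\Phi(x_0)+{\rm D}_+\bar\Phi(x_0)(x-x_0)$ for $x\ge x_0$ and $\bar\Phi(x)\ge\bar\Phi(x_0)+{\rm D}_-\bar\Phi(x_0)(x-x_0)$ for $x\le x_0$; combined with $\Phi\ge\bar\Phi$ and $\Phi(x_0)=\bar\Phi(x_0)$ these give $\Delta_l\Phi(x_0)\ge{\rm D}_+\bar\Phi(x_0)$ for $l>0$ and $\Delta_l\Phi(x_0)\le{\rm D}_-\bar\Phi(x_0)$ for $l<0$, so that equality holds in both directions, which is $\eqref{c5.10}$.

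For stages (3)--(4): $\Phi$ is Lipschitz (being the primitive of $\varphi\in L^\infty$) and $\bar\Phi$ is continuous with $\Phi\ge\bar\Phi$, so $\mathcal{K}_0=(\Phi-\bar\Phi)^{-1}(\{0\})$ is closed and $\mathcal{K}^c_0$ is open. The structural claim is that $\bar\Phi$ is affine on each component $(e,h)$ of $\mathcal{K}^c_0$; to prove it I will argue by contradiction: if $\bar\Phi$ is not affine on $(e,h)$, pick a point $r\in(e,h)$ at which $\bar\Phi$ is strictly convex (a kink of ${\rm D}_\pm\bar\Phi$, or a point of strict increase of ${\rm D}_+\bar\Phi$), so the chord $\ell_\eta$ of $\bar\Phi$ over $[r-\eta,r+\eta]$ satisfies $\ell_\eta(r)>\bar\Phi(r)$ while $\sup_{[r-\eta,r+\eta]}(\ell_\eta-\bar\Phi)\to0$ as $\eta\to0$ by local Lipschitz continuity of $\bar\Phi$; choosing $\eta$ so small that this supremum is below $\min_{[r-\eta_0,r+\eta_0]}(\Phi-\bar\Phi)>0$, the function $\max\{\bar\Phi,\ell_\eta\}$ is convex, still $\le\Phi$, and strictly above $\bar\Phi$ at $r$, contradicting the maximality of $\bar\Phi$. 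Given this, (c) is immediate: a bounded component $(e_n,h_n)$ has its endpoints in the closed set $\mathcal{K}_0$, and $\bar\Phi$ affine on $[e_n,h_n]$ with $\bar\Phi(e_n)=\Phi(e_n)$, $\bar\Phi(h_n)=\Phi(h_n)$ gives $\eqref{c5.13}$ with the stated common slope and one-sided derivatives. For (a), when $x^+_0=\sup\mathcal{K}_0<\infty$ the set $(x^+_0,\infty)$ is an unbounded component of $\mathcal{K}^c_0$ on which $\bar\Phi$ is affine; by $\eqref{c5.8a}$ (or, if $\overline{u}_l=\underline{u}_r$, directly from Lemma $\ref{lem:c5.2}$(ii)(a)) the nondecreasing function ${\rm D}_+\bar\Phi$ is $\le\underline{u}_r$ everywhere, while monotonicity of ${\rm D}_+\bar\Phi$ and $\eqref{c5.8}$ force ${\rm D}_+\bar\Phi(x^+_0)=\sup_{x_0\in\mathcal{K}_0}{\rm D}_+\bar\Phi(x_0)=\underline{u}_r$; hence the affine slope on $[x^+_0,\infty)$ equals $\underline{u}_r$, and $\eqref{c5.11}$ follows from $\bar\Phi(x^+_0)=\Phi(x^+_0)$. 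Part (b) is the mirror argument using ${\rm D}_-\bar\Phi$, $\overline{u}_l$, and the left end of $\mathcal{K}_0$. The main obstacle is the affine-on-components claim of stage (3): the lifting competitor must be constructed so that it never rises above $\Phi$, which is exactly why one works on a small interval around a strict-convexity point and compares the chord gap with the (strictly positive) distance $\Phi-\bar\Phi$ there; once this is established, everything else is a routine consequence of convexity and the previously listed lemmas.
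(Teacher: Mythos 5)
Your proposal is correct, but it reaches the conclusion by a genuinely different route from the paper's Appendix~B proof. The pivot of your argument is the identification of $\bar{\Phi}$ as the \emph{largest convex minorant} of $\Phi$ over $({-}\infty,\infty)$ (via $g|_{[-N,N]}\le\bar{\Phi}_N$ for every convex $g\le\Phi$), a characterization the paper never states but which is easily justified exactly as you do. This maximality powers two departures. First, for (i) you get the reverse inequalities from the supporting-line property $\bar{\Phi}(x)\ge\bar{\Phi}(x_0)+{\rm D}_\pm\bar{\Phi}(x_0)(x-x_0)$ of the convex function $\bar{\Phi}$ together with $\Phi\ge\bar{\Phi}$ and $\Phi(x_0)=\bar{\Phi}(x_0)$; the paper instead proves $\inf_{l>0}\Delta_l\bar{\Phi}_N(x_0)=\inf_{l>0}\Delta_l\Phi(x_0)$ for each $N$ and then invokes Dini-type uniform convergence of $\bar{\Phi}_N\to\bar{\Phi}$ on compacta to pass to the limit (its $\eqref{ab12}$--$\eqref{ab19}$). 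Your version is shorter and avoids the uniform-convergence step entirely. Second, you prove a single structural lemma --- $\bar{\Phi}$ is affine on every connected component of $\mathcal{K}_0^c$, via the chord-lifting competitor $\max\{\bar{\Phi},\ell_\eta\}$ whose gap over a small interval is dominated by the strictly positive distance $\Phi-\bar{\Phi}$ there --- and read off (a), (b), (c) from it; the paper instead handles (a) and (b) by sandwiching $\Delta_l\bar{\Phi}(x_0^\pm)$ between $\underline{u}_r$ (resp.\ $\overline{u}_l$) on both sides using the definition $\eqref{c5.7}$ of the invariants, and handles (c) by showing $\bar{\Phi}_N|_{[e_n,h_n]}=\mathop{cov}_{[e_n,h_n]}\Phi$ and identifying that local hull with the chord. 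What each buys: your route is more conceptual, reuses one lemma for all three structural claims, and makes the classical convex-hull structure explicit; the paper's route stays closer to the truncated definition $\eqref{c5.4}$--$\eqref{c5.5}$ and needs neither the maximality characterization nor the lifting construction. The only places where you elide standard details are the existence of arbitrarily small subintervals on which $\bar{\Phi}$ fails to be affine (obtainable by bisection from non-constancy of ${\rm D}_+\bar{\Phi}$) and the identification of the affine slope on $(x_0^+,\infty)$ with ${\rm D}_+\bar{\Phi}(x_0^+)$; both are routine and do not constitute gaps.
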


\smallskip
We now present the main theorems of divides.

\begin{The}[Necessary and sufficient conditions for the locations and speeds of divides]\label{the:c5.1}
Let $\mathcal{D}(x_0)$ be defined by $\eqref{c5.0}$, 
and let $\bar{\Phi}(x)$ as in $\eqref{c5.4}$--$\eqref{c5.5}$ be the convex hull of $\Phi(x)$
over $({-}\infty,\infty)$ with $\varphi(x)\in L^\infty(\mathbb{R})$.
Then
\begin{itemize}
\item [(i)] If $\bar{\Phi} (x)>-\infty$ on $\mathbb{R}$,
then $c\in \mathcal{D}(x_0)$ if and only if
\begin{equation}\label{c5.14}
\Phi(x_0)=\bar{\Phi}(x_0), \qquad {\rm D}_-\bar{\Phi}(x_0)\leq c\leq {\rm D}_+\bar{\Phi}(x_0).
\end{equation}
\item [(ii)] If $\bar{\Phi} (x)\equiv-\infty$,
then $\mathcal{D}(x_0)=\varnothing$ for any $x_0\in \mathbb{R}$.
\end{itemize}
\noindent
In the above, ${\rm D}_{+}\bar{\Phi}(x_0)$ and ${\rm D}_{-}\bar{\Phi}(x_0)$
are given by $\eqref{c5.10}$.
See {\rm Figs.} $\ref{figConvexH}$--$\ref{figConvexHK}$ for the details.
\end{The}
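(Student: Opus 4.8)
\emph{Proof strategy.} The plan is to reduce the entire statement to the supporting‑line characterization of divides provided by Proposition~$\ref{pro:c5.1}$. Rewriting $\eqref{c5.1d}$ via the substitution $x=x_0+l$ and recalling $\Phi(x)=\int_0^x\varphi(\xi)\,{\rm d}\xi$, one sees that $c\in\mathcal{D}(x_0)$ if and only if the affine function $\ell_{x_0,c}(x):=\Phi(x_0)+c(x-x_0)$ satisfies $\ell_{x_0,c}(x)\leq\Phi(x)$ for all $x\in\mathbb{R}$; that is, precisely when $\ell_{x_0,c}$ is an affine minorant of $\Phi$ that touches $\Phi$ at $x_0$. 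Everything then becomes a comparison between such affine minorants and the convex hull $\bar\Phi$ defined by $\eqref{c5.4}$--$\eqref{c5.5}$.

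First I would settle case~(ii). If $c\in\mathcal{D}(x_0)$, then $\ell_{x_0,c}\leq\Phi$ forces $\Phi(x)-cx\geq\Phi(x_0)-cx_0$ for all $x$, hence $\liminf_{x\to\pm\infty}(\Phi(x)-cx)>-\infty$; by Lemma~$\ref{lem:c5.1}$ this would give $\bar\Phi(x)>-\infty$ on $\mathbb{R}$, contradicting $\bar\Phi\equiv-\infty$. Thus $\mathcal{D}(x_0)=\varnothing$ for every $x_0$, which is (ii).

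For case~(i), assume $\bar\Phi(x)>-\infty$ on $\mathbb{R}$, so $\bar\Phi$ is convex by Lemma~$\ref{lem:c5.3}$. For sufficiency: if $\Phi(x_0)=\bar\Phi(x_0)$ and ${\rm D}_-\bar\Phi(x_0)\leq c\leq{\rm D}_+\bar\Phi(x_0)$, then convexity of $\bar\Phi$ makes $\ell_{x_0,c}$ (with $\ell_{x_0,c}(x_0)=\bar\Phi(x_0)$) a supporting line of $\bar\Phi$, i.e. $\ell_{x_0,c}\leq\bar\Phi$ on $\mathbb{R}$; since $\bar\Phi\leq\Phi$ by construction, we get $\ell_{x_0,c}\leq\Phi$, and Proposition~$\ref{pro:c5.1}$ gives $c\in\mathcal{D}(x_0)$. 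For necessity: if $c\in\mathcal{D}(x_0)$, then $\ell_{x_0,c}\leq\Phi$ on $\mathbb{R}$; being affine, $\ell_{x_0,c}$ is dominated by each convex‑truncation $\bar\Phi_N$ (on $[-N,N]$ by the defining maximality of the convex hull over a finite interval, and elsewhere since $\bar\Phi_N=\Phi$ there), hence $\ell_{x_0,c}\leq\bar\Phi$. Then $\Phi(x_0)=\ell_{x_0,c}(x_0)\leq\bar\Phi(x_0)\leq\Phi(x_0)$ forces $\Phi(x_0)=\bar\Phi(x_0)$, and $\ell_{x_0,c}\leq\bar\Phi$ with equality at $x_0$ exhibits $\ell_{x_0,c}$ as a supporting line of the convex function $\bar\Phi$ at $x_0$, so its slope satisfies ${\rm D}_-\bar\Phi(x_0)\leq c\leq{\rm D}_+\bar\Phi(x_0)$ by the formulas in $\eqref{c5.10}$. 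This yields $\eqref{c5.14}$.

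The main obstacle is the step that passes from affine minorants of $\Phi$ to affine minorants of $\bar\Phi$ (the implication $\ell_{x_0,c}\leq\Phi\Rightarrow\ell_{x_0,c}\leq\bar\Phi$), which needs the monotonicity of $\bar\Phi_N$ in $N$ and the interval‑wise maximality of ${\rm cov}_{[-N,N]}\Phi$ from $\eqref{c5.5}$, together with a check of the degenerate situations (e.g. $\mathcal{K}_0=\varnothing$ as in Lemma~$\ref{lem:c5.2}$(ii)(b), where both sides of the claimed equivalence are vacuous, so there is nothing to prove); once Lemmas~$\ref{lem:c5.1}$--$\ref{lem:c5.3}$ are available the rest is routine convex analysis.
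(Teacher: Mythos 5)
Your proposal is correct and follows essentially the same route as the paper: both reduce the statement to the affine-minorant characterization of Proposition~$\ref{pro:c5.1}$, dispose of case (ii) via Lemma~$\ref{lem:c5.1}$, and in case (i) compare the touching affine minorant $\ell_{x_0,c}$ with the truncated hulls $\bar\Phi_N$ and pass to the limit, invoking Lemma~$\ref{lem:c5.3}$ for the slope bounds. The only cosmetic difference is that you phrase the sufficiency step through the supporting-line property of the convex function $\bar\Phi$ together with $\bar\Phi\leq\Phi$, whereas the paper reads $\eqref{c5.1d}$ off directly from the difference-quotient formulas in $\eqref{c5.10}$; these are equivalent given Lemma~$\ref{lem:c5.3}$.
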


\begin{figure}[H]
	\begin{center}
		{\includegraphics[width=0.6\columnwidth]{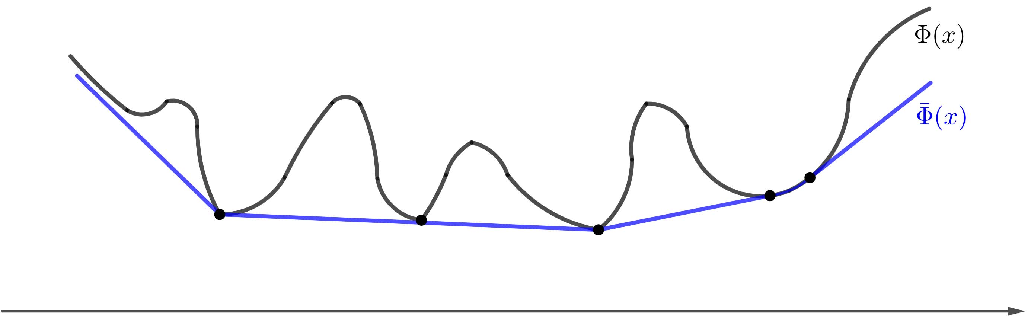}}
 \caption{The graph of $\Phi(x)$ and $\bar{\Phi}(x)$ when $\overline{u}_l<\underline{u}_r$.
		}\label{figConvexH}
	\end{center}
\end{figure}

\begin{figure}[H]
	\begin{center}
		{\includegraphics[width=0.7\columnwidth]{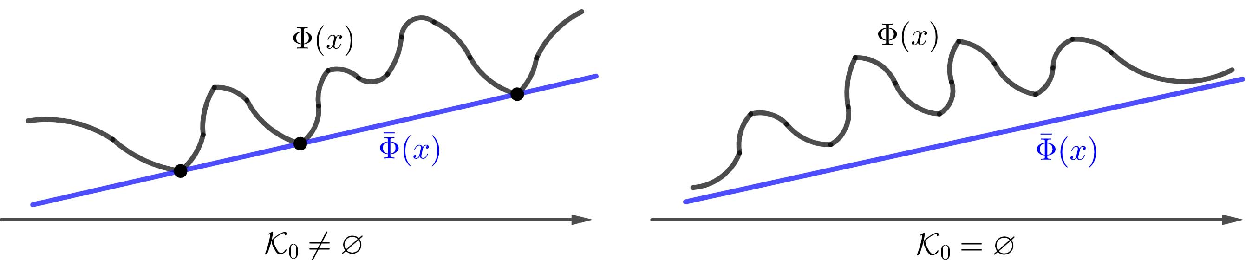}}
 \caption{The graph of $\Phi(x)$ and $\bar{\Phi}(x)$ when $\overline{u}_l=\underline{u}_r$.
		}\label{figConvexHK}
	\end{center}
\end{figure}

\begin{proof} We divide the proof into two steps accordingly.

\vspace{1pt}
\noindent
{\bf 1}. First, if $c\in \mathcal{D}(x_0)$, then $\eqref{c5.1d}$ holds.
By taking $x:=x_0+l$ into $\eqref{c5.1d}$,
\begin{equation*}
\Phi(x)\geq c(x-x_0)+\Phi(x_0)\qquad {\rm for \ any}\ x\in \mathbb{R}.
\end{equation*}

Since $\bar{\Phi}_N(x)$
is the convex hull of $\Phi(x)$ over $[-N,N]$ and $\bar{\Phi}_N(x)={\Phi}(x)$ for $|x|\geq N$, then
$\bar{\Phi}_N(x)\geq c(x-x_0)+\Phi(x_0)$ for any $x\in \mathbb{R}$,
so that $\bar{\Phi}_N(x_0)\geq\Phi(x_0)$.
Meanwhile, from $\eqref{c5.5}$, $\bar{\Phi}_N(x_0)\leq\Phi(x_0)$ for any $N>0$.
Thus, $\bar{\Phi}_N(x_0)=\Phi(x_0)$.
Letting $N\rightarrow \infty$, we obtain
\begin{equation*}
\bar{\Phi}(x_0)=\lim_{N\rightarrow \infty}\bar{\Phi}_N(x_0)=\Phi(x_0).
\end{equation*}
Furthermore, it is direct to check from $\eqref{c5.1d}$ that
\begin{equation*}
\sup_{l<0} \frac{1}{l}\big(\Phi(x_0+l)-\Phi(x_0)\big)\leq c
\leq\inf_{l>0} \frac{1}{l}\big(\Phi(x_0+l)-\Phi(x_0)\big),
\end{equation*}
which, by $\eqref{c5.10}$,
implies that ${\rm D}_-\bar{\Phi}(x_0)\leq c\leq {\rm D}_+\bar{\Phi}(x_0)$.

On the other hand, if $\bar{\Phi}(x_0)=\Phi(x_0)$
and ${\rm D}_-\bar{\Phi}(x_0)\leq c\leq {\rm D}_+\bar{\Phi}(x_0)$,
then it follows from $\eqref{c5.10}$ that $\eqref{c5.1d}$ holds, $i.e.$,
$c\in \mathcal{D}(x_0)$.

\smallskip
\noindent
{\bf 2}. Fix $x_0\in \mathbb{R}$.
Since $\bar{\Phi}(x)\equiv -\infty$, by Lemma $\ref{lem:c5.1}$,
for any fixed $c\in \mathbb{R}$,
$$
\mathop{\underline{\lim}}\limits_{x\rightarrow \infty}(\Phi(x)-cx)=-\infty \qquad
{\rm or}\quad\mathop{\underline{\lim}}\limits_{x\rightarrow -\infty}(\Phi(x)-cx)=-\infty
$$
which is to say that:
\begin{itemize}
\item [(a)] If $\mathop{\underline{\lim}}_{x\rightarrow \infty}(\Phi(x)-cx)=-\infty$,
then $\Phi(x)-cx<\Phi(x_0)-cx_0$ for sufficiently large $x>x_0$
so that $\int^x_{x_0}(\varphi (\xi)-c)\,{\rm d}\xi <0$,
which, by $\eqref{c5.1d}$, implies that $c \notin \mathcal{D}(x_0)$.

\item [(b)] If $\mathop{\underline{\lim}}_{x\rightarrow -\infty}(\Phi(x)-cx)=-\infty$,
then $\Phi(x)-cx<\Phi(x_0)-cx_0$ for sufficiently negative $x<x_0$
so that $\int^x_{x_0}(\varphi (\xi)-c)\,{\rm d}\xi <0$,
which, by $\eqref{c5.1d}$, implies that $c \notin \mathcal{D}(x_0)$.
\end{itemize}

By the arbitrariness of $c\in \mathbb{R}$, we see that $\mathcal{D}(x_0)=\varnothing$.
Then, by the arbitrariness of $x_0\in \mathbb{R}$, we conclude that
$\mathcal{D}(x_0)=\varnothing$ for any $x_0\in \mathbb{R}$.
\end{proof}

\begin{The}[Locations and speeds of divides of entropy solutions]\label{the:c5.2}
Let $\mathcal{D}(x_0)$ and $\bar{\Phi}(x)$ be defined by $\eqref{c5.0}$ and $\eqref{c5.4}$, respectively.
For the Cauchy problem $\eqref{c1.1}$--$\eqref{ID}$
with initial data function $\varphi(x)\in L^\infty(\mathbb{R})$,
\begin{itemize}
\item [(i)] If $\overline{u}_l<\underline{u}_r$,
the entropy solution $u(x,t)$ of the Cauchy problem $\eqref{c1.1}$--$\eqref{ID}$ has divides.
For any $x_0\in \mathbb{R}$, 
$\mathcal{D}(x_0)\neq \varnothing$ if and only if $\Phi(x_0)=\bar{\Phi}(x_0)$,
which implies that, in this case, 

\begin{equation}\label{c5.18}
\mathcal{D}(x_0)=[{\rm D}_-\bar{\Phi}(x_0), {\rm D}_+\bar{\Phi}(x_0)].
\end{equation}
Furthermore, there exist two cases{\rm:}
\begin{itemize}
\item [\rm (a)] If ${\rm D}_-\bar{\Phi}(x_0)={\rm D}_+\bar{\Phi}(x_0)$,
there exists a unique divide emitting from $x_0$ given by 
\begin{equation}\label{c5.19}
L_{{\rm D}_-\bar{\Phi}(x_0)}(x_0):\, x=x_0+tf'({\rm D}_-\bar{\Phi}(x_0))
\qquad {\rm for}\ t\in \mathbb{R}^+.
\end{equation}
\item[\rm (b)] If ${\rm D}_-\bar{\Phi}(x_0)<{\rm D}_+\bar{\Phi}(x_0)$,
the divides emitting from $x_0$ form 
a rarefaction wave{\rm :}
\begin{equation}\label{c5.20}
u(x,t)=(f')^{-1}(\frac{x-x_0}{t})
\end{equation}
for $x\in(x_0+tf'({\rm D}_-\bar{\Phi}(x_0)),x_0+tf'({\rm D}_+\bar{\Phi}(x_0)))$.
\end{itemize}

\item [(ii)] If $\overline{u}_l=\underline{u}_r$,
the entropy solution $u(x,t)$ of the Cauchy problem $\eqref{c1.1}$--$\eqref{ID}$ has divides
if and only if
there exists $x_0\in \mathbb{R}$ such that $\Phi(x_0)=\bar{\Phi}(x_0)$, 
which implies that, in this case, the unique divide emitting from $x_0$ is given by
\begin{equation}\label{c5.21}
L_{\underline{u}_r}(x_0):\, x=x_0+tf'(\underline{u}_r)
\qquad\,\, {\rm for }\ t\in \mathbb{R}^+.
\end{equation}

\item [(iii)] If $\overline{u}_l>\underline{u}_r$,
the entropy solution $u(x,t)$ of the Cauchy problem $\eqref{c1.1}$--$\eqref{ID}$ has no divides.
\end{itemize}
\noindent
In the above, ${\rm D}_{+}\bar{\Phi}(x_0)$ and ${\rm D}_{-}\bar{\Phi}(x_0)$
are given by $\eqref{c5.10}$.
See also {\rm Figs.} $\ref{figDSD}$--$\ref{figDSDK}$.
\end{The}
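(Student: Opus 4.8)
The plan is to read off Theorem~$\ref{the:c5.2}$ from the already-established characterization of $\mathcal{D}(x_0)$ in Theorem~$\ref{the:c5.1}$ together with the trichotomy for the convex hull $\bar{\Phi}$ in Lemma~$\ref{lem:c5.2}$, and then to upgrade the statement ``$\mathcal{D}(x_0)\neq\varnothing$'' into the geometric description of the divides. First I would dispose of the cases in which no divides exist: when $\overline{u}_l>\underline{u}_r$, Lemma~$\ref{lem:c5.2}$(iii) gives $\bar{\Phi}\equiv-\infty$, so Theorem~$\ref{the:c5.1}$(ii) yields $\mathcal{D}(x_0)=\varnothing$ for every $x_0$, proving (iii); and in case (ii) the same argument covers the subcase $\bar{\Phi}\equiv-\infty$, while the finiteness condition $\eqref{c5.8b}$ of Lemma~$\ref{lem:c5.2}$(ii) controls when $\bar{\Phi}>-\infty$. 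Since $\Phi$ is finite-valued, the equation $\Phi(x_0)=\bar{\Phi}(x_0)$ can hold only when $\bar{\Phi}(x_0)>-\infty$, so the condition ``there exists $x_0$ with $\Phi(x_0)=\bar{\Phi}(x_0)$'' is exactly the condition $\mathcal{K}_0\neq\varnothing$, which by Theorem~$\ref{the:c5.1}$(i) is equivalent to the existence of a divide generation point.

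For case (i) I would invoke Lemma~$\ref{lem:c5.2}$(i) to obtain $\bar{\Phi}>-\infty$ on $\mathbb{R}$ and $\mathcal{K}_0\neq\varnothing$, so the entropy solution indeed has divides. By Theorem~$\ref{the:c5.1}$(i), for any $x_0$ we have $c\in\mathcal{D}(x_0)$ if and only if $\Phi(x_0)=\bar{\Phi}(x_0)$ and ${\rm D}_-\bar{\Phi}(x_0)\le c\le{\rm D}_+\bar{\Phi}(x_0)$; since $\bar{\Phi}$ is convex and finite by Lemma~$\ref{lem:c5.3}$, the one-sided derivatives ${\rm D}_\pm\bar{\Phi}(x_0)$ exist with ${\rm D}_-\bar{\Phi}(x_0)\le{\rm D}_+\bar{\Phi}(x_0)$, so $\mathcal{D}(x_0)\neq\varnothing$ is equivalent to $\Phi(x_0)=\bar{\Phi}(x_0)$ and, in that case, $\mathcal{D}(x_0)=[{\rm D}_-\bar{\Phi}(x_0),{\rm D}_+\bar{\Phi}(x_0)]$, which is $\eqref{c5.18}$. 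The two subcases (a) and (b) are then simply the dichotomy ${\rm D}_-\bar{\Phi}(x_0)={\rm D}_+\bar{\Phi}(x_0)$ versus ${\rm D}_-\bar{\Phi}(x_0)<{\rm D}_+\bar{\Phi}(x_0)$: in subcase (a), $\mathcal{D}(x_0)$ is a single point and Definition~$\ref{def:c5.1}$ gives precisely the one divide $\eqref{c5.19}$.

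The substantive point is subcase (b), where I must show that the divides emanating from $x_0$ sweep out the open fan and that the entropy solution there is the centered rarefaction wave $\eqref{c5.20}$. For each $c\in[{\rm D}_-\bar{\Phi}(x_0),{\rm D}_+\bar{\Phi}(x_0)]$ we have $c\in\mathcal{D}(x_0)$, so $L_c(x_0)$ is a divide and $\mathcal{U}(x,t)=\{c\}$ along it by Definition~$\ref{def:c5.1}$, whence $u(x,t)=c$; inverting $x=x_0+tf'(c)$ using the strict monotonicity of $f'$ gives $c=(f')^{-1}(\frac{x-x_0}{t})$, which is exactly $\eqref{c5.20}$ at that point. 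Strict monotonicity of $f'$ also shows that divides with distinct speeds are disjoint for $t>0$, so the fan $\{(x,t):x_0+tf'({\rm D}_-\bar{\Phi}(x_0))<x<x_0+tf'({\rm D}_+\bar{\Phi}(x_0)),\ t>0\}$ is precisely the disjoint union of the divides $L_c(x_0)$ over $c$ in the open interval, and on it $u$ is single-valued and equal to $(f')^{-1}(\frac{x-x_0}{t})$; in particular no shock can intrude into this region, since every point of it lies on a divide where $\mathcal{U}$ is a single point.

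Finally, case (ii) with $\bar{\Phi}>-\infty$ is closed off by Lemma~$\ref{lem:c5.2}$(ii), which forces $\bar{\Phi}(x)=\underline{u}_rx+b_0$, so that ${\rm D}_\pm\bar{\Phi}(x_0)\equiv\underline{u}_r$ and Theorem~$\ref{the:c5.1}$(i) gives $\mathcal{D}(x_0)=\{\underline{u}_r\}$ when $\Phi(x_0)=\bar{\Phi}(x_0)$ and $\mathcal{D}(x_0)=\varnothing$ otherwise, yielding the single divide $\eqref{c5.21}$; combined with the first paragraph this settles the ``if and only if'' in (ii). I expect the main obstacle to be the bookkeeping in subcase (i)(b)---confirming that the fan is genuinely filled by divides with no shocks intruding and that the rarefaction formula is valid throughout---rather than any new estimate, since all the analytic content is already carried by Theorems~$\ref{the:c5.1}$ and~$\ref{pro:c3.3}$ and Lemmas~$\ref{lem:c5.1}$--$\ref{lem:c5.3}$.
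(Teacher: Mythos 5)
Your proposal is correct and follows essentially the same route as the paper: both reduce everything to the characterization of $\mathcal{D}(x_0)$ in Theorem~$\ref{the:c5.1}$ combined with the trichotomy for $\bar{\Phi}$ in Lemma~$\ref{lem:c5.2}$ (and $\eqref{c5.8}$, $\eqref{c5.14}$ for $\eqref{c5.18}$), and then read off $\eqref{c5.19}$--$\eqref{c5.21}$ from Definition~$\ref{def:c5.1}$. Your extra bookkeeping in subcase (i)(b) — that the fan is the disjoint union of the divides $L_c(x_0)$ and that $u=(f')^{-1}(\frac{x-x_0}{t})$ there since $\mathcal{U}(x,t)=\{c\}$ on each divide — is exactly what the paper leaves implicit when it says $\eqref{c5.20}$ follows from Definition~$\ref{def:c5.1}$.
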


\begin{figure}[H]
	\begin{center}
		{\includegraphics[width=0.6\columnwidth]{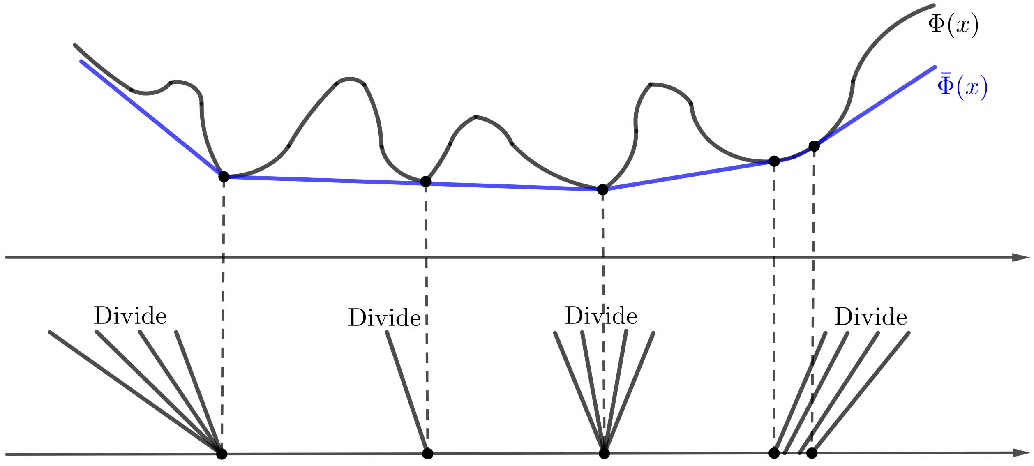}}
 \caption{Divides of entropy solutions when $\overline{u}_l<\underline{u}_r$.
		}\label{figDSD}
	\end{center}
\end{figure}

\begin{figure}[H]
	\begin{center}
		{\includegraphics[width=0.7\columnwidth]{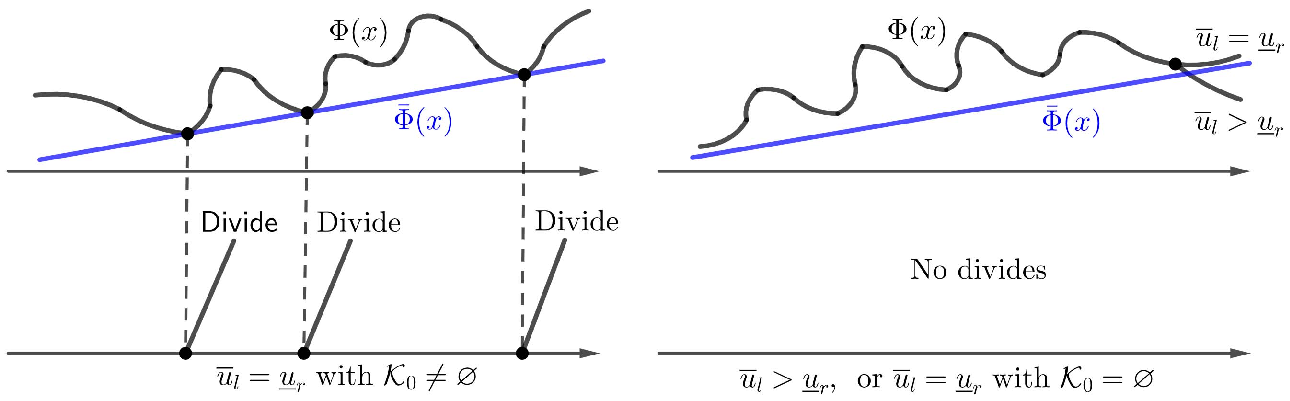}}
 \caption{Divides of entropy solutions when $\overline{u}_l\geq \underline{u}_r$.
		}\label{figDSDK}
	\end{center}
\end{figure}

\begin{proof}
We divide the proof into three steps accordingly.

\smallskip
\noindent
{\bf 1}. From Lemma $\ref{lem:c5.2}$(i), $\mathcal{K}_0\neq \varnothing$ implies that
the entropy solution $u(x,t)$ has divides.
By Theorem $\ref{the:c5.1}$(i), $\mathcal{D}(x_0)\neq \varnothing$
if and only if $\Phi(x_0)=\bar{\Phi}(x_0)$,
and $\eqref{c5.18}$ follows by $\eqref{c5.8}$ and $\eqref{c5.14}$.
Furthermore, if $\mathcal{D}(x_0)\neq \varnothing$,
$\eqref{c5.19}$--$\eqref{c5.20}$ follow by Definition $\ref{def:c5.1}$.

\smallskip
\noindent
{\bf 2}. From Lemma $\ref{lem:c5.2}$(ii) and Theorem $\ref{the:c5.1}$,
the entropy solution $u(x,t)$ has divides if and only if
there exists $x_0\in \mathbb{R}$ such that $\Phi(x_0)=\bar{\Phi}(x_0)$.
If this is the case,
from Lemma $\ref{lem:c5.2}$(ii), $\bar{\Phi}'(x)\equiv \underline{u}_r$ so that,
by Definition $\ref{def:c5.1}$, $\eqref{c5.21}$ holds.

\smallskip
\noindent
{\bf 3}. If $\overline{u}_l>\underline{u}_r$,
from Lemma $\ref{lem:c5.2}$(iii), $\bar{\Phi}(x)\equiv-\infty$ so that,
by Theorem $\ref{the:c5.1}$(ii), $\mathcal{D}(x_0)=\varnothing$ for any $x_0\in\mathbb{R}$,
which implies that the entropy solution $u(x,t)$ has no divides.
\end{proof}

\begin{Exa}[Initial data as a compact perturbation of a constant]\label{exa:c5.1}
Let the initial data function $\varphi(x)\in L^\infty(\mathbb{R})$ satisfy that
$\varphi(x)-m$ has compact support, $i.e.$,
${\rm spt}(\varphi(x)-m)\subset [-R,R]$ for some $R>0$.

It is direct to check that $\overline{u}_l=\underline{u}_r=m$,
and
$$
\mathop{\underline{\lim}}_{x\rightarrow \pm\infty}(\Phi(x)-mx) >-\infty.
$$
By {\rm Lemma} $\ref{lem:c5.2}{\rm (ii)}$,
we have
$$
\bar{\Phi}(x)=mx+b_0\qquad {\rm for}\ b_0=\inf_{x\in \mathbb{R}}\{\Phi (x)-mx\}.
$$

Since ${\rm spt}(\varphi(x)-m)\subset [-R,R]$,
$\Phi (x)-mx$ is constant if $|x|>R$ so that
there exists $x_0$ such that $\Phi (x)-mx$ attains infimum at $x_0$.
From $\eqref{c5.6}$, $\mathcal{K}_0\neq \varnothing$ is given by
\begin{equation}\label{c5.22}
\mathcal{K}_0=\big\{x_0\,:\, \Phi (x_0)-mx_0 =\min_{x\in \mathbb{R}}\{\Phi (x)-mx\}\big\}.
\end{equation}
From {\rm Theorem} $\ref{the:c5.2}{\rm(ii)}$,
all the divides of the entropy solution $u(x,t)$ are given by
\begin{equation}\label{c5.22m}
L_m(x_0):\, x=x_0+tf'(m) \qquad {\rm for}\ t\in \mathbb{R}^+,
\end{equation}
where $x_0\in \mathcal{K}_0$ with $\mathcal{K}_0$ given by $\eqref{c5.22}$.
\end{Exa}

\begin{Exa}[Periodic initial data]\label{exa:c5.2}
Let the initial data function $\varphi(x)\in L^\infty(\mathbb{R})$ be a periodic function
with the minimum positive period $p>0$ and the average $m$.

It is direct to check that
$\frac{1}{x}\int^x_0\varphi (\xi)\,{\rm d}\xi \rightarrow m$ as $x \rightarrow \pm\infty$
so that
$\overline{u}_l=\underline{u}_r=m$, and
$$
\mathop{\underline{\lim}}\limits_{x\rightarrow \pm\infty}\int^x_0(\varphi (\xi)-m)\,{\rm d}\xi =\mathop{\underline{\lim}}\limits_{x\rightarrow \pm\infty}
\int^x_{[\frac{x}{p}]\, p}(\varphi (\xi)-m)\,{\rm d}\xi
\geq -\big(\|\varphi\|_{L^\infty}+|m|\big)p>-\infty.
$$
By {\rm Lemma} $\ref{lem:c5.2}{\rm (ii)}$,
$\bar{\Phi}(x)=mx+b_0$ with $b_0=\min_{x\in [0,p]}\{\Phi (x)-mx\}$.
Then, from $\eqref{c5.6}$,
\begin{equation}\label{c5.23}
\mathcal{K}_0=\big\{x_0+np \,:\, x_0 \ {\rm minimazes}\ \Phi (x)-mx
\,\,\, {\rm on}\ [0,p],\,\, n\in \mathbb{Z}\big\}.
\end{equation}
Since $\Phi (x)-mx$ is continuous on $[0,p]$, then $\mathcal{K}_0\neq \varnothing$.
From {\rm Theorem} $\ref{the:c5.2}{\rm(ii)}$,
all the divides of the entropy solution $u(x,t)$ are given by $\eqref{c5.22m}$
for $x_0\in \mathcal{K}_0$ with $\mathcal{K}_0$ given by $\eqref{c5.23}$.
\end{Exa}

\begin{Exa}[Initial data as the $L^1$ perturbation of a constant]\label{exa:c5.3}
Let the initial data function $\varphi(x)\in L^\infty(\mathbb{R})$
satisfy that $\varphi(x)-m \in L^1(\mathbb{R})$.
It is direct to check that $\overline{u}_l=\underline{u}_r=m$ and
$$
\mathop{\underline{\lim}}\limits_{x\rightarrow \pm\infty}(\Phi(x)-mx)
=\mathop{\underline{\lim}}\limits_{x\rightarrow \pm\infty}
\int^x_0(\varphi (\xi)-m)\,{\rm d}\xi
\geq-\|\varphi(x)-m\|_{L^1(\mathbb{R})}>-\infty.
$$
By {\rm Lemma} $\ref{lem:c5.2}{\rm(ii)}$,
$\bar{\Phi}(x)=mx+b_0$ with $b_0=\inf_{x\in \mathbb{R}}\{\Phi (x)-mx\}$.
Then, from $\eqref{c5.6}$,
\begin{equation}\label{c5.22c}
\mathcal{K}_0=\big\{x_0\,:\, \Phi (x_0)-mx_0 =\inf_{x\in \mathbb{R}}\{\Phi (x)-mx\}\big\}.
\end{equation}
Then there exist two cases{\rm :}
If
$\Phi (x)-mx$ attains its infimum only at $-\infty$ or $\infty$,
then $\mathcal{K}_0=\varnothing$ so that,
by {\rm Theorem} $\ref{the:c5.2}{\rm(ii)}$,
the entropy solution $u(x,t)$ has no divides{\rm ;}
and if $\Phi (x)-mx$ attains its infimum at some finite points,
then $\mathcal{K}_0\neq \varnothing$ so that the entropy solution $u(x,t)$ has divides,
and all the divides are given by $\eqref{c5.22m}$
for $x_0\in \mathcal{K}_0$ with $\mathcal{K}_0$ given by $\eqref{c5.22c}$.
\end{Exa}

\subsection{Global structures of entropy solutions}
In this subsection,
we establish the global structures of entropy solutions of the Cauchy problem $\eqref{c1.1}$--$\eqref{ID}$.
First, we give a partition of the upper half-plane $\mathbb{R}\times \mathbb{R}^+$ as follows:

\smallskip
If $\bar{\Phi}(x)>-\infty$ with $\mathcal{K}_0\neq \varnothing$,
by the continuity of $\Phi(x)$ and $\bar{\Phi}(x)$,
$\mathcal{K}_0$ is a closed set so that $\mathcal{K}^c_0=\mathbb{R}-\mathcal{K}_0$ is open.
Thus, $\mathcal{K}^c_0$ can be expressed as a disjoint union of open intervals, $i.e.$,
\begin{equation}\label{c6.1}
\mathcal{K}^c_0:=\cup_n(e_n,h_n)
\end{equation}
with $e_n$ ({\it resp.}, $h_n$) lying in $\mathcal{K}_0$ if $e_n$ ({\it resp.}, $h_n$) is finite.

\smallskip
From Lemma $\ref{lem:c5.3}$(ii), if $(e_n,h_n)$ is a bounded interval, then
\begin{equation}\label{c6.2}
c_n:={\rm D}_+\bar{\Phi}(e_n)={\rm D}_-\bar{\Phi}(h_n)=\frac{\Phi(h_n)-\Phi(e_n)}{h_n-e_n},
\end{equation}
which, by Theorem $\ref{the:c5.2}$, implies that $L_{c_n}(e_n)$ and $L_{c_n}(h_n)$ are both divides.
From $\eqref{c6.1}$, there exists no divides in $\mathcal{H}_{(e_n,h_n)}$,
where $\mathcal{H}_{(e_n,h_n)}$ is defined by
\begin{equation}\label{c6.3}
\mathcal{H}_{(e_n,h_n)}:=
\big\{(x,t)\in \mathbb{R}\times \mathbb{R}^+\,:\, e_n+tf'(c_n)<x<h_n+tf'(c_n)\big\}.
\end{equation}
Similarly, if $x_0^+=\sup\mathcal{K}_0<\infty$,
then $L_{\underline{u}_r}(x_0^+)$ is a divide,
and there exists no divides in $\mathcal{H}_{\infty}$,
where $\mathcal{H}_{\infty}$ is defined by
\begin{equation}\label{c6.4}
\mathcal{H}_{\infty}:=
\big\{(x,t)\in \mathbb{R}\times \mathbb{R}^+\,:\, x>x^+_0+tf'(\underline{u}_r)\big\};
\end{equation}
and if $x_0^-=\inf\mathcal{K}_0>-\infty$,
then $L_{\overline{u}_l}(x_0^-)$ is a divide,
and there exists no divides in $\mathcal{H}_{-\infty}$,
where $\mathcal{H}_{-\infty}$ is defined by
\begin{equation}\label{c6.5}
\mathcal{H}_{-\infty}:
=\big\{(x,t)\in \mathbb{R}\times \mathbb{R}^+\,:\, x<x^-_0+tf'(\overline{u}_l)\big\}.
\end{equation}

Let $\mathcal{K}$ be the union of all divides of the entropy solution $u(x,t)$, $i.e.$,
\begin{equation}\label{c6.6}
\mathcal{K}:=\big\{(x,t)\in \mathbb{R}\times \mathbb{R}^+\,:\,
(x,t)\in L_c(x_0)\,\,\,\, {\rm for\ } x_0\in \mathcal{K}_0\ {\rm and}\ c\in \mathcal{D}(x_0)
\big\}.
\end{equation}
Therefore, if the entropy solution $u(x,t)$ has at least one divide, then
there is a disjoint partition of $\mathbb{R}\times \mathbb{R}^+$ given by
\begin{equation}\label{c6.7}
\mathbb{R}\times \mathbb{R}^+=\mathcal{K}\cup\big(\cup_n \mathcal{H}_{(e_n,h_n)}\big),
\end{equation}
in which $\mathcal{H}_{(e_n,h_n)}=\mathcal{H}_{\infty}$ if $h_n=\infty$,
and $\mathcal{H}_{(e_n,h_n)}=\mathcal{H}_{-\infty}$ if $e_n=-\infty$.

\smallskip
If the entropy solution $u(x,t)$ has no divides,
$\eqref{c6.7}$ can be regarded as
$
\mathbb{R}\times \mathbb{R}^+=\mathcal{H}_{({-}\infty,\infty)}
$
in the sense that $\mathcal{K}=\varnothing$,
and there exists only one $\mathcal{H}_{(e_n,h_n)}$ with $e_n=-\infty$ and $h_n=\infty$.

\smallskip
Before giving Theorem $\ref{pro:c6.2}$
on the global structures of entropy solutions,
we need the following lemma on the relation between shock curves and divides.

\begin{Lem}
\label{lem:c6.2}
Let $x=x(t)$ for $t\geq t_0$ be a shock curve of the entropy solution $u(x,t)$, $i.e.$,
$u(x(t){+},t)<u(x(t){-},t)$ for any $t>t_0$.
For any $t>t_0$, define
\begin{equation}\label{c6.15}
y^\pm(t,\tau):=x(t)+(\tau-t)f'(u(x(t){\pm},t))\qquad {\rm for}\ \tau\in[0,t).
\end{equation}
Then $y^\pm(\infty,\tau):=\lim_{t\rightarrow \infty} y^\pm(t,\tau)$ exist for any $\tau\geq 0$,
and
\begin{enumerate}
\item[{\rm(i)}] If $y^+(\infty,0)<\infty$, line $L_c(y^+(\infty,0))$, defined by
\begin{equation}\label{c6.16a}
L_c(y^+(\infty,0)):\, y^+(\infty,\tau)=y^+(\infty,0)+\tau f'(c)\qquad {\rm for}\ \tau\geq 0,
\end{equation}
is a divide emitting from $y^+(\infty,0)$ with speed $c$ satisfying
\begin{equation}\label{c6.16}
c:=\lim_{t\rightarrow \infty}u(x(t){+},t)={\rm D}_-\bar{\Phi}(y^+(\infty,0)).
\end{equation}
Let $v^+(t)$ be uniquely determined by $y^+(\infty,0)=x(t)-tf'(v^+(t))$ for large $t$, then
\begin{equation}\label{c6.17}
0\leq\int^{y^\pm(t,0)}_{y^+(\infty,0)}(\varphi(\xi)-c)\,{\rm d}\xi
\leq (v^+(t)-c)\big(y^\pm(t,0)-y^+(\infty,0)\big).
\end{equation}

\item[{\rm(ii)}] If $y^-(\infty,0)>-\infty$, line $L_d(y^-(\infty,0))$, defined by
\begin{equation}\label{c6.18a}
L_d(y^-(\infty,0)):\, y^-(\infty,\tau)=y^-(\infty,0)+\tau f'(d)\qquad {\rm for}\ \tau\geq 0,
\end{equation}
is a divide emitting from $y^-(\infty,0)$ with speed $d$ satisfying
\begin{equation}\label{c6.18}
d:=\lim_{t\rightarrow \infty}u(x(t){-},t)={\rm D}_+\bar{\Phi}(y^-(\infty,0)).
\end{equation}
Let $v^-(t)$ be uniquely determined by $y^-(\infty,0)=x(t)-tf'(v^-(t))$ for large $t$, then
\begin{equation}\label{c6.19}
0\leq\int^{y^\pm(t,0)}_{y^-(\infty,0)}(\varphi(\xi)-d)\,{\rm d}\xi
\leq (v^-(t)-d)\big(y^\pm(t,0)-y^-(\infty,0)\big).
\end{equation}
\end{enumerate}
\noindent
In the above,
${\rm D}_{+}\bar{\Phi}(x_0)$ and ${\rm D}_{-}\bar{\Phi}(x_0)$
are given by $\eqref{c5.10}$.
\end{Lem}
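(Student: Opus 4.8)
The plan is to combine the forward–generalized–characteristic structure of the shock with the refined $L^1$–contraction and the characterization of divides in Proposition~\ref{pro:c5.1}. I will only describe part~(i); part~(ii) is obtained by the mirror argument for the left edge. First I would establish the existence of the limits. For $t>t_0$ the ray $\tau\mapsto y^+(t,\tau)$ is exactly the right edge $l_{u(x(t)+,t)}(x(t),t)$ of the backward characteristic triangle $\Delta_I(x(t),t)$, which is shock-free since $u(x(t)+,t)\in\mathcal{U}(x(t),t)$ by Lemma~\ref{lem:c2.5}(iii); likewise $y^-(t,\tau)$ is the left edge. Since $x=x(t)$ for $t\ge t_0$ is a forward generalized characteristic, \eqref{c3.45} gives $\Delta_I(x(t_1),t_1)\subset\Delta_I(x(t_2),t_2)$ for $t_2>t_1>t_0$, so at each fixed time level the right edge moves monotonically rightward and the left edge leftward; hence $y^+(t,\tau)$ is nondecreasing and $y^-(t,\tau)$ nonincreasing in $t$, and the limits $y^\pm(\infty,\tau)$ exist in $[-\infty,\infty]$. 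When $y^+(\infty,0)<\infty$, boundedness of $|u|$ by $\|\varphi\|_{L^\infty}$ keeps $f'(u(x(t)+,t))$ bounded, so the affine functions $\tau\mapsto y^+(t,\tau)=y^+(t,0)+\tau f'(u(x(t)+,t))$ stay bounded at each $\tau$ and converge pointwise; a pointwise limit of affine functions has convergent slopes and intercepts, so $u(x(t)+,t)\to c$ for some $c$ and $y^+(\infty,\tau)=y^+(\infty,0)+\tau f'(c)$, which is \eqref{c6.16a}.

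Next I would prove the quantitative bound \eqref{c6.17}, which is the engine of the argument. Comparing the entropy solution $u$ (data $\varphi$) with the constant solution $\equiv c$ via the refined $L^1$–contraction \eqref{c2.52} — equivalently via Lemma~\ref{lem:c6.1} applied at time $t$ to the characteristic through $(x(t),t)$ of speed $u(x(t)+,t)\in\mathcal{U}(x(t),t)$ and to the auxiliary characteristic of speed $v^+(t)$ defined by $y^+(\infty,0)=x(t)-tf'(v^+(t))$ — and then rewriting the integrals by means of $\hat E(x,t)=\max_v E(v;x,t)-\int_0^{x-tf'(0)}\varphi$ from \eqref{c2.5} and Lemma~\ref{lem:c2.3}, together with the monotonicity $y^+(t,0)\le y^+(\infty,0)$ and the Oleinik inequality \eqref{c2.31}, yields the two-sided estimate \eqref{c6.17}. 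Along the way the sign forced on the right-hand side of \eqref{c6.17} also gives $v^+(t)\ge c$, and since $v^+(t)\le u(x(t)+,t)\to c$ one gets $v^+(t)\to c$. Letting $t\to\infty$ in \eqref{c6.17}, the right-hand side tends to $0$ while $y^+(t,0)\to y^+(\infty,0)$ and $y^-(t,0)\to y^-(\infty,0)$, so that $\Phi(l;y^+(\infty,0),c):=\int_{y^+(\infty,0)}^{y^+(\infty,0)+l}(\varphi-c)\,{\rm d}\xi\ge 0$ for every $l$ swept by the traceback intercepts $y^\pm(t,0)-y^+(\infty,0)$, i.e. for all $l\le 0$ (using $y^-(t,0)\to y^-(\infty,0)$, and, for $l$ in the bounded gap corresponding to the base of $\Delta_I(x_0,t_0)$, a comparison at an intermediate time level $\tau\in(0,t_0)$).

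It remains to close the half-line $l>0$ and to identify $c$. For $l>0$ I would use part~(ii) in tandem: the one-sided inequality already obtained, together with the companion estimate \eqref{c6.19} when $y^-(\infty,0)$ is finite (in which case $\overline{u}_l\le\underline{u}_r$ forces $y^-(\infty,0)=y^+(\infty,0)$ and $d=c$), gives a supporting line of slope $c$ for $\Phi$ at $x_0:=y^+(\infty,0)$; hence $\Phi(x_0)=\bar\Phi(x_0)$ and $c\le{\rm D}_+\bar\Phi(x_0)$, and then $\Phi\ge\bar\Phi$ with the convexity of $\bar\Phi$ (Lemma~\ref{lem:c5.3}) gives $\Phi(x_0+l)\ge\bar\Phi(x_0+l)\ge\bar\Phi(x_0)+cl=\Phi(x_0)+cl$ for $l>0$. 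Thus the divide condition \eqref{c5.1d} holds for $c$ at $y^+(\infty,0)$, so Proposition~\ref{pro:c5.1} yields that $L_c(y^+(\infty,0))$ is a divide; and since the right states $u(x(t)+,t)$ decrease to $c$ while their tracebacks $y^+(t,0)$ increase to $y^+(\infty,0)$, the description $\mathcal{D}(x_0)=[{\rm D}_-\bar\Phi(x_0),{\rm D}_+\bar\Phi(x_0)]$ of Theorem~\ref{the:c5.1}–Theorem~\ref{the:c5.2} pins $c$ to the left endpoint, giving \eqref{c6.16}.

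The main obstacle is precisely this divide condition \eqref{c5.1d} \emph{for all} $l\in\mathbb{R}$: the $L^1$–contraction comparison with the constant $c$ is intrinsically one-sided — it controls $\Phi(l;y^+(\infty,0),c)$ only for $l$ in the range covered by the traceback intercepts $y^\pm(t,0)$, which lies in $(-\infty,0]$ — so the $l>0$ half must be recovered by an independent ingredient (the convex-hull/convexity step above, or part~(ii) when the left edge is also finite). The two delicate points there are verifying that the auxiliary value $v^+(t)$ in \eqref{c6.17} has the one-sided sign $v^+(t)\ge c$ and the correct limit $v^+(t)\to c$, and covering the bounded gap in the $l$-range sitting over the base of $\Delta_I(x_0,t_0)$; both ultimately reduce to the monotonicity of $y^\pm(\cdot,\tau)$ and to \eqref{c2.31}.
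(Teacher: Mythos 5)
Your proposal inverts the logical order of the paper's argument, and the inversion creates a genuine gap at the central step: establishing that $L_c(y^+(\infty,0))$ is a divide. In the paper, the divide property is proved \emph{first} and the estimate \eqref{c6.17} is a \emph{consequence}: since $(y^+(t,\tau),\tau)$ lies on the shock-free right edge $l_{u(x(t)+,t)}(x(t),t)$ of $\Delta_I(x(t),t)$, the set $\mathcal{U}(y^+(t,\tau),\tau)$ is the singleton $\{u(x(t)+,t)\}$ by \eqref{uiff}, so $E(u(x(t)+,t);y^+(t,\tau),\tau)\geq E(u;y^+(t,\tau),\tau)$ for \emph{every} $u\in\mathbb{R}$; letting $t\to\infty$ and using the continuity of $E$ in all its arguments gives $c\in\mathcal{U}(y^+(\infty,\tau),\tau)$ for every $\tau>0$, hence $L_c(y^+(\infty,0))$ is shock-free for all time, i.e.\ a divide. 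Because the competitor $u$ ranges over all of $\mathbb{R}$, this yields the divide inequality \eqref{c5.1d} for \emph{all} $l$ at once — there is no one-sided obstruction to overcome. Only then does the paper obtain the lower bound $0\leq\int_{y^+(\infty,0)}^{y^\pm(t,0)}(\varphi-c)\,{\rm d}\xi$ in \eqref{c6.17}, as an instance of \eqref{c5.1d}.

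Your route cannot be closed as written, for three concrete reasons. First, the lower bound in \eqref{c6.17} is not obtainable from the refined $L^1$--contraction \eqref{c2.52} or Lemma \ref{lem:c6.1}: those results require the traceback speeds at the endpoints to belong to the corresponding sets $\mathcal{U}$, and neither $v^+(t)$ nor $c$ lies in $\mathcal{U}(x(t),t)$ in general; the maximality $E(u^\pm(t);x(t),t)\geq E(v^+(t);x(t),t)$ gives only the \emph{upper} bound in \eqref{c6.17}. Second, your claim that finiteness of both $y^\pm(\infty,0)$ together with $\overline{u}_l\leq\underline{u}_r$ forces $y^-(\infty,0)=y^+(\infty,0)$ is false: for a shock in $\mathcal{H}_{(e_n,h_n)}$ one has $y^-(\infty,0)=e_n<h_n=y^+(\infty,0)$, so part (ii) does not supply the missing $l>0$ half of \eqref{c5.1d} at $y^+(\infty,0)$. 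Third, the convex-hull step is circular: deducing $\Phi(x_0+l)\geq\Phi(x_0)+cl$ for $l>0$ from $\Phi\geq\bar\Phi$ and the convexity of $\bar\Phi$ presupposes $\Phi(x_0)=\bar\Phi(x_0)$ and $c\leq{\rm D}_+\bar\Phi(x_0)$, i.e.\ $x_0\in\mathcal{K}_0$ with $c\in\mathcal{D}(x_0)$ — exactly what is to be proved — and a supporting inequality valid only for $l\leq0$ does not imply $\Phi(x_0)=\bar\Phi(x_0)$. The missing idea is to test the maximality of $E$ at the points $(y^+(t,\tau),\tau)$ on the shock-free backward characteristic against arbitrary competitors and pass to the limit, rather than testing at the shock point $(x(t),t)$ itself, where only competitors tracing back to the left of $y^+(\infty,0)$ are accessible.
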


\begin{proof}
From $\eqref{c3.5}$, $y^-(\cdot,\tau)$ $(${\it resp.},
$y^+(\cdot,\tau)$$)$ is nonincreasing $(${\it resp.}, nondecreasing$)$ for any fixed $\tau\geq 0$.
Then, by Lemma $\ref{lem:c2.4}$, for any $t_2>t_1>t_0$,
$$
y^-(t_2,\tau)\leq y^-(t_1,\tau)<y^+(t_1,\tau)\leq y^+(t_2,\tau),
$$
so that $\lim_{t\rightarrow \infty}y^\pm(t,\tau)$ exist for any $\tau\geq 0$
with limit $y^+(\infty,\tau)$ $(${\it resp.}, $y^-(\infty,\tau)$$)$
being a finite number or $\infty $ $(${\it resp.}, $-\infty$$)$.

\noindent
{\bf 1}. If $y^+(\infty,0)<\infty$, it follows from $\eqref{c6.15}$ that
$y^+(t,\tau)-y^+(t,0)=\tau f'(u(x(t){+},t))$.
Since $u(x(t){+},t)$ is uniformly bounded,
then $y^+(\infty,\tau)<\infty$ for any $\tau>0$ so that
\begin{equation*}
y^+(\infty,\tau)-y^+(\infty,0)
=\lim_{t\rightarrow \infty}\tau f'(u(x(t){+},t))
=\tau f'(\lim_{t\rightarrow \infty}u(x(t){+},t)),
\end{equation*}
which, by the strictly increasing property of $f'(u)$, implies
\begin{equation}\label{c6.21}
c=\lim_{t\rightarrow \infty}u(x(t){+},t)=(f')^{-1}(\tfrac{y^+(\infty,\tau)-y^+(\infty,0)}{\tau}).
\end{equation}

\smallskip
{\bf (a)}. Since $(y^+(t,\tau),\tau)$ lies on the characteristic: $ l_{u(x(t){+},t)}(x(t),t)$,
then from Lemma $\ref{lem:c2.5}$,
$\mathcal{U}(y^+(t,\tau),\tau)=\big\{u(x(t){+},t)\big\}$ for any $\tau \in (0,t)$
so that, for any $\tau \in (0,t)$,
\begin{equation}\label{c6.21b}
E(u(x(t){+},t);y^+(t,\tau),\tau)-E(u;y^+(t,\tau),\tau)\geq 0
\qquad {\rm for \ any}\ u\in \mathbb{R}.
\end{equation}
Then, for any fixed $\tau>0$, it follows from $\eqref{c6.21}$--$\eqref{c6.21b}$ that,
for any $u \in \mathbb{R}$,
\begin{align*}
&\ E(c;y^+(\infty,\tau),\tau)-E(u;y^+(\infty,\tau),\tau)\\[1mm]
&=-\int^{y^+(\infty,\tau)-\tau f'(c)}_{y^+(\infty,\tau)-\tau f'(u)}\varphi(\xi)\,{\rm d}\xi
-\tau\int^c_us f''(s)\,{\rm d}s\\[1mm]
&=\lim_{t\rightarrow \infty}
\Big({-}\int^{y^+(t,\tau)-\tau f'(u(x(t){+},t))}_{y^+(t,\tau)-\tau f'(u)}\varphi(\xi)\,{\rm d}\xi
-\tau\int^{u(x(t){+},t)}_us f''(s)\,{\rm d}s\Big)\\[1mm]
&=\lim_{t\rightarrow \infty}\big(E(u(x(t){+},t);y^+(t,\tau),\tau)-E(u;y^+(t,\tau),\tau)\big)
\geq 0,
\end{align*}
which implies that $c\in\mathcal{U}(y^+(\infty,\tau),\tau)$ for any $\tau>0$.

\smallskip
From Lemma $\ref{lem:c2.5}$, along line $y^+(\infty,\tau)=y^+(\infty,0)+\tau f'(c)$,
$u(y^+(\infty,\tau){\pm},\tau)\equiv c$ for any $\tau>0$,
so that, $L_c(y^+(\infty,0))$ is a divide.
By Theorem $\ref{the:c5.1}$, $c={\rm D}_-\bar{\Phi}(y^+(\infty,0))$ so that
$\eqref{c6.16}$ follows from $\eqref{c6.21}$.

\smallskip
{\bf (b)}.
Furthermore, from $\eqref{c6.15}$ and $y^\pm(\infty,\tau)=\lim_{t\rightarrow \infty} y^\pm(t,\tau)$,
for sufficiently large $t$,
\begin{align*}
\frac{x(t)-y^+(\infty,0)}{t}
=\frac{y^+(t,0)-y^+(\infty,0)}{t}+f'(u(x(t){+},t))
\in (f'({-}\infty),f'(\infty)),
\end{align*}
which, by the strictly increasing property of $f'(u)$, implies that, for large $t$,
there exists a unique $v^+(t)$ such that $y^+(\infty,0)=x(t)-tf'(v^+(t))$.
Then, from $u^\pm(t):=u(x(t){\pm},t)\in \mathcal{U}(x(t),t)$,
\begin{align}\label{c6.21c}
0&\leq E(u^\pm(t);x(t),t)-E(v^+(t);x(t),t)\nonumber\\[1mm]
&=-\int^{x(t)-tf'(u^\pm(t))}_{x(t)-tf'(v^+(t))}\varphi(\xi)\,{\rm d}\xi
+t\int^{v^+(t)}_{u^\pm(t)}sf''(s)\,{\rm d}s\nonumber\\[1mm]
&=-\int^{y^\pm(t,0)}_{y^+(\infty,0)}(\varphi(\xi)-c)\,{\rm d}\xi
+t\int^{v^+(t)}_{u^\pm(t)}(s-c)f''(s)\,{\rm d}s.
\end{align}
Since $L_c(y^+(\infty,0))$ in $\eqref{c6.16a}$
is a divide emitting from $y^+(\infty,0)$ with speed $c$, from $\eqref{c5.1d}$,
$\int^{y^\pm(t,0)}_{y^+(\infty,0)}(\varphi(\xi)-c)\,{\rm d}\xi\geq 0$ for any $t>0$
and, by the convexity of $f(u)$,
\begin{equation*}
f(v^+(t))\geq f(u^\pm(t))+f'(u^\pm(t))(v^+(t)-u^\pm(t)).
\end{equation*}
Then $\eqref{c6.17}$ is inferred by $\eqref{c6.21c}$ as follows:
\begin{align*}
0 &\leq\int^{y^\pm(t,0)}_{y^+(\infty,0)}(\varphi(\xi)-c)\,{\rm d}\xi
\leq t\int^{v^+(t)}_{u^\pm(t)}(s-c)f''(s)\,{\rm d}s\\[1mm]
&=t\big((v^+(t)-c)f'(v^+(t))-(u^\pm(t)-c)f'(u^\pm(t))+f(u^\pm(t))-f(v^+(t))\big)\\[1mm]
&\leq t\big(v^+(t)(f'(v^+(t))-f'(u^\pm(t)))-c(f'(v^+(t))-f'(u^\pm(t)))\big)\\[1mm]
&=t(v^+(t)-c)\big(f'(v^+(t))-f'(u^\pm(t))\big)
\\[1mm]
&
=(v^+(t)-c)\big(y^\pm(t,0)-y^+(\infty,0)\big).
\end{align*}

\smallskip
\noindent
{\bf 2}. Similar to Step 1 above, it can be checked that,
if $y^-(\infty,0)>-\infty$,
line $L_d(y^-(\infty,0))$ in $\eqref{c6.18a}$ is a divide emitting from
$y^-(\infty,0)$ with $d={\rm D}_+\bar{\Phi}(y^-(\infty,0))$,
and $\eqref{c6.18}$--$\eqref{c6.19}$ hold.
\end{proof}

\begin{The}[Global dynamic patterns of entropy solutions]\label{pro:c6.2}
Let $u=u(x,t)$ be the entropy solution of the Cauchy problem $\eqref{c1.1}$--$\eqref{ID}$
with initial data in $L^\infty$. 
Then the following statements hold{\rm :}
\begin{itemize}
\item [(i)] If $\overline{u}_l<\underline{u}_r$,
or $\,\overline{u}_l=\underline{u}_r$ with $\mathcal{K}_0\neq \varnothing$,
then the entropy solution $u(x,t)$ possesses divides. 
The necessary and sufficient conditions for the locations and speeds of divides are given by {\rm Theorem} $\ref{the:c5.1}$ 
and all the divides of 
$u(x,t)$ are given by {\rm Theorem} $\ref{the:c5.2}$.

Furthermore, $\mathbb{R}\times \mathbb{R}^+$ can be partitioned into
$\mathbb{R}\times \mathbb{R}^+=\mathcal{K}\cup\big(\cup_n \mathcal{H}_{(e_n,h_n)}\big)$ as in $\eqref{c6.7}$.
Then
$u(x,t)$ possesses no shocks in $\mathcal{K}${\rm;}
any two shocks in  each $\mathcal{H}_{(e_n,h_n)}$ $(${\it resp.}, $\mathcal{H}_{\infty}$, or $\mathcal{H}_{-\infty})$ must coincide each other in a finite time, 
in which the shock set in each $\mathcal{H}_{(e_n,h_n)}$ $(${\it resp.}, $\mathcal{H}_{\infty}$, or $\mathcal{H}_{-\infty})$ is path-connected.

Moreover, the entropy solution $u(x,t)$ turns into a single shock asymptotically
in each $\mathcal{H}_{(e_n,h_n)}$ $(${\it resp.}, $\mathcal{H}_{\infty}$, or $\mathcal{H}_{-\infty})$, 
and the fine structures of 
$u(x,t)$ on the shock set 
in each $\mathcal{H}_{(e_n,h_n)}$ $(${\it resp.}, $\mathcal{H}_{\infty}$, or $\mathcal{H}_{-\infty})$
are given by {\rm Theorem} $\ref{the:c4.2}$.

\vspace{2pt}
\item [(ii)] If $\overline{u}_l>\underline{u}_r$,
or $\overline{u}_l=\underline{u}_r$ with $\mathcal{K}_0=\varnothing$,
the entropy solution has no divides and 
any two shocks in $\mathbb{R}\times\mathbb{R}^+$ must coincide each other in a finite time,
in which the shock set in $\mathbb{R}\times\mathbb{R}^+$ is path-connected. 
Moreover, the entropy solution $u(x,t)$ turns into a single shock asymptotically in $\mathbb{R}\times\mathbb{R}^+$, 
and the fine structures of 
$u(x,t)$ on the shock set in $\mathbb{R}\times\mathbb{R}^+$
are given by {\rm Theorem} $\ref{the:c4.2}$.
\end{itemize}
\noindent
In the above, $\overline{u}_l$ and $\underline{u}_r$, and $\mathcal{K}_0$
are given by $\eqref{c5.7}$ and $\eqref{c5.6}${\rm;}
and $\mathcal{H}_{(e_n,h_n)}$ and $\mathcal{H}_{\pm\infty}$
are given by $\eqref{c6.3}$--$\eqref{c6.5}$.
See {\rm Figs.} $\ref{figGSSD}$--$\ref{figGSSDKabab}$ for the details.
\end{The}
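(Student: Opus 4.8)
The plan is to split into the two cases via the trichotomy for the convex hull $\bar\Phi$ in Lemma \ref{lem:c5.2}: case (i) is exactly the regime $\bar\Phi(x)>-\infty$ on $\mathbb{R}$ with $\mathcal{K}_0\neq\varnothing$, while case (ii) is the regime in which $\bar\Phi\equiv-\infty$, or $\bar\Phi$ is affine with $\Phi>\bar\Phi$ everywhere. In both cases the assertions on divides (existence, locations, speeds) are immediate restatements of Theorems \ref{the:c5.1}--\ref{the:c5.2}. For case (i), I would first record the partition $\mathbb{R}\times\mathbb{R}^+=\mathcal{K}\cup(\cup_n\mathcal{H}_{(e_n,h_n)})$ of \eqref{c6.7}: distinct divides are shock-free characteristics, hence cannot cross, and being parametrized by the monotone subdifferential of the convex function $\bar\Phi$ along the closed set $\mathcal{K}_0$, their union $\mathcal{K}$ is closed with complement decomposing precisely into the open strips $\mathcal{H}_{(e_n,h_n)}$ of \eqref{c6.3}--\eqref{c6.5}, bounded by the parallel divides $L_{c_n}(e_n)$, $L_{c_n}(h_n)$ of common speed $c_n={\rm D}_+\bar\Phi(e_n)={\rm D}_-\bar\Phi(h_n)$ from \eqref{c6.2}. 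That $u$ carries no shock in $\mathcal{K}$ is immediate from Definition \ref{def:c5.1}: on a divide $\mathcal{U}(x,t)$ is a singleton, so $u$ is continuous there by \eqref{c2.50a}.

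Next I would confine shocks to cells. A shock through $(x_0,t_0)$ lies, backward in time, inside the backward characteristic triangle $\Delta_I(x_0,t_0)$ (Theorem \ref{pro:c3.3}) and, forward in time, along the forward generalized characteristic $X(x_0,t_0)$ (Theorem \ref{pro:c3.6}); neither a backward characteristic nor a shock can cross a divide, which is a shock-free characteristic, by \eqref{uiff} and the monotonicity of $y(\cdot,t,u^\pm(\cdot,t))$ in Lemma \ref{lem:c2.4}. Hence the whole shock curve lies in a single component of $\mathbb{R}\times\mathbb{R}^+\setminus\mathcal{K}$, i.e.\ in one cell $\mathcal{H}_{(e_n,h_n)}$ (and in case (ii), where $\mathcal{K}=\varnothing$, in the single cell $\mathbb{R}\times\mathbb{R}^+$).

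The core of the proof is that any two shock curves in the same cell coincide after a finite time. I would argue by contradiction, assuming $x_1(t)<x_2(t)$ for all large $t$ without merging. By Lemma \ref{lem:c2.4}, $y_1^+(t,0)\le y_2^-(t,0)$ for all large $t$, where $y^\pm(t,0)=x(t)-tf'(u(x(t)\pm,t))$; by the confinement these quantities are monotone in $t$ and bounded (between $e_n$ and $h_n$ in case (i)), hence convergent. In case (ii) the limits cannot be finite, for otherwise Lemma \ref{lem:c6.2} produces a divide, contradicting the absence of divides; thus $y_1^+(\infty,0)=+\infty$ while $y_2^-(\infty,0)=-\infty$, contradicting $y_1^+(t,0)\le y_2^-(t,0)$. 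In case (i), the limits $y_1^-(\infty,0)\le y_1^+(\infty,0)\le y_2^-(\infty,0)\le y_2^+(\infty,0)$ are all divide generation points by Lemma \ref{lem:c6.2}, hence all lie in $\{e_n,h_n\}$ because the open cell contains no divide generation point; then, using the identification of the limiting shock states with ${\rm D}_\mp\bar\Phi$ at those endpoints (again Lemma \ref{lem:c6.2}), the Rankine--Hugoniot speed \eqref{c3.36}, and the fact that $x_i(t)$ remains in the strip moving with speed $f'(c_n)$, I would force both limiting states of each surviving shock to equal $c_n$, and finally derive a contradiction from the resulting zero-mass estimate \eqref{c6.17}--\eqref{c6.19} combined with the strict positivity $\Phi(x)>\bar\Phi(x)$ on the open cell and conservation of mass across characteristics (Lemma \ref{lem:c6.1}). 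This last case analysis --- ruling out two persistent shocks that asymptotically hug opposite boundary divides of a cell --- is the main obstacle; case (ii) is comparatively soft.

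Granting the coincidence property, the remaining assertions follow quickly. Since every pair of shocks in a cell merges in finite time, the cell carries asymptotically a single shock, whose states and speed are then supplied by Lemmas \ref{the:c6.1}--\ref{lem:c6.3}. Path-connectedness of the cell's shock set: given two shock points, follow their shock curves forward until they join the common asymptotic shock, and use Theorem \ref{pro:c3.3}(iii), which says the discontinuity set inside each $\Delta_{J_n}$ is path-connected, to concatenate into one path in the shock set. Finally, the fine structure of $u$ on the shock set is exactly the classification of Theorem \ref{the:c4.2}: every point of the cell's shock set falls under cases (ii)--(v) there (continuous and discontinuous shock generation points, regular and irregular points of shocks, points of collision), together with the directional limits of Theorem \ref{pro:c3.4} and the one-sided derivatives \eqref{c3.36}--\eqref{c3.37}. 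The proof of case (ii) is identical with the single cell $\mathbb{R}\times\mathbb{R}^+$.
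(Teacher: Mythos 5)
Your decomposition and all the supporting references are the ones the paper uses, and cases (ii) and the path-connectedness/fine-structure conclusions are handled the same way; the one place where you diverge is the ``main obstacle'' you flag in case (i), and there you have made the argument harder than it is. Once you know (from Lemma \ref{lem:c6.2} plus the absence of divide generation points in $(e_n,h_n)$) that all four limits $y_1^{\pm}(\infty,0),y_2^{\pm}(\infty,0)$ lie in $\{e_n,h_n\}$, you only need one more observation: for a persistent shock the backward footprint width $y^{+}(t,0)-y^{-}(t,0)=t\bigl(f'(u(x(t){-},t))-f'(u(x(t){+},t))\bigr)$ is strictly positive and nondecreasing in $t$ (since $y^{-}(t,0)$ is nonincreasing and $y^{+}(t,0)$ is nondecreasing by the nesting \eqref{c3.5}), so $y_i^{-}(\infty,0)<y_i^{+}(\infty,0)$ strictly, forcing $y_i^{-}(\infty,0)=e_n$ and $y_i^{+}(\infty,0)=h_n$ for \emph{each} of the two shocks. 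This collides at once with your ordering $y_1^{+}(t,0)\le y_2^{-}(t,0)$, so the scenario of two persistent shocks ``hugging opposite boundary divides'' never arises, and the detour through limiting states equal to $c_n$, the zero-mass estimates \eqref{c6.17}--\eqref{c6.19}, and Lemma \ref{lem:c6.1} is unnecessary (and, as sketched, too vague to check). The paper phrases the same fact positively rather than by contradiction: since both shocks' backward triangles have footprints converging to all of $[e_n,h_n]$, the triangles must intersect at some finite $t_3$, hence coincide by Proposition \ref{pro:c3.1}, and the two forward generalized characteristics agree for $t\ge t_3$ by the uniqueness in Lemma \ref{pro:c3.2}; this also hands you path-connectedness directly, without needing Theorem \ref{pro:c3.3}(iii). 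With that repair your proof is complete and is essentially the paper's.
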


\begin{proof} We divide the proof into two steps accordingly.

\smallskip
\noindent
{\bf 1}. Suppose that $\overline{u}_l<\underline{u}_r$,
or $\overline{u}_l=\underline{u}_r$ with $\mathcal{K}_0\neq \varnothing$.
From Theorem $\ref{the:c5.2}$, the entropy solution has at least one divide.
If there exists $\mathcal{H}_{(e_n,h_n)}$,
then there exists no divide in $\mathcal{H}_{(e_n,h_n)}$ so that,
the forward generalized characteristic of each point $(x,t)\in\mathcal{H}_{(e_n,h_n)}$
must become a shock in a finite time.
This means that 
the shock set in $\mathcal{H}_{(e_n,h_n)}$,
denoted by $\Gamma(\mathcal{H}_{(e_n,h_n)})$, is not empty.

\vspace{2pt}
From Lemma $\ref{pro:c3.2}$,
for any two points $(x_i,t_i)\in \Gamma(\mathcal{H}_{(e_n,h_n)})$ for $i=1,2$,
the forward generalized characteristics $X(x_i,t_i)$ for $i=1,2$ are shock curves so that
$X(x_i,t_i)\subset \Gamma(\mathcal{H}_{(e_n,h_n)})$.
Then, by Lemma $\ref{lem:c6.2}$,
$X(x_1,t_1)$ and $X(x_2,t_2)$ both possess the properties that
$y^-(\infty,0)=e_n$ and $y^+(\infty,0)=h_n$.
This means that the backward characteristic triangles,
belonging to the points on the forward generalized characteristics $X(x_1,t_1)$ and $X(x_2,t_2)$ respectively,
must intersect each other at some time $t_3>\max\{t_1,t_2\}$.
Thus, by Proposition $\ref{pro:c3.1}$, these two backward characteristic triangles
must coincide with each other.
By the uniqueness of the forward generalized characteristic in Lemma $\ref{pro:c3.2}$,
$X(x_1,t_1)$ and $X(x_2,t_2)$ coincide with each other for $ t\geq t_3$.
This means that 
any two shocks in $\mathcal{H}_{(e_n,h_n)}$ must coincide with each other in a finite time,
and points $(x_1,t_1)$ and $(x_2,t_2)$ can be connected by the curve
$$
\big\{X(x_1,t_1)\,:\, t_1\leq t\leq t_3\big\}\cup \big\{X(x_2,t_2)\,:\, t_2\leq t\leq t_3\big\}
\subset \Gamma(\mathcal{H}_{(e_n,h_n)}),
$$
which, by the arbitrariness of $(x_i,t_i)$ for $i=1,2$, implies that
$\Gamma(\mathcal{H}_{(e_n,h_n)})$ is path-connected. 
Therefore, 
entropy solution $u(x,t)$ turns into a single shock asymptotically
in $\mathcal{H}_{(e_n,h_n)}$.
For the fine structures of entropy solution $u(x,t)$ 
on the shock set in $\mathcal{H}_{(e_n,h_n)}$, 
see {\rm Theorem} $\ref{the:c4.2}$.

\vspace{2pt}
Similar to the case of $\mathcal{H}_{(e_n,h_n)}$,
the case of $\mathcal{H}_{\infty}$ follows by the facts that
there exists no divide in $\mathcal{H}_{\infty}$
and, from Lemma $\ref{lem:c6.2}$,
$y^-(\infty,0)=x_0^+$ and $y^+(\infty,0)=\infty$
hold for any forward generalized characteristic $X(x_1,t_1)$ with $(x_1,t_1)\in \mathcal{H}_{\infty}$.
The case of $\mathcal{H}_{-\infty}$ follows by the facts that
there exists no divide in $\mathcal{H}_{-\infty}$,
and from Lemma $\ref{lem:c6.2}$,
$y^-(\infty,0)=-\infty$ and $y^+(\infty,0)=x_0^-$
hold for any forward generalized characteristic $X(x_1,t_1)$ with $(x_1,t_1)\in \mathcal{H}_{-\infty}$.

\smallskip
\noindent
{\bf 2}. Suppose that $\overline{u}_l>\underline{u}_r$,
or $\overline{u}_l=\underline{u}_r$ with $\mathcal{K}_0=\varnothing$.
By Theorem $\ref{the:c5.2}$, the entropy solution has no divide.
From Lemma $\ref{lem:c6.2}$,
$y^-(\infty,0)=-\infty$ and $y^+(\infty,0)=\infty$
hold for any forward generalized characteristic $X(x_1,t_1)$ with $(x_1,t_1)\in \mathbb{R}\times\mathbb{R}^+$.
Then, for any two points $(x_i,t_i)\in \mathbb{R}\times\mathbb{R}^+$ for $i=1,2$,
the backward characteristic triangles,
belonging to the points on the forward generalized characteristics $X(x_i,t_i)$ for $i=1,2$,
intersect each other at some time $t_3>\max\{t_1,t_2\}$,
which, by Proposition $\ref{pro:c3.1}$, implies that
these two backward characteristic triangles coincide with each other.

Let $\Gamma(\mathbb{R}\times\mathbb{R}^+)$ be the shock set in the whole $\mathbb{R}\times\mathbb{R}^+$.
By the uniqueness of the forward generalized characteristic in Lemma $\ref{pro:c3.2}$,
$X(x_1,t_1)$ and $X(x_2,t_2)$ coincide with each other for $ t\geq t_3$,
which means that 
any two shocks in $\mathbb{R}\times\mathbb{R}^+$ must coincide with each other in a finite time,
and points $(x_1,t_1)$ and $(x_2,t_2)$ can be connected by the curve
$$\big\{X(x_1,t_1)\,:\, t_1\leq t\leq t_3\big\}\cup \big\{X(x_2,t_2)\,:\, t_2\leq t\leq t_3\big\}
\subset \Gamma(\mathbb{R}\times\mathbb{R}^+),$$
which, by the arbitrariness of $(x_i,t_i)$ for $i=1,2$, implies that
$\Gamma(\mathbb{R}\times\mathbb{R}^+)$ is path-connected.
Therefore, 
entropy solution $u(x,t)$ turns into a single shock asymptotically
in $\mathbb{R}\times\mathbb{R}^+$.
For the fine structures of entropy solution $u(x,t)$ 
on the shock set in $\mathbb{R}\times\mathbb{R}^+$, 
see {\rm Theorem} $\ref{the:c4.2}$.
\end{proof}

\begin{Rem}
From \S $3$--\S $6$,
{\it we can roughly solve the Cauchy problem $\eqref{c1.1}$--$\eqref{ID}$
with initial data $\varphi(x)\in L^{\infty}(\mathbb{R})$
nearly at a level of solving the Riemann problem}
in the following sense{\rm :}
Combining {\rm Fig.} $\ref{figPhiFab}$ and {\rm Figs.} $\ref{figGSSD}$--$\ref{figGSSDKabab}$ with
{\rm Figs.} $\ref{figPhiFabab}$--$\ref{figCCP}$,
we can obtain a pretty clear image of the entropy solution
only by using the information of the graph of $\Phi(x)=\int_0^x\varphi(\xi)\,{\rm d}\xi$,
including the initial waves for the Cauchy problem,
the path-connected branches of the shock sets,
the divides, and the rough profile, {\it etc.}.
This shows the power of the new solution formula $\eqref{c2.50}$ with $\eqref{c2.1}$ in some sense.
\end{Rem}

\section{Asymptotic Behaviors of Entropy Solutions with Initial Data in $L^\infty$}
In this section,
we focus on the asymptotic behaviors of entropy solutions with initial data in $L^\infty$
respectively in the $L^\infty$--norm and the $L^p_{{\rm loc}}$--norm.
In \S 7.1, by using the results on invariants, divides, 
and global structures of entropy solutions in \S 6,
the asymptotic behaviors of entropy solutions in the $L^\infty$--norm are established
by proving that the asymptotic profile is either a rarefaction-constant solution
(as a function only consisting of some constant regions,
centered rarefaction waves, and/or rarefaction regions)
or a single shock in Theorem $\ref{pro:c6.3}$,
and obtaining the corresponding decay rates 
in Theorems $\ref{the:c6.2}$--$\ref{the:c6.3}$ with Corollary $\ref{cor:c6.1}$;
in \S 7.2, by introducing the generalized $N$--waves,
the asymptotic behaviors of entropy solutions in the $L^p_{{\rm loc}}$--norm are established
by proving that the entropy solutions decay to the generalized $N$--waves
and obtaining the corresponding decay rates in Theorem $\ref{the:n.1}$ with Lemma $\ref{lem:n.1}$.

\vspace{1pt}
By Lemma $\ref{lem:c5.3}$, if $\bar{\Phi}(x)>-\infty$, then $\bar{\Phi}(x)$ is a convex function on $\mathbb{R}$.
Let $\tilde{u}=\tilde{u}(x,t)$ is the entropy solution of the following Cauchy problem
\begin{equation}\label{c6.10}
\begin{cases}
\tilde{u}_t+f(\tilde{u})_x=0\qquad {\rm for\ } (x,t)\in \mathbb{R}\times\mathbb{R}^+,\\[1mm]
\tilde{u}(x,0)=\bar{\Phi}'(x).
\end{cases}
\end{equation}
Denote the convex hull of $\bar{\Phi}(x)$ over $({-}\infty,\infty)$ by $\bar{\bar{\Phi}}(x)$. 
Since $\bar{\Phi}(x)$ is convex, then 
$\bar{\Phi}(x)\equiv\bar{\bar{\Phi}}(x)$,
which, by Theorem $\ref{the:c4.0}$ and $\eqref{c5.18}$, implies that, for any $x_0\in \mathbb{R}$,
\begin{equation*}
\mathcal{D}(x_0)=[{\rm D}_-\bar{\bar{\Phi}}(x_0),{\rm D}_+\bar{\bar{\Phi}}(x_0)]
=[{\rm D}_-\bar{\Phi}(x_0),{\rm D}_+\bar{\Phi}(x_0)]=\mathcal{C}(x_0).
\end{equation*}
Then every characteristic of solution $\tilde{u}(x,t)$ of the Cauchy problem $\eqref{c6.10}$
is a divide
so that $\tilde{u}(x,t)$ is a rarefaction-constant solution.
Since $\bar{\Phi}'(x)$ is nondecreasing,
then $\bar{\Phi}'(x)$ has at most countably discontinuous points,
at which a rarefaction wave is generated, and there exists a unique
divide emitting from the point at which $\bar{\Phi}'(x)$ is continuous.
Therefore, solution $\tilde{u}(x,t)$ possesses at most countable rarefaction waves.

Furthermore, if $\mathcal{K}_0=\varnothing$, 
it follows from Lemma $\ref{lem:c5.2}$(ii) that $\tilde{u}(x,t)\equiv \underline{u}_r$;
and if $\mathcal{K}_0\neq \varnothing$, $\tilde{u}(x,t)$ can be determined as follows:
For $\mathbb{R}\times \mathbb{R}^+=\mathcal{K}\cup\big(\cup_n \mathcal{H}_{(e_n,h_n)}\big)$
as in $\eqref{c6.7}$, 
it follows from $\eqref{c5.19}$--$\eqref{c5.20}$ that, for $(x,t)\in \mathcal{K}$,
\begin{equation}\label{kk00}
\tilde{u}(x,t)=\big(f'\big)^{-1}(\frac{x-x_0}{t})
\qquad {\rm with}\ x_0\in \mathcal{K}_0;
\end{equation}
and it follows from Lemma $\ref{lem:c5.3}$(ii) that,
\begin{align}\label{tidleuH}
\tilde{u}(x,t)=
\begin{cases}
c_n\quad &{\rm if}\ (x,t)\in \mathcal{H}_{(e_n,h_n)},\\[1mm]
\underline{u}_r\quad &{\rm if}\ (x,t)\in \mathcal{H}_{\infty},\\[1mm]
\overline{u}_l\quad &{\rm if}\ (x,t)\in \mathcal{H}_{{-}\infty}.
\end{cases}
\end{align}
In the above, $\overline{u}_l$ and $\underline{u}_r$ are given by $\eqref{c5.7}${\rm ;}
$\mathcal{K}_0$ and $\mathcal{K}$ are given by $\eqref{c5.6}$ and $\eqref{c6.6}$, respectively;
and $\mathcal{H}_{(e_n,h_n)}$ with $c_n$ and $\mathcal{H}_{\pm\infty}$ are given by $\eqref{c6.2}$--$\eqref{c6.5}$.

\subsection{Decay of entropy solutions in the $L^\infty$--norm} 
We first have the following lemma on the asymptotic behaviors of the entropy solutions
which possess at least one divide.

\begin{Lem}
\label{the:c6.1}
Suppose that the initial data function $\varphi(x)\in L^{\infty}(\mathbb{R})$ satisfying that
$\overline{u}_l<\underline{u}_r$,
or $\overline{u}_l=\underline{u}_r$ with $\mathcal{K}_0\neq \varnothing$.
Let $u=u(x,t)$ and $\tilde{u}=\tilde{u}(x,t)$ be the entropy solutions
of the Cauchy problem $\eqref{c1.1}$--$\eqref{ID}$ 
and the Cauchy problem $\eqref{c6.10}$, respectively. 
Then
\begin{equation}\label{c6.25}
u(x,t)=\tilde{u}(x,t) \qquad {\rm for \ any}\ (x,t)\in \mathcal{K},
\end{equation}
and the entropy solution $u=u(x,t)$ has at most countable rarefaction waves on $\mathcal{K}$.

Furthermore, the rarefaction-constant solution $\tilde{u}(x,t)$ is the asymptotic profile
of the entropy solution $u=u(x,t)$.
More specifically, the following statements hold
$($see also {\rm Fig.} $\ref{figABSD}$$)${\rm :}

\begin{itemize}
 \item [(i)] If there exists $\mathcal{H}_{(e_n,h_n)}=:\mathcal{H}_{(e,h)}$ in $\eqref{c6.3}$
with $c_n=:c$ given by $\eqref{c6.2}$,
define $e(t):=e+tf'(c)$ and $h(t):=h+tf'(c)$.
Then $\tilde{u}(x,t)\equiv c$ on $\mathcal{H}_{(e,h)}$,
\begin{equation}\label{c6.26}
\int^{h(t)}_{e(t)}u(x,t)\,{\rm d}x=\int^h_e\varphi(x)\,{\rm d}x=c(h-e)
\qquad {\rm for\ any}\ t>0;
\end{equation}
and, for any forward generalized characteristic $X(x_0,t_0):\, x=x(t)$ for $t\geq t_0$
with $(x_0,t_0)\in \mathcal{H}_{(e,h)}$, as $t\rightarrow \infty$,
\begin{align}
&\qquad\quad f'(u(x,t))-f'(c)=
\begin{cases}
\displaystyle\frac{x-e(t)}{t}+o(1)\, t^{-1}\quad &{\rm uniformly \ for}\ e(t)<x<x(t),\\[4mm]
\displaystyle\frac{x-h(t)}{t}+o(1)\, t^{-1}\quad &{\rm uniformly \ for}\ x(t)<x<h(t),
\end{cases}\label{c6.27}\\[2mm]
&\qquad\quad \lim_{t\rightarrow\infty}u(x(t){\pm},t)=c,
\qquad \lim_{t\rightarrow\infty}\dfrac{x(t)}{t}=f'(c).\label{c6.28}
\end{align}
Moreover, if $f''(u)=(N+o(1))|u-c|^\alpha$ as $u\rightarrow c$ for $\alpha\geq 0$ and $N>0$,
then, as $t\rightarrow \infty$,
\begin{equation}\label{c6.29}
|u(x,t)-c|\leq(C+o(1))\Big(\frac{h-e}{2}\Big)^{\frac{1}{1+\alpha}}\, t^{-\frac{1}{1+\alpha}}
\end{equation}
holds uniformly in $x$ with $(x,t)\in \mathcal{H}_{(e,h)}$, and
\begin{equation}\label{c6.30}
\lim_{t\rightarrow\infty}\theta(t)
=\frac{1}{2},\qquad
\lim_{t\rightarrow\infty}\big(x(t)-tf'(c)\big)=\frac{e+h}{2},
\end{equation}
where $C=\big(\frac{1+\alpha}{N}\big)^{\frac{1}{1+\alpha}}$ and
$\theta(t)$ is given by
$$\theta(t):=\frac{f'(u(x(t){-},t))-f'(c)}{f'(u(x(t){-},t))-f'(u(x(t){+},t))}.$$

\smallskip
\item [(ii)] If $x_0^+= \sup\mathcal{K}_0<\infty$,
$\tilde{u}(x,t)\equiv \underline{u}_r$ on $\mathcal{H}_{\infty}$ as in $\eqref{c6.4}$.
Define $x_0^+(t):=x_0^++tf'(\underline{u}_r)$.
Then, for any forward generalized characteristic $X(x_0,t_0):\, x=x(t)$ for $t\geq t_0$
with $(x_0,t_0)\in \mathcal{H}_{\infty}$,
as $t\rightarrow \infty$,
\begin{align}
&\qquad\quad f'(u(x,t))-f'(\underline{u}_r)=\frac{x-x_0^+(t)}{t}+o(1)\, t^{-1}\quad\,\,
{\rm uniformly \ for}\ x_0^+(t)<x<x(t),\label{c6.31}\\[2mm]
&\qquad\quad \lim_{t\rightarrow\infty}u(x(t){\pm},t)=\underline{u}_r,
\qquad \lim_{t\rightarrow\infty}\dfrac{x(t)}{t}=f'(\underline{u}_r).\label{c6.32}
\end{align}
Furthermore, define $l^+(t):=y^+(t,0)-x_0^+$ with $y^+(t,0)$ given by $\eqref{c6.15}$. Then
\begin{equation}\label{c6.33}
\lim_{t\rightarrow \infty}
\frac{1}{l^+(t)}\int^{x_0^++l^+(t)}_{x_0^+}\varphi (\xi)\,{\rm d}\xi=\underline{u}_r.
\end{equation}

\item [(iii)] If $x_0^-= \inf\mathcal{K}_0>-\infty$,
$\tilde{u}(x,t)\equiv \overline{u}_l$ on $\mathcal{H}_{-\infty}$ as in $\eqref{c6.5}$.
Define $x_0^-(t):=x_0^-+tf'(\overline{u}_l)$.
Then, for any forward generalized characteristic $X(x_0,t_0):\, x=x(t)$ for $t\geq t_0$
with $(x_0,t_0)\in \mathcal{H}_{-\infty}$, as $t\rightarrow \infty$,
\begin{align}
&\qquad\quad f'(u(x,t))-f'(\overline{u}_l)=\frac{x-x_0^-(t)}{t}+o(1)\, t^{-1}\quad\,\,
{\rm uniformly \ for}\ x(t)<x<x_0^-(t),\label{c6.34}\\[2mm]
&\qquad\quad\lim_{t\rightarrow\infty}u(x(t){\pm},t)=\overline{u}_l,\qquad
\lim_{t\rightarrow\infty}\dfrac{x(t)}{t}=f'(\overline{u}_l).\label{c6.35}
\end{align}
Furthermore, define $l^-(t):=y^-(t,0)-x_0^-$ with $y^-(t,0)$ given by $\eqref{c6.15}$. Then
\begin{equation}\label{c6.36}
\lim_{t\rightarrow \infty}
\frac{1}{l^-(t)}\int^{x_0^-+l^-(t)}_{x_0^-}\varphi (\xi)\,{\rm d}\xi=\overline{u}_l.
\end{equation}
\end{itemize}
\noindent
In the above,
$\overline{u}_l$ and $\underline{u}_r$, $\mathcal{K}_0$, and $\mathcal{K}$
are given by $\eqref{c5.7}$, $\eqref{c5.6}$, and $\eqref{c6.6}$, respectively.
\end{Lem}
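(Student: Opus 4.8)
The plan is to derive the whole statement from the structure theory of divides built up in \S 6---specifically Theorems~$\ref{the:c5.1}$ and $\ref{the:c5.2}$ on the locations and speeds of divides, Theorem~$\ref{pro:c6.2}$ on the global structure and the partition $\eqref{c6.7}$, Lemma~$\ref{lem:c6.2}$ relating shock curves to divides, and Lemma~$\ref{lem:c6.1}$ for the mass identities---together with the monotonicity of the backward characteristic feet from Lemma~$\ref{lem:c2.4}$. First I would prove $\eqref{c6.25}$. Since $\bar\Phi$ is convex, its convex hull over $({-}\infty,\infty)$ is $\bar\Phi$ itself, so the initial datum $\bar\Phi'$ of problem $\eqref{c6.10}$ has the same convex hull as $\varphi$ up to an additive constant; hence Theorems~$\ref{the:c5.1}$--$\ref{the:c5.2}$ give, for every $x_0\in\mathcal{K}_0$, that $\mathcal{D}(x_0)=[{\rm D}_-\bar\Phi(x_0),{\rm D}_+\bar\Phi(x_0)]$ for \emph{both} $u$ and $\tilde u$, while $\mathcal{D}(x_0)=\varnothing$ for $u$ when $x_0\notin\mathcal{K}_0$. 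Thus every divide of $u$ is a divide of $\tilde u$, and along it both solutions equal the common divide value; since $\mathcal{K}$ in $\eqref{c6.6}$ is exactly the union of the divides of $u$, this proves $\eqref{c6.25}$. The claim that $u$ has at most countably many rarefaction waves on $\mathcal{K}$ then follows from $\eqref{c6.25}$ and the fact, recorded before the lemma, that $\tilde u$ is a rarefaction-constant solution with at most countably many rarefaction waves.

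For part~(i) I would next prove the conservation identity $\eqref{c6.26}$ by applying $\eqref{c6.12}$ to the two bounding divides $L_c(e)$ and $L_c(h)$ of $\mathcal{H}_{(e,h)}$, both of speed $c=c_n$ by $\eqref{c6.2}$: taking the two points lying on these divides at a common time gives $\int_{e(t)}^{h(t)}u(x,t)\,{\rm d}x=\int_e^h\varphi(x)\,{\rm d}x$, and the right-hand side equals $\bar\Phi(h)-\bar\Phi(e)=c(h-e)$ because $e,h\in\mathcal{K}_0$ and, by $\eqref{c5.13}$, $\bar\Phi$ is affine with slope $c$ on $[e,h]$. For $\eqref{c6.27}$--$\eqref{c6.28}$ I would use that, by Lemma~$\ref{lem:c6.2}$ and the absence of divides inside $\mathcal{H}_{(e,h)}$ (as in the proof of Theorem~$\ref{pro:c6.2}$), the shock $X(x_0,t_0)$, $x=x(t)$, satisfies $y^-(\infty,0)=e$ and $y^+(\infty,0)=h$ in the notation of $\eqref{c6.15}$, with $\lim_{t\to\infty}u(x(t)\pm,t)=c$. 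Then, for $(x,t)\in\mathcal{H}_{(e,h)}$ with $e(t)<x<x(t)$, the foot $y(x,t):=x-tf'(u(x,t))$ is trapped---by the monotonicity of Lemma~$\ref{lem:c2.4}$---between $e$ (the foot of $L_c(e)$) and $y^-(t,0)$, and since $y^-(t,0)\to e$ this forces $y(x,t)\to e$ uniformly in $x$; the same argument on $x(t)<x<h(t)$ gives $y(x,t)\to h$ uniformly. Writing $f'(u(x,t))=\frac{x-y(x,t)}{t}$ and subtracting $f'(c)$ yields $\eqref{c6.27}$, and $\eqref{c6.28}$ is then immediate.

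The decay rate $\eqref{c6.29}$ and the limits $\eqref{c6.30}$ would be established together. Using $f''(u)=(N+o(1))|u-c|^\alpha$ near $c$, one inverts $f'(u)-f'(c)=\pm\frac{N+o(1)}{1+\alpha}|u-c|^{1+\alpha}$; plugging $\eqref{c6.27}$ in gives, uniformly in $\mathcal{H}_{(e,h)}$, $|u(x,t)-c|=(1+o(1))\big(\tfrac{1+\alpha}{N}\big)^{\frac1{1+\alpha}}\big(\tfrac{|x-e(t)|}{t}\big)^{\frac1{1+\alpha}}$ on the left of $x(t)$ and the analogous expression with $h(t)$ on the right, with the sign of $u-c$ matching $\eqref{c6.27}$. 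Inserting these into the split identity $\int_{e(t)}^{x(t)}(u-c)\,{\rm d}x=\int_{x(t)}^{h(t)}(c-u)\,{\rm d}x$ coming from $\eqref{c6.26}$ and integrating in $x$, the two sides become a common positive factor times $(1+o(1))(x(t)-e(t))^{\frac{2+\alpha}{1+\alpha}}$ and $(1+o(1))(h(t)-x(t))^{\frac{2+\alpha}{1+\alpha}}$, so $x(t)-e(t)=(1+o(1))(h(t)-x(t))$; as their sum is $h-e$, both tend to $\tfrac{h-e}{2}$, which is precisely $x(t)-tf'(c)\to\tfrac{e+h}{2}$. Then $\eqref{c6.29}$ follows from the pointwise bound with $\max\{x(t)-e(t),h(t)-x(t)\}=\tfrac{h-e}{2}+o(1)$, and $\theta(t)\to\tfrac12$ follows by writing $f'(u(x(t)\mp,t))-f'(c)=\tfrac{(x(t)-tf'(c))-y^\mp(t,0)}{t}=\tfrac{(h-e)/2+o(1)}{t}$.

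Parts~(ii) and~(iii) are analogous but simpler: the relevant region is bounded on one side by the single divide $L_{\underline u_r}(x_0^+)$ (resp.\ $L_{\overline u_l}(x_0^-)$) and open on the other; the asymptotic value $\underline u_r$ (resp.\ $\overline u_l$) is pinned down by the invariant from Theorem~$\ref{pro:c6.1}$; Lemma~$\ref{lem:c6.2}$(ii) (resp.\ (i)) gives $y^-(\infty,0)=x_0^+$, $y^+(\infty,0)=\infty$ (resp.\ $y^+(\infty,0)=x_0^-$, $y^-(\infty,0)=-\infty$) and, through the comparison inequality $\eqref{c6.19}$ (resp.\ $\eqref{c6.17}$), forces $v^-(t)\to\underline u_r$ (resp.\ $v^+(t)\to\overline u_l$), whence $\eqref{c6.31}$--$\eqref{c6.33}$ (resp.\ $\eqref{c6.34}$--$\eqref{c6.36}$) follow as before, the only change being that the unbounded foot escapes to $\pm\infty$. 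The main obstacle I expect is the step producing $\eqref{c6.30}$: one has to make the inversion of $f'$ and the two mass integrals genuinely uniform in $x$ up to $o(1)$ errors---the delicate zone being $x$ close to $e(t)$ or $h(t)$, where the leading term degenerates---and it is only the exact balance of the two integrated mass deficits that pins $x(t)-tf'(c)$ to the midpoint $\tfrac{e+h}{2}$ rather than merely to the interval $[e,h]$.
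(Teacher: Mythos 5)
Your treatment of \eqref{c6.25}--\eqref{c6.28} matches the paper's: divides of $u$ are divides of $\tilde u$ because $\bar\Phi$ is its own convex hull, \eqref{c6.26} comes from \eqref{c6.12} applied to the bounding divides (your route via $c=\frac{\Phi(h)-\Phi(e)}{h-e}$ is equivalent to the paper's two-sided application of \eqref{c5.1d}), and \eqref{c6.27} follows from trapping the backward foot between $e$ (resp.\ $h$) and $y^{\mp}(t,0)$. For \eqref{c6.29}--\eqref{c6.30} you take a genuinely different route: you split the mass identity $\int_{e(t)}^{h(t)}(u-c)\,{\rm d}x=0$ at $x(t)$, integrate the inverted power law of $f'$, and deduce $x(t)-e(t)\cong h(t)-x(t)$, whence the midpoint limit and then $\theta(t)\to\tfrac12$. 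This works: the degenerate zones $|x-e(t)|\lesssim \varepsilon^-(t)$, $|x-h(t)|\lesssim\varepsilon^+(t)$ contribute only $O(\varepsilon^{\frac{2+\alpha}{1+\alpha}})\,t^{-\frac{1}{1+\alpha}}$, and the same balance rules out $x(t)-e(t)\to 0$. The paper instead proves $\theta(t)\to\tfrac12$ first, from the jump balance $E(u^-(t);x(t),t)=E(u^+(t);x(t),t)$ rewritten as $\int_{y^-(t,0)}^{y^+(t,0)}(\varphi-c)\,{\rm d}\xi=t\int_{u^+(t)}^{u^-(t)}(s-c)f''(s)\,{\rm d}s$, estimating the left side as $o(1)t^{-\frac{1}{1+\alpha}}$ via \eqref{c6.17}/\eqref{c6.19}; your argument is arguably more elementary but buys the same conclusion only under the convexity-degeneracy hypothesis $f''(u)=(N+o(1))|u-c|^\alpha$, which both proofs assume at this stage, so nothing is lost.

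The genuine gap is in parts (ii)--(iii). On the side of the shock whose backward foot escapes to infinity, Lemma~$\ref{lem:c6.2}$ gives you nothing: it identifies $\lim_t u(x(t){-},t)={\rm D}_+\bar\Phi(x_0^+)=\underline u_r$ (the finite-foot side), but the assertion $\lim_t u(x(t){+},t)=\underline u_r$ in \eqref{c6.32}, and likewise \eqref{c6.33}, are not delivered by ``the invariant from Theorem~$\ref{pro:c6.1}$'' or by \eqref{c6.19}: the invariant constrains averages of $u(\cdot,t)$ as $x\to\infty$ at fixed $t$, not the trace along a specific curve as $t\to\infty$, and there is no analogue of the mass identity \eqref{c6.26} on the unbounded region, so your part-(i) mechanism does not transfer. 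The paper closes this by again using $E(u^-(t);x(t),t)=E(u^+(t);x(t),t)$, which yields $\rho(u^-(t),u^+(t))-\underline u_r=\frac{1}{y^+(t,0)-y^-(t,0)}\int_{y^-(t,0)}^{y^+(t,0)}(\varphi(\xi)-\underline u_r)\,{\rm d}\xi$ with $\rho$ as in \eqref{aa5a}; since $y^-(t,0)$ stays bounded and $y^+(t,0)\to\infty$, the $\liminf$ of the right side is $0$ by the very definition of $\underline u_r$ in \eqref{c5.7}, and the strict monotonicity \eqref{aa6} of $\rho(\underline u_r,\cdot)$ then forces $\liminf_t u^+(t)\ge\underline u_r$; the same identity gives \eqref{c6.33} once $u^\pm(t)\to\underline u_r$ is known. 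Without some version of this step your proof of (ii)--(iii) is incomplete. (Note also that you cannot appeal to Lemma~$\ref{lem:c6.3}$ here, since its uniformity hypotheses \eqref{c6.49}--\eqref{c6.51} are not assumed in this lemma.)
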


\begin{figure}[H]
	\begin{center}
		{\includegraphics[width=0.6\columnwidth]{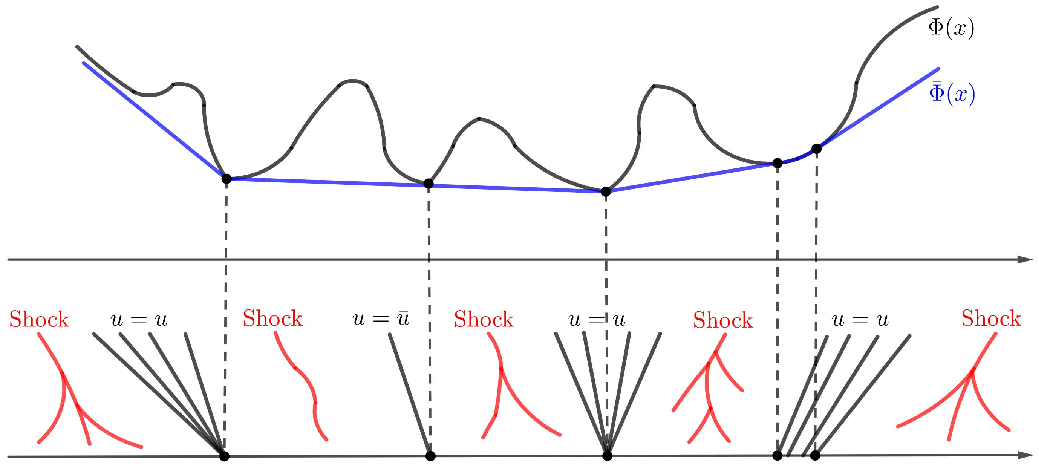}}
 \caption{Asymptotic behaviors of entropy solutions possessing divides.}\label{figABSD}
	\end{center}
\end{figure}

\begin{proof}
From Lemma $\ref{lem:c5.2}$, $\mathcal{K}_0\neq \varnothing$ is equivalent to
that $\overline{u}_l<\underline{u}_r$,
or $\overline{u}_l=\underline{u}_r$ with $\mathcal{K}_0\neq \varnothing$.
Since $\bar{\Phi}(x)$ is convex on $({-}\infty,\infty)$,
then $\bar{\Phi}(x)\equiv\bar{\bar{\Phi}}(x)$ so that,
for any $x_0\in \mathcal{K}_0$,
$$
\Phi(x_0)=\bar{\Phi}(x_0)=\bar{\bar{\Phi}}(x_0),\qquad
[{\rm D}_-\bar{\Phi}(x_0),{\rm D}_+\bar{\Phi}(x_0)]
=[{\rm D}_-\bar{\bar{\Phi}}(x_0),{\rm D}_+\bar{\bar{\Phi}}(x_0)],
$$
which means that each divide of the entropy solution $u(x,t)$ is a divide
of the rarefaction-constant solution $\tilde{u}(x,t)$.
From the definition of $\mathcal{K}$ in $\eqref{c6.6}$, $\eqref{c6.25}$ holds so that
the entropy solution $u=u(x,t)$ has at most countable rarefaction waves on $\mathcal{K}$.

\smallskip
\noindent
{\bf 1}. Since $e,h \in \mathcal{K}_0$ and $c= {\rm D}_+\bar{\Phi}(e)$$={\rm D}_-\bar{\Phi}(h)$,
it follows from $\eqref{c5.13}$ and $\eqref{c6.10}$ that $\tilde{u}(x,t)\equiv c$ on $\mathcal{H}_{(e,h)}$.
From Theorem $\ref{the:c5.1}$,
$e(t)=e+tf'(c)$ and $h(t)=h+tf'(c)$ for $t>0$
are both divides, which, by Proposition $\ref{pro:c5.1}$, implies
that
$\int^h_e(\varphi(x)-c)\,{\rm d}x\geq 0$ and $\int^e_h(\varphi(x)-c)\,{\rm d}x\geq 0$
so that $\int^h_e(\varphi(x)-c)\,{\rm d}x=0$.
Thus, $\eqref{c6.26}$ follows from $\eqref{c6.12}$ immediately.

Since there exists no divide in $\mathcal{H}_{(e,h)}$,
any forward generalized characteristic $X(x_0,t_0):\, x=x(t)$ for $t\geq t_0$ with $(x_0,t_0)\in \mathcal{H}_{(e,h)}$
must become a shock curve when $t>t_1$ for some $t_1\geq t_0$ large enough.
Then, by Lemma $\ref{lem:c6.2}$,
\begin{equation}\label{eh0}
y^-(\infty,0)=e,\qquad y^+(\infty,0)=h.
\end{equation}
From Lemma $\ref{lem:c2.4}$, for any $(x,t)\in \mathcal{H}_{(e,h)}$ with $e(t)<x<x(t)$,
$x-tf'(u(x,t))$ lies between $e=e(t)-tf'(c)$ and $y^-(t,0)$
so that, as $t\rightarrow \infty$,
\begin{equation*}
0\leq x-e(t)-t\big(f'(u(x,t))-f'(c)\big)\leq y^-(t,0)-e\ \longrightarrow 0,
\end{equation*}
which implies that $\eqref{c6.27}$ holds for the case that
$(x,t)\in \mathcal{H}_{(e,h)}$ with $e(t)<x<x(t)$.
Similarly, $\eqref{c6.27}$ holds for the case that
$(x,t)\in \mathcal{H}_{(e,h)}$ with $x(t)<x<h(t)$.

Since $f'(u)$ is strictly increasing,
it follows from $\eqref{c6.27}$ that $\lim_{t\rightarrow\infty}u(x(t){\pm},t)=c$.
From $x(t)\in (e(t), h(t))$ for any $t\geq t_0$,
$$
f'(c)=\lim_{t \rightarrow \infty}\frac{e(t)}{t}
\leq \lim_{t \rightarrow \infty}\frac{x(t)}{t}
\leq \lim_{t \rightarrow \infty}\frac{h(t)}{t}=f'(c),
$$
which yields $\eqref{c6.28}$.

If, in addition, $f''(u)=(N+o(1))|u-c|^\alpha$ as $u\rightarrow c$ for some $\alpha\geq 0$ and $N>0$,
from $\eqref{c6.27}$ and $\eqref{aa2}$,
we first have
\begin{align}\label{c6.29op}
|u(x,t)-c|
&=\Big(\frac{1+\alpha}{N+o(1)}\Big)^{\frac{1}{1+\alpha}} \,
\big|f'(u(x,t))-f'(c)\big|^{\frac{1}{1+\alpha}}
\leq (C+o(1))\,(h-e)^{\frac{1}{1+\alpha}}
\, t^{-\frac{1}{1+\alpha}}.
\end{align}

\smallskip
We now give the proof of $\eqref{c6.30}$, which is divided into three steps.

\smallskip
{\bf (a).} Denote $u^\pm(t):=u(x(t){\pm},t)$.
From $\eqref{c6.15}$,
$y^+(t,0)-y^-(t,0)=t\big(f'(u^-(t))-f'(u^+(t))\big)$
so that, by the definition of $\theta(t)$,
$
\theta(t)\big(y^+(t,0)-y^-(t,0)\big)=t\big(f'(u^-(t))-f'(c)\big).
$
Then, by Lemma $\ref{lem:c6.2}$, from $\eqref{eh0}$,
$y^-(t,0)=e+o(1)$ and $y^+(t,0)=h+o(1)$ as $t\rightarrow \infty$, and hence
\begin{align}\label{c6.37}
x(t)-tf'(c)&=y^-(t,0)+t\big(f'(u^-(t))-f'(c)\big)\nonumber\\[1mm]
&=y^-(t,0)+\theta(t)\big(y^+(t,0)-y^-(t,0)\big)\nonumber\\[1mm]
&=\theta(t)\, y^+(t,0)+(1-\theta(t))\, y^-(t,0)
=e+\theta(t)(h-e)+o(1),
\end{align}
which, by $x(t)-tf'(c)\in (e,h)$, implies
\begin{equation}\label{c6.38}
0\leq\mathop{\underline{\lim}}\limits_{t\rightarrow \infty}\theta(t)\leq \mathop{\overline{\lim}}\limits_{t\rightarrow \infty}\theta(t)\leq 1.
\end{equation}

{\bf (b).} From $\eqref{c6.37}$--$\eqref{c6.38}$,
to prove $\eqref{c6.30}$, it suffices to show that
$\lim_{t\rightarrow \infty}\theta(t)= \frac{1}{2}.$

In fact, since $u^\pm(t)\in \mathcal{U}(x(t),t)$, then
$E(u^-(t);x(t),t)-E(u^+(t);x(t),t)=0$.
Thus, by the definition of $E(u;x,t)$ in $\eqref{c2.1}$,
$$
-\int^{x(t)-tf'(u^-(t))}_{x(t)-tf'(u^+(t))}(\varphi(\xi)-c)\,{\rm d}\xi
-t\int^{u^-(t)}_{u^+(t)}(s-c)f''(s)\,{\rm d}s=0,
$$
which, by $\eqref{c6.15}$, implies
\begin{equation}\label{c6.39}
\int^{y^+(t,0)}_{y^-(t,0)}(\varphi(\xi)-c)\,{\rm d}\xi
=t\int^{u^-(t)}_{u^+(t)}(s-c)f''(s)\,{\rm d}s.
\end{equation}

We claim: As $t\rightarrow \infty$,
\begin{align}
&t\int^{u^-(t)}_{u^+(t)}(s-c)f''(s)\,{\rm d}s
=O(1)\,\Big(|\theta(t)|^{\frac{2+\alpha}{1+\alpha}}-|1-\theta(t)|^{\frac{2+\alpha}{1+\alpha}}\Big)\,
t^{-\frac{1}{1+\alpha}},\label{c6.40}\\[1mm]
&\int^{y^+(t,0)}_{y^-(t,0)}(\varphi(\xi)-c)\,{\rm d}\xi= o(1)\, t^{-\frac{1}{1+\alpha}}.\label{c6.41}
\end{align}
Then, if $\eqref{c6.40}$--$\eqref{c6.41}$ hold,
it follows from $\eqref{c6.38}$--$\eqref{c6.39}$ that
$\lim_{t\rightarrow \infty}\theta(t)= \frac{1}{2}$.

\smallskip
{\bf (c).} We now prove $\eqref{c6.40}$--$\eqref{c6.41}$.
Since $f''(u)=(N+o(1))|u-c|^\alpha$ as $u\rightarrow c$ for some $\alpha\geq 0$ and $N>0$,
from $\eqref{c6.15}$ and $\eqref{eh0}$,
$$
f'(u^-(t))-f'(u^+(t))=\frac{y^+(t,0)-y^-(t,0)}{t}=\frac{h-e+o(1)}{t},
$$
which, by $\eqref{aa2}$--$\eqref{aa3}$, implies
\begin{align*}
t\int^{u^-(t)}_{u^+(t)}(s-c)f''(s)\,{\rm d}s
&=\frac{N+o(1)}{2+\alpha}\Big(|u^-(t)-c|^{2+\alpha}-|u^+(t)-c|^{2+\alpha}\Big)\, t\\
&=O(1)\, \Big(|f'(u^-(t))-f'(c)|^{\frac{2+\alpha}{1+\alpha}}-|f'(u^+(t))-f'(c)|^{\frac{2+\alpha}{1+\alpha}}\Big)\, t\\[1mm]
&=O(1)\, \Big(\frac{h-e+o(1)}{t}\Big)^{\frac{2+\alpha}{1+\alpha}}\,
\Big(|\theta(t)|^{\frac{2+\alpha}{1+\alpha}}-|1-\theta(t)|^{\frac{2+\alpha}{1+\alpha}}\Big)\, t\\[1mm]
&=O(1)\, \Big(|\theta(t)|^{\frac{2+\alpha}{1+\alpha}}-|1-\theta(t)|^{\frac{2+\alpha}{1+\alpha}}\Big)\,
t^{-\frac{1}{1+\alpha}}.
\end{align*}
This means that $\eqref{c6.40}$ holds.

From $\eqref{c6.17}$ and $\eqref{c6.19}$,
for sufficiently large $t$, there exists a unique $v^{\pm}(t)\in\mathbb{R}$
such that
$e=x(t)-tf'(v^-(t))$ and $h=x(t)-tf'(v^+(t)).$
Then, from $\eqref{c6.15}$,
$$
f'(v^-(t))-f'(u^-(t))=\frac{y^-(t,0)-e}{t},
\qquad f'(v^+(t))-f'(u^+(t))=\frac{y^+(t,0)-h}{t},
$$
which, by $\eqref{eh0}$, implies that
$f'(v^\pm(t))-f'(u^\pm(t))=o(1)\, t^{-1}.$

Since $L_c(e)$ is a divide,
then, by $\eqref{c6.29op}$ and $\eqref{aa2}$,
it follows from $\eqref{c6.19}$ that
\begin{align*}
 0\leq \int^{y^-(t,0)}_e(\varphi(\xi)-c)\,{\rm d}\xi
&\leq o(1)\, \big(|v^-(t)-u^-(t)|+|u^-(t)-c|\big)\nonumber\\
&=o(1)\, \big(O(1)|f'(v^-(t))-f'(u^-(t))|^{\frac{1}{1+\alpha}}+|u^-(t)-c|\big)\nonumber\\[1mm]
&=o(1)\, \big(O(1)o(1)\,t^{-\frac{1}{1+\alpha}}+O(1)\,t^{-\frac{1}{1+\alpha}}\big)
= o(1)\, t^{-\frac{1}{1+\alpha}}.
\end{align*}
Similarly, we have
\begin{equation*}
0\leq \int^{y^+(t,0)}_h(\varphi(\xi)-c)\,{\rm d}\xi= o(1)\, t^{-\frac{1}{1+\alpha}}.
\end{equation*}
Therefore, it follows from $\eqref{c6.26}$ that
$$
\int^{y^+(t,0)}_{y^-(t,0)}(\varphi(\xi)-c)\,{\rm d}\xi
=\bigg(\int^{y^+(t,0)}_h+\int^h_e+\int^e_{y^-(t,0)}\bigg)(\varphi(\xi)-c)\,{\rm d}\xi
= o(1)\, t^{-\frac{1}{1+\alpha}},
$$
which yields $\eqref{c6.41}$, and hence $\eqref{c6.30}$ holds.

Finally, similar to $\eqref{c6.29op}$, it follows from
$\eqref{c6.27}$ and $\eqref{c6.30}$ that
\begin{align*}
|u(x,t)-c|
& \leq (C+o(1))\, \max\Big\{\Big|\frac{x(t)-e(t)}{t}\Big|^{\frac{1}{1+\alpha}},
\,\Big|\frac{x(t)-h(t)}{t}\Big|^{\frac{1}{1+\alpha}}\Big\}
\nonumber\\[2mm]
&
=(C+o(1))\,\Big(\frac{h-e}{2}\Big)^{\frac{1}{1+\alpha}}
\, t^{-\frac{1}{1+\alpha}}.
\end{align*}

\smallskip
\noindent
{\bf 2}. By the definition of $\mathcal{H}_{\infty}$ in $\eqref{c6.4}$,
$L_{\underline{u}_r}(x_0^+):\, x=x_0^++tf'(\underline{u}_r)$ for $t>0$ is a divide.
From $\eqref{c5.11}$ and $\eqref{c6.10}$,
$\tilde{u}(x,t)\equiv \underline{u}_r$ on $\mathcal{H}_{\infty}$.
Since there exists no divide in $\mathcal{H}_{\infty}$,
any forward generalized characteristic $X(x_0,t_0): x=x(t)$ for $t\geq t_0$ with $(x_0,t_0)\in \mathcal{H}_{\infty}$
must become a shock curve when $t>t_1$ for some $t_1$ large enough, $i.e.$,
$u(x(t){-},t)>u(x(t){+},t)$ for any $t>t_1$.

\smallskip
Denote $u^\pm(t):=u(x(t){\pm},t)$.
By Lemma $\ref{lem:c6.2}$,
$y^-(\infty,0)=x_0^+$ and $y^+(\infty,0)=\infty,$
so that
\begin{equation}\label{c6.45}
\mathop{\overline{\lim}}\limits_{t\rightarrow \infty}u^+(t)
\leq \lim_{t\rightarrow \infty}u^-(t)=\underline{u}_r.
\end{equation}

Similar to the proof of $\eqref{c6.27}$,
it is direct to check
$\eqref{c6.31}$.
To prove $\eqref{c6.32}$, it suffices to show:
$\mathop{\underline{\lim}}_{t\rightarrow \infty}u^+(t)\geq \underline{u}_r.$
In fact, since $u^\pm(t)\in \mathcal{U}(x(t),t)$,
by the definition of $E(u;x,t)$ in $\eqref{c2.1}$,
$$
-\int^{x(t)-tf'(u^-(t))}_{x(t)-tf'(u^+(t))}\varphi(\xi)\,{\rm d}\xi
-t\int^{u^-(t)}_{u^+(t)}sf''(s)\,{\rm d}s=0,
$$
which, by $\eqref{c6.15}$, implies
$$
\int^{y^+(t,0)}_{y^-(t,0)}(\varphi(\xi)-\underline{u}_r)\,{\rm d}\xi
=t\int^{u^-(t)}_{u^+(t)}(s-\underline{u}_r)f''(s)\,{\rm d}s.
$$
Then, by the definition of $\rho(u,v)$ in $\eqref{aa5a}$,
\begin{equation}\label{c6.46}
\frac{1}{y^+(t,0)-y^-(t,0)}\int^{y^+(t,0)}_{y^-(t,0)}(\varphi(\xi)-\underline{u}_r)\,{\rm d}\xi
=\rho(u^-(t),u^+(t))-\underline{u}_r,
\end{equation}
which, by the definition of $\underline{u}_r$ in $\eqref{c5.7}$, implies that
$\mathop{\underline{\lim}}_{t\rightarrow \infty} \rho(u^-(t),u^+(t))=\underline{u}_r$.
Since solution $u\in L^\infty$, there exists a sequence $\{t_n\}$
such that
$\lim_{n\rightarrow \infty}u^+(t_n)
=\mathop{\underline{\lim}}_{t \rightarrow \infty} u^+(t)$.
Thus, by $\eqref{c6.45}$ and $\rho(u,v)\in C(\mathbb{R}\times\mathbb{R})$, we obtain
\begin{equation*}
\rho(\underline{u}_r,\mathop{\underline{\lim}}\limits_{t\rightarrow \infty} u^+(t))
=\lim_{n\rightarrow \infty}\rho(u^-(t_n),u^+(t_n))
\geq \mathop{\underline{\lim}}\limits_{t\rightarrow \infty} \rho(u^-(t),u^+(t))
=\underline{u}_r.
\end{equation*}
Since $\rho(\underline{u}_r,v)$ is strictly increasing in $v\in \mathbb{R}$ with $\rho(\underline{u}_r,\underline{u}_r)=\underline{u}_r$,
then
$\mathop{\underline{\lim}}_{t\rightarrow \infty} u^+(t)\geq\underline{u}_r$,
which, by $\eqref{c6.45}$, implies that
$\lim_{t \rightarrow \infty} u^\pm(t)=\underline{u}_r$.

\smallskip
From $y^-(\infty,0)=x_0^+$
and $\lim_{t \rightarrow \infty} u^\pm(t)=\underline{u}_r$,
it is direct to check from $x(t)=y^-(t,0)+tf'(u^-(t))$ that $\eqref{c6.32}$ holds.
Since $\lim_{t \rightarrow \infty} u^\pm(t)=\underline{u}_r$
and $\rho(u,v)\in C(\mathbb{R}\times\mathbb{R})$,
$\eqref{c6.33}$ follows by letting $t\rightarrow \infty$ in $\eqref{c6.46}$.

\smallskip
\noindent
{\bf 3}. By the same arguments as in Step 2,
it can be checked that $\eqref{c6.34}$--$\eqref{c6.35}$ and $\eqref{c6.36}$ hold.
\end{proof}

\begin{Rem}
From {\rm Lemma} $\ref{the:c6.1}$, we have
\begin{itemize}
\item [(i)] For $\mathcal{H}_{(e_n,h_n)}$, by $\eqref{c6.30}$,
$u(x(t){-},t)>c>u(x(t){+},t)$ with $\lim_{t\rightarrow \infty}u(x(t){\pm},t)=c$.
It is interesting to see that $u(x(t){-},t)$ and $u(x(t){+},t)$ decay to $c$ with the same rate
in the sense that $\lim_{t\rightarrow\infty}\theta(t)=\frac{1}{2}$,
and any forward generalized characteristic $x(t)$ in $\mathcal{H}_{(e_n,h_n)}$ takes
the straight line $x=\frac{e_n+h_n}{2}+tf'(c)$ as its asymptotic line,
where $c={\rm D}_+\bar{\Phi}(e_n)={\rm D}_-\bar{\Phi}(h_n)$.

\item[(ii)] For $\mathcal{H}_{\infty}$, from $\eqref{c6.32}$,
$\lim_{t\rightarrow \infty}u(x(t){\pm},t)=\underline{u}_r$.
It is interesting to see that the limit of $\frac{\Phi(x)}{x}$ as $x\rightarrow\infty$
may not exist generically,
but from $\eqref{c6.33}$, for any forward generalized characteristic $x(t)$ in $\mathcal{H}_{\infty}$,
the limit of $\frac{\Phi(y^+(t,0))}{y^+(t,0)}$ always exists as $t\rightarrow\infty$.
It is similar for $\mathcal{H}_{-\infty}$.
\end{itemize}
\end{Rem}

\smallskip
In general, for the initial data function in $L^\infty$,
the entropy solution may not possess some uniform decay in $x$ as $t\rightarrow\infty$.
In order to obtain the uniform decay in $x$ of the entropy solution to its asymptotic profile,
we need the asymptotic behavior of the initial data functions
as $x\rightarrow \pm\infty$.
From $\eqref{c5.7}$, it can be checked that,
as $x\rightarrow \infty$,
\begin{equation}\label{c6.49}
\underline{u}_r + o_r(1;y)
\leq\frac{1}{x}\int^{y+x}_y\varphi (\xi)\,{\rm d}\xi
\leq \overline{u}_r +o_r(1;y),
\end{equation}
where $o_r(1;y)$ represents an infinitesimal quantity that depends on $y$;
and, as $x\rightarrow {-}\infty$,
\begin{equation}\label{c6.51}
\underline{u}_l + o_l(1;y)
\leq\frac{1}{x}\int^{y+x}_y\varphi (\xi)\,{\rm d}\xi
\leq \overline{u}_l +o_l(1;y).
\end{equation}
where $o_l(1;y)$ represents an infinitesimal quantity that depends on $y$.

Since $u(x,t)\in \mathcal{U}(x,t)$,
then
$$
E(u(x,t);x,t)-E(u;x,t)\geq 0\qquad\, {\rm for\ any}\ u\in\mathbb{R}.
$$
It is direct to check from $\eqref{c2.1}$ that
$$
-\int^{x-tf'(u(x,t))}_{x-tf'(u)}(\varphi(\xi)-u)\,{\rm d}\xi
-t\int^{u(x,t)}_{u}(s-u)f''(s)\,{\rm d}s\geq 0,
$$
which is equivalent to that,
for any $u\in \mathbb{R}$,
\begin{equation}\label{c6.53}
0\leq \int^{u(x,t)}_{u}(s-u)f''(s)\,{\rm d}s
\leq \frac{1}{t}\int_{x-tf'(u(x,t))}^{x-tf'(u)}(\varphi(\xi)-u)\,{\rm d}\xi.
\end{equation}

\smallskip
We now present the following lemma on the asymptotic behaviors of the entropy solutions which possess no divides
and the asymptotic behaviors of entropy solutions in $\mathcal{H}_{\pm\infty}$ (if it exists) when the entropy solutions possess at least one divide.

\begin{Lem}\label{lem:c6.3}
Let $u=u(x,t)$ be the entropy solution of the Cauchy problem $\eqref{c1.1}$--$\eqref{ID}$
with initial data function $\varphi(x)\in L^{\infty}(\mathbb{R})$,
and let $x(t)$ for $t\in [0,\infty)$ be a forward generalized characteristic starting from $x(0)=x_0$
along which the entropy solution 
is discontinuous for sufficiently large $t$.
Assume that \eqref{c6.49}--\eqref{c6.51} 
hold uniformly in $y$ for $y\geq 0$ and $y\leq 0$, respectively.
Then
\begin{enumerate}
\item [{\rm(i)}]
If $\lim_{t\rightarrow \infty}y^+(t,0)=\infty$,
then, as $t\rightarrow \infty$,
\begin{equation}\label{c6.54}
\underline{u}_r+o(1)\leq u(x\pm,t)\leq\overline{u}_r+o(1)
\qquad {\rm uniformly\ for}\ x>x(t).
\end{equation}
Furthermore, if $\lim_{t\rightarrow \infty}y^-(t,0)>-\infty$,
then the forward generalized characteristic $x(t)$ lies in $\mathcal{H}_{\infty}$,
and $\eqref{c6.54}$ holds uniformly in $x$ with $(x,t)\in\mathcal{H}_{\infty}$.

\vspace{1pt}
\item [{\rm(ii)}]
If $\lim_{t\rightarrow \infty}y^-(t,0)=-\infty$,
then, as $t\rightarrow \infty$,
\begin{equation}\label{c6.55}
\underline{u}_l+o(1)\leq u(x\pm,t)\leq\overline{u}_l+o(1)
\qquad {\rm uniformly \ for}\ x<x(t).
\end{equation}
Furthermore, if $\lim_{t\rightarrow \infty}y^+(t,0)<\infty$,
then the forward generalized characteristic $x(t)$ lies in $\mathcal{H}_{-\infty}$,
and $\eqref{c6.55}$ holds uniformly in $x$ with $(x,t)\in\mathcal{H}_{-\infty}$.
\end{enumerate}
\noindent
In the above, $\overline{u}_r$, $\underline{u}_r$, $\overline{u}_l$, and $\underline{u}_l$
are given by $\eqref{c5.7}${\rm ;}
and $\mathcal{H}_{\pm\infty}$ are given by $\eqref{c6.4}$--$\eqref{c6.5}$.
\end{Lem}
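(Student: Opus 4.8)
The plan is to run a Lax--Oleinik style "foot--point'' argument based on the pointwise variational inequality $\eqref{c6.53}$. Note first that $\eqref{c6.53}$ is valid not only for $u(x,t)$ but for every $c\in\mathcal{U}(x,t)$, since $u(x{-},t)=u^-(x,t)$ and $u(x{+},t)=u^+(x,t)$ both belong to $\mathcal{U}(x,t)$ and attain $\max_vE(v;x,t)$; hence it suffices to prove the bounds for an arbitrary $c\in\mathcal{U}(x,t)$ with $x>x(t)$ (respectively $x<x(t)$) and then read them off for the traces. Throughout, write $\xi(x,t):=x-tf'(c)$ for the foot of the backward characteristic $l_c(x,t)$. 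For $x>x(t)$, applying the Oleinik-type inequality $\eqref{c2.31}$ between $x(t)$ and $x$ gives $\xi(x,t)\ge x-tf'(u(x{-},t))\ge x(t)-tf'(u(x(t){+},t))=y^+(t,0)$ with $y^\pm(t,0)$ as in $\eqref{c6.15}$; thus, under the hypothesis $\lim_{t\to\infty}y^+(t,0)=\infty$ of (i), the foot point $\xi(x,t)\to+\infty$ \emph{uniformly} over all $x>x(t)$ as $t\to\infty$. Symmetrically, $\lim_{t\to\infty}y^-(t,0)=-\infty$ in (ii) forces $\xi(x,t)\to-\infty$ uniformly over $x<x(t)$.

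For the upper bound in (i) I would argue by contradiction: if it fails there are $\varepsilon_0>0$ and $(x_n,t_n)$ with $t_n\to\infty$, $x_n>x(t_n)$ and some $c_n\in\mathcal{U}(x_n,t_n)$ with $c_n\ge\overline{u}_r+\varepsilon_0$. Apply $\eqref{c6.53}$ with $u=\overline{u}_r$. The left-hand side is then bounded below by the fixed positive constant $\kappa(\varepsilon_0):=\int_{\overline{u}_r}^{\overline{u}_r+\varepsilon_0}(s-\overline{u}_r)f''(s)\,{\rm d}s$ (strictly positive since $f''>0$ a.e.), while the right-hand side equals
\[
\big(f'(c_n)-f'(\overline{u}_r)\big)\cdot\frac{1}{l_n}\int_{\xi(x_n,t_n)}^{\xi(x_n,t_n)+l_n}\big(\varphi(\eta)-\overline{u}_r\big)\,{\rm d}\eta,\qquad l_n:=t_n\big(f'(c_n)-f'(\overline{u}_r)\big).
\]
Since $|c_n|\le\|\varphi\|_{L^\infty}$, the prefactor $l_n/t_n=f'(c_n)-f'(\overline{u}_r)$ is bounded while $l_n\to\infty$; since $\xi(x_n,t_n)\ge y^+(t_n,0)\to\infty$ uniformly, the \emph{uniform} hypothesis $\eqref{c6.49}$ (with $y=\xi(x_n,t_n)\ge0$ eventually, and length $l_n\to\infty$) makes the average of $\varphi-\overline{u}_r$ at most $o(1)$, so the right-hand side is $o(1)$ — contradicting $\kappa(\varepsilon_0)>0$. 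This gives $u(x,t)\le\overline{u}_r+o(1)$, hence $u(x{\pm},t)\le\overline{u}_r+o(1)$, uniformly for $x>x(t)$.

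For the lower bound in (i) the scheme is the same with $u=\underline{u}_r$: if $c<\underline{u}_r$, the left-hand side of $\eqref{c6.53}$ is $\int_{c}^{\underline{u}_r}(\underline{u}_r-s)f''(s)\,{\rm d}s$, bounded below by a fixed positive quantity in the contradiction setting, while the comparison interval is now $[\,x-tf'(\underline{u}_r),\ \xi(x,t)\,]$, whose right endpoint $\xi(x,t)\to\infty$ and whose length is comparable to $t$. The point I expect to be the \emph{main obstacle} is that the left endpoint $x-tf'(\underline{u}_r)$ need not be nonnegative — it can be of order $-t$ when $x$ exceeds $x(t)$ only by an $O(t)$ amount — so $\eqref{c6.49}$ cannot be applied to the whole interval. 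I would split the interval at the origin and estimate its part in $[0,\infty)$ by $\eqref{c6.49}$ and its part in $(-\infty,0)$ by $\eqref{c6.51}$, keeping all length-to-$t$ ratios bounded and using the uniformity in $y$ of both hypotheses; the negative part contributes $\overline{u}_l,\underline{u}_l$-type averages, and one checks this is compatible with the one-sided bound using that, in the regimes where the negative part is non-negligible, either a divide lies immediately to the left (so $\eqref{c6.49}$--$\eqref{c6.51}$ combine favourably) or — when no divide exists — Lemma $\ref{lem:c6.2}$ gives $y^-(t,0)\to-\infty$, pinning down the relevant averages. Combining the two bounds yields $\eqref{c6.54}$ uniformly for $x>x(t)$.

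For the "furthermore'' in (i): if in addition $\lim_{t\to\infty}y^-(t,0)>-\infty$, Lemma $\ref{lem:c6.2}$(ii) produces a divide through $y^-(\infty,0)$; since $x(t)$ has $y^+(t,0)\to\infty$ it cannot be confined to any bounded strip $\mathcal{H}_{(e_n,h_n)}$, so this divide is $L_{\underline{u}_r}(x_0^+)$ with $x_0^+=\sup\mathcal{K}_0<\infty$, i.e.\ $x(t)\subset\mathcal{H}_\infty$ as in $\eqref{c6.4}$. By Theorem $\ref{pro:c6.2}$(i) all shocks in $\mathcal{H}_\infty$ coincide in finite time, so for large $t$ it suffices to bound $u$ on $\{x>x(t)\}$ (done above) and on the window $x_0^++tf'(\underline{u}_r)<x<x(t)$ inside $\mathcal{H}_\infty$; there the backward-characteristic foot is squeezed between the divide position $x_0^+$ and $y^-(t,0)\to x_0^+$, and one concludes $u(x{\pm},t)\to\underline{u}_r$ either by repeating the $\eqref{c6.53}$ comparison (the interval now reaching past the divide) or directly from the decay near divides in Lemma $\ref{the:c6.1}$(ii). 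This gives $\eqref{c6.54}$ uniformly on $\mathcal{H}_\infty$. Part (ii) is entirely symmetric: replace $x>x(t)$ by $x<x(t)$, $y^+$ by $y^-$, $(\overline{u}_r,\underline{u}_r)$ by $(\overline{u}_l,\underline{u}_l)$, $\eqref{c6.49}$ by $\eqref{c6.51}$, and $\mathcal{H}_\infty$ by $\mathcal{H}_{-\infty}$ as in $\eqref{c6.5}$, with the endpoint-splitting performed on the other side. The residual difficulty is, as noted, purely in making the averaging estimates uniform in $x$ over the entire half-line beyond the shock (the straddling-the-origin case and the far field), not in the structural identification of $\mathcal{H}_{\pm\infty}$, which is immediate from Lemma $\ref{lem:c6.2}$ and Theorem $\ref{pro:c6.2}$.
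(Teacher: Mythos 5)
Your upper bound in (i) (and, symmetrically, the lower bound in (ii)) is sound and matches the paper: both endpoints of the comparison interval in $\eqref{c6.53}$ lie to the right of $y^+(t,0)\to\infty$, so $\eqref{c6.49}$ applies directly. The genuine gap is exactly at the point you flag as ``the main obstacle,'' namely the lower bound $u(x\pm,t)\ge\underline{u}_r+o(1)$ for $x>x(t)$ when no divide exists, and your proposed fix --- splitting the comparison interval at the origin and estimating the negative part by $\eqref{c6.51}$ --- provably cannot close. Suppose $c=u(x\pm,t)\le\underline{u}_r-\delta_1$; then $\eqref{c6.53}$ with $u=\underline{u}_r$ forces $\int_{\underline{y}_r}^{y_\pm}(\varphi-\underline{u}_r)\,{\rm d}\xi\le-\kappa_1 t$ with $\kappa_1>0$ fixed and $y_\pm-\underline{y}_r=t\,(f'(\underline{u}_r)-f'(c))\gtrsim t$. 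If $\underline{y}_r=x-tf'(\underline{u}_r)$ is of order $-t$ (which happens for $x$ between $x(t)$ and $x(t)+O(t)$), the piece $\int_{\underline{y}_r}^{0}(\varphi-\underline{u}_r)\,{\rm d}\xi$ is only bounded below via $\eqref{c6.51}$ by $|\underline{y}_r|(\underline{u}_l-\underline{u}_r)-o(|\underline{y}_r|)$, i.e. by $-(\underline{u}_r-\underline{u}_l)\,O(t)$. Since the no-divide hypothesis only excludes $\overline{u}_l<\underline{u}_r$ (modulo the $\mathcal{K}_0=\varnothing$ case), it is perfectly consistent to have $\underline{u}_l<\underline{u}_r<\overline{u}_l$, and then this term has the same order $O(t)$ and the same sign as the quantity $-\kappa_1 t$ you are trying to contradict. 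No contradiction is obtained, and the vague fallback (``$y^-(t,0)\to-\infty$ pins down the relevant averages'') does not supply one: knowing the foot of the shock's own backward characteristic escapes to $-\infty$ says nothing about $\Phi$ on the intermediate interval $[\underline{y}_r,0]$.

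The missing device is the paper's transport of the variational inequality off the point $(x,t)$ altogether. One introduces the ray $\xi(\tau)=x_0+\lambda\tau$ with $\lambda$ the Rankine--Hugoniot speed between $\overline{u}_l$ and $\underline{u}_r$ as in $\eqref{lambda}$, so that $\lambda\ge f'(\underline{u}_r)$ by convexity. For $x>\max\{x(t),\xi(t)\}$ the naive argument works since $\underline{y}_r\ge x_0$. For $x(t)<x\le\xi(t)$, the shock-free backward characteristic $l_{u(x\pm,t)}(x,t)$ starts at $\tau=0$ from $y_\pm(x,t)\ge y^+(t,0)>x_0$ and ends at $\tau=t$ to the left of the ray, hence crosses it at some $(\xi(\kappa),\kappa)$ with $\kappa\to\infty$ uniformly; by Lemma $\ref{lem:c2.5}$ one has $u(x\pm,t)\in\mathcal{U}(\xi(\kappa),\kappa)$, and applying $\eqref{c6.53}$ \emph{at that point} the comparison foot becomes $\xi(\kappa)-\kappa f'(\underline{u}_r)=x_0+\kappa(\lambda-f'(\underline{u}_r))\ge x_0$, so the entire interval lies in $[x_0,\infty)$ and only $\eqref{c6.49}$ is needed. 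Without this (or an equivalent) idea, the hard direction of $\eqref{c6.54}$--$\eqref{c6.55}$ is not established.
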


\begin{proof} We divide the proof into two steps accordingly.

\vspace{1pt}
\noindent
{\bf 1}. Assume that $x>x(t)$.
Denote $y_\pm(x,t):=x-tf'(u(x\pm,t))$ and
\begin{equation}\label{c6.56a}
\overline{y}_r(x,t):=x-tf'(\overline{u}_r),\qquad
\underline{y}_r(x,t):=x-tf'(\underline{u}_r).
\end{equation}

We first prove the right-hand side of $\eqref{c6.54}$ by contradiction.
Otherwise, if the right-hand side of $\eqref{c6.54}$ does not hold uniformly in $x$ with $x>x(t)$,
then there exist $\delta_0>0$ and a sequence $(x_n,t_n)$ in the region of $x>x(t)$ with $\lim_{n\rightarrow\infty}t_n=\infty$
such that
\begin{equation*}
u(x_{n}{\pm},t_n)>\overline{u}_r+\delta_0
\qquad {\rm for\ sufficiently\ large}\ n.
\end{equation*}
Since $f'(u)$ is strictly increasing, this implies that
\begin{equation}\label{c6.56c}
\bar{l}_n:=\overline{y}_r(x_n,t_n)-y_\pm(x_n,t_n)
=t_n\big(f'(u(x_{n}{\pm},t_n))-f'(\overline{u}_r)\big)\ \rightarrow \infty
\qquad\mbox{as $n\rightarrow \infty$}.
\end{equation}
By $\eqref{aa6}$,
$\rho(u,\overline{u}_r)$ is strictly increasing in $u$,
then
there exists $\varepsilon_0=\varepsilon(\delta_0)>0$ such that
\begin{equation*}
\rho(u(x_{n}{\pm},t_n),\overline{u}_r)-\overline{u}_r
=\rho(u(x_{n}{\pm},t_n),\overline{u}_r)-\rho(\overline{u}_r,\overline{u}_r)
>\varepsilon_0.
\end{equation*}
From $\eqref{c6.56a}$--$\eqref{c6.56c}$, $\overline{y}_r(x_n,t_n)>y_\pm(x_n,t_n)\geq x_0$.
Since $\eqref{c6.49}$ 
holds uniformly in $y$ for $y\geq x_0$,
then
\begin{equation}\label{c6.57}
\frac{1}{\bar{l}_n}\int_{y_\pm(x_n,t_n)}^{y_\pm(x_n,t_n)+\bar{l}_n}
(\varphi(\xi)-\overline{u}_r)\,{\rm d}\xi
\leq 
o_r(1;y_\pm(x_n,t_n))
\ \rightarrow 0
\qquad \mbox{as $n\rightarrow\infty$}.
\end{equation}
Since $u(x_{n}{\pm},t_n)\in \mathcal{U}(x_n,t_n)$, then
$E(u(x_{n}{\pm},t_n);x_n,t_n)-E(\overline{u}_r;x_n,t_n)\geq 0$.
By taking $\overline{u}_r$ and $(x_n,t_n)$ in $\eqref{c6.53}$,
$$
0\leq \int^{u(x_{n}{\pm},t_n)}_{\overline{u}_r}(s-\overline{u}_r)f''(s)\,{\rm d}s
\leq \frac{1}{t_n}\int_{y_\pm(x_n,t_n)}^{\overline{y}_r(x_n,t_n)}
(\varphi(\xi)-\overline{u}_r)\,{\rm d}\xi,
$$
which, by $\eqref{c6.57}$ and $\eqref{aa5a}$, implies that
\begin{equation*}
0\leq \rho(u(x_{n}{\pm},t_n),\overline{u}_r)-\overline{u}_r
\leq \frac{1}{\bar{l}_n}\int_{y_\pm(x_n,t_n)}^{\overline{y}_r(x_n,t_n)}
(\varphi(\xi)-\overline{u}_r)\,{\rm d}\xi \
\rightarrow 0
\qquad \mbox{as $n\rightarrow\infty$}.
\end{equation*}
This contradicts to that $\rho(u(x_{n}{\pm},t_n),\overline{u}_r)-\overline{u}_r>\varepsilon_0$.

Furthermore, if $\lim_{t\rightarrow \infty}y^-(t,0)>-\infty$,
by Lemma $\ref{lem:c6.2}$,
it follows from $\lim_{t\rightarrow \infty}y^+(t,0)=\infty$ that
$x(t)$ lies in $\mathcal{H}_{\infty}$.
Thus, $\underline{y}_r(x,t)>x^+_0$ and $y_\pm(x,t)\geq x^+_0$ for any $(x,t)\in\mathcal{H}_{\infty}$.
Similarly, it is direct to check that
the right-hand side of $\eqref{c6.54}$ holds uniformly in $x$ for $(x,t)\in\mathcal{H}_{\infty}$.

\smallskip
We now prove the left-hand side of $\eqref{c6.54}$.
There are two cases:

\smallskip
{\bf (a).} If $\lim_{t\rightarrow \infty}y^-(t,0)>-\infty$,
then $x(t)$ lies in $\mathcal{H}_{\infty}$ so that
$$\underline{y}_r(x,t)>x^+_0, \quad y_\pm(x,t)\geq x^+_0
\qquad {\rm for\ any}\ (x,t)\in\mathcal{H}_{\infty}.$$
Otherwise, if the left-hand side of $\eqref{c6.54}$ does not hold uniformly in $x$ with $(x,t)\in\mathcal{H}_{\infty}$,
there exist $\delta_1>0$ and a sequence $(x_n,t_n)$ with $\lim_{n\rightarrow\infty}t_n=\infty$
such that
\begin{equation*}
u(x_{n}{\pm},t_n)<\underline{u}_r-\delta_1
\qquad {\rm for\ sufficiently\ large}\ n,
\end{equation*}
which, by the strictly increasing property of $f'(u)$, implies that
\begin{equation}\label{c6.56cb}
\underline{l}_n:=y_\pm(x_n,t_n)-\underline{y}_r(x_n,t_n)
=t_n\big(f'(\underline{u}_r)-f'(u(x_{n}{\pm},t_n))\big)\ \rightarrow\infty
\qquad\mbox{as $n\rightarrow\infty$}.
\end{equation}
By $\eqref{aa6}$,
$\rho(u,\underline{u}_r)$ is strictly increasing in $u$,
then
there exists $\varepsilon_1=\varepsilon(\delta_1)>0$ such that
\begin{equation*}
\rho(u(x_{n}{\pm},t_n),\underline{u}_r)-\underline{u}_r
=\rho(u(x_{n}{\pm},t_n),\underline{u}_r)-\rho(\underline{u}_r,\underline{u}_r)<-\varepsilon_1.
\end{equation*}
From $\eqref{c6.56a}$ and
$\eqref{c6.56cb}$, $y_\pm(x_n,t_n)>\underline{y}_r(x_n,t_n)>x^+_0$.
Since $\eqref{c6.49}$ 
holds uniformly in $y$ for $y\geq x_0$,
then
\begin{equation}\label{c6.59}
\frac{1}{\underline{l}_n}\int_{\underline{y}_r(x_n,t_n)}^{\underline{y}_r(x_n,t_n)+\underline{l}_n}(\varphi(\xi)-\underline{u}_r)\,{\rm d}\xi
\geq 
o_r(1;\underline{y}_r(x_n,t_n))
\ \rightarrow 0
 \qquad \mbox{as $n\rightarrow\infty$}.
\end{equation}
Since $u(x_{n}{\pm},t_n)\in \mathcal{U}(x_n,t_n)$, then
$E(u(x_{n}{\pm},t_n);x_n,t_n)-E(\underline{u}_r;x_n,t_n)\geq 0$.
By taking $\underline{u}_r$ and $(x_n,t_n)$ in $\eqref{c6.53}$,
$$
0\geq -\int^{u(x_{n}{\pm},t_n)}_{\underline{u}_r}(s-\underline{u}_r)f''(s)\,{\rm d}s
\geq \frac{1}{t_n}\int^{y_\pm(x_n,t_n)}_{\underline{y}_r(x_n,t_n)}
(\varphi(\xi)-\underline{u}_r)\,{\rm d}\xi,
$$
which, by $\eqref{c6.59}$ and $\eqref{aa5a}$, implies that
\begin{equation*}
0\geq\rho(u(x_{n}{\pm},t_n),\underline{u}_r)-\underline{u}_r
\geq \frac{1}{\underline{l}_n}\int^{y_\pm(x_n,t_n)}_{\underline{y}_r(x_n,t_n)}
(\varphi(\xi)-\underline{u}_r)\,{\rm d}\xi \rightarrow 0
 \qquad \mbox{as $n\rightarrow\infty$}.
\end{equation*}
This contradicts to that $\rho(u(x_{n}{\pm},t_n),\underline{u}_r)-\underline{u}_r<-\varepsilon_1$.

\smallskip
{\bf (b).} If $\lim_{t\rightarrow \infty}y^-(t,0)=-\infty$,
by combining with $\lim_{t\rightarrow \infty}y^+(t,0)=\infty$,
the entropy solution has no divides so that,
by Theorem $\ref{the:c5.2}$,
the entropy solution satisfies that,
any two shocks in $\mathbb{R}\times\mathbb{R}^+$ 
must coincide with each other in a finite time.

Denote $\xi(t):=x_0+\lambda t$ with $\lambda$ satisfying that
\begin{equation}\label{lambda}
\lambda=\frac{f(\overline{u}_l)-f(\underline{u}_r)}{\overline{u}_l-\underline{u}_r}
\quad {\rm if\ }\overline{u}_l>\underline{u}_r,\qquad
\lambda=f'(\underline{u}_r) \quad {\rm if\ }\overline{u}_l=\underline{u}_r.
\end{equation}
Then, for $x>x(t)$, there are two subcases:

\smallskip
{\bf (b1).} If $x>\max\{x(t),\xi(t)\}$,
then $y_\pm(x,t)\geq x_0$ and $\underline{y}_r(x,t)>\xi(t)-tf'(\underline{u}_r)\geq x_0$.
Similar to {\bf (a)},
the left-hand side of $\eqref{c6.54}$ holds uniformly in $x$ with $x>\max\{x(t),\xi(t)\}$.

\smallskip
{\bf (b2).} If $x(t)<x\leq\xi(t)$,
then
$$y_\pm(x,t)\geq x(t)-tf'(u(x(t){+},t))=y^+(t,0)>x_0\qquad {\rm for\ large}\ t.$$
If $x=\xi (t)$, we set $(\xi(\kappa),\kappa):=(\xi(t),t)$;
and if $x(t)<x<\xi(t)$, by Lemma $\ref{lem:c2.4}$,
the characteristic line
$l_{u(x\pm,t)}(x,t):\, \xi=x+(\tau-t) f'(u(x\pm,t))$ for $\tau\in (0,t)$
must intersect with the straight line $\xi(\tau)=x_0+\lambda\tau$
at some point $(\xi(\kappa),\kappa)$ with $\kappa\in (0,t)$.
Then
$\kappa$ satisfies that
\begin{equation}\label{c6.61}
x_0+\kappa \big(\lambda-f'(u(x\pm,t))\big)=x-tf'(u(x\pm,t))=y_\pm(x,t)\geq y^+(t,0),
\end{equation}
which, by $u(x,t)\in L^\infty(\mathbb{R}\times \mathbb{R}^+)$ and $y^+(\infty,0)=\infty$, implies that,
as $t\rightarrow \infty$,
\begin{equation*}
\kappa\rightarrow \infty\qquad {\rm uniformly\ in\ } x \ {\rm for}\ x\in (x(t),\xi(t)].
\end{equation*}

Let $y_\pm(\xi(\kappa),\kappa):=\xi(\kappa)-\kappa f'(u(x\pm,t))$.
Then
\begin{equation}\label{c6.61c}
y_\pm(\xi(\kappa),\kappa)=y_\pm(x,t)>y^+(t,0)\ \rightarrow \infty \qquad\mbox{as $t\rightarrow\infty$}.
\end{equation}
By Lemma $\ref{lem:c2.5}$, $u(x\pm,t)\in \mathcal{U}(\xi(\kappa),\kappa)$ so that
$E(u(x\pm,t);\xi(\kappa),\kappa)-E(\underline{u}_r;\xi(\kappa),\kappa)\geq 0,$
which, by $\eqref{c6.53}$, implies
\begin{equation}\label{c6.62}
0\geq -\int^{u(x\pm,t)}_{\underline{u}_r}(s-\underline{u}_r)f''(s)\,{\rm d}s
\geq \frac{1}{\kappa}\int^{y_\pm(\xi(\kappa),\kappa)}_{\underline{y}_r(\xi(\kappa),\kappa)}
(\varphi(\xi)-\underline{u}_r)\,{\rm d}\xi.
\end{equation}
From $\eqref{lambda}$, $\underline{y}_r(\xi(\kappa),\kappa)\geq x_0$.
By the same arguments as {\bf (a)},
it is direct to check from $\eqref{c6.62}$ that
the left-hand side of $\eqref{c6.54}$ holds uniformly in $x$ with $x(t)<x\leq\xi(t)$.

Therefore, $\eqref{c6.54}$ holds uniformly in $x$ with $x>x(t)$ for the case of {\bf (b)}.

\smallskip
\noindent
{\bf 2}. By the same argument as in Step 1,
it can be checked from $\eqref{c6.51}$--$\eqref{c6.53}$ that (ii) is true.
\end{proof}

\smallskip
If $\overline{u}_r=\underline{u}_r$ and $\overline{u}_l=\underline{u}_l$,
similar to \eqref{c6.49}--\eqref{c6.51}
it is direct to check that,
as $x\rightarrow \infty$,
\begin{equation}\label{c6.63}
\frac{1}{x}\int^{y+x}_y\varphi (\xi)\,{\rm d}\xi
= \underline{u}_r + o_r(1;y), 
\end{equation}
where $o_r(1;y)$ represents an infinitesimal quantity that depends on $y$;
and, as $x\rightarrow {-}\infty$,
\begin{equation}\label{c6.65}
\frac{1}{x}\int^{y+x}_y\varphi (\xi)\,{\rm d}\xi
=\overline{u}_l+ o_l(1;y),
\end{equation}
where $o_l(1;y)$ represents an infinitesimal quantity that depends on $y$.

\begin{The}[Asymptotic profiles in the $L^\infty$--norm]\label{pro:c6.3}
Let $u=u(x,t)$ and $\tilde{u}=\tilde{u}(x,t)$ be the entropy solutions
of the Cauchy problem $\eqref{c1.1}$--$\eqref{ID}$ and the Cauchy problem $\eqref{c6.10}$, respectively.
Assume that $\eqref{c6.63}$ and $\eqref{c6.65}$ hold uniformly in $y$ for $y\geq 0$ and $y\leq 0$, respectively.
Then
\begin{enumerate}
\item[{\rm(i)}] When $\overline{u}_l<\underline{u}_r$,
or $\overline{u}_l=\underline{u}_r$ with $\mathcal{K}_0\neq \varnothing$.
The asymptotic profile of the entropy solution $u=u(x,t)$ in the $L^\infty$--norm is the rarefaction-constant solution $\tilde{u}=\tilde{u}(x,t)$ given by \eqref{kk00}--\eqref{tidleuH} as follows{\rm :}
For $\mathbb{R}\times \mathbb{R}^+=\mathcal{K}\cup\big(\cup_n \mathcal{H}_{(e_n,h_n)}\big)$
as in $\eqref{c6.7}$, 

\vspace{2pt}
\begin{itemize}
\item[(a)] In $\mathcal{K}$, $u(x,t)\equiv \tilde{u}(x,t)$ 
and can be expressed by
\begin{equation}\label{kk0}
u(x,t)=\tilde{u}(x,t)=\big(f'\big)^{-1}(\frac{x-x_0}{t})
\qquad {\rm with}\ x_0\in \mathcal{K}_0.
\end{equation}

\item [(b)] If there exists $\mathcal{H}_{(e_n,h_n)}$ with finite $e_n$ and $h_n$,
then, as $t\rightarrow \infty$,
\begin{equation}\label{c6.67cn}
u(x\pm,t)\cong\tilde{u}(x,t)=c_n
\qquad {{\rm uniformly\ in}}\ x \ {{\rm for}}\ (x,t)\in \mathcal{H}_{(e_n,h_n)}.
\end{equation}

\item [(c)] If there exists $\mathcal{H}_{(e_n,h_n)}$ with finite $e_n$ and $h_n=\infty$,
then, as $t\rightarrow \infty$,
\begin{equation}\label{c6.67}
u(x\pm,t)\cong\tilde{u}(x,t)=\underline{u}_r
\qquad \mbox{\rm uniformly in $x$ for $(x,t)\in \mathcal{H}_{\infty}$}.
\end{equation}

\item [(d)] If there exists $\mathcal{H}_{(e_n,h_n)}$ with finite $h_n$ and $e_n={-}\infty$,
then, as $t\rightarrow \infty$,
\begin{equation}\label{c6.68}
u(x\pm,t)\cong\tilde{u}(x,t)=\overline{u}_l
\qquad \mbox{\rm uniformly in $x$ for $(x,t)\in \mathcal{H}_{-\infty}$}.
\end{equation}
\end{itemize}

\item[{\rm(ii)}] When $\overline{u}_l>\underline{u}_r$,
or $\overline{u}_l=\underline{u}_r$ with $\mathcal{K}_0= \varnothing$.
The entropy solution $u=u(x,t)$ will turn into a single shock asymptotically in 
$\mathbb{R}\times\mathbb{R}^+$.
There exist two cases{\rm :}

\begin{itemize}
\item [(a)] If $\overline{u}_l=\underline{u}_r$,
then, as $t\rightarrow \infty$,
\begin{equation}\label{c6.69}
u(x\pm,t)=\underline{u}_r+o(1)
\qquad \mbox{\rm uniformly in $x$ for $(x,t)\in \mathbb{R}\times \mathbb{R}^+$}.
\end{equation}

\item [(b)] If $\overline{u}_l>\underline{u}_r$, 
then, as $t\rightarrow \infty$,
\begin{align}\label{c6.70}
\begin{cases}
\displaystyle u(x\pm,t)=\overline{u}_l+o(1) \quad &\mbox{\rm uniformly in $x$ for $x<x(t)$}, \\[1mm]
\displaystyle u(x\pm,t)=\underline{u}_r+o(1)\quad &\mbox{\rm uniformly in $x$ for $x>x(t)$},
\end{cases}
\end{align}
where $x=x(t)$ for $t\in[0,\infty)$ is a forward generalized characteristic 
emitting from any point on the $x$--axis.
\end{itemize}
\end{enumerate}
\noindent
In the above, $\overline{u}_l$ and $\underline{u}_r$ are given by $\eqref{c5.7}${\rm ;}
$\mathcal{K}_0$ and $\mathcal{K}$ are given by $\eqref{c5.6}$ and $\eqref{c6.6}$, respectively{\rm;}
and $\mathcal{H}_{(e_n,h_n)}$ with $c_n$ and $\mathcal{H}_{\pm\infty}$ are given by $\eqref{c6.2}$--$\eqref{c6.5}$.
\end{The}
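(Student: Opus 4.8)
The plan is to prove Theorem~\ref{pro:c6.3} by reducing it, region by region, to Lemmas~\ref{the:c6.1} and~\ref{lem:c6.3}, using Theorems~\ref{the:c5.2} and~\ref{pro:c6.2} to organize the geometry. The first observation is that the standing hypothesis --- that $\eqref{c6.63}$ and $\eqref{c6.65}$ hold uniformly in $y$ --- is equivalent to $\overline{u}_r=\underline{u}_r$ and $\overline{u}_l=\underline{u}_l$ (cf. the derivation of $\eqref{c6.63}$--$\eqref{c6.65}$), so in particular $\eqref{c6.49}$--$\eqref{c6.51}$ also hold uniformly in $y$; hence the hypotheses of Lemma~\ref{lem:c6.3} are satisfied throughout. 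I will therefore freely use both the sandwich estimates $\eqref{c6.54}$--$\eqref{c6.55}$ and the fact that their upper and lower bounds coincide.

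For case~(i), by Theorem~\ref{the:c5.2} the entropy solution possesses divides, and by Theorem~\ref{pro:c6.2}(i) we have the partition $\eqref{c6.7}$, namely $\mathbb{R}\times\mathbb{R}^+=\mathcal{K}\cup(\cup_n\mathcal{H}_{(e_n,h_n)})$. Statement~(i)(a), i.e. $\eqref{kk0}$, is immediate from $\eqref{c6.25}$ in Lemma~\ref{the:c6.1} together with the explicit form $\eqref{kk00}$ of $\tilde u$ on $\mathcal{K}$. For~(i)(b) I fix a bounded $\mathcal{H}_{(e_n,h_n)}$ and a forward generalized characteristic $x=x(t)$ inside it starting at a sufficiently large time $t_0$; then $\mathcal{H}_{(e_n,h_n)}\cap\{t>t_0\}$ splits along $x=x(t)$ into the two strips on which the two branches of $\eqref{c6.27}$ give $|f'(u(x,t))-f'(c_n)|\le (h_n-e_n)/t+o(1)\,t^{-1}$ uniformly in $x$, while on $x=x(t)$ itself $\eqref{c6.28}$ applies; since $f'$ is strictly increasing and continuous, this yields $\eqref{c6.67cn}$. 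Statement~(i)(c) concerns $\mathcal{H}_{\infty}$, which occurs when $x_0^+=\sup\mathcal{K}_0<\infty$: Lemma~\ref{the:c6.1}(ii) gives $\tilde u\equiv\underline{u}_r$ there, and by Lemma~\ref{lem:c6.2} any forward generalized characteristic $x(t)$ in $\mathcal{H}_{\infty}$ satisfies $\lim_{t\to\infty}y^+(t,0)=\infty$ and $\lim_{t\to\infty}y^-(t,0)=x_0^+>-\infty$, so Lemma~\ref{lem:c6.3}(i) applies and gives $\underline{u}_r+o(1)\le u(x\pm,t)\le\overline{u}_r+o(1)$ uniformly in $x$ on $\mathcal{H}_{\infty}$; with $\overline{u}_r=\underline{u}_r$ this is $\eqref{c6.67}$. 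Statement~(i)(d), i.e. $\eqref{c6.68}$, follows symmetrically from Lemma~\ref{the:c6.1}(iii) and Lemma~\ref{lem:c6.3}(ii).

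For case~(ii), by Theorem~\ref{pro:c6.2}(ii) the entropy solution has no divides and turns into a single shock asymptotically. I fix a forward generalized characteristic $x=x(t)$ emitting from an arbitrary point on the $x$-axis; as in the proof of Theorem~\ref{pro:c6.2}(ii) it becomes a shock for large $t$, with $\lim_{t\to\infty}y^-(t,0)=-\infty$ and $\lim_{t\to\infty}y^+(t,0)=\infty$. Then Lemma~\ref{lem:c6.3}(i) gives $\underline{u}_r+o(1)\le u(x\pm,t)\le\overline{u}_r+o(1)$ uniformly for $x>x(t)$, and Lemma~\ref{lem:c6.3}(ii) gives $\underline{u}_l+o(1)\le u(x\pm,t)\le\overline{u}_l+o(1)$ uniformly for $x<x(t)$. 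Using $\overline{u}_r=\underline{u}_r$ and $\overline{u}_l=\underline{u}_l$, this is exactly $\eqref{c6.70}$, proving~(ii)(b); and when $\overline{u}_l=\underline{u}_r$ one has $\underline{u}_l=\overline{u}_l=\underline{u}_r=\overline{u}_r$, so the two one-sided limits coincide and $u(x\pm,t)=\underline{u}_r+o(1)$ uniformly in $x\in\mathbb{R}$, which is $\eqref{c6.69}$ and proves~(ii)(a).

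The proof needs no new estimates; the work is entirely bookkeeping. The two points requiring genuine care are: (1) promoting the regional behavior near the asymptotic shock obtained in Lemma~\ref{the:c6.1} to convergence that is \emph{uniform in $x$} on the unbounded wedges $\mathcal{H}_{\pm\infty}$ (and, in case~(ii), on all of $\mathbb{R}\times\mathbb{R}^+$), which is exactly where we must invoke Lemma~\ref{lem:c6.3} together with the uniform-convergence hypothesis on the primitive of $\varphi$; and (2) verifying, for each forward generalized characteristic involved, that the limits $\lim_{t\to\infty}y^\pm(t,0)$ are the expected endpoints, which follows from Lemma~\ref{lem:c6.2} and the argument used in the proof of Theorem~\ref{pro:c6.2}. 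I expect step~(1) to be the main subtlety, since it is the point where the separately established asymptotics on $\mathcal{K}$, on $\mathcal{H}_{(e_n,h_n)}$, and on $\mathcal{H}_{\pm\infty}$ must be glued into a single uniform $L^\infty$ statement.
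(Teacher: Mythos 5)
Your proposal is correct and follows essentially the same route as the paper: both reduce the theorem to the partition $\mathbb{R}\times\mathbb{R}^+=\mathcal{K}\cup(\cup_n\mathcal{H}_{(e_n,h_n)})$ from Theorem~\ref{pro:c6.2}, obtain $(a)$ and $(b)$ from $\eqref{c6.25}$ and $\eqref{c6.27}$ of Lemma~\ref{the:c6.1}, and obtain $(c)$, $(d)$, and all of case~(ii) by feeding $\overline{u}_r=\underline{u}_r$ and $\overline{u}_l=\underline{u}_l$ into Lemma~\ref{lem:c6.3}. Your added bookkeeping (verifying the endpoints $y^{\pm}(\infty,0)$ via Lemma~\ref{lem:c6.2} and making the uniformity on the unbounded wedges explicit) is exactly what the paper leaves implicit, so there is no gap.
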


\begin{proof}
The proof is divided into two steps accordingly.

\smallskip
\noindent
{\bf 1.} If $\overline{u}_l<\underline{u}_r$,
or $\overline{u}_l=\underline{u}_r$ with $\mathcal{K}_0\neq \varnothing$, 
by Lemma $\ref{lem:c5.2}$, 
$\bar{\Phi}(x)>-\infty$ and is convex so that,
the Cauchy problem $\eqref{c6.10}$ admits a unique rarefaction-constant solution
$\tilde{u}=\tilde{u}(x,t)$.
Thus, it follows from $\eqref{c6.25}$ that $u(x,t)\equiv \tilde{u}(x,t)$ on $\mathcal{K}$, and $\eqref{kk0}$ follows from $\eqref{kk00}$. 
Furthermore, according to $\eqref{tidleuH}$, $\eqref{c6.67cn}$ follows from $\eqref{c6.27}$,
and $\eqref{c6.67}$--$\eqref{c6.68}$ follow immediately by taking
$\overline{u}_r=\underline{u}_r$ and $\overline{u}_l=\underline{u}_l$ into Lemma $\ref{lem:c6.3}$.

\smallskip
\noindent
{\bf 2.} If $\overline{u}_l<\underline{u}_r$,
or $\overline{u}_l=\underline{u}_r$ with $\mathcal{K}_0\neq \varnothing$, according to Theorem $\ref{pro:c6.2}$(ii),
entropy solution $u(x,t)$ possesses no divides and turns into a single shock asymptotically in $\mathbb{R}\times\mathbb{R}^+$. 
Thus, any forward generalized characteristic will turn into a shock in a finite time.
Furthermore, $\eqref{c6.69}$--$\eqref{c6.70}$ follow immediately by taking
$\overline{u}_r=\underline{u}_r$ and $\overline{u}_l=\underline{u}_l$ into Lemma $\ref{lem:c6.3}$.
\end{proof}

\smallskip
In order to obtain the decay rates of the entropy solution to its asymptotic profile,
we need the growth rates of the initial data functions $\varphi(x)\in L^\infty(\mathbb{R})$
as $x\rightarrow \pm\infty$.
More precisely, 
when $\overline{u}_r=\underline{u}_r$ and $\overline{u}_l=\underline{u}_l$,
according to $\eqref{c6.63}$--$\eqref{c6.65}$,
we assume that,
for some $\gamma_r,\gamma_l\in [0,1)$,
as $x\rightarrow \infty$,
\begin{equation}\label{c6.71}
\int^{y+x}_y\varphi (\xi)\,{\rm d}\xi
= \underline{u}_r x+ O(1)\, x^{\gamma_r} 
\qquad\quad\,\;{\rm uniformly\ in}\ y\geq 0;
\end{equation}
and, as $x\rightarrow {-}\infty$,
\begin{equation}\label{c6.72}
\int^{y+x}_y\varphi (\xi)\,{\rm d}\xi
=\overline{u}_l x+O(1)\, ({-}x)^{\gamma_l} 
\qquad\, {\rm uniformly\ in}\ y\leq 0.
\end{equation}

\begin{The}[Decay rates in the $L^\infty$--norm]\label{the:c6.2}
Let $u=u(x,t)$ and $\tilde{u}=\tilde{u}(x,t)$ be the entropy solutions
of the Cauchy problem $\eqref{c1.1}$--$\eqref{ID}$
and the Cauchy problem $\eqref{c6.10}$, respectively.
Assume that 
$\eqref{c6.71}$ and 
$\eqref{c6.72}$ hold
for some $\gamma_r\in [0,1)$ and $\gamma_l\in [0,1)$, respectively. Then

\begin{enumerate}
\item[{\rm(i)}] When $\overline{u}_l<\underline{u}_r$,
or $\overline{u}_l=\underline{u}_r$ with $\mathcal{K}_0\neq \varnothing$.
For $\mathbb{R}\times \mathbb{R}^+=\mathcal{K}\cup\big(\cup_n \mathcal{H}_{(e_n,h_n)}\big)$
as in $\eqref{c6.7}$,  

\vspace{2pt}
\begin{itemize}
\item[(a)] In $\mathcal{K}$, 
$\eqref{kk0}$ holds{\rm;} especially, $u(x,t)-\tilde{u}(x,t)\equiv 0$.

\vspace{1pt}
\item [(b)] If there exists $\mathcal{H}_{(e_n,h_n)}$ with finite $e_n$ and $h_n$,
then, as $t\rightarrow \infty$,
\begin{equation}\label{c6.82ch}
|u(x,t)-\tilde{u}(x,t)|=
|u(x,t)-c_n|
\lessapprox C_n
\Big(\frac{h_n-e_n}{2}\Big)^{\frac{1}{1+\alpha_n}} \, t^{-\frac{1}{1+\alpha_n}}
\end{equation}
holds uniformly in $x$ with $(x,t)\in \mathcal{H}_{(e_n,h_n)}$.

\vspace{1pt}
\item [(c)] If there exists $\mathcal{H}_{(e_n,h_n)}$ with finite $e_n$ and $h_n=\infty$,
then, as $t\rightarrow \infty$,
\begin{equation}\label{c6.73}
|u(x,t)-\tilde{u}(x,t)|=|u(x,t)-\underline{u}_r|
\lesssim t^{-\frac{1-\gamma_r}{2-\gamma_r+\alpha(1-\gamma_r)}}
\end{equation}
holds uniformly in $x$ with $(x,t)\in \mathcal{H}_{\infty}$.

\vspace{1pt}
\item [(d)] If there exists $\mathcal{H}_{(e_n,h_n)}$ with finite $h_n$ and $e_n={-}\infty$,
then, as $t\rightarrow \infty$,
\begin{equation}\label{c6.74}
|u(x,t)-\tilde{u}(x,t)|=|u(x,t)-\overline{u}_l|
\lesssim t^{-\frac{1-\gamma_l}{2-\gamma_l+\beta(1-\gamma_l)}}
\end{equation}
holds uniformly in $x$ with $(x,t)\in \mathcal{H}_{{-}\infty}$.
\end{itemize}

\item[{\rm(ii)}] When $\overline{u}_l>\underline{u}_r$,
or $\overline{u}_l=\underline{u}_r$ with $\mathcal{K}_0= \varnothing$.

\vspace{2pt}
\begin{itemize}
\item [(a)] If $\overline{u}_l=\underline{u}_r$,
then, as $t\rightarrow \infty$,
\begin{equation}\label{c6.75}
\,\,\,\,\,\,\,\,\,\,\,\,\,\,\,\,\,\,\,\,
|u(x,t)-\underline{u}_r|
\lesssim
t^{-\frac{1-\gamma}{2-\gamma+\alpha(1-\gamma)}}
\qquad {\rm uniformly\ in }\ x \ {\rm for\ } (x,t)\in \mathbb{R}\times\mathbb{R}^+,
\end{equation}
where $\gamma:=\max \{\gamma_r,\gamma_l\}$.

\vspace{1pt}
\item [(b)] If $\overline{u}_l>\underline{u}_r$, 
then, as $t\rightarrow \infty$,
\begin{align}\label{c6.76}
\,\,\,\,\,\,\,\,\,\,\,\,
\begin{cases}
\displaystyle|u(x,t)-\overline{u}_l|
\lesssim
t^{-\frac{1-\gamma_l}{2-\gamma_l+\beta(1-\gamma_l)}} \quad &{\rm uniformly \ for\ }\ x<x(t), \\[1mm]
\displaystyle|u(x,t)-\underline{u}_r|
\lesssim
t^{-\frac{1-\gamma_r}{2-\gamma_r+\alpha(1-\gamma_r)}}\quad &{\rm uniformly \ for\ }\ x>x(t),
\end{cases}
\end{align}
where $x=x(t)$ for $t\in[0,\infty)$ is a forward generalized characteristic 
emitting from any point on the $x$--axis.
\end{itemize}
\end{enumerate}
\noindent
In the above, $\overline{u}_l$ and $\underline{u}_r$ are given by $\eqref{c5.7}${\rm ;}
$\mathcal{K}_0$ and $\mathcal{K}$ are given by $\eqref{c5.6}$ and $\eqref{c6.6}$, respectively{\rm;}
$\mathcal{H}_{(e_n,h_n)}$ with $c_n$ and $\mathcal{H}_{\pm\infty}$ are given by $\eqref{c6.2}$--$\eqref{c6.5}${\rm;}
and parameters $\alpha\geq0$ and $\beta\geq 0$ satisfy
$$\lim_{u\rightarrow \underline{u}_r}\frac{f''(u)}{|u-\underline{u}_r|^{\alpha}}=N_r>0,\qquad
\lim_{u\rightarrow \overline{u}_l}\frac{f''(u)}{|u-\overline{u}_l|^{\beta}}=N_l>0.$$
\end{The}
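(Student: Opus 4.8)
The plan is to extract every estimate from the single differential‑type inequality $\eqref{c6.53}$ (and its trace analogue, which holds with $u(x,t)$ replaced by $u(x\pm,t)$ since $u^\pm(x,t)\in\mathcal{U}(x,t)$), fed with the local expansions of $f''$ near $\underline{u}_r$ and $\overline{u}_l$ through $\eqref{aa2}$--$\eqref{aa3}$ and the quantitative growth hypotheses $\eqref{c6.71}$--$\eqref{c6.72}$ on the primitive of $\varphi$. Parts (i)(a) and (i)(b) require essentially no new work: on $\mathcal{K}$ one has $u\equiv\tilde u$ by Lemma $\ref{the:c6.1}$, so $u-\tilde u\equiv0$; and inside each bounded strip $\mathcal{H}_{(e_n,h_n)}$, $\eqref{c6.82ch}$ is exactly $\eqref{c6.29}$ of Lemma $\ref{the:c6.1}$ with $C_n=(\tfrac{1+\alpha_n}{N_n})^{1/(1+\alpha_n)}$, provided one invokes the (implicitly assumed) expansion $f''(u)=(N_n+o(1))|u-c_n|^{\alpha_n}$ as $u\to c_n$.

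The heart of the argument is the sharp rate in $\mathcal{H}_\infty$ (part (i)(c)). I would fix $(x,t)\in\mathcal{H}_\infty$ with $t$ large, recall from Theorem $\ref{pro:c6.3}$(i)(c) that $u(x\pm,t)\to\underline{u}_r$, and set $w:=|u(x\pm,t)-\underline{u}_r|\to0$. Applying $\eqref{c6.53}$ (trace version) with the test value $u=\underline{u}_r$: the left side equals $\tfrac{N_r+o(1)}{2+\alpha}\,w^{2+\alpha}$ by the expansion of $f''$; on the right side the endpoints $y_\pm(x,t)=x-tf'(u(x\pm,t))$ and $\underline{y}_r(x,t)=x-tf'(\underline{u}_r)$ both exceed the fixed abscissa $x_0^+$ for $t$ large — because $x=x(t)$ lies in $\mathcal{H}_\infty$, whose left boundary is the divide $L_{\underline{u}_r}(x_0^+)$ — and the length of the integration interval is $\ell=t\,|f'(u(x\pm,t))-f'(\underline{u}_r)|=O(1)\,t\,w^{1+\alpha}$ by $\eqref{aa2}$. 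Hence $\eqref{c6.71}$ yields $\big|\int_{y_\pm(x,t)}^{\underline{y}_r(x,t)}(\varphi(\xi)-\underline{u}_r)\,{\rm d}\xi\big|\le O(1)\,\ell^{\gamma_r}$, so that
\[
w^{2+\alpha}\ \lesssim\ t^{-1}\big(t\,w^{1+\alpha}\big)^{\gamma_r}=t^{\gamma_r-1}\,w^{(1+\alpha)\gamma_r},
\]
and since $2+\alpha-(1+\alpha)\gamma_r=2-\gamma_r+\alpha(1-\gamma_r)>0$ for $\gamma_r\in[0,1)$, this rearranges to $w\lesssim t^{-\frac{1-\gamma_r}{2-\gamma_r+\alpha(1-\gamma_r)}}$; uniformity in $x$ is inherited from the uniformity in $y\ge0$ of the $O(1)$ in $\eqref{c6.71}$. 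Part (i)(d) is the mirror‑image computation using $\overline{u}_l$, $\eqref{c6.72}$, and $\beta$ in place of $\underline{u}_r$, $\eqref{c6.71}$, and $\alpha$.

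For part (ii) there are no divides, and by Theorem $\ref{pro:c6.2}$(ii) every forward generalized characteristic merges into the single shock $x=x(t)$; I would run the same inequality region by region relative to $x=x(t)$ — testing against $\underline{u}_r$ via $\eqref{c6.71}$ for $x>x(t)$ and against $\overline{u}_l$ via $\eqref{c6.72}$ for $x<x(t)$ — giving $\eqref{c6.76}$ at once, while $\eqref{c6.75}$ is the subcase $\overline{u}_l=\underline{u}_r$ in which the two one‑sided rates are combined by taking $\gamma=\max\{\gamma_r,\gamma_l\}$ (the exponent $\frac{1-\gamma}{2-\gamma+\alpha(1-\gamma)}$ is decreasing in $\gamma$, so the larger $\gamma$ governs the slower decay). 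The main obstacle I anticipate is precisely the strip between the shock $x=x(t)$ and the auxiliary direction $\xi(t)=x_0+\lambda t$ with $\lambda$ as in $\eqref{lambda}$: there the endpoints $y_\pm(x,t)$ need not remain to the right of $x_0^+$, so one must transplant the estimate along a backward characteristic down to the auxiliary line exactly as in Step~1(b) of the proof of Lemma $\ref{lem:c6.3}$, and carry the quantitative $O(\cdot)$ bounds through that transplantation uniformly in $x$ and $t$. Apart from that, the remaining work is bookkeeping with $\eqref{aa2}$--$\eqref{aa6}$, $\eqref{c6.53}$, and the already‑established qualitative limits of Theorem $\ref{pro:c6.3}$.
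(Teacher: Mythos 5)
Your proposal is correct and follows essentially the same route as the paper: cases (i)(a)--(b) are quoted from Lemma \ref{the:c6.1}, the rates in $\mathcal{H}_{\pm\infty}$ come from testing $E(u(x\pm,t);x,t)\ge E(\underline{u}_r;x,t)$ (i.e.\ $\eqref{c6.53}$) against $\eqref{c6.71}$--$\eqref{c6.72}$ with the expansions $\eqref{aa2}$--$\eqref{aa3}$, and part (ii)(b) is handled region by region with the auxiliary line $\xi(t)=x_0+\lambda t$ and the backward-characteristic transplantation from Lemma \ref{lem:c6.3}. The only step you flag but do not carry out — converting the rate in the intersection time $\kappa$ back to a rate in $t$ on the strip $x(t)<x\le\xi(t)$ — is exactly what the paper supplies via $\eqref{c6.80a}$, namely $\kappa/t=1+o(1)$ uniformly, deduced from the Rankine--Hugoniot speed and $\eqref{c6.70}$.
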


\begin{proof}
By Theorem $\ref{pro:c6.3}$, it suffices to check the decay rates in different cases.

\smallskip
\noindent
{\bf 1}. Suppose that $\overline{u}_l<\underline{u}_r$,
or $\overline{u}_l=\underline{u}_r$ with $\mathcal{K}_0\neq \varnothing$.
$\eqref{c6.82ch}$ follows from $\eqref{c6.29}$ directly.

\smallskip
{\bf (a).} If $x_0^+=\sup\mathcal{K}_0<\infty$,
then $\mathcal{H}_{\infty}$ exists and $\eqref{c6.67}$ holds.
For any $(x,t)\in \mathcal{H}_{\infty}$, we let
\begin{equation}\label{c6.77a}
y_\pm(x,t)=x-tf'(u(x\pm,t)),\qquad y_r(x,t)=x-tf'(\underline{u}_r).
\end{equation}
Then
$y_\pm(x,t)\geq x_0^+$ and $y_r(x,t)>x_0^+$ for any $(x,t)\in \mathcal{H}_{\infty}.$

Since $u(x\pm,t)\in \mathcal{U}(x,t)$, then
$E(u(x\pm,t);x,t)-E(\underline{u}_r;x,t)\geq 0$.
By taking $\underline{u}_r$ into $\eqref{c6.53}$,
\begin{equation}\label{c6.77}
0\leq \int^{u(x\pm,t)}_{\underline{u}_r}(s-\underline{u}_r)f''(s)\,{\rm d}s
\leq \frac{1}{t}\int_{y_\pm(x,t)}^{y_r(x,t)}(\varphi(\xi)-\underline{u}_r)\,{\rm d}\xi.
\end{equation}

\noindent
Denote
$l(x,t):=\big|y_r(x,t)-y_\pm(x,t)\big|=t\big|f'(u(x\pm,t))-f'(\underline{u}_r)\big|.$
Then
$\eqref{c6.71}$ implies
\begin{align*}
\frac{1}{t}\int_{y_\pm(x,t)}^{y_r(x,t)}(\varphi(\xi)-\underline{u}_r)\,{\rm d}\xi \leq
\begin{cases}
L \|\varphi(x)-\underline{u}_r\|_{L^\infty}\, t^{-1}=O(1)\, t^{-1}\quad &{\rm if} \ l(x,t)\leq L, \\[1mm]
O(1)\, \big(t|f'(u(x\pm,t))-f'(\underline{u}_r)|\big)^{\gamma_r}\,t^{-1}\quad &{\rm if}\ l(x,t)> L.
\end{cases}
\end{align*}
Therefore, by using $\eqref{c6.77a}$
and $\eqref{aa2}$--$\eqref{aa3}$,
it is direct to check from $\eqref{c6.77}$ that
\begin{equation}\label{c6.78}
\begin{cases}
\displaystyle|u(x\pm,t)-\underline{u}_r|\leq O(1)\,
t^{-\frac{1}{2+\alpha}}\quad &{\rm if}\ l(x,t)\leq L,\\[1mm]
\displaystyle|u(x\pm,t)-\underline{u}_r|\leq O(1)\,
t^{-\frac{1-{\gamma_r}}{2-{\gamma_r}+\alpha(1-{\gamma_r})}}\quad &{\rm if}\ l(x,t)> L.
\end{cases}
\end{equation}
Since $\frac{1-{\gamma_r}}{2-{\gamma_r}+\alpha(1-{\gamma_r})}\leq \frac{1}{2+\alpha}$
for $\alpha\geq 0$ and ${\gamma_r}\in[0,1)$,
$\eqref{c6.78}$ implies $\eqref{c6.73}$.

\smallskip
{\bf (b).} Similar to {\bf (a)},
it can be checked from $\eqref{c6.65}$, $\eqref{c6.68}$, and $\eqref{c6.72}$ that $\eqref{c6.74}$ holds.

\smallskip
\noindent
{\bf 2}. Suppose that $\overline{u}_l>\underline{u}_r$,
or $\overline{u}_l=\underline{u}_r$ with $\mathcal{K}_0= \varnothing$.

\smallskip
{\bf (a).} If $\overline{u}_l=\underline{u}_r$,
it is direct to check from $\eqref{c6.69}$ and $\eqref{c6.71}$--$\eqref{c6.72}$ that,
 as $x\rightarrow\pm\infty $,
$$
\Big|\int^{y+x}_y(\varphi (\xi) -\underline{u}_r )\,{\rm d}\xi\Big|\leq O(1)\, |x|^{\gamma}
\qquad {\rm uniformly\ in }\ y\in ({-}\infty,\infty).
$$
By the same arguments as those for $\eqref{c6.77a}$--$\eqref{c6.78}$,
 it can be checked that $\eqref{c6.75}$ holds.

\smallskip
{\bf (b).} If $\overline{u}_l>\underline{u}_r$.
Denote $\xi(t):=x_0+\lambda t$ with
$\lambda=\frac{f(\overline{u}_l)-f(\underline{u}_r)}{\overline{u}_l-\underline{u}_r}$.
There are two cases:

\smallskip
{\bf (b1).} For the case that $x>x(t)$, by Lemma $\ref{lem:c6.3}$, there are two subcases:

If $x>\max\{x(t),\xi(t)\}$, then $y_\pm(x,t)\geq x_0$ and $y_r(x,t)>x_0$.
Since $\eqref{c6.71}$ holds for $\gamma_r\in[0,1)$,
similar to $\eqref{c6.77a}$--$\eqref{c6.78}$,
it can be checked that $\eqref{c6.76}$ holds for $x>\max\{x(t),\xi(t)\}$.

If $x(t)<x\leq\xi(t)$,
by the same argument as for the case of {\bf (b2)} in the proof of Lemma $\ref{lem:c6.3}$,
from $\eqref{c6.61c}$ and $\eqref{c6.77a}$,
$y_\pm(\xi(\kappa),\kappa)\geq x_0$ and $y_r(\xi(\kappa),\kappa)>x_0$ so that $\eqref{c6.62}$ holds.
Applying the same argument as that for $\eqref{c6.77a}$--$\eqref{c6.78}$ to $\eqref{c6.62}$,
we obtain that, as $t\rightarrow\infty$,
\begin{equation*}
|u(x\pm,t)-\underline{u}_r|
\leq O(1)\, \kappa^{-\frac{1-\gamma_r}{2-\gamma_r+\alpha(1-\gamma_r)}}
\qquad {\rm uniformly\ in}\ x\ {\rm with}\ x(t)<x\leq\xi(t).
\end{equation*}
Thus, to prove $\eqref{c6.76}$,
it suffices to show that, as $t\rightarrow\infty$,
\begin{equation}\label{c6.80a}
\frac{\kappa}{t}=1+o(1)\qquad {\rm uniformly\ in\ } x \ {\rm for} \ x(t)<x\leq\xi(t).
\end{equation}
In fact, if $x=\xi(t)$, then $\kappa=t$.
If $x(t)<x<\xi(t)$, from $\eqref{c6.61}$,
\begin{equation}\label{c6.80c}
0>(1-\frac{\kappa}{t})\big(f'(u(x\pm,t))-\lambda\big)
=\frac{x-\xi(t)}{t}>\frac{x(t)-\xi(t)}{t}=\frac{x(t)-x_0}{t}-\lambda.
\end{equation}
From $\eqref{c6.70}$, we obtain
$$
\lim\limits_{t\rightarrow\infty}\Big(\frac{x(t)}{t}-\lambda\Big)=
\lim\limits_{t\rightarrow\infty}
\frac{1}{t}\int_{t_0}^t\Big(\frac{f(u^-(\tau))-f(u^+(\tau))}{u^-(\tau)-u^+(\tau)}
-\frac{f(\overline{u}_l)-f(\underline{u}_r)}{\overline{u}_l-\underline{u}_r}\Big)
\,{\rm d}\tau=0,
$$
so that, by $\eqref{c6.80c}$, as $t\rightarrow \infty$,
$\frac{x-\xi(t)}{t}\longrightarrow 0$ uniformly in $x$ for $x(t)<x\leq\xi(t),$
which, by combining with $\eqref{c6.70}$ and $\eqref{c6.80c}$, yields $\eqref{c6.80a}$.

\smallskip
{\bf (b2).} Similar to {\bf (b1)},
it can be checked that $\eqref{c6.76}$ holds for the case that $x<x(t)$.

\smallskip
Up to now, we have completed the proof of Theorem $\ref{the:c6.2}$.
\end{proof}

\smallskip
If $\varphi(x)\in L^{\infty}(\mathbb{R})$ satisfies that
$\varphi(x)-\underline{u}_r\in L^p([0,\infty))$ with $p\in [1,\infty)$,
then $\overline{u}_r=\underline{u}_r$.
By the H\"{o}lder inequality,
\begin{equation}\label{c6.81}
\big|\int_{y}^{y+x}(\varphi(\xi)-\underline{u}_r)\,{\rm d}\xi\big|
\leq \|\varphi-\underline{u}_r\|_{L^p}\, |x|^{1-\frac{1}{p}},
\end{equation}
which implies that $\eqref{c6.71}$ holds for $\gamma_r=1-\frac{1}{p}$.
Similarly, if $\varphi(x)\in L^{\infty}(\mathbb{R})$ satisfies that
$\varphi(x)-\overline{u}_l\in L^q(({-}\infty,0])$ with $q\in [1,\infty)$,
then $\overline{u}_l=\underline{u}_l$,
and $\eqref{c6.72}$ holds for $\gamma_l=1-\frac{1}{q}$.

\smallskip
Applying Theorem $\ref{the:c6.2}$ to the $L^{\infty}$ initial data function $\varphi(x)$
satisfying that
$\varphi(x)-\underline{u}_r\in L^p([0,\infty))$
and $\varphi(x)-\overline{u}_l\in L^q(({-}\infty,0])$ with $p,q\in [1,\infty)$,
we obtain the following theorem.

\begin{The}[Decay rates in the $L^\infty$--norm]\label{the:c6.3}
Let $u=u(x,t)$ be the entropy solution of the Cauchy problem $\eqref{c1.1}$--$\eqref{ID}$
with initial data function $\varphi(x)\in L^{\infty}(\mathbb{R})$ satisfying that
$\varphi(x)-\underline{u}_r\in L^p([0,\infty))$
and $\varphi(x)-\overline{u}_l\in L^q(({-}\infty,0])$ with $p,q\in [1,\infty)$,
and let $\tilde{u}=\tilde{u}(x,t)$ be the entropy solutions
of the Cauchy problem $\eqref{c6.10}$ as in \eqref{kk00}--\eqref{tidleuH}.
Then
\begin{itemize}
\item [{\rm (i)}] When $\overline{u}_l<\underline{u}_r$,
or $\overline{u}_l=\underline{u}_r$ with $\mathcal{K}_0\neq \varnothing$.
For $\mathbb{R}\times \mathbb{R}^+=\mathcal{K}\cup\big(\cup_n \mathcal{H}_{(e_n,h_n)}\big)$
as in $\eqref{c6.7}$, 

\vspace{2pt}
\begin{enumerate}
\item[{\rm(a)}] In $\mathcal{K}$, 
$\eqref{kk0}$ holds{\rm;} especially, $u(x,t)-\tilde{u}(x,t)\equiv 0$.

\vspace{1pt}
\item [{\rm (b)}] If there exists $\mathcal{H}_{(e_n,h_n)}$ with finite $e_n$ and $h_n$,
then, as $t\rightarrow \infty$,
\begin{equation}\label{c6.82}
|u(x,t)-\tilde{u}(x,t)|=|u(x,t)-c_n|
\lessapprox C_n
\Big(\frac{h_n-e_n}{2}\Big)^{\frac{1}{1+\alpha_n}} \, t^{-\frac{1}{1+\alpha_n}}
\end{equation}
holds uniformly in $x$ with $(x,t)\in \mathcal{H}_{(e_n,h_n)}$.

\vspace{1pt}
\item [{\rm (c)}] If there exists $\mathcal{H}_{(e_n,h_n)}$ with finite $e_n$ and $h_n=\infty$,
then, as $t\rightarrow \infty$,
\begin{equation}\label{c6.83}
|u(x,t)-\tilde{u}(x,t)|=|u(x,t)-\underline{u}_r|
\lessapprox C_r
\|\varphi-\underline{u}_r\|_{L^p([x_0^+,\infty))}^{\frac{p}{p+1+\alpha}}\,
t^{-\frac{1}{p+1+\alpha}}
\end{equation}
holds uniformly in $x$ with $(x,t)\in \mathcal{H}_{\infty}$.

\vspace{1pt}
\item [{\rm (d)}] If there exists $\mathcal{H}_{(e_n,h_n)}$ with finite $h_n$ and $e_n={-}\infty$,
then, as $t\rightarrow \infty$,
\begin{equation}\label{c6.84}
|u(x,t)-\tilde{u}(x,t)|=|u(x,t)-\overline{u}_l|
\lessapprox C_l
\|\varphi-\overline{u}_l\|_{L^q(({-}\infty,x_0^-])}^{\frac{q}{q+1+\beta}}\,
t^{-\frac{1}{q+1+\beta}}
\end{equation}
holds uniformly in $x$ with $(x,t)\in \mathcal{H}_{-\infty}$.
\end{enumerate}

\smallskip
\item [{\rm (ii)}] When $\overline{u}_l>\underline{u}_r$,
or $\overline{u}_l=\underline{u}_r$ with $\mathcal{K}_0= \varnothing$.

\vspace{2pt}
\begin{enumerate}
\item [{\rm (a)}] If $\overline{u}_l=\underline{u}_r$, then, as $t\rightarrow \infty$,
\begin{equation}\label{c6.85}
|u(x,t)-\underline{u}_r|
\lessapprox C_r
\|\varphi-\underline{u}_r\|_{L^{s}(\Omega)}^{\frac{s}{s+1+\alpha}}\,
t^{-\frac{1}{s+1+\alpha}}
\end{equation}
holds uniformly in $x$ with $(x,t)\in \mathbb{R}\times \mathbb{R}^+$,
where $s:=\max\{p,q\}$,  $\Omega=[0,\infty)$ if $p>q$,
$\Omega=({-}\infty,0]$ if $p<q$, and $\Omega=\mathbb{R}$ if $p=q$.

\vspace{1pt}
\item [{\rm (b)}] If $\overline{u}_l>\underline{u}_r$,
then, as $t\rightarrow \infty$,
\begin{align}\label{c6.86}
\,\,\,\,\,\,\,\,\,\,\,\,\,\,\,\,\,\,\,\,\,\,\,\,\,\,\,
\begin{cases}
|u(x,t)-\overline{u}_l|
\lessapprox C_l
\|\varphi-\overline{u}_l\|_{L^q(({-}\infty,x_0])}^{\frac{q}{q+1+\beta}}\,
t^{-\frac{1}{q+1+\beta}}
\ &{\rm uniformly \ for\ } x<x(t), \\[3mm]
|u(x,t)-\underline{u}_r|
\lessapprox C_r
\|\varphi-\underline{u}_r\|_{L^p([x_0,\infty))}^{\frac{p}{p+1+\alpha}}\,
t^{-\frac{1}{p+1+\alpha}}
\ &{\rm uniformly \ for\ } x>x(t).
\end{cases}
\end{align}
where $x=x(t)$ for $t\in[0,\infty)$ is a forward generalized characteristic 
emitting from any point $x_0$ on the $x$--axis.
\end{enumerate}
\end{itemize}
\noindent
In the above,
$\overline{u}_l$ and $\underline{u}_r$ are given by $\eqref{c5.7}${\rm ;}
$\mathcal{K}_0$ and $\mathcal{K}$ are given by $\eqref{c5.6}$ and $\eqref{c6.6}$, respectively{\rm;}
$\mathcal{H}_{(e_n,h_n)}$ with $c_n$ and $\mathcal{H}_{\pm\infty}$ are given by $\eqref{c6.2}$--$\eqref{c6.5}${\rm;}
and constants $C_n,C_r,C_l$ and parameters $\alpha_n,\alpha,\beta$ are given by

\smallskip
${\rm (a)}$ $\,C_n=\big(\frac{1+\alpha_n}{N_n}\big)^{\frac{1}{1+\alpha_n}}$
with $\alpha_n\geq0$ satisfying that
 $\lim_{u\rightarrow c_n}\frac{f''(u)}{|u-c_n|^{\alpha_n}}=N_n>0$.

\smallskip
${\rm (b)}$ $\, C_r=
\big(\frac{2+\alpha}{1+\alpha}\big)^{\frac{p}{p+1+\alpha}}\big(\frac{1+\alpha}{N_r}\big)^{\frac{1}{p+1+\alpha}}$
and $C_l=
\big(\frac{2+\beta}{1+\beta}\big)^{\frac{q}{q+1+\beta}}\big(\frac{1+\beta}{N_l}\big)^{\frac{1}{q+1+\beta}}$
with $\alpha\geq0$ and $\beta\geq0$ satisfying that
$\lim\limits_{u\rightarrow \underline{u}_r}\frac{f''(u)}{|u-\underline{u}_r|^\alpha}=N_r>0$ and
$\lim\limits_{u\rightarrow \overline{u}_l}\frac{f''(u)}{|u-\overline{u}_l|^\beta}=N_l>0$, respectively.
\end{The}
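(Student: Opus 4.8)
The plan is to deduce Theorem \ref{the:c6.3} from Theorem \ref{the:c6.2} by first checking that the $L^p/L^q$ hypotheses imply the growth conditions $\eqref{c6.71}$--$\eqref{c6.72}$, then specializing the decay exponents, and finally reopening the proof of Theorem \ref{the:c6.2} to pin down the constants. First I would record that, under $\varphi-\underline{u}_r\in L^p([0,\infty))$, the H\"older inequality $\eqref{c6.81}$ gives $\eqref{c6.71}$ with $\gamma_r=1-\tfrac1p$ and with the ``$O(1)$'' coefficient equal to $\|\varphi-\underline{u}_r\|_{L^p([0,\infty))}$ (more locally, to $\|\varphi-\underline{u}_r\|_{L^p([y,\infty))}$ on any half-line $[y,\infty)$); symmetrically, $\varphi-\overline{u}_l\in L^q((-\infty,0])$ yields $\eqref{c6.72}$ with $\gamma_l=1-\tfrac1q$. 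In particular $\overline{u}_r=\underline{u}_r$ and $\overline{u}_l=\underline{u}_l$, so the case dichotomy of Theorem \ref{the:c6.2} matches the one stated here.

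Substituting $\gamma_r=1-\tfrac1p$ into the exponent $\tfrac{1-\gamma_r}{2-\gamma_r+\alpha(1-\gamma_r)}$ collapses it to $\tfrac{1}{p+1+\alpha}$, and likewise $\gamma_l=1-\tfrac1q$ gives $\tfrac{1}{q+1+\beta}$; moreover $\gamma:=\max\{\gamma_r,\gamma_l\}=1-\tfrac1s$ with $s=\max\{p,q\}$ turns $\tfrac{1-\gamma}{2-\gamma+\alpha(1-\gamma)}$ into $\tfrac{1}{s+1+\alpha}$, with the relevant half-line or full line $\Omega$ determined by whether $p>q$, $p<q$, or $p=q$ exactly as in Theorem \ref{the:c6.2}(ii)(a). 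This already yields the power-of-$t$ content of $\eqref{kk0}$ and $\eqref{c6.83}$--$\eqref{c6.86}$, while $\eqref{c6.82}$ (independent of the $L^p/L^q$ assumption) is inherited verbatim from $\eqref{c6.29}$ of Lemma \ref{the:c6.1}, whence $C_n=(\tfrac{1+\alpha_n}{N_n})^{1/(1+\alpha_n)}$, and $\eqref{kk0}$ together with $u\equiv\tilde u$ on $\mathcal K$ is immediate from Theorem \ref{the:c6.2}(i)(a).

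The substantive step is identifying $C_r$ and $C_l$. I would revisit the chain $\eqref{c6.77}$--$\eqref{c6.78}$ in the proof of Theorem \ref{the:c6.2} keeping the coefficients explicit: for $(x,t)\in\mathcal H_\infty$ with $A:=|u(x\pm,t)-\underline{u}_r|$, the left side of $\eqref{c6.77}$ equals $(\tfrac{N_r}{2+\alpha}+o(1))A^{2+\alpha}$ by $f''(u)=(N_r+o(1))|u-\underline{u}_r|^\alpha$ and $\eqref{aa2}$--$\eqref{aa3}$, while the right side is at most $t^{-1}\|\varphi-\underline{u}_r\|_{L^p([x_0^+,\infty))}\,l(x,t)^{1-1/p}$ with $l(x,t)=t|f'(u(x\pm,t))-f'(\underline{u}_r)|=(\tfrac{N_r}{1+\alpha}+o(1))\,t\,A^{1+\alpha}$. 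Feeding the second bound into the first and solving for $A$ — whose exponent on the left becomes $2+\alpha-(1+\alpha)(1-\tfrac1p)=\tfrac{p+1+\alpha}{p}$ — produces exactly $A\lessapprox C_r\|\varphi-\underline{u}_r\|_{L^p([x_0^+,\infty))}^{p/(p+1+\alpha)}t^{-1/(p+1+\alpha)}$ with $C_r=(\tfrac{2+\alpha}{1+\alpha})^{p/(p+1+\alpha)}(\tfrac{1+\alpha}{N_r})^{1/(p+1+\alpha)}$, which is $\eqref{c6.83}$; the $L^q$-mirror along $\mathcal H_{-\infty}$ gives $\eqref{c6.84}$ with the stated $C_l$. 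The no-divide statements $\eqref{c6.85}$--$\eqref{c6.86}$ follow by the same estimate: $\eqref{c6.85}$ uses the uniform-in-$y$ H\"older bound for $|\int_y^{y+x}(\varphi-\underline{u}_r)|$ over $\Omega$ with exponent $1-\tfrac1s$, and $\eqref{c6.86}$ splits at the line $\xi(t)=x_0+\lambda t$, $\lambda=\tfrac{f(\overline{u}_l)-f(\underline{u}_r)}{\overline{u}_l-\underline{u}_r}$, invoking $\tfrac{\kappa}{t}=1+o(1)$ as in the proof of Theorem \ref{the:c6.2}(ii)(b), so that the auxiliary time $\kappa$ may be replaced by $t$ inside the decay, leaving the $L^q((-\infty,x_0])$ resp.\ $L^p([x_0,\infty))$ norm as the coefficient.

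I expect the main obstacle to be the bookkeeping in this constant-tracking step: one must carry the $o(1)$ errors from $\eqref{aa2}$--$\eqref{aa3}$ through the nonlinear solve for $A$ without degrading them, check that the powers of $N_r$ coming from both $f''$ and $f'$ combine to exactly $N_r^{-1/(p+1+\alpha)}$, and identify correctly the interval over which the $L^p/L^q$ norm is taken in each of the four geometric situations ($\mathcal H_{(e_n,h_n)}$ bounded, $\mathcal H_\infty$, $\mathcal H_{-\infty}$, and the two shock regions in the no-divide case). By contrast, the power-of-$t$ statements are a routine specialization of Theorem \ref{the:c6.2}.
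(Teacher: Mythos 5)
Your proposal is correct and follows essentially the same route as the paper: Theorem \ref{the:c6.3} is deduced from Theorem \ref{the:c6.2} by taking $\gamma_r=1-\tfrac1p$ and $\gamma_l=1-\tfrac1q$ via the H\"older bound $\eqref{c6.81}$, and the constants $C_r$, $C_l$ are then recovered by rerunning the chain $\eqref{c6.53}$, $\eqref{c6.77}$--$\eqref{c6.78}$ with explicit coefficients from $\eqref{aa2}$--$\eqref{aa3}$, exactly as in the paper's verification of $C_r$ in $\eqref{c6.83}$. Your constant-tracking computation, including the exponent $\tfrac{p+1+\alpha}{p}$ and the combination of the powers of $N_r$, matches the paper's.
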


\begin{proof}
By $\eqref{c6.81}$,
Theorem $\ref{the:c6.3}$ follows from Theorem $\ref{the:c6.2}$ by
taking $\gamma_r=1-\frac{1}{p}$ and $\gamma_l=1-\frac{1}{q}$,
except to check the constants in $\eqref{c6.83}$--$\eqref{c6.86}$.

\smallskip
We now check constant $C_r$ in $\eqref{c6.83}$ as an example.
All the other constants in $\eqref{c6.84}$--$\eqref{c6.86}$ can be checked similarly.
Suppose that $x_0^+=\sup\mathcal{K}_0<\infty$.
For any $(x,t)\in \mathcal{H}_{\infty}$,
since $u(x\pm,t)\in \mathcal{U}(x,t)$, then
$E(u(x\pm,t);x,t)-E(\underline{u}_r;x,t)\geq 0$
so that, by taking $\underline{u}_r$ into $\eqref{c6.53}$,
\begin{equation}\label{c6.87}
0\leq \int^{u(x\pm,t)}_{\underline{u}_r}(s-\underline{u}_r)f''(s)\,{\rm d}s
\leq \frac{1}{t}\int_{x-tf'(u(x\pm,t))}^{x-tf'(\underline{u}_r)}
(\varphi(\xi)-\underline{u}_r)\,{\rm d}\xi.
\end{equation}
From $\mathcal{H}_{\infty}$ in $\eqref{c6.4}$,
$L_{\underline{u}_r}(x_0^+):x=x_0^++tf'(\underline{u}_r)$ is a divide so that,
$$x-tf'(u(x\pm,t))\geq x_0^+,\quad x-tf'(\underline{u}_r)>x_0^+
\qquad {\rm for\ any}\ (x,t)\in \mathcal{H}_{\infty}.$$
Since $\varphi(x)\in L^\infty(\mathbb{R})$ satisfying that
$\varphi(x)-\underline{u}_r\in L^p([0,\infty))$ with $p\in [1,\infty)$,
then $\varphi(x)-\underline{u}_r\in L^p([x_0^+,\infty))$ with $p\in [1,\infty)$.
Thus, by the H\"{o}lder inequality,
\begin{equation*}
\int_{x-tf'(u(x\pm,t))}^{x-tf'(\underline{u}_r)}(\varphi(\xi)-\underline{u}_r)\,{\rm d}\xi
\leq \|\varphi-\underline{u}_r\|_{L^p([x_0^+,\infty))}\,
\big(t|f'(u(x\pm,t))-f'(\underline{u}_r)|\big)^{1-\frac{1}{p}},
\end{equation*}
which, by $\eqref{c6.87}$ and $\eqref{aa2}$--$\eqref{aa3}$, implies that,
for $u(x\pm,t)\neq \underline{u}_r$,
\begin{equation*}
0\leq \dfrac{\frac{N_r}{2+\alpha}+o(1)}{\big(\frac{N_r}{1+\alpha}\big)^{1-\frac{1}{p}}+o(1)}\,
\big|u(x\pm,t)-\underline{u}_r\big|^{\frac{p+1+\alpha}{p}}
\leq \|\varphi-\underline{u}_r\|_{L^p([x_0^+,\infty))}\, t^{-\frac{1}{p}}.
\end{equation*}
This, by a simple calculation, yields constant $C_r$ in $\eqref{c6.83}$.

\smallskip
Up to now, we have proved Theorem $\ref{the:c6.3}$.
\end{proof}

If $\mathcal{K}_0\neq \varnothing$, define $L(\mathcal{K}_0)$ as follows:
If there exists a bounded interval $(e_n,h_n)\subset\mathcal{K}^c_0$ in $\eqref{c6.1}$,
\begin{equation}\label{c6.91}
L(\mathcal{K}_0):=\sup_n\big\{h_n-e_n\,:\, (e_n,h_n)\subset \mathcal{K}^c_0 \ {\rm is\ a \ bounded \ interval}\big\};
\end{equation}
and, if there exists no bounded interval $(e_n,h_n)\subset\mathcal{K}^c_0$,
we set $L(\mathcal{K}_0)=0$.

\begin{Cor}[Asymptotic behaviors in the $L^\infty$--norm for the case of flux functions of uniform convexity]\label{cor:c6.1}
Suppose that $f''(u)\geq c_0>0$ on $u\in\mathbb{R}$.
Let $u=u(x,t)$ be the entropy solution of the Cauchy problem $\eqref{c1.1}$--$\eqref{ID}$
with initial data function $\varphi(x)\in L^\infty(\mathbb{R})$ satisfying that
$\varphi(x)-\overline{u}_l\in L^p(({-}\infty,0])$
and $\varphi(x)-\underline{u}_r\in L^p([0,\infty))$ with $p\in [1,\infty)$.
Then
\begin{enumerate}
\item [{\rm (i)}] If $\overline{u}_l<\underline{u}_r$,
or $\overline{u}_l=\underline{u}_r$ with $\mathcal{K}_0\neq\varnothing$.
Then $\eqref{c6.25}$ holds, $\eqref{c6.82}$ holds with $C_n=c_0^{-1}$ and $\alpha_n=0$,
and $\eqref{c6.83}$--$\eqref{c6.84}$ hold with
$C_r=C_l=\big(2^{p}c_0^{-1}\big)^{\frac{1}{p+1}}$ and $\alpha=\beta=0.$
Furthermore, if $L(\mathcal{K}_0)$ in $\eqref{c6.91}$ satisfies that $L(\mathcal{K}_0)<\infty$,
then, as $t\rightarrow \infty$,
\begin{equation}\label{c6.92}
|u(x,t)-\tilde{u}(x,t)|
\lessapprox \tilde{C}\,
t^{-\frac{1}{p+1}}
\qquad {\rm uniformly\ in}\ x\ {\rm with}\ (x,t)\in \mathbb{R}\times \mathbb{R}^+,
\end{equation}
where $\tilde{C}$ is given by
\begin{align*}
\tilde{C}=\begin{cases}
\big(2^{p}c_0^{-1}P_r\big)^{\frac{1}{p+1}}
\quad &{\rm if}\ \sup\mathcal{K}_0<\infty,\ \inf\mathcal{K}_0=-\infty,\\[1mm]
\big(2^{p}c_0^{-1}P_l\big)^{\frac{1}{p+1}}
\quad &{\rm if}\ \sup\mathcal{K}_0=\infty,\ \inf\mathcal{K}_0>-\infty,\\[1mm]
\big(2^{p}c_0^{-1}\max\{P_r,P_l\}\big)^{\frac{1}{p+1}}
\quad &{\rm if}\ \sup\mathcal{K}_0<\infty,\ \inf\mathcal{K}_0>-\infty
\end{cases}
\end{align*}
for $P_r=\|\varphi(x)-\underline{u}_r\|^p_{L^p([x_0^+, \infty))}$
and $P_l=\|\varphi(x)-\overline{u}_l\|^p_{L^p(({-}\infty, x_0^-])}$.

\vspace{2pt}
In particular, if $L(\mathcal{K}_0)<\infty$ with $\sup\mathcal{K}_0=\infty$ and $\inf\mathcal{K}_0=-\infty$,
then, as $t\rightarrow\infty$,
\begin{equation}\label{c6.95}
\,\,\,\,\,\,\,\,\,\,\,\,\,\,\,\,\,\,\,\,\,\,\,\,
|u(x,t)-\tilde{u}(x,t)|
\lessapprox
(2c_0)^{-1} L(\mathcal{K}_0)\, t^{-1}
\quad {\rm uniformly\ in}\ x\ {\rm with}\ (x,t)\in \mathbb{R}\times \mathbb{R}^+.
\end{equation}

\item [{\rm (ii)}] If $\overline{u}_l=\underline{u}_r$ with $\mathcal{K}_0=\varnothing$,
then, as $t\rightarrow \infty$,
\begin{equation}\label{c6.93}
|u(x,t)-\underline{u}_r|
\lessapprox \tilde{C}_r\,
t^{-\frac{1}{p+1}}
\qquad {\rm uniformly\ in}\ x\ {\rm with}\ (x,t)\in \mathbb{R}\times \mathbb{R}^+,
\end{equation}
where $\tilde{C}_r=\big(2^p f''(\underline{u}_r)^{-1}\|\varphi(x)-\underline{u}_r\|^p_{L^p(\mathbb{R})}\big)^{\frac{1}{p+1}}$.

\vspace{1pt}
\item [{\rm (iii)}] If $\overline{u}_l>\underline{u}_r$, then,
for any forward generalized characteristic $X(x_0,0): x=x(t)$ for $t\geq 0$ emitting from $x_0$ on the $x$--axis,
as $t\rightarrow \infty$,
\begin{equation}\label{c6.94}
\begin{cases}
\displaystyle|u(x,t)-\overline{u}_l|
\lessapprox \bar{C}_l\,
t^{-\frac{1}{p+1}} \quad &{\rm uniformly \ for\ }\ x<x(t), \\[1mm]
\displaystyle|u(x,t)-\underline{u}_r|
\lessapprox \bar{C}_r\,
t^{-\frac{1}{p+1}}\quad &{\rm uniformly \ for\ }\ x>x(t),
\end{cases}
\end{equation}
where $\bar{C}_l$ and $\bar{C}_r$ are given by
\begin{equation*}
\begin{cases}
\bar{C}_l=\big(2^p f''(\overline{u}_l)^{-1}
\|\varphi(x)-\overline{u}_l\|^p_{L^p(({-}\infty,x_0])}\big)^{\frac{1}{p+1}}, \\[2mm]
\bar{C}_r=\big(2^p f''(\underline{u}_r)^{-1}
\|\varphi(x)-\underline{u}_r\|^p_{L^p([x_0, \infty))}\big)^{\frac{1}{p+1}}.
\end{cases}
\end{equation*}
\end{enumerate}
\noindent
In the above,
$\overline{u}_r$, $\underline{u}_r$, $\overline{u}_l$, and $\underline{u}_l$
are given by $\eqref{c5.7}${\rm ;}
and $\mathcal{K}_0$ is given by $\eqref{c5.6}$.
\end{Cor}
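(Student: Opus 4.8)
The plan is to obtain Corollary~\ref{cor:c6.1} as a specialization of Theorems~\ref{pro:c6.3}, \ref{the:c6.2}, and \ref{the:c6.3} to the uniformly convex regime $f''(u)\ge c_0>0$, supplemented by one short additional argument that upgrades the cell-by-cell bounds to the uniform-in-$x$ rates \eqref{c6.92} and \eqref{c6.95}. First I would verify the hypotheses of those theorems. Since $f\in C^2(\mathbb{R})$ and $f''\ge c_0>0$, the function $f''$ is continuous and bounded below by $c_0$, so $\lim_{u\to c}f''(u)=f''(c)\ge c_0$ for every $c\in\mathbb{R}$; hence, in the notation of Theorems~\ref{the:c6.2}--\ref{the:c6.3}, the convexity-degeneracy exponents all vanish, $\alpha=\beta=\alpha_n=0$, and the associated constants satisfy $N_r=f''(\underline{u}_r)$, $N_l=f''(\overline{u}_l)$, $N_n=f''(c_n)$, each $\ge c_0$. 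Moreover, from $\varphi-\underline{u}_r\in L^p([0,\infty))$ and $\varphi-\overline{u}_l\in L^p((-\infty,0])$, the H\"older estimate \eqref{c6.81} forces $\overline{u}_r=\underline{u}_r$ and $\overline{u}_l=\underline{u}_l$ and shows that \eqref{c6.71}--\eqref{c6.72}, hence \eqref{c6.63}--\eqref{c6.65}, hold uniformly in $y$ with $\gamma_r=\gamma_l=1-\frac{1}{p}$. Thus Theorems~\ref{pro:c6.3}, \ref{the:c6.2}, \ref{the:c6.3} all apply.

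Second I would read off parts (i)--(iii). For part (i): \eqref{c6.25} is exactly the identity established in Lemma~\ref{the:c6.1}; \eqref{c6.82}--\eqref{c6.84} are \eqref{c6.82ch}--\eqref{c6.74} of Theorem~\ref{the:c6.3} with $\alpha_n=\alpha=\beta=0$. Since $f''(c_n),f''(\underline{u}_r),f''(\overline{u}_l)\ge c_0$, the constants produced there obey $C_n=1/f''(c_n)\le c_0^{-1}$ and $C_r=C_l=2^{p/(p+1)}(f'')^{-1/(p+1)}\le(2^pc_0^{-1})^{1/(p+1)}$, so replacing them by $c_0^{-1}$ and $(2^pc_0^{-1})^{1/(p+1)}$ only weakens the bounds and gives precisely the form claimed in Corollary~\ref{cor:c6.1}(i). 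Parts (ii) and (iii) are, verbatim, cases (ii)(a) and (ii)(b) of Theorem~\ref{the:c6.3} with $q=p$ (so $s=p$, $\Omega=\mathbb{R}$) and $\alpha=\beta=0$: here one keeps the sharp constants, since $\tilde C_r=(2^pf''(\underline{u}_r)^{-1}\|\varphi-\underline{u}_r\|_{L^p(\mathbb{R})}^p)^{1/(p+1)}$, $\bar C_l$, and $\bar C_r$ are exactly those in \eqref{c6.85}--\eqref{c6.86} after substituting $N_r=f''(\underline{u}_r)$, $N_l=f''(\overline{u}_l)$. No new work is needed for (ii)--(iii).

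Third, and this is the only genuinely new content, I would prove \eqref{c6.92} and \eqref{c6.95}. Using the partition $\mathbb{R}\times\mathbb{R}^+=\mathcal{K}\cup(\cup_n\mathcal{H}_{(e_n,h_n)})$ from \eqref{c6.7}: on $\mathcal{K}$ one has $u\equiv\tilde u$ by \eqref{c6.25}; on every \emph{bounded} cell $\mathcal{H}_{(e_n,h_n)}$, \eqref{c6.82} with $C_n=c_0^{-1}$ yields $|u(x,t)-c_n|\lessapprox c_0^{-1}\,\frac{h_n-e_n}{2}\,t^{-1}\le(2c_0)^{-1}L(\mathcal{K}_0)\,t^{-1}$, and crucially this bound is uniform in $n$ because $c_0$ is a \emph{global} lower bound for $f''$ that does not depend on $n$ (this is precisely where uniform convexity is essential, and where the corresponding estimate fails in the degenerate setting, since there $C_n=1/f''(c_n)$ may be unbounded as $f''(c_n)\to0$); on the unbounded cells $\mathcal{H}_\infty$ (present iff $\sup\mathcal{K}_0<\infty$) and $\mathcal{H}_{-\infty}$ (present iff $\inf\mathcal{K}_0>-\infty$), \eqref{c6.83}--\eqref{c6.84} give the $t^{-1/(p+1)}$ decay with constants $(2^pc_0^{-1}P_r)^{1/(p+1)}$ and $(2^pc_0^{-1}P_l)^{1/(p+1)}$, where $P_r=\|\varphi-\underline{u}_r\|_{L^p([x_0^+,\infty))}^p$ and $P_l=\|\varphi-\overline{u}_l\|_{L^p((-\infty,x_0^-])}^p$. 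Since $t^{-1}$ decays faster than $t^{-1/(p+1)}$ for $p\ge1$, the unbounded-cell contributions dominate the overall rate, which gives \eqref{c6.92} with $\tilde C$ equal to the maximum of the relevant constants according to which of $\mathcal{H}_\infty$, $\mathcal{H}_{-\infty}$ is present. When in addition $\sup\mathcal{K}_0=\infty$ and $\inf\mathcal{K}_0=-\infty$ there are no unbounded cells, every $\mathcal{H}_{(e_n,h_n)}$ is bounded, the uniform-in-$n$ bound above holds throughout, and \eqref{c6.95} follows.

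The main obstacle is therefore not analytic but organizational: assembling the finitely-or-countably-many cell estimates of Theorem~\ref{the:c6.3} into a single estimate uniform in $x$ over all of $\mathbb{R}\times\mathbb{R}^+$, keeping careful track of which regime produces a $c_0$-dependent constant (parts (i)(b)--(d), \eqref{c6.92}, \eqref{c6.95}) versus an $f''(\underline{u}_r)$- or $f''(\overline{u}_l)$-dependent one (parts (ii), (iii)), and checking that the slower rate $t^{-1/(p+1)}$ from the unbounded cells, rather than the $t^{-1}$ rate from the bounded cells, controls the global decay when unbounded cells exist.
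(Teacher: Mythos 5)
Your proposal is correct and follows essentially the same route as the paper: the paper also obtains the corollary by specializing Theorem \ref{the:c6.3} (with $\alpha_n=\alpha=\beta=0$, $N_n=f''(c_n)$, $N_r=f''(\underline{u}_r)$, $N_l=f''(\overline{u}_l)$, all bounded below by $c_0$) and then assembling the cell-by-cell bounds over the partition \eqref{c6.7}, noting that bounded cells give the uniform $(2c_0)^{-1}L(\mathcal{K}_0)t^{-1}$ rate while any unbounded cell $\mathcal{H}_{\pm\infty}$ downgrades the global rate to $t^{-1/(p+1)}$. Your added care about uniformity in $n$ of the constant $C_n\le c_0^{-1}$ is exactly the point the paper's terser argument relies on.
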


\begin{proof}  We divide the proof into three steps accordingly.

\vspace{1pt}
\noindent
{\bf 1}. Since $\mathcal{K}_0\neq \varnothing$,
from Lemma $\ref{lem:c5.3}$ and the definition of $\tilde{u}=\tilde{u}(x,t)$ in $\eqref{c6.10}$,
$$\tilde{u}(x,t)\equiv {\rm D}_+\bar{\Phi}(e_n)\quad {\rm on}\ \mathcal{H}_{(e_n,h_n)},\qquad
\tilde{u}(x,t)\equiv {\rm D}_\pm\bar{\Phi}(x_0^\pm)\quad {\rm on}\ \mathcal{H}_{\pm\infty},$$
if the related case appears.
From Theorem $\ref{the:c6.1}$, $u(x,t)=\tilde{u}(x,t)$ on $\mathcal{K}$.

Since $f''(u)\geq c_0>0$ on $\mathbb{R}$,
it is direct to check from Theorem $\ref{the:c6.3}$ that
$\eqref{c6.82}$ holds with $C_n=c_0^{-1}$ and $\alpha_n=0$,
and $\eqref{c6.83}$--$\eqref{c6.84}$ hold
with $C_r=C_l=(2^{p}c_0^{-1})^{\frac{1}{p+1}}$ and $\alpha=\beta=0$.

For the case that $L(\mathcal{K}_0)<\infty$,
since $\varphi(x)-\overline{u}_l\in L^p(({-}\infty,0])$
and $\varphi(x)-\underline{u}_r\in L^p([0,\infty))$,
it follows from $\eqref{c6.82}$--$\eqref{c6.84}$ that $\eqref{c6.92}$ holds.

In particular, for the case that $L(\mathcal{K}_0)<\infty$ with $\sup\mathcal{K}_0=\infty$
and $\inf\mathcal{K}_0=-\infty$,
both $\mathcal{H}_{\infty}$ and $\mathcal{H}_{-\infty}$ do not appear.
Since $f''(u)\geq c_0>0$ on $\mathbb{R}$, then $\alpha_n\equiv 0$
and $C_n=f''(c_n)^{-1}\leq c_0^{-1}$ in $\eqref{c6.82}$,
which, by $L(\mathcal{K}_0)<\infty$,
yields $\eqref{c6.95}$.

\vspace{2pt}
\noindent
{\bf 2}. $\eqref{c6.93}$ follows by taking $q=p$ and $\alpha=0$ in $\eqref{c6.85}$.

\vspace{2pt}
\noindent
{\bf 3}. $\eqref{c6.94}$ follows by taking $q=p$ and $\alpha=\beta=0$ in $\eqref{c6.86}$.
\end{proof}

\begin{Rem}
Suppose that $f''(u)\geq c_0>0$ as in {\rm Corollary} $\ref{cor:c6.1}$. Then

\begin{itemize}
\item [(i)] For {\rm Example} $\ref{exa:c5.1}$,
the initial data function $\varphi(x)\in L^\infty(\mathbb{R})$ satisfies that
$\varphi(x)-m$ has compact support, $i.e.$,
${\rm spt} (\varphi(x)-m)\subset [-R,R]$ for some $R>0$.
Then $\mathcal{K}_0\neq \varnothing$.

If, for any $\,x\in [-R,R]$,
$$
\int_0^x(\varphi(\xi)-m)\,{\rm d}\xi\geq \int_0^R(\varphi(\xi)-m)\,{\rm d}\xi=\int_0^{-R}(\varphi(\xi)-m)\,{\rm d}\xi,
$$
then $\sup\mathcal{K}_0=\infty$ and $\inf\mathcal{K}_0=-\infty$ with $L(\mathcal{K}_0)\leq 2R$ so that,
from $\eqref{c6.95}$, the entropy solution $u=u(x,t)$ decays to
$\tilde{u}(x,t)\equiv m$ in $L^\infty(\mathbb{R})$ with decay rate
$(c_0)^{-1}R\,t^{-1}$.

Otherwise,
$\sup\mathcal{K}_0<\infty$ and/or $\inf\mathcal{K}_0<-\infty$ with $L(\mathcal{K}_0)\leq 2R$ so that,
from $\eqref{c6.92}$, the entropy solution $u=u(x,t)$ decays to
$\tilde{u}(x,t)\equiv m$ in $L^\infty(\mathbb{R})$ with decay rate $t^{-\frac{1}{2}}$.

\vspace{1pt}
\item [(ii)] For {\rm Example} $\ref{exa:c5.2}$,
the initial data function $\varphi(x)\in L^\infty(\mathbb{R})$ is a periodic function
with the minimum positive period $p>0$ and its average $m$ over the period.
Thus,
$\sup\mathcal{K}_0=\infty$ and $\inf\mathcal{K}_0=-\infty$ with $L(\mathcal{K}_0)\leq p$ so that,
from $\eqref{c6.95}$, the entropy solution $u=u(x,t)$ decays to
$\tilde{u}(x,t)\equiv m$ in $L^\infty(\mathbb{R})$ with decay rate $(2c_0)^{-1}p\,t^{-1}$.

\vspace{1pt}
\item [(iii)] For {\rm Example} $\ref{exa:c5.3}$,
initial data function $\varphi(x)\in L^\infty(\mathbb{R})$ satisfies that
$\varphi(x)-m\in L^1(\mathbb{R})$, then $\overline{u}_l=\underline{u}_r=m$.
In general, from $\eqref{c6.92}$--$\eqref{c6.93}$,
the entropy solution $u=u(x,t)$ decays to $\tilde{u}(x,t)\equiv m$ in $L^\infty(\mathbb{R})$
with decay rate $t^{-\frac{1}{2}}$.
For the special case that $\mathcal{K}_0\neq\varnothing$ and $L(\mathcal{K}_0)<\infty$ with $\sup\mathcal{K}_0=\infty$ and $\inf\mathcal{K}_0=-\infty$,
it follows from $\eqref{c6.95}$ that the entropy solution $u=u(x,t)$ decays to
$\tilde{u}(x,t)\equiv m$ in $L^\infty(\mathbb{R})$ with the decay rate $t^{-1}$.
\end{itemize}
\end{Rem}

\subsection{Generalized $N$--waves and decay of entropy solutions in the $L^p_{{\rm loc}}$--norm}
We now consider the decay of entropy solutions in $L^p_{{\rm loc}}$--norm.
We first introduce the notion of generalized $N$--waves,
and then prove that entropy solutions decay to the generalized $N$--waves in the $L^p_{{\rm loc}}$--norm,
when the entropy solutions possess at least one divide.

\begin{Def}[Generalized $N$--waves]\label{def:n.1}
Let the initial data function $\varphi(x)\in L^\infty(\mathbb{R})$ satisfies that
$\mathcal{K}_0\neq\varnothing$ for $\mathcal{K}_0$ given by $\eqref{c5.6}$.
The generalized $N$--wave $w=w(x,t)$ is defined in forms by $($see {\rm Fig.} $\ref{figNwavef}$$)$
\begin{equation}\label{n.0}
w(x,t)=\big(f'\big)^{-1}\big(\frac{x-\xi}{t}\big)\qquad {\rm for}\ \xi\in\mathcal{K}_0,
\end{equation}
expressed by
\begin{enumerate}
\item [{\rm(i)}] For the case that $(x,t)\in \mathcal{K}$ as in $\eqref{c6.6}$,
\begin{equation}\label{n.4}
w(x,t)=\big(f'\big)^{-1}\big(\frac{x-x_0}{t}\big)\qquad {\rm if}\ (x,t)\in L_c(x_0)
\end{equation}
for all divides $L_c(x_0)\subset \mathcal{K}$, $i.e.$,
$x_0\in \mathcal{K}_0$ and $c\in \mathcal{D}(x_0)$.

\vspace{1pt}
\item [{\rm(ii)}] If there exists $\mathcal{H}_{(e_n,h_n)}$ as in $\eqref{c6.3}$
with $c_n$ given by $\eqref{c6.2}$,
and $e_n(t):=e_n+tf'(c_n)$ and $h_n(t):=h_n+tf'(c_n)$,
then, for any chosen forward generalized characteristic $x_n(t)$ for $t\geq0$ in $\mathcal{H}_{(e_n,h_n)}$,
\begin{equation}\label{n.1}
w(x,t)=
\begin{cases}
\displaystyle \big(f'\big)^{-1}\big(\frac{x-e_n}{t}\big)\quad &{\rm if}\ e_n(t)<x<x_n(t),\\[4mm]
\displaystyle \big(f'\big)^{-1}\big(\frac{x-h_n}{t}\big)\quad &{\rm if}\ x_n(t)<x<h_n(t).
\end{cases}
\end{equation}

\item [{\rm(iii)}] If $x_0^+=\sup\mathcal{K}_0<\infty$,
then $\mathcal{H}_{\infty}$ as in $\eqref{c6.4}$ exists
with $\underline{u}_r={\rm D}_+\bar{\Phi}(x_0^+)$ and
$x_0^+(t)=x_0^++tf'(\underline{u}_r)$.
For any chosen forward generalized characteristic $x_+(t)$ for $t\geq0$ in $\mathcal{H}_{\infty}$,
\begin{equation}\label{n.2}
w(x,t)=\big(f'\big)^{-1}\big(\frac{x-x_0^+}{t}\big)\qquad {\rm if}\ x_0^+(t)<x<x_+(t).
\end{equation}

\item [{\rm(iv)}] If $x_0^-=\inf\mathcal{K}_0>-\infty$,
then $\mathcal{H}_{-\infty}$ as in $\eqref{c6.5}$ exists
with $\overline{u}_l={\rm D}_-\bar{\Phi}(x_0^-)$ and
$x_0^-(t)=x_0^-+tf'(\overline{u}_l)$.
For any chosen forward generalized characteristic $x_-(t)$ for $t\geq0$ in $\mathcal{H}_{-\infty}$,
\begin{equation}\label{n.3}
w(x,t)=\big(f'\big)^{-1}\big(\frac{x-x_0^-}{t}\big)\qquad {\rm if}\ x_-(t)<x<x_0^-(t).
\end{equation}
\end{enumerate}
In the above,
$\mathcal{D}(x_0)$ and ${\rm D}_{\pm}\bar{\Phi}(x_0)$
are given by $\eqref{c5.0}$ and $\eqref{c5.10}$, respectively.
\end{Def}

\begin{figure}[H]
	\begin{center}
		{\includegraphics[width=0.7\columnwidth]{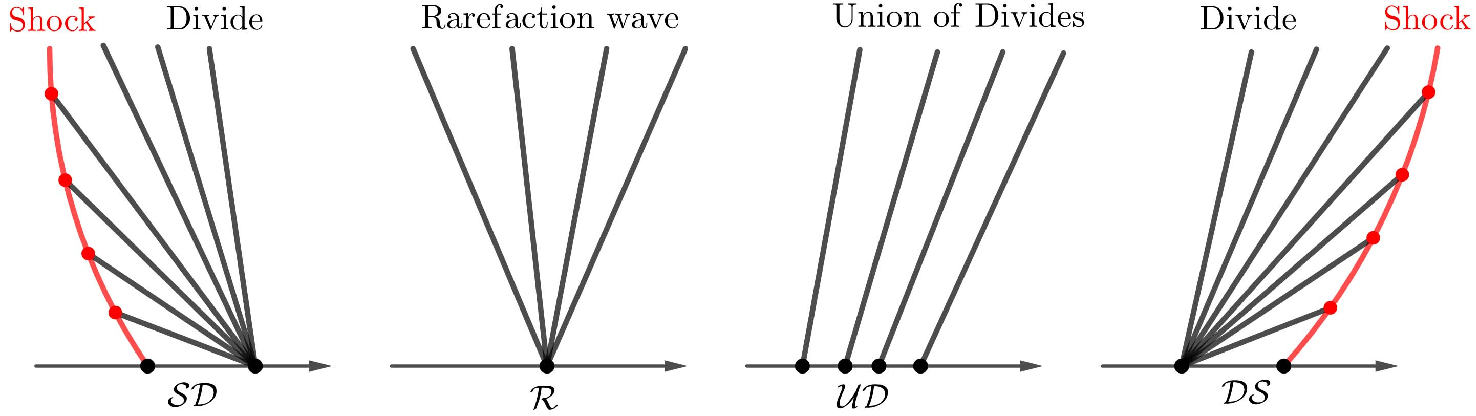}}
 \caption{The four types of generalized $N$-waves:
 $\mathcal{SD}$, $\mathcal{R}$, $\mathcal{UD}$, and $\mathcal{DS}$.
		}\label{figNwavef}
	\end{center}
\end{figure}

From Definition $\ref{def:n.1}$,
the generalized $N$--wave $w=w(x,t)$ is the union of the four types of waves
$\mathcal{SD}$, $\mathcal{R}$, $\mathcal{UD}$, and $\mathcal{DS}$
as showed in {\rm Fig.} $\ref{figNwavef}$,
all of which are bounded by two divides, or by a shock curve and a divide.
From $\eqref{c5.1}$, for any $x_0\in\mathcal{K}_0$,
\begin{equation*}
\mathcal{D}(x_0)=[{\rm D}_-\bar{\Phi}(x_0),{\rm D}_+\bar{\Phi}(x_0)]\subset \mathcal{C}(x_0)
\subset [\overline{{\rm D}}_-\Phi(x_0),\underline{{\rm D}}_+\Phi(x_0)].
\end{equation*}

\begin{Rem}[Detailed description of generalized $N$--waves]
\label{rem:n.1}
From $\eqref{n.0}$, the form of the generalized $N$--wave $w(x,t)$ looks like
a {\rm ``}generalized rarefaction wave{\rm ''} emitting from the point set $\mathcal{K}_0$,
instead of a single point for the case of rarefaction waves.
For the cases of $\mathcal{R}$ and $\mathcal{UD}$,
from $\eqref{kk0}$ and $\eqref{n.4}$,
\begin{equation}\label{n.9}
w(x,t)=\tilde{u}(x,t)=u(x,t) \qquad \ {\rm on} \ \mathcal{K}.
\end{equation}
For the cases of $\mathcal{SD}$ and $\mathcal{DS}$,
we have
\begin{enumerate}
\item [{\rm(i)}] For the case that $\mathcal{SD}\subset\mathcal{H}_{(e_n,h_n)}$,
bounded by a shock $x_n(t)$ in $\mathcal{H}_{(e_n,h_n)}$ from the left
and a divide $L_{c_n}(h_n)$ from the right,
by Theorem $\ref{the:c4.1}$, there are three subcases{\rm :}
\begin{itemize}
\item[(a)] If $\overline{{\rm D}}_-\Phi(h_n)<{\rm D}_-\bar{\Phi}(h_n)$
with $\overline{{\rm D}}_-\Phi(h_n)\notin\mathcal{C}(h_n)$,
there exists a shock $\mathcal{S}$ emitting from $h_n$ and lying on the left of divide $h_n(t)$.
Since any two forward generalized characteristics in $\mathcal{H}_{(e_n,h_n)}$
coincide with each other on $t>t_n$
for sufficiently large $t_n>0$, then
\begin{equation}\label{n.5}
w(x,t)=u(x,t) \qquad \ \mbox{\rm if  $x_n(t)<x<h_n(t)$ and $t\geq t_n$}.
\end{equation}

\item[(b)] If $\overline{{\rm D}}_-\Phi(h_n)<{\rm D}_-\bar{\Phi}(h_n)$
with $\overline{{\rm D}}_-\Phi(h_n)\in\mathcal{C}(h_n)$,
then the forward generalized characteristic of points on the characteristic segment
emitting from $h_n$
with speed $f'(\overline{{\rm D}}_-\Phi(h_n))$
coincides with $x_n(t)$ for large time so that $\eqref{n.5}$ holds.

\item[(c)] If $\overline{{\rm D}}_-\Phi(h_n)={\rm D}_-\bar{\Phi}(h_n)\in\mathcal{C}(h_n)$,
then, for any $x\in(x_n(t),h_n(t))$,
$$x-tf'(u(x{\pm},t))<h_n=x-tf'(w(x,t)),$$
which, by the strictly increasing property of $f'(u)$, implies
\begin{equation*}
w(x,t)<u(x,t) \qquad \ {\rm if} \ x_n(t)<x<h_n(t).
\end{equation*}
\end{itemize}
The case of $\mathcal{SD}\subset\mathcal{H}_{-\infty}$ is the same
via replacing $h_n\in\mathcal{K}_0$ by $x_0^-\in\mathcal{K}_0$, $etc.$.

\item [{\rm(ii)}] Similar to $\mathcal{SD}$,
for $\mathcal{DS}$ with $e_n\in\mathcal{K}_0$ $($or $x_0^+\in\mathcal{K}_0$$)$,
there are three subcases{\rm:}
\begin{itemize}
\item[(d)] If $\underline{{\rm D}}_+\Phi(e_n)>{\rm D}_+\bar{\Phi}(e_n)$
with $\underline{{\rm D}}_+\Phi(e_n)\notin\mathcal{C}(e_n)$,
for sufficiently large $t'_n>0$,
\begin{equation}\label{n.7}
w(x,t)=u(x,t) \qquad \ {\rm if} \ e_n(t)<x<x_n(t), \ t\geq t'_n.
\end{equation}

\item[(e)] If $\underline{{\rm D}}_+\Phi(e_n)>{\rm D}_+\bar{\Phi}(e_n)$
with $\underline{{\rm D}}_+\Phi(e_n)\in\mathcal{C}(e_n)$, then $\eqref{n.7}$ holds.

\item[(f)] If $\underline{{\rm D}}_+\Phi(e_n)={\rm D}_+\bar{\Phi}(e_n)\in\mathcal{C}(h_n)$,
then
\begin{equation*}
w(x,t)>u(x,t) \qquad \ {\rm if} \ e_n(t)<x<x_n(t).
\end{equation*}
\end{itemize}
\end{enumerate}
See also {\rm Fig.} $\ref{figNwaved}$.
\end{Rem}

\begin{figure}[H]
	\begin{center}
		{\includegraphics[width=0.6\columnwidth]{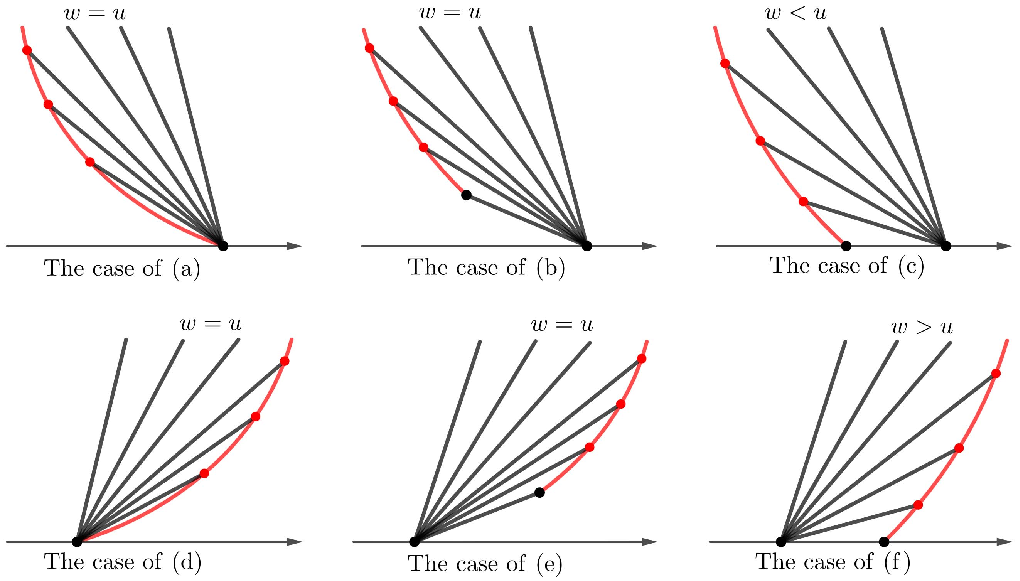}}
 \caption{The detailed description of generalized $N$--waves as in Remark $\ref{rem:n.1}$.
		}\label{figNwaved}
	\end{center}
\end{figure}

According to Lemma $\ref{lem:c6.2}$,
\begin{equation*}
\left\{\begin{array}{lll}
y^-(\infty,0)=e_n,\ &y^+(\infty,0)=h_n \qquad& {\rm for} \ x_n(t)\in \mathcal{H}_{(e_n,h_n)}, \\[1mm]
y^-(\infty,0)=x_0^+,\ &y^+(\infty,0)=\infty \qquad&{\rm for} \ x_+(t)\in \mathcal{H}_{\infty},\\[1mm]
y^-(\infty,0)=-\infty,\ &y^+(\infty,0)=x_0^- \qquad &{\rm for} \ x_-(t)\in \mathcal{H}_{-\infty}.
\end{array}\right.
\end{equation*}

For the case that $x_n(t)\in \mathcal{H}_{(e_n,h_n)}$,
$f'(v^\pm(t))-f'(u^\pm(t))=o(1)\,t^{-1}$ as $t\rightarrow \infty$,
where $v^+(t)$ and $v^-(t)$ are given by $\eqref{c6.17}$ and $\eqref{c6.19}$, respectively.
Then, from $\eqref{c6.29}$ and $\eqref{aa2}$,
if $\eqref{aa1}$ holds with $c=c_n$ and $\alpha=\alpha_n$, then
\begin{align}\label{pmn2}
|v^\pm(t)-c_n|
&\leq|v^\pm(t)-u^\pm(t)|+|u^\pm(t)-c_n|\nonumber\\[1mm]
&\leq o(1)t^{-\frac{1}{1+\alpha_n}}+O(1)t^{-\frac{1}{1+\alpha_n}}
=O(1)\,t^{-\frac{1}{1+\alpha_n}},
\end{align}
which, by $\eqref{c6.17}$ and $\eqref{c6.19}$, implies that,
as $t\rightarrow\infty$,
\begin{equation*}
\begin{cases}
\displaystyle\int_{e_n}^{y^-(t,0)}(\varphi(\xi)-c_n)\,{\rm d}\xi
\leq O(1)\,t^{-\frac{1}{1+\alpha_n}} \big(y^-(t,0)-e_n\big),\\[4mm]
\displaystyle\int_{h_n}^{y^+(t,0)}(\varphi(\xi)-c_n)\,{\rm d}\xi
\leq O(1)\,t^{-\frac{1}{1+\alpha_n}} \big(h_n-y^+(t,0)\big).
\end{cases}
\end{equation*}
Since $\Phi(x)>\bar{\Phi}(x)$ on $(e_n,h_n)$ with
$\Phi(e_n)=\bar{\Phi}(e_n)$ and $\Phi(h_n)=\bar{\Phi}(h_n)$,
then, from $\eqref{c5.13}$,
\begin{equation*}
\begin{cases}
\displaystyle\int_{e_n}^{x}(\varphi(\xi)-c_n)\,{\rm d}\xi=\Phi(x)-\bar{\Phi}(x)>0
\qquad {\rm for\ any}\ x\in(e_n,h_n),\\[4mm]
\displaystyle\int_{h_n}^{x}(\varphi(\xi)-c_n)\,{\rm d}\xi=\Phi(x)-\bar{\Phi}(x)>0
\qquad {\rm for\ any}\ x\in(e_n,h_n).
\end{cases}
\end{equation*}
Therefore, we can assume that, for some $\sigma_n>0$,
\begin{equation}\label{n.12}
\begin{cases}
\displaystyle\int_{e_n}^{x}(\varphi(\xi)-c_n)\,{\rm d}\xi
\geq O(1)\, |x-e_n|^{1+\sigma_n}\qquad {\rm as}\ x\rightarrow e_n{+},\\[4mm]
\displaystyle\int_{h_n}^{x}(\varphi(\xi)-c_n)\,{\rm d}\xi
\geq O(1)\, |x-h_n|^{1+\sigma_n}\qquad {\rm as}\ x\rightarrow h_n{-}.
\end{cases}
\end{equation}

Similarly, for the case that $x_+(t)\in \mathcal{H}_{\infty}$,
if $\eqref{aa1}$ holds with $c=\underline{u}_r$ and $\alpha=\alpha_r$,
and $\varphi(x)-\underline{u}_r\in L^p([0,\infty))$, then
\begin{equation*}
0\leq\int_{x_0^+}^{y^-(t,0)}(\varphi(\xi)-\underline{u}_r)\,{\rm d}\xi
\leq O(1)\,t^{-\frac{1}{p+1+\alpha_r}} \big(y^-(t,0)-x_0^+\big),
\end{equation*}
so that we can assume that, for some $\sigma_r>0$,
\begin{equation}\label{n.13a}
\int_{x_0^+}^{x}(\varphi(\xi)-\underline{u}_r)\,{\rm d}\xi
\geq O(1)\, |x-x_0^+|^{1+\sigma_r}\qquad {\rm as}\ x\rightarrow x_0^+{+};
\end{equation}
and, for the case that $x_-(t)\in \mathcal{H}_{-\infty}$,
if $\eqref{aa1}$ holds with $c=\overline{u}_l$ and $\alpha=\alpha_l$, and $\varphi(x)-\overline{u}_l\in L^p(({-}\infty,0])$,
then
\begin{equation*}
0\leq\int_{x_0^-}^{y^+(t,0)}(\varphi(\xi)-\overline{u}_l)\,{\rm d}\xi
\leq O(1)\,t^{-\frac{1}{p+1+\alpha_l}} \big(x_0^--y^+(t,0)\big),
\end{equation*}
so that we can assume that, for some $\sigma_l>0$,
\begin{equation}\label{n.14a}
\int_{x_0^-}^{x}(\varphi(\xi)-\overline{u}_l)\,{\rm d}\xi
\geq O(1)\, |x-x_0^-|^{1+\sigma_l}\qquad {\rm as}\ x\rightarrow x_0^-{-}.
\end{equation}

From $\eqref{n.9}$, $w(x,t)=u(x,t)$ for the case of $\mathcal{R}$ and $\mathcal{UD}$.
We now give the $L^p_{{\rm loc}}$--norm decay of entropy solutions
to the generalized $N$--waves for the cases of $\mathcal{SD}$ and $\mathcal{DS}$.

\begin{Lem}[Generalized $N$--waves as asymptotic profiles in the $L^p_{\rm loc}$--norm]\label{lem:n.1}
Suppose that $\overline{u}_l<\underline{u}_r$,
or $\overline{u}_l=\underline{u}_r$ with $\mathcal{K}_0\neq \varnothing$.
Let $u=u(x,t)$ be the entropy solution of the Cauchy problem $\eqref{c1.1}$--$\eqref{ID}$
with initial data function $\varphi(x)\in L^{\infty}(\mathbb{R})$ satisfying that
$\varphi(x)-\underline{u}_r\in L^p([0,\infty))$
and $\varphi(x)-\overline{u}_l\in L^p(({-}\infty,0])$ for $p\in[1,\infty)$,
and let the generalized $N$--wave $w=w(x,t)$ be given by {\rm Definition} $\ref{def:n.1}$.
Then $u(x,t)\equiv w(x,t)$ on $\mathcal{K}$, 
and the following statements hold{\rm :}

\begin{enumerate}
\item[{\rm(i)}] If there exists $\mathcal{H}_{(e_n,h_n)}$ as in $\eqref{c6.3}$,
then, for any $q>0$,
as $t\rightarrow\infty$,
\begin{equation}\label{n.15}
\|f'(u(\cdot,t))-f'(w(\cdot,t))\|_{L^q([e_n(t),\,h_n(t)])}
\lessapprox
\Big(\frac{h_n-e_n}{2}\Big)^{\frac{1}{q}} \, \varepsilon_{n,q}(t)\, t^{-1}.
\end{equation}
Furthermore, if $\eqref{n.12}$ holds for some $\sigma_n>0$, then,
as $t\rightarrow\infty$,
\begin{equation}\label{n.15a}
\|f'(u(\cdot,t))-f'(w(\cdot,t))\|_{L^q([e_n(t),\,h_n(t)])}
\lesssim t^{-1-\frac{1}{\sigma_n(1+\alpha_n)}}.
\end{equation}

\item[{\rm(ii)}] If $x_0^+=\sup\mathcal{K}_0<\infty$,
 for any forward generalized characteristic $x_+(t)$ in $\mathcal{H}_{\infty}$ as in $\eqref{c6.4}$ and for any $q>\frac{p}{p+1+\alpha_r}$,
 as $t\rightarrow\infty$,
\begin{equation}\label{n.16}
\|f'(u(\cdot,t))-f'(w(\cdot,t))\|_{L^q([x_0^+(t),\,x_+(t)])}
\lesssim
\varepsilon_r(t)\, t^{-1+\frac{p/q}{p+1+\alpha_r}}.
\end{equation}
Furthermore, if $\eqref{n.13a}$ holds for some $\sigma_r>0$, then, for any $q>\frac{p}{p+1+\alpha_r}$,
as $t\rightarrow\infty$,
\begin{equation}\label{n.16a}
\|f'(u(\cdot,t))-f'(w(\cdot,t))\|_{L^q([x_0^+(t),\,x_+(t)])}
\lesssim
t^{-1+\frac{p/q}{p+1+\alpha_r}-\frac{1/\sigma_r}{p+1+\alpha_r}}.
\end{equation}

\item[{\rm(iii)}] If $x_0^-=\inf\mathcal{K}_0>-\infty$,
 for any forward generalized characteristic $x_-(t)$ in $\mathcal{H}_{-\infty}$
 as in $\eqref{c6.5}$ and for any $q>\frac{p}{p+1+\alpha_l}$,
 as $t\rightarrow\infty$,
\begin{equation}\label{n.17}
\|f'(u(\cdot,t))-f'(w(\cdot,t))\|_{L^q([x_-(t),\,x_0^-(t)])}
\lesssim
\varepsilon_l(t)\,
t^{-1+\frac{p/q}{p+1+\alpha_l}}.
\end{equation}
Furthermore, if $\eqref{n.14a}$ holds for some $\sigma_l>0$, then, for any $q>\frac{p}{p+1+\alpha_l}$,
as $t\rightarrow\infty$,
\begin{equation}\label{n.17a}
\|f'(u(\cdot,t))-f'(w(\cdot,t))\|_{L^q([x_-(t),\,x_0^-(t)])}
\lesssim
t^{-1+\frac{p/q}{p+1+\alpha_l}-\frac{1/\sigma_l}{p+1+\alpha_l}}.
\end{equation}
\end{enumerate}
In the above,
$\overline{u}_r$, $\underline{u}_r$, $\overline{u}_l$, and $\underline{u}_l$
are given by $\eqref{c5.7}${\rm ;}
$\mathcal{K}_0$ is given by $\eqref{c5.6}${\rm ;}
$\alpha_n$, $\alpha_r$, and $\alpha_l$ are given by $\eqref{aa1}$
with $c=c_n$, $c=\underline{u}_r$, and $c=\overline{u}_l$, respectively{\rm ;}
and
$\varepsilon_{n,q}(t):=\big((\varepsilon^-_n(t))^q+(\varepsilon^+_n(t))^q\big)^{\frac{1}{q}}$ with
$\varepsilon^-_n(t):=y^-_n(t,0)-e_n$ and $\varepsilon^+_n(t):=h_n-y^+_n(t,0)$,
$\varepsilon_r(t):=y^-_r(t,0)-x_0^+$, and $\varepsilon_l(t):=x_0^--y^+_l(t,0)$,
for $y^{\pm}_n(t,0)$, $y^-_r(t,0)$, and $y^+_l(t,0)$ given by $\eqref{c6.15}$ with
the forward generalized characteristics $x_n(t)$ in $\mathcal{H}_{(e_n,h_n)}$,
$x_+(t)$ in $\mathcal{H}_{\infty}$,
and $x_-(t)$ in $\mathcal{H}_{-\infty}$, respectively.
\end{Lem}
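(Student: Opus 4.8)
The plan is to reduce all three statements to one pointwise estimate of the form $|f'(u(x,t))-f'(w(x,t))|\le \varepsilon(t)/t$ on a $t$--slice and then integrate. On $\mathcal{K}$ the asserted identity $u\equiv w$ is immediate: by \eqref{kk0} (equivalently \eqref{c6.25}) one has $u=\tilde u$ on $\mathcal{K}$, and $\tilde u=(f')^{-1}(\tfrac{x-x_0}{t})=w$ there by \eqref{n.4}; this is exactly \eqref{n.9}. So the work is confined to the regions $\mathcal{H}_{(e_n,h_n)}$, $\mathcal{H}_{\infty}$, and $\mathcal{H}_{-\infty}$, where the generalized $N$--wave is an explicit ``generalized rarefaction'' about $\mathcal{K}_0$.

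First I would establish the pointwise bound inside $\mathcal{H}_{(e_n,h_n)}$. Fix $t$ large and take $x$ with $e_n(t)<x<x_n(t)$. On this piece $f'(w(x,t))-f'(c_n)=\tfrac{x-e_n(t)}{t}$ by \eqref{n.1}, i.e. $x-tf'(w(x,t))=e_n$. Since $L_{c_n}(e_n)$ is a shock--free characteristic bounding $(x,t)$ on the left (with backward trace $e_n$) and $x_n(t)$ is a shock bounding it on the right (with left backward trace $y^-_n(t,0)$), the monotonicity of $y^\pm(\cdot,t)$ in Lemma \ref{lem:c2.4} together with \eqref{c2.31} traps $x-tf'(u(x,t))$ between $e_n$ and $y^-_n(t,0)$; subtracting gives
\[
0\le f'(w(x,t))-f'(u(x,t))\le \frac{y^-_n(t,0)-e_n}{t}=\frac{\varepsilon^-_n(t)}{t}.
\]
Symmetrically, for $x_n(t)<x<h_n(t)$, the divide $L_{c_n}(h_n)$ (trace $h_n$) now bounds on the right and one gets $|f'(u(x,t))-f'(w(x,t))|\le \varepsilon^+_n(t)/t$. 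Raising these to the $q$-th power, integrating over the two subintervals, and using $x_n(t)-e_n(t)\to\tfrac{h_n-e_n}{2}$ and $h_n(t)-x_n(t)\to\tfrac{h_n-e_n}{2}$ from \eqref{c6.30}, I obtain \eqref{n.15} with $\varepsilon_{n,q}(t)=((\varepsilon^-_n(t))^q+(\varepsilon^+_n(t))^q)^{1/q}$. For \eqref{n.15a} I would bound $\varepsilon^\pm_n(t)$: Lemma \ref{lem:c6.2} (the estimate \eqref{c6.19}, with $d=c_n$) gives $0\le\int_{e_n}^{y^-_n(t,0)}(\varphi-c_n)\le(v^-(t)-c_n)\varepsilon^-_n(t)$, while the lower bound \eqref{n.12} gives $\int_{e_n}^{y^-_n(t,0)}(\varphi-c_n)\gtrsim(\varepsilon^-_n(t))^{1+\sigma_n}$; hence $(\varepsilon^-_n(t))^{\sigma_n}\lesssim v^-(t)-c_n\lesssim t^{-1/(1+\alpha_n)}$ by \eqref{pmn2}, so $\varepsilon^\pm_n(t)\lesssim t^{-1/(\sigma_n(1+\alpha_n))}$, and feeding this into \eqref{n.15} closes \eqref{n.15a}.

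For $\mathcal{H}_{\infty}$ and $\mathcal{H}_{-\infty}$ the same trapping argument applies: on $x_0^+(t)<x<x_+(t)$, the divide $L_{\underline u_r}(x_0^+)$ (trace $x_0^+$) and the shock $x_+(t)$ (left trace $y^-_r(t,0)$) trap $x-tf'(u(x,t))$, and since $x-tf'(w(x,t))=x_0^+$ by \eqref{n.2}, one gets $|f'(u(x,t))-f'(w(x,t))|\le \varepsilon_r(t)/t$ with $\varepsilon_r(t)=y^-_r(t,0)-x_0^+\to0$. The one new ingredient is the width of the slice: writing $x_+(t)=y^-_r(t,0)+tf'(u(x_+(t){-},t))$ and $x_0^+(t)=x_0^++tf'(\underline u_r)$, I would invoke the $L^\infty$--decay rate \eqref{c6.83} of Theorem \ref{the:c6.3} together with the local behaviour of $f''$ near $\underline u_r$ (so that $f'(u)-f'(\underline u_r)$ is comparable to $|u-\underline u_r|^{1+\alpha_r}$) to conclude $x_+(t)-x_0^+(t)\lesssim\varepsilon_r(t)+t\,|f'(u(x_+(t){-},t))-f'(\underline u_r)|\lesssim t^{p/(p+1+\alpha_r)}$. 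Integrating the pointwise bound over an interval of this length and taking $q$-th roots — which requires $q>\tfrac{p}{p+1+\alpha_r}$ so the resulting exponent $-1+\tfrac{p/q}{p+1+\alpha_r}$ is negative — yields \eqref{n.16}. Then \eqref{n.16a} follows by bounding $\varepsilon_r(t)$ as before: \eqref{c6.19} (now with the divide at $x_0^+$) and \eqref{n.13a} give $(\varepsilon_r(t))^{\sigma_r}\lesssim v^-(t)-\underline u_r\lesssim t^{-1/(p+1+\alpha_r)}$. The statements \eqref{n.17}--\eqref{n.17a} are completely symmetric, with $\overline u_l$, $x_0^-$, $\mathcal{H}_{-\infty}$, and \eqref{c6.84} in place of their counterparts.

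The main obstacle I expect is bookkeeping rather than any deep difficulty: one must (a) verify the trapping estimate with the correct orientation across all four $N$--wave sub-types $\mathcal{SD}$, $\mathcal{R}$, $\mathcal{UD}$, $\mathcal{DS}$, so that the unsigned quantity $|f'(u)-f'(w)|$ — not just a one--sided difference — is controlled by $\varepsilon(t)/t$ on each piece; and (b) pin down the precise growth rate of the width of $\mathcal{H}_{\pm\infty}\cap\{t=\mathrm{const}\}$ so that, combined with $\varepsilon(t)/t$ and the lower bounds \eqref{n.13a}--\eqref{n.14a}, it produces exactly the stated exponents. Step (b) is where the $L^p$--hypothesis on $\varphi$ and the decay estimates of \S7.1 are genuinely consumed.
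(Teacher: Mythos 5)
Your proposal is correct and follows essentially the same route as the paper: the pointwise trapping bound $|f'(u)-f'(w)|\le \varepsilon(t)/t$ via Lemma $\ref{lem:c2.4}$ and the monotone backward traces of Lemma $\ref{lem:c6.2}$, the slice-width estimates from $\eqref{c6.30}$ (bounded case) and $\eqref{c6.83}$--$\eqref{c6.84}$ with $\eqref{aa2}$ (unbounded case), and the bound on $\varepsilon(t)$ by combining $\eqref{c6.17}$/$\eqref{c6.19}$ with the lower bounds $\eqref{n.12}$--$\eqref{n.14a}$. Your explicit remark that $q>\tfrac{p}{p+1+\alpha_r}$ is exactly what makes the exponent negative is a useful clarification the paper leaves implicit.
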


\begin{proof}
Denote $y(x,t):=x-tf'(u(x,t))$ for any $(x,t)\in \mathbb{R}\times\mathbb{R}^+$.
From $\eqref{kk0}$ and $\eqref{n.4}$, 
$u(x,t)\equiv w(x,t)$ on $\mathcal{K}$.
The remaining proof is divided into three steps accordingly.

\smallskip
\noindent
{\bf 1}. By Lemma $\ref{lem:c6.2}$,
for any forward generalized characteristic $x_n(t)$ in $\mathcal{H}_{(e_n,h_n)}$,
as $t\rightarrow\infty$,
\begin{equation*}
\varepsilon^-_n(t)=y^-_n(t,0)-e_n\ \longrightarrow 0,
\qquad\varepsilon^+_n(t)=h_n-y^+_n(t,0)\ \longrightarrow 0.
\end{equation*}
From Lemma $\ref{lem:c2.4}$,
by $\eqref{n.1}$,
for any $(x,t)\in\mathcal{H}_{(e_n,h_n)}$ with $e_n(t)<x<x_n(t)$,
\begin{equation*}
|f'(u(x,t))-f'(w(x,t))|
=\Big|\frac{y(x,t)-e_n}{t}\Big|
\leq\Big|\frac{y^-_n(t,0)-e_n}{t}\Big|
=\frac{\varepsilon^-_n(t)}{t};
\end{equation*}
and, for any $(x,t)\in\mathcal{H}_{(e_n,h_n)}$ with $x_n(t)<x<h_n(t)$,
\begin{equation*}
|f'(u(x,t))-f'(w(x,t))|=\Big|\frac{y(x,t)-h_n}{t}\Big|
\leq\Big|\frac{y^+_n(t,0)-h_n}{t}\Big|=\frac{\varepsilon^+_n(t)}{t}.
\end{equation*}
From $\eqref{c6.30}$, $x_n(t)-tf'(c_n)=\frac{h_n+e_n}{2}+o(1)$ as $t\rightarrow\infty$.
Then
\begin{align*}
&\ \|f'(u(\cdot,t))-f'(w(\cdot,t))\|^q_{L^q([e_n(t),h_n(t)])}\\
&=\int_{e_n(t)}^{x_n(t)}|f'(u(x,t))-f'(w(x,t))|^q\,{\rm d}x
+\int_{x_n(t)}^{h_n(t)}|f'(u(x,t))-f'(w(x,t))|^q\,{\rm d}x\\[1mm]
&\leq \Big(\frac{\varepsilon^-_n(t)}{t}\Big)^q\,
\Big(\frac{h_n-e_n}{2}+o(1)\Big)+\Big(\frac{\varepsilon^+_n(t)}{t}\Big)^q\,
\Big(\frac{h_n-e_n}{2}+o(1)\Big)\\[1mm]
&=(1+o(1))\frac{h_n-e_n}{2}\big((\varepsilon^-_n(t))^q+(\varepsilon^+_n(t))^q\big)\, t^{-q}\\[1mm]
&=(1+o(1))\frac{h_n-e_n}{2}(\varepsilon_{n,q}(t))^q\, t^{-q},
\end{align*}
which yields $\eqref{n.15}$.

Furthermore, if $\eqref{n.12}$ holds for some $\sigma_n>0$,
it follows from $\eqref{c6.19}$ and $\eqref{pmn2}$ that
\begin{align}\label{n.190}
O(1)(\varepsilon^-_n(t))^{1+\sigma_n}
\leq\int_{e_n}^{y^-_n(t,0)}(\varphi(\xi)-c_n)\,{\rm d}\xi
\leq \big(v^-(t)-c_n\big)\,\varepsilon^-_n(t)
\leq O(1)\,t^{-\frac{1}{1+\alpha_n}}\, \varepsilon^-_n(t),
\end{align}
which implies
\begin{equation*}
0\leq\varepsilon^-_n(t)=y^-_n(t,0)-e_n\leq O(1)\, t^{-\frac{1}{\sigma_n(1+\alpha_n)}}.
\end{equation*}
Similarly, we have
\begin{equation*}
0\leq\varepsilon^+_n(t)=h_n-y^+_n(t,0)\leq O(1)\, t^{-\frac{1}{\sigma_n(1+\alpha_n)}}.
\end{equation*}
Therefore, $\eqref{n.15a}$ follows from $\eqref{n.15}$.

\smallskip
\noindent
{\bf 2}. Suppose $x_0^+=\sup\mathcal{K}_0<\infty$.
By Lemma $\ref{lem:c6.2}$, for any forward generalized characteristic $x_+(t)$ in $\mathcal{H}_{\infty}$,
\begin{equation*}
\varepsilon_r(t)=y^-_r(t,0)-x_0^+\ \rightarrow 0\qquad\, \mbox{as $t\rightarrow\infty$}.
\end{equation*}
From Lemma $\ref{lem:c2.4}$, by $\eqref{n.2}$,
for any $(x,t)\in\mathcal{H}_{\infty}$ with $x_0^+(t)<x<x_+(t)$,
\begin{equation*}
|f'(u(x,t))-f'(w(x,t))|
=\Big|\frac{y(x,t)-x_0^+}{t}\Big|
\leq\Big|\frac{y^-_r(t,0)-x_0^+}{t}\Big|
=\frac{\varepsilon_r(t)}{t}.
\end{equation*}
Since $\varphi(x)-\underline{u}_r\in L^p([0,\infty))$ for $p\in[1,\infty)$,
then, from $\eqref{c6.83}$ and $\eqref{aa2}$,
$$
0<x_+(t)-x_0^+(t)
\leq y^-_r(t,0)-x_0^++t\big|f'(u^-(t))-f'(\underline{u}_r)\big|
\leq O(1)\, t^{\frac{p}{p+1+\alpha_r}},$$
so that,
as $t\rightarrow \infty$,
\begin{align*}
\|f'(u(\cdot,t))-f'(w(\cdot,t))\|_{L^q([x_0^+(t),x_+(t)])}
&=\Big(\int_{x_0^+(t)}^{x_+(t)}|f'(u(x,t))-f'(w(x,t))|^q\,{\rm d}x\Big)^{\frac{1}{q}}\\[1mm]
&\leq \frac{\varepsilon_r(t)}{t}\, \big(x_+(t)-x_0^+(t)\big)^{\frac{1}{q}}
\leq O(1)\, \varepsilon_r(t)\,t^{-1+\frac{p/q}{p+1+\alpha_r}},
\end{align*}
which yields $\eqref{n.16}$.
Furthermore, if $\eqref{n.13a}$ holds for some $\sigma_r>0$,
similar to $\eqref{n.190}$,
 \begin{equation*}
\varepsilon_r(t)=y^-_r(t,0)-x_0^+\leq O(1)\, t^{-\frac{1/\sigma_r}{p+1+\alpha_r}},
\end{equation*}
which, by taking into $\eqref{n.16}$, implies $\eqref{n.16a}$.

\smallskip
\noindent
{\bf 3}.
Suppose $x_0^-=\inf\mathcal{K}_0>-\infty$.
By Lemma $\ref{lem:c6.2}$,
for any forward generalized characteristic $x_-(t)$ in $\mathcal{H}_{-\infty}$,
\begin{equation}\label{n.22a}
\varepsilon_l(t)=x_0^--y^+_l(t,0)\ \rightarrow 0\qquad\, \mbox{as $t\rightarrow\infty$}.
\end{equation}
From Lemma $\ref{lem:c2.4}$, by $\eqref{n.3}$,
for any $(x,t)\in\mathcal{H}_{-\infty}$ with $x_-(t)<x<x_0^-(t)$,
\begin{equation}\label{n.22}
|f'(u(x,t))-f'(w(x,t))|=\Big|\frac{y(x,t)-x_0^-}{t}\Big|\leq\Big|\frac{y^+_l(t,0)-x_0^-}{t}\Big|=\frac{\varepsilon_l(t)}{t}.
\end{equation}
Since $\varphi(x)-\overline{u}_l\in L^p(({-}\infty,0])$ for $p\in[1,\infty)$,
from $\eqref{c6.84}$, $\eqref{n.22a}$, and $\eqref{aa2}$,
$$
0< x_0^-(t)-x_-(t)\leq x_0^--y^+_l(t,0)+t\big|f'(u^+(t))-f'(\overline{u}_l)\big|\leq O(1)\, t^{\frac{p}{p+1+\alpha_l}},
$$
so that, by $\eqref{n.22}$, as $t\rightarrow \infty$,
\begin{align*}
\|f'(u(\cdot,t))-f'(w(\cdot,t))\|_{L^q([x_-(t),x_0^-(t)])}
&=\Big(\int_{x_-(t)}^{x_0^-(t)}|f'(u(x,t))-
f'(w(x,t))|^q\,{\rm d}x\Big)^{\frac{1}{q}}\\[1mm]
&\leq \frac{\varepsilon_l(t)}{t}\, \big(x_0^-(t)-x_-(t)\big)^{\frac{1}{q}}
\leq O(1)\, \varepsilon_l(t)\,
t^{-1+\frac{p/q}{p+1+\alpha_l}},
\end{align*}
which yields $\eqref{n.17}$.
Furthermore, if $\eqref{n.14a}$ holds for some $\sigma_r>0$,
similar to $\eqref{n.190}$,
\begin{equation*}
\varepsilon_l(t)=x_0^--y^+_l(t,0)\leq O(1)\, t^{-\frac{1/\sigma_l}{p+1+\alpha_l}},
\end{equation*}
which, by taking into $\eqref{n.17}$, implies $\eqref{n.17a}$.
\end{proof}

\begin{The}[Decay rates in the $L^p_{\rm loc}$--norm]\label{the:n.1}
Suppose that 
$\overline{u}_l<\underline{u}_r$,
or $\overline{u}_l=\underline{u}_r$ with $\mathcal{K}_0\neq \varnothing$.
Let $u=u(x,t)$ be the entropy solution of the Cauchy problem $\eqref{c1.1}$--$\eqref{ID}$
with initial data function $\varphi(x)\in L^{\infty}(\mathbb{R})$ satisfying that
$\varphi(x)-\underline{u}_r\in L^p([0,\infty))$
and $\varphi(x)-\overline{u}_l\in L^p(({-}\infty,0])$ for $p\in[1,\infty)$,
and let the generalized $N$--wave
$w=w(x,t)$ be given by {\rm Definition} $\ref{def:n.1}$. 
Then the following statements hold{\rm :}
\begin{enumerate}
\item[{\rm(i)}] For any two divides $e(t)$ and $h(t)$ with $e(t)< h(t)$ and for any $q>0$,
as $t\rightarrow\infty$,
\begin{equation}\label{n.24}
\|f'(u(\cdot,t))-f'(w(\cdot,t))\|_{L^q([e(t),\,h(t)])}\leq o(1)\, t^{-1}.
\end{equation}

\item[{\rm(ii)}] If $\inf\mathcal{K}_0>-\infty$ and/or $\sup\mathcal{K}_0<\infty$,
for any two forward generalized characteristics $x_1(t)\in \mathcal{H}_{-\infty}$
and/or $x_2(t)\in \mathcal{H}_{\infty}$ with $x_1(t)< x_2(t)$
and for any $q>\frac{p}{p+1+\alpha}$,
as $t\rightarrow\infty$,
\begin{equation}\label{n.25}
\|f'(u(\cdot,t))-f'(w(\cdot,t))\|_{L^q([x_1(t),\,x_2(t)])}\leq o(1)\,
t^{-1+\frac{p/q}{p+1+\alpha}},
\end{equation}
In particular, if $x_0^+=\sup\mathcal{K}_0<\infty$
with ${\rm D}_+\bar{\Phi}(x_0^+)<\underline{{\rm D}}_+\Phi(x_0^+)$
and/or $x_0^-=\inf\mathcal{K}_0>-\infty$
with $\overline{{\rm D}}_-\Phi(x_0^-)<{\rm D}_-\bar{\Phi}(x_0^-)$,
then, for any $q>0$,
as $t\rightarrow\infty$,
\begin{equation}\label{n.25a}
\|f'(u(\cdot,t))-f'(w(\cdot,t))\|_{L^q([x_1(t),\,x_2(t)])}\leq o(1)\, t^{-1}.
\end{equation}
\end{enumerate}
In the above,
$\overline{u}_r$, $\underline{u}_r$, $\overline{u}_l$, and $\underline{u}_l$
are given by $\eqref{c5.7}${\rm ;}
 $\mathcal{K}_0$ and $\mathcal{H}_{\pm\infty}$
are given by $\eqref{c5.6}$ and $\eqref{c6.4}$--$\eqref{c6.5}$, respectively{\rm ;}
$\underline{{\rm D}}_+\Phi(x)$ and $\overline{{\rm D}}_-\Phi(x)$,
and ${\rm D}_{\pm}\bar{\Phi}(x_0)$
are given by $\eqref{c4.1}$ and $\eqref{c5.10}$, respectively{\rm ;}
and $\alpha$ is given by
\begin{equation*}
\alpha=\left\{\begin{array}{lll}
\alpha_l \quad&{\rm if}\ x_1(t)\in \mathcal{H}_{-\infty},&x_2(t)\in \mathcal{H}_{(e_n,\,h_n)},\\[1mm]
\alpha_r \quad&{\rm if}\ x_1(t)\in \mathcal{H}_{(e_n,\,h_n)},&x_2(t)\in \mathcal{H}_{\infty},\\[1mm]
\min\{\alpha_l,\alpha_r\} \quad &{\rm if}\ x_1(t)\in \mathcal{H}_{-\infty},&x_2(t)\in \mathcal{H}_{\infty}
\end{array}\right.
\end{equation*}
for $\alpha_r$ and $\alpha_l$ given by $\eqref{aa1}$ with $c=\underline{u}_r$ and $c=\overline{u}_l$, respectively.
See also {\rm Fig.} $\ref{figNwavth}$.
\end{The}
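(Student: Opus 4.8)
The plan is to reduce everything to Lemma \ref{lem:n.1}, which already supplies the sharp local decay on each of the building blocks $\mathcal{H}_{(e_n,h_n)}$, $\mathcal{H}_{\infty}$, and $\mathcal{H}_{-\infty}$ (together with $u\equiv w$ on $\mathcal{K}$), and then to sum these contributions using the global partition $\mathbb{R}\times\mathbb{R}^+=\mathcal{K}\cup(\cup_n\mathcal{H}_{(e_n,h_n)})$ of Theorem \ref{pro:c6.2}. So the first step is to argue that the interval of integration decomposes, compatibly with $\eqref{c6.7}$, into a $\mathcal{K}$-part on which $f'(u)=f'(w)$ and hence no contribution arises, plus the pieces to which Lemma \ref{lem:n.1} directly applies.

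For part (i), both $e(t)$ and $h(t)$ are divides, so by Theorem \ref{the:c5.2} they equal $L_{c_e}(x_e)$ and $L_{c_h}(x_h)$ with $x_e\le x_h$ in $\mathcal{K}_0$. Since distinct divides are global characteristics that cannot interact, they are pairwise non-crossing, so at each time the open slab bounded by $e(t)$ and $h(t)$ consists of its intersection with $\mathcal{K}$ together with exactly those $\mathcal{H}_{(e_n,h_n)}$ with $(e_n,h_n)\subset(x_e,x_h)$; because $x_e,x_h\in\mathcal{K}_0$ while $(e_n,h_n)\subset\mathcal{K}_0^c$, each $(e_n,h_n)$ lies either entirely inside or entirely outside $(x_e,x_h)$, so no $\mathcal{H}_{(e_n,h_n)}$ is split. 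On each relevant $\mathcal{H}_{(e_n,h_n)}$, Lemma \ref{lem:n.1}(i) gives $\|f'(u(\cdot,t))-f'(w(\cdot,t))\|^q_{L^q([e_n(t),h_n(t)])}\lesssim\tfrac{h_n-e_n}{2}(\varepsilon_{n,q}(t))^q t^{-q}$. Adding these (the $q$th power of the $L^q$-norm is countably additive over disjoint sets) reduces part (i) to the claim $\sum_n\tfrac{h_n-e_n}{2}(\varepsilon_{n,q}(t))^q\to 0$ as $t\to\infty$.

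To prove the summation claim I would note that, by Lemma \ref{lem:c6.2}, $y^{\pm}_n(t,0)$ stays in $[e_n,h_n]$, so $\varepsilon_{n,q}(t)\le 2^{1/q}(h_n-e_n)$ and hence $\tfrac{h_n-e_n}{2}(\varepsilon_{n,q}(t))^q\le(h_n-e_n)^{q+1}$; since $\sum_n(h_n-e_n)\le x_h-x_e<\infty$, the right-hand side is summable, while for each fixed $n$, $\varepsilon_{n,q}(t)\to 0$ by Lemma \ref{lem:c6.2}. Dominated convergence over the counting measure then yields $\eqref{n.24}$. For part (ii), when $x_1(t)\in\mathcal{H}_{-\infty}$ and $x_2(t)\in\mathcal{H}_{\infty}$ (the one-sided cases being analogous), the interval $[x_1(t),x_2(t)]$ splits into the tail $[x_1(t),x^-_0(t)]\subset\mathcal{H}_{-\infty}$, a slab between the divides $L_{\overline{u}_l}(x^-_0)$ and $L_{\underline{u}_r}(x^+_0)$ treated exactly as in part (i), and the tail $[x^+_0(t),x_2(t)]\subset\mathcal{H}_{\infty}$. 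The middle slab contributes $o(1)t^{-1}$ by part (i); Lemma \ref{lem:n.1}(ii)--(iii) bound the two tails by $O(1)\varepsilon_r(t)t^{-1+\frac{p/q}{p+1+\alpha_r}}$ and $O(1)\varepsilon_l(t)t^{-1+\frac{p/q}{p+1+\alpha_l}}$ for $q>p/(p+1+\min\{\alpha_l,\alpha_r\})$, and since $\varepsilon_r,\varepsilon_l\to 0$ while the exponent $-1+\frac{p/q}{p+1+\alpha}$ is maximal (slowest decay) at the smaller of $\alpha_l,\alpha_r$, the three contributions combine to $o(1)t^{-1+\frac{p/q}{p+1+\alpha}}$ with $\alpha=\min\{\alpha_l,\alpha_r\}$, giving $\eqref{n.25}$; the one-sided cases produce $\alpha=\alpha_l$ or $\alpha=\alpha_r$ by the same bookkeeping. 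For the "in particular" assertion $\eqref{n.25a}$, the hypotheses ${\rm D}_+\bar{\Phi}(x^+_0)<\underline{{\rm D}}_+\Phi(x^+_0)$ and/or $\overline{{\rm D}}_-\Phi(x^-_0)<{\rm D}_-\bar{\Phi}(x^-_0)$ put us in cases (a)--(b) and (d)--(e) of Remark \ref{rem:n.1} (read with $x^\pm_0$ in place of $h_n,e_n$), so $u\equiv w$ on the relevant tails of $\mathcal{H}_{\pm\infty}$ for all large $t$ by $\eqref{n.5}$ and $\eqref{n.7}$; those tail contributions then vanish, leaving only the bounded $\mathcal{H}_{(e_n,h_n)}$ pieces, whence the part-(i) bound $o(1)t^{-1}$ applies.

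The only step that is not mere bookkeeping is the uniform-in-$t$ control needed to interchange the limit with the countable sum over the regions $\mathcal{H}_{(e_n,h_n)}$; this is handled by the domination $\tfrac{h_n-e_n}{2}(\varepsilon_{n,q}(t))^q\le(h_n-e_n)^{q+1}$ together with the finiteness of $\sum_n(h_n-e_n)$, which is where the hypothesis $\overline{u}_l<\underline{u}_r$ (or $\overline{u}_l=\underline{u}_r$ with $\mathcal{K}_0\neq\varnothing$) enters through the convexity of $\bar{\Phi}$. Beyond that, the argument is a direct assembly of Lemma \ref{lem:n.1}, the partition $\eqref{c6.7}$, and the non-crossing of divides, the remaining subtlety being the identification of the exponent $\alpha$ governing the slowest decay in the two-sided situation of part (ii).
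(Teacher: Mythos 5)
Your proposal is correct and follows essentially the same route as the paper: part (i) is assembled from Lemma~$\ref{lem:n.1}$(i) together with $u\equiv w$ on $\mathcal{K}$ from $\eqref{n.9}$, part (ii) from combining $\eqref{n.15}$--$\eqref{n.17}$ with the worst exponent $\alpha=\min\{\alpha_l,\alpha_r\}$, and the ``in particular'' case from Remark~$\ref{rem:n.1}$ giving $u\equiv w$ on the tails of $\mathcal{H}_{\pm\infty}$ for large $t$. The only difference is that you make explicit the interchange of $t\to\infty$ with the countable sum over the regions $\mathcal{H}_{(e_n,h_n)}$ via the domination $\tfrac{h_n-e_n}{2}(\varepsilon_{n,q}(t))^q\le (h_n-e_n)^{q+1}$ and $\sum_n(h_n-e_n)<\infty$ — a detail the paper's ``it is direct to see'' leaves implicit — and this step is sound.
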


\begin{proof}
It is direct to see that $\eqref{n.24}$ follows from $\eqref{n.15}$ and $u(x,t)=w(x,t)$ on $\mathcal{K}$ in $\eqref{n.9}$.

Furthermore,
$\eqref{n.25}$ follows by combining $\eqref{n.15}$ and $\eqref{n.16}$ with $\eqref{n.17}$.

By Remark $\ref{rem:n.1}$, if $x_0^+=\sup\mathcal{K}_0<\infty$
with ${\rm D}_+\bar{\Phi}(x_0^+)<\underline{{\rm D}}_+\Phi(x_0^+)$,
there exists sufficiently large $t_+>0$ such that
$u(x,t)=w(x,t)$ for $x_0^+(t)<x<x_2(t)$ with $t>t_+$.
If $x_0^-=\inf\mathcal{K}_0>-\infty$
with $\overline{{\rm D}}_-\Phi(x_0^-)<{\rm D}_-\bar{\Phi}(x_0^-)$,
there exists sufficiently large $t_->0$ such that
$u(x,t)=w(x,t)$ for $x_1(t)<x<x_0^-(t)$ with $t>t_-.$
Therefore, it follows from $\eqref{n.24}$ that $\eqref{n.25a}$ holds.
\end{proof}

For the initial data function $\varphi(x)\in L^{\infty}(\mathbb{R})$ satisfying that
\begin{equation}\label{ssxx}
\varphi(x)\equiv u_l\quad {\rm if}\ x<s_-,\qquad\,\,\varphi(x)\equiv u_r\quad {\rm if}\ x>s_+
\end{equation}
with $s_-<s_+$, we have
$$
\Phi(x)-\Phi(s_-)\equiv u_l(x-s_-)\quad {\rm if}\ x\leq s_-, \qquad\,\,
\Phi(x)-\Phi(s_+)\equiv u_r(x-s_+)\quad {\rm if}\ x\geq s_+.
$$
Thus, it follows from Lemmas $\ref{lem:c5.1}$--$\ref{lem:c5.2}$ that,
$\bar{\Phi}(x)>-\infty$ with $\mathcal{K}_0\neq \varnothing$ if $u_l\leq u_r$,
and $\bar{\Phi}(x)=-\infty$ with $\mathcal{K}_0=\varnothing$ if $u_l> u_r$.

Furthermore, from Lemma $\ref{lem:c5.3}$, if $u_l\leq u_r$,
\begin{equation}\label{ssxx1}
\left\{\begin{array}{llll}
({-}\infty,s_-]\subset \mathcal{K}_0 &{\rm if}\ s_-\in \mathcal{K}_0,
   \quad &({-}\infty,s_-]\cap \mathcal{K}_0=\varnothing &{\rm if}\ s_-\notin \mathcal{K}_0,\\[2mm]
[s_+,\infty)\subset \mathcal{K}_0 &{\rm if}\ s_+\in \mathcal{K}_0,
  \quad &[s_+,\infty)\cap \mathcal{K}_0=\varnothing &{\rm if}\ s_+\notin \mathcal{K}_0.
\end{array}\right.
\end{equation}

\begin{Cor}[Decay in the $L^p$--norm for some special initial data]
Let $u=u(x,t)$ be the entropy solution of the Cauchy problem $\eqref{c1.1}$--$\eqref{ID}$
for the initial data function $\varphi(x)\in L^{\infty}(\mathbb{R})$ satisfying
$\eqref{ssxx}$ with $u_l\leq u_r$.
\begin{enumerate}
\item[{\rm (i)}] If $s_+\in \mathcal{K}_0$ and $s_-\in \mathcal{K}_0$,
then, for any $q>0$,
as $t\rightarrow\infty$,
\begin{equation}\label{n.26}
\|f'(u(\cdot,t))-f'(w(\cdot,t))\|_{L^q(\mathbb{R})}\leq o(1)\, t^{-1}.
\end{equation}

\item[{\rm (ii)}] If $s_-\notin \mathcal{K}_0$ and/or $s_+\notin \mathcal{K}_0$,
for any chosen forward generalized characteristics $x_\pm(t)$ emitting from $s_\pm$,
define
\begin{equation*}
\bar{w}(x,t):=\begin{cases}
u_l\quad &{\rm if}\ x<x_-(t),\\
w(x,t)\quad &{\rm if}\ x_-(t)<x<x_+(t),\\
u_r\quad &{\rm if}\ x>x_+(t).
\end{cases}
\end{equation*}
Then, for any $q>\frac{p}{p+1+\alpha}$,
as $t\rightarrow\infty$,
\begin{equation}\label{n.27}
\|f'(u(\cdot,t))-f'(\bar{w}(\cdot,t))\|_{L^q(\mathbb{R})}\leq o(1)\,
t^{-1+\frac{p/q}{p+1+\alpha}}.
\end{equation}
In particular, if $s_+\notin \mathcal{K}_0$
with ${\rm D}_+\bar{\Phi}(x_0^+)<\underline{{\rm D}}_+\Phi(x_0^+)$
and/or $s_-\notin \mathcal{K}_0$
with $\overline{{\rm D}}_-\Phi(x_0^-)<{\rm D}_-\bar{\Phi}(x_0^-)$,
then, for any $q>0$,
as $t\rightarrow\infty$,
\begin{equation}\label{n.28}
\|f'(u(\cdot,t))-f'(\bar{w}(\cdot,t))\|_{L^q(\mathbb{R})}\leq o(1)\, t^{-1}.
\end{equation}
\end{enumerate}
In the above,
$\mathcal{K}_0$ is given by $\eqref{c5.6}${\rm ;}
$\underline{{\rm D}}_+\Phi(x)$ and $\overline{{\rm D}}_-\Phi(x)$,
and ${\rm D}_{\pm}\bar{\Phi}(x_0)$
are given by $\eqref{c4.1}$ and $\eqref{c5.10}$, respectively{\rm ;}
and $\alpha$ is given by
\begin{equation*}
\alpha=\left\{\begin{array}{lll}
\alpha_l \quad&{\rm if}\ s_-\notin \mathcal{K}_0,&s_+\in \mathcal{K}_0,\\
\alpha_r \quad&{\rm if}\ s_-\in \mathcal{K}_0,&s_+\notin \mathcal{K}_0,\\
\min\{\alpha_l,\alpha_r\} \quad &{\rm if}\ s_-\notin \mathcal{K}_0,&s_+\notin \mathcal{K}_0
\end{array}\right.
\end{equation*}
for $\alpha_r$ and $\alpha_l$ given by $\eqref{aa1}$ with $c=\underline{u}_r$ and $c=\overline{u}_l$, respectively.
\end{Cor}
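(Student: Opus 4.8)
The plan is to deduce this corollary from Theorem $\ref{the:n.1}$ (together with Lemma $\ref{lem:n.1}$), once the geometry of $\mathcal{K}_0$ for the special initial data $\eqref{ssxx}$ has been pinned down, plus one elementary \emph{constancy lemma}: that $u(x,t)$ equals the outer states $u_l$ and $u_r$ to the left of $x_-(t)$ and to the right of $x_+(t)$, respectively. Since $\varphi\equiv u_l$ on $({-}\infty,s_-)$ and $\varphi\equiv u_r$ on $(s_+,\infty)$ with $u_l\le u_r$, Lemmas $\ref{lem:c5.1}$--$\ref{lem:c5.2}$ give $\bar{\Phi}(x)>-\infty$ with $\mathcal{K}_0\neq\varnothing$, so the generalized $N$--wave $w$ of Definition $\ref{def:n.1}$ is well defined and Theorem $\ref{the:n.1}$ applies; and $\eqref{ssxx1}$, a consequence of Lemma $\ref{lem:c5.3}$, records exactly when $({-}\infty,s_-]$ and $[s_+,\infty)$ lie inside $\mathcal{K}_0$. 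Whenever one of these half-lines lies in $\mathcal{K}_0$, the convex hull $\bar{\Phi}$ is linear there with slope $u_l$ (resp. $u_r$), hence $\mathcal{D}(x_0)=\{u_l\}$ (resp. $\{u_r\}$) and a single divide emanates from each such $x_0$.

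The first step is the constancy lemma. Fix the left bounding curve: in Case (i) take $e(t):=L_{u_l}(s_-)$, which is a divide because $s_-\in\mathcal{K}_0$; in Case (ii), where $s_-\notin\mathcal{K}_0$, let $x_-(t)$ be the chosen forward generalized characteristic emitting from $s_-$. In either situation $y^-$ evaluated along this curve is nonincreasing in $t$ by $\eqref{c3.5}$ and equals $s_-$ at $t=0$, so it stays $\le s_-$; by Proposition $\ref{pro:c3.1}$ the backward characteristic triangle of any $(x,t)$ strictly to the left of the curve is disjoint from, and lies to the left of, this curve's triangle, whence $y^+(x,t)\le s_-$, i.e. every $c\in\mathcal{U}(x,t)$ satisfies $x-tf'(c)\le s_-$. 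Then, comparing $E(u;x,t)$ with $E(c;x,t)$ via $\eqref{c4.2}$ for $u$ near $c$, where the relevant values of $\varphi$ are the single constant $u_l$, and using $f''>0$ almost everywhere from $\eqref{c1.2}$, one finds that $E(\cdot\,;x,t)$ is uniquely maximized at $u_l$; hence $u(x,t)\equiv u_l$ to the left of the curve. Moreover $w(x,t)\equiv u_l$ there as well, since such a point lies on a divide $L_{u_l}(x_0)$ with $x_0<s_-$ in Case (i) and this is precisely the definition of $\bar{w}$ in Case (ii). The symmetric argument gives $u\equiv u_r$ and $w$ (resp. $\bar{w}$) $\equiv u_r$ to the right of $h(t):=L_{u_r}(s_+)$ (resp. $x_+(t)$).

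With the constancy lemma in hand the corollary is immediate. In Case (i) the difference $f'(u(\cdot,t))-f'(w(\cdot,t))$ vanishes outside $[e(t),h(t)]$, so $\|f'(u(\cdot,t))-f'(w(\cdot,t))\|_{L^q(\mathbb{R})}=\|f'(u(\cdot,t))-f'(w(\cdot,t))\|_{L^q([e(t),h(t)])}$, and $\eqref{n.26}$ follows from $\eqref{n.24}$ applied to the two divides $e(t)<h(t)$. In Case (ii) the difference $f'(u(\cdot,t))-f'(\bar{w}(\cdot,t))$ vanishes outside $[x_-(t),x_+(t)]$, so the $L^q(\mathbb{R})$ norm reduces to the $L^q([x_-(t),x_+(t)])$ norm; this is controlled by $\eqref{n.25}$ in Theorem $\ref{the:n.1}$(ii), and matching up which of $\mathcal{H}_{-\infty}$, $\mathcal{H}_{(e_n,h_n)}$, $\mathcal{H}_{\infty}$ occurs identifies the parameter $\alpha$, giving $\eqref{n.27}$; the refinement $\eqref{n.25a}$, available when ${\rm D}_+\bar{\Phi}(x_0^+)<\underline{{\rm D}}_+\Phi(x_0^+)$ and/or $\overline{{\rm D}}_-\Phi(x_0^-)<{\rm D}_-\bar{\Phi}(x_0^-)$, yields $\eqref{n.28}$.

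The only genuinely new work is the constancy lemma, and I expect its main subtlety to be the boundary case in which $x-tf'(c)=s_-$ holds exactly for some $c\in\mathcal{U}(x,t)$: there the two one-sided perturbations $u\to c\pm$ must be treated separately, and the strict positivity $f''>0$ almost everywhere is needed to turn the resulting comparisons into strict inequalities. Everything else is routine bookkeeping built on Theorem $\ref{the:n.1}$.
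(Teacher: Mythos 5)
Your overall route is the paper's: reduce everything to Theorem \ref{the:n.1} by showing that $u$ coincides with the outer constants $u_l$, $u_r$ outside the strip bounded by $x_-(t)$ and $x_+(t)$ (resp.\ the two divides $L_{u_l}(s_-)$, $L_{u_r}(s_+)$ in Case (i)), so that the $L^q(\mathbb{R})$ norm collapses to the $L^q$ norm over that strip, where \eqref{n.24}, \eqref{n.25}, \eqref{n.25a} apply. Case (i) of your constancy lemma is sound: there the foot $x-tf'(u_l)$ is strictly less than $s_-$ for $x<s_-+tf'(u_l)$, all maximizer feet are $\le s_-$ by Lemma \ref{lem:c2.4}, and the comparison of $E(c;x,t)$ with $E(u_l;x,t)$ only sees $\varphi\equiv u_l$, forcing $\mathcal{U}(x,t)=\{u_l\}$.

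The gap is in Case (ii), where you assert the constancy for all $t$ and locate the only subtlety in "one-sided perturbations at the boundary foot $s_-$". That diagnosis is wrong, and the statement you are trying to prove is false for all $t$: when $s_-\notin\mathcal{K}_0$ but $\underline{{\rm D}}_+\Phi(s_-)>u_l=\overline{{\rm D}}_-\Phi(s_-)$, Theorem \ref{the:c4.1} and \eqref{c4.3a} produce an entire rarefaction fan of characteristics emitting from $s_-$ with speeds filling $(f'(u_l),f'(\underline{{\rm D}}_+\Phi(s_-)))$; these are not divides, but they survive for a positive time, and if the chosen $x_-(t)$ starts along (or to the right of) the fast edge of this fan, then $u\neq u_l$ on an open region to the left of $x_-(t)$. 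No local perturbation argument at the single foot $s_-$ can rescue a uniquely-maximized-at-$u_l$ claim there, because the maximizers with foot exactly $s_-$ genuinely range over all of $\mathcal{C}(s_-)$. The missing ingredient is the one the paper uses: since $s_-\notin\mathcal{K}_0$ forces $({-}\infty,s_-]\cap\mathcal{K}_0=\varnothing$ by \eqref{ssxx1}, Lemma \ref{lem:c6.2} rules out a finite limit for $y^-(t,0)$, so $y^-(t,0)\to-\infty$; hence there is a finite $t_-$ with $y^-(t,0)<s_-$ strictly for $t>t_-$, all backward feet from $\{x<x_-(t)\}$ land strictly inside $\{\varphi\equiv u_l\}$, and Lemmas \ref{lem:c2.4}--\ref{lem:c2.5} then give $u\equiv u_l$ there exactly (symmetrically on the right with $y^+(t,0)\to\infty$). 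Since the conclusions \eqref{n.27}--\eqref{n.28} are asymptotic, restricting to $t>\max\{t_-,t_+\}$ costs nothing, and with this replacement your argument closes.
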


\begin{proof}
If $s_+\in \mathcal{K}_0$ and $s_-\in \mathcal{K}_0$,
by $\eqref{ssxx1}$, $u(x,t)=w(x,t)\equiv u_r$ on $\{x>s_++tf'(u_r)\}\subset \mathcal{K}$ and
$u(x,t)=w(x,t)\equiv u_l$ on $\{x<s_-+tf'(u_l)\}\subset \mathcal{K}$ so that
$\eqref{n.26}$ follows by $\eqref{n.24}$.

\smallskip
If $s_+\notin \mathcal{K}_0$, from $\eqref{ssxx1}$, we see that $x_0^+<s_+$ so that,
by Lemma $\ref{lem:c6.2}$,
$$
y^+(t,0)=x_+(t)-tf'(u(x_+(t){+},t))\ \rightarrow \infty \qquad\,\, {\rm as}\ t\rightarrow \infty,
$$
which means that there exists sufficiently large $t_+$ such that $y^+(t,0)>s_+$ for $t>t_+$.
From $\eqref{ssxx}$, by Lemmas $\ref{lem:c2.4}$--$\ref{lem:c2.5}$,
$u(x,t)\equiv u_r$ for $x>x_+(t)$ with $t>t_+.$
Similarly, if $s_-\notin \mathcal{K}_0$, then
$u(x,t)\equiv u_l$ for $x<x_-(t)$ with $t>t_-.$
Thus, we obtain that, when $t>\max\{t_+,t_-\}$,
$u(x,t)=\bar{w}(x,t)$ for $x>x_+(t)\ {\rm or}\ x<x_-(t),$
which, by combining with $\eqref{n.25}$, yields $\eqref{n.27}$.

\smallskip
Finally, it follows from $\eqref{n.25a}$ that $\eqref{n.28}$ holds.
\end{proof}

\begin{Rem}
The generalized $N$--waves provide a
good approximation of entropy solutions
which possess at least one divide.
In detail,
the generalized $N$--wave $w(x,t)$ exactly equals to the entropy solution $u(x,t)$ for large time
in five of the seven cases in {\rm Remark} $\ref{rem:n.1}$.
For the remaining two cases,
according to {\rm Lemma} $\ref{lem:n.1}$ and {\rm Theorem} $\ref{the:n.1}$,
{\it the entropy solutions can decay to generalized $N$--waves with rates
{\rm(}even far more{\rm)} greater than $t^{-1}$}.
In particular, from {\rm Remark} $\ref{rem:n.1}$,
for the period initial data function satisfying that
there emits a rarefaction wave initially at any point of $\mathcal{K}_0$,
the generalized $N$--wave exactly equals to the entropy solution for large enough time.
As an example, considering the Burgers' equation $u_t+(\frac{1}{2}u^2)_x=0$
with the initial data function $\varphi(x)=(|\sin x|)'$,
it is direct to see that, when $t>\frac{\pi}{2}$, 
the entropy solution equals to the generalized $N$--wave, 
for which the forward generalized characteristics $x_n(t)$ are chosen to emit from points $x_0=n\pi$ for $n\in \mathbb{Z}$ with the same initial speed $1$.
\end{Rem}

\section{
Extension of the New Formula
to More General 
Hyperbolic Conservation Laws}
The new formula for entropy solutions can be generalized to the Cauchy problem
for more general scalar hyperbolic conservation law:
\begin{eqnarray}
&& U(u)_t+F(u)_x=0\qquad\,\,
{\rm for}\ (x,t) \in \mathbb{R} \times \mathbb{R}^+,\label{c7.1}\\[1mm]
&&u|_{t=0}=\varphi(x), \label{c7.1b}
\end{eqnarray}
where the flux pair $(U(u), F(u))\in C^2(\mathbb{R})$ satisfies
\begin{equation}\label{c7.1c}
U'(u)>0 \,\,\,\,\, {\rm on}\ u\in \mathbb{R},
\qquad\,\, H(u):=\frac{F'(u)}{U'(u)} \,\, {\rm is \ strictly\ increasing},
\end{equation}
and the initial data function $\varphi(x) \in L_{{\rm loc}}^1(\mathbb{R})$ satisfies
\begin{equation}\label{c7.19}
\mathop{\overline{\lim}}\limits_{s\rightarrow \pm \infty}\frac{\varphi (x-t H(s))}{s}<1
\qquad {\rm locally\ uniform\ in\ } (x,t).
\end{equation}
Condition \eqref{c7.1c} for $H(u)$ is equivalent to
\begin{equation}\label{c1.2b}
H'(u)\ge 0 \,\,\,\,\mbox{on $u \in \mathbb{R}$},\qquad\,\,\, \mathcal{L}\{u\,:\, H'(u)=0\}=0,
\end{equation}
where $\mathcal{L}$ is the Lebesgue measure, which means that $H(u)$ is not uniformly increasing.

The new formula for entropy solutions of the Cauchy problem \eqref{c7.1}--\eqref{c7.1b}
can be introduced as follows:
For any fixed $(x,t)\in \mathbb{R}\times \mathbb{R}^+$,
\begin{equation}\label{c7.2}
E(u;x,t)=t\int_0^u H'(s)\big(U(\varphi(x-tH(s)))-U(s)\big)\,{\rm d}s.
\end{equation}
From $\eqref{c7.19}$, for any $(x,t)\in \mathbb{R}\times \mathbb{R}^+$,
there exist $s^{\pm}(x,t)$ such that
\begin{equation*}
\varphi (x-tH(u))<u\quad {\rm if}\ u>s^+(x,t),\qquad\,\,\,
\varphi (x-tH(u))>u\quad {\rm if}\ u<s^-(x,t).
\end{equation*}
This, by $\eqref{c7.1c}$, implies that, if $u>s^+(x,t)$, then
$$H'(u)\big(U(\varphi(x-t H(u)))-U(u)\big)\leq 0$$
so that $E(\cdot\,;x,t)$ is strictly decreasing for $u>s^+(x,t)$;
and if $u<s^-(x,t)$, then
$$H'(u)\big(U(\varphi(x-t H(u)))-U(u)\big)\geq 0$$
so that $E(\cdot\,;x,t)$ is strictly increasing for $u<s^-(x,t)$.
This means that $E(\cdot\,;x,t)$ attains its maximum on $[s^-(x,t),s^+(x,t)]$.

\smallskip
Let $\mathcal{U}(x,t)$ be the set of points
at which $E(u;x,t)$ attains its maximum in $u\in \mathbb{R}$, $i.e.$,
\begin{equation}\label{c7.3}
\mathcal{U}(x,t)=\big\{w\in \mathbb{R}\,:\, E(w;x,t)=\max\limits_{v\in \mathbb{R}}E(v;x,t)\big\}.
\end{equation}
The definition of $u^\pm(x,t)$ is given by
\begin{equation}\label{c7.4}
u^-(x,t)=\sup\,\mathcal{U}(x,t), \qquad u^+(x,t)=\inf\,\mathcal{U}(x,t).
\end{equation}
Similar to $\eqref{c2.5}$, for any $(x,t)\in \mathbb{R}\times \mathbb{R}^+$, we define
\begin{equation}\label{c2.5h}
\hat{E}(x,t):=
\max_{v\in \mathbb{R}}E(v;x,t)-\int_0^{x-tH(0)}U(\varphi(\xi))\,{\rm d}\xi.
\end{equation}

\smallskip
Similar to $\eqref{c2.59}$--$\eqref{c2.58}$,
for any fixed $\tau>0$, consider the following the Cauchy problem:
\begin{equation}\label{c2.59h}
\begin{cases}
U(v)_t+F(v)_x=0 \qquad & {\rm for}\ x\in \mathbb{R},\ t>\tau, \\[2mm]
v(x,t)|_{t=\tau}=u(x,\tau)\qquad & {\rm for}\ x\in \mathbb{R},\ t=\tau,
\end{cases}
\end{equation}
where $u(x,\tau)$ is obtained by restricting solution $u(x,t)$
of the Cauchy problem \eqref{c7.1}--\eqref{c7.1b} to the line: $t=\tau$.
Similar to $\eqref{c7.2}$--$\eqref{c2.5h}$,
we define that, for any $(x,t)\in \mathbb{R}\times \mathbb{R}^+$ with $t>\tau$,
\begin{equation}\label{c2.57h}
E(v;x,t;\tau):=(t-\tau)\int_0^v H'(s)\big(U(u(x-(t-\tau)H(s),\tau))-U(s)\big)\,{\rm d}s,
\end{equation}
and
\begin{equation}\label{c2.58h}
\begin{cases}
\displaystyle\mathcal{U}(x,t;\tau)
:=\big\{v\in \mathbb{R}\,:\, E(v;x,t;\tau)=\max_{w\in\mathbb{R}} E(w;x,t;\tau)\big\}, \\[3mm]
\displaystyle v^-(x,t):=\sup\,\mathcal{U}(x,t;\tau),
\qquad v^+(x,t):=\inf\, \mathcal{U}(x,t;\tau),\\[2mm]
\displaystyle \hat{E}(x,t;\tau):=\max_{w\in\mathbb{R}} E(w;x,t;\tau)+\hat{E}(x-(t-\tau)H(0),\tau).
\end{cases}
\end{equation}

With these, we can obtain the main theorem for the Cauchy problem \eqref{c7.1}--\eqref{c7.1b}
similar to Theorem $\ref{the:mt}$ and Corollaries $\ref{cor:c2.1}$--$\ref{cor:c2.2}$.
From $\eqref{c7.19}$, the intervals:
$[s^-(x,t),s^+(x,t)]$ are contained in the same bounded interval locally in $(x,t)$.
Then $u^\pm(x,t)$ defined by $\eqref{c7.4}$ are locally uniformly bounded,
which, instead of $\eqref{c2.7}$, can be used to prove
the local properties similar to Lemmas $\ref{lem:c2.3}$--$\ref{lem:c2.5}$ as follows:
\begin{itemize}
\item[(i)] For any $t>0$ and $x_1,x_2\in \mathbb{R}$,
\begin{equation*}
\int^{x_2}_{x_1}U(u^\pm(x,t))\,{\rm d}x=\hat{E}(x_1,t)-\hat{E}(x_2,t).
\end{equation*}
\item[(ii)] For any $x\in \mathbb{R}$ and $t_2>t_1>0$,
\begin{equation*}
\int^{t_2}_{t_1}F(u^\pm(x,t))\,{\rm d}t=\hat{E}(x,t_2)-\hat{E}(x,t_1).
\end{equation*}

\item[(iii)] For any $t>0$ and $x,x'\in \mathbb{R}$ with $x<x'$,
\begin{equation*}
x-tH(u^+(x,t))\leq x' -tH(u^-(x',t)).
\end{equation*}

\item [(iv)] Lemma $\ref{lem:c2.5}$ holds via replacing $f'(c)$ by $H(c)$ in $\eqref{c2.43}$.
\end{itemize}

\smallskip
By the same arguments as the proofs of Theorem $\ref{the:mt}$ and Corollaries $\ref{cor:c2.1}$--$\ref{cor:c2.2}$,
these local properties of $u^\pm(x,t)$ and $\hat{E}(x,t)$ validate
the following main theorem for the Cauchy problem \eqref{c7.1}--\eqref{c7.1b}.

\begin{The}[Formula for entropy solutions of the Cauchy problem $\eqref{c7.1}$--$\eqref{c7.1b}$]
Suppose that the flux pair $(U(u), F(u))$ satisfies $\eqref{c7.1c}$
and the initial data function $\varphi(x)\in L_{{\rm loc}}^1(\mathbb{R})$ satisfies $\eqref{c7.19}$.
Let $u=u(x,t)$ on $\mathbb{R}\times \mathbb{R}^+$ be defined by
\begin{equation}\label{c7.8}
u(x,t):=u^+(x,t)=\inf \big\{w\in \mathbb{R}\,:\, E(w;x,t)=\max_{v\in \mathbb{R}}E(v;x,t)\big\}
\end{equation}
with $E(u;x,t)$ given by $\eqref{c7.2}$.
Then $u=u(x,t)$,
defined by the solution formula $\eqref{c7.8}$,
is the unique entropy solution of the Cauchy problem $\eqref{c7.1}$--$\eqref{c7.1b}$ in the sense that
\begin{equation*}
H(u(x_2,t))-H(u(x_1,t))\leq \frac{x_2-x_1}{t} \qquad {\rm for\ any\ } x_2>x_1\ {\rm and}\ t>0.
\end{equation*}
Furthermore, the entropy solution $u=u(x,t)$ possesses both the left- and right-traces pointwise
and satisfies
\begin{equation*}
u(x{-},t)=u^-(x,t),\qquad
u(x{+},t)=u^+(x,t)\qquad\, {\rm on}\ (x,t)\in \mathbb{R} \times \mathbb{R}^+;
\end{equation*}
and, for each $t>0$, $u(x,t)=u(x{-},t)=u(x{+},t)$ holds except
at most a countable set of $(x,t)$ at which $u(x{-},t)>u(x{+},t)$ and satisfies
\begin{equation*}
H(u(\cdot,t))\in {\rm BV}_{{\rm loc}}(\mathbb{R}) \qquad {\rm for\ any }\ t>0.
\end{equation*}
Moreover, the results in {\rm Corollary} $\ref{cor:c2.1}$ still hold
if $\varphi_i(x)$,  $u_i(x,t)$, and $f'(u)$ in $\eqref{c2.51}$
are replaced by  $U(\varphi_i(x))$, $U(u_i(x,t))$, and $H(u)$, respectively.
Also, the results in {\rm Corollary} $\ref{cor:c2.2}$ still hold.
\end{The}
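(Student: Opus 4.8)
The plan is to run the five-step scheme of the proof of Theorem~\ref{the:mt} together with the arguments for Corollaries~\ref{cor:c2.1}--\ref{cor:c2.2}, organized around the change of unknown $v=U(u)$. Under this substitution the problem $\eqref{c7.1}$--$\eqref{c7.1b}$ becomes $v_t+g(v)_x=0$, $v|_{t=0}=U(\varphi(x))$, with $g(v):=F(U^{-1}(v))$; since $(U,F)\in C^2(\mathbb{R})$ and $U'>0$, we have $U^{-1}\in C^2(\mathbb{R})$ and hence $g\in C^2(\mathbb{R})$, while $g'(v)=H(U^{-1}(v))$ and $g''(v)=H'(u)/U'(u)$ at $v=U(u)$, so that $\eqref{c1.2b}$ for $H$ translates exactly into $\eqref{c1.2}$ for $g$ (the set $\{g''=0\}=U(\{H'=0\})$ is Lebesgue null because $U$ is bi-Lipschitz on compacta). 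Substituting $s=U(\sigma)$ in $\eqref{c7.2}$ shows that $E(u;x,t)$ coincides with the functional $\eqref{c2.1}$ for flux $g$ and datum $U\circ\varphi$, evaluated at $U(u)$; hence $\mathcal{U}(x,t)=U^{-1}\big(\mathcal{U}_g(x,t)\big)$, $u^\pm(x,t)=U^{-1}\big(u_g^\pm(x,t)\big)$, the Oleinik-type inequality for $\eqref{c7.1}$ is precisely $\eqref{c1.4}$ for $g$, and $H(u(\cdot,t))=g'(v(\cdot,t))\in{\rm BV}_{{\rm loc}}(\mathbb{R})$ follows from $\eqref{c2.38}$. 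In short, every assertion to be proved is the image under the strictly increasing homeomorphism $U^{-1}$ of the corresponding assertion for a convex flux.

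The one genuinely new point is that the transformed datum $U\circ\varphi$ need only lie in $L^1_{{\rm loc}}(\mathbb{R})$, so the global bound $\eqref{c2.7}$ is unavailable and Theorem~\ref{the:mt} cannot be invoked as a black box; instead one reproves it with localized estimates. First I would record that $\eqref{c7.19}$ forces, for every compact $K\subset\mathbb{R}\times\mathbb{R}^+$, an inclusion $[s^-(x,t),s^+(x,t)]\subset[-M_K,M_K]$ with $M_K$ uniform on $K$ (by the monotonicity argument given just before $\eqref{c7.3}$), whence $|u^\pm(x,t)|\le M_K$ on $K$; rewriting the $\varphi$-dependent part of $\eqref{c7.2}$ as an integral of $U\circ\varphi$ over the interval between $x-tH(0)$ and $x-tH(u)$ then shows $E(\cdot\,;x,t)$ and $\hat E$ are finite. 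With $M_K$ in hand the proofs of Lemmas~\ref{lem:c2.3}--\ref{lem:c2.5} transfer verbatim after replacing $\|\varphi\|_{L^\infty}$ by the local bound, $f'$ by $H$, $f''(s)\,{\rm d}s$ by $H'(s)\,{\rm d}s$, $\varphi$ by $U\circ\varphi$, and $s$ by $U(s)$ in every integral, and confining each estimate to a fixed compact neighbourhood; this yields the four local identities (i)--(iv) listed in the text. Steps~1--5 of the proof of Theorem~\ref{the:mt} then go through: (i)--(ii) give that $\hat E$ is locally Lipschitz with $\hat E_x\mathop{=}\limits^{a.e.}-U(u)$, $\hat E_t\mathop{=}\limits^{a.e.}F(u)$, hence $u$ is a weak solution; (iii)--(iv) give the one-sided inequality and the pointwise traces $u(x{\mp},t)=u^{\mp}(x,t)$ with the countability of jumps and the ${\rm BV}_{{\rm loc}}$ statement; and uniqueness follows from the doubling/mollification argument of Step~5 applied to $U(u_1)-U(u_2)$, with the divided differences of $g$ in place of $f[\cdot,\cdot]$, $f[\cdot,\cdot,\cdot]$ and all test functions supported in a strip $[-R,R]\times[t_1,t_2]$ so that only the local bounds enter.

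For Corollaries~\ref{cor:c2.1}--\ref{cor:c2.2} I would proceed the same way. The inequality $\eqref{c2.52}$ and its consequences are purely algebraic manipulations of $\eqref{c2.13}$, $\eqref{c2.5}$, and the structure of $E$; carrying them out with $U(\varphi_i)$, $U(u_i)$, $H$ in place of $\varphi_i$, $u_i$, $f'$, and using the local bounds wherever boundedness of $\varphi_i,u_i$ was invoked, yields the refined $L^1$-contraction, the pointwise monotonicity $\eqref{c2.50b}$, and the monotone pointwise convergence $\eqref{c7.30}$--$\eqref{c7.31}$ for $\eqref{c7.1}$--$\eqref{c7.1b}$. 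For the semigroup property one follows the proof of Corollary~\ref{cor:c2.2}: one checks that the time-$\tau$ restriction $u(\cdot,\tau)$ again satisfies a growth condition of the form $\eqref{c7.19}$ — this is where the one-sided bound $H(u(x_2,\tau))-H(u(x_1,\tau))\le(x_2-x_1)/\tau$ must be combined with $\eqref{c7.19}$ for $\varphi$, so that $\eqref{c2.59h}$ is governed by $\eqref{c7.2}$ with $E(\cdot\,;x,t;\tau)$ from $\eqref{c2.57h}$ — and then the identity $\mathcal{U}(x,t;\tau)=\mathcal{U}(x,t)$, $\hat E(x,t;\tau)=\hat E(x,t)$ is obtained by the same two-sided estimate resting on the convexity of $g$ (equivalently, the monotonicity of $H$), which closes the $L^1_{{\rm loc}}$-contractive semigroup structure.

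The step I expect to be the main obstacle is the uniform localization: in the $L^\infty$ theory many estimates — the Lipschitz continuity of $\hat E$, the trace limits, and the uniqueness argument — lean on a single global modulus, whereas here one must verify that the bound $M_K$ coming from $\eqref{c7.19}$ is genuinely uniform on compacta and, more delicately, that the time-$\tau$ datum $u(\cdot,\tau)$ inherits a growth condition of the same type so that the semigroup identity is legitimate. Establishing that $\eqref{c7.19}$ is stable along the flow, and in particular that it keeps $E$ finite, is the technical heart; once that is in place, the remainder is a careful but routine transcription of the $L^\infty$ proofs under the substitutions above.
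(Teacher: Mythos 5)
Your proposal is correct and, in substance, follows the same route as the paper: the paper's Section~8 argument is precisely to localize the bound $|u^\pm|\le M_K$ via $\eqref{c7.19}$, transfer the four local analogues of Lemmas~$\ref{lem:c2.3}$--$\ref{lem:c2.5}$ under the dictionary $f'\mapsto H$, $f''(s)\,{\rm d}s\mapsto H'(s)\,{\rm d}s$, $\varphi\mapsto U\circ\varphi$, $s\mapsto U(s)$, and then rerun Steps~1--5 of Theorem~$\ref{the:mt}$ and the proofs of Corollaries~$\ref{cor:c2.1}$--$\ref{cor:c2.2}$. The one organizational difference is that you make the dictionary automatic by the change of unknown $v=U(u)$, $g=F\circ U^{-1}$, under which $\eqref{c7.2}$ coincides with $\eqref{c2.1}$ for the flux $g$ and datum $U\circ\varphi$ up to an additive term independent of $u$ (coming from the shifted lower limit $U^{-1}(0)$ of the transformed integral), so the argmax sets, the Oleinik inequality, and the ${\rm BV}_{\rm loc}$ statement for $H(u(\cdot,t))=g'(v(\cdot,t))$ transfer verbatim; the paper instead re-derives each formula directly without performing the substitution. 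What your framing buys is a cleaner explanation of \emph{why} every identity transfers (it is the image of the convex-flux identity under the increasing homeomorphism $U$); what it does not buy is a shortcut past the genuinely new work, which you correctly identify: the global bound $\eqref{c2.7}$ is unavailable, so all estimates must be run on compacta with $M_K$ from $\eqref{c7.19}$, and for the semigroup identity one must check that $u(\cdot,\tau)$ inherits a growth condition of type $\eqref{c7.19}$ (via the one-sided inequality combined with $\eqref{c7.19}$ for $\varphi$). The paper leaves these last points at the same level of detail as you do, so your proposal is not less complete than the source.
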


To elucidate the fine properties of entropy solutions of the Cauchy problem \eqref{c7.1}--\eqref{c7.1b} as in \S 3--\S 7,
we restrict to the case that
the initial data function $\varphi(x) \in L^\infty(\mathbb{R})$.
For this purpose,
we present the following adjustments to the corresponding symbols and notions:

\begin{itemize}
\item [(a)] Replace $f'(u)$ and $f''(u)$ by $H(u)$ and $H'(u)$, respectively.

\vspace{1pt}
\item [(b)] Replace all the integrals of $\varphi(x)$ and $u(x,t)$ by
the corresponding integrals of $U(\varphi(x))$ and $U(u(x,t))$,
especially for replacing $\Phi(x)=\int^x_{0} \varphi(\xi)\,{\rm d}\xi$
by $\Phi(x)=\int^x_{0} U(\varphi(\xi))\,{\rm d}\xi$
along with its Dini derivatives and convex hull $\bar{\Phi}(x)$ over $(-\infty,\infty)$.

\vspace{1pt}
\item [(c)] Replace $\Phi(l;x_0,c)$ and $F(l;t,c)$ defined in $\eqref{c4.2b}$ by
\begin{align*}
\qquad\quad\ \Phi(l;x_0,c)=\!\int^{x_0+l}_{x_0}(U(\varphi(\xi))-U(c))\,{\rm d}\xi,\quad
F(l;t,c)={-}t\!\int^{u(l;t,c)}_c(U(s)-U(c))H'(s)\,{\rm d}s.
\end{align*}

\item [(d)] The four invariants
$\overline{u}_l$, $\underline{u}_l$, $\overline{u}_r$, and $\underline{u}_r$,
defined by $\eqref{c5.7}$, are replaced by
\begin{equation*}
\qquad\quad\begin{cases}
U(\overline{u}_l):=\displaystyle
\mathop{\overline{\lim}}\limits_{x\rightarrow -\infty}
\frac{1}{x}\int^x_0U(\varphi (\xi))\,{\rm d}\xi,
\quad
&U(\underline{u}_l):=\displaystyle
\mathop{\underline{\lim}}\limits_{x\rightarrow -\infty}
\frac{1}{x}\int^x_0U(\varphi (\xi))\,{\rm d}\xi,\\[4mm]
U(\overline{u}_r):=\displaystyle
\mathop{\overline{\lim}}\limits_{x\rightarrow \infty}
\frac{1}{x}\int^x_0U(\varphi (\xi))\,{\rm d}\xi,
\quad
&U(\underline{u}_r):=\displaystyle
\mathop{\underline{\lim}}\limits_{x\rightarrow \infty}
\frac{1}{x}\int^x_0U(\varphi (\xi))\,{\rm d}\xi.
\end{cases}
\end{equation*}

\item [(e)] Similar to $\eqref{c6.10}$,
let $\tilde{u}=\tilde{u}(x,t)$ be the rarefaction-constant solution
of the Cauchy problem for \eqref{c7.1} with the initial data function $\bar{\Phi}'(x)$.
\end{itemize}

By the same arguments as \S 3--\S 7,
these adjustments can validate the following main theorem
on the fine properties of entropy solutions of the Cauchy problem \eqref{c7.1}--\eqref{c7.1b}.

\begin{The}[Fine properties for entropy solutions of the Cauchy problem $\eqref{c7.1}$--$\eqref{c7.1b}$]
Suppose that the flux pair $(U(u), F(u))$ satisfies $\eqref{c7.1c}$
and the initial data function $\varphi(x)\in L^\infty(\mathbb{R})$.
Then the unique entropy solution $u=u(x,t)$,
defined by $\eqref{c7.8}$,
of the Cauchy problem \eqref{c7.1}--\eqref{c7.1b} possesses the following fine properties{\rm :}

\begin{itemize}
\item [(i)]
The results in {\rm Theorems} $\ref{pro:c3.3}$--$\ref{pro:c3.4}$
with {\rm Proposition} $\ref{pro:c3.1}$ and {\rm Lemma} $\ref{pro:c3.2}$ still hold{\rm ;}
and the results in {\rm Theorem} $\ref{pro:c3.6}$ still hold if
the left- and right-derivatives of shock curves as in $\eqref{c3.36}$--$\eqref{c3.37}$ are replaced by
\begin{equation*}
\qquad\quad \frac{{\rm d}^+x(t)}{{\rm d}t}
=\frac{F(u(x(t){-},t))-F(u(x(t){+},t))}{U(u(x(t){-},t))-U(u(x(t){+},t))},\qquad
\frac{{\rm d}^-x(t)}{{\rm d}t}=\frac{F(d_n)-F(c_n)}{U(d_n)-U(c_n)}.
\end{equation*}

\item [(ii)]
The results in {\rm Lemma} $\ref{lem:c4.1}$ still hold{\rm ;}
the results in {\rm Propositions} $\ref{pro:c4.1}$--$\ref{pro:c4.4}$ still hold
if $\eqref{c4.3}$ is replaced by
\begin{equation*}
\overline{{\rm D}}_- \Phi(x_0)\leq U(c)\leq\underline{{\rm D}}_+ \Phi(x_0)
\end{equation*}
and
$\eqref{c4.3a}$ is replaced by
\begin{equation*}
(\overline{{\rm D}}_- \Phi(x_0),\underline{{\rm D}}_+ \Phi(x_0))\subset
U(\mathcal{C}(x_0))\subset
[\overline{{\rm D}}_- \Phi(x_0),\underline{{\rm D}}_+ \Phi(x_0)];
\end{equation*}
and
the results in {\rm Theorems} $\ref{the:c4.0}$--$\ref{the:c4.1}$, $\ref{the:c4.2}$,
and $\ref{lem:plc1}$--$\ref{the:elc0}$ with {\rm Lemma} $\ref{lem:plc0}$ also hold.

\item [(iii)]
The results in {\rm Theorems} $\ref{the:c4.3}$
and $\ref{the:dsw1}$ with {\rm Lemma} $\ref{lem:fsw2}$ still hold.

\item [(iv)]
The results in {\rm Lemma} $\ref{lem:c6.1}$ still hold
if $tc$ and $td$ in $\eqref{c6.11}$ are replaced by
$tU(c)$ and $tU(d)$, respectively{\rm ;}
the results in {\rm Theorem} $\ref{pro:c6.1}$ still hold
if $\overline{u}_l$, $\underline{u}_l$, $\overline{u}_r$, and $\underline{u}_r$
in $\eqref{c6.22}$ are replaced by
$U(\overline{u}_l)$, $U(\underline{u}_l)$, $U(\overline{u}_r)$, and $U(\underline{u}_r)$, respectively{\rm ;}
and the results in {\rm Propositions} $\ref{pro:c5.1}$--$\ref{pro:c5.2}$
and {\rm Theorems} $\ref{the:c5.1}$--$\ref{the:c5.2}$ still hold
if $\eqref{c5.14}$ is replaced by
\begin{equation*}
\Phi(x_0)=\bar{\Phi}(x_0), \qquad {\rm D}_-\bar{\Phi}(x_0)\leq U(c)\leq {\rm D}_+\bar{\Phi}(x_0).
\end{equation*}
Moreover,
the results in {\rm Lemma} $\ref{lem:c6.2}$ still hold
if $v^+(t)-c$ in $\eqref{c6.17}$
and $v^-(t)-d$ in $\eqref{c6.19}$ are replaced by
$U(v^+(t))-U(c)$ and $U(v^-(t))-U(d)$ respectively,
and $\eqref{c6.16}$ and $\eqref{c6.18}$ are replaced by
$$
U(c)=\lim_{t\rightarrow \infty}U(u(x(t){+},t))={\rm D}_-\bar{\Phi}(y^+(\infty,0))
$$
and
$$
U(d)=\lim_{t\rightarrow \infty}U(u(x(t){-},t))={\rm D}_+\bar{\Phi}(y^-(\infty,0))
$$
respectively{\rm ;}
and
the results in {\rm Theorem} $\ref{pro:c6.2}$ also hold.

\item [(v)]
The results in {\rm Lemmas} $\ref{the:c6.1}$--$\ref{lem:c6.3}$
and {\rm Theorems} $\ref{pro:c6.3}$--$\ref{the:c6.2}$ hold{\rm ;}
the results in {\rm Theorem} $\ref{the:c6.3}$ still hold
if
$u(x,t)-\overline{u}_l$ and $u(x,t)-\underline{u}_r$ are replaced by
$U(u(x,t))-U(\overline{u}_l)$ and $U(u(x,t))-U(\underline{u}_r)$ respectively,
and
$\varphi(x)-\overline{u}_l$ and $\varphi(x)-\underline{u}_r$ are replaced by
$U(\varphi(x))-U(\overline{u}_l)$ and $U(\varphi(x))-U(\underline{u}_r)$ respectively{\rm ;}
and the results in
{\rm Lemma} $\ref{lem:n.1}$ and {\rm Theorem} $\ref{the:n.1}$ still hold
for the generalized $N$--waves given by {\rm Definition} $\ref{def:n.1}$
if
$\varphi(x)-\overline{u}_l$ and $\varphi(x)-\underline{u}_r$ are replaced by
$U(\varphi(x))-U(\overline{u}_l)$ and $U(\varphi(x))-U(\underline{u}_r)$ respectively.
\end{itemize}
\end{The}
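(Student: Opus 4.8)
The plan is to reduce the Cauchy problem \eqref{c7.1}--\eqref{c7.1b} to the case \eqref{c1.1}--\eqref{ID}, which has been fully analyzed in \S 3--\S 7, by the change of unknown $v=U(u)$. Since $U'(u)>0$ on $\mathbb{R}$, the map $u\mapsto U(u)$ is a strictly increasing $C^2$ bijection of $\mathbb{R}$ onto $U(\mathbb{R})$ with $C^1$ inverse, and is locally bi-Lipschitz, so it carries Lebesgue-null sets to Lebesgue-null sets in both directions. Writing $u=U^{-1}(v)$, equation \eqref{c7.1} becomes $v_t+g(v)_x=0$ with $g(v):=F(U^{-1}(v))$, and $g'(v)=H(U^{-1}(v))$ is strictly increasing, while $g''(v)=H'(U^{-1}(v))\big(U'(U^{-1}(v))\big)^{-1}\ge 0$ with $\{v:g''(v)=0\}=U(\{u:H'(u)=0\})$, a null set by \eqref{c1.2b} and the bi-Lipschitz property; hence $g$ is a convex flux satisfying \eqref{c1.2}. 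For $\varphi\in L^\infty(\mathbb{R})$ the transformed datum $\psi:=U(\varphi)$ also lies in $L^\infty(\mathbb{R})$, so Theorem \ref{the:mt} and all results of \S 3--\S 7 apply verbatim to the problem $v_t+g(v)_x=0,\ v|_{t=0}=\psi$.

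Next I would identify the new formula \eqref{c7.2} with the Lax--Oleinik-type formula of Theorem \ref{the:mt} for this transformed problem. Substituting $s=U(\sigma)$ in the standard $E$-function built from $g$ and $\psi$, and using $g'(U(\sigma))=H(\sigma)$ together with $g''(U(\sigma))\,U'(\sigma)\,{\rm d}\sigma={\rm d}H(\sigma)$, one recovers, up to an additive constant independent of $u$, precisely $E(u;x,t)$ as in \eqref{c7.2}. Consequently the maximizing set $\mathcal{U}(x,t)$ in \eqref{c7.3} is the $U^{-1}$-image of the corresponding $\mathcal{U}$-set of the $v$-problem, $u^\pm(x,t)=U^{-1}(v^\pm(x,t))$, and $\hat{E}(x,t)$ in \eqref{c2.5h} coincides with that of the $v$-problem; thus $u(x,t)=U^{-1}(v(x,t))$, where $v$ is the entropy solution \eqref{c2.50} of the transformed problem. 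This is exactly the content of the local properties (i)--(iv) already recorded before the statement, and in particular it yields the pointwise traces, the at-most-countable jump set, and $H(u(\cdot,t))\in{\rm BV}_{\rm loc}(\mathbb{R})$.

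With this correspondence fixed, the remainder of the proof is translational. The dictionary is: $f'\leftrightarrow H$, $f''\leftrightarrow H'$; every primitive or local average of $\varphi$ (resp.\ of $u$) becomes the corresponding one of $U(\varphi)$ (resp.\ of $U(u)$), so in particular $\Phi(x)=\int_0^x U(\varphi(\xi))\,{\rm d}\xi$ with its Dini derivatives and convex hull $\bar{\Phi}$, and Lemma \ref{lem:c2.3} is replaced by $\int_{x_1}^{x_2}U(u^\pm(x,t))\,{\rm d}x=\hat E(x_1,t)-\hat E(x_2,t)$ and $\int_{t_1}^{t_2}F(u^\pm(x,t))\,{\rm d}t=\hat E(x,t_2)-\hat E(x,t_1)$; Rankine--Hugoniot speeds become $[F(u)]/[U(u)]$; and the four invariants are read through $U$ as in item (d). Crucially, characteristic lines $x=x_0+tH(c)$, backward characteristic triangles, forward generalized characteristics, divides $L_c(x_0)$, and shock curves are literally the same subsets of $\mathbb{R}\times\mathbb{R}^+$ for the $u$- and $v$-problems, because they are defined through $H(u)=g'(v)$ and through $\mathcal{U}(x,t)$, both preserved by the substitution. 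Hence each displayed identity or inequality of items (i)--(v) — in Propositions \ref{pro:c4.1}--\ref{pro:c4.4}, Theorems \ref{the:c4.0}--\ref{the:c4.1}, \ref{the:c4.2}--\ref{the:c4.3}, \ref{the:dsw1}, \ref{lem:plc1}--\ref{the:elc0}, \ref{pro:c3.3}--\ref{pro:c3.6}, \ref{pro:c6.1}--\ref{pro:c6.2}, \ref{pro:c6.3}--\ref{the:c6.3}, Lemmas \ref{lem:c4.1}, \ref{lem:plc0}, \ref{lem:fsw2}, \ref{pro:c3.1}, \ref{pro:c3.2}, \ref{lem:c6.1}--\ref{lem:c6.3}, \ref{lem:n.1}, and the $N$-wave results — is obtained by pulling back through $v=U(u)$ the already-established statement applied to $g$ and $\psi$.

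The hard part will be the single genuinely analytic point of the first step: checking that $g$ satisfies \eqref{c1.2}, namely that $\{v:g''(v)=0\}$ is Lebesgue-null. This is exactly where $U'(u)>0$ (not merely $U'\ge0$) is needed, via the bi-Lipschitz transport of the null set $\{u:H'(u)=0\}$. A secondary, routine-but-necessary verification is that every argument in \S\S 3--7 touches the flux only through $f'$, $f''$, and through integrals that, under the dictionary, become integrals of $U(\varphi)$ and $U(u)$ — i.e.\ that no step secretly uses "$u$ is conserved" rather than "$U(u)$ is conserved"; the weak formulation \eqref{c1.3}, the Oleinik inequality \eqref{c1.4}, and all $E$-function manipulations are already written in this form, so once this is confirmed no original step is disturbed and the theorem follows.
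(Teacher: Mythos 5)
Your route is genuinely different from the paper's. The paper proves this theorem by re-running the arguments of \S 2--\S 7 under a symbol dictionary ($f'\leftrightarrow H$, $f''\leftrightarrow H'$, integrals of $\varphi$ and $u$ replaced by integrals of $U(\varphi)$ and $U(u)$), after first checking that the local properties of $u^{\pm}$ and $\hat{E}$ (the analogues of Lemmas $\ref{lem:c2.3}$--$\ref{lem:c2.5}$) survive the substitution. You instead reduce the whole problem to the already-proved case by the change of dependent variable $v=U(u)$, $g=F\circ U^{-1}$, and then pull every statement back through $U$. Your verification that $E_v(U(u);x,t)$ differs from $E(u;x,t)$ in $\eqref{c7.2}$ only by a $u$-independent constant (coming from the lower limit $U^{-1}(0)$ versus $0$) is the right key computation, and it does make the translation automatic rather than a line-by-line re-derivation: characteristics, triangles, divides and shocks are literally the same point sets for the $u$- and $v$-problems, $\Phi(x)=\int_0^xU(\varphi)\,{\rm d}\xi$ is exactly the primitive of the transformed datum, and the invariants, Rankine--Hugoniot speeds, and decay statements in items (iv)--(v) are precisely the $v$-problem statements read through $U$. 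This buys conceptual economy; what it costs is that every quantitative constant in Theorems $\ref{lem:plc1}$, $\ref{the:c4.3}$, and $\ref{the:dsw1}$ (the lifespans $t_p^{\pm}$, the exponents $\alpha_{\pm},\gamma_{\pm},\sigma$, the constants $O_1(1)$, $Q_{\pm}$, etc.) must be checked to transform correctly: $g''$ near $U(c)$ has the same infinitesimal order as $H'$ near $c$ but a constant rescaled by powers of $U'(c)$, and the paper's \S 8 statements are phrased directly in terms of $H'$ and $U(\varphi)$, so the pulled-back constants must be shown to coincide with the dictionary ones. You flag this as routine; it is, but it is not free.

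There is one concrete gap you do not address: $g=F\circ U^{-1}$ is only defined on $U(\mathbb{R})$, which can be a proper bounded open interval (nothing in $\eqref{c7.1c}$ prevents $U'$ from being integrable at $\pm\infty$), whereas Theorem $\ref{the:mt}$ and all of \S 3--\S 7 require a convex flux in $C^2(\mathbb{R})$ satisfying $\eqref{c1.2}$ on the whole line. You need to extend $g$ outside $U([-M,M])$, $M=\|\varphi\|_{L^\infty}$, to a $C^2$ function with $g''>0$ there, and then observe that the entropy solution of the $v$-problem is independent of the extension because $\mathcal{U}_v(x,t)\subset[U(-M),U(M)]$ (the monotonicity argument following $\eqref{c2.1}$ localizes the maximizer of $E_v(\cdot;x,t)$ to the essential range of $\psi=U(\varphi)$, regardless of how $g$ behaves outside it). This is a standard fix and does not require a new idea, but without it the sentence ``Theorem $\ref{the:mt}$ and all results of \S 3--\S 7 apply verbatim'' is not literally true. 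A smaller point of the same kind: the uniqueness assertion must be stated in the correct class, namely weak solutions of $U(u)_t+F(u)_x=0$ with $\lim_{t\to0+}\int_{x_1}^{x_2}U(u)\,{\rm d}x=\int_{x_1}^{x_2}U(\varphi)\,{\rm d}x$ satisfying the Oleinik-type inequality for $H$; your reduction delivers exactly this once you note that $U$ is a bijection between the two solution classes.
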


\smallskip
In addition, for Appendix A, $\rho(u,v)$ in $\eqref{aa5a}$ becomes
$$
\rho(u,v)=\frac{\int^u_vU(s)H'(s)\,{\rm d}s}{\int^u_vH'(s)\,{\rm d}s}\quad {\rm if}\ u\neq v,\qquad \rho(v,v)=U(v)\quad {\rm if }\ u=v.
$$
Then the corresponding results similar to $\eqref{aa5}$--$\eqref{aa6}$ hold.
Furthermore, for any fixed $c\in\mathbb{R}$,
if
$$
H'(u)=(N_\pm+o(1))\, |u-c|^{\alpha\pm}\qquad {\rm as}\ u\rightarrow c{\pm}
$$
holds for some $\alpha_\pm \geq 0$ and $N_\pm>0$,
then $\eqref{aa2}$ still holds if $f'(u)$ is replaced by $H(u)$,
and $\eqref{aa3}$ becomes
$$
\int^u_c\big(U(s)-U(c)\big)H'(s)\,{\rm d}s
=\frac{N_\pm U'(c)+o(1)}{2+\alpha_\pm}\, |u-c|^{2+\alpha_\pm}
\qquad\, {\rm as}\ u\rightarrow c{\pm};
$$
moreover, for $\eqref{aa6a}$, it is direct to check that, as $u\rightarrow c{\pm}$,
\begin{equation*}
\rho(u,c)-U(c)=(1+o(1))\,\frac{1+\alpha_\pm}{2+\alpha_\pm}\,(U(u)-U(c)).
\end{equation*}

For Appendix B,
{\rm Lemmas} $\ref{lem:c5.1}$--$\ref{lem:c5.3}$ still hold if
$\overline{u}_l$ and $\underline{u}_r$ are replaced by $U(\overline{u}_l)$ and $U(\underline{u}_r)$.

\medskip
\appendix
\section{Properties of the Flux Functions of Convexity Degeneracy}
Suppose that the flux function $f(u)$ satisfies $\eqref{c1.2}$.
For $(u,v)\in \mathbb{R}\times\mathbb{R}$, we define
\begin{equation}\label{aa5a}
\rho(u,v):=\dfrac{\int^u_vsf''(s)\,{\rm d}s}{\int^u_vf''(s)\,{\rm d}s} \quad
\mbox{if}\ u\neq v,
\qquad \rho(u,v):=v \quad \ \mbox{if}\ u=v.
\end{equation}
Then
$\rho(u,v)=\rho(v,u)\in C(\mathbb{R}\times\mathbb{R})$ satisfies
\begin{equation}\label{aa5}
\lim_{u,v\rightarrow w}\rho(u,v)=\rho(w,w)=w \qquad {\rm for\ any}\ w\in\mathbb{R};
\end{equation}
and, for any fixed $v\in\mathbb{R}$,
$\rho(u,v)=\rho(v,u)$ is strictly increasing in $u\in \mathbb{R}$
and satisfies
\begin{equation}\label{aa6}
\partial_1\rho(u,v)=\partial_2\rho(v,u)=
f''(u)\frac{u-\rho(u,v)}{f'(u)-f'(v)} \geq 0
\qquad {\rm if}\ u\neq v.
\end{equation}

Furthermore, for any fixed $c\in \mathbb{R}$, if
\begin{equation}\label{aa1}
f''(u)=(N_\pm+o(1))\, |u-c|^{\alpha_\pm} \qquad {\rm as}\ u\rightarrow c{\pm}
\end{equation}
holds for some $\alpha_\pm \geq0$ and $N_\pm>0$,
then
\begin{itemize}
\item [(i)] As $u\rightarrow c{\pm}$,
\begin{equation}\label{aa2}
|f'(u)-f'(c)|=\frac{N_\pm+o(1)}{1+\alpha_\pm}\, |u-c|^{1+\alpha_\pm}.
\end{equation}

\item [(ii)] As $u\rightarrow c{\pm}$,
\begin{equation}\label{aa3}
\int^u_c(s-c)f''(s)\,{\rm d}s=\frac{N_\pm+o(1)}{2+\alpha_\pm}\, |u-c|^{2+\alpha_\pm}.
\end{equation}

\item [(iii)] As $u\rightarrow c{\pm}$,
\begin{equation}\label{aa6a}
\rho(u,c)-c=(1+o(1))\,\frac{1+\alpha_\pm}{2+\alpha_\pm}\,(u-c).
\end{equation}
\end{itemize}

\begin{proof}
Clearly, $\rho(u,v)=\rho(v,u)$. Since $f'(u)$ is strictly increasing in $u\in \mathbb{R}$,
then it is direct to see that $\rho(u,v)$ is continuous at any point $(u,v)$ with $u\neq v$.
Moreover, for any $w\in \mathbb{R}$,
\begin{align*}
|\rho(u,v)-\rho(w,w)|
&=\bigg|\frac{\int^u_v(s-v)f''(s)\,{\rm d}s}{f'(u)-f'(v)}+v-w\bigg|\\[1mm]
&\leq\bigg|\frac{(u-v)f'(u)-(f(u)-f(v))}{f'(u)-f'(v)}\bigg|+|v-w|\\[1mm]
&=\frac{f'(u)-f'(\xi)}{f'(u)-f'(v)}|u-v|+|v-w|
\leq|u-v|+|v-w|,
\end{align*}
where $\xi$ lies between $u$ and $v$.
Thus, $\eqref{aa5}$ holds so that
$\rho(u,v)$ is continuous at $(w,w)$ for any $w\in \mathbb{R}$.
Therefore, $\rho(u,v) \in C(\mathbb{R}\times \mathbb{R})$.

To prove $\eqref{aa6}$, fix $v\in\mathbb{R}$. For any $u\neq v$,
it is direct to check that
$$\partial_1\rho(u,v)=\partial_2\rho(v,u)=
f''(u)\,\frac{u-\rho(u,v)}{f'(u)-f'(v)}
=f''(u)\,\frac{\int^u_v(u-s)f''(s)\,{\rm d}s}{\big(\int^u_v f''(s)\,{\rm d}s\big)^2}
\geq 0,$$
which, by the strictly increasing property of $f'(u)$, implies that
$\rho(u,v)$ is strictly increasing in $u$.

\smallskip
Furthermore, if $\eqref{aa1}$ holds, then

\begin{itemize}
\item[(i)] Letting $u\rightarrow c{\pm}$,
\begin{align*}
|f'(u)-f'(c)|&=\int^1_0f''(\theta u+(1-\theta)c)\,{\rm d}\theta\, |u-c|\\[1mm]
&=\int^1_0(N_\pm+o(1))\, |\theta(u-c)|^{\alpha_\pm} \,{\rm d}\theta\, |u-c|
=\frac{N_\pm+o(1)}{1+\alpha_\pm}\, |u-c|^{1+\alpha_\pm}.
\end{align*}

\item[(ii)] Letting $u\rightarrow c{\pm}$, if $u>c$,
\begin{align*}
\int^u_c(s-c)f''(s)\,{\rm d}s= \int^u_c(s-c)(N_\pm+o(1))(s-c)^{\alpha_\pm} \,{\rm d}s
=\frac{N_\pm+o(1)}{2+\alpha_\pm}(u-c)^{2+\alpha_\pm};
\end{align*}
and, if $u<c$,
\begin{align*}
\int^u_c(s-c)f''(s)\,{\rm d}s= \int^c_u(c-s)(N_\pm+o(1))(c-s)^{\alpha_\pm} \,{\rm d}s
=\frac{N_\pm+o(1)}{2+\alpha_\pm}(c-u)^{2+\alpha_\pm}.
\end{align*}

\item[(iii)] From $\eqref{aa2}$--$\eqref{aa3}$, as $u\rightarrow c{\pm}$,
$$
\frac{\rho(u,c)-\rho(c,c)}{u-c}
=\dfrac{\int^u_c(s-c)f''(s)\,{\rm d}s}{(u-c)(f'(u)-f'(c))}
=\dfrac{\frac{N_\pm+o(1)}{2+\alpha_\pm}|u-c|^{2+\alpha_\pm}}
{\frac{N_\pm+o(1)}{1+\alpha_\pm}|u-c|^{2+\alpha_\pm}}
=(1+o(1))\frac{1+\alpha_\pm}{2+\alpha_\pm},
$$
which implies $\eqref{aa6a}$.
\end{itemize}

In summary, we have proved the properties of the flux functions of convexity degeneracy.
\end{proof}

\section{Proofs of Lemmas $\ref{lem:c5.1}$--$\ref{lem:c5.3}$}
From $\eqref{c5.4}$--$\eqref{c5.5}$, $\Phi(x)$ is
continuous on $({-}\infty,\infty)$,
and the convex hull $\bar{\Phi}_N(x)$ of $\Phi(x)$ on $[-N,N]$ satisfies that
$\Phi(x)\geq \bar{\Phi}_N(x)$ on $[-N,N]$, $\bar{\Phi}_N(\pm N)=\Phi(\pm N),$
and
$$
\Phi(x)\geq \bar{\Phi}_1(x)\geq\cdots\geq\bar{\Phi}_N(x)\geq\bar{\Phi}_{N+1}(x)\geq \cdots
\qquad\,\, \mbox{for any $x\in \mathbb{R}$}.
$$
Then $\bar{\Phi}_N(x)$ converges to $\bar{\Phi}(x)$ pointwise on $[-\infty,\infty)$.

\smallskip
We now give the proofs of Lemmas $\ref{lem:c5.1}$--$\ref{lem:c5.3}$.

\begin{proof}[Proof of {\rm Lemma} $\ref{lem:c5.1}$.]
The proof is divided into two steps.

\smallskip
\noindent
{\bf 1.} Suppose that $\bar{\Phi}(x)>-\infty$,
we now prove $\eqref{phicc}$.
By the definition of $\bar{\Phi}_N(x)$ in $\eqref{c5.5}$,
for any fixed $x_1,x_2\in \mathbb{R}$, if $N>\max\{|x_1|,|x_2|\}$,
\begin{equation*}
\bar{\Phi}_N(\frac{x_1+x_2}{2})\leq\frac{\bar{\Phi}_N(x_1)+\bar{\Phi}_N(x_2)}{2},
\end{equation*}
which, by letting $N\rightarrow\infty$, implies that $\bar{\Phi}(x)$ is a convex function, $i.e.$,
for any $x_1,x_2\in \mathbb{R}$,
\begin{equation}\label{ab1}
\bar{\Phi}(\frac{x_1+x_2}{2})\leq\frac{\bar{\Phi}(x_1)+\bar{\Phi}(x_2)}{2}.
\end{equation}

From $\eqref{ab1}$, the left- and right-derivatives ${\rm D}_{\pm}\bar{\Phi}(x)$ of $\bar{\Phi}(x)$
exist pointwise on $({-}\infty,\infty)$.
Denote $c:={\rm D}_-\bar{\Phi}(0)$. Then
$\bar{\Phi}(x)\geq\bar{\Phi}(0)+cx$ for any $x\in \mathbb{R}.$
From ${\Phi}(x)\geq\bar{\Phi}_N(x)\geq\bar{\Phi}(x)$,
\begin{equation*}
\Phi(x)\geq\bar{\Phi}(0)+cx \qquad {\rm for\ any}\ x\in \mathbb{R},
\end{equation*}
which implies
\begin{equation}\label{ab3a}
\mathop{\underline{\lim}}_{x\rightarrow \pm\infty}(\Phi(x)-cx)\geq \bar{\Phi}(0)>-\infty.
\end{equation}

\smallskip
\noindent
{\bf 2.}
On the other hand, suppose that $\eqref{ab3a}$ holds for some $c\in \mathbb{R}$.
Then there exists $b_0\in \mathbb{R}$ such that
$\mathop{\underline{\lim}}_{x\rightarrow \pm\infty}(\Phi(x)-cx)\geq b_0.$
Thus, for $N$ sufficiently large,
\begin{equation*}
\Phi(x)-c x\geq b_0-1 \qquad {\rm for\ any}\ |x|> N.
\end{equation*}

Denote the minimum of $\Phi(x)-cx$ on $[-N,N]$ by $b_1$ and $b:=\min\{b_0-1,b_1\}$.
Then
\begin{equation}\label{ab4}
\Phi(x)\geq cx+b \qquad{\rm for\ any}\ x \in \mathbb{R},
\end{equation}
so that, by $\eqref{c5.5}$,
$
\bar{\Phi}_N(x)\geq cx+b $ for any $ x \in \mathbb{R}.
$
By letting $N\rightarrow\infty$, this implies that
\begin{equation*}
\bar{\Phi}(x)\geq cx+b>-\infty\qquad {\rm for\ any}\ x\in \mathbb{R},
\end{equation*}
as desired.
\end{proof}

\begin{proof}[Proof of {\rm Lemma} $\ref{lem:c5.2}$.]  We divide the proof into three steps accordingly.

\smallskip
\noindent
{\bf 1}. If $\overline{u}_l$ and $\underline{u}_r$ in $\eqref{c5.7}$ satisfy that $\overline{u}_l<\underline{u}_r$,
then, for any $c\in (\overline{u}_l,\underline{u}_r)$,
\begin{equation}\label{ab5}
\begin{cases}
 \displaystyle\lim\limits_{x\rightarrow \infty}(\Phi(x)-cx)
 =\lim\limits_{x\rightarrow \infty}x\, \Big(\frac{\Phi(x)}{x}-c\Big)=\infty,
\\[4mm]
 \displaystyle\lim\limits_{x\rightarrow -\infty}(\Phi(x)-cx)
 =\lim\limits_{x\rightarrow -\infty}({-}x)\, \Big(c-\frac{\Phi(x)}{x}\Big)=\infty,
\end{cases}
\end{equation}
which, by Lemma $\ref{lem:c5.1}$, implies that $\bar{\Phi}(x)>-\infty$ on $x \in \mathbb{R}$.

From $\eqref{ab5}$, there exists $x_0\in \mathbb{R}$ such that
${\Phi}(x)-cx\geq {\Phi}(x_0)-cx_0$ for any $x\in \mathbb{R},$
which, by $\eqref{c5.5}$, implies that
$\bar{\Phi}_N(x)\geq cx+( {\Phi}(x_0)-cx_0)$ for any $x\in \mathbb{R}.$
Then, by letting $N\rightarrow\infty$,
\begin{equation}\label{ab6}
\bar{\Phi}(x)\geq cx+( {\Phi}(x_0)-cx_0) \qquad {\rm for\ any}\ x\in \mathbb{R},
\end{equation}
which means that
$\bar{\Phi}(x_0)\geq cx_0+ ({\Phi}(x_0)-cx_0)=\Phi(x_0).$
Since ${\Phi}(x)\geq \bar{\Phi}(x)$ on $\mathbb{R}$,
then ${\Phi}(x_0)= \bar{\Phi}(x_0)$ so that $\mathcal{K}_0\neq\varnothing$.

\smallskip
Since ${\Phi}(x)\geq \bar{\Phi}(x)$ on $ \mathbb{R}$, then,
for any fixed $y\in \mathbb{R}$,
\begin{equation}\label{ab7}
\frac{\bar{\Phi}(x)-\bar{\Phi}(y)}{x-y}\leq \frac{{\Phi}(x)-\bar{\Phi}(y)}{x-y}
\ \ {\rm if}\ x>y, \qquad
\frac{\bar{\Phi}(x)-\bar{\Phi}(y)}{x-y}\geq \frac{{\Phi}(x)-\bar{\Phi}(y)}{x-y}
\ \ {\rm if}\ x<y.
\end{equation}
Then it follows from $\eqref{ab1}$ that
$\frac{\bar{\Phi}(x)-\bar{\Phi}(y)}{x-y}$ is nondecreasing in $x\in \mathbb{R}$
so that, by $\eqref{ab7}$,
\begin{align}\label{ab8}
 {\rm D}_+\bar{\Phi}(y)
 &=\lim\limits_{x\rightarrow y{+}}\frac{\bar{\Phi}(x)-\bar{\Phi}(y)}{x-y}
\leq\lim\limits_{x\rightarrow \infty}\frac{\bar{\Phi}(x)-\bar{\Phi}(y)}{x-y}
\leq\mathop{\underline{\lim}}\limits_{x\rightarrow \infty}
\frac{{\Phi}(x)-\bar{\Phi}(y)}{x-y}\nonumber\\[1mm]
&=\mathop{\underline{\lim}}\limits_{x\rightarrow \infty}\frac{{\Phi}(x)-\bar{\Phi}(y)}{x-y}\,
\mathop{\underline{\lim}}\limits_{x\rightarrow \infty}\frac{x-y}{x}
+\mathop{\underline{\lim}}\limits_{x\rightarrow \infty}\frac{\bar{\Phi}(y)}{x}\nonumber\\[1mm]
&\leq\mathop{\underline{\lim}}\limits_{x\rightarrow \infty}
\Big(\frac{{\Phi}(x)-\bar{\Phi}(y)}{x-y}\,\frac{x-y}{x}+\frac{\bar{\Phi}(y)}{x}\Big)
=\mathop{\underline{\lim}}\limits_{x\rightarrow \infty}\frac{{\Phi}(x)}{x}
=\underline{u}_r.
\end{align}
Similarly, it follows from $\eqref{ab7}$ that
${\rm D}_-\bar{\Phi}(y)\geq \overline{u}_l$ for any $y\in \mathbb{R}$.
This implies $\eqref{c5.8a}$.

To prove $\eqref{c5.8}$, from $\eqref{c5.8a}$, we first have
\begin{equation}\label{ab9}
\inf_{x_0\in \mathcal{K}_0}{\rm D}_-\bar{\Phi}(x_0)\geq \overline{u}_l,
\qquad \sup_{x_0\in \mathcal{K}_0}{\rm D}_+\bar{\Phi}(x_0)\leq \underline{u}_r.
\end{equation}
On the other hand, similar to the arguments as those for $\eqref{ab5}$--$\eqref{ab6}$,
for any $c\in (\overline{u}_l,\underline{u}_r)$,
there exists $x_0\in \mathcal{K}_0$ such that
$\bar{\Phi}(x)\geq cx+\bar{\Phi}(x_0)-cx_0$ for any $x\in \mathbb{R},$
which implies
$$
{\rm D}_-\bar{\Phi}(x_0)
=\lim\limits_{x\rightarrow x_{0}{-}}\frac{\bar{\Phi}(x)-\bar{\Phi}(x_0)}{x-x_0}\leq c, \qquad
{\rm D}_+\bar{\Phi}(x_0)
=\lim\limits_{x\rightarrow x_{0}{+}}\frac{\bar{\Phi}(x)-\bar{\Phi}(x_0)}{x-x_0}\geq c.
$$
By the arbitrariness of $c\in (\overline{u}_l,\underline{u}_r)$, we obtain
\begin{equation}\label{ab10}
\inf_{x_0\in \mathcal{K}_0}{\rm D}_-\bar{\Phi}(x_0)\leq \overline{u}_l,
\qquad \sup_{x_0\in \mathcal{K}_0}{\rm D}_+\bar{\Phi}(x_0)\geq \underline{u}_r.
\end{equation}
Combining $\eqref{ab9}$ with $\eqref{ab10}$, $\eqref{c5.8}$ holds.

\smallskip
\noindent
{\bf 2}. If $\eqref{c5.8b}$ holds, it follows from Lemma $\ref{lem:c5.1}$ that
$\bar{\Phi}(x)>-\infty$ on $ x\in \mathbb{R}$.

On the other hand, suppose that $\bar{\Phi}(x)>-\infty$ on $ x\in \mathbb{R}$.
From Lemma $\ref{lem:c5.1}$,
there exists $c\in \mathbb{R}$ such that
$\mathop{\underline{\lim}}_{x\rightarrow \pm\infty}(\Phi(x)-cx)>-\infty$,
and hence $\eqref{ab4}$ holds.
Then, to prove $\eqref{c5.8b}$, it suffices to show that $c=\underline{u}_r$.
In fact, from $\eqref{ab4}$,
$$\frac{\Phi(x)}{x}\geq \frac{b}{x}+c\quad {\rm if}\ x>0,
\qquad \frac{\Phi(x)}{x}\leq \frac{b}{x}+c \quad {\rm if }\ x<0,$$
which implies
$$
\overline{u}_l=\mathop{\overline{\lim}}\limits_{x\rightarrow -\infty}\frac{\Phi(x)}{x}\leq c\leq\mathop{\underline{\lim}}\limits_{x\rightarrow \infty}\frac{\Phi(x)}{x}=\underline{u}_r.
$$
This, by $\overline{u}_l=\underline{u}_r$, yields that $c=\underline{u}_r$.

By the same arguments as those for $\eqref{ab7}$--$\eqref{ab8}$,
it can be checked that
${\rm D}_-\bar{\Phi}(y)={\rm D}_+\bar{\Phi}(y)=\underline{u}_r$ for any $y\in \mathbb{R},$
which implies that
$\bar{\Phi}(x)=\underline{u}_r\, x+b_0$ for some $b_0\in \mathbb{R}.$
Thus, if there exists $x_0\in \mathbb{R}$ such that $\Phi(x_0)=\underline{u}_r x_0+b_0$,
then $\mathcal{K}_0\neq \varnothing$;
and, if $\Phi(x)>\underline{u}_r x+b_0$ for any $x\in \mathbb{R}$,
then $\mathcal{K}_0= \varnothing$.

\smallskip
\noindent
{\bf 3}. By the definition of $\overline{u}_l$ and $\underline{u}_r$,
for any $c>\underline{u}_r$,
$$
\mathop{\underline{\lim}}\limits_{x\rightarrow \infty}(\Phi(x)-cx)
=\mathop{\underline{\lim}}\limits_{x\rightarrow \infty}x\Big(\frac{\Phi(x)}{x}-c\Big)=-\infty,
$$
and, for any $c<\overline{u}_l$,
$$
\mathop{\underline{\lim}}\limits_{x\rightarrow -\infty}(\Phi(x)-cx)
=\mathop{\underline{\lim}}\limits_{x\rightarrow -\infty}({-}x)\Big(c-\frac{\Phi(x)}{x}\Big)=-\infty.
$$
Then it follows from $\overline{u}_l>\underline{u}_r$ that, for any $c\in \mathbb{R}$,
$$
\mathop{\underline{\lim}}\limits_{x\rightarrow \infty}(\Phi(x)-cx)=-\infty
\qquad {\rm or}\qquad
\mathop{\underline{\lim}}\limits_{x\rightarrow -\infty}(\Phi(x)-cx)=-\infty,
$$
which, by Lemma $\ref{lem:c5.1}$, implies that $\bar{\Phi}(x)\equiv-\infty$ on $\mathbb{R}$.

\smallskip
Up to now, we have proved Lemma $\ref{lem:c5.2}$.
\end{proof}

\begin{proof}[Proof of {\rm Lemma} $\ref{lem:c5.3}$.]
Suppose that $\bar{\Phi}(x)>-\infty$ on $x\in \mathbb{R}$.
From $\eqref{ab1}$, $\bar{\Phi}(x)$ is convex.

\smallskip
\noindent
{\bf 1}. For any fixed $x_0 \in\mathcal{K}_0$, $\Phi(x_0)=\bar{\Phi}(x_0)$.
Since $\Phi(x)\geq\bar{\Phi}_N(x)\geq\bar{\Phi}(x)$, then
\begin{equation}\label{ab11}
{\Phi}(x_0)=\bar{\Phi}_N(x_0)=\bar{\Phi}(x_0) \qquad {\rm for\ any}\ N.
\end{equation}

\smallskip
{\bf (a).}
We first prove that,
\begin{equation}\label{ab12}
\inf_{l>0}\Delta_l\bar{\Phi}_N(x_0)=\inf_{l>0}\Delta_l{\Phi}(x_0),\qquad \sup_{l<0}\Delta_l\bar{\Phi}_N(x_0)=\sup_{l<0}\Delta_l{\Phi}(x_0).
\end{equation}
In fact, since $\Phi(x)\geq\bar{\Phi}_N(x)$, by $\eqref{ab11}$,
for any fixed $N$,
$$
\frac{{\Phi}(x_0+l)-{\Phi}(x_0)}{l}\geq\frac{\bar{\Phi}_N(x_0+l)-\bar{\Phi}_N(x_0)}{l}
\qquad {\rm for\ any}\ l>0,
$$
which implies
\begin{equation}\label{ab13}
\inf_{l>0}\Delta_l{\Phi}(x_0)\geq\inf_{l>0}\Delta_l\bar{\Phi}_N(x_0).
\end{equation}
Furthermore, for any $l>0$, if $x_0+l\geq N$,
by $\eqref{c5.5}$, $\Phi(x_0+l)=\bar{\Phi}_N(x_0+l)$ so that, by $\eqref{ab11}$,
\begin{equation}\label{ab14}
\frac{\bar{\Phi}_N(x_0+l)-\bar{\Phi}_N(x_0)}{l}
=\frac{{\Phi}(x_0+l)-{\Phi}(x_0)}{l}\geq\inf_{l>0}\Delta_l{\Phi}(x_0);
\end{equation}
and, if $x_0+l<N$, $\eqref{ab14}$ holds when $\Phi(x_0+l)=\bar{\Phi}_N(x_0+l)$.
For the case that $\Phi(x_0+l)>\bar{\Phi}_N(x_0+l)$ with $x_0+l<N$,
by the definition of convex hull and $\eqref{ab11}$,
there exists $\bar{l}>0$ with $x_0+\bar{l}\leq N$ such that
\begin{equation}\label{ab15}
\frac{\bar{\Phi}_N(x_0+l)-\bar{\Phi}_N(x_0)}{l}
=\frac{{\Phi}(x_0+\bar{l})-{\Phi}(x_0)}{\bar{l}}
\geq\inf_{l>0}\Delta_l{\Phi}(x_0).
\end{equation}
Then it follows from $\eqref{ab14}$--$\eqref{ab15}$ that
\begin{equation}\label{ab16}
\inf_{l>0}\Delta_l\bar{\Phi}_N(x_0)\geq \inf_{l>0}\Delta_l{\Phi}(x_0).
\end{equation}
Combining $\eqref{ab13}$ with $\eqref{ab16}$, it follows that $\inf_{l>0}\Delta_l\bar{\Phi}_N(x_0)=\inf_{l>0}\Delta_l{\Phi}(x_0)$ in $\eqref{ab12}$.
Similarly, it can be checked that
$\sup_{l<0}\Delta_l\bar{\Phi}_N(x_0)=\sup_{l<0}\Delta_l{\Phi}(x_0)$.
Hence, $\eqref{ab12}$ holds.

\smallskip
{\bf (b).} We now prove $\eqref{c5.10}$.
Since $\bar{\Phi}(x)$ is convex on $({-}\infty,\infty)$,
then $\Delta_l\bar{\Phi}(x_0)$ is nondecreasing so that
$$
{\rm D}_+\bar{\Phi}(x_0)=\inf_{l>0}\Delta_l\bar{\Phi}(x_0),\qquad\,\, {\rm D}_-\bar{\Phi}(x_0)=\sup_{l<0}\Delta_l\bar{\Phi}(x_0).
$$
Suppose that  $x_0\in \mathcal{K}_0$.
Since $\Phi(x)\geq \bar{\Phi}_N(x)\geq \bar{\Phi}(x)$ on $\mathbb{R}$,
it is direct to check from $\eqref{ab11}$ that
$$
\Delta_l\bar{\Phi}(x_0)\leq \Delta_l\bar{\Phi}_N(x_0)\quad {\rm for }\ l>0,
\qquad\,\, \Delta_l\bar{\Phi}(x_0)\geq \Delta_l\bar{\Phi}_N(x_0)\quad {\rm for }\ l<0,
$$
which, by $\eqref{ab12}$, implies
\begin{equation}\label{ab17}
\begin{cases}
\displaystyle {\rm D}_+\bar{\Phi}(x_0)=\inf_{l>0}\Delta_l\bar{\Phi}(x_0)
\leq \inf_{l>0}\Delta_l\bar{\Phi}_N(x_0)=\inf_{l>0}\Delta_l{\Phi}(x_0),\\[4mm]
\displaystyle {\rm D}_-\bar{\Phi}(x_0)=\sup_{l<0}\Delta_l\bar{\Phi}(x_0)
\geq \sup_{l<0}\Delta_l\bar{\Phi}_N(x_0)=\sup_{l<0}\Delta_l{\Phi}(x_0).
\end{cases}
\end{equation}

Then, to prove $\eqref{c5.10}$,
it suffices to prove the inverse inequalities in $\eqref{ab17}$.
In fact, for any given $R>|x_0|$,
$\{\bar{\Phi}_N(x)\}_N$ is a nonincreasing sequence of continuous functions on $[-R,R]$
and converges to $\bar{\Phi}(x)$ pointwise so that
$\bar{\Phi}_N(x)$ converges to $\bar{\Phi}(x)$ uniformly on $[-R,R]$.
Thus, for any fixed $\varepsilon>0$, there exists $N_0>0$ such that,
for any $N>N_0$,
$$
0\leq\bar{\Phi}_N(x)-\bar{\Phi}(x)<\varepsilon \qquad {\rm for\ any}\ x\in [-R,R],
$$
which, by $\eqref{ab12}$, implies that, if $x\in (x_0, R)$, then
$$
\frac{\varepsilon+\bar{\Phi}(x)-\bar{\Phi}(x_0)}{x-x_0}
>\frac{\bar{\Phi}_N(x)-\bar{\Phi}_N(x_0)}{x-x_0}
\geq\inf_{l>0}\Delta_l\bar{\Phi}_N(x_0)
=\inf_{l>0}\Delta_l{\Phi}(x_0),
$$
and, if $x\in ({-}R, x_0)$, then
$$
\frac{\varepsilon+\bar{\Phi}(x)-\bar{\Phi}(x_0)}{x-x_0}
<\frac{\bar{\Phi}_N(x)-\bar{\Phi}_N(x_0)}{x-x_0}
\leq\sup_{l<0}\Delta_l\bar{\Phi}_N(x_0)
=\sup_{l<0}\Delta_l{\Phi}(x_0).
$$
By the arbitrariness of $\varepsilon>0$, we obtain
\begin{equation*}
\begin{cases}
\displaystyle \frac{\bar{\Phi}(x)-\bar{\Phi}(x_0)}{x-x_0}
\geq\inf_{l>0}\Delta_l{\Phi}(x_0) \quad &{\rm if}\ x\in (x_0,R),\\[4mm]
 \displaystyle\frac{\bar{\Phi}(x)-\bar{\Phi}(x_0)}{x-x_0}
 \leq\sup_{l<0}\Delta_l{\Phi}(x_0) \quad &{\rm if}\ x\in ({-}R,x_0),
\end{cases}
\end{equation*}
which, by letting $x\rightarrow x_{0}{+}$ and $x\rightarrow x_{0}{-}$ respectively, implies
\begin{equation}\label{ab19}
\begin{cases}
\displaystyle {\rm D}_+\bar{\Phi}(x_0)
=\lim\limits_{x\rightarrow x_{0}{+}}\frac{\bar{\Phi}(x)-\bar{\Phi}(x_0)}{x-x_0}\geq\inf_{l>0}\Delta_l{\Phi}(x_0),\\[4mm]
\displaystyle{\rm D}_-\bar{\Phi}(x_0)
=\lim\limits_{x\rightarrow x_{0}{-}}\frac{\bar{\Phi}(x)-\bar{\Phi}(x_0)}{x-x_0}\leq\sup_{l<0}\Delta_l{\Phi}(x_0).
\end{cases}
\end{equation}

We combine $\eqref{ab17}$ with $\eqref{ab19}$ to conclude $\eqref{c5.10}$.

\smallskip
\noindent
{\bf 2}.
Since $\Phi(x)$ and $\bar{\Phi}(x)$ are both continuous functions,
$\mathcal{K}_0$ in $\eqref{c5.6}$ is a closed set so that
$\mathcal{K}^c_0=\mathbb{R}-\mathcal{K}_0$ is an open set.

\smallskip
{\bf (a).} Suppose that $x^+_0=\sup \mathcal{K}_0<\infty$.
Since $\mathcal{K}_0$ is a closed set, then $x^+_0\in \mathcal{K}_0$.
Since ${\rm D}_+\bar{\Phi}(x)$ is nondecreasing,
it follows from $\eqref{c5.8}$ that ${\rm D}_+\bar{\Phi}(x^+_0)=\underline{u}_r$.

By the nondecreasing of
$\Delta_l\bar{\Phi}(x^+_0)=\frac{1}{l}\big(\bar{\Phi}(x^+_0+l)-{\Phi}(x^+_0)\big)$ in $l\neq0$,
for $l>0$,
\begin{align*}
\underline{u}_r
&={\rm D}_+\bar{\Phi}(x^+_0)\leq \Delta_l\bar{\Phi}(x^+_0)
\leq \lim_{l\rightarrow \infty} \Delta_l\bar{\Phi}(x^+_0)\nonumber\\[1mm]
&=\lim_{l\rightarrow \infty}\Big(\frac{\bar{\Phi}(x^+_0+l)}{x^+_0+l} \,
\frac{x^+_0+l}{l}-\frac{\bar{\Phi}(x^+_0)}{l}\Big)
=\lim_{l\rightarrow \infty}\frac{\bar{\Phi}(x^+_0+l)}{x^+_0+l}
\leq\mathop{\underline{\lim}}\limits_{l\rightarrow \infty}\frac{{\Phi}(x^+_0+l)}{x^+_0+l}
=\underline{u}_r,
\end{align*}
which implies that, for any $x=x_0^++l > x_0^+$,
$$
\frac{\bar{\Phi}(x)-\Phi(x_0^+)}{x-x_0^+}
=\frac{\bar{\Phi}(x)-\bar{\Phi}(x_0^+)}{x-x_0^+}
=\Delta_l\bar{\Phi}(x^+_0)=\underline{u}_r.
$$
This implies $\eqref{c5.11}$.

\smallskip
{\bf (b).} Suppose that $x^-_0=\inf \mathcal{K}_0>-\infty$.
Similar to {\bf (a)},
then $x^-_0\in \mathcal{K}_0$ and ${\rm D}_-\bar{\Phi}(x^-_0)=\overline{u}_l$.

By the nondecreasing of
$\Delta_l\bar{\Phi}(x^-_0)=\frac{1}{l}\big(\bar{\Phi}(x^-_0+l)-{\Phi}(x^-_0)\big)$ in $l\neq0$,
for $l<0$,
\begin{align*}
\overline{u}_l
&={\rm D}_-\bar{\Phi}(x^-_0) \geq\Delta_l\bar{\Phi}(x^-_0)
\geq \lim_{l\rightarrow -\infty} \Delta_l\bar{\Phi}(x^-_0)\nonumber\\[1mm]
&=\lim_{l\rightarrow -\infty}\Big(\frac{\bar{\Phi}(x^-_0+l)}{x^-_0+l} \,
\frac{x^-_0+l}{l}-\frac{\bar{\Phi}(x^-_0)}{l}\Big)
=\lim_{l\rightarrow -\infty} \frac{\bar{\Phi}(x^-_0+l)}{x^-_0+l}
\nonumber\\[1mm]
&
\geq\mathop{\overline{\lim}}_{l\rightarrow -\infty} \frac{{\Phi}(x^-_0+l)}{x^-_0+l}
=\overline{u}_l,
\end{align*}
which implies that, for any $x=x_0^-+l < x_0^-$,
$$
\frac{\bar{\Phi}(x)-\Phi(x_0^-)}{x-x_0^-}
=\frac{\bar{\Phi}(x)-\bar{\Phi}(x_0^-)}{x-x_0^-}
=\Delta_l\bar{\Phi}(x^-_0)=\overline{u}_l.
$$
This implies $\eqref{c5.12}$.

\smallskip
{\bf (c).} Since the bounded interval $(e_n,h_n)$ contains in
$\mathcal{K}^c_0$ with $e_n,h_n \in \mathcal{K}_0$, then
\begin{equation}\label{ab22}
\begin{cases}
{\Phi}(x)>\bar{\Phi}(x) \qquad\quad {\rm for\ any}\ x\in (e_n,h_n),\\[1mm]
{\Phi}(e_n)=\bar{\Phi}(e_n),\quad {\Phi}(h_n)=\bar{\Phi}(h_n),
\end{cases}
\end{equation}
which, by ${\Phi}(x)\geq\bar{\Phi}_N(x)\geq\bar{\Phi}(x)$,
implies that, for any $N>0$,
\begin{equation*}
{\Phi}(e_n)=\bar{\Phi}_N(e_n)=\bar{\Phi}(e_n),
\qquad {\Phi}(h_n)=\bar{\Phi}_N(h_n)=\bar{\Phi}(h_n).
\end{equation*}
Then it follows from $\eqref{c5.5}$ that,
if $N>\max\{|e_n|,|h_n|\}$,
\begin{equation*}
\bar{\Phi}_N(x)\big|_{[e_n,\,h_n]}=\mathop{\rm cov}_{[e_n,\,h_n]}\Phi(x)
\qquad {\rm for\ any}\ x\in [e_n, h_n],
\end{equation*}
which, by letting $N\rightarrow\infty$, implies
\begin{equation}\label{ab25}
\bar{\Phi}(x)\big|_{[e_n,\,h_n]}
=\lim_{N\rightarrow \infty}\bar{\Phi}_N(x)\big|_{[e_n,\,h_n]}
=\mathop{\rm cov}_{[e_n,\,h_n]}\Phi(x)
\qquad {\rm for\ any}\ x\in [e_n,h_n].
\end{equation}
Combining $\eqref{ab22}$ with $\eqref{ab25}$, we obtain
\begin{equation*}
\begin{cases}
\displaystyle{\Phi}(x)>\mathop{\rm cov}\limits_{[e_n,\,h_n]}\Phi(x)
\qquad\quad {\rm for\ any}\ x\in (e_n,\,h_n),\\[4mm]
\displaystyle{\Phi}(e_n)=\mathop{\rm cov}\limits_{[e_n,\,h_n]}\Phi(e_n),
\quad {\Phi}(h_n)=\mathop{\rm cov}\limits_{[e_n,\,h_n]}\Phi(h_n),
\end{cases}
\end{equation*}
which, by the definition of the convex hull and $\eqref{ab25}$, implies that,
for any $x\in [e_n,h_n]$,
\begin{align*}
\bar{\Phi}(x)=\mathop{\rm cov}\limits_{[e_n,\,h_n]}\Phi(x)
&={\Phi}(e_n)+\frac{{\Phi}(h_n)-{\Phi}(e_n)}{h_n-e_n}(x-e_n)
={\Phi}(e_n)+{\rm D}_+\bar{\Phi}(e_n)(x-e_n).
\end{align*}
This means that $\eqref{c5.13}$ holds.

\smallskip
To sum up, we have completed the proof of Lemma $\ref{lem:c5.3}$.
\end{proof}

\medskip
~\\ \textbf{Acknowledgements}.
The research of Gaowei Cao was supported in part
by the National Natural Science Foundation of China No.11701551
and the China Scholarship Council No.20200491 0200.
The research of Gui-Qiang G. Chen was supported in part
by the UK Engineering and Physical Sciences Research Council Award
EP/L015811/1, EP/V008854, and EP/V051121/1.
The research of Xiaozhou Yang was supported in part
by the National Natural Science Foundation
of China No.11471332.
For the purpose of open access, the authors have applied a CC BY public copyright license
to any Author Accepted Manuscript (AAM) version
arising from this submission.

\bigskip
\medskip
\noindent{\bf Conflict of Interest:} The authors declare that they have no conflict of interest.
The authors also declare that this manuscript has not been previously published,
and will not be submitted elsewhere before your decision.

\bigskip
\noindent{\bf Data availability:} Data sharing is not applicable to this article as no datasets were generated or analyzed during the current study.

\bigskip
\medskip


\begin{thebibliography}{99}

\bibitem{ACD}
L. Ambrosio, G. Crippa, C. De Lellis, F. Otto, and M. Westdickenberg,
{\it Transport Equations and Multi-D Hyperbolic Conservation Laws,}
Vol. {\bf 5}. Berlin: Springer, 2008.

\bibitem{ADL}
L. Ambrosio and C. De Lellis,
{\it A note on admissible solutions of 1D scalar conservation laws and 2D Hamilton-Jacobi equations,}
J. Hyperbolic Diff. Eqs. {\bf 1(4)} (2004), 813--826.

\bibitem{AKR}
B. Andreianov, K.~H. Karlsen, and N.~H. Risebro,
{\it A theory of $L^1$-dissipative solvers for scalar conservation laws with discontinuous flux,}
Arch. Ration. Mech. Anal. {\bf 201(1)} (2011), 27--86.

\bibitem{BM}
S. Bianchini and E. Marconi,
{\it On the structure of $L^\infty$-entropy solutions to scalar conservation laws in one-space dimension,}
Arch. Ration. Mech. Anal. {\bf 226(1)} (2017), 441--493.

\bibitem{BY}
S. Bianchini and L. Yu,
{\it Structure of entropy solutions to general scalar conservation laws in one space dimension,}
J. Math. Anal. Appl. {\bf 428(1)} (2015), 356--386.

\bibitem{BP}
F. Bouchut and B. Perthame,
{\it Kru\v{z}kov's estimates for scalar conservation laws revisited},
Trans. Amer. Math. Soc. {\bf 350(7)} (1998), 2847--2870.

\bibitem{BGJ}
C. Bourdarias, M. Gisclon, and S. Junca,
{\it Fractional BV spaces and applications to scalar conservation laws,}
J. Hyperbolic Differ. Equ. {\bf 11(4)} (2014), 655--677.

\bibitem{BA}
A. Bressan,
{\it Global solutions of systems of conservation laws by wave-front tracking,}
J. Math. Anal. Appl. {\bf 170(2)} (1992), 414--432.

\bibitem{BLY}
A. Bressan, T.-P. Liu, and T. Yang,
{\it $L^1$ stability estimates for $n \times n$ conservation laws,}
Arch. Ration. Mech. Anal. {\bf 149(1)} (1999), 1--22.

\bibitem{BFK}
R. Bürger, H. Frid, and K.~H. Karlsen,
{\it On the well-posedness of entropy solutions to conservation laws with a zero-flux boundary condition,}
J. Math. Anal. Appl. {\bf 326(1)} (2007), 108--120.

\bibitem{CC}
G.~W. Cao and G.-Q. Chen,
{\it Minimal entropy conditions for scalar conservation laws with general convex fluxes},
Quart. Appl. Math. {\bf 81(3)} (2023), 567--598.

\bibitem{CF}
G.-Q. Chen and H. Frid,
{\it Large-time behavior of entropy solutions of conservation laws,}
J. Diff. Eqs. {\bf 152(2)} (1999), 308--357.

\bibitem{CF1}
G.-Q. Chen and H. Frid,
{\it Decay of entropy solutions of nonlinear conservation laws,}
Arch. Ration. Mech. Anal. {\bf 146(2)} (1999), 95--127.

\bibitem{CF2}
G.-Q. Chen and H. Frid,
{\it Divergence-measure fields and hyperbolic conservation laws,}
Arch. Ration. Mech. Anal. {\bf 147(2)} (1999), 89--118.

\bibitem{CHK}
G.-Q. Chen, H. Holden, and K. Karlsen,
{\it Hyperbolic Conservation Laws and Related Analysis with Applications,}
Springer-Verlag: Berlin-Heidelberg, 2014.

\bibitem{CP}
G.-Q. Chen and B. Perthame,
{\it Well-posedness for non-isotropic degenerate parabolic-hyperbolic equations,}
Ann. Inst. Henri Poincar\'{e}, Anal. Non Lin\'{e}aire, {\bf 20(4)} (2003), 645--668.

\bibitem{CR}
G.-Q. Chen and M. Rascle,
{\it Initial layers and uniqueness of weak entropy solutions to hyperbolic conservation laws,}
Arch. Ration. Mech. Anal. {\bf 153(3)} (2000), 205--220.


\bibitem{DCM0}
C.~M. Dafermos,
{\it Polygonal approximation of solutions of the initial value problem for a conservation law,}
J. Math. Anal. Appl. {\bf 38(1)} (1972), 33--41.

\bibitem{DCM2}
C.~M. Dafermos,
{\it Characteristics in hyperbolic conservation laws, A study of the structure and asymptotic behavior of solutions,}
Nonlin. Anal. Mech., Heriot-Watt Symp., Edinburgh 1976, {\bf 1}(1977), 1--58, Pitman: London.

\bibitem{DCM3}
C.~M. Dafermos,
{\it Generalized characteristics and the structure of solutions of hyperbolic conservation laws,}
Indiana U. Math. J. {\bf 26(6)} (1977), 1097--1119.

\bibitem{DCM5}
C.~M. Dafermos,
{\it Generalized characteristics in hyperbolic systems of conservation laws,}
Arch. Ration. Mech. Anal. {\bf 107(2)} (1989) 127--155.

\bibitem{DCM1}
C.~M. Dafermos,
{\it Hyperbolic Conservation Laws in Continuum Physics},
4th Edition, Springer-Verlag: Berlin, 2016.

\bibitem{DOW03}
C. De Lellis, F. Otto, and M. Westdickenberg,
{\it Structure of entropy solutions for multi-dimensional scalar conservation laws,}
Arch. Ration. Mech. Anal. {\bf 170(2)} (2003), 137--184.

\bibitem{DOW04}
C. De Lellis, F. Otto, and M. Westdickenberg,
{\it Minimal entropy conditions for Burgers equation},
Quart. Appl. Math. {\bf 62(4)} (2004), 687--700.

\bibitem{DR}
C. De Lellis and T. Riviere,
{\it The rectifiability of entropy measures in one space dimension,}
J. Math. Pures Appl. {\bf 82(10)} (2003), 1343--1367.

\bibitem{DRJ}
R.~J. DiPerna,
{\it Global existence of solutions to nonlinear systems of conservation laws,}
J. Diff. Eqs. {\bf 20(1)} (1976), 187--212.

\bibitem{EKMS}
W.~N. E, K. Khanin, A. Mazel, and Ya. Sinai,
{\it Invariant measures for Burgers equation with stochastic forcing,}
Ann. Math. {\bf 151(3)} (2000), 877--960.

\bibitem{GP}
F. Golse and B. Perthame,
{\it Optimal regularizing effect for scalar conservation laws,}
Rev. Mat. Iberoam. {\bf 29(4)} (2013), 1477--1504.

\bibitem{HD}
D. Hoff,
{\it The sharp form of Oleinik's entropy condition in several space variables,}
Trans. Amer. Math. Soc. {\bf 276(2)} (1983), 707--714.

\bibitem{HH}
H. Holden and L. Holden,
{\it On scalar conservation laws in one dimension,}
In: Ideas and Methods in Mathematics and Physics, pages 480--509.
S. Albeverio, J.~E. Fenstad, H. Holden, and T. Lindstrøm (Editors),
Cambridge Univ. Press, Cambridge, 1992.

\bibitem{HLB94}
H. Holden, T. Lindstrøm, B. Øksendal, J. Ubøe, and T.-S. Zhang,
{\it The Burgers equation with a noisy force and the stochastic heat equation,}
Commun. Partial Differ. Equ. {\bf 19(1--2)} (1994), 119--141.

\bibitem{HR}
H. Holden and N.~H. Risebro,
{\it Front Tracking for Hyperbolic Conservation Laws,}
second edition. Applied Mathematical Sciences, Vol. {\bf 152}. Springer, Heidelberg, 2015.

\bibitem{HRT}
H. Holden, N.~H. Risebro, and A. Tveito,
{\it Maximum principles for a class of conservation laws,}
SIAM J. Appl. Math. {\bf 55(3)} (1995), 651--661.

\bibitem{HE}
E. Hopf,
{\it The partial differential equation $u_t+ uu_x= \mu_{xx}$,}
Comm. Pure Appl. Math. {\bf 3(3)} (1950), 201--230.

\bibitem{HFM}
F.~M. Huang,
{\it Existence and uniqueness of discontinuous solutions for a class of nonstrictly hyperbolic systems,}
 In: Advances in Nonlinear PDEs and Related Areas, Eds. G.-Q. Chen et al.,
pp. 187--208,
World Scientific: Singapore, 1998.

\bibitem{HW}
 F.~M. Huang and Z. Wang,
 {\it Well posedness for pressureless flow,}
 Commun. Math. Phys. {\bf 222(1)} (2001), 117--146.

\bibitem{KRV}
S.~G. Krupa and A.~F. Vasseur,
{\it On uniqueness of solutions to conservation laws verifying a single entropy condition},
J. Hyperbolic Differ. Equ. {\bf 16(01)} (2019), 157--191.


\bibitem{KV}
Y.~S. Kwon and A. Vasseur,
{\it Strong traces for solutions to scalar conservation laws with general flux,}
Arch. Ration. Mech. Anal. {\bf 185(3)} (2007), 495--513.

\bibitem{LPD57}
P.~D. Lax,
{\it Hyperbolic systems of conservation laws II,}
Comm. Pure Appl. Math. {\bf 10} (1957), 535--566.

\bibitem{LPD73}
P.~D. Lax,
{\it Hyperbolic Systems of Conservation Laws and the Mathematical Theory of Shock Waves},
CBMS-RCSAM, No. 11, Society for Industrial and Applied Mathematics, Philadelphia, Pa., 1973.

\bibitem{LMP}
{M.~P. Lebaud,
{\it Description de la formation $d'$un choc dans le p-syst$\grave{e}$me},
J. Math. Pures Appl. IX. Sér. {\bf 73(6)}(1994), 523--565.}

\bibitem{LBH}
B.~H. Li,
{\it Global structure of shock waves,}
Sci. Sinica, {\bf 22(9)} (1979), 979--990.

\bibitem{LW}
B.~H. Li and J.~H. Wang,
{\it The global qualitative study of solutions to a conservation law, {\rm (I)}, {\rm (II)},}
Sci. Sinica, Special Issue for Math (I) (1979), 12--24, 25--38 (in Chinese).

\bibitem{LLW}
L.~W. Lin,
{\it Researches of the formation of shock waves and central simple waves in generalized solutions of quasi-linear differential equations of the first order,}
Acta Scientiarum Naturalim Univertaties Jilinensis, {\bf 2} (1963), 19--74.

\bibitem{LPT}
P.~L. Lions, B. Perthame, and E. Tadmor,
{\it A kinetic formulation of multidimensional scalar conservation laws and related equations,}
J. Amer. Math. Soc. {\bf 7(1)} (1994), 169--191.

\bibitem{LTP}
T.~P. Liu,
{\it Invariants and asymptotic behavior of solutions of a conservation law,}
Proc. Am. Math. Soc. {\bf 71(2)} (1978), 227--231.

\bibitem{ME18}
E. Marconi,
{\it Regularity estimates for scalar conservation laws in one space dimension,}
J. Hyperbolic Differ. Equ. {\bf 15(4)} (2018), 623--691.

\bibitem{MB}
E. Marconi and S. Bianchini,
{\it On the concentration of entropy for scalar conservation laws,}
Discrete Contin. Dyn. Syst. Ser. S, {\bf 9(1)} (2016), 73--88.

\bibitem{OOA}
O.~A. Oleinik,
{\it Discontinuous solutions of non-linear differential equations},
Uspekhi. Mat. Nauk, {\bf 12(3)} (1957), 3--73.

\bibitem{PEY94}
E.~Yu. Panov,
 {\it Uniqueness of the solution of the Cauchy problem for a first order quasilinear
equation with one admissible strictly convex entropy,}
(Russian) Mat. Zametki {\bf 55(5)} (1994), 116--129 (in Russian);
Math. Notes {\bf 55(5-6)} (1994), 517--525 (in English).

\bibitem{PB}
B. Perthame,
{\it Kinetic Formulation of Conservation Laws,}
Oxford Lecture Series in Mathematics and Its Applications, Vol. {\bf 21},
Oxford Univ. Press, Oxford, 2002.

\bibitem{QBK}
B.~K. Quinn,
{\it Solutions with shocks: an example of an $L^1$--contraction semi-group,}
Comm. Pure Appl. Math. {\bf 24(2)} (1971), 125--132.

\bibitem{SSI}
B. Saussereau and I.~L. Stoica,
{\it Scalar conservation laws with fractional stochastic forcing:
existence, uniqueness, and invariant measure,}
Stoch. Process. Appl. {\bf 122(4)} (2012), 1456--1486.

\bibitem{SD21}
D. Serre,
{\it Source-solutions for the multi-dimensional Burgers equation,}
Arch. Ration. Mech. Anal. {\bf 239(1)} (2021), 95--116.

\bibitem{SD22}
D. Serre,
{\it $L^2$-type Lyapunov functions for hyperbolic scalar conservation laws,}
Commun. Partial Differ. Equ. {\bf 47(2)} (2022), 401--416.

\bibitem{SS}
D. Serre and L. Silvestre,
{\it Multi-dimensional Burgers equation with unbounded initial data: well-posedness and dispersive estimates,}
Arch. Ration. Mech. Anal. {\bf 234(3)} (2019), 1391--1411.

\bibitem{SL}
L. Silvestre,
{\it Oscillation properties of scalar conservation laws,}
Comm. Pure Appl. Math. {\bf 72(6)} (2019), 1321--1348.

\bibitem{TT}
E. Tadmor and T. Tao,
{\it Velocity averaging, kinetic formulations, and regularizing effects in quasi-linear PDEs,}
Comm. Pure Appl. Math.
{\bf 60(10)} (2007), 1488--1521.

\bibitem{TB}
B. Temple,
{\it Systems of conservation laws with invariant submanifolds,}
Trans. Amer. Math. Soc. {\bf 280(2)} (1983), 781--795.

\bibitem{VA}
A. Vasseur,
{\it Strong traces for solutions of multidimensional scalar conservation laws,}
Arch. Ration. Mech. Anal. {\bf 160(3)}(2001), 181--193.


\bibitem{WHD}
Z. Wang, F.~M. Huang, and X.~Q. Ding,
{\it On the Cauchy problem of transportation equations,}
Acta Math. Appl. Sini. {\bf 13(2)} (1997), 113--122.

\bibitem{YZ}
{H.~C. Yin, and L. Zhu,
{\it The shock formation and optimal regularities of the resulting shock curves for 1D scalar conservation laws,}
Nonlinearity, {\bf 35(2)} (2022), 954--997.}

\end{thebibliography}
\end{document}